\theoremstyle{plain}
\newtheorem{theorem}{Theorem}[section]
\newtheorem{assumption}[theorem]{Assumption}
\newtheorem{corollary}[theorem]{Corollary}
\newtheorem{claim}[theorem]{Claim}
\newtheorem{proposition}[theorem]{Proposition}
\newtheorem{lemma}[theorem]{Lemma}
\newtheorem{observation}[theorem]{Observation}
\newtheorem{conjecture}[theorem]{Conjecture}
\newtheorem{definition}[theorem]{Definition}
\theoremstyle{definition}
\newtheorem{remark}[theorem]{Remark}
\numberwithin{equation}{section}
\DeclareMathOperator*{\argmin}{arg\,min}
\newcommand{\R}{{\mathbb R}}
\newcommand{\Z}{{\mathbb Z}}
\newcommand{\N}{{\mathbb N}}
\newcommand{\E}{\mathbb E}
\newcommand{\Prob}{\mathbb{P}}
\newcommand\eps{\varepsilon}
\newcommand{\CA}{{\mathcal A}}
\newcommand{\CB}{{\mathcal B}}
\newcommand{\CC}{{\mathcal C}}
\newcommand{\CE}{{\mathcal E}}
\newcommand{\CG}{{\mathcal G}}
\newcommand{\CH}{{\mathcal H}}
\newcommand{\CI}{{\mathcal I}}
\newcommand{\CJ}{{\mathcal J}}
\newcommand{\CK}{{\mathcal K}}
\newcommand{\CL}{{\mathcal L}}
\newcommand{\CM}{{\mathcal M}}
\newcommand{\CQ}{{\mathcal Q}}
\newcommand{\CR}{{\mathcal R}}
\newcommand{\CS}{{\mathcal S}}
\newcommand{\CU}{{\mathcal U}}
\newcommand{\CV}{{\mathcal V}}
\newcommand{\CY}{{\mathcal Y}}
\newcommand{\CZ}{{\mathcal Z}}
\newcommand{\re}{{\mathrm e}}
\newcommand{\Unif}{{\mathrm{Unif}}}
\newcommand{\Poi}{{\mathrm{Poi}}}
\newcommand{\Bin}{{\mathrm{Bin}}}
\newcommand{\ind}[1]{\mathbbm{1}_{\{#1\}}}
\newcommand{\Ind}[1]{\mathbbm{1}{\{#1\}}}
\newcommand{\rd}{\mathrm{d}}
\newcommand{\sss}[1]{{\scriptscriptstyle #1}}
\begin{document}

\begin{frontmatter}
\title{Cluster-size decay in supercritical\\ kernel-based spatial random graphs}
\runtitle{Cluster-size decay in supercritical KSRGs}

\begin{aug}
\author[A]{\fnms{Joost}~\snm{Jorritsma}\ead[label=e1]{Department of Statistics, University of Oxford.}},
\author[B]{\fnms{J\'ulia}~\snm{Komj\'athy}\ead[label=e2]{Delft Institute of Applied Mathematics, Delft University of Technology.}}
\and
\author[C]{\fnms{Dieter}~\snm{Mitsche}\ead[label=e3]{Institut Camille Jordan, Univ. Jean Monnet, Univ. de Lyon and IMC,  Pontif\'icia Universidad Cat\'olica de Chile.}}
\address[A]{joost.jorritsma@stats.ox.ac.uk\printead[presep={,\ }]{e1}}
\address[B]{j.komjathy@tudelft.nl\printead[presep={,\ }]{e2}}
\address[C]{dmitsche@gmail.com\printead[presep={,\ }]{e3}}
\end{aug}

\begin{abstract}
We consider a large class of spatially-embedded random graphs that includes among others long-range percolation, continuum scale-free percolation and the age-dependent random connection model. We assume that the model is supercritical: there is an infinite component. We identify the stretch-exponent $\zeta\in(0,1)$ of the decay of the cluster-size distribution. That is, with $|\CC(0)|$ denoting the number of vertices in the component of the vertex at $0\in \R^d$, we prove
 \[
  \Prob(k< |\CC(0)|<\infty)=\exp\big(-\Theta(k^{\zeta})\big), \qquad \text{as }k\to\infty.
 \]
 The value of $\zeta$ undergoes several phase transitions with respect to three main model parameters: the Euclidean dimension $d$, the power-law tail exponent $\tau$ of the degree distribution and a long-range parameter $\alpha$ governing the presence of long edges in Euclidean space. 
 
 In this paper we present the proof for the region in the phase diagram where the model is a generalization of continuum scale-free percolation and/or hyperbolic random graphs: $\zeta$ in this regime depends both on $\tau,\alpha$. We also prove that the second-largest component in a box of volume $n$ is of size $\Theta((\log n)^{1/\zeta})$ with high probability. We develop a deterministic algorithm, the cover expansion, as new methodology. 
 This algorithm 
 enables us to prevent too large components that may be de-localized or locally dense in space. 
\end{abstract}

\begin{keyword}[class=MSC]
\kwd[Primary ]{60K35}
\kwd[; secondary ]{05C80}
\end{keyword}

\begin{keyword}
\kwd{Cluster-size distribution}
\kwd{second-largest component}
\kwd{spatial random graphs}
\kwd{scale-free networks}
\end{keyword}
\end{frontmatter}
\setcounter{tocdepth}{1}
\tableofcontents
\section{Introduction}\label{sec:intro}
Consider nearest-neighbor Bernoulli percolation on $\Z^d$ \cite{broadbent1957percolation} (NNP), and write  $|\CC(0)|$ for the number of vertices in the connected component containing the origin. Assume that the model is supercritical, i.e., let $p>p_c(\Z^d)$ -- the critical percolation probability on $\Z^d$. It is a result of a sequence of works \cite{aizenman1980lower, alexander1990wulff,cerf2000large, grimmett1990supercritical, kesten1990bondpercolation, kunz1978essential} that  
\begin{equation}\label{eq:intro-basic}
 \log \Prob(k< |\CC(0)|< \infty)      =
        -\Theta(k^{\zeta}),\qquad\text{with }\zeta=(d-1)/d.
\end{equation}
Thus, the \emph{cluster-size decay} in this model is stretched exponential with stretch-exponent $(d-1)/d$.
This decay rate emanates from \emph{surface tension}: all the $\Omega(k^{(d-1)/d})$ edges on the outer boundary of a cluster $\CC$ with $|\CC|> k$  need to be absent. More recently, these results have been extended to Bernoulli percolation on general classes of transitive graphs \cite{contreras2021supercritical, hutchcroft2022transience}.

The present paper and our related works \cite{clusterII, clusterIII} consider $\Prob(k< |\CC(0)|< \infty)$ for a large class of supercritical inhomogeneous percolation models where the degree distribution and/or the edge-length distribution have heavy tails. Our goal is  to 
\begin{equation}\tag{{\bf Goal}}\label{metaquestion}
     \parbox{\dimexpr\linewidth-10em}{%
    \strut
    \centering
    {\emph{ Determine how high-degree vertices and long-range edges \\change the surface-tension driven behavior of cluster-size decay.}}%
    \strut
  }
\end{equation}
We show that the cluster-size decay in \eqref{eq:intro-basic} remains stretched exponential in inhomogeneous models, but with a new exponent $\zeta$ that depends both on the decay of the edge-length and the decay of the degree distribution. The new value of $\zeta$ reflects the structure of the infinite/largest component in the graph induced by a volume-$n$ box: it describes the most likely way that a box is isolated, and represents the scale and structure of a ``backbone'', i.e., a skeleton holding the largest component $\CC_n^{\sss{(1)}}$ together.
These topological descriptions uncover an intimate connection between the cluster-size decay, the size of the second-largest component $\CC_n^{\sss{(2)}}$, and the lower tail of large deviations for the size of $\CC_n^{\sss{(1)}}$. We develop general methods to move between these quantities.  This paper and~\cite{clusterIII} focus on the cluster-size decay and $|\CC_n^\sss{(2)}|$, while \cite{clusterII} treats large deviations of $|\CC_n^\sss{(1)}|$ in more detail.
\smallskip 

\noindent\emph{Results for a special case.} 
We identify the formula for $\zeta$, and prove matching lower and upper bounds for $\log\Prob(k<|\CC(0)|<\infty)$ and $|\CC_n^\sss{(2)}|$ for supercritical continuum scale-free percolation (CSFP) \cite{DeiHofHoo13, DepWut18},  (in)finite geometric inhomogeneous random graphs (GIRG)~\cite{BriKeuLen19}, and hyperbolic random graphs (HRG)~\cite{krioukov2010hyperbolic}. We focus on a region of the parameter space where these models are robust under percolation. These three models can all be parametrized so that  the vertex set is generated by a Poisson point process on $\R^d$, and each vertex $u$ (with spatial location $x_u$) has an independent and identically distributed (iid) random \emph{vertex mark} $w_u$ from a Pareto distribution $\Prob(W\ge x)\propto x^{-(\tau-1)}$. With $a\wedge b:=\min(a,b)$, each pair of vertices $u,v$ is conditionally independently connected by an edge with probability
\begin{equation}\label{eq:product-connectivity}
\Prob\big(u\sim v \mid (x_u, w_u), (x_v, w_v)\big) =p\cdot\bigg(1\wedge\frac{w_u w_v}{\|x_u-x_v\|^d}\bigg)^\alpha.
\end{equation} 
Here $\alpha>1$ is called the \emph{long-range parameter} and $p\in(0,1]$. When $\tau\in(2,3)$, the models are supercritical for all $p\in(0,1]$ \cite{DeiHofHoo13, DepWut18, krioukov2010hyperbolic}. We state our main result applied to these models. Let $\Prob^{\sss{0}}$ be the Palm measure of having a vertex at location $0\in \R^d$ with a random vertex mark.
\begin{theorem}[Special case of main result]\label{thm:informal}
 Consider continuum scale-free percolation, (finite and infinite) geometric inhomogeneous random graphs, and hyperbolic random graphs with parametrization as in \eqref{eq:product-connectivity} and $\tau\in(2,3)$. When $\zeta_{\sss{\mathrm{GIRG}}}=(3-\tau)/(2-(\tau-1)/\alpha)> \max(2-\alpha, (d-1)/d)$, then
 \begin{equation}\label{eq:product-kernel}
 \begin{aligned}
 \log \Prob^\sss{0}( k < |\CC(0)|<\infty) &=  - \Theta(k^{\zeta_{\sss{\mathrm{GIRG}}}}), & &
|\CC_n^{\sss{(2)}}|\,\big/\,(\log n)^{1/\zeta_{\sss{\mathrm{GIRG}}}} \ \mbox{ is tight}; \\
|\CC_n^{\sss{(1)}}|\,\big/\,n\ &{\buildrel \Prob \over \longrightarrow} \  \Prob^\sss{0}(0\leftrightarrow\infty),
& &\log \Prob\big(|\CC_n^\sss{(1)}|<\rho n\big)\overset{(\star)}= -\Omega(n^{\zeta_{\sss{\mathrm{GIRG}}}})
\end{aligned}
 \end{equation}
  for all $\rho>0$. 
 A matching upper bound on $(\star)$  is proven in~\cite{clusterII} for any $\rho<\Prob^\sss{0}(0\leftrightarrow \infty)$.
\end{theorem}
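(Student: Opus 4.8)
Write $\zeta:=\zeta_{\sss{\mathrm{GIRG}}}$, let $\Lambda_n$ be the box of volume $n$, and set $\theta:=\Prob^{\sss{0}}(0\leftrightarrow\infty)$. The plan is to first establish the two-sided estimate $\Prob^{\sss{0}}(k<|\CC(0)|<\infty)=\exp(-\Theta(k^{\zeta}))$ and then read off the statements on $|\CC_n^{\sss{(2)}}|$, on $|\CC_n^{\sss{(1)}}|/n$, and the bound $(\star)$ from it, together with the geometry the proof reveals. I would proceed in four steps: a construction for the lower bound; the matching upper bound via the cover expansion; a two-sided tiling argument for $|\CC_n^{\sss{(2)}}|$; and a reduction of the $|\CC_n^{\sss{(1)}}|$ statements to the earlier steps.

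\textbf{Step 1 (construction; the value of $\zeta$).} A single hub of mark $\asymp k$ recruits $\Theta(k)$ neighbours but is useless: any region large enough to dilute its long edges already harbours other hubs reaching outside, so isolating it costs $\exp(-\Theta(k))$. Instead one builds $\CC(0)$ as a \emph{locally dense, spatially bounded core} --- the giant of a box of volume $\Theta(k)$ run with marks capped just below a threshold $w^{\star}$, whose backbone (marks near $w^{\star}$) is automatically connected because $\tau<3$ keeps the typical spacing $w^{\beta/d}$ of mark-$\ge w$ vertices, $\beta:=\tau-1$, below their connection range $w^{2/d}$ --- surrounded by a \emph{moat}: a much larger region, with marks capped even lower, required to be shattered into small components so that it carries no connection to infinity. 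The probability of this event factorizes into the costs of (a) the mark caps and the shattering of the moat, (b) deleting the residual long edges that still bridge the moat, and (c) a surface-tension term; optimizing the core volume, $w^{\star}$, the moat width and the cap profile against the requirement of $\Theta(k)$ vertices in $\CC(0)$ yields $\zeta=(3-\tau)/(2-(\tau-1)/\alpha)$, and the hypothesis $\zeta>\max(2-\alpha,(d-1)/d)$ records that at the optimum term (a) dominates (b) and (c).

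\textbf{Step 2 (upper bound; the main obstacle).} This is the heart of the proof. Given a configuration with $k<|\CC(0)|<\infty$, one must construct, \emph{deterministically from $\CC(0)$}, a \emph{cover} $\mathcal O$ --- a finite family of boxes, each tagged with a mark threshold --- such that (1) on $\{k<|\CC(0)|<\infty\}$ all boundary edges and high marks prescribed by $\mathcal O$ lie in their unlikely state, an event of probability $\le\exp(-c\,\mathrm{cost}(\mathcal O))$; (2) $\mathrm{cost}(\mathcal O)\ge c\,k^{\zeta}$; and (3) the number of covers of cost in $[t,2t]$ is only $\exp(o(t))$, so that $\sum_{\mathcal O}\Prob(\mathcal O\text{ forced})\le\sum_t\exp(o(t)-ct)\le\exp(-\Omega(k^{\zeta}))$. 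The obstruction to simply isolating the bounding box of $\CC(0)$ is that $\CC(0)$ may be \emph{delocalized} --- spread over volume $\gg k$ by long edges, so the bounding box is far too large --- or \emph{locally dense} --- hubs carrying most of the mass, so the boundary is cheap to cross. The cover-expansion algorithm reconciles this: starting from small boxes around $\CC(0)$, it repeatedly \emph{expands} any box whose boundary is cheap (because it contains a too-high mark, or because $\CC(0)$ extends well beyond it) into a larger box at a coarser mark scale, charging each expansion to the corresponding mark-suppression or long-edge-deletion event, and halts once every box is expensive; proving that it terminates with $\mathrm{cost}=\Omega(k^{\zeta})$ and with bounded complexity --- so that (3) holds --- is where essentially all the work, and the only genuinely new idea, lies.

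\textbf{Steps 3 and 4 (consequences).} Tightness of $|\CC_n^{\sss{(2)}}|/(\log n)^{1/\zeta}$ is a union bound over the $\Theta(n)$ vertices of $\Lambda_n$ against the finite-volume form of Step 2 (delivered by the same machinery), since $n\exp(-cA^{\zeta}\log n)\to0$ for $A$ large, after a routine preliminary step excluding a macroscopic second component; the matching $|\CC_n^{\sss{(2)}}|=\Omega((\log n)^{1/\zeta})$ w.h.p.\ tiles $\Lambda_n$ into $\Theta(n/\log n)$ well-separated sub-boxes of volume $\Theta(\log n)$ and runs the Step 1 construction at scale $k\asymp\log n$ in each, so that each sub-box --- the events being effectively independent because $\alpha>1$ --- hosts an isolated component of the target size with probability $n^{-o(1)}$, and a second-moment computation produces at least one success. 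For the giant, $\E|\CC_n^{\sss{(1)}}|/n\to\theta$ because, away from $\partial\Lambda_n$, the events $\{v\in\CC_n^{\sss{(1)}}\}$ and $\{v\leftrightarrow\infty\}$ differ only when $v$ lies in a non-giant component of size above a fixed $M$, of density $\le\exp(-cM^{\zeta})\to0$; a concentration estimate for $|\CC_n^{\sss{(1)}}|$ (again from Step 2) then gives convergence in probability. Finally $(\star)$ follows from Step 2 at scale $n$: on $\{|\CC_n^{\sss{(1)}}|<\rho n\}$ with $\rho<\theta$, greedily bundling the small components exhibits a \emph{balanced disconnection} of $\Lambda_n$ into two vertex sets of size $\ge c_\rho n$ with no crossing edges; running the cover expansion on it yields a bounded-complexity cover of $\mathrm{cost}=\Omega(n^{\zeta})$, and the union bound of Step 2 gives $\Prob(|\CC_n^{\sss{(1)}}|<\rho n)\le\exp(-\Omega(n^{\zeta}))$. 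The delicate point throughout is controlling the entropy of these covers, which is precisely what the cover expansion is engineered to do; the sharpness of $(\star)$ for $\rho<\theta$ is the content of \cite{clusterII}.
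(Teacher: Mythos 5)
Your outline gets the high-level architecture right (lower bound by isolating a box, upper bound by a deterministic geometric construction feeding a union bound, consequences by tiling), but two of your steps contain genuine gaps. The most serious is Step 2, where you describe the cover expansion as a Peierls/contour argument: extract a geometric ``cover'' from $\CC(0)$, charge it a cost, and close a union bound by an entropy estimate $\exp(o(t))$ on covers of cost $t$. This is exactly the type of surface-tension argument the paper's discussion section says does \emph{not} suffice in the regime $\zeta_{\mathrm{GIRG}}>\max(2-\alpha,(d-1)/d)$: ball growth is superpolynomial and long edges delocalize finite clusters, so there is no useful contour-entropy bound, and your claim ``the number of covers of cost in $[t,2t]$ is only $\exp(o(t))$'' is exactly the step that would fail. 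The paper's cover expansion (Proposition~\ref{prop:cover-expansion-deterministic}) is a different object: given any $s$-expandable vertex set $\CL$ of size $\geq k$, it deterministically produces a region $\CK_n(\CL)\subseteq\Lambda_n$ of volume $\Omega(|\CL|)$ such that any vertex of mark $\geq\underline{w}$ located in $\CK_n(\CL)$ connects to $\CL$ \emph{independently} with probability $\geq p/2$. This is used inside a four-stage revealment argument (Section~\ref{sec:upper-2nd}): build a spatially dense backbone on marks in $[w_{\mathrm{hh}},2w_{\mathrm{hh}})$; pre-thin the higher-mark vertices into two independent Poisson processes of ``sure'' and ``unsure'' connectors; reveal everything except the sure-connectors; show each component of size $>k$ in the partially revealed graph must capture a sure-connector inside its cover and hence merge into the backbone. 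The union bound is over the at most $|\CV_{n,3}|/k$ components of the partially revealed graph, so no counting of contours appears anywhere in the argument.

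Your Step 4 also proves the wrong direction of $(\star)$. In this paper $(\star)$ is the \emph{lower} bound $\Prob(|\CC_n^{\sss{(1)}}|<\rho n)\geq\exp(-(A/\rho)n^{\zeta_\star})$ for all $\rho>0$ (Theorem~\ref{prop:lower-dev-giant}), proven in Section~\ref{sec:lower-tail} by covering $\Lambda_n$ with $O(1/\rho)$ balls of volume $n\rho/2$, isolating each ball via the optimally-suppressed mark profile $\CM_{\gamma_\star}$, and multiplying the $O(1/\rho)$ isolation probabilities with an FKG inequality for decreasing events. What you construct (a balanced disconnection plus cover expansion, concluding $\Prob(|\CC_n^{\sss{(1)}}|<\rho n)\leq\exp(-\Omega(n^\zeta))$) is an \emph{upper} bound on the same probability, which the theorem explicitly delegates to~\cite{clusterII}. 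Two smaller issues: the paper's lower-bound isolation event does not use a ``moat'' that must be ``shattered into small components''---it demands only that $\CV$ lies below a single mark-surface $\CM_{\gamma_\star}$ around $\partial\CB_{\mathrm{in}}$ and that no edge crosses $\partial\CB_{\mathrm{in}}$ between vertices below it, combined with a positive-probability linear-sized component inside the ball (Proposition~\ref{proposition:existence-large}); and in your Step 3 tiling for the lower bound on $|\CC_n^{\sss{(2)}}|$, sub-boxes of volume $\Theta(\log n)$ are too small since $\zeta<1$ forces $(\log n)^{1/\zeta}\gg\log n$---the paper (Section~\ref{sec:lower-second-csd}) uses $n^{1-\vartheta}$ boxes of volume $n^\vartheta$ with $\vartheta\min(\alpha,\tau-1)>1$, closed by a first-moment rather than second-moment estimate.
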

Theorem~\ref{thm:informal} exemplifies that sufficiently many high-degree vertices ($\tau\in(2,3)$) can change the surface-tension driven behavior of the cluster-size decay compared to~\eqref{eq:intro-basic}.
            It has been folklore in the community that the models CSFP, GIRG, HRG in their robust phase $\tau\in(2,3)$ qualitatively behave like their `non-spatial' analogues, namely rank-one inhomogeneous random graphs such as the Chung--Lu or Norros--Reittu model \cite{ChungLu02.1, NorRei06}. This is true with respect to graph distances, first-passage percolation, and the metastable density of the contact process \cite{bringmann2016average, KomLod20, linker2021contact}. 
            In contrast, the underlying geometry affects cluster-size decay, as  $\zeta_{\sss{\mathrm{GIRG}}}\in(\tfrac{d-1}{d},1)$ depends on the long-range parameter $\alpha$ and  dimension $d$;             
            while the distribution of non-giant components in non-spatial models decays exponentially, i.e., $\zeta=1$.  

Instead of treating CSFP, GIRG, and HRG only, we work with a general model that we call \emph{kernel-based spatial random graph} (KSRG), which is a hidden-variable model that incorporates the three models of Theorem~\ref{thm:informal}, and also includes other models: \emph{long-range percolation} (LRP) \cite{schulman_1983}, the (soft) \emph{Poisson--Boolean model}  \cite{gouere2008subcritical, hall1985continuum} (SPBM), the \emph{age- and weight-dependent random connection models} (ARCM) \cite{gracar2019age, GraHeyMonMor19}, and the \emph{scale-free Gilbert graph} \cite{hirsch2017gilbertgraph}. The KSRG model allows for interpolation between these models, which gives rise to a rich phase diagram for the exponent $\zeta$. We obtain partial proofs of \eqref{eq:intro-basic} for these other models and for parameter settings complementary to Theorem~\ref{thm:informal} with $\zeta_{\sss{\mathrm{GIRG}}}\le\max(2-\alpha, (d-1)/d)$. The techniques we develop here form the main technical tools for proving \eqref{eq:intro-basic} for these models for other values $\zeta>\tfrac{d-1}d$ in~\cite{clusterII,clusterIII}, constructing the backbone in \cite{clusterII} with renormalization techniques,  and using combinatorial methods in~\cite{clusterIII}.

\smallskip

\noindent\emph{New methodology.} 
The setting in Theorem~\ref{thm:informal} presents the greatest challenge when it comes to controlling  the size of finite or non-largest clusters in KSRGs.  
In SPBM and ARCM  high-mark vertices tend to be connected by an edge to vertices of lower mark only, 
while in CSFP, GIRG, HRG high-mark vertices tend to have edges to even higher-mark vertices. So, if a partially-revealed finite cluster contains some `fairly' high-mark vertices then the probability of the cluster being isolated is small. However, a finite cluster may be present on vertices of only low marks and be spatially spread out as well, i.e., we cannot guarantee a typical mark distribution. To still obtain the stretched-exponential decay with exponent $\zeta_{\sss{\mathrm{GIRG}}}$, we need to show that any partially-revealed finite cluster has many `backbone' vertices relatively close, where `relatively close' depends on the particular model in question. In CSFP, GIRG, and HRG, sufficiently many backbone vertices need to be `essentially' $O(1)$ distance  away from the set $\CC$ for all partially-revealed clusters $\CC$. For atypically dense clusters, we cannot guarantee the 
$O(1)$-distance bound. In LRP, SPBM, and ARCM, the much weaker distance-estimate $O(k^{1/d})$ suffices to obtain \eqref{eq:intro-basic}~\cite{clusterII}.

\phantomsection
\label{sec:new-method-cover}{\emph{The cover expansion}} is our main novel methodology that overcomes this problem. 
The cover expansion algorithm takes as input a cluster $\CC$ in a partially revealed graph. Making use of `dense areas' of vertices in $\CC$, it allocates a sufficiently large spatial area $\CK(\CC)$ to $\CC$, with the property that backbone vertices located in $\CK(\CC)$ connect by an edge to the set of vertices $\CC$ with constant probability, regardless of the mark distribution in $\CC$. As a result, any partially-revealed cluster $\CC$ stays isolated with probability $\exp(-\Omega(k^{\zeta_\mathrm{GIRG}}))$.
The cover-expansion algorithm is robustly applicable and adaptable to other spatial models. 

For all supercritical models in the KSRG class, we unfold the general relation between the cluster-size decay and the size of the second-largest component. This is an elaborate truncation and sequential boxing argument, and is our main tool in proving upper bounds for \eqref{eq:intro-basic} for other KSRG models in \cite{clusterII, clusterIII}. The present paper  proves lower bounds on $\Prob^\sss{0}(k<|\CC(0)|<\infty)$ and $|\CC_n^\sss{(2)}|$ for all supercritical KSRG models up to the the existence of a linear-sized component in a typical box, which is generally not known for supercritical KSRGs as SPBM and ARCM.
These lower bounds give the formula for $\zeta$ for all KSRGs at once as the solution of a variational problem that describes the most likely way that a box is isolated from its complement.
Before the explanation of this variational problem, we give the definition of the general model encompassing the above  inhomogeneous percolation models.

\begin{definition}[Kernel-based spatial random graphs (KSRG)]\label{def:ksrg} Fix a dimension $d\ge 1$. Let the vertex set $V$ be either $\Z^d$ or a homogeneous Poisson point process (PPP) on $\R^d$. Given $V$, we equip each vertex $u\in V$ with an independent positive mark following distribution $F_W$. 
Let $\kappa: \R_+^2 \to \R_+$ be a symmetric function, called the kernel function. Let $\varrho: \R_+\to [0,1]$ be a non-decreasing function, called the profile function, let $\beta>0$ be the edge-density parameter, and let $p\in(0,1]$ be the edge-percolation parameter. 
Conditionally on the marked vertex set $\CV := \{(x_u, w_u)\}_{u\in V}\subset \R^d\times \R_+$, 
each pair $\{u,v\}$ is independently present in the edge-set $\CE$ with probability
\begin{equation}\label{eq:connection-prob-gen-intro}
\mathrm{p}(u,v):=\Prob\big(u\text{ connected by an edge to }v \mid \CV\,\big) = p\cdot \varrho\Big(\beta\cdot\frac{\kappa(w_u, w_v)}{\|x_u-x_v\|^d}\Big).
\end{equation}
We denote the obtained infinite graph by $\CG=(\CV,\CE)$. We write $\Lambda_n:=[-n^{1/d}/2, n^{1/d}/2]^d$ for a box of volume $n$ centered at the origin, and denote by $\CG_n=(\CV_n,\CE_n)$ the graph induced by vertices with spatial location in $\Lambda_n$. 
We write $\CC_n^{\sss{(i)}}$ for the $i$th largest component of $\CG_n$, and $\CC_n(0)$  for the component containing a vertex at the origin in $\CG_n$, and $\CC(0)$ or $\CC_\infty(0)$ for the component containing this vertex in $\CG$. We write $\Prob^\sss{x}$ for the Palm-measure when the vertex set of a homogeneous Poisson point process is conditioned to contain a vertex at location $x\in\R^d$ with unknown mark.\end{definition}

Definition~\ref{def:ksrg} allows for general kernels, profile functions, and mark distributions, and generalizes the setup above Theorem~\ref{thm:informal}. 
In the rest of the paper we restrict to settings that are commonly used, and which cover the specific models in the introduction \cite{BriKeuLen19, DeiHofHoo13, gracar2019age, hall1985continuum, hirsch2017gilbertgraph, krioukov2010hyperbolic, schulman_1983}. For any  $a, b \in \R$ we write $a\wedge b$ for $\min(a,b)$, and $a\vee b$ for $\max(a,b)$.
 \begin{assumption}\label{assumption:main}
The mark distribution $F_W$ is either constant, i.e., $W_v\equiv 1$ for all $v$, or follows a Pareto distribution with parameter $\tau>2$, i.e., 
\begin{equation}\label{eq:power-law}
1-F_W(w):=\Prob\big(W_v\ge w\big)=w^{-(\tau-1)}, \qquad w\ge 1.
\end{equation}
The profile function $\varrho$ is either threshold or polynomial: for a constant $\alpha>1$, $\varrho$ is either
\begin{equation}\label{eq:profile}
\varrho_{\alpha}(s):=(1\wedge s)^{\alpha} \qquad \text{or }\qquad\varrho_{\mathrm{thr}}(s):=\ind{s\ge1}.
\end{equation}
We assume that the kernel $\kappa$ is one of the following for some parameter $\sigma\ge 0$:
\begin{equation}\label{eq:kernels}
\kappa_\sigma(w_1, w_2)=(w_1\vee w_2)(w_1\wedge w_2)^\sigma, \quad \text{or}\quad\kappa_\mathrm{sum}(w_1, w_2)=\big(w_1^{1/d}+w_2^{1/d}\big)^d.
\end{equation}
When the vertex set is a homogeneous Poisson point process, w.l.o.g.\ we assume   unit intensity. 
When the vertex set is $\Z^d$, we assume that $p\wedge\beta<1$ so that the graph is not connected a.s.
\end{assumption}
When $W_v\equiv 1$ for all $v\in V$ we say 
that $\tau=\infty$; when $\varrho=\varrho_\mathrm{thres}$ we say that $\alpha=\infty$. As $\kappa_0\le\kappa_\mathrm{sum}\le 2^d\kappa_0$, the qualitative behavior of models with $\kappa_0$ and $\kappa_\mathrm{sum}$ is the same. Therefore, when $\kappa=\kappa_{\mathrm{sum}}$ we say that $\sigma=0$.
Assumption \ref{assumption:main} ensures that the model is parametrized so that the expected degree of a vertex is proportional to its mark iff 
$\tau> \sigma+1$~\cite{luchtrathThesis2022}. 
The restrictions $\tau>2$ and $\alpha>1$ ensure that the graph is locally finite. Increasing $\tau$ and/or $\alpha$ leads to less inhomogeneity, that is, lighter-tailed degrees and fewer long edges, respectively. The parameter $\sigma$ allows us to continuously interpolate between well-known models that are special cases. Therefore, we call $\kappa_\sigma$ the \emph{interpolation kernel}. Independently of our work, $\kappa_\sigma$ appeared recently in \cite{luchtrathThesis2022} and was used in \cite{maitraClustering2022}.
This kernel generalizes commonly used kernels in the literature: \emph{trivial}, \emph{strong},  
\emph{product} and \emph{preferential attachment} (PA) kernels, the last one mimicking the spatial preferential attachment model \cite{AieBonCooJanss08, JacMor15}. With $\tau>2$ as in~\eqref{eq:power-law}, 
\begin{equation}\label{eq:kernel-examples}
 \begin{aligned}
  \kappa_{\text{triv}}(x,y) & =1, &\kappa_{\mathrm{strong}}(x,y)&=x\vee y, 
   \\\kappa_{\mathrm{prod}}(x,y)&=xy,              & \kappa_{\mathrm{pa}}(x,y)&=(x\vee y)(x\wedge y)^{\tau-2}.&
 \end{aligned}
\end{equation}
These kernel parametrizations 
all ensure that the degree distribution decays as a power law with exponent $\tau$~\cite{GraHeyMonMor19}.  Any KSRG model with kernel $\kappa_{\mathrm{triv}}$ has the same connection probability as models with $\kappa_{0}$ and marks identical to $1$. Thus, in this case we set $\kappa=\kappa_0$ and $\tau:=\infty$.
A slightly more general version of $\kappa_\sigma$ is the following: let $\sigma_1\ge 0$ and $\sigma_2\in\R$, and define
\begin{equation}\kappa_{\sigma_1, \sigma_2}(x,y):=(x\vee y)^{\sigma_1}(x\wedge y)^{\sigma_2}.\label{eq:kernels-gen}
\end{equation} 
Contrary to $\kappa_\sigma$, the kernel $\kappa_{\sigma_1,\sigma_2}$  includes $\kappa_{\mathrm{weak}}(x,y):=(x\wedge y)^\tau$ by setting $\sigma_1=0, \sigma_2=\tau$. However, models with $\sigma_1=0$ can still be approximated with  $\kappa_{\sigma}$ \cite{jorritsmaThesis}. Moreover, any KSRG with kernel $\kappa_{\sigma_1, \sigma_2}$ and $\sigma_1>0$  can be re-parametrized to have $\sigma_1=1$ by  changing $\tau$ in \eqref{eq:power-law}. 

The parameter $\sigma$ can also be interpreted as an \emph{assortativity} parameter: in a natural coupling of these models using common edge-variables,  edges incident to at least one low-mark vertex are barely  affected by changing $\sigma$. However, edges between two high-mark vertices are created rapidly if $\sigma$ increases. In the next section we explain how the parameters affect the stretch exponent $\zeta$ of the cluster-size decay, inspired by the proof of the lower bound.

\subsection{Downward vertex-boundary and the phase diagram of \texorpdfstring{$\zeta$}{Z}}\label{sec:four-regimes}One possible way for the event $\{k<|\CC(0)|<\infty\}$ to occur is the following: in $\CG_K$, the induced subgraph in the box $\Lambda_K$ of volume $K=\Theta(k)$, the origin is in a (localized) component $\CC_{\mathrm{local}}(0)$ of size larger than $k$, 
and there are also no edges from $\CC_{\mathrm{local}}(0)$ to  $\Lambda_K^{\sss{\complement}}:=\R^d\setminus \Lambda_K$ in $\CG$. The probability that this event occurs is of the same order as the probability that there are no crossing edges  from inside $\Lambda_K$ to outside $\Lambda_K$, provided that we show that $\{|\CC_\mathrm{local}(0)|>k\}$ occurs with constant probability given this \emph{isolation event}. This event $\{\Lambda_K \not \sim \Lambda_K^{\sss{\complement}}\}$ is rare, and the likeliest way it occurs is when there are no `high-mark' vertices in $\Lambda_K$, no high-mark vertices close to $\Lambda_K$, and no crossing edges between lower-mark vertices. The threshold for being of high-mark must balance the expected number of high-mark vertices and that of crossing edges between lower-mark vertices so that they are both of order $\Theta(k^\zeta)$\footnote{On phase boundaries of $\zeta_\star$, polylogarithmic correction factors are required here and in~\eqref{zeta-1}, see Remark~\ref{remark:polylog}.}. 
The isolation event then occurs with probability $\exp(-\Theta(k^\zeta))$. By symmetry, it suffices to only count lower-mark vertices inside $\Lambda_K$ with \emph{downward edges} to $\Lambda_K^\sss{\complement}$: we say that the edge $\{u,v\}=\{(x_u, w_u), (x_v, w_v)\}$ is a `downward edge' from $u$ if $w_u\ge w_v$. We write $u\searrow\Lambda_K^\sss{\complement}$ if $u$ has a downward edge to a vertex in $\Lambda_K^\sss{\complement}$. 
In our proof we show that for all KSRGs 
\begin{equation}\label{zeta-1}
\log\Prob\big(\Lambda_K\not\sim \Lambda_K^\complement\big)=-\Omega\big(\E\big[ \big|\big\{u\in \Lambda_K: u\searrow\Lambda_K^\complement\big\}\big|\big]\big)=-\Omega(k^{\zeta_\star}),
\end{equation} where we define $\zeta_\star$ as
\begin{equation}\label{eq:zeta-star}
\zeta_\star:=\lim_{k\to\infty}\frac{\log \E\Big[ \big|\big\{u\in \Lambda_k: u\searrow\Lambda_k^\complement\big\}\big|\Big]}{\log k}.
\end{equation}
The absence of a mark restriction on the vertices $u$ in~\eqref{zeta-1} indicates that the expected number of high-mark vertices in $\Lambda_K$ is of smaller order than the total expected size of the downward vertex boundary. The restriction to downward edges (in place of just `edges') avoids counting upward edges to a few high-mark vertices outside $\Lambda_K$ that are not present on the isolation event. This restriction is necessary for kernel and profile pairs when ``high-low connections'' dominate the expectation in~\eqref{eq:zeta-long} below, but is unnecessary otherwise.

 In nearest-neighbor percolation on $\Z^d$ all edges are downward edges and short, giving the surface-tension exponent $\zeta_\star=(d-1)/d$. 
When the profile is long-range and/or $\kappa_\sigma$ is non-trivial, there are long edges, and we will show that $\zeta_\star=\max(\zeta_\mathrm{long},(d-1)/d)$, where
\begin{equation}\label{eq:zeta-long}
\zeta_\mathrm{long}:=\lim_{k\to\infty}\Bigg(0\vee\frac{\log \E\Big[ \big|\big\{u\in \Lambda_{k/2}: u\searrow\Lambda_k^\complement\big\}\big|\Big]}{\log k}\Bigg)
 \end{equation}
  describes the number of vertices incident to long downwards edges, that is, of length $\Omega(k^{1/d})$. We will never use $\zeta_\mathrm{long}$ when it equals $0$. The maximum with $0$ avoids unnecessary computations when the the second term is negative. 
  Both $\zeta_{\star}$ and $\zeta_\mathrm{long}$ are explicitly computable given the profile,  kernel, and  vertex-mark distribution, see Claim \ref{lemma:lower-vertex-boundary} below.
 We now give their potential values based on  back-of-the-envelope calculations for KSRGs satisfying Assumption~\ref{assumption:main}.
 We distinguish four types of connections in the downward vertex boundary, and call the type producing the largest contribution to \eqref{eq:zeta-star} \emph{dominant}. \smallskip

\noindent\emph{Nearest-neighbor edges} are dominant if the main contribution to \eqref{eq:zeta-star} is coming from edges of constant length: there are roughly $\Theta(k^{(d-1)/d})$ vertices incident to such edges in $\Lambda_k$, giving the `surface-tension' exponent 
\begin{equation}\label{eq:zeta-nn}
    \zeta_\mathrm{short}:=(d-1)/d.
\end{equation}
Next, we count vertices with edges of length $\Theta(k^{1/d})$ crossing the boundary of $\Lambda_k$, and thus also contributing to $\zeta_{\mathrm{long}}$ in \eqref{eq:zeta-long}.
\smallskip 

\noindent\emph{Low-low edges} are dominant if the main contribution to \eqref{eq:zeta-star} is coming from constant (low-mark) vertices in $\Lambda_{k/2}$ connected to low-mark vertices $\Lambda_k^\sss{\complement}$. The expected number of such connected pairs is $\Theta(k \cdot k \cdot k^{-\alpha})$. Abbreviating `\emph{low}-mark to \emph{low}-mark' by ll, we obtain
\begin{equation}\label{eq:zeta-ll}
\zeta_{\mathrm{ll}}:=2-\alpha.
\end{equation}  
Models with dominantly low-low type connectivity behave similar to long-range percolation.\smallskip 

The remaining connectivity types describe `high-mark' vertices in $\Lambda_{k/2}$ incident to long-edges.  Model-dependently, we call a vertex high-mark if its mark is at least $k^{\gamma_{\mathrm{high}}}$, where 
\begin{equation}\label{eq:gamma-long}
\gamma_{\mathrm{high}}:=\min\Big\{\gamma\ge0: \liminf_{k\to\infty}\E^\sss{0}\big[ |\{ \mbox{edges between }0\mbox{ and }\Lambda_k^\complement\}| \,\big|\, (0, k^{\gamma})\in\CV\big]>0\Big\}.
\end{equation}  
Then, a constant proportion of vertices of mark at least $k^{\gamma_{\mathrm{high}}}$ inside $\Lambda_{k/2}$ contributes to the vertex boundary. By the Pareto mark-distribution in~\eqref{eq:power-law}, there are $\Theta(k^{1-\gamma_{\mathrm{high}}(\tau-1)})$ many high-mark vertices inside $\Lambda_{k/2}$. 
The values $\tau, \sigma,\alpha$ in~\eqref{eq:power-law}--\eqref{eq:kernels} determine the value of $\gamma_{\mathrm{high}}$.
\smallskip 

\noindent\emph{High-low edges} are dominant if a high-mark vertex in $\Lambda_{k/2}$ is typically connected to low (constant) mark vertices outside $\Lambda_k$. There are $\Theta(k)$ constant-mark vertices at distance $\Theta(k^{1/d})$. Using the connection probability \eqref{eq:connection-prob-gen-intro} with $\kappa_\sigma$ or $\kappa_\mathrm{sum}$ from~\eqref{eq:kernels}, for $\gamma \ge 0$, the expected number of edges between vertex $(0, k^\gamma)$ and constant-mark vertices outside $\Lambda_k$ is roughly $k (1\wedge (k^\gamma / k))^\alpha$. As required in \eqref{eq:gamma-long}, this expression is of constant order when
\begin{equation}\label{eq:gamma-lh}
\gamma=\gamma_\mathrm{hl}:=1-1/\alpha, \quad\text{and}\quad     \zeta_\mathrm{hl}:=1-\gamma_\mathrm{hl}(\tau-1)=(\tau-1)/\alpha - (\tau-2).
\end{equation}
High-low connectivity is dominant in (regions of parameters of) models with small $\sigma$,  for example in the age-dependent random connection model and the soft Poisson--Boolean model. Since the value of $\sigma$ barely affects the presence of edges incident to at least one constant-mark vertex, $\zeta_\mathrm{hl}$ does not depend on $\sigma$, as opposed to the next type.
\smallskip 

\noindent\emph{High-high edges} are dominant if a high-mark vertex in $\Lambda_{k/2}$ is typically connected to another high-mark vertex outside $\Lambda_k$.   
There are $\Theta(k^{1-\gamma(\tau-1)})$ vertices of mark $\Omega(k^\gamma)$ at distance $\Theta(k^{1/d})$ from $0$. Using the connection probability \eqref{eq:connection-prob-gen-intro}, the expected number of edges between $(0, k^\gamma)$ and these vertices is roughly $k^{1-\gamma (\tau-1)} (1\wedge (k^{\gamma(\sigma+1)}/ k))^\alpha$. This expression tends to zero for all $\gamma\ge 0$ when $\tau>\sigma+2$, but satisfies \eqref{eq:gamma-long} when $\tau\le \sigma+2$ and 
\begin{equation}\label{eq:gamma-hh}
\gamma=\gamma_\mathrm{hh}:=
    \begin{dcases}
    \frac{1-1/\alpha}{\sigma+1-(\tau-1)/\alpha},&\text{if }\tau\le \sigma+2\text{ and }\alpha<\infty,\\
    \frac{1}{\sigma+1},&\text{if }\tau>\sigma+2\text{ or }\alpha=\infty,
    \end{dcases}
\end{equation}
which in turn gives 
\begin{equation}\label{eq:zeta-hh}
    \zeta_\mathrm{hh}:=1-\gamma_\mathrm{hh}(\tau-1)=
    \begin{dcases}
        \frac{\sigma+2-\tau}{\sigma+1-(\tau-1)/\alpha},&\text{if }\tau\le \sigma+2\text{ and }\alpha<\infty,\\ 
        \frac{\sigma+2-\tau}{\sigma+1},&\text{if }\tau>\sigma+2\text{ or }\alpha=\infty.
    \end{dcases}
\end{equation}
When $\tau>\sigma+2$, $\zeta_\mathrm{hh}$ is negative and some other connectivity type is dominant. The definition of $\gamma_\mathrm{hh}$ when $\tau>\sigma+2$ is purely technical, giving continuity and monotonicity in the parameters. 
The high-high type connectivity is the only type that depends on $\sigma$, and is dominant (for some parameters) in models with large $\sigma$: the product-kernel models in Theorem~\ref{thm:informal} have $\sigma=1$, and $\zeta_{\sss{\mathrm{GIRG}}}=\zeta_{\mathrm{hh}}$ when $\tau<3$. 
The next claim  shows that these are the only connectivity types. The proof follows directly from Lemma~\ref{lemma:lower-vertex-boundary2} below.
\begin{claim}[Dominant connections]\label{lemma:lower-vertex-boundary}
Consider a KSRG model satisfying Assumption \ref{assumption:main} with parameters $\alpha\in(1,\infty]$, $\tau\in(2,\infty]$, $\sigma\ge 0$, and $d\in\N$. 
With $\zeta_\star, \zeta_\mathrm{long}$ from \eqref{eq:zeta-star}, \eqref{eq:zeta-long},
\begin{equation} \label{eq:zeta-star-long-short}
\zeta_\star=\max(\zeta_\mathrm{long}, \zeta_\mathrm{short}) \quad \text{and}\quad \zeta_\mathrm{long}=\max(\zeta_{\mathrm{ll}},\zeta_{\mathrm{hl}}, \zeta_{\mathrm{hh}},0), 
\end{equation}
where for models with threshold profiles ($\alpha=\infty$) and/or lighter-tailed vertex-marks ($\tau=\infty$) one has to take the corresponding limit in the formulas \eqref{eq:zeta-ll}, \eqref{eq:gamma-lh}, and \eqref{eq:zeta-hh}.
\end{claim}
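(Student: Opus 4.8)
\emph{Proof plan.} Both \eqref{eq:zeta-star} and \eqref{eq:zeta-long} involve only the \emph{expectation} of the downward vertex boundary, so the statement is a first-moment computation; no concentration or second-moment input is needed. First I would record that, conditionally on the marked vertex set, edges are independent, so for a vertex $u$ at location $x$ with mark $w$ one has $\Prob(u\searrow\Lambda_k^\complement\mid\CV)\asymp 1\wedge\mu_k(x,w)$, where
\[
\mu_k(x,w):=\int_{\Lambda_k^\complement}\int_1^{w}p\,\varrho\big(\beta\kappa(w,w')/\|x-y\|^d\big)\,\rd F_W(w')\,\rd y
\]
is the expected number of downward neighbours of $u$ outside $\Lambda_k$, so that by the Mecke/Campbell formula (and its lattice analogue when $V=\Z^d$)
\[
\E\Big[\big|\{u\in\Lambda_r:u\searrow\Lambda_k^\complement\}\big|\Big]\asymp\int_{\Lambda_r}\!\int_1^\infty\!\big(1\wedge\mu_k(x,w)\big)\,\rd F_W(w)\,\rd x,\qquad r\in\{k,k/2\}.
\]
Throughout one may take $\kappa=\kappa_\sigma$, since $\kappa_0\le\kappa_{\mathrm{sum}}\le 2^d\kappa_0$ (Assumption~\ref{assumption:main}), and recover $\tau=\infty$ (constant marks) and $\alpha=\infty$ (threshold profile) as formal limits.

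\emph{Step 1: $\zeta_\star=\max(\zeta_\mathrm{long},\zeta_\mathrm{short})$.} For the lower bound, restricting the inner integral of $\mu_k$ to constant-mark vertices $y$ at distance $O(1)$ from $x\in\partial\Lambda_k$ (where $\kappa/\|x-y\|^d$, and hence $\varrho$, is bounded away from $0$ for both profiles) gives $\zeta_\star\ge(d-1)/d=\zeta_\mathrm{short}$, while $\Lambda_{k/2}\subseteq\Lambda_k$ gives $\zeta_\star\ge\zeta_\mathrm{long}$. For the upper bound I would split the crossing edge $\{x,y\}$ by its length $\ell=\|x-y\|$ into dyadic blocks $\ell\in[2^{j},2^{j+1})$, $j=0,\dots,O(\log k)$; a vertex of $\Lambda_k$ carrying such an edge lies within distance $2^{j+1}$ of $\partial\Lambda_k$, so the admissible region for $x$ has volume $\Theta(k^{(d-1)/d}2^{j})$ while $2^{j}\lesssim k^{1/d}$ and volume $\Theta(k)$ at the top scale $2^{j}\asymp k^{1/d}$. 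The base-$k$ exponent of the $j$-th block is piecewise-affine in $j$ (and in the logarithmic mark variables), hence maximised at an endpoint: $j=0$ gives length $\Theta(1)$ and the surface-tension exponent $\zeta_\mathrm{short}$, and the top block is governed by exactly the same optimization as $\zeta_\mathrm{long}$ (same $\Theta(k)$ volume factor, same collection of outside vertices at distance $\Theta(k^{1/d})$). Summing $\Theta(\log k)$ blocks costs only a polylogarithmic factor, invisible to \eqref{eq:zeta-star}.

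\emph{Step 2: $\zeta_\mathrm{long}=\max(\zeta_\mathrm{ll},\zeta_\mathrm{hl},\zeta_\mathrm{hh},0)$.} For $u\in\Lambda_{k/2}$ a crossing downward edge has length $\Omega(k^{1/d})$, and since $\alpha>1$ the integral $\int_{\Omega(k^{1/d})}^{\infty}\ell^{d-1}\varrho(\beta\kappa/\ell^d)\,\rd\ell$ is dominated by its lower endpoint, so the length is $\Theta(k^{1/d})$ and the outer volume factor is $\Theta(k)$. Writing $w=k^{\gamma_1}$, $w'=k^{\gamma_2}$ with $0\le\gamma_2\le\gamma_1$, there are $\asymp k^{1-(\tau-1)\gamma_1}$ vertices of mark $\asymp k^{\gamma_1}$ in $\Lambda_{k/2}$, and $\mu_k(x,k^{\gamma_1})$ has base-$k$ exponent
\[
h(\gamma_1):=\sup_{0\le\gamma_2\le\gamma_1}\Big\{1-(\tau-1)\gamma_2+\alpha\big((\gamma_1+\sigma\gamma_2-1)\wedge 0\big)\Big\},
\]
where at $\alpha=\infty$ the term $\alpha(\,\cdot\wedge 0)$ restricts the support to $\{\gamma_1+\sigma\gamma_2\ge 1\}$. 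Hence $\zeta_\mathrm{long}=0\vee\sup_{\gamma_1\ge 0}\big\{1-(\tau-1)\gamma_1+\min(0,h(\gamma_1))\big\}$. Carrying out the inner supremum is a short case split on the sign of $\alpha\sigma-(\tau-1)$ and on whether the profile saturates; the result is piecewise-affine in $\gamma_1$, and since the whole expression is eventually decreasing the outer supremum is attained at a breakpoint. The only breakpoints producing a nonnegative value are those with $h(\gamma_1)=0$, i.e.\ where the outward expected degree is $\Theta(1)$: $\gamma_1=0$ (constant marks, $\zeta_\mathrm{ll}=2-\alpha$, cf.\ \eqref{eq:zeta-ll}), $\gamma_1=\gamma_\mathrm{hl}=1-1/\alpha$ (high-to-low, $\zeta_\mathrm{hl}$, cf.\ \eqref{eq:gamma-lh}), and $\gamma_1=\gamma_\mathrm{hh}$ (high-to-high, $\zeta_\mathrm{hh}$, cf.\ \eqref{eq:gamma-hh}--\eqref{eq:zeta-hh}); all remaining breakpoints and endpoints give exponents $\le 0$. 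When $\tau>\sigma+2$ no high-to-high configuration has outward expected degree $\Theta(1)$, the value $\zeta_\mathrm{hh}<0$ from \eqref{eq:gamma-hh}--\eqref{eq:zeta-hh} is a convention that is absorbed into the $\vee\,0$, and one checks $\zeta_\mathrm{hl}$ is likewise $\le\max(\zeta_\mathrm{ll},0)$ there; the limits $\tau=\infty$ and $\alpha=\infty$ follow by substituting the stated limits into \eqref{eq:zeta-ll}, \eqref{eq:gamma-lh}, \eqref{eq:zeta-hh}. Combining with Step~1 yields \eqref{eq:zeta-star-long-short}.

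\emph{Main obstacle.} The substantive work is not any single integral but, first, handling the truncation $1\wedge\mu_k$ correctly — showing the optimum sits exactly at the ``kink'' mark-scales $\gamma_\mathrm{hl},\gamma_\mathrm{hh}$ where the outward expected degree is $\Theta(1)$, with heavier marks (suppressed by the Pareto tail) and lighter marks (smaller outward degree) both strictly worse in the binding direction — and second, the bookkeeping of the resulting optimization, where the dominant exponent changes among $(d-1)/d$, $2-\alpha$, $(\tau-1)/\alpha-(\tau-2)$ and the expression in \eqref{eq:zeta-hh} according to the signs of $d(\alpha-1)-1$, $\tau-1-\alpha$, $\alpha\sigma-(\tau-1)$ and $\tau-\sigma-2$, so that one must verify across all these regimes (and both kernels, both profiles) that no other breakpoint ever wins. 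This is precisely what the detailed first-moment estimate (Lemma~\ref{lemma:lower-vertex-boundary2} below) packages: the lower bound on $\zeta_\mathrm{long}$ is the easy direction (restrict the integral to a window around the relevant $\gamma_\mathrm{high}$), while the matching upper bound is the one requiring the full case analysis.
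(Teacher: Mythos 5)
Your proposal computes both exponents directly via a first-moment integral parametrised over dyadic edge-length scales and mark scales, and derives $\zeta_\star=\max(\zeta_\mathrm{long},\zeta_\mathrm{short})$ structurally by comparing the bottom and top blocks of the dyadic decomposition. The paper instead states that Claim~\ref{lemma:lower-vertex-boundary} follows from Lemma~\ref{lemma:lower-vertex-boundary2}, whose proof sketch (Appendix~B) reuses the suppressed mark-profile machinery built for the isolation lower bound: the downward vertex boundary in $\Lambda_k$ is dominated by the number of vertices \emph{above} a profile $\CM_{\gamma_\star}$ recentred on $\partial\Lambda_k$, plus the number of downward crossing edges emanating from vertices \emph{below} it, which delegates the case analysis to Lemmas~\ref{lemma:lower-vertices-above} and~\ref{lemma:lower-edges-below}. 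Both are legitimate first-moment computations; your direct parametrisation makes the identity $\zeta_\star=\max(\zeta_\mathrm{long},\zeta_\mathrm{short})$ transparent as a competition between edge-length scales, while the paper's route avoids redoing integral case splits already needed for the lower bound on cluster isolation.

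Two points need tightening. In Step~1 you infer ``maximised at an endpoint'' from ``piecewise-affine in $j$ and the log-mark variables,'' but that inference is not valid in general: a piecewise-affine function is only forced to peak at an endpoint if it is convex, and the $\min(0,\cdot)$ structure makes yours concave in the marks for fixed $j$. The argument is rescued by rescaling the log-mark variables by edge length: writing $\gamma_i=\beta d\,\tilde\gamma_i$ with $\beta:=j\log 2/\log k$, the factor $\beta d>0$ can be pulled through every $\min(0,\cdot)$, the exponent becomes $(d-1)/d + \beta\cdot C(\tilde\gamma_1,\tilde\gamma_2)$ with $C$ independent of $\beta$, and the constraint set $\{0\le\tilde\gamma_2\le\tilde\gamma_1\}$ is also $\beta$-independent; taking the sup over the rescaled marks then leaves a genuinely affine function of $\beta$, whose max on $[0,1/d]$ is at an endpoint. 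Without this observation the endpoint conclusion is a gap. Second, in Step~2 the characterisation of the candidate breakpoints as ``those with $h(\gamma_1)=0$'' does not cover $\gamma_1=0$: one has $h(0)=1-\alpha<0$ for $\alpha>1$, and the ll-exponent arises at the boundary of the domain, where $f(0)=1+\min(0,1-\alpha)=2-\alpha=\zeta_\mathrm{ll}$, not at a zero of $h$. Your list $\{0,\gamma_\mathrm{hl},\gamma_\mathrm{hh}\}$ of candidates is nevertheless the right one, so this is a mislabelling rather than a structural error; the case split (and the explicit handling of the convention when $\tau>\sigma+2$, and of the $\alpha=\infty$, $\tau=\infty$ limits) matches what the paper does.
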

We visualize the changes of the dominant type of $\zeta_\star$ as a function of the parameter space in Figure \ref{fig:phase-diagram-girg-arcm} for models using $\kappa_\mathrm{prod},\kappa_\mathrm{pa},\kappa_\mathrm{max}, \kappa_\mathrm{sum}$. For these kernels, at most one of the regimes ``high-low'' and ``high-high'' appears on the diagrams, see also Table~\ref{table:overview}. In Figure~\ref{fig:phase-diagram-alpha} we vary $\sigma$ and $\tau$ while keeping $\alpha$  and $d$ fixed.  
\smallskip 

\noindent\emph{A general conjecture.} The connection to the downward vertex boundary gives the method to prove lower bounds. However, upper bounds do not follow from this intuition, and the challenge there is to handle components that are delocalized in space. Relating back to~\eqref{metaquestion}, we state our conjecture for KSRGs in Definition~\ref{def:ksrg} in general.
\begin{conjecture}\label{conj:intro-version2} Consider a supercritical KSRG. Let $\zeta_\star$ be as in~\eqref{eq:zeta-star} and assume that the parameters are such that $\zeta_\star>0$. Then, 
\[
-\log \Prob^\sss{0}(k<|\CC(0)|<\infty)=k^{\zeta_\star\pm o(1)}, \quad  
\Prob\big((\log n)^{1/\zeta_\star - o(1)}\le |\CC_n^\sss{(2)}|\le (\log n)^{1/\zeta_\star + o(1)}\big)\to 1.
\]
Moreover, $-\log \Prob^\sss{0}(|\CC_n^\sss{(1)}|<\rho n)=n^{\zeta_\star\pm o(1)}$ for any $\rho<\Prob^\sss{0}(0\leftrightarrow\infty)$.
\end{conjecture}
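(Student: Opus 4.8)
The plan is to split Conjecture~\ref{conj:intro-version2} into (i) the cluster-size decay $-\log\Prob^{\sss{0}}(k<|\CC(0)|<\infty)=k^{\zeta_\star\pm o(1)}$, (ii) the two-sided logarithmic estimate for $|\CC_n^{\sss{(2)}}|$, and (iii) the law of large numbers together with the lower-tail bound for $|\CC_n^{\sss{(1)}}|$, and to treat lower and upper bounds by genuinely different tools. For the \emph{easy} (lower-bound-on-probability) side of (i) and (iii), I would build an explicit \emph{isolation event}: fix $K=\Theta(k)$, delete every vertex of mark $\ge k^{\gamma_{\mathrm{high}}}$ lying in $\Lambda_K$ or within distance $\Theta(k^{1/d})$ of it, and delete every crossing edge between the surviving low-mark vertices. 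By \eqref{zeta-1}, \eqref{eq:zeta-star} and Claim~\ref{lemma:lower-vertex-boundary} the number of deletions has expectation $\Theta(k^{\zeta_\star})$, and since the underlying point process and edge-indicators are independent, the event has probability $\exp(-\Theta(k^{\zeta_\star}))$; conditionally on it, $\Lambda_K$ hosts a self-contained supercritical KSRG, so — invoking the input (not known for SPBM/ARCM, hence the conditional character of the present paper's lower bounds) that a typical box contains a linear-sized component — it carries a component of size $>k$ with probability bounded away from $0$. Partitioning $\Lambda_n$ into $\Theta(1/\rho)$ blocks of volume $\rho n/2$ and isolating each from the others by the same device confines every component of $\CG_n$ to a single block, giving $-\log\Prob^{\sss{0}}(|\CC_n^{\sss{(1)}}|<\rho n)\le n^{\zeta_\star+o(1)}$ for all $\rho>0$.

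\textbf{Upper bound on the decay.} The hard side is $-\log\Prob^{\sss{0}}(k<|\CC(0)|<\infty)\ge k^{\zeta_\star-o(1)}$. I would explore $\CC(0)$ by breadth-first search and stop at the first time the revealed set $\CC$ has at least $k$ vertices. The difficulty is that $\CC$ may be delocalised over a huge region or atypically dense, so neither the surface of a box nor a fixed mark profile is available. This is precisely what the \emph{cover expansion} is designed for: fed the partially revealed $\CC$, it allocates a region $\CK(\CC)$, built around the densest sub-scales present in $\CC$, that is simultaneously large enough that its ``downward-boundary capacity'' is $\Omega(k^{\zeta_\star})$ and such that every not-yet-revealed vertex falling in $\CK(\CC)$ connects by an edge to $\CC$ with probability bounded below by a constant, \emph{uniformly} over the mark configuration on $\CC$. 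Conditioned on the exploration these potential connections are independent Bernoulli variables, so on $\{|\CC(0)|<\infty\}$ all of them fail, an event of probability $\exp(-\Omega(k^{\zeta_\star}))$; because the breadth-first order already records $\CC$, summing over stopping configurations costs no extra entropy that could overturn this, and the polylogarithmic corrections of Remark~\ref{remark:polylog} enter only on the phase boundaries where the dominant type of $\zeta_\star$ switches. The complementary upper tail $\Prob(|\CC_n^{\sss{(1)}}|<\rho n)\le\exp(-n^{\zeta_\star-o(1)})$ for $\rho<\Prob^{\sss{0}}(0\leftrightarrow\infty)$ requires the dual ``truncation and sequential boxing'' scheme and would be imported from~\cite{clusterII}.

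\textbf{Second-largest component and the LLN.} For $|\CC_n^{\sss{(2)}}|\le(\log n)^{1/\zeta_\star+o(1)}$ with high probability, take $k_n=(\log n)^{1/\zeta_\star+\eps}$ and union-bound over $v\in\CV_n$ the event that $v$ lies in a component of $\CG_n$ of size in $[k_n,\rho n]$; a finite-box version of the decay upper bound — again via the cover expansion, now also ruling out that the explored cluster merges with the giant — bounds each term by $\exp(-\Omega(k_n^{\zeta_\star}))=n^{-\omega(1)}$, while $\Prob^{\sss{0}}(0\leftrightarrow\infty)\,n-o(n)$ vertices belong to $\CC_n^{\sss{(1)}}$, so no component of intermediate size survives. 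The matching lower bound $|\CC_n^{\sss{(2)}}|\ge(\log n)^{1/\zeta_\star-o(1)}$ comes from tiling $\Lambda_n$ into $\Theta(n/K_n)$ boxes of volume $K_n=(\log n)^{1/\zeta_\star-\eps}$: in each, independently up to boundary overlaps, the isolation-plus-giant construction of the first paragraph succeeds with probability $\exp(-\Theta(K_n^{\zeta_\star}))=n^{-1+\Theta(\eps)}$, so the expected number of isolated boxes carrying a $\Theta(K_n)$-component diverges, and a second-moment (or Poisson-approximation) estimate produces one with high probability; it is distinct from $\CC_n^{\sss{(1)}}$ because $|\CC_n^{\sss{(1)}}|\gg K_n$. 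Finally $|\CC_n^{\sss{(1)}}|/n\to\Prob^{\sss{0}}(0\leftrightarrow\infty)$ in probability follows from uniqueness of the infinite component (a Burton--Keane argument) and local weak convergence of $\CG_n$ to $\CG$, which give $\E[|\CC_n^{\sss{(1)}}|]/n\to\Prob^{\sss{0}}(0\leftrightarrow\infty)$, upgraded to convergence in probability by a bounded-difference concentration bound since the decay estimate already excludes a second macroscopic component.

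\textbf{Main obstacle.} The crux is the upper bound on $\Prob^{\sss{0}}(k<|\CC(0)|<\infty)$ and, inside it, the construction and analysis of the cover expansion. One must show that for \emph{every} admissible — possibly adversarial, spatially spread-out, or locally dense — mark configuration on the explored cluster $\CC$, the region $\CK(\CC)$ can be chosen to meet three competing requirements at once: its downward-boundary capacity is $\Omega(k^{\zeta_\star})$; its spatial footprint is small enough that the allocation fits inside the already-revealed graph without double-counting across different clusters; and the connection probability from $\CK(\CC)$ to $\CC$ is uniformly positive. Reconciling these against the four dominant connectivity regimes of Claim~\ref{lemma:lower-vertex-boundary} — and making the argument degrade to exactly the right polylogarithmic corrections on the boundaries where the dominant type changes — is where essentially all the difficulty lies; once the decay estimate is available, the statements about $|\CC_n^{\sss{(2)}}|$ and $|\CC_n^{\sss{(1)}}|$ follow by comparatively soft percolation arguments.
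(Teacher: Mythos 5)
The statement you are proving is Conjecture~\ref{conj:intro-version2}; the paper does \emph{not} prove it, and explicitly says so. What the paper proves is a two-sided bound only for parameters with $\zeta_{\mathrm{hh}}>0$ (so $\tau<\sigma+2$), a Poisson vertex set, and $\tau\ge\sigma+1$; in that regime the upper bound matches the lower bound iff $\zeta_{\mathrm{hh}}$ is the unique maximiser in $\{\zeta_{\mathrm{ll}},\zeta_{\mathrm{hl}},\zeta_{\mathrm{hh}},(d-1)/d\}$. Treating your sketch as a review target, the overall decomposition (isolation event for the lower bound, cover expansion plus backbone for the upper bound, tiling/Paley--Zygmund for $|\CC_n^{\sss{(2)}}|$, local weak convergence for the LLN) does track the paper's architecture in the regime it covers, but it is not a proof of the conjecture, and the way you locate the ``main obstacle'' misstates what the cover expansion delivers.

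The central gap is this sentence of yours: you ask the cover expansion to give a region $\CK(\CC)$ whose ``downward-boundary capacity is $\Omega(k^{\zeta_\star})$, uniformly over mark configurations.'' That is not what the construction can give, and is not what the paper proves. In Proposition~\ref{prop:cover-expansion-deterministic} the set $\CK_n(\CC)$ has volume $\Omega(|\CC|)$, and the vertices that connect to $\CC$ with probability $\ge p/2$ are those with mark $\ge\underline w=2w_{\mathrm{hh}}(k)\asymp k^{\gamma_{\mathrm{hh}}}$. The expected number of such ``sure-connector'' vertices in $\CK_n(\CC)$ is therefore $\Theta(k\cdot k^{-\gamma_{\mathrm{hh}}(\tau-1)})=\Theta(k^{\zeta_{\mathrm{hh}}})$ --- the exponent is $\zeta_{\mathrm{hh}}$ by construction, not $\zeta_\star$. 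The two agree only when $\zeta_{\mathrm{hh}}$ is the dominant type. Whenever $\zeta_\star\in\{\zeta_{\mathrm{ll}},\zeta_{\mathrm{hl}},(d-1)/d\}$ strictly exceeds $\zeta_{\mathrm{hh}}$ (and certainly when $\zeta_{\mathrm{hh}}\le 0$, i.e.\ $\tau\ge\sigma+2$), the cover-expansion step as built here is too weak, and the backbone used in Lemma~\ref{lemma:upper-hh-bb} cannot even be built: its existence uses that high-mark vertices connect to each other across adjacent subboxes with probability $\ge 1/2$, which is a high-high phenomenon (see the role of $\tau<2+\sigma$ in \eqref{eq:gamma-upper-sigma}--\eqref{eq:upper-hh-bb-half}). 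A genuinely different backbone via renormalisation is needed there, and the paper delegates this to \cite{clusterII,clusterIII}. Your phrase ``polylogarithmic corrections enter only on phase boundaries'' hides the fact that away from the $\zeta_{\mathrm{hh}}$-dominated region the gap is polynomial in $k$, not polylogarithmic.

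Two further points. First, the expandability hypothesis of the cover expansion introduces an independent error source: Lemma~\ref{lemma:hh-expandable} controls $\Prob(\neg\CA_{\mathrm{exp}})$ by $\exp(-\Theta(k^{1/(\sigma+1-(\tau-1)/\alpha)}))$, and this exponent is $<\zeta_{\mathrm{hh}}$ exactly when $\tau<\sigma+1$, which is why Theorems~\ref{thm:subexponential-decay}(iii) and~\ref{thm:second-largest}(iii) give the worse exponent $1/(\sigma+1-(\tau-1)/\alpha)$ rather than $\zeta_\star$. Your sketch does not address this at all. Second, your BFS-stop-and-cover idea glosses over the small-to-large merging problem: after you reveal the connector vertices, components below size $k$ can merge into a new component of size $>k$ that is not the giant, and the naive union bound over stopping configurations does not rule this out. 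The paper's Step~3 (splitting $\CV_n[2w_{\mathrm{hh}},\infty)$ into two independent PPPs and pre-sampling the ``sure'' connections) is a nontrivial device precisely to avoid this, and it also explains why the proof is stated for Poisson vertex sets only. Finally, you are right that the lower bound is conditional on the existence of a linear-sized component after mark-truncation (Proposition~\ref{prop:condition-lower}), which the paper can verify only for $\zeta_{\mathrm{hh}}>0$ via Proposition~\ref{proposition:existence-large}; for SPBM and ARCM this is genuinely open, so the conjecture's lower bound is also not fully established by the methods here.
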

Proving this conjecture would achieve \eqref{metaquestion}: since $\zeta_\star=\max((d-1)/d, \zeta_{\mathrm{long}})$, high-degree vertices and long-range edges change the surface-tension behavior of the cluster-size decay only when the downward vertex boundary is dominated by vertices incident to long edges. This paper and~\cite{clusterII} study this region of the parameter space to prove the conjecture for KSRGs on Poisson-point processes satisfying Assumption~\ref{assumption:main} whenever  $\zeta_\star>(d-1)/d$ and $\tau\ge\sigma+1$. Here and in~\cite{clusterII, clusterIII}, we obtain partial results when $(d-1)/d\ge \max(\zeta_\mathrm{ll}, \zeta_\mathrm{hl}, \zeta_\mathrm{hh})$ and for KSRGs on $\Z^d$.
The next section presents the detailed results of this paper that prove the conjecture for the red regions in Figure~\ref{fig:phase-diagram-combined}, of which Theorem~\ref{thm:informal} is a special case. 

\section{Main results}\label{sec:main-results}
Recall $\zeta_\mathrm{ll}$, $\zeta_\mathrm{hl}$, $\zeta_\mathrm{hh}$, and $\zeta_\mathrm{short}$ from \eqref{eq:zeta-ll}, \eqref{eq:gamma-lh}, \eqref{eq:zeta-hh}, and \eqref{eq:zeta-nn}, and that $\zeta_\star=\max(\zeta_\mathrm{ll}, \zeta_\mathrm{hl}, \zeta_\mathrm{hh},\zeta_\mathrm{short})$ by Claim~\ref{lemma:lower-vertex-boundary}. Our following results assume parameters where high-high connections are present, i.e., $\zeta_\mathrm{hh}>0$.  This is equivalent to $\tau\in (2,2+\sigma)$, and includes $\kappa_\mathrm{prod}\equiv\kappa_{1}$ when $\tau\in(2,3)$, as in  Theorem~\ref{thm:informal}. 
 Whenever $\zeta_\mathrm{hh}>0$, the model is supercritical for all $p, \beta>0$ in \eqref{eq:connection-prob-gen-intro} and $\alpha>1$, i.e., there exists a unique  infinite component $\CC_\infty^\sss{(1)}$, see Proposition~\ref{proposition:existence-large} below.  We denote the number of dominant connectivity types by 
\begin{equation}\label{eq:m-star}
\mathfrak{m}_\star:=\ind{\zeta_\star=\zeta_\mathrm{ll}} + \ind{\zeta_\star=\zeta_\mathrm{hl}} +\ind{\zeta_\star=\zeta_\mathrm{hh}} +\ind{\zeta_\star=\zeta_\mathrm{short}}.
\end{equation} 

\begin{figure}[p]
\begin{subfigure}{406pt}
  \includegraphics[width=406pt]{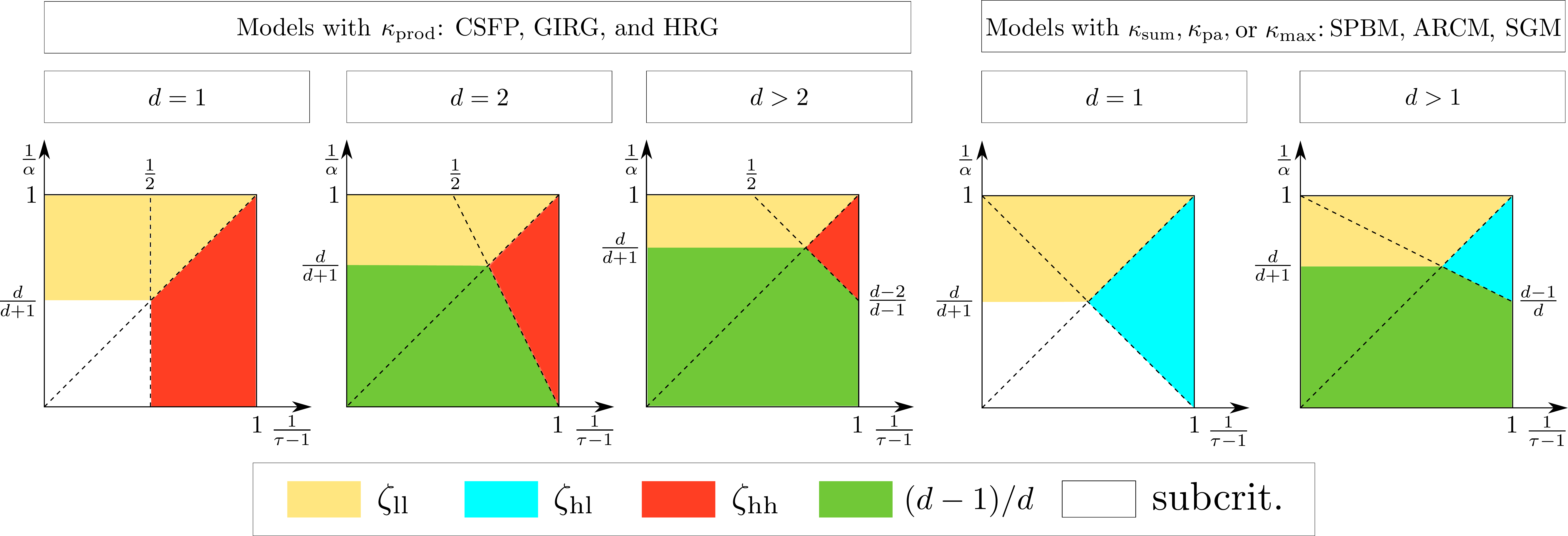}
 \caption{
  Phase-diagrams of $\zeta=\zeta(\tau,\alpha)$ for models with kernels $\kappa_\mathrm{prod}$, and $\kappa_\mathrm{sum}$, $\kappa_\mathrm{pa}$  or $\kappa_\mathrm{strong}$, plotted as a function of $1/(\tau-1)$ and $1/\alpha$.
  The $y$-axis (i.e., $1/(\tau-1)=0$) also describes the phase diagram of  (continuum) long-range percolation that has kernel $\kappa_\mathrm{triv}$, while the models on the $x$-axis ($1/\alpha=0$) coincide with models using a threshold profile function in \eqref{eq:profile}.  When $1/\alpha> 1$ or $1/(\tau-1)> 1$, then $\CG_\infty$ is connected and each vertex has infinite degree almost surely \cite{HeyHulJor17}. A white color within the square means that  the model is subcritical for each value $p,\beta$ in \eqref{eq:connection-prob-gen-intro} \cite{gracar2022finiteness}.\vspace{10pt} 
 }\label{fig:phase-diagram-girg-arcm}
 \end{subfigure}
 \begin{subfigure}{406pt}
  \begin{center}
 \includegraphics[width=406pt]{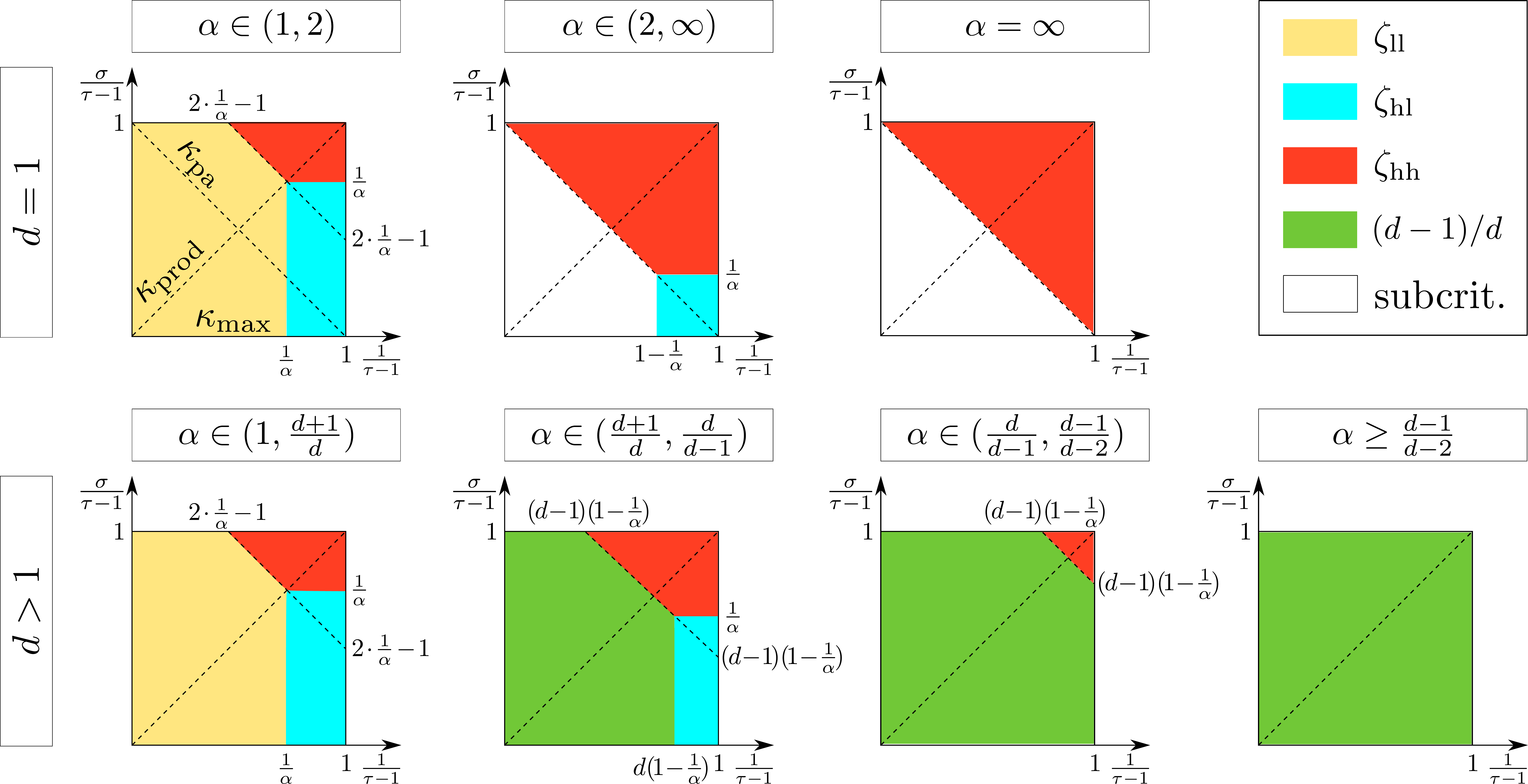}
 \end{center}
 \caption{
 Phase-diagrams of $\zeta=\zeta(\sigma,\tau)$ for fixed values of $\alpha$ in \eqref{eq:profile}, plotted as a function of $1/(\tau-1)$ on the $x$-axis and $\sigma/(\tau-1)$ on the $y$-axis.
  The identity line $y\!=\!x$   corresponds to models using kernel $\kappa_{\mathrm{prod}}\equiv\kappa_{1}$, the $x$-axis to models using $\kappa_{\mathrm{strong}}\!\equiv\!\kappa_{0}$ and the cross-diagonal $x\!+\!y\!=\!1$ to models using $\kappa_{\mathrm{pa}}\!\equiv\!\kappa_{\tau-2}$. The origin  captures models with $\kappa_{\mathrm{triv}}\!\equiv\!\kappa_{0}$.
  Observe that $\zeta_{\mathrm{hl}}$ (blue) is never dominant above the diagonal $y\ge x$ (equivalently, $\sigma\ge 1$), while $\zeta_{\mathrm{hh}}$ (red) is never dominant below the cross-diagonal $x\!+\!y\!=\!1$  (equivalently, $\sigma\!\le\! \tau\!-\!2$). In the quadrant $x\!+\!y\!\ge\! 1, y\!\le\! x$ all four exponents `compete' for dominance. 
  }\label{fig:phase-diagram-alpha}
 \end{subfigure}
\caption{Phase diagrams of the (conjectured) 
cluster-size decay for kernel-based spatial random graphs. Theorem~\ref{thm:subexponential-decay} proves the upper bound in the red regions, and the lower bounds above the $x+y\ge 1$ line on Figure \ref{fig:phase-diagram-alpha}, for all four colors simultaneously, with logarithmic correction terms on phase boundary lines.}
\label{fig:phase-diagram-combined}
\end{figure}

 \begin{table}[t]
 \caption{
  Models belonging to the KSRG framework, their vertex sets, kernels, profiles, and their  value $\zeta_\star$. \\Horizontal lines separate models with different kernels.}\label{table:overview}
\footnotesize
 \begin{tabular}{lcccc}
 \hline
  \textbf{Model}  & \textbf{$\bm{\CV}$} &
  \textbf{Kernel}                           & \textbf{Profile}                       & \textbf{$\zeta_\star$}                                                                                                                                 \\\hline
  Bond-percolation on $\Z^d$                         
  \cite{HammWels65}& $\mathbb{Z}^d$      & $\kappa_{\mathrm{triv}}$    & $\varrho_\mathrm{thr}$                 & $\zeta_\mathrm{short}$                                                                                                                                \\
  Random geometric graph                           
  \cite{Pen}
  &PPP                 &                                           & $\varrho_\mathrm{thr}$                 & $\zeta_\mathrm{short}$                                                                                                                   \\
  Long-range percolation \cite{schulman_1983}                           & $\mathbb{Z}^d$      &                                           & $\varrho_\alpha$                       & $\max(\zeta_\mathrm{ll}, \zeta_\mathrm{short})$                                       \\
  Continuum long-range percolation \cite{longrangeUnique1987}                          & PPP                 &                                           & $\varrho_\alpha$                       & $\max(\zeta_\mathrm{ll}, \zeta_\mathrm{short})$                                                      \\ \hline
  Scale-free percolation  \cite{DeiHofHoo13}                         & $\mathbb{Z}^d$      & $\kappa_{\mathrm{prod}}, \kappa_{1}$    & $\varrho_\alpha$                       & $\max(\zeta_\mathrm{hh}, \zeta_\mathrm{ll}, \zeta_\mathrm{short})$                                                                     \\
  Continuum scale-free percolation           \cite{DepWut18}               & PPP                 &                                           & $\varrho_\alpha$                       & $\max(\zeta_\mathrm{hh}, \zeta_\mathrm{ll}, \zeta_\mathrm{short})$                                          \\
  Geometric inhomogeneous random graph               \cite{BriKeuLen19}           & PPP                 &                                           & $\varrho_\alpha, \varrho_\mathrm{thr}$ & $\max(\zeta_\mathrm{hh}, \zeta_\mathrm{ll}, \zeta_\mathrm{short})$                    \\
  Hyperbolic random graph                           
  \cite{krioukov2010hyperbolic}&PPP      &           &                                            $\varrho_\mathrm{thr}$                 & $\zeta_\mathrm{hh}$                                                                  \\ \hline
  Age-dependent random connection model              \cite{gracar2019age}            & PPP                 & $\kappa_{\mathrm{pa}}, \kappa_{\tau-2}$ & $\varrho_\alpha, \varrho_\mathrm{thr}$ & $\max(\zeta_\mathrm{hl}, \zeta_\mathrm{ll}, \zeta_\mathrm{short})$           \\ \hline
  Poisson--Boolean model \cite{gouere2008subcritical}                           & PPP                 & $\kappa_\mathrm{sum}$        & $\varrho_\mathrm{thr}$                 & $\zeta_\mathrm{short}$                                                                   \\
  Soft Poisson--Boolean model \cite{GraHeyMonMor19}                           & PPP                 &        & $\varrho_\alpha$                 & $\max(\zeta_\mathrm{hl}, \zeta_\mathrm{ll}, \zeta_\mathrm{short})$                   \\ \hline
  Scale-free Gilbert graph \cite{hirsch2017gilbertgraph}                           & PPP                 & $\kappa_\mathrm{strong},\kappa_{1}$        & $\varrho_\mathrm{thr}$                 & $\zeta_\mathrm{short}$                                                                   \\
                 &                                    &                                           & $\varrho_\alpha$                       & $\max(\zeta_\mathrm{hl}, \zeta_\mathrm{ll}, \zeta_\mathrm{short})$                    \\ \hline
  Ultra-small scale-free geometric network           \cite{yukich_2016}                 & $\mathbb{Z}^d$      & $\kappa_\mathrm{weak},\kappa_{0,\tau}$        & $\varrho_\mathrm{thr}$                 & $\max(\zeta_\mathrm{hh}, \zeta_\mathrm{short})$                                                                                                          \\
  \hline \hline
  \textbf{Interpolating KSRG}                   & \textbf{PPP}        & $\kappa_{\sigma}$                       & $\varrho_\alpha, \varrho_\mathrm{thr}$ & $\max(\zeta_\mathrm{ll},\zeta_\mathrm{hl}, \zeta_\mathrm{hh}, \zeta_\mathrm{short})$ \\ \hline \hline
 \end{tabular}
\end{table}

\begin{theorem}[Cluster-size decay]\label{thm:subexponential-decay}
Consider a KSRG in dimension $d\ge 1$ satisfying Assumption \ref{assumption:main} with parameters such that $\zeta_\mathrm{hh}>0$, i.e., $\alpha\in(1,\infty]$, $\sigma> 0$, and $\tau\in(2, 2+\sigma)$.
 There exists a constant $A>0$ such that for all $k\ge1$ the following hold.
 \begin{itemize}
\setlength\itemsep{-0.2em}
  \item[(i)] For all $n\in (Ak, \infty]$,
        \begin{equation}
\Prob^\sss{0}\big(|\CC_n(0)|> k, \; 0\notin\CC_n^\sss{(1)} \big)\,\ge\,  \exp\big(-Ak^{\zeta_\star}(\log k)^{\mathfrak{m}_\star-1}\big).\label{eq:subexponential-main-unlower}
        \end{equation}
  \item[(ii)]   If additionally $\tau\ge \sigma+1$ and the vertex set is formed by a homogeneous Poisson point process, then for all   $n \in (k, \infty]$, 
        \begin{align}
         \Prob^\sss{0}\big(|\CC_n(0)|> k, \; 0\notin\CC_n^\sss{(1)}\big)
         \, \le\,
         \exp\big(-(1/A)k^{\zeta_\mathrm{hh}}\big),\label{eq:subexponential-main-upper}
        \end{align}
        \item[(iii)]  while if  $\tau<\sigma+1$ and the vertex set is formed by a homogeneous Poisson point process, then for all $n \in (k, \infty]$,
        \begin{align}
         \Prob^\sss{0}\big(|\CC_n(0)|> k, \; 0\notin\CC_n^\sss{(1)}\big)
         \, \le\,
         \exp\big(-(1/A)k^{1/(\sigma+1-(\tau-1)/\alpha)}\big).\label{eq:subexponential-main-upper-2}
        \end{align}
 \end{itemize}
\end{theorem}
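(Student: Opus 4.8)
\emph{Lower bound, part~(i).}
The plan is to make the isolation scenario behind \eqref{zeta-1} quantitative. Fix $K:=\lceil A_0k\rceil$ for a large constant $A_0$ and work in $\Lambda_K$. Reveal the marked vertex set $\CV$ and, guided by the heuristics \eqref{eq:zeta-ll}--\eqref{eq:zeta-hh} that Claim~\ref{lemma:lower-vertex-boundary} (via Lemma~\ref{lemma:lower-vertex-boundary2}) makes rigorous, define a ``depletion event'' $\CE_{\mathrm{dep}}$ asking that no vertex in $\Lambda_K$, and no vertex of $\R^d\setminus\Lambda_K$ at distance $r$ from $\partial\Lambda_K$, carries a mark above a threshold $m(r)$; choose $r\mapsto m(r)$ so that the expected number of forbidden vertices is $\Theta(k^{\zeta_\star}(\log k)^{\mathfrak{m}_\star-1})$, the logarithmic factors appearing on phase boundaries where $\mathfrak{m}_\star$ of the four connectivity types in \eqref{eq:m-star} are simultaneously dominant and one integrates across the common scale. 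Being a Poisson void probability (a product of Pareto-tail factors when $V=\Z^d$), $\Prob(\CE_{\mathrm{dep}})\ge\exp(-\Theta(k^{\zeta_\star}(\log k)^{\mathfrak{m}_\star-1}))$. On $\CE_{\mathrm{dep}}$ every crossing pair $u\in\Lambda_K$, $v\in\Lambda_K^{\complement}$ has $\mathrm p(u,v)\le 1-c$ and the same estimate gives $\sum_{u,v}\mathrm p(u,v)=O(k^{\zeta_\star}(\log k)^{\mathfrak{m}_\star-1})$, so the conditionally independent edges yield $\Prob(\Lambda_K\not\sim\Lambda_K^{\complement}\mid\CE_{\mathrm{dep}})\ge\exp(-\Theta(k^{\zeta_\star}(\log k)^{\mathfrak{m}_\star-1}))$. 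On the isolation event the interior of $\Lambda_K$ decouples from its exterior, and the subgraph induced on the low-mark vertices of $\Lambda_K$ is still supercritical (the model being robust under deletion of high-mark vertices when $\tau\in(2,2+\sigma)$, cf.\ Proposition~\ref{proposition:existence-large}), hence with probability bounded away from $0$ it contains a component of size $\Theta(K)>k$ through the origin. Taking $A$ large enough that $\Lambda_n\setminus\Lambda_{2K}$ simultaneously carries a component of size exceeding $|\CV\cap\Lambda_K|$ forces $0\notin\CC_n^{\sss{(1)}}$ (trivially so when $n=\infty$); multiplying the three conditionally independent contributions gives \eqref{eq:subexponential-main-unlower}.

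\emph{Upper bounds, parts~(ii)--(iii): the cover expansion.}
Reveal the vertex set $\CV_n$, then explore $\CC:=\CC_n(0)$ by breadth-first search, revealing all edges incident to $\CC$; on the event under consideration $|\CC|>k$ and all boundary edges of $\CC$ are \emph{absent}, which is the source of the cost. The algorithm of Section~\ref{sec:new-method-cover} applied to the revealed set $\CC$ returns a deterministic region $\CK(\CC)$ with the key property that, \emph{whatever the marks inside $\CC$}, every ``backbone-type'' vertex $v'\in\CK(\CC)\setminus\CC$ satisfies $\sum_{u\in\CC}\mathrm p(u,v')\ge c_0>0$. Now reveal the edges internal to $\CV_n\setminus\CC$ --- a set of edge-variables disjoint from those already seen, hence unbiased: on $\{0\notin\CC_n^{\sss{(1)}}\}$ the largest component $\CC_n^{\sss{(1)}}$ lives there and, by supercriticality and the usual renormalisation, is spread out enough to contain $\Omega(k^{\zeta_\mathrm{hh}})$ backbone-type vertices inside $\CK(\CC)$. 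The boundary edges from $\CC$ to those vertices were revealed absent, so their factors multiply to at most $\exp(-\sum\mathrm p)\le\exp(-\Omega(k^{\zeta_\mathrm{hh}}))$; absorbing the (equally rare) event that $\CV_n$ is spatially atypical yields \eqref{eq:subexponential-main-upper}. When $\tau<\sigma+1$ the mark of a vertex no longer controls its degree linearly, so a size-$k$ cluster can sit on markedly smaller marks and the cover expansion can only certify $\Omega(k^{1/(\sigma+1-(\tau-1)/\alpha)})$ backbone vertices --- note that $\sigma+1-(\tau-1)/\alpha$ is precisely the denominator of $\gamma_\mathrm{hh}$ in \eqref{eq:gamma-hh} --- which gives the weaker exponent of \eqref{eq:subexponential-main-upper-2}.

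\emph{The main obstacle.}
The heart of the argument is that the explored cluster $\CC$ need not be typical: it may be delocalised over a region of volume far exceeding $|\CC|$, or locally dense because it sits on atypically high marks. A naive surface count around the hull of $\CC$ fails in the first case (a spatially sparse region may simply lack vertices for $\CC$ to reach), a naive mark count fails in the second. The cover-expansion algorithm must therefore detect the ``dense areas'' of $\CC$ --- maximal sub-regions into which $\CC$ has absorbed an anomalous number of vertices, which necessarily forces either local density or moderately high marks --- and allocate each of them extra cover area, so that in all cases $\CK(\CC)$ still carries $\Omega(k^{\zeta_\mathrm{hh}})$ backbone vertices each connecting to $\CC$ with constant probability, while remaining a deterministic function of the already-revealed data so that the edges used to locate $\CC_n^{\sss{(1)}}$ stay independent of it. Reconciling these three requirements uniformly over all cluster shapes is the core difficulty. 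A secondary issue is the reduction to this essentially localised regime: when $|\CC_n(0)|$ is comparable to $n$ the cover need not fit in $\Lambda_n$, and this case --- and more generally the possibility that $\CC$ escapes to far scales --- is removed by an a priori truncation and sequential-boxing argument controlling the part of $\CC$ outside a volume-$\Theta(k)$ box around the origin, after which the cover expansion applies to the localised remainder.
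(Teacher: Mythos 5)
Your lower-bound sketch matches the paper's approach: the ``depletion event'' is the paper's optimally-suppressed mark profile $\{\CV\le\CM_{\gamma_\star}\}$ of Section~\ref{sec:lower-strat}, the crossing-edge sum is controlled via Lemma~\ref{lemma:lower-edges-below}, and the supercritical interior/exterior components come from Proposition~\ref{proposition:existence-large} and the event $\CA_\mathrm{components}^\sss{(k)}$. The one place your sketch glosses over a genuine subtlety is the claim that the three contributions are ``conditionally independent'': they are not. Conditioning on $\{|\CC_{\mathrm{in}}(0)|>k\}$ pushes up the density of moderately-high-mark vertices near $\partial\CB_{\mathrm{in}}$, which is negatively correlated with the no-crossing-edge event. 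The paper interposes the regularity event $\CA_{\mathrm{regular}}^{\sss{(k)}}(\eta)$ exactly to sever this dependence; see \eqref{eq:cond-prob-lower-1} and Lemma~\ref{prop:subexponential-lower}. This is not a cosmetic point, but your overall route is the same.

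The upper-bound proposal has a real gap in the order of exposure. You reveal $\CV_n$, explore $\CC=\CC_n(0)$ (so in particular reveal that \emph{every} boundary edge of $\CC$ is absent), then build the cover expansion $\CK(\CC)$, and then want to charge the absent boundary edges to the $\Omega(k^{\zeta_\mathrm{hh}})$ backbone-type vertices in $\CK(\CC)\setminus\CC$ at rate $\ge p/2$ each. But $\CK(\CC)$ is a function of $\CC$, which is a function of exactly the edge variables you are trying to charge; which vertices land in $\CK(\CC)\setminus\CC$ is determined by which boundary edges came up absent. The step ``factors multiply to at most $\exp(-\sum\mathrm p)$'' is therefore circular: it is a statement about a conditional likelihood given $\CC$, not an upper bound on $\Prob(|\CC_n(0)|>k,\,0\notin\CC_n^{\sss{(1)}})$. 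Summing over the exponentially many admissible clusters $C\ni 0$ does not fix this either, since $\sum_C\Prob(\text{internal edges connect }C)$ is far above one. The fresh edges internal to $\CV_n\setminus\CC$, which you correctly observe are unbiased, only help you locate $\CC_n^{\sss{(1)}}$; they do not help you pay for the already-revealed absent boundary edges.

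This is precisely the obstacle the paper's Step~3 (pre-sampling; Definition~\ref{def:ksrg-alt}) is designed to remove. The high-mark PPP $\CV_n[2w_{\mathrm{hh}},\infty)$ is split, \emph{before} any exploration, into two independent thinnings $\widehat\CV_n^\sss{(\mathrm{sure})}$ and $\widehat\CV_n^\sss{(\mathrm{unsure})}$ of equal intensity; only the unsure part enters the partially-revealed graph $\CG_{n,3}$ on which the cover expansion is run. The sure-connector vertices landing in $\CK(\CC)$ are then genuinely independent of $\CK(\CC)$, each connects to the (fixed) backbone and to $\CC$ with fresh coins, and Lemma~\ref{lemma:hh-expandable} gives the clean bound. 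The paper also first proves the $|\CC_n^{\sss{(2)}}|$ bound (Proposition~\ref{prop:2nd-upper-bound-hh}) and only then passes to the cluster-size decay at all scales $n\le\infty$ via the truncation/sequential-boxing argument of Section~\ref{sec:upper-sub} (Proposition~\ref{prop:condition-subexponential}, Claim~\ref{claim:leaving-giant}, Claims~\ref{claim:edge-short}--\ref{claim:edge-long}): your mention of truncation is in the right spirit but its role there is to pass from $n\le n_k$ to $n=\infty$, not to localize $\CC$ directly. Finally, the weaker exponent for $\tau<\sigma+1$ in part~(iii) is not because the cover expansion ``certifies fewer vertices''; it is because the probability that $\CV_{n,3}$ fails to be $s(2w_{\mathrm{hh}})$-expandable has exponent $s(2w_{\mathrm{hh}})=\Theta(k^{1/(\sigma+1-(\tau-1)/\alpha)})$, and this becomes the dominant error term exactly when $\tau<\sigma+1$ (Lemma~\ref{lemma:hh-expandable} and \eqref{eq:sigma-issue}).
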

The  next theorem is the analogue of Theorem~\ref{thm:subexponential-decay} for the size of the second-largest component. The following intuition applies: the maximum value of $n$ \emph{iid} random variables $X_i$ with $\Prob(X_i\ge x)=\exp(-\Theta(x^\zeta))$ is of order $\Theta((\log n)^{1/\zeta})$. Although the non-largest cluster sizes $(|\CC_n(v)|)_{v\notin \CC_n^\sss{(1)}}$ are \emph{not iid}, Theorem~\ref{thm:subexponential-decay} suggests a cluster in $\CG_n$ of this order.
\begin{theorem}[Second-largest component]\label{thm:second-largest}
Consider a KSRG under the same assumptions as in Theorem~\ref{thm:subexponential-decay}. 
 \begin{itemize}
  \setlength\itemsep{-0.3em}
  \item[(i)]
        There exist  constants $A, \delta, n_0>0$, such that for all $n\in[n_0,\infty)$,
        \begin{align}
         \Prob\Big(|\CC^\sss{(2)}_n| \ge (1/A)(\log n)^{1/\zeta_\star}/(\log\log n)^{(\mathfrak{m}_\star-1)/\zeta_\star}\Big)
         \ge 1-n^{-\delta}.\label{eq:second-largest-main-unlower}
        \end{align}

  \item[(ii)] If $\tau\ge \sigma+1$  and the vertex set is formed by a homogeneous
  Poisson point process, then for all $\delta>0$ there exists $A>0$ such that for all $n\in[1,\infty)$,
        \begin{equation}
         \Prob\big(|\CC^\sss{(2)}_n| \le A(\log n)^{1/\zeta_\mathrm{hh}}\big)
         \ge 1-n^{-\delta}.\label{eq:second-largest-main-upper}
        \end{equation}
          \item[(iii)] If $\tau< \sigma+1$  and the vertex set is formed by a homogeneous
          Poisson point process, then for all $\delta>0$, there exists $A>0$ such that for all $n\in[1,\infty)$,
        \begin{equation}
         \Prob\big(|\CC^\sss{(2)}_n| \le A(\log n)^{\sigma+1-(\tau-1)/\alpha}\big)
         \ge 1-n^{-\delta}.\label{eq:second-largest-main-upper-2}
        \end{equation}
 \end{itemize}
\end{theorem}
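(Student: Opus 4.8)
I would deduce all three parts from Theorem~\ref{thm:subexponential-decay} by turning its single-cluster estimates into statements about the \emph{existence} of clusters of a prescribed size in $\Lambda_n$. For the upper bounds (ii)--(iii) this is a direct first-moment (union-bound) computation; the lower bound (i) needs a tiling argument combined with a decorrelation step, and that decorrelation is where I expect the real work to lie.

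\textbf{Upper bounds (ii)--(iii).} Fix $\delta>0$. Using the Mecke--Palm formula for the Poisson point process together with the fact that the bound of Theorem~\ref{thm:subexponential-decay}(ii) holds for $\Prob^{x}$ at every $x\in\Lambda_n$ with the same constant, for all $k\le n$
\begin{equation*}
 \E\big[\#\{v\in\CV_n:\ |\CC_n(v)|>k,\ v\notin\CC_n^{\sss{(1)}}\}\big]
 =\int_{\Lambda_n}\Prob^{x}\big(|\CC_n(x)|>k,\ x\notin\CC_n^{\sss{(1)}}\big)\,\rd x
 \ \le\ n\exp\big(-(1/A)k^{\zeta_\mathrm{hh}}\big).
\end{equation*}
Taking $k=k_n:=\big(A(1+\delta)\log n\big)^{1/\zeta_\mathrm{hh}}$ makes the right-hand side at most $n^{-\delta}$, so Markov's inequality gives that with probability $\ge 1-n^{-\delta}$ every vertex $v$ with $|\CC_n(v)|>k_n$ lies in $\CC_n^{\sss{(1)}}$; hence every other component has at most $k_n$ vertices, which is \eqref{eq:second-largest-main-upper}. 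Part (iii) is the same computation with \eqref{eq:subexponential-main-upper-2} in place of \eqref{eq:subexponential-main-upper}: the exponent $\zeta_\mathrm{hh}$ is replaced by $1/(\sigma+1-(\tau-1)/\alpha)$ in the exponential, which produces the exponent $\sigma+1-(\tau-1)/\alpha$ on $\log n$ in \eqref{eq:second-largest-main-upper-2}. Small $n$ are absorbed into the constant $A$ of the statement.

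\textbf{Lower bound (i).} Let $C:=A$ with $A$ the constant from Theorem~\ref{thm:subexponential-decay}(i). The plan is to partition $\Lambda_n$ into $m:=\lfloor n/(2Ck)\rfloor$ pairwise disjoint, evenly spaced translates $\Lambda^{\sss{(1)}},\dots,\Lambda^{\sss{(m)}}$ of a box of volume in $[Ck,2Ck]$, and, for each $j$, to let $E_j$ be the isolation event underlying \eqref{eq:subexponential-main-unlower} localized to $\Lambda^{\sss{(j)}}$: namely that $\CG_n$ has a component of size $>k$ contained in $\Lambda^{\sss{(j)}}$ with no $\CG_n$-edge leaving $\Lambda^{\sss{(j)}}$. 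Running the construction behind \eqref{eq:subexponential-main-unlower} inside $\Lambda^{\sss{(j)}}$ (whose volume exceeds $Ak$) gives $\Prob(E_j)\ge\exp\big(-Ak^{\zeta_\star}(\log k)^{\mathfrak{m}_\star-1}\big)$. I would then choose $k=k_n$ as the largest integer with $Ak^{\zeta_\star}(\log k)^{\mathfrak{m}_\star-1}\le(1-\delta)\log m$; a short computation gives $k_n=\Theta\big((\log n)^{1/\zeta_\star}(\log\log n)^{-(\mathfrak{m}_\star-1)/\zeta_\star}\big)$, matching \eqref{eq:second-largest-main-unlower}, and $\Prob(E_j)\ge m^{-1+\delta}$. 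If the $E_j$ were independent then $\Prob\big(\bigcap_j E_j^{\complement}\big)\le\exp\big(-\sum_j\Prob(E_j)\big)\le\exp(-m^{\delta})\le n^{-\delta'}$ for $n$ large; and on $\bigcup_j E_j$ there is a component of size in $(k_n,2Ck_n]$, which is distinct from and smaller than $\CC_n^{\sss{(1)}}$ because whp $|\CC_n^{\sss{(1)}}|\ge\rho n\gg 2Ck_n$ for some $\rho>0$ (from the existence of the infinite component, Proposition~\ref{proposition:existence-large}, by a standard ergodicity argument). Hence $|\CC_n^{\sss{(2)}}|>k_n$ whp, which is \eqref{eq:second-largest-main-unlower}.

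\textbf{Main obstacle: decorrelation.} The $E_j$ are not exactly independent, and this is the crux of part (i). The point configurations in the disjoint boxes are independent and the internal edge-variables are conditionally independent across $j$, but the ``no edge leaving $\Lambda^{\sss{(j)}}$'' clause couples distinct boxes through shared crossing-edge variables. I see two routes to handle it. (a) Separate the boxes by a slowly growing distance $\ell_n=\mathrm{polylog}(n)$ and fold into $E_j$ the event that no edge of length $>\ell_n$ is incident to $\Lambda^{\sss{(j)}}$: since the construction keeps all marks inside $\Lambda^{\sss{(j)}}$ bounded, the connection-probability tails of Assumption~\ref{assumption:main} make this event have probability $1-o(1/m)$, after which each $E_j$ depends only on the randomness in an $\ell_n$-neighbourhood of $\Lambda^{\sss{(j)}}$, and these neighbourhoods are disjoint, so independence is restored. (b) Alternatively, run a second-moment argument on $N:=\#\{j:E_j\text{ holds}\}$, showing that the correlation of $E_i$ and $E_j$ coming from crossing edges is $O\big(\mathrm{dist}(\Lambda^{\sss{(i)}},\Lambda^{\sss{(j)}})^{-\alpha d}\,\mathrm{poly}(k_n)\big)$ and summable, so that $\E[N^2]=(1+o(1))\E[N]^2$ and the Paley--Zygmund inequality gives $N>0$ whp. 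Either way, the quantification relies on the edge-length tails rather than on a finite-range property, and this --- together with checking that the localized isolation event genuinely forces the component to remain inside $\Lambda^{\sss{(j)}}$ as a subgraph of $\CG_n$ --- is where the bulk of the work for (i) sits.
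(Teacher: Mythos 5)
For parts (ii)--(iii), your first-moment route through Theorem~\ref{thm:subexponential-decay}(ii) inverts the logical order of the paper. The paper proves Proposition~\ref{prop:2nd-upper-bound-hh} (a direct bound $\Prob(|\CC_n^\sss{(2)}|>k)\le n\exp(-ck^{\zeta_\mathrm{hh}})$) via the four-step backbone and cover-expansion construction of Section~\ref{sec:upper-2nd}, and Theorem~\ref{thm:second-largest}(ii)--(iii) follows immediately by substituting $k\sim(\log n)^{1/\zeta_\mathrm{hh}}$. Theorem~\ref{thm:subexponential-decay}(ii)--(iii) is then \emph{deduced} from that finite-volume bound through the sequential-boxing machinery of Proposition~\ref{prop:condition-subexponential}, whose hypothesis~\eqref{eq:prop-2nd} is precisely the second-largest-component estimate you are trying to establish. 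So deriving Theorem~\ref{thm:second-largest}(ii)--(iii) from Theorem~\ref{thm:subexponential-decay}(ii)--(iii) is circular given how the latter is proved. Independently, your Mecke step has a concrete gap: Theorem~\ref{thm:subexponential-decay}(ii) is a statement about $\Prob^\sss{0}$ at the centre of $\Lambda_n$, and translation invariance does not transfer it to $\Prob^x$ for $x$ near $\partial\Lambda_n$, because the box around $x$ is then asymmetric. The paper's own version of this integral (in the LLN argument inside the proof of Proposition~\ref{prop:condition-subexponential}) splits it into a bulk $\|x-\partial\Lambda_n\|\ge N_k^{1/d}/2$ and a boundary shell; with your choice $k_n\sim(\log n)^{1/\zeta_\mathrm{hh}}$ the scale $N_k$ is polynomial in $n$, so that shell is not negligible at the precision you need without further work.

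For part (i), the tiling-plus-localised-isolation scheme is indeed the paper's route (Section~\ref{sec:lower-second-csd}, generalising \cite{kiwimitsche2ndlargest}), and you correctly identify decorrelation as the crux. However, your separation scale is wrong. With sub-boxes whose contents have sizes and marks of order $\mathrm{polylog}(n)$, the expected number of edges of length $>\ell_n$ touching a fixed sub-box is of order $\mathrm{polylog}(n)\cdot\ell_n^{-d(\alpha-1)}$ (note the exponent is $d(\alpha-1)$, not $\alpha d$, after integrating over the density of far vertices). To make this $o(1/m)$ with $m\sim n/\ell_n^d$ boxes you need $\ell_n^{d\alpha}\gtrsim n\cdot\mathrm{polylog}(n)$, i.e.\ $\ell_n$ polynomial in $n$, not polylog. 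The paper accomplishes this by tiling with sub-boxes of volume $\tilde n=n^\vartheta$ for $\vartheta\in(1/\min(\alpha,\tau-1),1)$, confining the candidate component to a polylog-volume region deep inside, and bounding cross-box edges via Claim~\ref{claim:edge-long} (the $T_1$ term in~\eqref{eq:two-terms-second}); the induced subgraphs on disjoint sub-boxes are then genuinely independent, which gives the $T_2$ term. Because $m$ is still polynomial in $n$, the resulting $k_n\sim[(\log n)/(\log\log n)^{\mathfrak{m}_\star-1}]^{1/\zeta_\star}$ is unchanged, so the final exponent is unaffected. Your second-moment alternative (b) is not quantified: the $E_j$ are positively correlated through shared absent cross-edges, and with $\Prob(E_j)\sim m^{-1+\delta}$ the additive covariance contributions do not obviously stay below $(\E N)^2\sim m^{2\delta}$. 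Finally, you invoke ``a standard ergodicity argument'' for $|\CC_n^\sss{(1)}|\ge\rho n$; in the paper this is Corollary~\ref{cor:lln}, whose proof runs through the very upper-bound machinery under discussion, so it cannot be treated as free. The paper sidesteps this dependency altogether by constructing, inside each sub-box, a second and larger component ($\CA_\mathrm{giant\textnormal{-}out}^\sss{(k)}$ in~\eqref{eq:lower-large-events}) that locally certifies the isolated cluster is not the largest.
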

Let us make a few remarks. We believe that the lower bounds in part~(i) of both theorems are sharp. They give rise to Conjecture \ref{conj:intro-version2}. Part (ii) matches part (i) when $\zeta_\mathrm{hh}$ is the unique maximum (this case includes $\sigma\le 1$ such as $\kappa_\mathrm{prod}$, since we assume $\tau>2$). When the maximum is non-unique, we conjecture the lower bound to be sharp. Part (iii) never matches the lower bound of part (i), 
which is due to (non-negligible) technicalities in our proofs, relating to the degree distributions having a heavier tail exponent than $\tau-1$ \cite{luchtrathThesis2022}. We expect that parts (ii) and (iii) extend to KSRGs with $\Z^d$ as a vertex set, but we leave the technicalities out of this paper to benefit from independence properties of Poisson point processes.

 The upper bound of Theorem~\ref{thm:subexponential-decay} leads to the weak law of large numbers for the size of the largest component, which was already known for hyperbolic random graphs~\cite{FouMul18}, but not for geometric inhomogeneous random graphs and continuum scale-free percolation.\vspace*{-4pt}
\begin{corollary}[Law of large numbers for the giant]\label{cor:lln}
Consider a KSRG under the same assumptions as in Theorem~\ref{thm:subexponential-decay}, with vertex set formed by a homogeneous
Poisson point process.
 Then, 
 \vspace{-2pt}
\begin{equation}\nonumber
    |\CC_n^\sss{(1)}|\,\big/\, n\overset{\Prob}\longrightarrow \Prob^\sss{0}(0\leftrightarrow\infty), \qquad\mbox{as } n\to\infty.
 \end{equation}
\end{corollary}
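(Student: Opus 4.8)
\emph{Plan.} The goal is to show $|\CC_n^\sss{(1)}|/n\to\theta$ in probability, where $\theta:=\Prob^\sss{0}(0\leftrightarrow\infty)>0$ by supercriticality, by proving $\Prob\big(|\CC_n^\sss{(1)}|>(\theta+\eps)n\big)\to0$ and $\Prob\big(|\CC_n^\sss{(1)}|<(\theta-\eps)n\big)\to0$ for every $\eps>0$. Two facts are used throughout. First, since the infinite component $\CC_\infty^\sss{(1)}$ is a.s.\ unique (Proposition~\ref{proposition:existence-large}), the vertices of $\CV$ lying in it form a stationary, ergodic thinning of the unit-intensity Poisson point process of intensity $\theta$; by the multidimensional ergodic theorem, $N_n:=|\{v\in\CV_n:|\CC(v)|=\infty\}|$ satisfies $N_n/n\to\theta$ almost surely. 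Second, each component $\CC_n^\sss{(i)}$ of $\CG_n$ is a connected subgraph of $\CG$, so all of its vertices lie in a single $\CG$-component; in particular $\CC_n^\sss{(1)}$ is contained either in $\CC_\infty^\sss{(1)}$ or in one finite component of $\CG$, and $|\CC_n(v)|\le|\CC(v)|$ for every $v$.

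\emph{Upper bound.} Set $\zeta':=\zeta_\mathrm{hh}$ if $\tau\ge\sigma+1$ and $\zeta':=1/(\sigma+1-(\tau-1)/\alpha)$ otherwise; in both cases $\zeta'>0$. Put $k_n:=(2A\log n)^{1/\zeta'}=o(n)$. By the Mecke equation and translation invariance, $\E\big[|\{v\in\CV_n: k_n<|\CC(v)|<\infty\}|\big]=n\,\Prob^\sss{0}(k_n<|\CC(0)|<\infty)$, and the right-hand side is at most $n\exp(-(1/A)k_n^{\zeta'})=1/n$ by the case $n=\infty$ of Theorem~\ref{thm:subexponential-decay}(ii)--(iii). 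Hence, with probability $1-o(1)$, no vertex of $\Lambda_n$ lies in a finite $\CG$-component of size larger than $k_n$; on this event the second fact gives $|\CC_n^\sss{(1)}|\le N_n+k_n$, and combining with $N_n/n\to\theta$ yields the upper bound.

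\emph{Lower bound.} Here I would show that, with probability $1-o(1)$, all but $o(n)$ of the $N_n$ infinite-component vertices of $\Lambda_n$ lie in one and the same component of $\CG_n$; together with $N_n/n\to\theta$ this gives $\Prob\big(|\CC_n^\sss{(1)}|<(\theta-\eps)n\big)\to0$. This is a coarse-graining step: fix a large $L=L(\eps)$, tile $\Lambda_n$ by translates of $\Lambda_L$, and call a tile \emph{good} when the restriction of $\CG$ to a fixed dilate $\Lambda_{L'}$ of that tile contains a component capturing at least a $(1-\eps)$-fraction of the infinite-component vertices of the tile, with the extra requirement that neighbouring good tiles share this component. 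Uniqueness of $\CC_\infty^\sss{(1)}$ forces the probability that a tile is good to tend to $1$ as $L\to\infty$, uniformly in $n$; a union bound over the $\lceil n/L\rceil$ tiles then produces, with probability $1-o(1)$, a single $\CG_n$-component covering at least $(\theta-2\eps)n$ vertices. Alternatively one can invoke the renormalized-component construction of~\cite{clusterII} directly. Letting $\eps\downarrow0$ completes the proof.

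\emph{Main obstacle.} The genuine difficulty is the gluing in the lower bound: long edges mean that $\CC_\infty^\sss{(1)}$ can link two faraway regions through a single edge leaping over all intermediate tiles, so the classical $\Z^d$ block argument must be replaced by one stable under long-range connections — in effect a quantitative version of ``uniqueness of the infinite component implies that the infinite component is connected at every scale''. The upper bound, by contrast, is a soft consequence of Theorem~\ref{thm:subexponential-decay} and the ergodic theorem and requires no new ideas.
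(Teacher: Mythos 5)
Your upper bound is correct and self-contained: by the multidimensional ergodic theorem $N_n/n\to\theta:=\Prob^\sss{0}(0\leftrightarrow\infty)$ a.s.; by Mecke's formula and the $n=\infty$ case of Theorem~\ref{thm:subexponential-decay}(ii)--(iii), $\E\big[|\{v\in\CV_n: k_n<|\CC(v)|<\infty\}|\big]=n\,\Prob^\sss{0}(k_n<|\CC(0)|<\infty)\le n^{-1}$ with your choice $k_n=(2A\log n)^{1/\zeta'}=o(n)$; and since $\CC_n^\sss{(1)}$ sits inside a single $\CG$-component, $|\CC_n^\sss{(1)}|\le N_n+k_n$ w.h.p., giving $\Prob\big(|\CC_n^\sss{(1)}|>(\theta+\eps)n\big)\to0$.

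The lower bound, however, has a genuine gap, and the specific argument you write down fails. A union bound over the $\lceil n/L\rceil$ tiles, each good with a probability $p_L<1$ that depends only on $L$ and not on $n$, gives a failure bound of order $(n/L)(1-p_L)$, which diverges for any fixed $L$. What is actually required is a density-plus-connectivity (supercritical renormalization) argument: w.h.p.\ a $(1-\delta)$-fraction of tiles are good, \emph{and} the good tiles form a spanning renormalized percolation cluster whose local giant pieces all merge; this is the Grimmett--Marstrand machinery you gesture at but do not carry out, and it is nontrivial here precisely because long edges can leap over bad tiles in ways the classical $\Z^d$ block argument does not control. The paper explicitly reserves that construction for~\cite{clusterII}, which you cannot invoke here without circularity since~\cite{clusterII} builds on the present paper. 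The paper's actual proof sidesteps the gluing entirely: it cites the ``giant is almost local'' criterion of~\cite{hofstad2021giantlocal}, using that $(\CG_n, o_n)$ converges locally in probability to $(\CG_\infty,0)$ by~\cite{maitra2021locallim}; the one remaining hypothesis is that the expected density $T_{n,k}$ of vertex pairs both lying in components of size $\ge k$ yet in \emph{different} components vanishes as $n\to\infty$ and then $k\to\infty$. This is dominated by the probability that a uniformly chosen vertex lies in a $\ge k$-sized component other than $\CC_n^\sss{(1)}$, which is $\exp(-\Omega(k^{\zeta}))$ by the same estimates that prove Theorem~\ref{thm:subexponential-decay}(ii)--(iii). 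That single inequality delivers both directions of the LLN, the limiting constant $\Prob^\sss{0}(0\leftrightarrow\infty)$ falls out of local weak convergence automatically, and your ergodic-theorem observation becomes unnecessary.
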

\vspace{-5pt}
The next theorem shows that $\zeta_\star$ also governs the lower tail of large deviations of $|\CC_n^\sss{(1)}|$. It also holds  when $\zeta_\mathrm{hh}\le0$, contrary to Theorems \ref{thm:subexponential-decay}--\ref{thm:second-largest}.
\vspace*{-6pt}
\begin{theorem}[Speed of the lower tail of large deviations of the giant]\label{prop:lower-dev-giant}
Consider a KSRG in dimension $d\ge 1$ satisfying Assumption \ref{assumption:main}, i.e., $\alpha\in(1,\infty]$, $\sigma\ge 0$, and $\tau\in(2,\infty]$. There is a constant $A>0$ such that for all $\rho>0$ and $n\ge 1$,
 \begin{equation}
\Prob\big(|\CC_n^\sss{(1)}|<\rho n\big)\ge \exp\big(-(A/\rho)\cdot n^{\zeta_\star}(\log n)^{\mathfrak{m}_\star-1}\big).\label{eq:lower-dev-giant}
 \end{equation}
\end{theorem}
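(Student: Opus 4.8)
The plan is to produce a localized, explicit construction that forces $|\CC_n^\sss{(1)}|<\rho n$ and whose probability is bounded below by $\exp(-(A/\rho)n^{\zeta_\star}(\log n)^{\mathfrak{m}_\star-1})$. The mechanism is the one already identified in Section~\ref{sec:four-regimes}: partition $\Lambda_n$ into roughly $n/K$ translated sub-boxes each of volume $K=\Theta(n)$, but with $K$ a fixed fraction of $n$, say $K=\lceil c\rho n\rceil$ for a small constant $c$; then isolate \emph{each} such sub-box from the rest of the graph. If every sub-box of volume $K$ is disconnected from its complement in $\CG$, then every component of $\CG_n$ lives inside one sub-box, hence has size at most the number of PPP points in that box, which is $O(K)=O(\rho n)<\rho n$ with the constant tuned; more carefully one keeps a single family of $\Theta(1/\rho)$ disjoint boxes tiling $\Lambda_n$ and isolates each one. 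Thus
\[
\Prob\big(|\CC_n^\sss{(1)}|<\rho n\big)\ \ge\ \prod_{i=1}^{\Theta(1/\rho)}\Prob\big(\Lambda_K^{(i)}\not\sim (\Lambda_K^{(i)})^\complement\big)\cdot(\text{factor controlling point counts}),
\]
where the boxes $\Lambda_K^{(i)}$ are disjoint translates, so that by the independence properties of the PPP and of the edge variables the events $\{\Lambda_K^{(i)}\not\sim(\Lambda_K^{(i)})^\complement\}$ are \emph{not} independent across $i$ only through edges leaving a box — but since isolation of box $i$ and isolation of box $j$ together just say ``no crossing edge out of $\Lambda_K^{(i)}$ and none out of $\Lambda_K^{(j)}$'', one can simply ask for the single event ``no crossing edge out of any box of the tiling'', which is again an isolation-type event on $\Lambda_n$ refined to the tiling.

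The core quantitative input is \eqref{zeta-1}: for any box $\Lambda_K$,
\[
\log\Prob\big(\Lambda_K\not\sim\Lambda_K^\complement\big)=-\Omega\big(\E\big[\,\big|\{u\in\Lambda_K: u\searrow\Lambda_K^\complement\}\big|\,\big]\big)=-\Omega\big(K^{\zeta_\star}(\log K)^{\mathfrak{m}_\star-1}\big),
\]
the polylog factor entering exactly on phase boundaries as flagged in the footnote and Remark~\ref{remark:polylog}, and $\mathfrak m_\star$ counting the dominant connectivity types. Here one must be slightly careful that isolating a \emph{tiling} of $\Lambda_n$ into $\Theta(1/\rho)$ boxes of volume $K=\Theta(\rho n)$ costs, up to constants, $\Theta(1/\rho)$ copies of the single-box cost, i.e.
\[
-\log\Prob(\text{tiling isolated})=\Theta\Big(\tfrac1\rho\cdot(\rho n)^{\zeta_\star}(\log(\rho n))^{\mathfrak m_\star-1}\Big)=\Theta\big(\rho^{\zeta_\star-1}n^{\zeta_\star}(\log n)^{\mathfrak m_\star-1}\big),
\]
and since $\zeta_\star\le 1$ we have $\rho^{\zeta_\star-1}\le 1/\rho$ for $\rho\le 1$ (and for $\rho\ge 1$ the bound is trivial since then $\rho n\ge n$ and the statement is about isolating a single box), giving the claimed $(A/\rho)n^{\zeta_\star}(\log n)^{\mathfrak m_\star-1}$. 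The point-count factor is harmless: conditionally on the isolation event the PPP inside each box is still Poisson (isolation only constrains edges, and marks/locations of points are unaffected in the Poisson case; on $\Z^d$ the vertex set is deterministic), so with probability bounded below by a constant independent of $n$ — or, if one wants a clean deterministic bound, by enlarging $c$ — each box contains at most $2K$ points, hence every component of $\CG_n$ has size at most $2K<\rho n$ once $c<1/2$.

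The only genuine work is establishing the lower bound in \eqref{zeta-1} uniformly over $K$, i.e.\ that the isolation event is not too costly — but this is precisely the content promised in the surrounding text (``In our proof we show that for all KSRGs \eqref{zeta-1}'') and in Claim~\ref{lemma:lower-vertex-boundary}/Lemma~\ref{lemma:lower-vertex-boundary2}, which I may assume. Concretely, $\{\Lambda_K\not\sim\Lambda_K^\complement\}$ is implied by the intersection of: (a) no vertex of mark $\ge K^{\gamma_\mathrm{high}}$ lies within distance $O(K^{1/d})$ of $\Lambda_K$ (a Poisson void-type event, probability $\exp(-\Theta(K^{1-\gamma_\mathrm{high}(\tau-1)}))$), (b) no vertex of mark $\ge K^{\gamma_\mathrm{high}}$ lies in $\Lambda_K$ itself, and (c) none of the crossing edges among the remaining (low-mark, and not-too-far high-mark) pairs is present — the expected number of such pairs being $\Theta(K^{\zeta_\star}(\log K)^{\mathfrak m_\star-1})$ by the same computation that defines $\zeta_{\mathrm{ll}},\zeta_{\mathrm{hl}},\zeta_{\mathrm{hh}},\zeta_{\mathrm{short}}$ — and then using $1-q\ge e^{-2q}$ for $q\le1/2$ (thinning edges of probability $>1/2$ does not affect the exponent). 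The main obstacle, and the place where care is needed, is the interplay of the four connectivity types when several are simultaneously dominant (hence the $(\log)^{\mathfrak m_\star-1}$ and the polylog corrections on phase boundaries): one must verify that conditioning away high-mark vertices near $\Lambda_K$ genuinely makes the residual crossing-edge count have expectation $\Theta(K^{\zeta_\star}(\log K)^{\mathfrak m_\star-1})$ and not larger, which is exactly the back-of-the-envelope ledger of Section~\ref{sec:four-regimes} made rigorous; once that is in hand, the large-deviation statement for the giant is an immediate tiling corollary with the $1/\rho$ bookkeeping above.
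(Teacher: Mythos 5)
Your high-level strategy — cover $\Lambda_n$ by $\Theta(1/\rho)$ regions of volume $\Theta(\rho n)$, isolate each from its complement, and pay $\exp(-\Theta((\rho n)^{\zeta_\star}(\log n)^{\mathfrak{m}_\star-1}))$ per region — is exactly the paper's. The main gap is in how you combine the isolation events across regions. You correctly observe that the events $\{\Lambda_K^{(i)}\not\sim(\Lambda_K^{(i)})^\complement\}$ are not independent, but your remedy (``ask for the single event `no crossing edge out of any box', which is again an isolation-type event'') only pushes the problem one level down: that single event still requires, to make it probable, that no high-mark vertices sit near \emph{any} of the box boundaries. Those vertex-mark void constraints for the different boxes are correlated through the shared Poisson process, so the probability of their conjunction does not trivially factorize, and \eqref{zeta-1} is stated for a single box only. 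The paper handles this with an FKG inequality (stated and proved around \eqref{eq:fkg} in Section~\ref{sec:lower-tail}): isolation events and point-count events $\{|\CV^{(i)}|<\rho n\}$ are all decreasing in the vertex set and in the edge variables, so FKG for marked Poisson processes combined with conditional FKG for the edge variables gives $\Prob(\cap_i\CA_i)\ge\prod_i\Prob(\CA_i)$, at which point the single-region bound of Lemma~\ref{prop:subexponential-lower} can be plugged in. Without FKG or an explicit replacement (e.g., a direct Poisson void computation for the union of all the suppressed-mark regions, which would need you to verify that this union's $\mu_\tau$-measure is still $O(\frac{1}{\rho}(\rho n)^{\zeta_\star}(\log n)^{\mathfrak{m}_\star-1})$), the multiplication over $\Theta(1/\rho)$ regions is not justified. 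Two smaller points: the paper uses (overlapping) balls rather than a box tiling specifically to recycle the radially-defined profile $f_\gamma$ in \eqref{eq:suppressed-curve}; your observation that the sharper exponent $\rho^{\zeta_\star-1}\le 1/\rho$ holds because $\zeta_\star\le 1$ is correct and matches what the paper's constant-tracking gives.
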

  \vspace{-10pt}

\subsection{Discussion and related literature}
The event $\{k<|\CC(0)|<\infty\}$ is non-monotone under edge-addition, which makes it challenging to control the geometry of the ``outer boundary'' of small clusters and the infinite component. Peierls' argument and Grimmett--Marstrand dynamic renormalization are popular tools to control the outer boundary in models where surface tension governs cluster-size decay, such as Bernoulli percolation on $\Z^d$~\cite{grimmett1990supercritical}, and the Poisson--Boolean model~\cite{dembin2022almost}. 
The recent work \cite{contreras2021supercritical} combines a static renormalization method with hypercontractive inequalities to prove surface-tension driven behavior for Bernoulli percolation on transitive graphs of polynomial ball growth (when the number of vertices at distance $r$ from a  vertex grows polynomially in $r$). 

We consider here inhomogeneous percolation models on the complete graph of the vertex set, with correlated edge probabilities dependent on vertex-marks and spatial distance. In this setting, the vertex boundary of finite boxes is either governed by \emph{short} edges (surface tension), or by \emph{long} edges. Long edges can be one of three `types'  depending on the typical degrees or marks of the end-vertices. The decay exponent $\zeta_\star$ is determined by the dominant edge type. We focus on parameter settings where the long edges dominate the vertex boundary of finite boxes, and the ball growth is \emph{superpolynomial} even after percolation~\cite{biskup2004scaling, bringmann2016average, DeiHofHoo13,gracar2022chemical, LapLenKom20}. Due to these long edges, surface tension is no longer the relevant quantity  and the methods above for graphs with polynomial growth do not give sufficiently strong bounds. 
Instead, long edges make connections to a `backbone' of the giant component possible, so the relevant quantity to control is the `effective' distance from this backbone. We guarantee good distance bounds by a new method, the cover expansion, see page \pageref{sec:new-method-cover} above.

Another example where competing phenomena in the boundary lead to phase transitions, is the growth of long-range first passage percolation on $\Z^d$~\cite{chatterjee2016multiple} and that of first-passage percolation on SFP, GIRG, and HRG \cite{KomLod20}. In the former, phase transitions occur at $\alpha=2$ and $\alpha=2+1/d$, and in the latter, phase transitions occur at $\tau=3$ and $\alpha=2$. For the cluster-size decay, the phase transition 
in long-range percolation occurs at $\alpha=1+1/d$; in SFP, GIRG, and HRG transitions occur at $\alpha=\tau-1$, $\alpha=1+1/d$, and when $d(\alpha-1)(\tau-1)=2\alpha-(\tau-1)$. Whereas our exponent $\zeta_\star$ is determined by the ``bulk'' of the vertex boundary, the transitions in~\cite{chatterjee2016multiple, KomLod20} are determined by the presence of ``exceptional'' edges on the edge boundary, causing different transition points. Analogously, the phase transitions for graph distances in KSRGs also differ from those of $\zeta_\star$ of the cluster-size decay~\cite{biskup2004scaling, bringmann2016average, DeiHofHoo13,gracar2022chemical, LapLenKom20}. \smallskip 

\emph{The second-largest component.\ \ }
The study of the second-largest component $\CC_n^\sss{(2)}$ ties in with the percolation duality for non-spatial random graphs (Erd{\H o}s-R\'enyi random graphs, inhomogeneous random graphs \cite{bollobas1984evolution,BolSvaRio07}), for which $\Prob(k<|\CC(0)|<\infty)$ decays exponentially in $k$ and $|\CC_n^\sss{(2)}|$ is logarithmic in $n$. For models with underlying geometry, $|\CC_n^\sss{(2)}|$ was studied for random geometric graphs, long-range percolation, and hyperbolic random graphs~\cite{crawford2012simple, kiwimitsche2ndlargest, lichev2022bernoulli, Pen, penrose2022giant}. By introducing the interpolation kernel $\kappa_\sigma$, see also~\cite{luchtrathThesis2022, maitraClustering2022}, this paper uncovers the intricate connection between $|\CC_n^\sss{(2)}|$ and the cluster-size decay in inhomogeneous percolation models in the KSRG class in general, and enables us to prove analogues of Theorems~\ref{thm:subexponential-decay}--\ref{thm:second-largest} for other parameters in the follow-up works~\cite{clusterII, clusterIII}. 

Both threshold and soft hyperbolic random graphs (HRG) in \cite{krioukov2010hyperbolic} are a special case of Theorems~\ref{thm:subexponential-decay}--\ref{thm:second-largest}: there is an isomorphism between an HRG and a $1$-dimensional  KSRG with a product kernel i.e., $\sigma=1$, $\tau\in(2,3)$, with threshold HRGs having $\alpha=\infty$ and soft HRGs having $\alpha<\infty$, see  \cite{BriKeuLen19} or \cite[Section~9]{KomLod20}. So, for threshold HRGs the exponent equals $\zeta_{\mathrm{HRG}}:=(3-\tau)/2$. Theorem~\ref{thm:second-largest} thus includes the known bound $|\CC_n^\sss{(2)}|=\Theta_\Prob((\log n)^{2/(3-\tau)})$ in threshold hyperbolic random graphs from~\cite{kiwimitsche2ndlargest}. Due to the threshold profile and the underlying  one-dimensional space, in these graphs all small components are localized.   
In contrast, Theorem~\ref{thm:second-largest} of this paper allows for any $\alpha\in(1,\infty]$ and any dimension  $d\in\N$. When $\alpha<\infty$ or $d>1$, de-localized small components may be present, and different proof methods are required for both the lower bound (variational problem, see Section~\ref{sec:four-regimes}) and the upper bound (cover expansion; preventing small-to-large merging, see page~\pageref{small-to-large} below). 
\smallskip 

\emph{Large deviations for the giant.\ \ }
The lower tail of large deviations for the size of the largest cluster in supercritical Bernoulli percolation on $\Z^d$ and random geometric graphs has been studied in~\cite{penrose1996large,pisztora1996surface}, proving  $\Prob(|\CC_n^\sss{(1)}|/n<\rho)=\exp(-\Theta(k^{(d-1)/d}))$ for any $\rho<\theta:=\Prob(0\leftrightarrow\infty)$. For models with long edges, the works~\cite{biskup2004scaling,girgGiantBlasius} prove ---for sufficiently small $\rho>0$--- the upper bounds $\Prob(|\CC_n^\sss{(1)}|/n<\rho)\le \exp(-\Theta(k^{ \zeta}))$ with $\zeta=2-\alpha-o(1)$ for long-range percolation \cite{biskup2004scaling} and $\zeta=\zeta_{\mathrm{HRG}}=(3-\tau)/2$ for hyperbolic random graphs \cite{girgGiantBlasius} using renormalization techniques.   
 Theorem~\ref{prop:lower-dev-giant} here gives the \emph{lower bound} for the same event for models in the kernel-based spatial random graph class in general,  complementing these previous results, and making use of the connection to the cluster-size decay.
In the follow-up paper~\cite{clusterII}, we combine the methods here with renormalization techniques to prove the upper bound $\Prob(|\CC_n^\sss{(1)}|/n<\rho )\le\exp(-\Theta(n^{\zeta_\star}))$ for any $\rho<\theta$ for KSRGs with $\zeta_\mathrm{long}>0$. This gives matching upper and lower bounds outside the phase transition boundaries of $\zeta_\star$, i.e., whenever $\mathfrak{m}_\star=1$ in \eqref{eq:lower-dev-giant}. The upper tail of large deviations behaves differently: for $\rho\in(\theta,1)$, \cite{clusterII}  proves that $\Prob(|\CC_n^\sss{(1)}|>\rho n)$ decays polynomially when $\tau<\infty$ and exponentially when $\tau=\infty$. 

\subsection{Organization of the paper}
In Section~\ref{sec:methodology} we give an elaborate overview of the proofs. Section~\ref{sec:cover-expansion} introduces the cover expansion, our main novel technical contribution, required for  the upper bound on $|\CC_n^\sss{(2)}|$ in Section~\ref{sec:upper-2nd}. Only Section~\ref{sec:upper-2nd} restricts to models with $\zeta_{\mathrm{hh}}>0$. Section~\ref{sec:upper-sub} connects the finite-volume bounds ($|\CC_n^{\sss{(2)}}|$) with the cluster-size decay in the infinite model, leading to the LLN of $|\CC_n^\sss{(1)}|$ (Corollary \ref{cor:lln}). Section~\ref{sec:lower} proves the lower bounds on $|\CC_n^\sss{(2)}|$ and on the cluster-size decay, and Theorem~\ref{prop:lower-dev-giant}.
Sections \ref{sec:upper-2nd}--\ref{sec:lower} start with a proposition each. Together, these imply Theorems \ref{thm:subexponential-decay} and \ref{thm:second-largest}, as verified in Section~\ref{sec:main-proofs}.

We only give proofs for KSRGs using the kernel $\kappa_\sigma$ in~\eqref{eq:kernels} with $\sigma\ge 0$: we restrict to
 \begin{equation}
  \mathrm{p}\big((x_u, w_u), (x_v, w_v)\big):=
  \begin{dcases}
  p\bigg(1\wedge \beta\frac{(w_1\vee w_2)(w_1\wedge w_2)^\sigma}{\|x_v-x_u\|^d}\bigg)^\alpha, & \text{if }\alpha<\infty, \\
  p\mathbbm 1\left\{\beta\frac{(w_1\vee w_2)(w_1\wedge w_2)^\sigma}{\|x_v-x_u\|^d} \ge 1\right\},          & \text{if }\alpha=\infty.
  \end{dcases}
  \label{eq:connection-prob-gen}
 \end{equation}
The proofs for $\kappa_\mathrm{sum}$ can be directly obtained by using the bound $\kappa_0\le \kappa_\mathrm{sum}$ in lower bound estimates and $\kappa_{\mathrm{sum}}\le 2^d\kappa_0$ in upper bound estimates. Further, we only give proofs for models with a Poisson point process as vertex set. The extension to $\Z^d$, when applicable, follows generally from replacing concentration bounds for Poisson random variables by concentration for sums of independent Bernoulli random variables, and by replacing integrals by summations. When more adaptations are required, we comment on those.\smallskip 

\emph{Notation.\ \ }
We write write $|\CS|$ for the size of a \emph{discrete} set $\CS$. We write $\mathrm{Vol}(\CK)$ for the Lebesgue measure of a set $\CK\subseteq \R^d$, $\partial\CK$ for its boundary and $\CK^\complement:=\R^d\setminus \CK$ for its complement. We denote the complement of an event $\mathcal A$ by $\neg \mathcal A$.
Formally we define a vertex $v$ by a pair of location and mark, i.e., $v:=(x_v, w_v)$, but we will sometimes write $v\in \CK$ if $x_v\in \CK$.
For two vertices $u,v$, we write $u\sim v$ if $u$ is connected by an edge to $v$ in the graph under consideration (typically $\CG_\infty$), and $u\not\sim v$ otherwise.  We also write $\{u,v\}$ for the same (undirected) edge. For a set of vertices $\CS$, we write $u\sim \CS$ if there exists $v\in\CS$ such that $u\sim v$.  We write $X\succcurlyeq Y$ if the random variable $X$ stochastically dominates the random variable $Y$, i.e., $\Prob(X\ge x)\ge \Prob(Y\ge x)$ for all $x\in \R$. A random graph $\CG_1=(\CV_1,\CE_1)$ stochastically dominates a random graph $\CG_2=(\CV_2, \CE_2)$ if there exists a coupling such that $\Prob\big(\CV_2\subseteq\CV_1, \CE_2\subseteq\CE_1\big)=1$.
For $x\in\R^d$ and $s\ge 0$, and $\CQ\subseteq\R^d$, $a\le b$,  we introduce notations for boxes of volume $s$ centered at $x$, and vertex sets restricted to locations in $\CQ$ with mark in $[a,b)$:
\begin{equation}\label{eq:xi-q-ab}
\begin{aligned}
\Lambda(x,s):=\Lambda_s(x)&:=x+[-s/2^{1/d}, s/2^{1/d}]^d,&\Lambda_s&:=\Lambda_s(0), \\
 \CV_\CQ[a,b)&:= \CV\cap \big(\CQ\times[a,b)\big), & \CV_s[a,b)&:= \CV_{\Lambda_s}[a,b).
 \end{aligned}
\end{equation}
Lastly, we define 
\begin{equation}\label{eq:gn-ab}
    \CG_n[a,b):=\text{the subgraph of $\CG$ induced by vertices in }\CV_n[a,b).
\end{equation}

\section{Methodology}\label{sec:methodology}
We first sketch the strategy for upper bound on the size of the second-largest component, then we explain how to obtain the cluster-size decay from it, and lastly we sketch the lower bound.
Throughout the outline we assume that $(n/k)^{1/d}\in\mathbb{N}$.
\subsection{Second-largest component}\label{sec:outline-hh}
We aim to show an upper bound of the form
\begin{equation}\label{eq:outline-error-prob}
 \Prob\big(|\CC_n^\sss{(2)}|> k\big) \le (n/k)\exp(-ck^{\zeta_\mathrm{hh}})=:\mathrm{err}_{n,k}.
\end{equation}
for arbitrary values of $n\ge k$ and some constant $c>0$. Such a bound yields \eqref{eq:second-largest-main-upper} when substituting $k=A(\log n)^{1/\zeta_\mathrm{hh}}$ for a sufficiently large constant $A=A(\delta)>0$. 
The proof consists of four revealment stages,  illustrated in Figure~\ref{fig:upper-bound}.\smallskip 

 \noindent {\bf Step 1. Building a backbone.}
We set $w_\mathrm{hh}:=\Theta(k^{\gamma_\mathrm{hh}})$ with $\gamma_\mathrm{hh}>0$ from \eqref{eq:gamma-hh}.
We partition the volume-$n$ box $\Lambda_{n}$ into $n/k$ smaller sub-boxes of volume $k$.
In this first revealment step we only reveal the location and edges between vertices in $\CV_{n}[w_\mathrm{hh},2w_\mathrm{hh})$, obtaining the graph $\CG_{n,1}:=\CG_n[w_\mathrm{hh},2w_\mathrm{hh})$. We show that $\CG_{n,1}$ contains a connected component $\CC_{\mathrm{bb}}$ that contains $\Theta(k^{\zeta_{\mathrm{hh}}})$ many vertices in each subbox, that we call \emph{backbone} vertices. We show that this event -- say $\CA_{\mathrm{bb}}$ -- has probability at least $1-\mathrm{err}_{n,k}.$
We do this by ordering the subboxes so that subboxes with consecutive indices share a $(d-1)$-dimensional face, and by iteratively connecting $\Theta(k^{\zeta_{\mathrm{hh}}})$ many vertices in the next subbox to the component we already built, combined with a union bound over all subboxes.  The event $\CA_{\mathrm{bb}}$ ensures us to show that independently for all $v\in \CV_n[2w_\mathrm{hh},\infty)$, regardless of their locations,
\begin{equation}
 \Prob\big(v\sim \CC_{\mathrm{bb}}\mid \CA_{\mathrm{bb}}, v\in\CV_{n}[2w_\mathrm{hh},\infty)\big)\ge1/2.\label{eq:q-bound}
\end{equation}
 We call vertices in $ \CV_n[2w_\mathrm{hh},\infty)$ \emph{connector} vertices. If $\alpha<\infty$, not all connector vertices will connect to the backbone, i.e., the $1/2$ in \eqref{eq:q-bound} cannot be improved to $1$. \smallskip 

 \noindent{\bf Step 2: Revealing low-mark vertices.}
We now also reveal all vertices with mark in $[1,w_\mathrm{hh})$, and all their incident edges to $\CG_{n,1}$ and towards each other, i.e., the graph $\CG_{n,2}:=\CG_{n}[1,2w_\mathrm{hh})\supseteq \CG_{n,1}$. \smallskip 

\begin{figure}[t]
 \includegraphics[width=406pt]{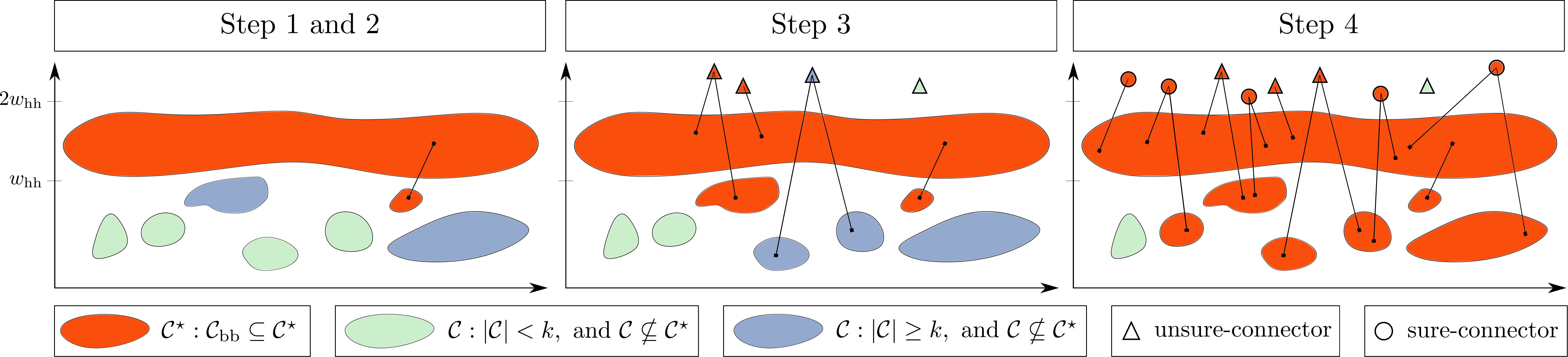}
 \captionsetup{width=406pt}
 \captionsetup{font={footnotesize}} 
 \caption{Upper bound. The $y$-axis represents marks, the $x$-axis represents space. After Steps 1 and 2 there is a component $\CC^\star$ containing the backbone that is connected to some small components from $\CG_n[1,w_\mathrm{hh})$. After Step 3, the unsure-connectors are revealed: there is small-to-large merging; some unsure-connectors connect to the backbone. After Step 4, each component of size at least $k$ merged with the largest component via a sure-connector; unmerged small components and unsure-connectors outside $\CC_n^\sss{(1)}$ remain all of size at most $k$.}\label{fig:upper-bound}
\end{figure}

 \noindent{\bf Step 3: Pre-sampling randomness to avoid merging of smaller components.}
 \phantomsection
 \label{small-to-large}
To show \eqref{eq:outline-error-prob}, in the fourth revealment stage below we must \emph{avoid} \emph{small-to-large merging}: when the edges to/from some $v\in \CV_n[2w_{\mathrm{hh}},\infty)$ are revealed,
a set of small components, each of size at most $k$, could merge into a component of size at least $k$ without connecting to the giant component.
If we simply revealed $\CV_n[2w_{\mathrm{hh}}, \infty)$ after Step 2, \eqref{eq:q-bound} would not be sufficient to show that small-to-large merging occurs with probability at most $1-\mathrm{err}_{n,k}$. So, we pre-sample randomness: we split $\CV_n[2w_{\mathrm{hh}}, \infty)$ into two PPPs:
\begin{equation}
 \CV_{n}[2w_\mathrm{hh},\infty)=\CV_{n}^\sss{(\mathrm{sure})}[2w_\mathrm{hh},\infty) \cup\CV_{n}^\sss{(\mathrm{unsure})}[2w_\mathrm{hh},\infty),\nonumber
\end{equation} where $\CV_{n}^\sss{(\mathrm{sure})}[2w_\mathrm{hh},\infty), \CV_{n}^\sss{(\mathrm{unsure})}[2w_\mathrm{hh},\infty) $ are \emph{independent} PPPs with equal intensity: using \eqref{eq:q-bound} and helping random variables that encode the presence of edges, we pre-sample whether a connector vertex connects for sure to $\CC_{\mathrm{bb}}$ by at least one edge;
forming $\CV_{n}^\sss{(\mathrm{sure})}[2w_\mathrm{hh},\infty)$. Vertices in $\CV_{n}^\sss{(\mathrm{unsure})}[2w_\mathrm{hh},\infty)$
might still connect to $\CC_{\mathrm{bb}}$ since $1/2$ is only a lower bound in \eqref{eq:q-bound}, but we ignore that information. We crucially use the property that thinning a PPP yields two independent PPPs. The adaptation of our technique to lattices as vertex set seems non-trivial due to this step.
We reveal now $\CV_{n}^\sss{(\mathrm{unsure})}[2w_\mathrm{hh},\infty)$. Let $\CG_{n,3}\supseteq \CG_{n,2}$ be the graph induced on the vertex set
\begin{equation}
 \CV_{n,3}:=\CV_{n}[1,2w_\mathrm{hh})\cup\CV_{n}^\sss{(\mathrm{unsure})}[2w_\mathrm{hh},\infty). \label{eq:outline-v3}
\end{equation} 

\noindent{\bf Step 4: Cover expansion, a volume-based argument.}
We now reveal $\CV_n^{\sss{(\mathrm{sure})}}[2w_\mathrm{hh},\infty)$ and merge all components of size at least $k$ with the largest component in $\CG_{n,3}$ with probability at least $1-\mathrm{err}_{n,k}$.
Small-to-large merging cannot happen since vertices in  $\CV_n^\sss{(\mathrm{sure})}[2w_\mathrm{hh},\infty)$ all connect to $\CC_{\mathrm{bb}}$. We argue how to obtain \eqref{eq:outline-error-prob}.\smallskip

\noindent{\bf Step 4a: Not too dense components via proper cover.}
For a component $\CC\subseteq\CG_{n,3}$, the \emph{proper cover} $\CK_n(\CC)\subseteq \Lambda_n$ is the union of volume-$1$ boxes centered at the vertices of $\CC$ (the formal definition below is slightly different).
Fixing a constant $\delta>0$, we call $\CC$ not too dense if
\begin{equation}
 \mathrm{Vol}(\CK_n(\CC))\ge \delta|\CC|.\label{eq:linear-volume}
\end{equation}
Using the connectivity function $\mathrm{p}$ in \eqref{eq:connection-prob-gen}, each pair of vertices within constant distance is connected by an edge with constant probability. Since $w_\mathrm{hh}=\Theta(k^{\gamma_\mathrm{hh}})$, there exists $k_0$ such that for any $k\ge k_0$  and any pair of vertices $u\in\CV_{n,3}$ in \eqref{eq:outline-v3} and $v\in\CV_n^\sss{(\mathrm{sure)}}[2w_\mathrm{hh},\infty)$ within the same volume-1 box,
\begin{equation}
 \mathrm{p}(u, v )\ge p/2.\label{eq:low-mark-bound}
\end{equation}
Using this bound and that $\CV_n^{\sss{\mathrm{(sure)}}}[2w_\mathrm{hh},\infty)$ is a PPP, when $|\CC|> k$, with  probability at least $1-\mathrm{err}_{n,k}$, at least $\Theta(k^{\zeta_{\mathrm{hh}}})$ many vertices of $\CV_n^{\sss{\mathrm{(sure)}}}[2w_\mathrm{hh},\infty)$ fall inside $\CK_n(\CC)$ and at least one of them connects to $\CC$ by an edge. Since these vertices belong to $\CV_n^{\sss{\mathrm{(sure)}}}[2w_\mathrm{hh},\infty)$, they connect to $\CC_{\mathrm{bb}}$ by construction, merging $\CC$ with the component containing $\CC_{\mathrm{bb}}$.\smallskip

 \noindent{\bf Step 4b: Too dense components via cover expansion.}
We still need to handle components $\CC\subseteq \CG_{n,3}$ with $|\CC|> k$ that do not satisfy \eqref{eq:linear-volume}. These may exist (outside the component of $\CC_{\mathrm{bb}}$) since the PPP $\CV_n$ contains dense areas, e.g.,\ volume-one balls with $\Theta((\log n)/\log\log n)$ vertices.
We introduce a deterministic algorithm which works for any vertex set provided that there are no `large' areas containing atypically many vertices. The definition of `large' depends on $w_\mathrm{hh}=w_\mathrm{hh}(\sigma, \tau)$; a homogeneous Poisson point process satisfies this property with probability at least $1-\mathrm{err}_{n,k}$ as long as $\tau\ge\sigma+1$. When $\tau<\sigma+1$, it is at this step that we obtain a slightly worse error bound.

\emph{The cover-expansion algorithm} outputs for any (deterministic) set $\CL$ of at least $k$ vertices a set $\CK^{\mathrm{exp}}(\CL)\subset\R^d$, called the \emph{cover-expansion} of $\CL$, that satisfies bounds similar to
\eqref{eq:linear-volume} and \eqref{eq:low-mark-bound}.
In the design of the set $\CK^{\mathrm{exp}}(\CL)$ we quantify how far a connector vertex may fall from a too dense subset $\CL'\subseteq \CL$, while still ensuring connection probability at least $p/2$ to the \emph{set} $\CL'$. 
 We apply this algorithm with $\CL=\CC$ for components of size at least $k$ of $\CG_{n,3}$ that do not satisfy \eqref{eq:linear-volume} and do not contain $\CC_{\mathrm{bb}}$. The remainder of the proof is identical to Step 4a.
Steps 4a, 4b, and a union bound over all components of size at least $k$ in $\CG_{n,3}$ yield \eqref{eq:outline-error-prob}.

\subsection{Subexponential decay, upper bound} \label{sec:second-sub}
Consider $k$ fixed. We obtain the cluster-size decay \eqref{eq:subexponential-main-upper} for any $n\in [k, n_k]$ with $n_k=\exp(\Theta(k^{\zeta_{\mathrm{hh}}}))$ by substituting $n_k$ into \eqref{eq:outline-error-prob}. To extend it to larger $n$, we first identify the lowest mark $\overline w(n)$ such that \emph{all} vertices with mark at least $\overline w(n)$ belong to the giant component $\CC_{n}^\sss{(1)}\subseteq \CG_n$ with sufficiently high probability (in $n$).
Then we embed  $\Lambda_{n_k}$ in $\Lambda_n$ and show that
\begin{equation}
 \begin{aligned}
  \Prob^\sss{0}\big(&|\CC_{n}(0)|> k, \;  0\not\in \CC_{n}^\sss{(1)}\big)                                                                                  \\
                                  & \le  \Prob\big(|\CC^\sss{(2)}_{n_k}|> k\big) + \Prob\big(\CC^\sss{(1)}_{n_k}\nsubseteq\CC^\sss{(1)}_{n}\big) +
  \Prob^\sss{0}\big(|\CC_{n}(0)|> k, \; 0\not\in \CC_{n}^\sss{(1)}, |\CC_{n_k}(0)|< k\big).
 \end{aligned}    \label{eq:outline-upper-second-sub-last}
\end{equation}
The first term on the right-hand side has the right error bound by \eqref{eq:outline-error-prob}.
We relate the second term to the event that for some $\tilde{n}\in(n_k, n]$ there is no polynomially-sized largest component or the  second-largest component is too large.
The event in the third term implies that one of the at most $k$ vertices in $\CC_{n_k}(0)$  has an edge of length $\Omega(n_k^{1/d}/k)$, which will have probability at most $\mathrm{err}_{n_k,k}$, since these vertices have mark at most $\overline w(n_k)$.
\subsection{Lower bound}\label{sec:lower-method}
For the subexponential decay, we compute the probability of a specific event satisfying $k\le |\CC(0)|<\infty$. We draw a ball $\CB$ of volume $\Theta(k)$ around the origin, and compute an \emph{optimally suppressed mark-profile}: the PPP $\CV$ must fall below a $(d+1)$-dimensional mark-surface $\CM:=\{(x,f(x)), x\in \R^d\}$, i.e., $w_v\le f(x_v)$ must hold for all $(x_v, w_v)\in \CV$. We write $\{\CV\le \CM\}$ for this event. The value of $f(x)$ is increasing in $\|x - \partial\CB\|$ since high-mark vertices close to $\partial\CB$ are most likely to have edges crossing $\partial \CB$. $\CM$ is optimized so that $\Prob(\CV \le \CM)\sim\Prob(\CB\not\sim \CB^\complement \mid \CV \le \CM)$, where $\{\CB\not\sim \CB^\complement\}$ is the event that there is no edge present between vertices in $\CB$ and those in its complement. Both events occur with probability $\exp(-\Theta(k^{\zeta_\star}))$, (up to logarithmic correction factors in the exponent on phase boundaries of $\zeta_\star$).
We then find an isolated component of size at least $k$ inside $\CB$ using a technique that works when $\zeta_\mathrm{hh}>0$.
We use a boxing scheme to extend this argument to the lower bound on $|\CC_n^{\sss{(2)}}|$, similar to \cite{kiwimitsche2ndlargest}. We use another boxing argument to bound $\Prob(|\CC_n^\sss{(1)}|<\rho n)$ from below. 

\section{The cover-expansion algorithm}\label{sec:cover-expansion}
The goal of this section is to develop the cover-expansion technique in Step 4b of Section~\ref{sec:outline-hh}.  The statements apply also to KSRGs on vertex sets other than a PPP.
First we define a desired property for a set of vertices based on their spatial locations.
Recall the definition $\Lambda_{s}(x)=\Lambda(x,s)$ from \eqref{eq:xi-q-ab}. Throughout this section, we often identify vertices with their \emph{locations} and ignore their marks. Slightly abusing notation, when $\CL$ is a set of location of vertices we write $v\sim \CL$ for $v$ having an edge to the corresponding set of vertices.
\begin{definition}[$s$-expandable point-set]\label{def:expandable}
 Let $\CS\subset\R^d$ be a discrete set of points in $\R^d$, and $s>0$. We call $\CS$ $s$-expandable if for all $x\in \Z^d$ and all  $s'\ge s$,
 \[
  |\CS\cap \Lambda_{s'}(x)|/s'\le \re.
 \]
\end{definition}
A discrete set $\CS\subset \R^d$ is $s$-expandable if there are no large boxes with too high ratio of number of vertices in $\CS$ in the box compared to its volume. In particular, the definition enforces $|\CS\cap\Lambda_n|\le \re n$. Moreover, if $\CS$ is $s$-expandable, then any subset of $\CS$ is $s$-expandable; lastly, if $\CS$ is $s$-expandable, then $\CS$ is also $\tilde s$-expandable for any $\tilde s\ge s$.
The next proposition solves the problem of too dense components in space, cf.\ \eqref{eq:linear-volume}. 
\begin{proposition}[Covers and expansions for $s$-expandable sets]\label{prop:cover-expansion-deterministic} Consider a KSRG in dimension $d\ge 1$ satisfying Assumption \ref{assumption:main} on a (arbitrary) marked vertex set $\CV=\{(x_v,w_v)\}_{v\in \CV}$. For a given $\underline w>(2^dd^{d/2}/\beta\vee 1)$, define  $s(\underline{w})>0$ as
 \begin{equation}
  s(\underline w):= (2^d \beta \underline w)^{1/(1-1/\alpha)}.
  \label{eq:prop-cover-expansion-min-weight}
 \end{equation}
 Given $n$, assume that $s(\underline w)\le n$, and $\CL\subseteq \Lambda_n$ is the set of locations of any $s(\underline w)$-expandable set of vertices. Then there is a set $\CK_n(\CL) \subseteq\Lambda_n$ with
 \begin{equation}
  \mathrm{Vol}(\CK_n(\CL))\ge\frac{1}{2^{4d+1}\re d^{d/2}}|\CL|,\label{eq:prop-cover-expansion-min-volume}
\end{equation}
such that any $v\in \CK_n(\CL) \times[\underline w,\infty)$ connects by an edge to $\{(x_u, w_u): x_u\in\CL\}$ independently with probability at least $p/2$, i.e.,
\begin{equation}\label{eq:p-connection}
 \Prob\Big(v\sim \CL \, \big| \, \{(x_u, w_u): x_u\in\CL\}\cup \{v\}\subseteq \CV\Big) \ge p/2.
 \end{equation}
\end{proposition}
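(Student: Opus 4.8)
The statement is purely deterministic/geometric, so the plan is to construct $\CK_n(\CL)$ explicitly from the spatial structure of $\CL$, and then verify the two quantitative claims \eqref{eq:prop-cover-expansion-min-volume} and \eqref{eq:p-connection}. The guiding heuristic is: a vertex $v$ of mark $\geq\underline w$ connects to a single vertex $u$ at distance $r$ with probability $\asymp\beta^\alpha(\underline w/r^d)^\alpha$, which is only useful when $r^d\lesssim\beta\underline w$, i.e.\ at constant-order distance when $\underline w=\Theta(1)$. But if $\CL$ contains a \emph{dense clump} of $m$ vertices inside a box of volume $V$, then $v$ at distance $r$ from the clump connects to \emph{some} vertex of the clump with probability $\gtrsim 1\wedge m\beta^\alpha(\underline w/r^d)^\alpha$ (union bound / one-minus-product over the $m$ vertices, valid because each individual probability is small), so the ``useful radius'' around the clump grows like $r^d\lesssim (m\beta\underline w)^{1/\alpha}\cdot(\text{vol terms})$ — a clump of size $m$ can reach out to volume $\asymp m^{1/\alpha}$ rather than $\asymp 1$. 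The quantity $s(\underline w)=(2^d\beta\underline w)^{1/(1-1/\alpha)}$ in \eqref{eq:prop-cover-expansion-min-weight} is exactly the scale at which a box that is ``at capacity'' for an $s$-expandable set (i.e.\ holds $\asymp\re s$ vertices) can expand to fill itself: one checks $(\re s\cdot\beta\underline w)^{1/\alpha}\asymp s$ precisely when $s\asymp(\beta\underline w)^{1/(1-1/\alpha)}$. This is why $s$-expandability at scale $s(\underline w)$ is the right hypothesis.

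\textbf{Construction.} I would proceed by a greedy covering at dyadic (or geometric) scales. Tile $\R^d$ by the grid of unit boxes $\{\Lambda_1(x):x\in\Z^d\}$, and more generally by grids of boxes of volume $2^{jd}$ for $j=0,1,2,\dots$ up to the largest $j$ with $2^{jd}\le s(\underline w)$ (here one uses $s(\underline w)\le n$). Process scales from \emph{largest} to \emph{smallest}. At scale $j$, look at each scale-$j$ box $Q$ that contains ``many'' points of $\CL$ not yet assigned — say at least a constant fraction of the capacity $\re\,2^{jd}$ — declare $Q$ (suitably dilated by a constant factor, to absorb the $d^{d/2}$ diagonal loss and keep everything inside $\Lambda_n$) to be part of $\CK_n(\CL)$, and mark all points of $\CL$ in $Q$ as assigned. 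After running through all scales down to $j=0$, every point of $\CL$ has been assigned: a point never captured at any scale $j\ge1$ must sit in a unit box with a bounded number of points, and at $j=0$ we add that unit box. Set $\CK_n(\CL)$ to be the union of all the (dilated) boxes selected. The dilation constants and the ``fraction of capacity'' threshold are the free constants; they get pinned down to produce the explicit $2^{-(4d+1)}/(\re d^{d/2})$ in \eqref{eq:prop-cover-expansion-min-volume}.

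\textbf{Verifying the two bounds.} For the volume bound \eqref{eq:prop-cover-expansion-min-volume}: each selected box $Q$ of volume $2^{jd}$ absorbs at most $\re\,2^{jd}$ points (since $\CL$ is $s(\underline w)$-expandable and $2^{jd}\le s(\underline w)$, so the expandability inequality applies — here crucially $2^{jd}\ge s'\ge s(\underline w)$ is \emph{not} needed because expandability also holds at all smaller scales down to... — careful: Definition~\ref{def:expandable} is stated for $s'\ge s$, so for $j$ with $2^{jd}\ge s(\underline w)$ the ratio bound holds directly, and for smaller $j$ one uses that a box of smaller volume holds at most what a containing box of volume $\ge s(\underline w)$ holds, hence at most $\re\cdot s'$ with $s'$ the containing scale; in the construction I only ever select boxes at scales $\le s(\underline w)$ and the capacity accounting should be arranged so each selected box of volume $V$ gets charged $\le\re V$ points), so summing over selected boxes, $|\CL|\le \re\sum_Q \mathrm{Vol}(Q)\le \re\cdot 2^{4d}\,\mathrm{Vol}(\CK_n(\CL))$ after accounting for the dilation factor ($\le 2^{3d}$ say) and possible overlaps of dilated boxes ($\le 2^d$, bounded-overlap of a dilated grid), which rearranges to \eqref{eq:prop-cover-expansion-min-volume} with room to spare for the $d^{d/2}$. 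For the connection bound \eqref{eq:p-connection}: fix $v=(x_v,w_v)$ with $x_v\in\CK_n(\CL)$ and $w_v\ge\underline w$. Then $x_v$ lies in some dilated selected box $Q$ (volume $V=2^{jd}$) which before dilation contained $\ge c\,\re V$ points of $\CL$, all at distance $\le C V^{1/d}$ from $x_v$ for explicit constants $c,C$. Using $1-\prod(1-p_i)\ge \tfrac12\sum p_i$ when $\sum p_i\le 1$ (and a direct argument giving $\ge p/2$ otherwise), together with $\mathrm{p}(v,u)\ge p(1\wedge\beta\underline w/\|x_v-x_u\|^d)^\alpha\ge p(\beta\underline w/(C^dV))^\alpha$ for each of the $\ge c\re V$ nearby points, the probability that $v\sim\CL$ is at least $\tfrac{p}{2}\bigl(1\wedge c\re V(\beta\underline w/(C^dV))^\alpha\bigr)=\tfrac p2\bigl(1\wedge c\re C^{-d\alpha}V^{1-\alpha}(\beta\underline w)^\alpha\bigr)$; since $V\le s(\underline w)=(2^d\beta\underline w)^{1/(1-1/\alpha)}$ one checks $V^{1-\alpha}(\beta\underline w)^\alpha=(\beta\underline w)\cdot\bigl((\beta\underline w)^{1/(1-1/\alpha)}/V\bigr)^{\alpha-1}\cdot(\dots)\ge$ a constant $\ge C^{d\alpha}/(c\re)$ once the dilation/threshold constants are chosen compatibly with the ``$2^d$'' in \eqref{eq:prop-cover-expansion-min-weight}, so the minimum is not active and the bound is $\ge p/2$. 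The case $j=0$ (unit boxes with few points) is the boundary case and is exactly where the hypothesis $\underline w>(2^dd^{d/2}/\beta)\vee 1$ is used: there $\mathrm{p}(v,u)\ge p(1\wedge\beta\underline w/d^{d/2})^\alpha=p$ since $\beta\underline w/d^{d/2}>2^d\ge1$, so even a single nearby point suffices.

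\textbf{Main obstacle.} The genuine difficulty is not any single estimate but the \emph{bookkeeping of constants} so that the greedy multiscale construction simultaneously (a) assigns every point of $\CL$, (b) keeps the total volume within the factor $2^{4d+1}\re d^{d/2}$, and (c) guarantees for \emph{every} scale $V\le s(\underline w)$ that the ``capacity surplus'' $c\re V$ of nearby points is enough to push $V^{1-\alpha}(\beta\underline w)^\alpha$ above threshold — i.e.\ matching the exponent $1/(1-1/\alpha)$ in $s(\underline w)$ to the exponent $\alpha-1$ appearing in the union-bound connection probability. These two exponents are reciprocal-up-to-sign ($1-\alpha$ vs.\ $1/(1-1/\alpha)=\alpha/(\alpha-1)$), and the content of \eqref{eq:prop-cover-expansion-min-weight} is precisely that they cancel. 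Getting the dilation factors, the bounded-overlap count, and the capacity threshold to cohere into the clean stated constants — rather than just ``some constant $C_d$'' — is where the real care goes; the rest is the elementary inequalities $\mathrm{p}(v,u)\ge p(1\wedge\beta\kappa/\|x\|^d)^\alpha$, $1-\prod(1-p_i)\ge\tfrac12(1\wedge\sum p_i)$, and $\|x-y\|\le d^{1/2}(\mathrm{Vol})^{1/d}$ within a box.
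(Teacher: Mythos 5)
The construction you propose (top-down greedy selection at dyadic scales) is genuinely different from the paper's: the paper works at a single unit scale, detects dense cells, and then runs an iterative algorithm (\textsc{cover-expansion}) that grows each dense cell into an \emph{adaptively-sized} box whose volume is exactly proportional to the number of points it absorbs (with proportionality $\re d^{d/2}2^{3d}$), merging overlapping expanded boxes via a lexicographic-potential argument. Your multiscale-greedy has the advantage of avoiding the termination proof, so it is worth pursuing; however, the key estimate as you have written it does not close.

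The gap is in the connection-probability step. You assert that with $V\le s(\underline w)=(2^d\beta\underline w)^{\alpha/(\alpha-1)}$ one has $V^{1-\alpha}(\beta\underline w)^\alpha\ge$ a constant. That is false: the minimum over $V\le s(\underline w)$ is attained at $V=s(\underline w)$, and plugging in gives $s(\underline w)^{1-\alpha}(\beta\underline w)^\alpha=(2^d\beta\underline w)^{-\alpha}(\beta\underline w)^\alpha=2^{-d\alpha}$, which tends to $0$ as $\alpha\to\infty$. Consequently the exponent in the connection bound is $\asymp p\,\nu\,(2C)^{-d\alpha}$ for your density threshold $\nu$ and dilation constant $C$, and you would need $\nu$ to grow like $(2C)^{d\alpha}$ — not a fixed constant — for this to be bounded away from $0$. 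What is missing is the observation that a selected box with density at least $\nu$ cannot be large: since a box of volume $s(\underline w)$ has at most $\re\, s(\underline w)$ points by expandability, a sub-box with at least $\nu V$ points forces $V\le\re\,s(\underline w)/\nu$. Using this \emph{tighter} cap instead of $V\le s(\underline w)$ produces an extra factor $(\nu/\re)^{\alpha-1}$, and then the exponent becomes $p\,\re\,(\nu/(\re(2C)^d))^\alpha$, which is bounded below once $\nu\ge\re(2C)^d$. This is exactly the role played by the paper's Observation~3.6(iii)--(iv): its expanded box $B_j^{(\star)}$ has density pinned to $\re d^{d/2}2^{3d}$ by construction, and (iv) deduces the volume cap $\mathrm{Vol}(B_j^{(\star)})\le d^{-d/2}2^{-3d}s$ from expandability — so the density factor and the cap factor are equal, and the $\alpha$-dependence cancels cleanly to give $\exp(-p\re)\le 1-p/2$. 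To repair your argument you need to state and use this cap explicitly, and set your dilation and threshold constants so that the cap factor equals the density threshold; the ``$2^d$'' in \eqref{eq:prop-cover-expansion-min-weight} then absorbs the remaining slack.
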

The independence here means that the connection to $\CL$ of any set of vertices in $ \CK_n(\CL)\times [\underline w, \infty)$ dominates independent Bernoulli random variables with success probability $p/2$, regardless of the marks of vertices in $\CL$, and the exact location and mark of $v$, as long as it belongs to the set $\CK_n(\CL) \times[\underline w,\infty)$.
We use two  constructions for the set $\CK_n(\CL)$. If $\CL$ is not too dense (see  \eqref{eq:linear-volume}), we will use a \emph{proper cover} (see Definition~\ref{def:cover-proper} below). If, however, the points of $\CL$ are densely concentrated in small areas, we will use a new (deterministic) algorithm, the \emph{cover-expansion algorithm},  producing an \emph{expanded cover} (see Definition~\ref{def:cover-expansion} below) that still satisfies the connection probability in \eqref{eq:p-connection}. This will prove Proposition~\ref{prop:cover-expansion-deterministic}.
We start with some preliminaries.

\begin{definition}[Cells in a volume-$n$ box]\label{def:cells-general} Let $\widetilde B_z$ be a box of volume $1$ centered around $z\in \Z^d$.  For any two neighboring boxes $\widetilde B_z, \widetilde B_{z'}$, allocate the shared boundary $\partial \widetilde B_z\cap \partial \widetilde  B_{z'}$ to precisely one of the boxes (in an arbitrary but fixed way).
 For each $u\in \Z^d$ such that $u\notin \Lambda_n$ but  $\widetilde B_u\cap \Lambda_n \neq \emptyset$, let  $z(u):=\argmin\{\|u-z\|:z\in \Lambda_n\cap \Z^d\}$, and then define for each $z\in \Z^d\cap \Lambda_n$ the \emph{cell of} $z$ as
 \[ B_{z}:=\Big(\widetilde B_z\cap \Lambda_n \Big) \cup \Big(\bigcup_{u\in \Z^d: z(u)=z} \big(\widetilde B_u\cap \Lambda_n\big)\Big). \]
\end{definition}
In words, boxes that have their center inside $\Lambda_n$ but are not fully contained in $\Lambda_n$ are truncated, while boxes that have their centers outside $\Lambda_n$ but intersect $\Lambda_n$ are merged with the closest box with center inside $\Lambda_n$. At every point of $\Lambda_n$ at most $2^d$ cells are merged together, and only $1/2$ of the radius in each coordinate can be truncated. Thus, for each cell~$B_z$,
\begin{equation}
 \sup\{\|x-y\|: x, y \in B_z\}\le2\sqrt{d}; \quad \mbox{and} \quad \quad 2^{-d}\le \mathrm{Vol}(B_z)\le 2^d \label{eq:max-cell-dist}
 .\end{equation}

\begin{definition}[Notation for cells containing vertices]\label{def:cell-zi}
Let $\CL\subseteq\Lambda_n$ be (a subset of) the locations of the vertex realization $\CV$.
 Let $\{B_{z_i}\}_{i=1}^{m'}$ be the cells with $\CL\cap B_{z_i}\neq \emptyset$.
 Let $\CL_i:=\CL \cap B_{z_i}$, $\ell_i:=|\CL_i|$ and $L:=|\CL|=\sum_{i=1}^{m'}\ell_i$.
\end{definition}

We will distinguish two cases for the arrangement of the vertices among the cells: either the number of cells is linear in the number of vertices, or there is a positive fraction of all cells  that all contain `many' vertices.
The next combinatorial claim makes this precise.
\begin{claim}[Pigeon-hole principle for cells]\label{lemma:pigeonhole}
 Let $\delta\in(0,1)$, $\nu\ge1$, and $\ell_1,\dots,\ell_{m'}\ge 1$ integers such that $\sum_{i\le m'}\ell_i=L$.
 If $m'< L(1-\delta)/\nu$
 then
 \begin{equation}
  \exists\   \CI\subseteq[m']: \quad  \forall i\in\CI: \ell_i\ge \nu, \qquad \text{ and }\qquad \sum_{i\in\CI}\ell_i\ge \delta L.\label{eq:pigeonhole-1}
 \end{equation}
\end{claim}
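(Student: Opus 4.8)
The claim is a pigeon-hole type statement: given cell-counts $\ell_1,\dots,\ell_{m'}\ge 1$ summing to $L$, if there are \emph{few} cells ($m'<L(1-\delta)/\nu$) then a large fraction of the mass $L$ must sit in cells that are individually `heavy' (at least $\nu$). The natural approach is to argue by contradiction: isolate the `light' cells (those with $\ell_i<\nu$, equivalently $\ell_i\le\nu-1$, but it is cleaner to just say $\ell_i<\nu$) and show that they cannot carry too much of the total mass.

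\emph{Key steps.} First I would set $\CI:=\{i\in[m']:\ell_i\ge\nu\}$ — this is exactly the candidate set required by \eqref{eq:pigeonhole-1}, and the first condition ($\forall i\in\CI:\ell_i\ge\nu$) holds by definition. Let $\CJ:=[m']\setminus\CI$ be the light cells, so $\ell_i\le\nu-1<\nu$ for $i\in\CJ$. Then
\[
\sum_{i\in\CI}\ell_i = L - \sum_{i\in\CJ}\ell_i \ge L - |\CJ|\cdot(\nu-1) \ge L - m'(\nu-1).
\]
Actually the cleanest bound uses $\ell_i\le\nu$ is false for light cells; since $\ell_i$ is an integer $<\nu$ we could use $\ell_i\le\nu-1$, but to match the hypothesis as written it suffices to bound $\ell_i< \nu$ and hence $\sum_{i\in\CJ}\ell_i< |\CJ|\nu\le m'\nu$. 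Combining with the hypothesis $m'<L(1-\delta)/\nu$ gives $\sum_{i\in\CJ}\ell_i< L(1-\delta)$, hence $\sum_{i\in\CI}\ell_i = L-\sum_{i\in\CJ}\ell_i> L-L(1-\delta)=\delta L$, which is the second conclusion in \eqref{eq:pigeonhole-1}. (If one wants $\ge\delta L$ rather than $>\delta L$ the strict inequality already suffices.)

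\emph{Main obstacle.} There is essentially no obstacle here — this is a one-line counting argument. The only mild subtlety is bookkeeping between strict and non-strict inequalities and the integrality of the $\ell_i$; I would simply use $\ell_i<\nu$ on light cells (rather than $\ell_i\le\nu-1$) so that the computation mirrors the hypothesis $m'\nu<L(1-\delta)$ verbatim, and note that $\CI$ may a priori be empty, in which case $\sum_{i\in\CI}\ell_i=0$, but then the displayed chain forces $0>\delta L\ge 0$, a contradiction, so $\CI$ is automatically nonempty — no separate case analysis is needed. I would write the proof in two or three sentences.

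\begin{proof}
Set $\CI:=\{i\in[m']:\ell_i\ge\nu\}$; then the first requirement in \eqref{eq:pigeonhole-1} holds by definition of $\CI$. Writing $\CJ:=[m']\setminus\CI$, every $i\in\CJ$ satisfies $\ell_i<\nu$, so $\sum_{i\in\CJ}\ell_i< |\CJ|\,\nu\le m'\nu$. By the assumption $m'< L(1-\delta)/\nu$ this gives $\sum_{i\in\CJ}\ell_i< L(1-\delta)$, and therefore
\[
\sum_{i\in\CI}\ell_i = L - \sum_{i\in\CJ}\ell_i > L - L(1-\delta) = \delta L,
\]
which establishes the second part of \eqref{eq:pigeonhole-1} (and in particular $\CI\neq\emptyset$).
\end{proof}
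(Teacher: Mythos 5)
Your proof is correct and is essentially the same one-line counting argument as the paper's, only phrased directly rather than by contradiction. The paper assumes \eqref{eq:pigeonhole-1} fails for the particular set $\CJ^\complement=\{j:\ell_j\ge\nu\}$, deduces $\sum_{j\in\CJ}\ell_j\ge(1-\delta)L$ for the light cells $\CJ$, and then $m'\ge|\CJ|\ge(1-\delta)L/\nu$, contradicting the hypothesis. You instead take $\CI=\CJ^\complement$ outright and verify both requirements; this is slightly more informative (it exhibits a witness $\CI$) at the cost of having to check the displayed bound by hand, whereas the contradiction version gets the same arithmetic by bounding $|\CJ|$ from below. One pedantic point: the first inequality in your chain $\sum_{i\in\CJ}\ell_i<|\CJ|\nu\le m'\nu$ is strict only when $\CJ\ne\emptyset$; if $\CJ=\emptyset$ both sides are $0$, so that step, as written, is literally false even though the conclusion $\sum_{i\in\CJ}\ell_i<L(1-\delta)$ still holds (the final $<$ is supplied by the hypothesis). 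Writing $\le$ in the first two places, or noting that $\CJ=\emptyset$ gives $\CI=[m']$ and hence $\sum_{i\in\CI}\ell_i=L\ge\delta L$ immediately, would make the chain airtight.
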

\begin{proof}
 Assume by contradiction that $\delta, \nu, \ell_1,\dots,\ell_{m'}$ are such that $m'< L(1-\delta)/\nu$ holds but \eqref{eq:pigeonhole-1} does not hold.
 Let $\CJ:=\{j: \ell_j<\nu\} \subseteq[m']$ and let $\CJ^\complement:=[m']\backslash\CJ$. Then $\forall i\in \CJ^\complement: \ell_i\ge \nu$ and hence $\sum_{j \in \CJ^\complement} \ell_j < \delta L$, as we assumed the opposite of \eqref{eq:pigeonhole-1}. Since the total sum is $L$, this  implies that
 $\sum_{j\in\CJ}\ell_j\ge  (1-\delta)L$.
 Moreover, since $\ell_j< \nu$ for $j\in \CJ$, it must hold that $m'\ge|\CJ|\ge (1-\delta)L/\nu$, which then contradicts that $m'< L(1-\delta)/\nu$.
\end{proof}

We define the first possibility for the set $\CK_n(\CL)$, which is inspired by Claim \ref{lemma:pigeonhole} with $\nu=\re d^{d/2}2^{3d}$ and $\delta=1/2$.
\begin{definition}[Proper cover]\label{def:cover-proper}
 We say that $\CL$ admits a \emph{proper cover} if
 $
  m'\ge |\CL|/(2\re d^{d/2}2^{3d})
 $ in Definition~\ref{def:cell-zi},
 and  we define the cover of $\CL$ as
 \begin{equation}\nonumber
  \CK_n^{(\mathrm{prop.})}(\CL) := \bigcup_{i\in[m'  ]}B_{z_i},\quad  \mbox{ satisfying }\quad \mathrm{Vol}(\CK_n^\sss{(\mathrm{prop.})})\ge \frac{1}{d^{d/2}\re 2^{4d+1}} |\CL|.
 \end{equation}
\end{definition}
By \eqref{eq:max-cell-dist}, $\nu=\re d^{d/2}2^{3d}$, and $\delta=1/2$, hence, we obtain the desired volume bound on the right-hand side above,
establishing \eqref{eq:prop-cover-expansion-min-volume} for sets  admitting a proper cover.
Moreover, consider now $(x_v, w_v)\in (B_{z_i}\cap \CL)\times[1,\infty)$ and $u:=(x_u,w_u)\in B_{z_i}\times[\underline{w},\infty)$ with $B_{z_i}\subseteq \CK_n^\sss{(\mathrm{prop.})}$. Then $\|x_u-x_v\|\le 2\sqrt{d}$ by \eqref{eq:max-cell-dist}. Since we assumed
$\underline{w}\ge (2^d d^{d/2}/\beta\vee 1)$ above \eqref{eq:prop-cover-expansion-min-weight}, using \eqref{eq:connection-prob-gen} and \eqref{eq:kernels},
\begin{equation}\label{eq:p-for-proper}
 \mathrm{p}(u,v) \ge p\big(1\wedge(\beta\kappa_\sigma(\underline{w},1)/(2\sqrt{d})^{d})\big)^{\alpha}
 \ge p\Big(1\wedge \big((2^d d^{d/2}/\beta\vee 1)\beta/(2\sqrt{d})^{d}\big)\Big)^{\alpha}
 \ge p.
\end{equation}
This shows \eqref{eq:p-connection} for sets admitting a proper cover. The argument for $\alpha=\infty$ is similar.

In what follows we treat sets $\CL$ that do not admit a proper cover, i.e., when $\CL$ is contained in too few cells. We define an ``expanded'' cover, which we obtain after applying a suitable volume-increasing procedure ---the cover expansion algorithm--- to $\cup_{i\in[m']} B_{z_i}$ that we  explain at the end of the section.

\subsection{Cover expansion}
In this section we assume that $\CL$ does not admit a proper cover. By Claim~\ref{lemma:pigeonhole}, and re-indexing cells in Definition~\ref{def:cell-zi}, without loss of generality we may assume that $\CI=[m]\subseteq[m']$ satisfies \eqref{eq:pigeonhole-1} with $\nu=\re d^{d/2}2^{3d}$ and $\delta=1/2$.
We use $\Lambda(x, s)$ in \eqref{eq:xi-q-ab} here for the box of volume $s$ centered at $x\in \R^d$.
\begin{definition}[Cover expansion]\label{def:cover-expansion}
 Let $\CL$ be a set of vertex locations that does not admit a proper cover as in Definitions~\ref{def:cell-zi} and \ref{def:cover-proper}. Let $[m]:=\{j: \ell_j \ge \re d^{d/2}2^{3d}\}\subseteq [m']$ satisfy \eqref{eq:pigeonhole-1} with $\nu=\re d^{d/2}2^{3d}$ and $\delta=1/2$.
 The \emph{cover allocation} is defined as a subset of labels $\CJ^{\sss{(\star)}}\subseteq [m]$  and corresponding boxes $(B_j^\sss{(\star)})_{j\in \CJ^{\sss{(\star)}}}\subset\R^d$, centered at $(z_j)_{j\in \CJ^{\sss{(\star)}}}$, together with an allocation ${\buildrel \star \over \mapsto}$ of the cells $B_{z_i}: i\le m$ to these boxes, with
 \begin{equation}
  \mathrm{Cells}_j^\sss{(\star)}:=\bigcup_{i\le m}\big\{i: B_{z_i}\ {\buildrel \star \over \mapsto}\ B_j^\sss{(\star)}\big\}, \label{eq:assigned-cells-star}
 \end{equation}
 satisfying the following properties:
 \begin{itemize}
  \item[(disj.)] the boxes $(B_j^\sss{(\star)})_{j\in \CJ^{\sss{(\star)}}}$ are pairwise disjoint sets in $\R^d$;
  \item[(vol.)]   for all $j\in \CJ^{\sss{(\star)}}$; we have
        \begin{equation}
         B_j^\sss{(\star)}=\Lambda\Big(z_j, \frac{1}{\re d^{d/2}2^{3d}}\sum_{i\in\mathrm{Cells}_j^\sss{(\star)}} \ell_i\Big); \quad  \mathrm{Vol}(B_j^\sss{(\star)})=\frac{1}{\re d^{d/2}2^{3d}}\sum_{i\in\mathrm{Cells}_j^\sss{(\star)}} \ell_i\label{eq:merging-volume};
        \end{equation}
  \item[(near)] for each $i\in[m]$ ; $i\in\mathrm{Cells}_j^\sss{(\star)}$
        \begin{equation}
         \|z_i - z_j\|^d \le
         d^{d/2}\mathrm{Vol}(B_j^\sss{(\star)}).\label{eq:merging-distance}
        \end{equation}
 \end{itemize}
 We  call $B_j^\sss{(\star)}$ the expanded boxes, and define  the \emph{cover expansion} of $\CL$ as
 \begin{equation}\label{eq:expanded-cover-def}
  \CK_n^\sss{(\mathrm{exp})}(\CL):=\Lambda_n\cap \Big(\bigcup_{j\in\CJ^{\sss{(\star)}}}B_j^\sss{(\star)} \Big).
 \end{equation}
\end{definition}
We make a few comments about Definition~\ref{def:cover-expansion}: (disj) and (vol) together ensure that the total volume of the expanded cover is proportional to $|\CL|$. Further, (vol) ensures that $\mathrm{Vol}(B_j^\sss{(\star)})$ is proportional to the number of vertices that are in cells allocated to $B_j^\sss{(\star)}$.
Finally, (near) ensures that the center $z_i$ of each cell $B_{z_i}$ is relatively close to the center of the box to which it is allocated. The distance between the center of the allocated cells and the center of $B_j^{\sss{(\star)}}$ is at most $\sqrt{d}$ times the side-length of the box $B_j^{\sss{(\star)}}$. In particular if  $B_j^{\sss{(\star)}}$ contains many vertices of $\CL$ and it is thus large, this distance can be also large.

\begin{proposition}[Every set has either a proper cover or a cover expansion]\label{prop:cover-expansion-correctness}
 Assume $\CL$ does not admit a proper cover defined in Definition~\ref{def:cover-proper}. Then there exists a cover expansion of $\CL$ in the sense of Definition~
  \ref{def:cover-expansion}.
 Further, if $\CL$ is $n$-expandable then the total volume of the cover expansion of $\CL$ is linear in $|\CL|$, i.e.,
 \begin{equation}\label{eq:linear-volume-expansion}
  \mathrm{Vol}(\CK_n^\sss{(\mathrm{exp})}(\CL))
  \ge
  \frac{1}{2^{4d+1}\re d^{d/2}}|\CL|.
 \end{equation}
\end{proposition}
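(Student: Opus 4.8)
\emph{Setup.} Write $\nu:=\re\,d^{d/2}2^{3d}$ and recall from the discussion preceding Definition~\ref{def:cover-expansion} that, since $\CL$ does not admit a proper cover, $[m]$ may be taken to be $\{i\le m':\ell_i\ge\nu\}$, and then Claim~\ref{lemma:pigeonhole} (with $\delta=\tfrac12$) gives $\sum_{i\in[m]}\ell_i\ge\tfrac12|\CL|$. The plan is to (A)~construct a cover allocation in the sense of Definition~\ref{def:cover-expansion}, and then (B)~deduce the volume bound \eqref{eq:linear-volume-expansion} under $n$-expandability. Part (B) will be short once (A) is available, so the work is in (A).

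\emph{(A) The cover-expansion algorithm.} I would build the allocation by an iterative merging procedure. Initialise by letting each $i\in[m]$ be its own box, with center $z_i$, cell-set $\{i\}$ and volume $\ell_i/\nu\ge1$; at this stage (vol.) and (near) hold trivially, but the boxes may overlap. Then repeat: as long as two current boxes overlap, pick such a pair $B$ (volume $V$, center $z_B$, cells $C_B$) and $B'$ (volume $V'\le V$, center $z_{B'}$, cells $C_{B'}$) and replace them by one box with center $z_B$, cell-set $C_B\cup C_{B'}$ and volume $V+V'$. Each merge strictly reduces the number of boxes, so the procedure halts; the final boxes, cell-sets and labels give $(B_j^\sss{(\star)})_j$, $(\mathrm{Cells}_j^\sss{(\star)})_j$ and $\CJ^\sss{(\star)}$. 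Then (vol.) holds by construction and (disj.) holds at termination; the substance is to show that (near) survives every merge. For $i\in C_B$ this is automatic since the volume only increases. For $i\in C_{B'}$ one uses that $B,B'$ overlap, whence $\|z_B-z_{B'}\|\le\tfrac{\sqrt d}{2}(V^{1/d}+V'^{1/d})$, together with the elementary inequality $\tfrac12(a^{1/d}+b^{1/d})\le\max(a,b)^{1/d}\le(a+b)^{1/d}$; this already closes the estimate when $B'$ is a singleton. In general one must additionally control the ``spread'' of each box's cell-set relative to its side-length, and — this is the main obstacle, see below — order the merges carefully so that a box is absorbed only once its volume is a sufficiently small fraction of the absorber's; propagating the matching inductive invariant then yields (near). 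Since $z_j\in\Z^d\cap\Lambda_n$ for every $j\in\CJ^\sss{(\star)}$, this produces a cover expansion $\CK_n^\sss{(\mathrm{exp})}(\CL)=\Lambda_n\cap\bigcup_jB_j^\sss{(\star)}$.

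\emph{(B) The volume bound.} Assume $\CL$ is $n$-expandable and set $V_j:=\mathrm{Vol}(B_j^\sss{(\star)})=\tfrac1\nu\sum_{i\in\mathrm{Cells}_j^\sss{(\star)}}\ell_i\ge1$. By (near) and \eqref{eq:max-cell-dist}, each cell $B_{z_i}$ with $i\in\mathrm{Cells}_j^\sss{(\star)}$ lies within $\ell^\infty$-distance $\sqrt d\,V_j^{1/d}+2\sqrt d\le 3\sqrt d\,V_j^{1/d}$ of $z_j$, so $\bigcup_{i\in\mathrm{Cells}_j^\sss{(\star)}}B_{z_i}\subseteq\Lambda(z_j,6^dd^{d/2}V_j)$, a box centered at a lattice point. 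Were its volume $\ge n$, $n$-expandability would force $\nu V_j=\sum_{i\in\mathrm{Cells}_j^\sss{(\star)}}\ell_i=\big|\CL\cap\bigcup_{i}B_{z_i}\big|\le\re\,6^dd^{d/2}V_j$, i.e.\ $\nu\le\re\,6^dd^{d/2}$, contradicting $\nu=\re\,8^dd^{d/2}$; hence $6^dd^{d/2}V_j<n$, in particular $V_j<n$. Since $z_j\in\Lambda_n$ and $B_j^\sss{(\star)}=\Lambda(z_j,V_j)$ has side $<n^{1/d}$, intersecting with $\Lambda_n$ keeps at least half the extent of $B_j^\sss{(\star)}$ in each coordinate, so $\mathrm{Vol}(\Lambda_n\cap B_j^\sss{(\star)})\ge 2^{-d}V_j$. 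Summing over the disjoint boxes,
\begin{gather*}
\mathrm{Vol}\big(\CK_n^\sss{(\mathrm{exp})}(\CL)\big)=\sum_{j\in\CJ^\sss{(\star)}}\mathrm{Vol}\big(\Lambda_n\cap B_j^\sss{(\star)}\big)\ \ge\ 2^{-d}\sum_{j}V_j\\
=\ \frac{1}{2^d\nu}\sum_{i\in[m]}\ell_i\ \ge\ \frac{|\CL|}{2^{d+1}\nu}\ =\ \frac{|\CL|}{2^{4d+1}\re\,d^{d/2}},
\end{gather*}
which is \eqref{eq:linear-volume-expansion}.

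\emph{The main obstacle.} Everything hinges on preserving (near) through the merges. Resetting the merged volume to $V+V'$ is forced by (vol.), but if two overlapping boxes have comparable volume $V$, an absorbed cell can lie at distance $\approx 2\sqrt d\,V^{1/d}$ from the new center, which exceeds the $\sqrt d\,2^{1/d}V^{1/d}$ permitted by (near) for the merged box as soon as $d\ge2$. So the algorithm has to be arranged so that a box is only ever merged into a strictly dominant one; identifying the correct order of merges and propagating the accompanying geometric invariant on each box's cell-set — which is precisely where Definition~\ref{def:expandable} enters inside (A) — is the technical core of the cover-expansion construction, while the reduction in the Setup and the computation in (B) are routine.
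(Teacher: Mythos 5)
Your part (B) is essentially right (you reprove Observation~\ref{obs:cover-expansion}(v) by slightly different means, deducing $V_j<n$ from a contradiction with $s$-expandability rather than directly from $|\CL|\le\re n$ and $V_j\le|\CL|/\nu$, but both work), and your reduction via Claim~\ref{lemma:pigeonhole} is the paper's Setup. However, part (A) contains a genuine gap, and you have correctly located it yourself: a wholesale merge of two comparably-sized overlapping boxes breaks (near) as soon as $d\ge2$, and you never actually repair this — you only gesture at ``ordering the merges so a box is absorbed only by a strictly dominant one'' and ``propagating a geometric invariant on each box's cell-set.'' That is not what the paper does, and it would not obviously work: in the initial configuration all boxes can have nearly equal volume, so there is no ``strictly dominant'' candidate to start from, and any inductive invariant tying a box's cell-spread to its side-length would have to survive merges without being able to appeal to such an invariant on the partner box. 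Your parenthetical claim that Definition~\ref{def:expandable} ``enters inside (A)'' is also off: expandability plays no role in constructing the allocation or proving the algorithm terminates; it is used only for the volume bounds (Observation~\ref{obs:cover-expansion}(iv)--(v)), i.e.\ in (B).

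The paper's resolution is structurally different from a merge. In each round one takes the \emph{largest} box $B_{j_1}$ with an overlap and the largest box $B_{j_2}$ overlapping it, but one does \emph{not} hand all of $\mathrm{Cells}_{j_2}$ to $j_1$. Only those labels $i\in\mathrm{Cells}_{j_2}$ whose centers $z_i$ already satisfy the (near) constraint for $B_{j_1}^{\sss{(r)}}$ (the set $\CI_1^{\sss{(r)}}$ in \eqref{eq:cover-alg-z1}) are re-allocated to $j_1$; the remaining labels $\CI_2^{\sss{(r)}}$ are re-allocated \emph{back to themselves} as singleton boxes. This makes (near) trivially preserved in every case: absorbed cells satisfy it by selection, and cells returned to singletons have $\|z_i-z_i\|=0$. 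The non-emptiness of $\CI_1^{\sss{(r)}}$ (Observation~\ref{obs:nonempty}) follows because overlap forces $z_{j_2}$ itself into $\CI_1^{\sss{(r)}}$, so $B_{j_1}$'s volume strictly increases by at least $1$ per round. Termination is then a purely combinatorial lexicographic argument on the sorted volume vector $\bm a^{\sss{(r)}}$: the largest-volume entry touched grows, the surplus ``crumbles'' into smaller singletons, and the vector is bounded coordinatewise by $|\CL|/\nu$, so the process halts in finitely many rounds. Your proposal's main obstacle is thus dissolved, not circumvented, by absorbing only what fits and discarding the rest.
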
 
We defer the proof of existence of $\CK_n^\sss{(\mathrm{exp})}(\CL)$ to the end of the section. Assuming that a cover expansion exists, we show now a few important properties. After that, we show how Proposition~\ref{prop:cover-expansion-deterministic} follows from Proposition~\ref{prop:cover-expansion-correctness}.

\begin{observation}[Cover-expansion properties]\label{obs:cover-expansion}
 Consider the cover expansion of a set $\CL$  that does not admit a proper cover according to Definition~\ref{def:cover-proper}.
 \begin{itemize}
  \setlength\itemsep{-0.3em}
  \item[(i)] Every expanded box has volume at least $1$, i.e., for all $j\in \CJ^{\sss{(\star)}}$, $\mathrm{Vol}(B_j^\sss{(\star)})\ge1$.
  \item[(ii)] For any cell with  $B_{z_i}\ {\buildrel \star \over \mapsto}\  B_j^\sss{(\star)}$,
        \begin{equation}
         \sup \big\{\|x_u-x_v\|: x_u\in\CL_i, x_v\in B_j^\sss{(\star)}\big\} \le
         4\sqrt{d}\mathrm{Vol}(B_j^\sss{(\star)})^{1/d}.
         \nonumber
        \end{equation}
  \item[(iii)]
        For every box $B_j^\sss{(\star)}$, there exists a box $B_j'$ centered at $z_j$ such that
        \begin{equation}
         \mathrm{Vol}(B_j')=d^{d/2}2^{3d}\mathrm{Vol}(B_j^\sss{(\star)}),
         \qquad \text{ and }\qquad
         |\CL \cap B_j'|\ge \re\mathrm{Vol}(B_j').\label{eq:cover-expansion-larger-box}
        \end{equation}
  \item[(iv)] If $\CL$ is (additionally) $s$-expandable for some $s\le n$, then for all $j\in \CJ^{\sss{(\star)}}$
        \begin{equation}
         \mathrm{Vol}(B_j^\sss{(\star)})\le d^{-d/2}2^{-3d}s.\label{eq:cover-expansion-maxvol}
        \end{equation}
  \item[(v)]
        If $\CL$ is $n$-expandable, then the total volume of a cover expansion is linear in $|\CL|$, i.e., \eqref{eq:linear-volume-expansion} holds.
 \end{itemize}
 \begin{proof}
  Part (i) is a consequence of Definition~\ref{def:cover-expansion}: every cell with label at most $m$  has  $\ell_i\ge\re d^{d/2}2^{3d}$, so by (vol), i.e., \eqref{eq:merging-volume}, Observation (i) follows.

  For part (ii) we apply the triangle inequality: since $x_u\in \CL_i$, $x_u$ is in $B_{z_i}$, and so by \eqref{eq:max-cell-dist},
  $\|x_u-z_i\|\le 2\sqrt{d}$; and by  \eqref{eq:merging-distance}
  $\|z_i-z_j\|\le \sqrt{d}\mathrm{Vol}(B_j^\sss{(\star)})^{1/d}$; hence
  $
  \|x_u-z_j\| \le 2\sqrt{d} + \sqrt{d}\mathrm{Vol}(B_j^\sss{(\star)})^{1/d}.
  $
  Also, for any $x_v\in B_j^\sss{(\star)}$,
  $
  \|z_j-x_v\|\le (\sqrt{d}/2)\mathrm{Vol}(B_j^\sss{(\star)})^{1/d}
  $ by \eqref{eq:merging-volume}.
  Combining these bounds and using $\mathrm{Vol}(B_j^\sss{(\star)})^{1/d}\ge 1$ yields
  \[
  \|x_u-x_v\| \le 2\sqrt{d} + (3\sqrt{d}/2)\mathrm{Vol}(B_j^\sss{(\star)})^{1/d}\le (7\sqrt{d}/2)\mathrm{Vol}(B_j^\sss{(\star)})^{1/d} \le
  4\sqrt{d}\mathrm{Vol}(B_j^\sss{(\star)})^{1/d},
  \]
  and part (ii) follows.
  For part (iii), note that part (ii) applied to $u\in \CL_i \subset B_{z_i}$ and $z_j$, yields 
  \begin{equation}
  \sup_{i\in \mathrm{Cells}_j^\sss{(\star)}} \big\{\|x_u-z_j\|: x_u\in\CL_{i}\big\} \le
  4\sqrt{d}\mathrm{Vol}(B_j^\sss{(\star)})^{1/d}.\nonumber
  \end{equation}
  Consequently, the box $B_j'$ centered at $z_j$ of volume
  $
  \mathrm{Vol}(B_j')=d^{d/2}2^{3d}\mathrm{Vol}(B_j^\sss{(\star)})
  $
  contains all $u\in \CL_i$ with $i\in \mathrm{Cells}_j^\sss{(\star)}$.
  Hence, using \eqref{eq:merging-volume}, we obtain
  \begin{equation}\label{eq:Bj-points}
  |\CL \cap B_j'| \,\ge\, \sum_{i\in\mathrm{Cells}_j^\sss{(\star)}}\ell_i \,=\, \re d^{d/2}2^{3d} \mathrm{Vol}(B_j^\sss{(\star)}) \,=  \, \re\mathrm{Vol}(B_j'),
  \end{equation}
  and part (iii) follows. For part (iv), by combining \eqref{eq:Bj-points} with Definition~\ref{def:expandable} we see that $\CL$ can only be $s$-expandable if $\mathrm{Vol}(B'_j)\le  s$. Rearrangement of the first part of \eqref{eq:cover-expansion-larger-box} yields \eqref{eq:cover-expansion-maxvol}.

 Part (v). Since $\CL$ is $n$-expandable, Definition~\ref{def:expandable} implies that $|\CL|\le \re n$. By choice of the boxes in~\eqref{eq:merging-volume}, $B_j^{\sss{(\star)}}$ has volume at most $n$. 
 Therefore, by an argument similar to \eqref{eq:max-cell-dist}, $\mathrm{Vol}(B_j^{\sss{(\star)}} \cap \Lambda_n) \ge 2^{-d} \mathrm{Vol}(B_j^{\sss{(\star)}})$  for all $j\in \CJ^{\sss{(\star)}}$.
  Since all boxes of the cover expansion are disjoint, and each cell is allocated once,  \eqref{eq:merging-volume} and \eqref{eq:expanded-cover-def} imply that
  \begin{align}
  \mathrm{Vol}(\CK_n^\sss{(\mathrm{exp})}(\CL)) & =\sum_{j\in\CJ^{\sss{(\star)}}}\mathrm{Vol}(B_j^\sss{(\star)}\cap \Lambda_n) \ge 2^{-d}
  \sum_{j\le \CJ^{\sss{(\star)}}}\mathrm{Vol}(B_j^\sss{(\star)})                                                                          \nonumber \\
                                                 & =
  \frac{1}{\re2^{4d}d^{d/2}}\sum_{j\in \CJ^{\sss{(\star)}}}\sum_{i\in \mathrm{Cells}_j^\sss{(\star)}} \ell_i
  =
  \frac{1}{\re d^{d/2}2^{4d}}\sum_{i\le m}\ell_i
  \ge
  \frac{1}{\re d^{d/2}2^{4d+1}}|\CL|,\nonumber
  \end{align}
  where the last bound follows by the assumption in Definition~\ref{def:cover-expansion} that $\ell_i \ge \re d^{d/2} 2^{3d}$ for $i\le m$, and the initial assumption that \eqref{eq:pigeonhole-1} in Claim \ref{lemma:pigeonhole} holds for $[m]$ with $\delta=1/2$.
 \end{proof}
\end{observation}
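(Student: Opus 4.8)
I would establish the five parts of Observation~\ref{obs:cover-expansion} in the order listed, each from the defining properties (disj.), (vol.), (near) of the cover expansion in Definition~\ref{def:cover-expansion} together with the cell-geometry bounds \eqref{eq:max-cell-dist} and the pigeon-hole conclusion \eqref{eq:pigeonhole-1} with $\delta=1/2$; the ordering matters because each part feeds into the next. For (i): every $j\in\CJ^\sss{(\star)}$ lies in $[m]$ and in its own allocated set $\mathrm{Cells}_j^\sss{(\star)}$, so (vol.), i.e.\ \eqref{eq:merging-volume}, gives $\mathrm{Vol}(B_j^\sss{(\star)})\ge(\re d^{d/2}2^{3d})^{-1}\ell_j\ge 1$ since $\ell_j\ge\re d^{d/2}2^{3d}$ by the definition of $[m]$. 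The content of (i) is that every expanded box has side length at least $1$, which is what lets additive constants be absorbed afterwards. For (ii): take $x_u\in\CL_i\subseteq B_{z_i}$ and $x_v\in B_j^\sss{(\star)}$ with $B_{z_i}\ {\buildrel \star \over \mapsto}\ B_j^\sss{(\star)}$, and chain the triangle inequality along $x_u\to z_i\to z_j\to x_v$: the first leg is at most $2\sqrt d$ by \eqref{eq:max-cell-dist}, the second at most $\sqrt d\,\mathrm{Vol}(B_j^\sss{(\star)})^{1/d}$ by (near) \eqref{eq:merging-distance}, and the third at most $(\sqrt d/2)\mathrm{Vol}(B_j^\sss{(\star)})^{1/d}$ because $x_v$ and $z_j$ lie in the same box of side $\mathrm{Vol}(B_j^\sss{(\star)})^{1/d}$; summing and replacing the additive $2\sqrt d$ by $2\sqrt d\,\mathrm{Vol}(B_j^\sss{(\star)})^{1/d}$ via (i) gives $(7\sqrt d/2)\mathrm{Vol}(B_j^\sss{(\star)})^{1/d}\le 4\sqrt d\,\mathrm{Vol}(B_j^\sss{(\star)})^{1/d}$.

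For (iii): apply (ii) with the centre $z_j$ in the role of $x_v$ to conclude that $\bigcup_{i\in\mathrm{Cells}_j^\sss{(\star)}}\CL_i$ lies in the Euclidean ball of radius $4\sqrt d\,\mathrm{Vol}(B_j^\sss{(\star)})^{1/d}$ around $z_j$, hence in the axis-parallel box $B_j'$ centred at $z_j$ of side $8\sqrt d\,\mathrm{Vol}(B_j^\sss{(\star)})^{1/d}$, so that $\mathrm{Vol}(B_j')=(8\sqrt d)^d\mathrm{Vol}(B_j^\sss{(\star)})=d^{d/2}2^{3d}\mathrm{Vol}(B_j^\sss{(\star)})$, the first identity in \eqref{eq:cover-expansion-larger-box}. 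Since the $\CL_i$ sit in distinct cells they are disjoint, so counting points and using the volume formula in (vol.) gives $|\CL\cap B_j'|\ge\sum_{i\in\mathrm{Cells}_j^\sss{(\star)}}\ell_i=\re d^{d/2}2^{3d}\mathrm{Vol}(B_j^\sss{(\star)})=\re\,\mathrm{Vol}(B_j')$, the second part of \eqref{eq:cover-expansion-larger-box}.

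For (iv): the box $B_j'$ from (iii) is centred at the lattice point $z_j\in\Z^d$ and contains at least $\re\,\mathrm{Vol}(B_j')$ points of $\CL$, so if $\CL$ is $s$-expandable then Definition~\ref{def:expandable}, applied at scale $s'=\mathrm{Vol}(B_j')$, forces $\mathrm{Vol}(B_j')\le s$; rearranging $\mathrm{Vol}(B_j')=d^{d/2}2^{3d}\mathrm{Vol}(B_j^\sss{(\star)})$ yields \eqref{eq:cover-expansion-maxvol}. For (v): $n$-expandability at scale $s'=n$ gives $|\CL|\le\re n$, so by (vol.) each box satisfies $\mathrm{Vol}(B_j^\sss{(\star)})\le(\re d^{d/2}2^{3d})^{-1}|\CL|\le n$; since $B_j^\sss{(\star)}$ is centred at a lattice point of $\Lambda_n$ with side at most $n^{1/d}$, intersecting with $\Lambda_n$ removes at most half of each coordinate's extent, so $\mathrm{Vol}(B_j^\sss{(\star)}\cap\Lambda_n)\ge 2^{-d}\mathrm{Vol}(B_j^\sss{(\star)})$. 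As the $B_j^\sss{(\star)}$ are pairwise disjoint by (disj.) and each cell $i\le m$ is allocated exactly once, summing over $j\in\CJ^\sss{(\star)}$, rewriting via \eqref{eq:merging-volume}, and using $\sum_{i\le m}\ell_i\ge|\CL|/2$ from \eqref{eq:pigeonhole-1} produces \eqref{eq:linear-volume-expansion}.

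\emph{Main difficulty.\ \ } The argument is elementary geometry once Definition~\ref{def:cover-expansion} is in hand; the only delicate point is the constant bookkeeping tying (i)--(iv) together. The radius factor $4\sqrt d$ in (ii) must be chosen precisely so that the ball around $z_j$ fits into a box whose volume is \emph{exactly} $d^{d/2}2^{3d}$ times that of $B_j^\sss{(\star)}$ --- the same factor that appears in (vol.) --- because it is this exact match that converts the point count of (iii) into the $s$-expandability obstruction of (iv) with the stated constants; and the absorption of the additive cell-diameter term in (ii) is exactly what forces part (i) (``every expanded box has side length $\ge 1$'') to be proved first.
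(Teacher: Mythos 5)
Your proposal is correct and follows essentially the same route as the paper's proof: the same part-by-part structure, the same triangle-inequality chain $x_u\to z_i\to z_j\to x_v$ with the additive $2\sqrt d$ absorbed via part (i), the same enlarged box $B_j'$ of volume $d^{d/2}2^{3d}\mathrm{Vol}(B_j^{\sss{(\star)}})$ feeding the expandability bound in (iv), and the same disjointness, half-volume, and pigeonhole accounting in (v). The only cosmetic difference is that in (i) you invoke the self-allocation $j\in\mathrm{Cells}_j^{\sss{(\star)}}$ (an algorithmic fact), whereas the paper only needs that $\mathrm{Cells}_j^{\sss{(\star)}}$ contains some cell with $\ell_i\ge \re d^{d/2}2^{3d}$; both yield the same conclusion.
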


\begin{proof}[Proof of Proposition~\ref{prop:cover-expansion-deterministic} assuming Proposition~\ref{prop:cover-expansion-correctness}]
 For sets $\CL$ that admit a proper cover, we recall the reasoning below Definition~\ref{def:cover-proper} (in particular \eqref{eq:p-for-proper}) which  implies both bounds \eqref{eq:prop-cover-expansion-min-volume} and \eqref{eq:p-connection} in Proposition~\ref{prop:cover-expansion-deterministic}.  Let $\CL$ be an $s$-expandable set that does not admit a proper cover. Let $\CK_n^\sss{(\mathrm{exp})}$ be a cover-expansion of $\CL$ given by the boxes $(B_j^\sss{(\star)})_{j\in \CJ^{\sss{(\star)}}}, \CJ^{\sss{(\star)}}\subseteq [m]$ and an allocation ${\buildrel \star \over \mapsto}$ of the initial cells $(B_{z_i})_{i\in[m]}$ to these boxes. The existence of this cover expansion is guaranteed by Proposition~\ref{prop:cover-expansion-correctness}. The volume bound \eqref{eq:prop-cover-expansion-min-volume} follows from \eqref{eq:linear-volume-expansion} in Proposition~\ref{prop:cover-expansion-correctness}.  Hence, it only remains to verify \eqref{eq:p-connection}.

 Let $u=(x_u,w_u)\in \CK_n^\sss{(\mathrm{exp})}\times[\underline{w},\infty)$. By (disj), and \eqref{eq:expanded-cover-def}, there exists $j\in\CJ^\sss{(\star)}$ such that $x_u\in B_j^\sss{(\star)}$. Recall from \eqref{eq:assigned-cells-star} that $\mathrm{Cells}_j^{\sss{(\star)}}$ are the cells allocated to $B_j^\sss{(\star)}$, and from Definition~\ref{def:cell-zi} that $\CL_i=\CL\cap B_{z_i}$. Let now
 $\CL_{j}^{\sss{(\star)}}:=\cup_{i\in \mathrm{Cells}_j^{\sss{(\star)}}} \CL_i$.
 Recall the formula of the connection probability from \eqref{eq:connection-prob-gen}.
 
 \emph{Case (1): $\alpha<\infty$}.
 By Observation \ref{obs:cover-expansion}(ii) for any $v=(x_v, w_v)\in \CL_j^{\sss{(\star)}}\times[1,\infty)$, and any $(x_u,w_u)\in B_j^\sss{(\star)}\times [\underline w, \infty)$, using the lower bounds for the marks, we obtain using \eqref{eq:connection-prob-gen} that
 \[
  \mathrm{p}\big(u,v\big) = p\Big(1\wedge\beta\frac{\kappa_\sigma(w_u,w_v)}{\|x_v-x_u\|^{d}}\Big)^\alpha \ge p\Big(1\wedge\frac{\beta\underline{w}}{ (4\sqrt{d})^{d}\mathrm{Vol}(B_j^\sss{(\star)})}\Big)^\alpha=:r.
 \]
 By \eqref{eq:merging-volume}, $|\CL_{j}^{\sss{(\star)}}|=\sum_{i\in \mathrm{Cells}_j^{\sss{(\star)}}} \ell_i=\re d^{d/2}2^{3d}\mathrm{Vol}(B_j^\sss{(\star)})$. Hence, we have
 \begin{align*}
  \Prob\Big((x_u, w_u)\nsim \CL_j^{\sss{(\star)}}& \,\big| \, \{(x_u, w_u): x_u\in\CL\}\cup \{v\}\subseteq \CV\Big)                                                                                                                                            \\
                                                        & \le (1-r)^{\re d^{d/2}2^{3d}\mathrm{Vol}(B_j^\sss{(\star)})}                                                                                                                              \\
                                                        & \le \exp\big(-p\re d^{d/2}2^{3d} \big(\mathrm{Vol}(B_j^\sss{(\star)})\wedge\beta^\alpha\underline{w}^\alpha (4\sqrt{d})^{-\alpha d}\mathrm{Vol}(B_j^\sss{(\star)})^{1-\alpha}\big)\big).
 \end{align*}
 Take now $s=s(\underline w)= (2^d \beta \underline w)^{1/(1-1/\alpha)}$. Since $\alpha \ge 1$, and $\CL$ is $s$-expandable, we can use the upper bound in  \eqref{eq:cover-expansion-maxvol} on $\mathrm{Vol}(B_j^\sss{(\star)})$ to bound the second term in the minimum on the right-hand side of the last row, and we use  $\mathrm{Vol}(B_j^\sss{(\star)})\ge 1$ by Observation \ref{obs:cover-expansion}(i) to bound the first term. We obtain
 \begin{align*}
  \Prob\Big((x_u, w_u)\nsim \CL_j^{\sss{(\star)}}\, &\big| \, \{(x_u, w_u): x_u\in\CL\}\cup \{v\}\subseteq \CV\Big)                                                               \\
                                                        & \le
  \exp\big(-p\re \big(d^{d/2}2^{3d}\wedge d^{d/2}2^{3d}\beta^\alpha\underline{w}^\alpha (4\sqrt{d})^{-\alpha d}s^{1-\alpha}(d^{-d/2}2^{-3d})^{1-\alpha}\big)\big) \\
                                            & =
  \exp\big(-p\re \cdot\big((d^{d/2}2^{3d
})\wedge ((2^d\beta )^\alpha \underline{w}^\alpha s^{1-\alpha})\big)\big) \le \exp(-\re p) \le 1-p/2,
 \end{align*}
 where we used in the last row that  $d^{d/2}2^{3d}>1$, the definition of $s$ and also that the bound on $\underline{w}$ in  \eqref{eq:prop-cover-expansion-min-weight}  ensures that  the second term inside the minimum is at least 1, and that $\exp(-\re p)\le 1-p/2$ for $p\in[0,1]$. This concludes the proposition for $\alpha<\infty$.

 \emph{Case (2): $\alpha=\infty$}. Using the same bounds as for $\alpha<\infty$ on the distance, mark and volume of boxes, but now \eqref{eq:connection-prob-gen} for $\alpha=\infty$, for any $u=(x_u, w_u)\in B_j^\sss{(\star)}\times[\underline{w},\infty)$ and any $v=(x_v, w_v)\in\CL_j^{\sss{(\star)}}\times[1,\infty)$ that
 \begin{align*}
  \mathrm{p}\big((x_u,w_u),(x_v, w_v)\big) & \ge p\mathbbm 1\{\beta \underline{w} \ge(4\sqrt{d})^d\mathrm{Vol}(B_j^{(\star)})\}
  \ge p\mathbbm 1\{\beta \underline{w} \ge(4\sqrt{d})^dd^{-d/2}2^{-3d}s\}                                                      \\
& =p\mathbbm 1\{\beta \underline{w} \ge 2^{-d}s\}=p > p/2,
 \end{align*}
 where in the one-but-last step we used \eqref{eq:prop-cover-expansion-min-weight}, finishing the proof of $\alpha=\infty$.
\end{proof}
The case $\alpha=\infty$ does not use of the size of $\CL_j^{\sss{(\star)}}$, and only requires a single vertex in it, which is intuitive considering the threshold nature of $\mathrm p$ in \eqref{eq:connection-prob-gen}.
It remains to prove Proposition~\ref{prop:cover-expansion-correctness}, which is the content of the following subsection.
\subsection{The cover-expansion algorithm}
Now we give the algorithm producing the cover expansion of a set $\CL$ without a proper cover, thence, proving Proposition~\ref{prop:cover-expansion-correctness}.

\subsubsection*{Setup for the algorithm} Recall the notation from Definitions~\ref{def:cell-zi} and \ref{def:cover-expansion}. Throughout, we will assume that $\CL$ does not admit a proper cover in Definition~\ref{def:cover-proper} and that $(B_{z_i}: i\in [m])$ are the cells satisfying \eqref{eq:pigeonhole-1}.
Contrary to Definition~\ref{def:cover-expansion}, which allocates the initial cells $B_{z_i}$ \emph{to boxes} $B_j^{\sss{(\star)}}$, the algorithm allocates the labels $i$, $i\le m$ of the initial cells $B_{z_i}$ \emph{towards each other} in discrete rounds $r\in \N$.
We write $i\ {\buildrel r \over \mapsto}\ j$ to indicate that label $i$ is allocated to label $j$ in the allocation of round $r$. We also write
\begin{equation}
 {\buildrel r\over\mapsto}
 :=
 \{(i, j): i\ {\buildrel r\over\mapsto}\ j\}_{i\le m}; \quad \mathrm{Cells}_j^\sss{(r)}:=\bigcup_{i\le m}\big\{i: i\ {\buildrel r \over \mapsto}\ j\big\}; \quad \CJ^\sss{(r)}:=\{j: \mathrm{Cells}_j^\sss{(r)}\neq \emptyset\}. \nonumber
\end{equation}
In each round $r\ge 0$, the boxes $\{B_j^{^{\sss{(r)}}}\}_{j\in\CJ^\sss{(r)}}$, and the centers of these boxes are completely determined by ${\buildrel r\over \mapsto}$ by the formula
\begin{equation}
 B_j^\sss{(r)}:= \Lambda\bigg(z_j, \frac{1}{\re d^{d/2}2^{3d}}\sum_{i\in\mathrm{Cells}_j^\sss{(r)}} \ell_i\bigg) \quad \emph{for} \quad j\in \CJ^{\sss{(r)}};
 \label{eq:or-bjr}
\end{equation}
where $\Lambda(x, s)$ is a box of volume $s$ centered at $x\in \R^d$, see  \eqref{eq:xi-q-ab}.
Since label $j$ corresponds to center $z_j$ across different rounds, by slightly abusing notation we also write $B_{z_i} \ {\buildrel r\over\mapsto}\ B_j^\sss{(r)}$ if and only if $i \ {\buildrel r\over\mapsto}\  j$.
We say that ${\buildrel r\over\mapsto}$ satisfies one (or more) conditions in Definition~\ref{def:cover-expansion} if  $(B_j^{\sss{(r)}})_{j\in \CJ^{\sss{(r)}}}$ with allocation ${\buildrel r\over\mapsto}$ satisfies the condition(s).

The algorithm starts with the identity as initial allocation ${\buildrel 0\over \mapsto}$ that induces possibly overlapping boxes $B_1^\sss{(0)},\dots,B_m^\sss{(0)}$; we will show that ${\buildrel 0\over \mapsto}$ already satisfies  (near)\ and (vol.)\ of Definition~\ref{def:cover-expansion}.
In each stage the algorithm attempts to remove an overlap --  a non-empty intersection -- between a pair of boxes by re-allocating a few cell labels, while maintaining properties (near)\ and (vol.); we achieve  (disj) in the last round $r^\star$.
The last round $r^\star<\infty$ corresponds to the final output, by setting $\CJ^{\sss{(\star)}}:= \CJ^{\sss{(r^\star)}}; B_j^\sss{(\star)}:= B_j^\sss{(r^\star)}$ and defining $B_{z_i}\ {\buildrel \star\over \mapsto}\  B_j^\sss{(\star)}$  iff $i\  {\buildrel r^\star\over \mapsto}\ j$.

\subsubsection*{The cover-expansion algorithm}

\begin{itemize}[align=right, labelwidth=100pt, leftmargin=37pt]
 \item [(input)] $(B_{z_i})_{i\in[m]}$ and $\CL_i=\CL\cap B_{z_i}$ satisfying \eqref{eq:pigeonhole-1} with $\nu=\re d^{d/2}2^{3d}$ and $\delta=1/2$.
 \item[(init.)] Set $r:=0$, and  allocate $j\ {\buildrel \sss{0} \over \mapsto}\ j$ for all $j\le m$.
 \item[(while)]  If $(B_j^{\sss{(r)}})_{j\in \CJ^{\sss{(r)}}}$ in \eqref{eq:or-bjr} are all pairwise disjoint, set $r^\star:=r$; and return $\CJ^{\sss{(\star)}}:= \CJ^{\sss{(r^\star)}}$; $B_j^\sss{(\star)}:= B_j^\sss{(r^\star)}$ and ${\buildrel \star \over \mapsto}:={\buildrel r^\star \over \mapsto}$.

      Otherwise,  let $j_1(r)\in\CJ^\sss{(r)}$ be the label corresponding to the largest box $B_{j_1(r)}^{\sss{(r)}}$ with an overlap with some other box in round $r$, and let $j_2(r)$ be the label of the largest box that overlaps with $B_{{j_1(r)}}^{\sss{(r)}}$ (using an arbitrary tie-breaking rule).
      Define
      \begin{equation}
      \begin{aligned}
        \mathcal{I}_1^{\sss{(r)}} &:= \mathrm{Cells}_{j_2(r)}^{(r)}\cap\big\{i: \|z_i-z_{j_1(r)}\|\le \sqrt{d}\mathrm{Vol}(B_{j_1(r)}^\sss{(r)})^{1/d}\big\}_{i\le m};\\
        \mathcal{I}_2^{\sss{(r)}} &:= \mathrm{Cells}_{j_2(r)}^{(r)} \setminus \mathcal{I}_1^{\sss{(r)}}.\label{eq:cover-alg-z1}
        \end{aligned}
      \end{equation}
      Then we define ${\buildrel r+1 \over \mapsto}$ by  only re-allocating labels in $\mathrm{Cells}_{j_2(r)}^{(r)}$ as follows:
      \begin{itemize}\setlength\itemsep{-0.2em}
        \item[(i)] for $i\in \mathcal{I}_1^{\sss{(r)}}$  we allocate $i\ {\buildrel r+1 \over \mapsto}\ j_1(r)$, i.e., the labels of cells that are sufficiently close to the center of $B_{j_1(r)}^{\sss{(r)}}$ in order to satisfy \eqref{eq:merging-distance} are re-allocated to $j_1(r)$;
        \item[(ii)] for $i\in \mathcal{I}_2^{\sss{(r)}}$ we allocate $i\ {\buildrel r+1 \over \mapsto}\ i$, i.e., the labels of cells in $\mathrm{Cells}_{j_2(r)}^{(r)}$ that are potentially too far away from the center of $B_{j_1(r)}^\sss{(r+1)}$ are re-allocated back to themselves;
        \item[(iii)]  for  $i\in [m]\setminus (\mathrm{Cells}_{j_2}^{\sss{(r)}})$, we set $i\ {\buildrel r+1 \over \mapsto}\ k$ if and only if $i\ {\buildrel r \over \mapsto}\ k$ (that is, ${\buildrel r+1 \over \mapsto}$ agrees with ${\buildrel r \over \mapsto}$ outside labels in $\mathrm{Cells}_{j_2(r)}^{\sss{(r)}}$).
      \end{itemize}
      Increase $r$ by one and repeat (while).
\end{itemize}
We make an immediate observation.
\begin{observation}\label{obs:nonempty}
 In each iteration of (while), $\mathcal{I}_1^{\sss{(r)}}$ in \eqref{eq:cover-alg-z1} is always non-empty. Moreover,
 \begin{equation}\label{eq:step-size}
  \mathrm{Vol}\big(B_{{j_1(r)}}^\sss{(r+1)}\big) -
  \mathrm{Vol}\big(B_{{j_1(r)}}^\sss{(r)}\big) \ge 1.
 \end{equation}
\end{observation}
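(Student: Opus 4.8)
The plan is to first isolate a structural \emph{invariant} of the algorithm and then read off both assertions from it. The invariant is: for every round $r\ge 0$ and every active label $j\in\CJ^\sss{(r)}$ one has $j\in\mathrm{Cells}_j^\sss{(r)}$ (equivalently, the label $j$ is allocated to itself in round $r$). I would prove this by induction on $r$. The base case $r=0$ is immediate since $\CJ^\sss{(0)}=[m]$ and $\mathrm{Cells}_j^\sss{(0)}=\{j\}$. For the inductive step I would track which labels get mapped to a given $j$ when passing from round $r$ to $r+1$, using that only the labels in $\mathrm{Cells}_{j_2(r)}^\sss{(r)}$ are re-allocated, those in $\mathcal I_1^\sss{(r)}$ go to $j_1(r)$, and those in $\mathcal I_2^\sss{(r)}=\mathrm{Cells}_{j_2(r)}^\sss{(r)}\setminus\mathcal I_1^\sss{(r)}$ go to themselves. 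For $j\notin\{j_1(r),j_2(r)\}$ nothing changes, so $\mathrm{Cells}_j^\sss{(r+1)}=\mathrm{Cells}_j^\sss{(r)}$ and the invariant is inherited; for $j=j_1(r)$ one gets $\mathrm{Cells}_{j_1(r)}^\sss{(r+1)}=\mathrm{Cells}_{j_1(r)}^\sss{(r)}\sqcup\mathcal I_1^\sss{(r)}$, which still contains $j_1(r)$.

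The only delicate case is $j=j_2(r)$, and here I would use the inductive hypothesis $j_2(r)\in\mathrm{Cells}_{j_2(r)}^\sss{(r)}=\mathcal I_1^\sss{(r)}\sqcup\mathcal I_2^\sss{(r)}$, so that $j_2(r)$ lies in exactly one of the two sets: if $j_2(r)\in\mathcal I_1^\sss{(r)}$ then every label of $\mathrm{Cells}_{j_2(r)}^\sss{(r)}$ is re-allocated away from $j_2(r)$, so $\mathrm{Cells}_{j_2(r)}^\sss{(r+1)}=\emptyset$ and $j_2(r)\notin\CJ^\sss{(r+1)}$, making the invariant vacuous there; while if $j_2(r)\in\mathcal I_2^\sss{(r)}$ then $j_2(r)$ is re-allocated to itself, so $j_2(r)\in\mathrm{Cells}_{j_2(r)}^\sss{(r+1)}$. (In passing one also checks that each label in $\mathcal I_2^\sss{(r)}\setminus\{j_2(r)\}$ becomes its own singleton active label, again respecting the invariant; this is only needed to close the induction cleanly.) This completes the induction.

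Given the invariant, consider any round $r$ in which the while-loop continues, so $B_{j_1(r)}^\sss{(r)}$ and $B_{j_2(r)}^\sss{(r)}$ are distinct overlapping boxes with $\mathrm{Vol}(B_{j_2(r)}^\sss{(r)})\le\mathrm{Vol}(B_{j_1(r)}^\sss{(r)})$ (the latter because $j_1(r)$ indexes the largest box having an overlap, and $B_{j_2(r)}^\sss{(r)}$ has one too). Since these are axis-parallel cubes of side-lengths $\mathrm{Vol}(B_{j_1(r)}^\sss{(r)})^{1/d}\ge\mathrm{Vol}(B_{j_2(r)}^\sss{(r)})^{1/d}$ centered at $z_{j_1(r)}$ and $z_{j_2(r)}$, a non-empty intersection forces, coordinatewise, $\|z_{j_1(r)}-z_{j_2(r)}\|\le\sqrt d\cdot\tfrac12\big(\mathrm{Vol}(B_{j_1(r)}^\sss{(r)})^{1/d}+\mathrm{Vol}(B_{j_2(r)}^\sss{(r)})^{1/d}\big)\le\sqrt d\,\mathrm{Vol}(B_{j_1(r)}^\sss{(r)})^{1/d}$. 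By the invariant $j_2(r)\in\mathrm{Cells}_{j_2(r)}^\sss{(r)}$, so comparing with the definition \eqref{eq:cover-alg-z1} gives $j_2(r)\in\mathcal I_1^\sss{(r)}$, whence $\mathcal I_1^\sss{(r)}\neq\emptyset$. Finally, from $\mathrm{Cells}_{j_1(r)}^\sss{(r+1)}=\mathrm{Cells}_{j_1(r)}^\sss{(r)}\sqcup\mathcal I_1^\sss{(r)}$ and the volume formula \eqref{eq:or-bjr},
\[
\mathrm{Vol}\big(B_{j_1(r)}^\sss{(r+1)}\big)-\mathrm{Vol}\big(B_{j_1(r)}^\sss{(r)}\big)=\frac{1}{\re d^{d/2}2^{3d}}\sum_{i\in\mathcal I_1^\sss{(r)}}\ell_i\ \ge\ 1,
\]
because $\mathcal I_1^\sss{(r)}$ is non-empty and every $\ell_i$ with $i\le m$ satisfies $\ell_i\ge\re d^{d/2}2^{3d}$ by the input condition \eqref{eq:pigeonhole-1} (with $\nu=\re d^{d/2}2^{3d}$). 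The bookkeeping in the inductive proof of the invariant, and in particular the case $j=j_2(r)$, is the main (and essentially the only) obstacle; once the invariant is available, both claims reduce to the elementary cube-overlap estimate and the volume formula above.
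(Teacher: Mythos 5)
Your proof is correct and follows essentially the same route as the paper: establish by induction the invariant that every active label is allocated to itself, then use the overlap of the two boxes to place $j_2(r)$ in $\mathcal I_1^{\sss{(r)}}$, and finally read off the volume increase from the input bound $\ell_i\ge\re d^{d/2}2^{3d}$. The only difference is that the paper states the self-allocation invariant without proof ("it can be shown inductively that $j\ {\buildrel r\over \mapsto}\ j$ holds for all $j\in\CJ^{\sss{(r)}}$"), whereas you carry out that induction explicitly, including the delicate bookkeeping for the label $j_2(r)$; the rest matches step for step.
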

\begin{proof}It can be shown inductively that $j\ {\buildrel r\over \mapsto}\ j$ holds for all $j\in\CJ^\sss{(r)}$.  Since the boxes $B_{j_1(r)}^\sss{(r)}$ and $B_{j_2(r)}^\sss{(r)}$ overlap, the distance of their centers $\|z_{j_2(r)}-z_{j_1(r)}\|$ is at most the diameter of $B_{j_1(r)}^\sss{(r)}$, which is $\sqrt{d}\mathrm{Vol}(B_{{j_1(r)}}^\sss{(r)})^{1/d}$. Hence, $j_2(r)\in \CI_{1}^{(r)}$ and so we re-allocate $j_2(r)$ to $j_1(r)$ in round $r+1$.
 Since each cell contains $\ell_i\ge \re d^{d/2}2^{3d}$ many vertices by the assumption in (input), we obtain by \eqref{eq:or-bjr}
 \begin{equation*}
  \mathrm{Vol}\big(B_{{j_1(r)}}^\sss{(r+1)}\big) -
  \mathrm{Vol}\big(B_{{j_1(r)}}^\sss{(r)}\big) \ge \frac{\ell_{j_2(r)}}{\re d^{d/2} 2^{3d}}\ge 1.\qedhere
 \end{equation*}
\end{proof}

\begin{proof}[Proof of Proposition~\ref{prop:cover-expansion-correctness}]
 Once having shown that a cover expansion of $\CL$ exists, the bound on its volume \eqref{eq:linear-volume-expansion} holds by Observation \ref{obs:cover-expansion}(v). So it remains to show that the algorithm produces in finitely many rounds an output satisfying all conditions of a cover expansion in Definition~\ref{def:cover-expansion}.

 \emph{The algorithm stops in finitely many rounds.}
 We argue using a monotonicity argument. We say that a vector $\bm{a}=(a_1, \dots, a_m)\in \R^m$ is non-increasing if $a_i\ge a_{i+1}$ for all $i\le m-1$. We use the lexicographic ordering for non-increasing vectors $\bm{a}, \bm{b}\in \R^m$: let $\bm{a}>_L \bm{b}$ if there exists a coordinate $j\le m$ such that $a_\ell= b_\ell$ for all $\ell< j$ and $a_\ell> b_\ell$ for $\ell=j$.

 For all $r\in \N$, $\CJ^{\sss{(r)}}\subseteq[m]$, and hence, $m^{\sss{(r)}}:=|\CJ^{\sss{(r)}}|\le m$. Let $\bm{a}^{\sss{(r)}}\in \R^m$ be the non-increasing vector of the re-ordered $(\mathrm{Vol}(B_j^{\sss{(r)}}))_{j\in \CJ^{\sss{(r)}}}$ appended with $(m-m^{\sss{(r)}})$-many zeroes.
 By Observation \ref{obs:nonempty},
 the entry corresponding to $\mathrm{Vol}\big(B_{{j_1(r)}}^\sss{(r)}\big)$ in $\bm{a}^{\sss{(r)}}$ increases in $\bm{a}^{\sss{(r+1)}}$ by at least $1$. Moreover, the entry corresponding to $\mathrm{Vol}\big(B_{{j_2(r)}}^\sss{(r)}\big)$ increases the entry $\mathrm{Vol}\big(B_{{j_1(r)}}^\sss{(r)}\big)$, and the rest of its volume ``crumbles'' into  smaller volumes, since labels in $\CI_2^{\sss{(r)}})$ will be re-allocated to themselves. Since by definition, $j_1(r)$ corresponds to the largest box among $(B_j^{\sss{(r)}})_{j\in \CJ^{\sss{(r)}}}$ that has an overlap with some other box, so also $\mathrm{Vol}(B_{j_2(r)}^{\sss{(r)}})\le \mathrm{Vol}(B_{j_1(r)}^{\sss{(r)}})$, and the allocation of labels except those in $\mathrm{Cells}_{j_2(r)}^{\sss{(r)}}$ remains unchanged, these together imply that $\bm{a}^\sss{(r+1)}>_L\bm{a}^\sss{(r)}$.
 Finally, for any $r$ and  any $j \in \CJ^{\sss{(r)}}$,  $\mathrm{Vol}(B_j^{\sss{(r)}})\le |\CL|/(\re d^{d/2}2^{3d})=:b$ by \eqref{eq:merging-volume}, implying that for all $r$, $(b,\dots,b) >_L \bm{a}^{\sss{(r)}}$.
 So, $(\bm{a}^\sss{(r)})_{r\ge 0}$ is an increasing bounded sequence with respect to $>_L$, with an increase of at least $1$ per step by \eqref{eq:step-size}. Hence, $(\bm{a}^{\sss{(r)}})_{r\ge 0}$ converges and attains its limit after finitely many rounds, i.e.,  $r^\star<\infty$.

 \emph{The output corresponds to a cover expansion.}
 We now prove that the output $\CJ^{\sss{(\star)}}, \ {\buildrel \star\over\mapsto}$ and the corresponding boxes in \eqref{eq:or-bjr} satisfy the conditions of Definition~\ref{def:cover-expansion}. By the stopping condition in step (while) of the algorithm, $(B_j^{\sss{(\star)}})_{j\in \CJ^{\sss{(\star)}}}$ satisfy (disj.), and by their definition in \eqref{eq:or-bjr}, also (vol.). We need to still verify (near). We  show this by induction:
 initially, for $(B_j^{\sss{(0)}})_{j\in \CJ^{\sss{(0)}}}, {\buildrel 0\over\mapsto}$, (near)\ holds, since in (init.) all labels are allocated to themselves, so $\mathrm{Cells}_j^{\sss{(0)}}=\{j\}$, and thus the left-hand side in~\eqref{eq:merging-distance} is $0$.
 Assume then $r>0$. We prove that (near)\ holds for ${\buildrel r+1\over\mapsto}$, assuming that it holds for ${\buildrel r\over\mapsto}$. Recall from (while) that $j_1(r)$ is the label of the largest box that has an overlap; $j_2(r)$ is the label of the largest box overlapping with $B_{j_1(r)}^{\sss{(r)}}$; by \eqref{eq:cover-alg-z1}, $\mathcal I_1^{\sss{(r)}}$ is the set of labels in $\mathrm{Cells}_{j_2(r)}^{\sss{(r)}}$ re-allocated to $j_1(r)$, and $\mathcal I_2^{\sss{(r)}}=\mathrm{Cells}_{j_2(r)}^{\sss{(r)}}\setminus \mathcal I_1^{\sss{(r)}}$ is the set labels allocated in round $r$ to $j_2(r)$, and in round $r+1$  to  themselves. We distinguish between four cases for the proof of the inductive step:
 \begin{itemize}
  \setlength\itemsep{0em}
  \item Assume $i\notin(\mathrm{Cells}_{j_1(r)}^\sss{\mathrm{(r)}}\cup\mathrm{Cells}_{j_2(r)}^\sss{\mathrm{(r)}})$ and let $k$ be such that $i\ {\buildrel r+1\over \mapsto} k$. By (while) part (iii), $\mathrm{Cells}_k^\sss{(r+1)}=\mathrm{Cells}_k^\sss{(r)}$, so by the induction hypothesis, \eqref{eq:merging-distance} holds for ${\buildrel r+1\over\mapsto}$.
  \item
        Assume $i\in \mathrm{Cells}_{j_1(r)}^\sss{(r)}$. By \eqref{eq:step-size}, $B_{j_1(r)}^\sss{(r)}\subsetneq B_{j_1(r)}^\sss{(r+1)}$ as the volume increases by at least one while the centers of the boxes agree. Since $\|z_i-z_{j_1(r)}\|^d\le d^{d/2}\mathrm{Vol}(B_{j_1(r)}^\sss{(r)})$ by the induction hypothesis, it follows that $\|z_i-z_{j_1(r+1)}\|^d\le d^{d/2}\mathrm{Vol}(B_{j_1(r)}^\sss{(r+1)})$, implying \eqref{eq:merging-distance} for ${\buildrel r+1\over\mapsto}$.
  \item Assume $i\in\mathrm{Cells}_{j_2(r)}^\sss{(r)}\cap\mathcal I_1^{\sss{(r)}}$. The definition of $\mathcal I_1^{\sss{(r)}}$ in \eqref{eq:cover-alg-z1} forces that $\|z_i-z_{j_1(r)}\|$ satisfies \eqref{eq:merging-distance}.
  \item Assume $i\in\mathcal{I}_2^{\sss{(r)}}=\mathrm{Cells}_{j_2}^\sss{(r)}\setminus \mathcal I_1^{\sss{(r)}}$: \eqref{eq:merging-distance} holds for the same reason as for the base case, i.e., since $i\ {\buildrel r+1\over\mapsto}\ i$, $\|z_i -z_i\|=0$ trivially satisfies \eqref{eq:merging-distance}.
 \end{itemize}
 Having all possible cases covered, this finishes the proof of the induction. Since $r^\star<\infty$, this finishes the proof of Proposition~\ref{prop:cover-expansion-correctness}.
\end{proof}
\subsection{Poisson point processes are expandable}
We end this section by showing that a Poisson point process is typically $s$-expandable for $s$ sufficiently large. Recall $\Lambda_n=[-n^{1/d}/2, n^{1/d}/2]^d$.
\begin{lemma}[PPPs are expandable]\label{lem:expandable-ppp}
 Let $\Gamma$ be a Poisson point process on $\R^d$ equipped with an absolutely continuous intensity measure $\mu$  such that $\mu(\mathrm dx)\le \mathrm{Leb}(\mathrm dx)$. Then there exists a constant $C_{\ref{lem:expandable-ppp}}> 0$ such that for any $s\ge 4/(\re-2)$,
 \begin{equation}
  \Prob\big(\Gamma\cap \Lambda_n \text{ is not } s\text{-expandable}\big) \le
  C_{\ref{lem:expandable-ppp}}n\exp(-s/3).\nonumber
 \end{equation}
\end{lemma}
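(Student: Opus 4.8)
The plan is a union bound over integer scales. \emph{Step 1 (discretize the scales).} By Definition~\ref{def:expandable}, if $\Gamma\cap\Lambda_n$ fails to be $s$-expandable then there are $x\in\Z^d$ and a real $s'\ge s$ with $|\Gamma\cap\Lambda_{s'}(x)\cap\Lambda_n|>\re s'$. I would round up: set $m:=\lceil s'\rceil$ and $m_0:=\lceil s\rceil$, so that $m\ge m_0$, $\Lambda_{s'}(x)\subseteq\Lambda_m(x)$, and $s'>m-1$. Hence the failure event is contained in
\[
 \bigcup_{m\ge m_0}\bigcup_{x\in\Z^d}\big\{|\Gamma\cap\Lambda_m(x)\cap\Lambda_n|>\re(m-1)\big\},
\]
which trades the continuum of radii for integer ones, at the cost of only weakening $\re s'$ to $\re(m-1)$.

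\emph{Step 2 (a Poisson tail bound for each event).} Since $\mu(\mathrm dx)\le\mathrm{Leb}(\mathrm dx)$, the count $|\Gamma\cap\Lambda_m(x)\cap\Lambda_n|$ is Poisson with mean $\lambda_{x,m}=\mu(\Lambda_m(x)\cap\Lambda_n)\le\min(m,n)\le m$. Its threshold-to-mean ratio is at least
\[
 \frac{\re(m-1)}{m}=\re\Big(1-\frac1m\Big)\ \ge\ \re\Big(1-\frac1{m_0}\Big)\ \ge\ \re\Big(1-\frac{\re-2}{4}\Big)=\frac{\re(6-\re)}{4}=:c,
\]
and this is exactly where the hypothesis $s\ge 4/(\re-2)$ enters, since it forces $1/m_0\le(\re-2)/4$. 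One computes $c\approx 2.23$, so $\psi(c):=c\log c-c+1\approx 0.56>1/3$. Using the Chernoff bound $\Prob(\Poi(\lambda)\ge t)\le\exp(-(t\log(t/\lambda)-t+\lambda))$ together with the fact that $\lambda\mapsto t\log(t/\lambda)-t+\lambda$ is decreasing on $(0,t)$ (so the worst mean is $\lambda_{x,m}=m$), I obtain
\[
 \Prob\big(|\Gamma\cap\Lambda_m(x)\cap\Lambda_n|>\re(m-1)\big)\ \le\ \exp\!\big(-m\,\psi(\re(m-1)/m)\big)\ \le\ \exp(-\psi(c)\,m).
\]

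\emph{Step 3 (union bound and summation).} A box $\Lambda_m(x)$ meets $\Lambda_n$ only when $x$ lies within $(m^{1/d}+n^{1/d})/2$ of $\Lambda_n$ in each coordinate, so the number of admissible centres is $N_m\le(m^{1/d}+n^{1/d}+1)^d\le 3^d(m+n)$ (using $m\ge m_0\ge 1$). Therefore
\[
 \Prob\big(\Gamma\cap\Lambda_n\text{ not }s\text{-expandable}\big)\ \le\ \sum_{m\ge m_0}3^d(m+n)e^{-\psi(c)m}\ =\ 3^d n\sum_{m\ge m_0}e^{-\psi(c)m}+3^d\sum_{m\ge m_0}me^{-\psi(c)m}.
\]
The first sum is geometric, equal to $e^{-\psi(c)m_0}/(1-e^{-\psi(c)})$; the second equals $e^{-\psi(c)m_0}$ times an affine function of $m_0$. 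Since $m_0\ge s$ and $\psi(c)$ exceeds $1/3$ with room to spare, the factor $e^{-\psi(c)m_0}$ absorbs both the geometric constant and the affine prefactor and leaves $e^{-s/3}$ up to a factor depending only on $d$, which gives the claim.

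The only genuinely delicate point is the constant bookkeeping: rounding the continuum scales to integers is precisely what lets the hypothesis $s\ge 4/(\re-2)$ deliver a ratio $c=\re(6-\re)/4$ with $\psi(c)$ comfortably above $1/3$, leaving enough slack to swallow the $O(m_0)$ prefactor produced by the second sum. No step is conceptually hard; one just has to keep the final exponent pinned at $s/3$ rather than some worse constant.
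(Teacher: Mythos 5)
Your proof is correct and follows the same strategy as the paper's: discretize the continuum of scales to integers, apply a Poisson--Chernoff tail bound uniformly in the scale, then union-bound over centers and scales. The differences are bookkeeping. You round $s'$ \emph{up} to $m=\lceil s'\rceil$ and use the threshold $\re(m-1)$, giving a threshold-to-mean ratio of at least $\re(6-\re)/4\approx 2.23$ and decay rate $\approx 0.56$ per unit $m$; the paper rounds \emph{down} to the integer $\tilde s$ below $s'$, uses the threshold $2\tilde s$, and gets decay rate $2\log 2-1\approx 0.39$. Both comfortably exceed $1/3$, and both use the hypothesis $s\ge 4/(\re-2)$ at exactly the same point (to guarantee the ratio is large enough after losing the additive constant from rounding). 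One place where your write-up is slightly more careful than the paper's: you union over \emph{all} $x\in\Z^d$ with $\Lambda_m(x)\cap\Lambda_n\neq\emptyset$, giving an $O(m+n)$ count, whereas the paper unions only over $x\in\Z^d\cap\Lambda_n$, which is $O(n)$. Since Definition~\ref{def:expandable} quantifies over all $x\in\Z^d$, the paper's restriction tacitly relies on the observation that a violation at a center outside $\Lambda_n$ can be transferred to a nearby center inside $\Lambda_n$ (which holds, but is not spelled out); your version sidesteps that step at the modest price of the $3^d$ factor and the extra $\sum_m m\,e^{-\Theta(m)}$ term, both of which you correctly absorb into $C_{\ref{lem:expandable-ppp}}$.
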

\begin{proof}
 Using stochastic domination of point processes, without loss of generality we can assume that $\Gamma$ has intensity measure $\mathrm{Leb}(\rd x)$. Let us define
  $ \mathrm{R}(s):= \{\widetilde s\in\mathbb{N}: \widetilde s\ge s\}$. We first show that when $s\ge 4(\re-2)$,
 \begin{equation}\label{eq:poisson-cover-1}
     \{\Gamma\cap\Lambda_n\text{ is } s\text{-expandable}\}\subseteq \{\forall x\in\Z^d\cap\Lambda_n, \tilde s\in\mathrm{R}(s): |\Gamma\cap\Lambda_{\tilde s}(x)|\le 2\tilde s\}. 
 \end{equation}
Indeed, if the bound on the right-hand side holds for all $\tilde s\in\mathrm{R}(s)$, then for any $s'\in(\tilde s, \tilde s+1)$, 
\[
|\Gamma \cap \Lambda_{s'}(x)|\le |\Gamma\cap\Lambda_{\tilde s+1}(x)|\le 2(\tilde s+1)\le 2(s'+2)\le \re s'
\]
whenever $s'\ge 4/(\re-2)$.
 We consider the complements of the events in \eqref{eq:poisson-cover-1}. By a union bound over the at most $n$ possible centers of the boxes in $\Lambda_n$, and by translation invariance of $\mathrm{Leb}$, we thus obtain
 \begin{equation}
  \begin{aligned}
  \Prob\big(\Gamma\cap\Lambda_n \text{ is not } s\text{-expandable}\big) & =\Prob\big(\exists x\in \Z^d\cap \Lambda_n,\exists \tilde s\in\mathrm{R}(s): |\Gamma\cap \Lambda_{\tilde s}(x) |\ge 2 \tilde s\big) \\ & \le
  n\sum_{\tilde s\in\mathrm{R}(s)}\Prob\big(|\Gamma\cap\Lambda_{\tilde s}|\ge 2 \tilde s\big).
  \label{eq:expandable-proof-3}
  \end{aligned}
 \end{equation}
 Since the intensity of $\Gamma$ is equal to one,  each summand on the right-hand side is at most $\exp(-(2\log 2 - 1)\tilde s)\le \exp(-\tilde s/3)$ by Lemma~\ref{lemma:poisson-1}.
We obtain for the summation in \eqref{eq:expandable-proof-3} for  some constant $C_{\ref{lem:expandable-ppp}}>0$,
 \begin{equation*}
  \Prob  \big(\Gamma\cap\Lambda_n \text{ is not } s\text{-expandable}\big) \le
  n\sum_{\tilde s\in \N: \tilde s\ge s}\exp(-\tilde s/3) \le
  C_{\ref{lem:expandable-ppp}} n\exp(-s/3).\qedhere 
 \end{equation*}
\end{proof}
\section{Upper bound: second-largest component}\label{sec:upper-2nd}
The main goal of this section is to prove the following proposition for general values of $n$ and $k$, which readily implies Theorem~\ref{thm:second-largest}(ii-iii), i.e., \eqref{eq:second-largest-main-upper} and \eqref{eq:second-largest-main-upper-2}.
Recall $\zeta_\mathrm{hh}=1-\gamma_\mathrm{hh}(\tau-1)$ from \eqref{eq:zeta-hh}.
We restrict ourselves to the parameter setting of Theorem~\ref{thm:subexponential-decay}, which assumes $\zeta_\mathrm{hh}>0$, and corresponds to $\tau\in(2,2+\sigma)$. Moreover, below we will use independence properties of Poisson point processes, and therefore restrict to such vertex sets: we generate the marked vertex set $\CV=\{(x_v, w_v)\}_{v\in V}$ from Definition~\ref{def:ksrg} with iid marks following distribution $F_W$ in~\eqref{eq:power-law} in Assumption~\ref{assumption:main} as a \emph{marked} Poisson point process on $\R^d\times[1,\infty)$ with intensity measure 
\begin{equation}\label{eq:poisson-intensity}
    \mu_\tau(\rd x\times \rd w):=\mathrm{Leb}\otimes F_W(\rd w)=\rd x\times (\tau-1)w^{-\tau}\rd w.
\end{equation}
We use this construction throughout the paper and in particular in parts of this section.
Some subresults in this section also hold for KSRGs with vertex set on $\Z^d$ and can be obtained by replacing concentration inequalities for Poisson random variables by Chernoff bounds. We leave these adaptations to the reader but include them in the statements. 
\begin{proposition}\label{prop:2nd-upper-bound-hh}
 Consider a KSRG under the same assumptions as in Theorem~\ref{thm:subexponential-decay}, with vertex set formed by a homogeneous Poisson point process. 
  For $\tau\ge \sigma+1$, there exists a constant $c_{\ref{prop:2nd-upper-bound-hh}}>0$ such that for all $n\ge k\ge 1$ 
 \begin{equation}
\Prob\big(|\CC^\sss{(2)}_n| > k\big)\le n\exp\big(-c_{\ref{prop:2nd-upper-bound-hh}}k^{\zeta_{\mathrm{hh}}}\big).\label{eq:prop-2nd-upper-bound}
 \end{equation}
 For $\tau<\sigma+1$, 
 the inequality holds with exponent $1/(\sigma+1-(\tau-1)/\alpha)$ in place of $\zeta_{\mathrm{hh}}$.
\end{proposition}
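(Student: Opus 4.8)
The plan is to run the four-stage revealment scheme sketched in Section~\ref{sec:outline-hh}. Throughout I fix $w_{\mathrm{hh}}=\Theta(k^{\gamma_{\mathrm{hh}}})$ with $\gamma_{\mathrm{hh}}$ from \eqref{eq:gamma-hh}, partition $\Lambda_n$ into $n/k$ congruent sub-boxes of volume $k$, and aim to show that each stage fails with probability at most $\mathrm{err}_{n,k}=(n/k)\exp(-ck^{\zeta_{\mathrm{hh}}})$ and that, conditionally on all stages succeeding, every component of the final revealed graph that avoids the backbone has at most $k$ vertices; \eqref{eq:prop-2nd-upper-bound} then follows from a union bound, and substituting $k=A(\log n)^{1/\zeta_{\mathrm{hh}}}$ yields Theorem~\ref{thm:second-largest}(ii). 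The case split at $\tau=\sigma+1$ will enter only through the expandability estimate at the last stage.

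\textbf{Stage 1 (backbone).} First I would reveal only $\CV_n[w_{\mathrm{hh}},2w_{\mathrm{hh}})$ with its internal edges, call it $\CG_{n,1}$. Each sub-box carries in expectation $\Theta(k\,w_{\mathrm{hh}}^{-(\tau-1)})=\Theta(k^{1-\gamma_{\mathrm{hh}}(\tau-1)})=\Theta(k^{\zeta_{\mathrm{hh}}})$ vertices of mark in $[w_{\mathrm{hh}},2w_{\mathrm{hh}})$, and a Poisson concentration bound controls the rare event that some sub-box deviates. Ordering the sub-boxes so that consecutive ones share a $(d-1)$-face, I connect a constant fraction of the fresh mark-$[w_{\mathrm{hh}},2w_{\mathrm{hh}})$ vertices in each new sub-box to the component built so far: by the defining relation of $\gamma_{\mathrm{hh}}$ the expected number of edges between those $\Theta(k^{\zeta_{\mathrm{hh}}})$ vertices and the $\Theta(k^{\zeta_{\mathrm{hh}}})$ already-connected ones in the adjacent sub-box is $\Theta(k^{\zeta_{\mathrm{hh}}}\cdot k^{\zeta_{\mathrm{hh}}}\cdot k^{-\zeta_{\mathrm{hh}}})=\Theta(k^{\zeta_{\mathrm{hh}}})$, so this step fails with probability $\exp(-\Theta(k^{\zeta_{\mathrm{hh}}}))$; a union bound over the $n/k$ sub-boxes gives an event $\CA_{\mathrm{bb}}$ with $\Prob(\neg\CA_{\mathrm{bb}})\le\mathrm{err}_{n,k}$ on which a connected set $\CC_{\mathrm{bb}}\subseteq\CG_{n,1}$ has $\Theta(k^{\zeta_{\mathrm{hh}}})$ vertices in every sub-box. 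The key consequence is \eqref{eq:q-bound}: since every point of $\Lambda_n$ is within $O(k^{1/d})$ of $\Theta(k^{\zeta_{\mathrm{hh}}})$ backbone vertices, any vertex of mark $\ge 2w_{\mathrm{hh}}$, wherever located, connects to $\CC_{\mathrm{bb}}$ with probability $\ge 1/2$, uniformly in the rest of the configuration.

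\textbf{Stages 2--4 (low marks, pre-sampling, cover expansion).} Next I reveal $\CV_n[1,2w_{\mathrm{hh}})$ with all its edges, obtaining $\CG_{n,2}$, and then --- before revealing any edge of a connector vertex --- I thin the connector PPP $\CV_n[2w_{\mathrm{hh}},\infty)$ into two independent PPPs $\CV_n^{\sss{(\mathrm{sure})}}[2w_{\mathrm{hh}},\infty)$ and $\CV_n^{\sss{(\mathrm{unsure})}}[2w_{\mathrm{hh}},\infty)$, using \eqref{eq:q-bound} and auxiliary edge-variables so that a ``sure'' connector is declared to attach to $\CC_{\mathrm{bb}}$; Poisson thinning makes the two processes independent of each other and of $\CG_{n,2}$. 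Revealing $\CV_n^{\sss{(\mathrm{unsure})}}[2w_{\mathrm{hh}},\infty)$ and its edges to everything so far produces a frozen graph $\CG_{n,3}$ on the vertex set \eqref{eq:outline-v3}, and the only unrevealed randomness, that of $\CV_n^{\sss{(\mathrm{sure})}}[2w_{\mathrm{hh}},\infty)$, is independent of $\CG_{n,3}$; because every sure connector attaches to $\CC_{\mathrm{bb}}$, revealing them cannot create small-to-large merging. I then work on the event $\CE_{\mathrm{exp}}$ that $\CV_n$ is $s(2w_{\mathrm{hh}})$-expandable in the sense of \eqref{eq:prop-cover-expansion-min-weight}, which by Lemma~\ref{lem:expandable-ppp} fails with probability at most $C_{\ref{lem:expandable-ppp}}\,n\exp(-s(2w_{\mathrm{hh}})/3)$; since $s(2w_{\mathrm{hh}})=\Theta(k^{1/(\sigma+1-(\tau-1)/\alpha)})$, this is at most $\mathrm{err}_{n,k}$ precisely when $1/(\sigma+1-(\tau-1)/\alpha)\ge\zeta_{\mathrm{hh}}$, i.e.\ when $\tau\ge\sigma+1$, and otherwise it is the term that produces the weaker exponent. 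Now fix a component $\CC$ of $\CG_{n,3}$ with $|\CC|>k$ and $\CC_{\mathrm{bb}}\not\subseteq\CC$. Since $\CC$ inherits $s(2w_{\mathrm{hh}})$-expandability, Proposition~\ref{prop:cover-expansion-deterministic} (applied with $\underline w=2w_{\mathrm{hh}}$) gives $\CK_n(\CC)\subseteq\Lambda_n$ with $\mathrm{Vol}(\CK_n(\CC))\ge(2^{4d+1}\re d^{d/2})^{-1}|\CC|\ge ck$ such that every vertex of $\CK_n(\CC)\times[2w_{\mathrm{hh}},\infty)$ connects to $\CC$ with probability $\ge p/2$. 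The sure connectors landing in $\CK_n(\CC)$ form a Poisson set of mean $\Theta(\mathrm{Vol}(\CK_n(\CC))\,w_{\mathrm{hh}}^{-(\tau-1)})=\Theta(k\cdot k^{-\gamma_{\mathrm{hh}}(\tau-1)})=\Theta(k^{\zeta_{\mathrm{hh}}})$, and, conditionally on their number, each connects to $\CC$ independently with probability $\ge p/2$; hence with probability $1-\exp(-\Theta(k^{\zeta_{\mathrm{hh}}}))$ at least one of them does, and since it also attaches to $\CC_{\mathrm{bb}}$ it merges $\CC$ into the giant. A union bound over the at most $2n/k$ components of $\CG_{n,3}$ with more than $k$ vertices (on the further $1-\mathrm{err}_{n,k}$ event $|\CV_n|\le 2n$), together with $\Prob(\neg\CA_{\mathrm{bb}})+\Prob(\neg\CE_{\mathrm{exp}})+\Prob(|\CV_n|>2n)\le 3\,\mathrm{err}_{n,k}$, gives \eqref{eq:prop-2nd-upper-bound} after shrinking $c$; the $\tau<\sigma+1$ case is identical except that the exponent is inherited from $\Prob(\neg\CE_{\mathrm{exp}})$.

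\textbf{Main obstacle.} The delicate interplay is between Stages 3 and 4. The $1/2$ in \eqref{eq:q-bound} is far too weak to directly rule out the exponentially many ways in which finite clusters could merge into a size-$>k$ cluster that misses the giant; the pre-sampling side-steps this by converting ``$1/2$-connects'' into ``deterministically connects'' for an \emph{independent} sub-PPP, which is exactly the thinning step whose adaptation to $\Z^d$ is unclear. Moreover a cluster $\CC$ can, with only stretched-exponentially small probability but not deterministically, be concentrated in a few unit balls, so that a naive cover has $\mathrm{Vol}\ll|\CC|$ and the Poisson count above collapses; Proposition~\ref{prop:cover-expansion-deterministic} together with Lemma~\ref{lem:expandable-ppp} is precisely what repairs this, and the comparison of $s(2w_{\mathrm{hh}})$ with $k^{\zeta_{\mathrm{hh}}}$ is what forces --- and quantifies --- the case split at $\tau=\sigma+1$.
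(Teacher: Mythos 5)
Your proposal matches the paper's proof essentially step for step: the same backbone event $\CA_{\mathrm{bb}}$ built iteratively box-by-box, the same pre-sampling of sure/unsure connectors via Poisson thinning so that sure connectors attach deterministically to the backbone, the same expandability event and cover-expansion bound on $\Prob(\CC\not\sim\CV_n^{\sss{(\mathrm{sure})}})$, the same union bound over $O(n/k)$ large components, and the same source of the case split at $\tau=\sigma+1$ (comparing $s(2w_{\mathrm{hh}})=\Theta(k^{\gamma_{\mathrm{hh}}/(1-1/\alpha)})$ with $k^{\zeta_{\mathrm{hh}}}$). The only piece of the paper's argument you glossed over is the seeding of the backbone inside the first sub-box (the paper's $\CA_{\mathrm{init}}$, proved via domination by a supercritical ERRG and O'Connell's large deviations for its giant), which cannot be absorbed into the iterative ``connect to the component built so far'' step since box $1$ starts with no component; this is a genuine lemma one must supply, though it fits naturally into your framework and does not change the overall structure.
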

We follow the steps of the methodology from Section~\ref{sec:outline-hh}. The bulk of the work is to establish Steps 1 and 3 there, since we already developed the cover expansion of Step 4 in Section~\ref{sec:cover-expansion}.
We first introduce some notation.
We aim to partition the box $\Lambda_n$ into disjoint subboxes of (roughly) volume $k$. Define
\begin{equation}
 n' := k\lfloor (n/k)^{1/d}\rfloor^d.\label{eq:nprime-boxes}
\end{equation}
The box $\Lambda_{n'}\subseteq\Lambda_{n}$ is the largest box inside $\Lambda_n$ that can be partitioned into $n'/k$ disjoint subboxes of volume  exactly $k$ (boundaries are allocated uniquely, as in Definition~\ref{def:cells-general}). Let the boxes of this partitioning of $\Lambda_{n'}$ be $\CQ_1,\dots,\CQ_{n'/k}$, labeled so that $\CQ_i$ shares a boundary (that is, a $(d-1)$-dimensional face) with $\CQ_{i+1}$ for all $i<n'/k$.
Define for each $u=(x_u,w_u)\in \CV_n\subseteq\Lambda_n$,
\begin{equation}
 \CQ(u):= \argmin_{\CQ_i} \|x_u-\CQ_i\|, \label{eq:qu}
\end{equation}
with the convention that $\|x_u-\CQ_i\|=0$ if $x_u\in \CQ_i$, and take the box with the smallest index if the minimum is non-unique.
Similarly to \eqref{eq:max-cell-dist}, we observe that for any point $u\in\CV_n\subset \Lambda_n$
\begin{equation}
 \sup_{y\in \CQ(u)} \|x_u-y\|\le 2\sqrt{d}k^{1/d}.\label{eq:upper-hh-outside-tesselation}
\end{equation}
\subsection{Step 1. Construction of the backbone}  Recall the definition of $\CG_n[a,b)$ from \eqref{eq:gn-ab}.
We first show that, for some  $w_\mathrm{hh}=w_\mathrm{hh}(k)$, the graph $\CG_{n,1}:=\CG_{n}[w_\mathrm{hh}, 2w_\mathrm{hh})$ contains a so-called  backbone, a connected component $\CC_{\mathrm{bb}}$ that contains at least $s_k=\Theta(k^{\zeta_{\mathrm{hh}}})$ vertices in every subbox.
For $\lambda>1$, let $\varrho_\lambda$ be the survival probability of a Bienaym\'{e}-Galton-Watson branching process with $\mathrm{Poi}(\lambda)$ offspring distribution. Then let $\lambda_\star(1/2)$ be the mean offspring when $\varrho_{\lambda_\star(1/2)}=1/2$.  
Using $\beta$ from Definition~\ref{def:ksrg} and $\alpha>1$ from Assumption~\ref{assumption:main}, define the (small) constant $C_1$ to be the solution of the equation
\begin{align}
 (p/16)\beta^\alpha 2^{-\alpha d}d^{-\alpha d/2}  C_1^{-((1+\sigma)\alpha-(\tau-1))/(\tau-1)}& =\max(\log 2, \lambda_\star(1/2)), & \mbox{if } \alpha & <\infty, \label{eq:upper-hh-c1} \\
 \beta C_1^{-(1+\sigma)/(\tau-1)}d^{-d/2} 2^{-d-2\sigma}   & =1,       & \mbox{if } \alpha & =\infty.\label{eq:c1-infty}
\end{align}
We set, with $\gamma_\mathrm{hh}$ from \eqref{eq:gamma-hh},
\begin{equation}
\begin{aligned}
 w_\mathrm{hh}&:= w_\mathrm{hh}(k):=C_1^{-1/(\tau-1)}k^{\gamma_\mathrm{hh}},\\
 s_k&:=(C_1/16)k^{1-\gamma_\mathrm{hh}(\tau-1)}=(C_1/16)k^{\zeta_\mathrm{hh}}=kw_\mathrm{hh}^{-(\tau-1)}/16.
 \end{aligned}\label{eq:w-gamma-hh}
\end{equation}
To avoid cumbersome notation, we often assume that $s_k\in\mathbb{N}$. 
Let us define $k_1$ as the smallest non-negative number satisfying 
 \begin{equation}
  (1-p)^{C_1 k_1^{\zeta_{\mathrm{hh}}}/16} = (1-p)^{s_k}  \le 1/2.
  \label{eq:upper-hh-k0}
 \end{equation}
Recall the notation $\CV_\CQ[a,b)$ from~\eqref{eq:xi-q-ab}. Let
\begin{equation}
 \CA_\mathrm{bb}:=\CA_\mathrm{bb}(n,k) :=
 \left\{
 \begin{aligned}
  & \CG_{n,1}\text{ contains a connected component }\CC_\mathrm{bb}(n,k)\text{ s.t. }                \\
  & \text{ for all } i\le (n'/k): |\CV_{\CQ_i}[w_\mathrm{hh},2w_\mathrm{hh})\cap \CC_\mathrm{bb}| \ge s_k
 \end{aligned}
 \right\}.\label{eq:upper-hh-bb-event}
\end{equation}
On $\CA_\mathrm{bb}$, let $\CC_\mathrm{bb}:=\CC_{\mathrm{bb}}(n,k)$, the backbone, be the largest component in $\CG_n[w_{\mathrm{hh}},2w_{\mathrm{hh}})$ that satisfies the event $\CA_\mathrm{bb}$.
In the following lemma we obtain a lower bound on the probability that there exists a backbone. 
\begin{lemma}[Backbone construction]\label{lemma:upper-hh-bb}
 Consider a KSRG under the same assumptions as in Theorem~\ref{thm:subexponential-decay}, in particular $\tau\in(2,2+\sigma)$, with vertex set either formed by a homogeneous Poisson point process or $\Z^d$.
 There exist constants $c_{\ref{lemma:upper-hh-bb}}=c_{\ref{lemma:upper-hh-bb}}(p,\beta,d,\alpha, \tau,\sigma)>0$, such that for $k\ge k_1$ and all $n$ satisfying $n \ge k$,
 \begin{equation}
  \Prob\big(\neg\CA_\mathrm{bb}(n,k)\big) \le 3(n/k)\exp\big(-c_{\ref{lemma:upper-hh-bb}} k^{\zeta_\mathrm{hh}} \big).\label{eq:lem-hh-nobb}
 \end{equation}
\end{lemma}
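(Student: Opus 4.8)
The plan is to build, inside the $n'/k$ volume-$k$ subboxes $\CQ_1,\dots,\CQ_{n'/k}$ of \eqref{eq:nprime-boxes}, within-box \emph{connected} sets $G_i\subseteq\CV_{\CQ_i}[w_\mathrm{hh},2w_\mathrm{hh})$ with $|G_i|\ge s_k$, and to join $G_i$ to $G_{i+1}$ by a single edge for each $i<n'/k$. Then $\CC^\star:=\bigcup_i G_i$ is a connected subgraph of $\CG_{n,1}$ with $|\CC^\star\cap\CQ_i|\ge s_k$ for every $i$, so the component of $\CG_{n,1}$ containing $\CC^\star$ witnesses $\CA_\mathrm{bb}$ of \eqref{eq:upper-hh-bb-event}. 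The construction uses three revelations whose failure probabilities are each $\exp(-\Theta(k^{\zeta_\mathrm{hh}}))$ per subbox, which accounts for the factor $3$ in \eqref{eq:lem-hh-nobb}: (1) reveal the marked points in $\CV_n[w_\mathrm{hh},2w_\mathrm{hh})$; (2) for each $i$, reveal the edges internal to $\CQ_i$; (3) for each $i$, reveal the edges between $\CQ_i$ and $\CQ_{i+1}$. The edge-sets exposed in (2) and (3) are pairwise disjoint, so conditionally on (1) all these revelations are mutually independent.

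The argument rests on two estimates. \emph{Vertex counts.} With $T_i:=\CV_{\CQ_i}[w_\mathrm{hh},2w_\mathrm{hh})$, in the Poisson case $|T_i|$ is a Poisson random variable of mean $k\big(w_\mathrm{hh}^{-(\tau-1)}-(2w_\mathrm{hh})^{-(\tau-1)}\big)=C_1 k^{\zeta_\mathrm{hh}}\big(1-2^{-(\tau-1)}\big)\ge 8 s_k$ by \eqref{eq:w-gamma-hh} and $\tau>2$; on $\Z^d$ it is Binomial with the same mean up to a lower-order boundary term. By Lemma~\ref{lemma:poisson-1} (resp.\ a Chernoff bound) and a union bound over the $\le n/k$ boxes, the event $\CA_1:=\{|T_i|\ge 4 s_k\ \text{for all }i\}$ fails with probability $\le (n/k)\exp(-\Theta(k^{\zeta_\mathrm{hh}}))$; we condition on $\CA_1$. \emph{Connection probability.} Two vertices of mark $\ge w_\mathrm{hh}$ whose locations lie in the same subbox, or in two subboxes sharing a $(d-1)$-face, are at distance $\le 2\sqrt d\,k^{1/d}$ and have $\kappa_\sigma(w_u,w_v)\ge w_\mathrm{hh}^{1+\sigma}$, so by \eqref{eq:connection-prob-gen}, for $\alpha<\infty$ and $k$ large,
\[
\mathrm{p}(u,v)\ \ge\ q\ :=\ p\,\beta^\alpha 2^{-\alpha d}d^{-\alpha d/2}\big(w_\mathrm{hh}^{1+\sigma}/k\big)^{\alpha}.
\]
The identity $\alpha\big(1-\gamma_\mathrm{hh}(1+\sigma)\big)=\zeta_\mathrm{hh}$, which one checks from \eqref{eq:gamma-hh}--\eqref{eq:zeta-hh} using $\tau<\sigma+2$, gives $\big(w_\mathrm{hh}^{1+\sigma}/k\big)^\alpha=C_1^{-(1+\sigma)\alpha/(\tau-1)}k^{-\zeta_\mathrm{hh}}$, so that $q\,s_k=\max\!\big(\log 2,\lambda_\star(1/2)\big)$ by the very equation \eqref{eq:upper-hh-c1} defining $C_1$; for $\alpha=\infty$, \eqref{eq:c1-infty} yields $\mathrm{p}(u,v)=p$, and the argument below runs with $q=p$ (then $q\,s_k\to\infty$ since $\zeta_\mathrm{hh}>0$, and $(1-q)^{s_k}\le\tfrac12$ for $k\ge k_1$ by \eqref{eq:upper-hh-k0}).

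For revelation (2), fix $i$ and expose the internal edges of $\CQ_i$: the induced graph on $T_i$ dominates an Erd\H{o}s--R\'enyi graph on $|T_i|\ge 4 s_k$ vertices with edge probability $q$ and mean degree $\ge (4 s_k-1)q\ge\lambda_\star(1/2)$ (for $k$ large). By the standard lower bound on the giant of a supercritical sparse Erd\H{o}s--R\'enyi graph --- equivalently, by exploring the component of an arbitrary vertex of $T_i$ and comparing it from below, as long as fewer than $s_k$ vertices have been reached, to a $\Poi(\lambda_\star(1/2))$ Bienaym\'e--Galton--Watson tree, which survives with probability $\varrho_{\lambda_\star(1/2)}=\tfrac12$ --- the induced graph on $T_i$ has a connected component $G_i$ of size $\ge(\tfrac12-o(1))|T_i|\ge s_k$ with probability $\ge 1-\exp(-\Theta(k^{\zeta_\mathrm{hh}}))$; a union bound over the $\le n/k$ boxes controls revelation (2). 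For revelation (3), fix $i<n'/k$ and expose the edges between $T_i$ and $T_{i+1}$ (disjoint from all edge-sets of (2)): since $|G_i|,|G_{i+1}|\ge s_k$ and each such pair connects with probability $\ge q$,
\[
\Prob\big(\text{no edge between }G_i\text{ and }G_{i+1}\big)\ \le\ (1-q)^{s_k^2}\ \le\ e^{-(q\,s_k)\,s_k}\ =\ e^{-\max(\log 2,\lambda_\star(1/2))\,s_k}\ =\ e^{-\Theta(k^{\zeta_\mathrm{hh}})},
\]
and a union bound over the $<n/k$ consecutive pairs controls revelation (3). On the intersection of the three events, $\CC^\star=\bigcup_i G_i$ is connected and has $\ge s_k$ vertices in every $\CQ_i$, which yields \eqref{eq:lem-hh-nobb} (with $k_1$ in \eqref{eq:upper-hh-k0} taken large enough to also absorb the few ``$k$ large'' requirements above).

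The only genuinely delicate feature is the scaling: the error term of \emph{every} one of the $\Theta(n/k)$ local events must carry the \emph{same} exponent $k^{\zeta_\mathrm{hh}}$, which forces the effective edge probability $q$ between mark-$[w_\mathrm{hh},2w_\mathrm{hh})$ vertices at range $\Theta(k^{1/d})$ to satisfy $q\,s_k=\Theta(1)$; that this is consistent with the choices $w_\mathrm{hh}=\Theta(k^{\gamma_\mathrm{hh}})$, $s_k=\Theta(k^{\zeta_\mathrm{hh}})$ is precisely the identity $\alpha(1-\gamma_\mathrm{hh}(1+\sigma))=\zeta_\mathrm{hh}$, and the purpose of \eqref{eq:upper-hh-c1}--\eqref{eq:c1-infty} is only to fix the implied constant at a value making the branching-process survival probability and the single-edge-existence probability both at least $\tfrac12$. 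The remaining points are routine bookkeeping: the disjointness of the exposed edge-sets makes the three revelations independent, and the $\Z^d$ case changes nothing beyond replacing Poisson tail bounds by Chernoff bounds.
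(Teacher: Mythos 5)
Your proof is correct and yields the same scaling, but it organizes the propagation step differently from the paper. The paper applies the Erd\H{o}s--R\'enyi comparison only once, in $\CQ_1$, to produce $\CC_\mathrm{init}$, and then grows the backbone by \emph{one-step chaining}: $\widetilde\CV_{i+1}$ is defined as the set of vertices of $\CQ_{i+1}$ with an edge into $\widetilde\CV_i$, and the per-vertex connection probability $\ge\tfrac12$ of \eqref{eq:1/2-infty}--\eqref{eq:upper-hh-bb-half} together with a $\Bin(4s_k,\tfrac12)$ Chernoff bound give $|\widetilde\CV_{i+1}|\ge s_k$ with error $\exp(-s_k/4)$; the sets $\widetilde\CV_i$ for $i\ge2$ need not be internally connected. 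You instead build an internal Erd\H{o}s--R\'enyi giant $G_i$ inside \emph{every} subbox and link consecutive giants by a single edge, controlling the one-edge-failure probability $(1-q)^{s_k^2}$ from the same budget. Both decompositions carry the same per-event error exponent $k^{\zeta_\mathrm{hh}}$, both produce the factor $3$ in \eqref{eq:lem-hh-nobb} from a three-term union bound, and both rest on the scaling identity $q s_k=\Theta(1)$ that you correctly isolate as the crux. The paper's chaining is marginally leaner (one ER invocation instead of $O(n/k)$), while your version treats all subboxes symmetrically and makes the conditional independence of the three revelations especially transparent.

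One caveat worth flagging: your parenthetical describing a single-vertex exploration compared to a $\Poi(\lambda_\star(1/2))$ BGW tree as an ``equivalent'' route is not quite right. Exploring from one vertex gives only constant (about $\tfrac12$) success probability, not $1-\exp(-\Theta(s_k))$. What actually carries the exponential tail is the large-deviations \emph{lower} bound on the ER giant, which the paper invokes via O'Connell \cite[Theorem~3.1]{oconnel1998}; this is what your main sentence correctly calls the ``standard lower bound,'' and it is what you need in each of the $\Theta(n/k)$ boxes before the union bound.
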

\begin{proof}
 Towards proving \eqref{eq:lem-hh-nobb}, we reveal $\CV_n[w_{\mathrm{hh}}, 2w_{\mathrm{hh}})$, i.e., \emph{only} the vertex set of $\CG_{n,1}$, and define
 \begin{equation}\label{eq:apoi}
  \CA_\mathrm{poi}:= \{\forall i\le n'/k: |\CV_{\CQ_i}[w_\mathrm{hh},2w_\mathrm{hh})|\ge 4s_k \}.
 \end{equation}
     On $\CA_\mathrm{poi}$, every box contains enough vertices in $\CG_{n,1}$. Reveal now the edges of $\CG_{n,1}$ \emph{only within the box} $\CQ_1$: let $\CH$ be the induced subgraph of $\CG_{n,1}$ on $\CV_{\CQ_1}[w_\mathrm{hh},2w_\mathrm{hh})$, and define
 \begin{equation}\label{eq:J-def}
  \CA_\mathrm{init}:=\left\{\CH \mbox{\ contains a connected component\ } \CC_{\mathrm{init}} \mbox{\ with\ } |\CC_\mathrm{init}|\ge s_k\right\}.
 \end{equation}
  Then
 \begin{equation}
  \Prob(\neg\CA_\mathrm{bb})\le
  \Prob(\neg\CA_\mathrm{poi})+\Prob(\neg\CA_\mathrm{init}\mid \CA_\mathrm{poi})+\Prob(\neg\CA_\mathrm{bb} \mid\CA_\mathrm{init}\cap \CA_\mathrm{poi}).\label{eq:upper-hh-bb1}
 \end{equation}
 We first bound $\Prob(\neg\CA_\mathrm{poi})$ from above.
 The distribution of  $|\CV_{\CQ_i}[w_\mathrm{hh},2w_\mathrm{hh})|$ is Poisson 
 with mean $kw_\mathrm{hh}^{-(\tau-1)}(1-2^{-(\tau-1)})=16(1-2^{-(\tau-1)})s_k\ge 8s_k$ by \eqref{eq:poisson-intensity}, \eqref{eq:w-gamma-hh} and since $\tau\ge 2$. Lemma~\ref{lemma:poisson-1} yields
 \begin{equation}
  \Prob\big(|\CV_{\CQ_i}[w_{\mathrm{hh}}, 2w_{\mathrm{hh}})| \le 4s_k\big)
  \le
  \Prob\big(\Poi(8s_k)< 4s_k\big)
  \le
  \exp\big(-4s_k (1-(\log 2))\big).\nonumber
 \end{equation}
 Since $1-(\log 2)\ge 1/4$, by a union bound over the at most $n'/k\le n/k$ subboxes we get
 \begin{equation}
  \Prob(\neg\CA_\mathrm{poi})
  \le
  (n/k)\exp(-s_k).\label{eq:upper-hh-bb-1st}
 \end{equation}
 We will next show an upper bound on the third term on the right-hand side in \eqref{eq:upper-hh-bb1}. For this, we iteratively `construct' a backbone. The subboxes $\CQ_1,\dots,\CQ_{n'/k}$ are ordered so that  $\CQ_i$ and $\CQ_{i+1}$ share a boundary for all $i$. On $\CA_\mathrm{init}$, we know that $\CH$ inside $\CQ_1$ contains a connected component $\CC_{\mathrm{init}}$ with at least $s_k$ many vertices. We now reveal edges between $\CQ_{1}$ and $\CQ_{2}$, and bound the probability that there are at least $s_k$ many vertices in $\CQ_{2}$ that are connected by an edge to $\CC_\mathrm{init}$: denote this set of vertices by $\widetilde\CV_{2}$. Next, we apply the same bound to show that at least $s_k$ many vertices in $\CQ_{3}$ connect by an edge to $\widetilde\CV_{2}$, and so on.
 Hence, for $i\ge 1$, we need to  analyze the probability that a vertex in $\CQ_{i+1}$ connects to a vertex in $\widetilde\CV_i$, conditionally on $|\widetilde\CV_i|\ge s_k$.
 Since by assumption $\tau<2+\sigma$, by definition of $\gamma_\mathrm{hh}$ in \eqref{eq:gamma-hh} for all $\tau<2+\sigma$ and $\alpha\le\infty$, 
 \begin{equation}
  1-(1+\sigma) \gamma_\mathrm{hh} \ge 0, \quad \mbox{and} \quad 2+\sigma-\tau>0.\label{eq:gamma-upper-sigma}
 \end{equation}
 The Euclidean distance between vertices in neighboring boxes is at most $2\sqrt{d}k^{1/d}$ (twice the diameter of a single box), and all considered vertices have mark at least $w_\mathrm{hh}$.
 When $\alpha=\infty$, we use that $\gamma_{\mathrm{hh}}=1/(1+\sigma)$, see \eqref{eq:gamma-hh}, and so $w_{\mathrm{hh}}^{1+\sigma}/k=C_1^{-(\sigma+1)/(\tau-1)}$ by \eqref{eq:w-gamma-hh}. We obtain using $C_1$ from \eqref{eq:c1-infty}, $\mathrm{p}$ in \eqref{eq:connection-prob-gen}, that for any $u=(x_u, w_u)\in \CV_{\CQ_{i+1}}[w_{\mathrm{hh}}, 2w_{\mathrm{hh}})$,
 \begin{equation}
 \begin{aligned}
  \Prob\big((x_u,w_u)\sim\widetilde\CV_i \,\big|\, |\widetilde\CV_i |\ge s_k, \CA_{\mathrm{poi}}\big)
  & \ge 1-\Big(1-p\mathbbm 1\Big\{\frac{\beta w_{\mathrm{hh}}^{1+\sigma}}{(2\sqrt{d})^dk}\ge 1 \Big\}\Big)^{s_k} \\&= 1-(1-p)^{s_k}\ge 1/2,\label{eq:1/2-infty}
 \end{aligned}
 \end{equation}
 for all $k\ge k_1$ by~\eqref{eq:upper-hh-k0}.
 When $\alpha<\infty$, using $\mathrm p$ in \eqref{eq:connection-prob-gen} for $u\in \CQ_{\ell+1}$ with $w_u \ge w_\mathrm{hh}$, either the minimum is at $1$ below in \eqref{eq:temp-est-1} (in which case the right-hand side of \eqref{eq:1/2-infty} remains valid) or, the minimum in $\mathrm{p}$ is attained at the second term below: then we substitute $s_k$ from~\eqref{eq:w-gamma-hh},
 \begin{align}
  \Prob\big((x_u,w_u)\sim\widetilde\CV_i \mid |\widetilde\CV_\ell |\ge s_k, \CA_{\mathrm{poi}}\big)
  & \ge
  1-\big(1-p\big(1\wedge\beta (2\sqrt{d})^{-d} w_\mathrm{hh}^{1+\sigma}k^{-1}\big)^\alpha\big)^{s_k}\nonumber              \\
  & =
  1-\big(1-p\beta^\alpha (2\sqrt{d})^{-\alpha d} w_\mathrm{hh}^{(1+\sigma)\alpha}k^{-\alpha}\big)^{kw_\mathrm{hh}^{-(\tau-1)}/16} \label{eq:temp-est-1} \\
  & \ge
  1-\exp\big(- (p/16)\beta^\alpha (2\sqrt{d})^{-\alpha d} w_\mathrm{hh}^{(1+\sigma)\alpha-(\tau-1)}k^{1-\alpha}\big).\nonumber
 \end{align}
 By choice of $w_\mathrm{hh}$, and $\gamma_\mathrm{hh}$  in \eqref{eq:w-gamma-hh}, and \eqref{eq:gamma-hh}, respectively, factors containing $k$ cancel, and using the formula for $C_1$ in \eqref{eq:upper-hh-c1} we arrive at
 \begin{equation}
 \begin{aligned}
  \Prob\big(u\sim\widetilde\CV_i  \mid |\widetilde\CV_i |\ge s_k, \CA_{\mathrm{poi}}\big) &\ge
  1-\exp\big(- (p/16)\beta^\alpha (2\sqrt{d})^{-\alpha d} C_1^{1-(1+\sigma)\alpha/(\tau-1)}\big)\\
  &\ge 1/2.\label{eq:upper-hh-bb-half}
 \end{aligned}
 \end{equation}
 Combining \eqref{eq:upper-hh-bb-half} with \eqref{eq:1/2-infty}, we obtain a lower bound of $1/2$ for all $\alpha>1$ for any $u\in \CV_{\CQ_{i+1}}[w_{\mathrm{hh}}, 2w_{\mathrm{hh}})$.
 On $\CA_{\mathrm{poi}}$ (see \eqref{eq:apoi}) there are at least $4s_k$ vertices in $\CV_{\CQ_{i+1}}[w_{\mathrm{hh}}, 2w_{\mathrm{hh}})$. Each of these vertices connects conditionally independently by an edge to vertices in $\widetilde \CV_{i}$ with probability at least $1/2$, so for all $i\ge 1$, 
 \begin{align}
  \Prob\big(|\widetilde\CV_{i+1}|\ge s_k\,\big|\, |\widetilde\CV_{i}|\ge s_k, \CA_\mathrm{poi}\big)\ge
  \Prob\big(\,\Bin(4s_k, 1/2\,)\ge s_k
  \big)\ge
  1-\exp(-s_k/4),\nonumber
 \end{align}
 where the last bound follows by Chernoff's bound, see e.g. \cite[Theorem~2.1]{JLR}.     
 By a union bound over the at most $n'/k$ subboxes, we obtain
 \begin{equation}\label{eq:third-term-in-abb}
  \Prob\big(\neg\CA_\mathrm{bb} \mid \CA_\mathrm{init} \cap \CA_\mathrm{poi}\big)
  \le
  (n'/k)\exp(-s_k/4) \le (n/k) \exp(-s_k/4).
 \end{equation}
We will use this for the last term in \eqref{eq:upper-hh-bb1}, and \eqref{eq:upper-hh-bb-1st} to bound the first term. It remains to bound the second term,  
$\Prob(\neg \CA_\mathrm{init}\mid \CA_{\mathrm{poi}})
 $, with $\CA_\mathrm{init}$ from \eqref{eq:J-def}. 
 For this we show that the graph $\CH_1$ induced on $\CV_{\CQ_1}[w_{\mathrm{hh}}, 2w_{\mathrm{hh}})$ stochastically dominates a supercritical Erd{\H o}s-R\'enyi random graph with mean degree at least $\lambda_\star(1/2)$. We write $\mathrm{ER}(m,q)$ for an Erd{\H o}s-R\'enyi random graph on $m$ vertices with connection probability $q$. 
 Indeed, on the event $\CA_{\mathrm{poi}}$ there are at least $4s_k$ vertices in $\CV_{\CQ_1}[w_{\mathrm{hh}}, 2w_{\mathrm{hh}})$. Arbitrarily pick  $4s_k$ of them. Any two of those vertices, say $(x_u,w_u)$ and $(x_v, w_v)$, are within distance $\sqrt{d} k^{1/d}$, the diameter of $Q_i$.  So when $\alpha=\infty$, the same calculation as in \eqref{eq:1/2-infty} shows that they are connected with probability $p$, so the graph on $\CV_{\CQ_i}[w_{\mathrm{hh}}, 2w_{\mathrm{hh}})$ dominates $\mathrm{ER}(4s_k, p)$. 
Using \eqref{eq:connection-prob-gen}, for $\alpha<\infty$,
 \begin{equation}
  \mathrm{p}\big((x_u,w_u),(x_v, w_v)) \ge p\big(1\wedge (\beta  w_\mathrm{hh}^{1+\sigma}/(d^{d/2}k))\big)^\alpha. \nonumber
 \end{equation}
 If the minimum is at the first term, then again the graph on $\CV_{\CQ_1}[w_{\mathrm{hh}}, 2w_{\mathrm{hh}})$ dominates  $\mathrm{ER}(4s_k, p)$. Otherwise, if the minimum is at the second term, we compute the mean degree using \eqref{eq:w-gamma-hh}:
 \[ 
 \begin{aligned}
 4 d^{-\alpha d/2}s_k\cdot p\beta^\alpha  w_\mathrm{hh}^{(1+\sigma)\alpha}k^{-\alpha} &=  (p/4) \beta^{\alpha} d^{-\alpha d/2}  k^{1-\alpha} w_\mathrm{hh}^{(1+\sigma)\alpha-(\tau-1)} \\
 &=(p/4) \beta^{\alpha} d^{-\alpha d/2} C_1^{-((1+\sigma)\alpha - (\tau-1))/(\tau-1)}\ge \lambda_\star(1/2),
 \end{aligned}
 \]
by the definition of $C_1$ in \eqref{eq:upper-hh-c1}, since the powers of $k$ cancelled each other.
Hence, $\CH\succcurlyeq \mathrm{ER}\big(4s_k), \lambda_\star(1/2)/(4s_k)\big)=\mathrm{ER}_\star$, and the size of the largest connected component in $\CH$, denoted by $C^\sss{(1)}$ below, stochastically dominates the size of the largest component in $\mathrm{ER}_\star$.
 We apply a large-deviation principle for the size of the giant component in ERRGs obtained by O'Connell~\cite{oconnel1998}, see also~\cite{andreis2021er}.
 Denote by $\CC^\sss{(1)}(m,\lambda/m)$ the largest component of $\mathrm{ER}(m,\lambda/m)$, and recall that $\rho_\lambda$ is the survival probability of a Bienaym\'{e}--Galton--Watson branching process with $\mathrm{Poi}(\lambda)$ offspring. By~\cite[Theorem~3.1]{oconnel1998}, for every $\lambda > 1$ and $\tilde \varepsilon > 0$, there exists a constant $c_\lambda>0$ such that for each $m\ge 1$,
\begin{equation}\nonumber
\Prob\big(|\CC^\sss{(1)}(m,\lambda/m)|<(1-\tilde \varepsilon)\varrho_\lambda m\big)\le \exp\big(-c_\lambda m\big).
\end{equation}
Now, recall that by definition of $\lambda_\star(1/2)$, the survival probability of the branching process is $1/2$.
Applying the previous inequality with $\tilde \varepsilon=1/2$ to $\CC^{\sss{(1)}}$, we obtain that 
\begin{equation}\nonumber
\begin{aligned}
\Prob\Big(|\CC^{\sss{(1)}}(4s_k, \lambda_\star(1/2))|\le \frac12  \varrho_{\lambda_\star(1/2)} (4s_k)\Big) &= \Prob\big(|\CC^{\sss{(1)}}(4s_k, \lambda_\star(1/2))|\le s_k\big) \\
&\le \exp( - c_{\lambda_\star(1/2)} 4s_k).
\end{aligned}
\end{equation}
 Since 
 the number of boxes is $n'/k=\lfloor(n/k)^{1/d}\rfloor ^d\ge 1$ whenever $n\ge k$, we get 
 \begin{equation*}
  \Prob(\neg \CA_\mathrm{init} \mid \CA_{\mathrm{poi}})  \le \Prob( \CC^{\sss{(1)}}(4s_k, \lambda_\star(1/2))\le s_k \mid\CA_\mathrm{poi}) \le \exp(-c_{\lambda_\star(1/2)} 4 s_k)         \end{equation*}
 When combined with \eqref{eq:upper-hh-bb1}, \eqref{eq:upper-hh-bb-1st},  and \eqref{eq:third-term-in-abb}, and that $s_k=k^{\zeta_{\mathrm{hh}}}(C_1/16)$ in \eqref{eq:w-gamma-hh}, this  yields the statement of the lemma in \eqref{eq:lem-hh-nobb}.
\end{proof}
We will end Step 1 with a claim that shows \eqref{eq:q-bound}. We start by introducing a notation for the construction of the graph $\CG_n$ that facilitates later steps.
We recall the definition of KSRG from Definition~\ref{def:ksrg}.
Given the vertex set $\CV$, it is standard practice to use independent uniform random variables to facilitate couplings with the edge set. This definition here is more general and allows for other auxiliary random variables as well, leading to different distributions on graphs. This will be useful later.
\begin{definition}[Graph encoding]\label{def:encoding}
 Let $\CV\subset \R^d\times[1,\infty)$ be a discrete set and assume that
 $\Psi_{\CV}=\big\{\varphi_{u,v}:  \varphi_{u,v}\in[0,1], \{u,v\} \in \binom{\CV}{2}\big\}$ is a collection of random variables given $\CV$.
 For a given connectivity function $\mathrm{p}: (\R^d\times[1,\infty))^2\to [0,1]$,
 we call  $\CG'=(\CV', \CE')$ the (sub)graph \emph{encoded by} $(\CV, \Psi_\CV,\mathrm{p})$  if $\CV'=\CV$ and for all $\{u,v\}\in \binom{\CV}{2}$, with $u=(x_u, w_u), v=(x_v, w_v)$,
 \begin{equation}\label{eq:phi-edge-rule}
  \big\{\{u,v\}\in \CE'\big\} \Longleftrightarrow \big\{\varphi_{u,v}\le \mathrm{p}\big((x_u, w_u),(x_v, w_v)\big)\big\}.
 \end{equation}
Given $\CV$ in \eqref{eq:poisson-intensity},
  and $\mathrm{p}$ from \eqref{eq:connection-prob-gen},  let $\Psi_\CV$ be a collection of independent  $\mathrm{Unif}[0,1]$ random variables given $\CV$. $\CG_\infty$ in Definition~\ref{def:ksrg} is then the graph \emph{encoded by} $(\CV,\Psi_\CV, \mathrm{p})$.
  Writing $\Psi_n[a,b):=\big\{\varphi_{u,v}\in \Psi_\CV:\{u,v\}\in \binom{\CV_n[a,b)}{2}\}$ and $\Psi_n:=\Psi_n[1,\infty)$, $\CG_n$ in \eqref{eq:gn-ab} is then the graph encoded by $(\CV_n, \Psi_n,\mathrm{p})$.
    \end{definition}
An immediate corollary is the following.
\begin{corollary}\label{cor:graph-equal-law}
 Assume  $\widetilde\CG,\widehat\CG $ are two random graphs, encoded respectively by $(\widetilde\CV, \widetilde\Psi,\mathrm{p})$, and $(\widehat\CV, \widehat\Psi,\mathrm{p})$ for respective point processes $\widetilde\CV, \widehat \CV$ on $\R^d\times[1,\infty)$ using the \emph{same} connectivity function $\mathrm{p}$. If $(\widetilde\CV, \widetilde\Psi)$ and $(\widehat\CV, \widehat\Psi)$ have the same law then the encoded graphs
 $\widetilde\CG$ and $\widehat\CG$ also
 have the same law.
\end{corollary}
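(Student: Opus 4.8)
The plan is to read the edge rule \eqref{eq:phi-edge-rule} as exhibiting the encoded graph as the image of the pair $(\CV,\Psi_\CV)$ under a single fixed measurable map, one that depends only on the connectivity function $\mathrm p$ and not on which of the two inputs it is fed; since the two inputs have the same law, their images do too.

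First I would fix the relevant measurable spaces. A realisation of a pair $(\CV,\Psi_\CV)$ can be encoded as one locally finite point configuration $\xi$ on the disjoint union $\big(\R^d\times[1,\infty)\big)\sqcup\big((\R^d\times[1,\infty))^2\times[0,1]\big)$: the first component records the marked vertices, and the second records, for each unordered pair $\{u,v\}\subseteq\CV$, the triple $(u,v,\varphi_{u,v})$. Let $\mathbf M$ be the configuration space carrying its usual $\sigma$-algebra generated by the counting maps $\xi\mapsto\xi(B)$ over bounded Borel sets $B$. By hypothesis the laws on $\mathbf M$ of the configurations $\widetilde\xi$ and $\widehat\xi$ built from $(\widetilde\CV,\widetilde\Psi)$ and $(\widehat\CV,\widehat\Psi)$ coincide.

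Next I would define the encoding map $\Phi\colon \mathbf M\to\mathcal G$, where $\mathcal G$ is the space of at most countable graphs with vertex set a locally finite subset of $\R^d\times[1,\infty)$, equipped with the $\sigma$-algebra generated by the vertex-counting maps together with the maps recording, for a pair of bounded disjoint Borel sets, the number of edges between them. Given $\xi$, let $\Phi(\xi)$ have vertex set the vertex-component of $\xi$ and include $\{u,v\}$ as an edge precisely when the triple $(u,v,\varphi)$ present in $\xi$ satisfies $\varphi\le\mathrm p(u,v)$, as in \eqref{eq:phi-edge-rule}. Since $\mathrm p$ is a fixed Borel function, $\{\varphi\le\mathrm p(u,v)\}$ is a Borel subset of the second component, and each generating event of $\mathcal G$ pulls back under $\Phi$ to a measurable subset of $\mathbf M$; hence $\Phi$ is measurable. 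The key point is that $\Phi$ is literally the same map for $\widetilde\xi$ and $\widehat\xi$, because the connectivity function $\mathrm p$ is the same.

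Finally, by Definition~\ref{def:encoding} we have $\widetilde\CG=\Phi(\widetilde\xi)$ and $\widehat\CG=\Phi(\widehat\xi)$ as random elements of $\mathcal G$, and for a measurable map $f$ and random elements with $\widetilde\xi\overset{d}{=}\widehat\xi$ one has $f(\widetilde\xi)\overset{d}{=}f(\widehat\xi)$; taking $f=\Phi$ yields $\widetilde\CG\overset{d}{=}\widehat\CG$. The only real work is the routine verification that $\Phi$ is measurable for sensible $\sigma$-algebras on $\mathbf M$ and $\mathcal G$; I expect this bookkeeping to be the main obstacle, and it can be abbreviated by checking measurability on the generating $\pi$-system of cylinder events determined by finitely many bounded Borel windows and invoking a monotone-class argument.
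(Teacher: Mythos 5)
Your proposal is correct and matches the paper's implicit reasoning: the paper states this as an ``immediate corollary'' with no written proof, the idea being precisely yours --- that given a fixed connectivity function $\mathrm p$, the encoded graph is a deterministic measurable function of the pair $(\CV,\Psi_\CV)$, so equal-in-law inputs yield equal-in-law outputs. Your write-up simply makes the measurability bookkeeping explicit, which the authors chose to suppress.
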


The collection of (conditionally) independent \emph{uniform} variables $\Psi_n=\{\varphi_{u,v}: \{u,v\}\in \binom{\CV_n}{2}\}$ and the connectivity function $\mathrm{p}$ determine the presence of edges in $\CG_n$. By \eqref{eq:phi-edge-rule}, if $\varphi_{u,v}\le r\le \mathrm{p}(u, v)$ for some $r>0$, then $\{u\sim v\}$.
Writing $\CQ(u)$ for the box containing or closest to $u\in\CV_n$ (see \eqref{eq:qu}),  let $v_u(1), v_u(2),\dots, v_u(s_k), \dots$ denote the vertices in $\CQ(u)\cap\CC_\mathrm{bb}$, in decreasing order with respect to their marks. Let
\begin{equation}
 \CS(u):=\big\{v_u(1),\dots,v_u(s_k)\big\}.\label{eq:su}
\end{equation}
\begin{claim}[Connections to the backbone]\label{claim:connecting-to-bb}
 Consider a KSRG under the same assumptions as in Theorem~\ref{thm:subexponential-decay}, with vertex set either a homogeneous Poisson point process or $\Z^d$. 
 Fix $n\ge k$ for any $k\ge k_1$ in \eqref{eq:upper-hh-k0} and assume $\CG_{n,1}$ satisfies the event $\CA_\mathrm{bb}(n,k)$.
 Let $\Psi_n=\{\varphi_{u,v}: \{u,v\}\in \binom{\CV_n}{2}\}$ be a collection of iid $\mathrm{Unif}[0,1]$ random variables and $r_k:=1-2^{-1/s_k}$. Then, for all $u\in\CV_n[2w_\mathrm{hh}(k),\infty)$ and $v\in\CS(u)$, $\mathrm{p}(u, v) \ge r_k$ and
 \begin{equation}
  \Prob\big(\forall v\!\in\!\CS(u): \varphi_{u,v}\!>\! r_k\mid \CG_{n,1}, \CA_\mathrm{bb}\big)=\Prob\big(\exists v\!\in\!\CS(u): \varphi_{u,v}\!\le\! r_k \mid \CG_{n,1}, \CA_\mathrm{bb}\big)=1/2. \label{eq:lem-hh-conn-qk}
 \end{equation}
 \begin{proof}
  On the event $\CA_{\mathrm{bb}}$, $\CC_{\mathrm{bb}}\subseteq \CG_{n,1}$ satisfies \eqref{eq:upper-hh-bb-event} and in particular $\CS(u)$ in \eqref{eq:su} is well-defined and has size $s_k$. Since $\{\varphi_{u,v}\}$ is a collection of iid $\mathrm{Unif}[0,1]$ random variables, (cf.\ Definition~\ref{def:encoding}), one \emph{must} set $r_k:=1-2^{-1/s_k}$ for  \eqref{eq:lem-hh-conn-qk} to hold. Hence, it only remains to show $\mathrm{p}(u,v)\ge r_k$ in the statement. We show this somewhat implicitly, using calculations we did  around \eqref{eq:1/2-infty}--\eqref{eq:upper-hh-bb-half}.

  With $\CQ(u)$ and $\CS(u)$ from \eqref{eq:qu} and \eqref{eq:su}, respectively, by \eqref{eq:upper-hh-outside-tesselation}, every $u\in\CV_n[2w_\mathrm{hh},\infty)$ is at distance at most $2\sqrt{d}k^{1/d}$ from any vertex in $v\in \CS(u)$.
  Since $w_u\ge 2w_{\mathrm{hh}}\ge w_{\mathrm{hh}}$, and $|\CS(u)|=s_k$,  the computations \eqref{eq:1/2-infty}--\eqref{eq:upper-hh-bb-half}  carry word-by-word through with $\widetilde \CV_i$ replaced by $\CS(u)$, obtaining
  \[ \Prob( u \sim \CS(u)\mid \CG_{n, 1}, \CV_n, \CA_\mathrm{bb}) = 1-\prod_{v\in \CS(u)} (1-\mathrm{p}(u,v)) \ge 1-(1-z_k)^{s_k}\ge 1/2, \]
  with $z_k$ either equaling $p $ in the right-hand side of \eqref{eq:1/2-infty} or the appropriate expression in the right-hand side of \eqref{eq:temp-est-1}, that bounds individually each $\mathrm{p}(u, v)$ from below. Following now the calculations towards \eqref{eq:upper-hh-bb-half} ensures that in both cases
  $z_k\ge 1-2^{-1/s_k}$. The assumption $k\ge k_1$  in \eqref{eq:upper-hh-k0} is  needed when $z_k=p$, and it implies that $r_k \le p$, see around \eqref{eq:1/2-infty}. 
 \end{proof}
\end{claim}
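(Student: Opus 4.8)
The plan is to reduce everything to two ingredients: (a) the elementary identity that for iid $\mathrm{Unif}[0,1]$ variables $(\varphi_{u,v})_{v\in\CS(u)}$, the probability that \emph{all} of them exceed a threshold $r$ is exactly $(1-r)^{|\CS(u)|}$, and (b) the pointwise lower bound $\mathrm{p}(u,v)\ge r_k$ for each $v\in\CS(u)$, which is the only part with mathematical content. For (a), since $|\CS(u)|=s_k$ on $\CA_\mathrm{bb}$ (this is exactly the size guarantee in~\eqref{eq:upper-hh-bb-event}, and $\CS(u)$ in~\eqref{eq:su} is then well-defined), solving $(1-r_k)^{s_k}=1/2$ forces $r_k=1-2^{-1/s_k}$, which is precisely the definition in the statement; the two displayed probabilities in~\eqref{eq:lem-hh-conn-qk} are complementary events and both equal $1/2$ by this computation. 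The conditioning on $(\CG_{n,1},\CA_\mathrm{bb})$ is harmless because $\CS(u)$ is measurable with respect to $\CG_{n,1}$ (it is determined by the vertex locations/marks in $\CQ(u)\cap\CC_\mathrm{bb}$), while the edge-indicator variables $\varphi_{u,v}$ for $u\in\CV_n[2w_\mathrm{hh},\infty)$ have not been revealed when building $\CG_{n,1}=\CG_n[w_\mathrm{hh},2w_\mathrm{hh})$, so they remain iid $\mathrm{Unif}[0,1]$ given that information.

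The substantive step is showing $\mathrm{p}(u,v)\ge r_k$ for every $v\in\CS(u)$. Here I would invoke the geometry: by~\eqref{eq:upper-hh-outside-tesselation}, any $u\in\CV_n[2w_\mathrm{hh},\infty)$ lies within distance $2\sqrt{d}\,k^{1/d}$ of every vertex of $\CQ(u)$, hence of every $v\in\CS(u)\subseteq\CQ(u)\cap\CC_\mathrm{bb}$; and $w_u\ge 2w_\mathrm{hh}\ge w_\mathrm{hh}$, while $w_v\ge w_\mathrm{hh}$ by the definition of the backbone. These are exactly the distance and mark bounds used in the backbone construction. So the computations in~\eqref{eq:1/2-infty}--\eqref{eq:upper-hh-bb-half} apply verbatim with $\widetilde\CV_i$ replaced by $\CS(u)$ (both have size $s_k$), giving
\begin{equation}\nonumber
\Prob\big(u\sim\CS(u)\mid \CG_{n,1},\CV_n,\CA_\mathrm{bb}\big)=1-\prod_{v\in\CS(u)}(1-\mathrm{p}(u,v))\ge 1-(1-z_k)^{s_k}\ge 1/2,
\end{equation}
where $z_k$ is the common lower bound on each $\mathrm{p}(u,v)$: either $z_k=p$ when the relevant minimum in~\eqref{eq:connection-prob-gen} is attained at $1$ (the case $\alpha=\infty$, or $\alpha<\infty$ with the first term dominating, cf.~\eqref{eq:1/2-infty}), or $z_k$ equals the polynomial expression on the right of~\eqref{eq:temp-est-1} otherwise. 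Running the cancellation-of-powers-of-$k$ argument that leads to~\eqref{eq:upper-hh-bb-half} (using the choices of $w_\mathrm{hh},\gamma_\mathrm{hh},s_k$ in~\eqref{eq:w-gamma-hh}--\eqref{eq:gamma-hh} and the defining equations~\eqref{eq:upper-hh-c1}--\eqref{eq:c1-infty} for $C_1$) shows that in \emph{both} cases $(1-z_k)^{s_k}\le 1/2$, i.e.\ $z_k\ge 1-2^{-1/s_k}=r_k$, so a fortiori $\mathrm{p}(u,v)\ge z_k\ge r_k$ for each individual $v$. In the case $z_k=p$ this requires $(1-p)^{s_k}\le 1/2$, which is the content of the assumption $k\ge k_1$ via~\eqref{eq:upper-hh-k0}; note this also yields $r_k\le p$.

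The main (and essentially only) obstacle is bookkeeping: making sure that the estimates~\eqref{eq:1/2-infty}--\eqref{eq:upper-hh-bb-half} really do transfer with $\widetilde\CV_i\rightsquigarrow\CS(u)$ without hidden dependence on which subbox $u$ sits in, i.e.\ that the distance bound $2\sqrt d\,k^{1/d}$ (from~\eqref{eq:upper-hh-outside-tesselation}, valid for the box closest to $u$, not only for neighboring boxes as in Step~1) is the correct quantity and that the mark lower bound $w_\mathrm{hh}$ is genuinely uniform over $v\in\CS(u)$. Once those are in place the argument is a one-line consequence of the earlier computations, and the probabilistic identity~\eqref{eq:lem-hh-conn-qk} is immediate. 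I would therefore present the proof exactly as a short reduction: first fix $r_k$ by the uniform-variable identity, then quote the geometry from~\eqref{eq:upper-hh-outside-tesselation}, then point to~\eqref{eq:1/2-infty}--\eqref{eq:upper-hh-bb-half} for the bound $\mathrm{p}(u,v)\ge r_k$, and finally note the role of $k\ge k_1$.
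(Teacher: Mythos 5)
Your proposal is correct and follows essentially the same route as the paper: fix $r_k$ from the iid-uniform identity $(1-r_k)^{s_k}=1/2$, then reduce the substantive content to the pointwise bound $\mathrm{p}(u,v)\ge r_k$, which you obtain by transplanting the distance bound of \eqref{eq:upper-hh-outside-tesselation} and the mark bounds into the computations of \eqref{eq:1/2-infty}--\eqref{eq:upper-hh-bb-half}, with the $z_k=p$ case handled by $k\ge k_1$. The only addition is your explicit remark that the $\varphi_{u,v}$ with $u\in\CV_n[2w_\mathrm{hh},\infty)$ are unrevealed given $\CG_{n,1}$, which the paper leaves implicit but is a correct and worthwhile clarification.
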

\subsection{Step 2. Revealing low-mark vertices}
Having established that $\CG_{n,1}$ contains a backbone with the right error probability, we define $\CG_{n,2}:=\CG_{n}[1,2w_\mathrm{hh})\supseteq \CG_{n,1}$.

\subsection{Step 3. Presampling the vertices connecting to the backbone}
We make Step 3 of Section~\ref{sec:outline-hh} precise now. Step 3 ensures that during Step 4 below no \emph{small-to-large} merging occurs when revealing the connector vertices of $\CV_n[2 w_{\mathrm{hh}}, \infty)$. That is, components of size  smaller than $k$ do not merge into a larger component via edges to a vertex $v\in \CV_n[2w_{\mathrm{hh}}, \infty)$ that is not connected to the backbone $\CC_\mathrm{bb}$ ($\CC_{\mathrm{bb}}$ will be contained in the giant component of $\CG_n$). So, we partially pre-sample some randomness that encodes the presence of some edges.

For a pair $n,k$, we now present the alternative graph-encoding $\widehat\CG_n$ of KSRGs (cf.\ Definitions \ref{def:ksrg} and \ref{def:encoding}) and verify that $\widehat \CG_n$ and  $\CG_n$ in Definition~\ref{def:ksrg} have the same law. The difference between the encoding in Definition~\ref{def:encoding} and the construction of $\widehat\CG_n$ is that in the latter the edge-variables $\varphi_{u,v}$ are no longer independent $\Unif[0,1]$ random variables, but are  sampled from a suitable (conditional) joint distribution, whenever $u\in\CV_n[2w_\mathrm{hh}(k),\infty)$ and  $v\in \CS(u)$ from \eqref{eq:su}. Recall $r_k=1-2^{-1/s_k}$ from Claim \ref{claim:connecting-to-bb}, with $w_{\mathrm{hh}}(k):=w_{\mathrm{hh}}$ and $s_k$ defined in \eqref{eq:w-gamma-hh}.
\begin{definition}[Alternative graph construction]\label{def:ksrg-alt} Fix $n$ and $k$.
 Consider the subgraph $\CG_{n,2}\!=\!\CG_n[1,2w_\mathrm{hh}(k))$ of $\CG_n$ from Definition~\ref{def:ksrg}, on a vertex set formed by a homogeneous Poisson point process. Assume $\CG_{n,2}$ is 
 encoded by $\big(\CV_n[1,2w_\mathrm{hh}), \Psi_n[1,2w_\mathrm{hh}), \mathrm{p}\big)$. Let
 $ \widehat\CV_n^\sss{(\mathrm{unsure})}[2w_\mathrm{hh},\infty)$ and $\widehat\CV_n^\sss{(\mathrm{sure})}[2w_\mathrm{hh},\infty)$ be two \emph{independent} Poisson point processes
 on $\Lambda_n\times[2w_\mathrm{hh},\infty)$, each with intensity $(1/2)\mathrm{Leb}\otimes F_W(\rd w)$, with  $F_W$ as in \eqref{eq:poisson-intensity}. Define
 \begin{equation}
  \widehat\CV_n[2w_\mathrm{hh},\infty):=\widehat\CV_n^\sss{(\mathrm{unsure})}[2w_\mathrm{hh},\infty)\cup\widehat\CV_n^\sss{(\mathrm{sure})}[2w_\mathrm{hh},\infty).\label{eq:hat-xin}
 \end{equation}
 Let $\Sigma_n:=\{U_{u,v}: u\in \widehat\CV_n[2w_\mathrm{hh},\infty), v\in \CV_n[1, 2w_\mathrm{hh})\cup\widehat\CV_n[2w_\mathrm{hh},\infty)\}$ be a collection of iid $\mathrm{Unif}[0,1]$ random variables (conditionally on these PPPs).

 (i) If $\CG_{n,1}=\CG_n[w_{\mathrm{hh}},2w_\mathrm{hh})\subseteq \CG_{n,2}$ does not satisfy the event $\CA_{\mathrm{bb}}$ in \eqref{eq:upper-hh-bb-event}, then set $\widehat \Psi_n:=\Psi_n[1,2w_{\mathrm{hh}})\cup \Sigma_n$ in Definition~\ref{def:encoding} to construct $\widehat\CG_n\supseteq \CG_{n,2}$ on $\CV_n[1,2w_\mathrm{hh})\cup \widehat\CV_n[2w_\mathrm{hh},\infty)$, i.e.,
 \begin{equation}\widehat \CG_n:=(\CV_n[1,2w_\mathrm{hh})\cup \widehat\CV_n[2w_\mathrm{hh},\infty), \Psi_n[1,2w_{\mathrm{hh}})\cup \Sigma_n, \mathrm{p}).\nonumber
 \end{equation}

 (ii) If $\CG_{n,1}\subseteq \CG_{n,2}$  satisfies the event $\CA_{\mathrm{bb}}$, then we construct $\widehat\CG_n\supseteq \CG_{n,2}$ \emph{conditionally on} $\CG_{n,2}$ as follows.
 For each $u\in \widehat\CV_n[2w_\mathrm{hh},\infty)$ in \eqref{eq:hat-xin}, the set of vertices $\CS(u)\subseteq \CV_n[1,2w_{\mathrm{hh}})$ is a deterministic function of $\CG_{n,1}\subseteq \CG_{n,2}$, given by \eqref{eq:su}.
 Let
 \begin{equation}
  \begin{aligned}
  \widehat\Psi_n^{\sss{(\mathrm{iid}, \mathrm{unsure})}} & :=\big\{ U_{u,v}: u\in \widehat\CV_n^{\sss{\mathrm{(unsure)}}}[2w_{\mathrm{hh}}, \infty), v\in  \widehat\CV_n^{\sss{\mathrm{(unsure)}}}[2w_{\mathrm{hh}}, \infty)\cup \CV_n[1,2w_{\mathrm{hh}})\setminus \CS(u) \big\}, \\
  \widehat\Psi_n^{\sss{(\mathrm{iid}, \mathrm{sure})}}   & :=\big\{ U_{u,v}: u\in \widehat\CV_n^{\sss{\mathrm{(sure)}}}[2w_{\mathrm{hh}}, \infty), v\in  \widehat\CV_n^{\sss{\mathrm{(sure)}}}[2w_{\mathrm{hh}}, \infty)\cup \CV_n[1,2w_{\mathrm{hh}})\setminus \CS(u) \big\},     \\
  \widehat\Psi_n^{\sss{(\mathrm{iid}, \mathrm{both})}}   & :=\big\{ U_{u,v}: u\in \widehat\CV_n^{\sss{\mathrm{(sure)}}}[2w_{\mathrm{hh}}, \infty), v\in  \widehat\CV_n^{\sss{\mathrm{(unsure)}}}[2w_{\mathrm{hh}}, \infty)\big\}
  \end{aligned}\label{eq:psi-iid}
 \end{equation}
 be disjoint subsets of $\Sigma_n$, and write $\widehat\Psi_n^{\sss{(\mathrm{iid})}}:=\widehat\Psi_n^{\sss{(\mathrm{iid}, \mathrm{unsure})}}\cup \widehat\Psi_n^{\sss{(\mathrm{iid}, \mathrm{sure})}}\cup \widehat\Psi_n^{\sss{(\mathrm{iid}, \mathrm{both})}}$ for the union.
 Conditionally on $\widehat\CV_n^\sss{(\mathrm{unsure})}[2w_\mathrm{hh},\infty), \widehat\CV_n^\sss{(\mathrm{sure})}[2w_\mathrm{hh},\infty)$ and $\CG_n[1,2w_\mathrm{hh})$, define also the collections of random variables
 \begin{align}
  \widehat\Psi_n^{\sss{(\mathrm{cond, unsure})}} & :=\big\{\widehat\varphi_{u,v}: u\in \widehat\CV_n^\sss{(\mathrm{unsure})}[2w_\mathrm{hh},\infty), v\in \CS(u) \big\}, \label{eq:xi-cond-unsure} \\
  \widehat\Psi_n^{\sss{(\mathrm{cond, sure})}}   & :=\big\{\widehat\varphi_{u,v}: u\in \widehat\CV_n^\sss{(\mathrm{sure})}[2w_\mathrm{hh},\infty), v\in \CS(u) \big\}, \label{eq:xi-cond-sure}
 \end{align}
 so that for different vertices $u_1, u_2 \in\widehat\CV_n[2w_\mathrm{hh},\infty)$, the collections $\{\widehat\varphi_{u_1,v}\}_{v\in\CS(u_1)}$ and $\{\widehat\varphi_{u_2,v'}\}_{v' \in \CS(u_2)}$ are \emph{independent}. The joint distribution of $\{\widehat\varphi_{u,v}\}_{v\in\CS(u)}$ for a single $u\in\widehat\CV_n^\sss{(\mathrm{unsure})}[2w_\mathrm{hh},\infty)$ is as follows: for \emph{any} sequence $(z_{u,v})_{v \in \CS(u)}\in[0,1]^{s_k}$ of length $s_k$, and with $r_k=1-2^{-1/s_k}$, 
 \begin{equation}
  \begin{aligned}
  \Prob\big(\forall v\in\CS(u): \widehat\varphi_{u,v}\le z_{u,v} & \mid u\in\widehat\CV_n^\sss{(\mathrm{unsure})}[2w_\mathrm{hh},\infty)\big) \\
                                                                  & :=
  \Prob\big(\forall v\in\CS(u): U_{u,v}\le z_{u,v}\mid \forall v\in\CS(u): U_{u,v}>r_k\big).
  \end{aligned}\label{eq:unif-unsure}
 \end{equation}
 Similarly we define the joint distribution of $\{\widehat\varphi_{u,v}\}_{v\in\CS(u)}$ for a single $u\in\widehat\CV_n^\sss{(\mathrm{sure})}[2w_\mathrm{hh},\infty)$ as follows: for any sequence $(z_{u,v})_{v \in \CS(u)}\in[0,1]^{s_k}$ of length $s_k$,
 \begin{equation}
  \begin{aligned}
  \Prob\big(\forall v\in\CS(u): \widehat\varphi_{u,v}\le z_{u,v} & \mid u\in\widehat\CV_n^\sss{(\mathrm{sure})}[2w_\mathrm{hh},\infty)\big) \\
                                                                  & :=
  \Prob\big(\forall v\in\CS(u): U_{u,v}\le z_{u,v}\mid \exists v\in\CS(u): U_{u,v}\le r_k\big).
  \end{aligned}\label{eq:unif-sure}
 \end{equation}
 We define $\widehat\CG_{n}$ as the graph  encoded by $(\widehat\CV_n,\widehat\Psi_n, \mathrm{p})$, where
 \begin{equation}
  \begin{aligned}
  \widehat\CV_n  & :=\CV_n[1,2w_\mathrm{hh})\cup \widehat\CV_n^\sss{(\mathrm{unsure})}[2w_\mathrm{hh},\infty)\cup\widehat\CV_n^\sss{(\mathrm{sure})}[2w_\mathrm{hh},\infty),                  \\
  \widehat\Psi_n & :=\Psi_n[1,2w_\mathrm{hh})\cup \widehat\Psi_n^{ \sss{(\mathrm{iid})}}\cup \widehat\Psi_n^{\sss{(\mathrm{cond, unsure})}}\cup \widehat\Psi_n^{\sss{(\mathrm{cond, sure})}}.
  \end{aligned}\label{eq:gn-hat}
 \end{equation}
\end{definition}
An immediate corollary is the following statement.
\begin{corollary}\label{cor:sure-bb}
 Consider a KSRG $\widehat \CG_n$ from Definition~\ref{def:ksrg-alt} for some $n,k$.
 On the event $\CA_\mathrm{bb}(n,k)$, every vertex in $\widehat\CV_n^\sss{(\mathrm{sure})}[2w_\mathrm{hh},\infty)$ is connected by an edge to $\CC_\mathrm{bb}(n,k)$ in $\widehat\CG_n$.
\end{corollary}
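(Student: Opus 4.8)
The plan is to read the statement off directly from the alternative graph construction in Definition~\ref{def:ksrg-alt}(ii), combined with the pointwise lower bound $\mathrm p(u,v)\ge r_k$ supplied by Claim~\ref{claim:connecting-to-bb}. Since the assertion is vacuous off $\CA_\mathrm{bb}(n,k)$, I would assume throughout that $\CG_{n,1}$ satisfies $\CA_\mathrm{bb}(n,k)$, and (as is needed to invoke Claim~\ref{claim:connecting-to-bb}) that $k\ge k_1$; then case (ii) of Definition~\ref{def:ksrg-alt} is in force, so $\widehat\CG_n$ is the graph encoded by $(\widehat\CV_n,\widehat\Psi_n,\mathrm p)$ with $\widehat\CV_n,\widehat\Psi_n$ as in \eqref{eq:gn-hat}.

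First I would fix a connector vertex $u\in\widehat\CV_n^\sss{(\mathrm{sure})}[2w_\mathrm{hh},\infty)$ and recall that $\CS(u)$ from \eqref{eq:su} is a deterministic function of $\CG_{n,1}$; on $\CA_\mathrm{bb}(n,k)$ the defining event \eqref{eq:upper-hh-bb-event} then guarantees $\CS(u)\subseteq\CV_{\CQ(u)}[w_\mathrm{hh},2w_\mathrm{hh})\cap\CC_\mathrm{bb}(n,k)$ with $|\CS(u)|=s_k$. Next I would use that the edge-variables $\{\widehat\varphi_{u,v}\}_{v\in\CS(u)}$ belong to the collection $\widehat\Psi_n^{\sss{(\mathrm{cond,sure})}}$ in \eqref{eq:xi-cond-sure} and that, by \eqref{eq:unif-sure}, their joint law is that of $\{U_{u,v}\}_{v\in\CS(u)}$ conditioned on the event $\{\exists v\in\CS(u):U_{u,v}\le r_k\}$, with $r_k=1-2^{-1/s_k}$; hence almost surely there is some $v^\star=v^\star(u)\in\CS(u)$ with $\widehat\varphi_{u,v^\star}\le r_k$.

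To conclude that $\{u,v^\star\}$ is an edge of $\widehat\CG_n$, I would invoke Claim~\ref{claim:connecting-to-bb}, which (on $\CA_\mathrm{bb}(n,k)$, for $k\ge k_1$) gives $\mathrm p(u,v)\ge r_k$ for every $v\in\CS(u)$, because $u$ has mark at least $2w_\mathrm{hh}(k)>w_\mathrm{hh}(k)$. Together with $\widehat\varphi_{u,v^\star}\le r_k\le\mathrm p(u,v^\star)$ the edge rule \eqref{eq:phi-edge-rule} of the encoding yields $\{u,v^\star\}\in\widehat\CE_n$. Finally I would note that $\CC_\mathrm{bb}(n,k)$ is a genuine subgraph of $\widehat\CG_n$: indeed $\widehat\CG_n\supseteq\CG_{n,2}\supseteq\CG_{n,1}$, and the edge-variables $\Psi_n[1,2w_\mathrm{hh})$ of $\CG_{n,1}$ are carried over verbatim into $\widehat\Psi_n$ in \eqref{eq:gn-hat}. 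Hence $v^\star\in\CC_\mathrm{bb}(n,k)$ means $u$ is joined by an edge to $\CC_\mathrm{bb}(n,k)$ in $\widehat\CG_n$; since $u$ was an arbitrary element of $\widehat\CV_n^\sss{(\mathrm{sure})}[2w_\mathrm{hh},\infty)$, the corollary follows.

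There is no substantive obstacle here — the corollary is engineered precisely so that the ``sure'' half of the thinned connector process is forced onto the backbone. The only points needing attention are that the conditioning in \eqref{eq:unif-sure} guarantees \emph{at least one} small uniform toward $\CS(u)$, that $\CS(u)\subseteq\CC_\mathrm{bb}(n,k)$ on $\CA_\mathrm{bb}(n,k)$, and that $\mathrm p(u,v)\ge r_k$ on $\CS(u)$, which is exactly Claim~\ref{claim:connecting-to-bb} and implicitly requires $k\ge k_1$ (the regime in which the surrounding argument is applied).
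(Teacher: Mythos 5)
Your proof is correct and follows the same route as the paper: read off from the conditioning in \eqref{eq:unif-sure} that at least one $\widehat\varphi_{u,v}\le r_k$ occurs for $v\in\CS(u)\subseteq\CC_\mathrm{bb}$, then combine with $\mathrm p(u,v)\ge r_k$ from Claim~\ref{claim:connecting-to-bb} and the edge rule \eqref{eq:phi-edge-rule}. Your explicit flagging of the hypothesis $k\ge k_1$ (needed to invoke Claim~\ref{claim:connecting-to-bb}) is a reasonable addition; the paper leaves it implicit since the corollary is only ever applied in that regime.
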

\begin{proof}
 The conditioning in \eqref{eq:unif-sure} guarantees that for each $u\in \widehat\CV_n^\sss{(\mathrm{sure})}[2w_\mathrm{hh},\infty)$ at least one $\widehat \varphi_{u,v}\le r_k$ occurs among the edge-variables $\{\widehat \varphi_{u,v}: v\in \CS(u)\}$, where $\CS(u)\subseteq \CC_{\mathrm{bb}}$, see~\eqref{eq:su}.
 Then since $\widehat \varphi_{u,v}\le r_k\le \mathrm{p}(u,v)$ holds by Claim \ref{claim:connecting-to-bb}, this ensures that $\{u,v\}$ is in the edge set of $\widehat\CG_n$ by the graph-encoding in Definition~\ref{def:encoding}.
\end{proof}
\begin{proposition}
 Fix a connectivity function $\mathrm{p}$. The law of the random graph $\widehat\CG_n$ in Definition~\ref{def:ksrg-alt} is identical to the law of the random graph $\CG_n$ in Definition~\ref{def:ksrg}.
\end{proposition}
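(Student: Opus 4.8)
The plan is to show that $\widehat\CG_n$ and $\CG_n$ are both obtained by the same graph-encoding rule from a point process plus a collection of edge-variables whose \emph{joint law} agrees, and then invoke Corollary~\ref{cor:graph-equal-law}. The key point is that the marked vertex set is unchanged in law, and that the edge-variables attached to the ``special'' pairs $(u,v)$ with $u\in \CV_n[2w_\mathrm{hh},\infty)$, $v\in\CS(u)$, although not independent $\mathrm{Unif}[0,1]$ individually, reassemble into a collection with exactly the original i.i.d.\ uniform law once we remember \emph{which} of $\widehat\CV_n^\sss{(\mathrm{sure})}$, $\widehat\CV_n^\sss{(\mathrm{unsure})}$ the endpoint $u$ landed in. Concretely, I would proceed in the following steps.

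\textbf{Step 1: The vertex sets agree in law.} By the superposition/thinning property of Poisson point processes, $\widehat\CV_n^\sss{(\mathrm{unsure})}[2w_\mathrm{hh},\infty)\cup\widehat\CV_n^\sss{(\mathrm{sure})}[2w_\mathrm{hh},\infty)$ is a Poisson point process on $\Lambda_n\times[2w_\mathrm{hh},\infty)$ with intensity $\mathrm{Leb}\otimes F_W$, independent of $\CV_n[1,2w_\mathrm{hh})$; hence $\widehat\CV_n \overset{d}{=}\CV_n$ as marked point processes, matching the intensity $\mu_\tau$ in~\eqref{eq:poisson-intensity}. Moreover $\CG_{n,2}$, and in particular the event $\CA_\mathrm{bb}(n,k)$ and the deterministic functions $\CS(\cdot)$, depend only on $(\CV_n[1,2w_\mathrm{hh}), \Psi_n[1,2w_\mathrm{hh}))$, which has its original law and is independent of the freshly-revealed PPPs. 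So it suffices to prove that, conditionally on $(\CV_n[1,2w_\mathrm{hh}),\Psi_n[1,2w_\mathrm{hh}))$ and on $\widehat\CV_n[2w_\mathrm{hh},\infty)$, the full edge-variable family $\widehat\Psi_n$ in~\eqref{eq:gn-hat} has the same conditional law as $\Psi_n$ in Definition~\ref{def:encoding}, namely i.i.d.\ $\mathrm{Unif}[0,1]$ indexed by $\binom{\widehat\CV_n}{2}$ (after identifying $\widehat\CV_n$ with $\CV_n$), with the right independence from $\Psi_n[1,2w_\mathrm{hh})$.

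\textbf{Step 2: Case (i), $\neg\CA_\mathrm{bb}$.} Here $\widehat\Psi_n=\Psi_n[1,2w_\mathrm{hh})\cup\Sigma_n$ with $\Sigma_n$ i.i.d.\ $\mathrm{Unif}[0,1]$ by construction, so the claim is immediate and does not even use the PPP structure beyond Step~1. (One only has to note that on $\neg\CA_\mathrm{bb}$ the construction does not condition on anything, so no Bayes-rule bookkeeping is needed.)

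\textbf{Step 3: Case (ii), on $\CA_\mathrm{bb}$ --- reassembling a uniform family from a mixture.} Fix $u\in\widehat\CV_n[2w_\mathrm{hh},\infty)$ and the deterministic set $\CS(u)$. The pair-variables $\{U_{u,v}\}_{v\notin\CS(u)}$ are i.i.d.\ uniform and belong to $\widehat\Psi_n^\sss{(\mathrm{iid})}$, so they are fine. The subtle family is $\{\widehat\varphi_{u,v}\}_{v\in\CS(u)}\in[0,1]^{s_k}$. The point is a one-line conditional-probability identity: for $V\sim\mathrm{Unif}[0,1]^{s_k}$, writing $E:=\{\exists v\in\CS(u): V_v\le r_k\}$ (so $\Prob(E)=1-(1-r_k)^{s_k}=1/2$ by the choice $r_k=1-2^{-1/s_k}$), the pair $(\mathbbm 1_E, V)$ has the same law as first tossing a fair coin $B$ and then, conditionally on $B=1$ resp.\ $B=0$, sampling $V$ from $\mathcal L(V\mid E)$ resp.\ $\mathcal L(V\mid E^\complement)$. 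In Definition~\ref{def:ksrg-alt} the coin $B$ is exactly the indicator of $\{u\in\widehat\CV_n^\sss{(\mathrm{sure})}\}$: by thinning, each $u\in\widehat\CV_n[2w_\mathrm{hh},\infty)$ lands in the ``sure'' PPP with probability $1/2$ independently of everything else, in particular independently of the collection $\{U_{u,v}\}$ used to \emph{define} $E$; and then~\eqref{eq:unif-sure} resp.~\eqref{eq:unif-unsure} prescribe $\mathcal L(\widehat\varphi_{u,\cdot}\mid E)$ resp.\ $\mathcal L(\widehat\varphi_{u,\cdot}\mid E^\complement)$. Hence the \emph{unconditional} law of $\{\widehat\varphi_{u,v}\}_{v\in\CS(u)}$ (averaging over which PPP $u$ fell into) is $\frac12\mathcal L(V\mid E)+\frac12\mathcal L(V\mid E^\complement)=\mathcal L(V)=\mathrm{Unif}[0,1]^{s_k}$. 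Independence across distinct $u$'s is built into~\eqref{eq:xi-cond-unsure}--\eqref{eq:xi-cond-sure} (``for different vertices $u_1,u_2$ the collections are independent''), and independence of $\{\widehat\varphi_{u,v}\}_{v\in\CS(u)}$ from all the $\widehat\Psi_n^\sss{(\mathrm{iid})}$-variables and from $\Psi_n[1,2w_\mathrm{hh})$ holds because the former are functions of $\{U_{u,v}:v\in\CS(u)\}$ and of the thinning coin of $u$, both disjoint from the index sets in~\eqref{eq:psi-iid}. Assembling these over all $u$ and all pair-types gives that $\widehat\Psi_n$ in~\eqref{eq:gn-hat}, conditioned on $\widehat\CV_n$, is i.i.d.\ $\mathrm{Unif}[0,1]$ on $\binom{\widehat\CV_n}{2}$ --- matching $\Psi_n$ exactly.

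\textbf{Step 4: Conclude.} Combining Steps 1--3, $(\widehat\CV_n,\widehat\Psi_n)$ and $(\CV_n,\Psi_n)$ have the same law, and both $\widehat\CG_n$ and $\CG_n$ are encoded by these data with the \emph{same} connectivity function $\mathrm{p}$ via~\eqref{eq:phi-edge-rule}; Corollary~\ref{cor:graph-equal-law} then yields $\widehat\CG_n\overset{d}{=}\CG_n$. The only mild subtlety --- the step I expect to require the most care in writing --- is the measure-theoretic bookkeeping in Step~3: one must verify that the thinning coin $\mathbbm 1\{u\in\widehat\CV_n^\sss{(\mathrm{sure})}\}$ is genuinely independent of the defining variables $\{U_{u,v}:v\in\CS(u)\}$ (it is, since $\Sigma_n$ is sampled \emph{conditionally on} the two PPPs, hence after the thinning), so that the fair-coin mixture identity $\frac12\mathcal L(\cdot\mid E)+\frac12\mathcal L(\cdot\mid E^\complement)=\mathcal L(\cdot)$ applies, and that these reconstructions are jointly (not merely marginally) consistent across all pairs and all $u$. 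Everything else is a routine application of the Poisson superposition/thinning theorems and the definition of the graph encoding.
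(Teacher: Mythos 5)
Your proposal is correct and follows essentially the same route as the paper's proof: reduce to showing $(\widehat\CV_n,\widehat\Psi_n)\overset{d}{=}(\CV_n,\Psi_n)$ and apply Corollary~\ref{cor:graph-equal-law}, handle case~(i) immediately, and in case~(ii) compare the law of $\{\widehat\varphi_{u,v}\}_{v\in\CS(u)}$ against i.i.d.\ uniforms by conditioning on a half--half split. The paper encodes the $\tfrac12$--$\tfrac12$ mixture identity by working under the measure $\Prob^\star$ that deliberately forgets the sure/unsure label and writing out two total-probability decompositions (its displays \eqref{eq:dist-phi} and \eqref{eq:dist-varphi}), whereas you phrase the same fact directly as the reconstruction identity $\tfrac12\mathcal L(V\mid E)+\tfrac12\mathcal L(V\mid E^\complement)=\mathcal L(V)$ driven by an independent fair coin; these are the same argument in different clothing. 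Your remark that the thinning coin must be independent of $\{U_{u,v}\}_{v\in\CS(u)}$, justified by $\Sigma_n$ being sampled conditionally on (hence after) the thinning, is the same independence the paper secures by not revealing the labels in $\Prob^\star$.
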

\begin{proof}
 By Corollary \ref{cor:graph-equal-law} it is sufficient to show that $(\widehat\CV_n,\widehat\Psi_n)$ defined in \eqref{eq:gn-hat} has the same distribution as $(\CV_n, \Psi_n)$ from Definitions \ref{def:ksrg} and \ref{def:encoding}. 
 By \eqref{eq:gn-hat} in Definition~\ref{def:ksrg-alt}, the graph $\CG_{n,2}$ spanned on $\CV_n[1,2w_{\mathrm{hh}})\subseteq \widehat\CV_n$ is determined by $\Psi_n[1,2w_{\mathrm{hh}})=\{\varphi_{u,v}: u,v \in \CV_n[1,2w_{\mathrm{hh}})\}$ in Definition~\ref{def:encoding}. Thus $\CG_{n,2}$ has the same distribution both in Definition~\ref{def:encoding} and in Definition~\ref{def:ksrg-alt}.

 (i) If now $\Psi_n[1,2w_{\mathrm{hh}})$ is such that the graph $\CG_{n,2}$ does \emph{not} satisfy the event $\CA_\mathrm{bb}$, by (i) of Definition~\ref{def:ksrg-alt}, the statement holds since both $\{\varphi_{u,v}\}$ and $\{U_{u,v}\}$ are iid uniforms whenever $u\in \widehat\CV_n[2w_{\mathrm{hh}}, \infty)$, i.e., $\widehat\Psi_n\setminus\Psi_n[1,2w_{\mathrm{hh}})= \Sigma_n$ and $\Psi_n\setminus\Psi_n[1,2w_{\mathrm{hh}})$ have the same distribution.

 (ii) If $\Psi_n[1,2w_{\mathrm{hh}})$ is such that the graph $\CG_{n,2}$ does satisfy the event $\CA_\mathrm{bb}$, then we work conditionally on a realization of the graph $\CG_{n,2}=(\CV_n[1,2w_{\mathrm{hh}}), \Psi_n[1,2w_{\mathrm{hh}}), \mathrm{p})$, and also on the coupled realization of the PPPs $\CV_n[2w_\mathrm{hh},\infty)=\widehat\CV_n[2w_\mathrm{hh},\infty)$. Let us define the conditional probability measure (of the edges) under the coupling by
 \begin{equation}\label{eq:pstar}
  \Prob^\star(\cdot):=\Prob(\,\cdot\mid \CG_{n,2}, \CV_n[2w_\mathrm{hh},\infty))=\Prob(\,\cdot\mid \CG_{n,2}, \mbox{unlabeled } \widehat\CV_n[2w_\mathrm{hh},\infty) ),
 \end{equation}
 where in the conditioning we do \emph{not} reveal to which sub-PPP (either $\widehat\CV_n^\sss{(\mathrm{sure})}[2w_\mathrm{hh},\infty)$ or $\widehat\CV_n^\sss{(\mathrm{unsure})}[2w_\mathrm{hh},\infty)$) a vertex in $\widehat\CV_n[2w_\mathrm{hh},\infty)$ belongs to.
 Using $\widehat\Psi_n$ from \eqref{eq:gn-hat} and  $\widehat\Psi_n^\sss{(\mathrm{iid})}\subseteq \Sigma_n$  from \eqref{eq:psi-iid} (containing \emph{independent} copies $U_{u,v}$ of $\mathrm{Unif}[0,1]$ random variables, like $\Psi$ in Definition~\ref{def:encoding}), we see that variables in $\Psi_n\setminus \Psi_n[1,2w_{\mathrm{hh}})$ and $\widehat \Psi_n\setminus \Psi_n[1,2w_{\mathrm{hh}})$ also share the same (joint) law of iid $\mathrm{Unif}[0,1]$ whenever  $u$ and $v$ are such $u\in \widehat \CV_n[2w_{\mathrm{hh}, \infty})$ and that $v\notin \CS(u)$.
 Moreover, in \eqref{eq:xi-cond-unsure}-\eqref{eq:xi-cond-sure}, the collections $\{\widehat\varphi_{u,v}\}_{v\in\CS(u)}$ are independent across $u$ for different vertices $u\in\widehat{\CV}_n[2w_\mathrm{hh},\infty)$.
 So for $\widehat\CG_n\ {\buildrel d \over=}\ \CG_n$ it remains to show that  for any  $u\in\widehat{\CV}_n[2w_\mathrm{hh},\infty)=\CV_n[2w_\mathrm{hh},\infty)$, under the measure $\Prob^\star$,
 \begin{equation}\label{eq:coupling_su}
  \big\{\varphi_{u,v}, v\in\CS(u)\big\} \overset{d}=\big\{\widehat\varphi_{u,v},  v\in\CS(u)\big\}.
 \end{equation}
 We first analyze the distribution of the left-hand side, i.e., $\varphi_{u,v}$ being iid from Definition~\ref{def:encoding}.
 Let $(z_{u,v})_{v\in\CS(u)}\in[0,1]^{s_k}$ be any sequence of length $s_k$.
 By Claim \ref{claim:connecting-to-bb}, and the law of total probability
  \begin{align}
  \Prob^\star\big(\forall v\in\CS(u): \varphi_{u, v}\!\le\! z_{u,v}\big)
    & =
  (1/2)\Prob^\star\big(\forall v\in\CS(u): \varphi_{u, v}\!\le\! z_{u,v}\mid  \forall v\in\CS(u): \varphi_{u, v}\!>\! r_k \big) \nonumber\\
    & \hspace{15pt}+
  (1/2)\Prob^\star\big(\forall v\in\CS(u): \varphi_{u, v}\!\le\! z_{u,v}\mid  \exists v\in\CS(u): \varphi_{u, v}\!\le\! r_k \big).\label{eq:dist-phi}
  \end{align}
 We now analyze the right-hand side in \eqref{eq:coupling_su}. By the construction in  \eqref{eq:hat-xin}, $\widehat\CV_n[2w_\mathrm{hh},\infty)$ is the union of two iid sub-PPPs. Under $\mathbb P^\star$ in \eqref{eq:pstar} we did not reveal to which sub-PPP vertices belong to. Hence, for each $u \in \widehat\CV_n[2w_\mathrm{hh},\infty)$, independently of each other
 \[
 \begin{aligned}\Prob^\star(u \in \widehat\CV_n^\sss{(\mathrm{unsure})}[2w_\mathrm{hh},\infty) &\mid u \in \widehat\CV_n[2w_\mathrm{hh},\infty))\\&=\Prob^\star(u \in \widehat\CV_n^\sss{(\mathrm{sure})}[2w_\mathrm{hh},\infty) \mid u \in \widehat\CV_n[2w_\mathrm{hh},\infty))=1/2.
 \end{aligned}
 \]
 Thus, by the law of total probability, and using the distributions of $(\widehat\varphi_{u,v})_{v\in \CS(u)}$ given by~\eqref{eq:unif-unsure}, ~\eqref{eq:unif-sure},
  \begin{align}
  \Prob^\star\big(\forall v\in\CS(u): \widehat\varphi_{u,v}\le z_{u,v}\big) & =
  (1/2) \Prob^\star\big(\forall v\in\CS(u): \widehat\varphi_{u,v}\le z_{u,v}\mid u \in \widehat\CV_n^\sss{(\mathrm{unsure})}[2w_\mathrm{hh},\infty)\big) \nonumber\\
                                                                             & \hspace{15pt}+(1/2)
  \Prob^\star\big(\forall v\in\CS(u): \widehat\varphi_{u,v}\le z_{u,v}\mid u \in \widehat\CV_n^\sss{(\mathrm{sure})}[2w_\mathrm{hh},\infty)\big)         \nonumber\\
                                                                             & =(1/2)
  \Prob^\star\big(\forall v\in\CS(u): U_{u,v}\le z_{u,v}\mid  \forall v\in\CS(u): U_{u,v}\!>\! r_k \big)                                                     \nonumber\\
                                                                             & \hspace{15pt}+(1/2)
  \Prob^\star\big(\forall v\in\CS(u): U_{u,v}\le z_{u,v}\mid  \exists v\in\CS(u): U_{u,v}\!\le\! r_k \big).\label{eq:dist-varphi}
  \end{align}
 Note that $\{U_{u,v}\}_{u,v}$ and $\{\varphi_{u,v}\}_{u,v}$ are both sets of independent $\mathrm{Unif}[0,1]$ random variables by  Definitions \ref{def:ksrg-alt} and \ref{def:encoding}, respectively. Hence, \eqref{eq:coupling_su} follows by combining \eqref{eq:dist-phi} and~\eqref{eq:dist-varphi}.
\end{proof}

For the remainder of this section, we  construct $\CG_n$ following Definition~\ref{def:ksrg-alt} and write
\begin{equation}
 \CV_n[2w_\mathrm{hh},\infty):=\CV_n^\sss{(\mathrm{unsure})}[2w_\mathrm{hh},\infty)\cup\CV_n^\sss{(\mathrm{sure})}[2w_\mathrm{hh},\infty)\nonumber
\end{equation}
as the union of two independent PPPs of equal intensity, such that if $\CG_{n,2}=\CG[1,2w_\mathrm{hh})$ satisfies $\CA_\mathrm{bb}$ in \eqref{eq:upper-hh-bb-event}, each vertex in $\CV_n^\sss{(\mathrm{sure})}[2w_\mathrm{hh},\infty)$ connects by an edge to $\CC_\mathrm{bb}$ by Corollary~\ref{cor:sure-bb}.
To finish Step 3, on the event $\CA_{\mathrm{bb}}$, we define $\CG_{n,3}:=(\CV_{n,3}, \Psi_{n,3}, \mathrm{p})
$, with
\begin{equation}\label{eq:gn3}
 \begin{aligned}
  \CV_{n,3}&:=\CV_n[1,2w_{\mathrm{hh}})\cup \CV_n^\sss{(\mathrm{unsure})}[2_\mathrm{hh},\infty), \\
  \Psi_{n,3}&:=\Psi_n[1,2w_{\mathrm{hh}}) \cup  \widehat\Psi_n^{\sss{(\mathrm{iid}, \mathrm{unsure})}} \cup  \widehat\Psi_n^{\sss{(\mathrm{cond}, \mathrm{unsure})}},
 \end{aligned}
\end{equation}
i.e., the graph spanned on $\CV_{n,3}$.
We call the vertices in $\CV_n^\sss{(\mathrm{sure})}[2w_\mathrm{hh},\infty)$ \emph{sure-connector} vertices. If the event $A_{\mathrm{bb}}$ does not hold then we say that the construction failed and we leave $\CG_{n,3}$ undefined.
\subsection{Step 4. Cover expansion}In this step, we ensure that all components of size at least $k$ of $\CG_{n, 3}$ merge with the giant component of $\CG_n$ via edges towards sure-connector vertices, with error probability $\mathrm{err}_{n,k}$ from \eqref{eq:outline-error-prob}.  The next lemma proves this using the cover-expansion technique of Section~\ref{sec:cover-expansion}. The notion of expandability is from Definition~\ref{def:expandable}, and recall $s(\underline w)$ from  \eqref{eq:prop-cover-expansion-min-weight} that describes the necessary ``expandability parameter'' in Proposition~\ref{prop:cover-expansion-deterministic}, and $w_{\mathrm{hh}}$ from \eqref{eq:w-gamma-hh}. 
Define $k_2$ as 
\begin{equation}\label{eq:k2}
k_2:=\min\{k\in \N: 2w_\mathrm{hh}(k)=2C_1^{-1/(\tau-1)}k^{\gamma_{\mathrm{hh}}}>(2^d d^{d/2}/\beta\vee 1) \}
\end{equation}
so that the function $s(\cdot)$ is defined at $2w_{\mathrm{hh}}$. Slightly abusing notation, we say that a vertex set $\CV$ is $s$-expandable if the set of locations $(x_u)_{u\in \CV}$ is $s$-expandable.  
Define 
\begin{equation}
 \CA_\mathrm{exp}:=\CA_\mathrm{exp}(n,k):=\big\{\CV_{n,3} \mbox{ is $s(2w_{\mathrm{hh}})$-expandable}\big\}.\label{eq:1c-expandable}
\end{equation}
Recall that $k \ge k_1$ in \eqref{eq:upper-hh-k0} is necessary to build the backbone in Lemma~\ref{lemma:upper-hh-bb}.
\begin{lemma}[Cover-expansion]\label{lemma:hh-expandable}
 Consider a KSRG under the same assumptions as in Theorem~\ref{thm:subexponential-decay}, with vertex set a homogeneous Poisson point process. 
If $k\ge k_2$, then with $s(\cdot)$ from \eqref{eq:prop-cover-expansion-min-weight}, for some constant $c_{\ref{lemma:hh-expandable}}>0$,
 \begin{equation}
\Prob\big(\neg\CA_\mathrm{exp})\big) \le C_{\ref{lem:expandable-ppp}}n\exp(-s(2w_{\mathrm{hh}})/3)\le C_{\ref{lem:expandable-ppp}} n \exp(-c_{\ref{lemma:hh-expandable}} k^{1/(\sigma+1-(\tau-1)/\alpha)}). \label{eq:hh-expandable-prob}
 \end{equation}
 Moreover, 
 conditionally on any realization of $\CG_{n,3}$ satisfying  $\CA_\mathrm{bb}\cap\CA_\mathrm{exp}$,
 for all $k\ge (k_1\vee k_2)$ in \eqref{eq:upper-hh-k0}, \eqref{eq:k2} and any connected component $\CC$ of $\CG_{n,3}$ with $|\CC|> k$, 
 \begin{equation}
\Prob\big(\CC\not\sim\CV_n^\sss{(\mathrm{sure})}[2w_\mathrm{hh},\infty)\mid \CG_{n,3},  \CA_\mathrm{bb}\cap\CA_\mathrm{exp}\big) \le \exp\big(-c_{\ref{lemma:hh-expandable}} k^{\zeta_{\mathrm{hh}}}\big).\label{eq:lem-hh-exp}
 \end{equation}
\end{lemma}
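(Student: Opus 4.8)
The plan splits into the two displayed bounds. For the first, \eqref{eq:hh-expandable-prob}, the point is that $\CV_{n,3}\subseteq \CV_n$ (a homogeneous unit-intensity PPP restricted to $\Lambda_n$, possibly thinned), so by monotonicity of the expandability property under taking subsets, it suffices to bound $\Prob(\CV_n\cap\Lambda_n\text{ is not }s(2w_{\mathrm{hh}})\text{-expandable})$. This is exactly Lemma~\ref{lem:expandable-ppp} with $s=s(2w_{\mathrm{hh}})$, giving the bound $C_{\ref{lem:expandable-ppp}}n\exp(-s(2w_{\mathrm{hh}})/3)$ provided $s(2w_{\mathrm{hh}})\ge 4/(\re-2)$, which holds once $k\ge k_2$ (and, if needed, after enlarging $k_2$ by an absolute constant). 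It then remains to translate the exponent: by \eqref{eq:prop-cover-expansion-min-weight} and \eqref{eq:w-gamma-hh}, $s(2w_{\mathrm{hh}})=(2^{d+1}\beta C_1^{-1/(\tau-1)}k^{\gamma_\mathrm{hh}})^{1/(1-1/\alpha)}=\Theta(k^{\gamma_\mathrm{hh}/(1-1/\alpha)})$, and a direct computation using $\gamma_\mathrm{hh}=(1-1/\alpha)/(\sigma+1-(\tau-1)/\alpha)$ from \eqref{eq:gamma-hh} (the $\tau\le\sigma+2$, $\alpha<\infty$ branch, which is the relevant one since $\zeta_\mathrm{hh}>0$) shows $\gamma_\mathrm{hh}/(1-1/\alpha)=1/(\sigma+1-(\tau-1)/\alpha)$, yielding the stated exponent with any $c_{\ref{lemma:hh-expandable}}$ below $\tfrac13\cdot(2^{d+1}\beta C_1^{-1/(\tau-1)})^{1/(1-1/\alpha)}$. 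For $\alpha=\infty$ the argument is analogous using the $\alpha=\infty$ clause of \eqref{eq:prop-cover-expansion-min-weight}/\eqref{eq:w-gamma-hh}, and the exponent becomes $1/(\sigma+1)=\zeta_\mathrm{hh}$, so the two cases of the lemma's conclusion are consistent.

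For \eqref{eq:lem-hh-exp}, fix a realization of $\CG_{n,3}$ on $\CA_\mathrm{bb}\cap\CA_\mathrm{exp}$ and a component $\CC$ with $|\CC|>k$. Its vertex set $\CL$ (locations only) is a subset of $\CV_{n,3}$, hence $s(2w_\mathrm{hh})$-expandable and therefore also $s(\underline w)$-expandable for $\underline w:=2w_\mathrm{hh}$ — note $2w_\mathrm{hh}>(2^dd^{d/2}/\beta\vee1)$ precisely because $k\ge k_2$, so Proposition~\ref{prop:cover-expansion-deterministic} applies with $\underline w=2w_\mathrm{hh}$. This produces a set $\CK_n(\CL)\subseteq\Lambda_n$ with $\mathrm{Vol}(\CK_n(\CL))\ge c_d|\CL|\ge c_d k$ (where $c_d:=1/(2^{4d+1}\re d^{d/2})$) such that every vertex $v\in\CK_n(\CL)\times[2w_\mathrm{hh},\infty)$ connects to $\CL$ by an edge with conditional probability at least $p/2$, independently across such $v$ in the stochastic-domination sense stated after the proposition. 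Now I reveal $\CV_n^\sss{(\mathrm{sure})}[2w_\mathrm{hh},\infty)$, which (being the thinned half of the connector PPP) is an independent PPP on $\Lambda_n\times[2w_\mathrm{hh},\infty)$ of intensity $\tfrac12\mathrm{Leb}\otimes F_W$, independent of $\CG_{n,3}$. The number $N$ of its points landing in $\CK_n(\CL)$ is Poisson with mean $\tfrac12\mathrm{Vol}(\CK_n(\CL))\ge \tfrac{c_d}{2}k$. Conditionally on $N$, the event $\{\CC\not\sim\CV_n^\sss{(\mathrm{sure})}[2w_\mathrm{hh},\infty)\}$ is contained in the event that none of these $N$ points connects to $\CL$, which (using the $p/2$ lower bound and the independence/domination) has probability at most $(1-p/2)^N$. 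Averaging over the Poisson $N$,
\[
\Prob\big(\CC\not\sim\CV_n^\sss{(\mathrm{sure})}[2w_\mathrm{hh},\infty)\mid\CG_{n,3},\CA_\mathrm{bb}\cap\CA_\mathrm{exp}\big)\le \E\big[(1-p/2)^N\big]=\exp\big(-\tfrac{p}{4}\,\mathrm{Vol}(\CK_n(\CL))\big)\le \exp\big(-\tfrac{p\,c_d}{4}k\big),
\]
using the Poisson PGF $\E[t^N]=\exp((t-1)\E N)$ with $t=1-p/2$. Since $\zeta_\mathrm{hh}\le 1$, the bound $\exp(-\tfrac{pc_d}{4}k)\le\exp(-\tfrac{pc_d}{4}k^{\zeta_\mathrm{hh}})$ holds for $k\ge1$, giving \eqref{eq:lem-hh-exp} after possibly shrinking $c_{\ref{lemma:hh-expandable}}$ to $\min$ of the two constants produced.

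The main obstacle is the bookkeeping around conditioning and independence in the last step: I must be careful that $\CV_n^\sss{(\mathrm{sure})}[2w_\mathrm{hh},\infty)$ is genuinely independent of $\CG_{n,3}$ (this is the whole purpose of the pre-sampling/thinning in Step~3 and of Definition~\ref{def:ksrg-alt}, which guarantees $\CG_{n,3}$ is a measurable function of $\CV_n[1,2w_\mathrm{hh})$, $\CV_n^\sss{(\mathrm{unsure})}[2w_\mathrm{hh},\infty)$ and the iid uniforms among them, all of which are independent of $\CV_n^\sss{(\mathrm{sure})}[2w_\mathrm{hh},\infty)$), and that the $p/2$ connection lower bound from Proposition~\ref{prop:cover-expansion-deterministic} is applied with the correct $\underline w$ so that $s(\underline w)$ matches the expandability radius $s(2w_\mathrm{hh})$ guaranteed by $\CA_\mathrm{exp}$. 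A secondary subtlety is the case $\tau<\sigma+1$: there the error exponent in \eqref{eq:hh-expandable-prob} coming from $\CA_\mathrm{exp}$ is $k^{1/(\sigma+1-(\tau-1)/\alpha)}$, which is smaller than $k^{\zeta_\mathrm{hh}}$, so $\neg\CA_\mathrm{exp}$ is the dominant error term; one simply carries the weaker exponent through, consistent with the statement of Proposition~\ref{prop:2nd-upper-bound-hh}. Nothing else requires more than the Poisson concentration bounds and the deterministic geometry already established in Section~\ref{sec:cover-expansion}.
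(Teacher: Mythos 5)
Your structure matches the paper's: reduce \eqref{eq:hh-expandable-prob} to Lemma~\ref{lem:expandable-ppp} via monotonicity of expandability under subsets and a direct computation of the exponent, then for \eqref{eq:lem-hh-exp} apply Proposition~\ref{prop:cover-expansion-deterministic} with $\underline w=2w_\mathrm{hh}$ on the expandable set $\CL$, and couple with the independent sure-connector PPP. However, your Poisson-mean computation in the second part is wrong: you claim the number $N$ of points of $\CV_n^\sss{(\mathrm{sure})}[2w_\mathrm{hh},\infty)$ landing in $\CK_n(\CL)$ has mean $\tfrac12\mathrm{Vol}(\CK_n(\CL))\ge\tfrac{c_d}{2}k$, but this ignores the mark restriction. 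By Definition~\ref{def:ksrg-alt}, $\CV_n^\sss{(\mathrm{sure})}[2w_\mathrm{hh},\infty)$ has intensity $\tfrac12\mathrm{Leb}\otimes F_W(\rd w)$ on $\Lambda_n\times[2w_\mathrm{hh},\infty)$, so the mean is $\tfrac12\mathrm{Vol}(\CK_n(\CL))\Prob(W\ge 2w_\mathrm{hh})=\tfrac12\mathrm{Vol}(\CK_n(\CL))(2w_\mathrm{hh})^{-(\tau-1)}$. Since $w_\mathrm{hh}=\Theta(k^{\gamma_\mathrm{hh}})$, the correct mean is $\Theta(k\cdot k^{-\gamma_\mathrm{hh}(\tau-1)})=\Theta(k^{\zeta_\mathrm{hh}})$ (indeed $=\Theta(s_k)$, cf.\ \eqref{eq:w-gamma-hh}), not $\Theta(k)$. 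Your intermediate bound $\exp(-\tfrac{p c_d}{4}k)$ is therefore not a consequence of the argument; the derivable bound is $\exp(-\Theta(k^{\zeta_\mathrm{hh}}))$. Coincidentally, the incorrect stronger bound still implies the lemma's conclusion after your remark that $\zeta_\mathrm{hh}\le 1$, so the gap does not destroy the claim, but as written the proof contains an invalid step rather than a valid one that happens to be suboptimal. (A minor secondary slip: in the $\alpha=\infty$ case you assert $1/(\sigma+1)=\zeta_\mathrm{hh}$, but $\zeta_\mathrm{hh}=(\sigma+2-\tau)/(\sigma+1)$ there, which equals $1/(\sigma+1)$ only when $\tau=\sigma+1$; the lemma's exponent in \eqref{eq:hh-expandable-prob} is always $1/(\sigma+1-(\tau-1)/\alpha)$ and need not coincide with $\zeta_\mathrm{hh}$.)
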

\begin{proof}
 The statement \eqref{eq:hh-expandable-prob} follows directly from Lemma~\ref{lem:expandable-ppp}, by computing $s(2w_{\mathrm{hh}})$ using \eqref{eq:prop-cover-expansion-min-weight} and \eqref{eq:w-gamma-hh}, and $\gamma_{\mathrm{hh}}$ from \eqref{eq:gamma-hh}:
 \begin{equation*}
\begin{aligned}
s(2w_{\mathrm{hh}})=\big(2^{d+1}\beta C_1^{-1/(\tau-1)}\big)^{1/(1-1/\alpha)}k^{\gamma_{\mathrm{hh}}/(1-1/\alpha)}\le c_{\ref{lemma:hh-expandable}} k^{1/(\sigma+1-(\tau-1)/\alpha)}.
 \end{aligned}
 \end{equation*}
 We proceed to the proof of \eqref{eq:lem-hh-exp}.
 In Proposition~\ref{prop:cover-expansion-deterministic}, for a given mark $\underline w$, the function $s(\underline w)$ in \eqref{eq:prop-cover-expansion-min-weight} describes the necessary ``expandability parameter'', such that all vertices with mark at least $\underline w$ in $\CK_n(\CL)$ connect to any $s(\underline w)$-expandable set $\CL$ of vertices with probability at least $p/2$. We shall take $\underline w:=2w_{\mathrm{hh}}(k)$, the lowest possible mark in $\CV_n^{\sss{(\mathrm{sure})}}$. If $k\ge k_2$ in \eqref{eq:k2}, $\underline w$ satisfies the required lower bound in the statement of Proposition~\ref{prop:cover-expansion-deterministic}.
 
 On the event $\CA_{\mathrm{exp}}$, $\CV_{n,3}$ is thus $s(2w_{\mathrm{hh}})$-expandable. Since expandability carries through for subsets of $\CV_{n,3}$ (see below Definition~\ref{def:expandable}), \emph{any} subset of $\CV_{n,3}$ is $s(2w_{\mathrm{hh}})$-expandable. Hence, Proposition~\ref{prop:cover-expansion-deterministic} is applicable for any set $\CL\subseteq\CV_{n,3}$ and $\underline w:=2w_{\mathrm{hh}}$, and guarantees the existence of a set $\CK_n(\CL)\subseteq\Lambda_n$ satisfying \eqref{eq:prop-cover-expansion-min-volume} and \eqref{eq:p-connection}.

 Consider an arbitrary connected component $\CC$ of $\CG_{n,3}$ that satisfies $|\CC|> k$. With $\CK_n(\CC)$ from Proposition~\ref{prop:cover-expansion-deterministic}, we define the set of sure-connector vertices with location in $\CK_n(\CC)$ connected by an edge to $\CC$ as
 \begin{equation}
  \CH_{\CC}:=  \{v\in\CK_n(\CC)\cap\CV_{n}^\sss{(\mathrm{sure})}[2w_\mathrm{hh},\infty)\big): v\sim\CC\}.\label{eq:hs}
 \end{equation}
 Since $\CV^\sss{(\mathrm{sure})}[2w_\mathrm{hh},\infty)$ is a Poisson process, its cardinality in $\CK_n(\CC)$ follows a Poisson distribution. Since each of these vertices connects by an edge \emph{independently} to $\CC$ with probability at least $p/2$ by \eqref{eq:p-connection}, and an independent thinning of a PPP is another PPP,
 we obtain using the intensity measure in Definition~\ref{def:ksrg-alt} and the volume bound \eqref{eq:prop-cover-expansion-min-volume} on $\CK_n(\CC)$ for $|\CC|> k$ and $c:=(p/4)2^{-(4d+1)}2^{-(\tau-1)}d^{-d/2}/\re>0$,
 \begin{equation}
  \begin{aligned}
  \Prob\big(|\CH_\CC| = 0
  \mid
  \CG_{n,3},\,  &\CA_\mathrm{bb} \cap \CA_\mathrm{exp}
  \big) \\
    & \le
  \Prob\big(\Poi\big((p/2)\cdot(1/2)\cdot \mathrm{Vol}(\CK_n(\CC))\cdot(2w_\mathrm{hh})^{-(\tau-1)}\big)=0\big) \\
    & \le
  \exp\big(-(p/4)\mathrm{Vol}(\CK_n(\CC))(2w_\mathrm{hh})^{-(\tau-1)}\big)\big)                                 \\
    & \le
  \exp\big(-c\cdot kw_\mathrm{hh}^{-(\tau-1)}\big)=\exp\big(-16c\cdot s_k\big),\nonumber
  \end{aligned}
 \end{equation}
 where we used $s_k$ from \eqref{eq:w-gamma-hh} in the last step. Since $\{|\CH_{\CC}|>0\}$ in  \eqref{eq:hs} implies that $\{\CC \sim \CV_n^\sss{(\mathrm{sure})}[2w_\mathrm{hh},\infty)\}$, this finishes the proof of \eqref{eq:lem-hh-exp} for some constant $c_{\ref{lemma:hh-expandable}}>0$.
\end{proof}

\subsection*{Combining everything: preventing too large components}
\begin{proof}[Proof of Proposition~\ref{prop:2nd-upper-bound-hh}]
 Assume that    $k\ge k_1\vee k_2$  and $n\ge k$  holds. We construct $\CG_n\supseteq \CG_{n,3}$ following Definition~\ref{def:ksrg-alt}, where $\CG_{n,3}$ from \eqref{eq:gn3} is the subgraph of $\CG_n$ induced on $\CV_{n,3}=\CV_n[1,2w_{\mathrm{hh}})\cup \CV_n^\sss{(\mathrm{unsure})}[2w_\mathrm{hh},\infty) $.
 The events $\CA_{\mathrm{bb}}$  in \eqref{eq:upper-hh-bb-event} and $\CA_{\mathrm{exp}}$ in \eqref{eq:1c-expandable} are measurable with respect to $\CG_{n,3}$.  
  So,
 by the law of total probability (taking expectation over realizations of $\CG_{n,3}$), we obtain
 \begin{equation}
 \begin{aligned}
  \Prob\big(|\CC_{n}^\sss{(2)}| > k\big)
  & \le
  \E\big[\ind{\CA_{\mathrm{bb}}\cap\CA_\mathrm{exp}} \Prob\big(|\CC_{n}^\sss{(2)}| > k \mid \CG_{n,3}, \CA_\mathrm{exp}\cap \CA_\mathrm{bb}\big)\big]\\&\hspace{15pt} +
  \Prob\big(\neg\CA_\mathrm{bb}\big) + \Prob\big(\neg\CA_\mathrm{exp}\big).\label{eq:upper-hh-pr1}
 \end{aligned}
 \end{equation}
 Lemma~\ref{lemma:upper-hh-bb} applies since $k\ge k_1, n\ge k$, so $\Prob\big(\neg\CA_\mathrm{bb}\big) \le 3(n/k)\exp(-c_{\ref{lemma:upper-hh-bb}} k^{\zeta_{\mathrm{hh}}})$. The bound \eqref{eq:hh-expandable-prob} in Lemma~\ref{lemma:hh-expandable} applies to the third term  since $k\ge k_1\vee k_2$.  
Using  \eqref{eq:zeta-hh}, one  may verify that $1/(\sigma+1-(\tau-1)/\alpha)\ge \zeta_{\mathrm{hh}}$ if and only if $\tau\ge\sigma+1$.
 Thus,
 for some $c_{\mathrm{exp}}>0$, 
\begin{equation}\label{eq:sigma-issue}
 \Prob\big(\neg\CA_\mathrm{exp}\big) \le C_{\ref{lem:expandable-ppp}} n \exp\Big(- c_{\mathrm{exp}} \big(k^{\zeta_{\mathrm{hh}}} \ind{\tau\ge \sigma+1} + \ind{\tau<\sigma+1} k^{1/(\sigma+1-(\tau-1)/\alpha)}\big)\Big).
  \end{equation}
   We proceed to bounding the first term in \eqref{eq:upper-hh-pr1}.
 The not-yet-revealed vertices after Step 3 are $\CV_n\setminus \CV_{n,3}=\CV^\sss{(\mathrm{sure})}_n[2w_\mathrm{hh},\infty)$, and by Corollary \ref{cor:sure-bb} each vertex in $\CV^\sss{(\mathrm{sure})}_n[2w_\mathrm{hh},\infty)$ connects by an edge to $\CC_{\mathrm{bb}}$. Thus each component $\CC\nsupseteq \CC_{\mathrm{bb}}$ of $\CG_{n,3}$ either remains the same in $\CG_n$ or it merges with the component containing $\CC_{\mathrm{bb}}$ by connecting to a vertex in $\CV\setminus \CV_{n,3}$. 
 If all components of size at least $k$ in $\CG_{n,3}$ merge with the backbone, then there is at most one component above size $k$, and so the second-largest component is not larger than $k$. 
 Hence, if the second-largest component has size larger than $k$, there must be at least one connected component $\CC$ of size larger than $k$ in $\CG_{n,3}$ that does not connect by an edge to $\CV^\sss{(\mathrm{sure})}_n[2w_\mathrm{hh},\infty)$.
Formally, conditionally on $\CA_\mathrm{bb}$ and $\CG_{n,3}$, we have
 \begin{equation}
  \big\{|\CC^\sss{(2)}_n|> k\big\}\subseteq
  \{ \exists \mbox{ a component } \CC \mbox{ of }\CG_{n,3} \mbox{ with } |\CC|> k: \CC\not\sim\CV^\sss{(\mathrm{sure})}_n[2w_\mathrm{hh},\infty)\}.\label{eq:sure-connection}
 \end{equation}
 By a union bound over the at most $|\CV_{n,3}|/k$ components of size at least $k$, \eqref{eq:lem-hh-exp} of  Lemma~\ref{lemma:hh-expandable} yields
\begin{equation}
 \begin{aligned}
 \E\big[\ind{\CA_{\mathrm{bb}}\cap\CA_\mathrm{exp}} &\Prob\big(|\CC_{n}^\sss{(2)}| > k \mid \CG_{n,3}, \CA_\mathrm{exp}\cap \CA_\mathrm{bb}\big)\big]
  \\&\le \E\big[\ind{\CA_{\mathrm{bb}}\cap\CA_\mathrm{exp}} (|\CV_{n,3}|/k)\exp\big(-c_{\ref{lemma:hh-expandable}}k^{\zeta_{\mathrm{hh}}}\big)\big] \le (n/k) \exp(-c_{\ref{lemma:hh-expandable}} k^{\zeta_{\mathrm{hh}}}),\nonumber
  \end{aligned}
 \end{equation}
 since $\CV_{n,3}\subseteq\CV_n$ by construction, and $\E[|\CV_n|]=n$ by \eqref{eq:poisson-intensity}.
 Substituting this bound into \eqref{eq:upper-hh-pr1}, and using Lemma~\ref{lemma:upper-hh-bb} and \eqref{eq:sigma-issue} to bound the second and the third term yields that for $k\ge k_1\vee k_2$ and $n\ge k$ when $\tau\ge\sigma+1$,
 \begin{equation}\label{eq:cn2-final-bound}
  \Prob\big(|\CC_{n}^\sss{(2)}| > k\big)
  \le 
(C_{\ref{lem:expandable-ppp}}n + 3n/k + n/k ) \exp\big(-\min( c_{\ref{lemma:hh-expandable}}, c_{\ref{lemma:upper-hh-bb}}, c_{\mathrm{exp}})k^{\zeta_{\mathrm{hh}}}\big).
 \end{equation}
This finishes the proof of
 Proposition~\ref{prop:2nd-upper-bound-hh} for $\tau\ge \sigma+1$, $k\ge k_1\vee k_2$ and $n\ge k$.  For $k<k_1\vee k_2$, \eqref{eq:prop-2nd-upper-bound} is trivially satisfied for $c_{\ref{prop:2nd-upper-bound-hh}}>0$ sufficiently small. Finally, for $\tau<\sigma+1$, the only change is that the bound on $\Prob(\neg \CA_{\mathrm{exp}})$ in \eqref{eq:sigma-issue} becomes the leading order error term in \eqref{eq:upper-hh-pr1}, which is of order $n\exp(-\Theta(k^{1/(\sigma+1+(\tau-1)/\alpha)}))$. 
\end{proof}
\subsection*{The backbone: intermediate results}
We state two corollaries of the proof of Proposition~\ref{prop:2nd-upper-bound-hh}, and two propositions  based on the backbone constructions for later use.  We start with a corollary of the proof of Proposition~\ref{prop:2nd-upper-bound-hh}.
\begin{corollary}[Backbone becoming part of the giant]\label{cor:bb-in-giant}

 Consider a KSRG under the same assumptions as in Theorem~\ref{thm:subexponential-decay}, with vertex set formed by a homogeneous Poisson point process. Assume that $n>Ak^{2-\zeta_{\mathrm{hh}}}$ for some constant $A=A(\sigma, \tau, \alpha, d, \beta).$
 Then conditionally on the graph $\CG_{n,2}=\CG_n[1,2w_\mathrm{hh})$ satisfying $\CA_\mathrm{bb}(n,k)$ in \eqref{eq:upper-hh-bb-event}, if $\tau \ge \sigma+1$,
 \begin{equation}
\Prob\big(\CC_\mathrm{bb}(n,k)\nsubseteq\CC_n^\sss{(1)}\mid \CG_{n,2}, \CA_\mathrm{bb}(n,k)\big)\le (n/k)\exp\big(-c_{\ref{prop:2nd-upper-bound-hh}}k^{\zeta_\mathrm{hh}}\big). \label{eq:lem-hh-bb-giant}
 \end{equation}
  For $\tau<\sigma+1$, 
  the inequality holds with exponent $1/(\sigma+1-(\tau-1)/\alpha)$ in place of $\zeta_{\mathrm{hh}}$.
 \begin{proof}[Proof]Lemma~\ref{lemma:upper-hh-bb} constructs the backbone $\CC_{\mathrm{bb}}$, with size at least $s_kn/(2k) \ge k$ by definition of $s_k=\Theta(k^{\zeta_{\mathrm{hh}}})$ in \eqref{eq:w-gamma-hh} and by the lower bound  $n>Ak^{2-\zeta_\mathrm{hh}}$.  Using the complement of the event on the right-hand side of \eqref{eq:sure-connection}, if all components of size above $k$ of $\CG_{n,3}$ merge with the backbone, then there is at most one component above size $k$, which is the component containing the backbone. The right-hand side of \eqref{eq:cn2-final-bound} exactly bounds this event.
 \end{proof}
\end{corollary}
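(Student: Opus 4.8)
The plan is to re-run the revealment scheme from the proof of Proposition~\ref{prop:2nd-upper-bound-hh}, now tracking the component of the backbone instead of the second-largest component. We may assume $k\ge k_1\vee k_2$, since for smaller $k$ the asserted bound $(n/k)\exp(-c_{\ref{prop:2nd-upper-bound-hh}}k^{\zeta_\mathrm{hh}})$ exceeds $1$ once the constant is taken small enough; and throughout we work conditionally on $\{\CG_{n,2}\in\CA_\mathrm{bb}(n,k)\}$, where $\CG_{n,2}=\CG_n[1,2w_\mathrm{hh})$.

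First I would record that the hypothesis $n>Ak^{2-\zeta_\mathrm{hh}}$ makes the backbone itself of size $>k$. By Lemma~\ref{lemma:upper-hh-bb}, on $\CA_\mathrm{bb}$ the component $\CC_\mathrm{bb}$ contains at least $s_k=(C_1/16)k^{\zeta_\mathrm{hh}}$ vertices in each of the $n'/k=\lfloor(n/k)^{1/d}\rfloor^d\ge n/(2^dk)$ subboxes $\CQ_1,\dots,\CQ_{n'/k}$ (the last inequality holding once $A\ge 2^d$, as then $n/k>A$), so $|\CC_\mathrm{bb}|\ge s_kn/(2^dk)\ge c'A\,k$ for a constant $c'=c'(\sigma,\tau,\alpha,d,\beta)>0$; choosing $A$ large gives $|\CC_\mathrm{bb}|>k$. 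Since $\CC_\mathrm{bb}\subseteq\CG_{n,1}\subseteq\CG_{n,3}$, the component $\widetilde\CC$ of $\CG_{n,3}$ containing $\CC_\mathrm{bb}$ then satisfies $|\widetilde\CC|>k$.

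Next I would reveal the remaining vertices exactly as in Steps~3--4 of the proof of Proposition~\ref{prop:2nd-upper-bound-hh}: construct $\CG_n\supseteq\CG_{n,3}$ via Definition~\ref{def:ksrg-alt}, so that on $\CA_\mathrm{bb}$ every vertex of $\CV_n^\sss{(\mathrm{sure})}[2w_\mathrm{hh},\infty)$ is joined by an edge to $\CC_\mathrm{bb}$ by Corollary~\ref{cor:sure-bb}. Then any component of $\CG_n$ containing a sure-connector contains $\CC_\mathrm{bb}$; let $\CC^\star$ be the unique such component, so $|\CC^\star|\ge|\widetilde\CC|>k$, while every other component of $\CG_n$ coincides with a component of $\CG_{n,3}$ that sent no edge to $\CV_n^\sss{(\mathrm{sure})}[2w_\mathrm{hh},\infty)$ (the induced subgraph of $\CG_n$ on $\CV_{n,3}$ being exactly $\CG_{n,3}$). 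Hence, if every component of size $>k$ of $\CG_{n,3}$ does send such an edge, all of them merge into $\CC^\star$, which is then the unique component of $\CG_n$ of size $>k$, so $\CC^\star=\CC_n^\sss{(1)}\supseteq\CC_\mathrm{bb}$; i.e., just as in \eqref{eq:sure-connection},
\[
\{\CC_\mathrm{bb}\nsubseteq\CC_n^\sss{(1)}\}\ \subseteq\ \big\{\exists\text{ a component }\CC\text{ of }\CG_{n,3},\ |\CC|>k,\ \CC\not\sim\CV_n^\sss{(\mathrm{sure})}[2w_\mathrm{hh},\infty)\big\}.
\]

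It then remains to bound the probability of the right-hand side conditionally on $\{\CG_{n,2}\in\CA_\mathrm{bb}\}$, and this is exactly the chain of inequalities leading to \eqref{eq:cn2-final-bound}, run verbatim. A convenient first move is that $\Prob(E\mid\CG_{n,2}\in\CA_\mathrm{bb})\le\Prob(E)/\Prob(\CG_{n,2}\in\CA_\mathrm{bb})$ for any event $E$, combined with the observation that in the only regime where the asserted bound is non-trivial, namely $\log(n/k)<c_{\ref{prop:2nd-upper-bound-hh}}k^{\zeta_\mathrm{hh}}$, Lemma~\ref{lemma:upper-hh-bb} forces $\Prob(\CG_{n,2}\in\CA_\mathrm{bb})\ge1/2$ once $c_{\ref{prop:2nd-upper-bound-hh}}$ is small; so it suffices to prove the unconditional bound on the right-hand side above, up to a factor $2$. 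For that, intersecting further with $\CA_\mathrm{exp}$ and splitting as in \eqref{eq:upper-hh-pr1}: on $\CA_\mathrm{bb}\cap\CA_\mathrm{exp}$, estimate \eqref{eq:lem-hh-exp} of Lemma~\ref{lemma:hh-expandable} bounds the merging-failure probability of a fixed component of size $>k$ of $\CG_{n,3}$ by $\exp(-c_{\ref{lemma:hh-expandable}}k^{\zeta_\mathrm{hh}})$, and a union bound over the at most $|\CV_{n,3}|/k$ such components (with $\E[|\CV_{n,3}|]\le n$) yields a term $O(n/k)\exp(-c_{\ref{lemma:hh-expandable}}k^{\zeta_\mathrm{hh}})$; the remaining term $\Prob(\neg\CA_\mathrm{exp})$ is controlled by \eqref{eq:hh-expandable-prob}, whose exponent $1/(\sigma+1-(\tau-1)/\alpha)$ is $\ge\zeta_\mathrm{hh}$ when $\tau\ge\sigma+1$ and has to be kept unchanged when $\tau<\sigma+1$ — the sole source of the stated case distinction. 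Summing the contributions reproduces the right-hand side of \eqref{eq:cn2-final-bound} and hence \eqref{eq:lem-hh-bb-giant}. I expect the one genuinely delicate point to be precisely this bookkeeping around the conditioning on $\CA_\mathrm{bb}$ (and the use of its high probability to absorb the division), everything else being a verbatim reuse of the machinery already assembled for Proposition~\ref{prop:2nd-upper-bound-hh}.
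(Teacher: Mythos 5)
Your structure matches the paper's own (very terse) proof: show the backbone exceeds size $k$, deduce $\{\CC_\mathrm{bb}\nsubseteq\CC_n^\sss{(1)}\}$ is contained in the complement of the event in \eqref{eq:sure-connection}, and then run the estimates from \eqref{eq:upper-hh-pr1} to \eqref{eq:cn2-final-bound}. Your size computation for $\CC_\mathrm{bb}$ and the inclusion argument via Corollary~\ref{cor:sure-bb} are both correct and exactly the paper's intent.

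The one place you genuinely deviate is the ``first move'' of dividing by $\Prob(\CA_\mathrm{bb})\ge 1/2$. This yields a bound on $\Prob(\,\cdot\mid\CA_\mathrm{bb})$, conditioning on the \emph{event}, whereas the corollary as stated conditions on the \emph{realization} of $\CG_{n,2}$, i.e.\ claims a pointwise bound for a.e.\ $\CG_{n,2}\in\CA_\mathrm{bb}$. That pointwise version is in fact delicate for precisely the reason your move dodges: both $\CA_\mathrm{exp}$ and $|\CV_{n,3}|$ are partially $\CG_{n,2}$-measurable through $\CV_n[1,2w_\mathrm{hh})$ (if $\CV_n[1,2w_\mathrm{hh})$ is not $s(2w_\mathrm{hh})$-expandable then $\CV_{n,3}\supseteq\CV_n[1,2w_\mathrm{hh})$ isn't either, and $\CA_\mathrm{exp}$ fails deterministically given that $\CG_{n,2}$), so $\Prob(\neg\CA_\mathrm{exp}\mid\CG_{n,2})$ and $\E[|\CV_{n,3}|/k\mid\CG_{n,2}]$ are not uniformly controlled over all realizations satisfying $\CA_\mathrm{bb}$. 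The paper's one-line proof does not resolve this either; and what is actually needed downstream, e.g.\ inside $\E[\ind{\CA_\mathrm{bb}}\cdots]$ in the proof of Proposition~\ref{prop:min-giant-weight}, is exactly the integrated/event-conditional bound that your argument delivers. So the proof is sound; it proves a slightly weaker formulation than literally stated, but the same weaker formulation is what the paper's own two-sentence proof establishes, and your handling of the conditioning is the more transparent of the two.
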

The next corollary follows from Lemma~\ref{lemma:upper-hh-bb}.
It is not sharp but it yields a useful estimate.
\begin{corollary}[Lower bound on largest component]\label{cor:lower-c1}
 Consider a KSRG under the same assumptions as in Theorem~\ref{thm:subexponential-decay}, with vertex set either formed by a homogeneous Poisson point process or $\Z^d$. For each $\delta>0$, there exists a constant $A>0$ such that for all $n$ sufficiently large
 \begin{equation}
     \Prob\big(|\CC_n^{\sss{(1)}}|\le n(A\log n)^{1-1/\zeta_\mathrm{hh}}\big)\le n^{-\delta}.\nonumber
 \end{equation}
 \begin{proof}
 If $\CA_\mathrm{bb}(n,k_n)$ holds for some $k_n$, then the largest component $\CC_n^\sss{(1)}$ must have at least the size of the backbone $\CC_{\mathrm{bb}}(n,k_n)$. Setting $k=k_n=(A\log n)^{1/\zeta_\mathrm{hh}}$ in \eqref{eq:lem-hh-nobb}, the backbone exists with probability at least $1-n^{-\delta}$ for $A=A(\delta)$ sufficiently large, since $s_{k}=s_{k_n}=(C_1/16)A\log n$ by \eqref{eq:w-gamma-hh}. Then its size is at least $(n'/k) s_k=\Theta(n(A\log n)^{1-1/\zeta_\mathrm{hh}})$ by definition of $n'$ in \eqref{eq:nprime-boxes}, finishing the proof.
 \end{proof}
\end{corollary}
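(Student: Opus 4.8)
The plan is to deduce the bound directly from the backbone construction of Lemma~\ref{lemma:upper-hh-bb}. The key observation is that whenever $\CA_\mathrm{bb}(n,k)$ holds, the graph $\CG_n$ contains the connected subgraph $\CC_\mathrm{bb}(n,k)\subseteq\CG_n[w_\mathrm{hh},2w_\mathrm{hh})$, so $|\CC_n^{\sss{(1)}}|\ge|\CC_\mathrm{bb}(n,k)|$; and by the definition of $\CA_\mathrm{bb}$ in \eqref{eq:upper-hh-bb-event}, on this event $\CC_\mathrm{bb}(n,k)$ contains at least $s_k$ vertices in each of the $n'/k$ disjoint volume-$k$ subboxes $\CQ_1,\dots,\CQ_{n'/k}$ of $\Lambda_{n'}$ (recall \eqref{eq:nprime-boxes}), giving the deterministic bound $|\CC_\mathrm{bb}(n,k)|\ge(n'/k)s_k$. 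It therefore suffices to pick $k=k_n$ as a function of $n$ so that (a) the error probability in \eqref{eq:lem-hh-nobb} is at most $n^{-\delta}$, and (b) the quantity $(n'/k_n)s_{k_n}$ has the claimed order.

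For the choice, recall from \eqref{eq:w-gamma-hh} that $s_k=(C_1/16)k^{\zeta_\mathrm{hh}}$, so setting $k_n:=(A\log n)^{1/\zeta_\mathrm{hh}}$ gives $s_{k_n}=(C_1/16)A\log n$ and $k_n^{\zeta_\mathrm{hh}}=A\log n$. Plugging into \eqref{eq:lem-hh-nobb},
\[
\Prob\big(\neg\CA_\mathrm{bb}(n,k_n)\big)\le 3(n/k_n)\exp\big(-c_{\ref{lemma:upper-hh-bb}}k_n^{\zeta_\mathrm{hh}}\big)=3(n/k_n)\,n^{-c_{\ref{lemma:upper-hh-bb}}A}\le 3\,n^{1-c_{\ref{lemma:upper-hh-bb}}A},
\]
which is at most $n^{-\delta}$ for all $n$ large once $A>(1+\delta)/c_{\ref{lemma:upper-hh-bb}}$. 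I also need the admissibility conditions $k_n\ge k_1$ (from \eqref{eq:upper-hh-k0}) and $n\ge k_n$ of Lemma~\ref{lemma:upper-hh-bb}, and both hold for $n$ sufficiently large because $k_n$ is polylogarithmic in $n$.

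On the event $\CA_\mathrm{bb}(n,k_n)$, using $n'=k_n\lfloor(n/k_n)^{1/d}\rfloor^d\ge 2^{-d}n$ (valid once $n\ge 2^dk_n$),
\[
|\CC_n^{\sss{(1)}}|\ge(n'/k_n)s_{k_n}\ge 2^{-d}\frac{n}{k_n}\cdot\frac{C_1}{16}A\log n=2^{-d}\frac{C_1}{16}\,n\,(A\log n)^{1-1/\zeta_\mathrm{hh}},
\]
since $s_{k_n}/k_n=(C_1/16)A\log n\cdot(A\log n)^{-1/\zeta_\mathrm{hh}}$. Because $1-1/\zeta_\mathrm{hh}<0$ (as $\zeta_\mathrm{hh}\in(0,1)$ whenever $\alpha>1$ and $\tau\in(2,2+\sigma)$), passing to a suitably larger constant $A'=A'(\delta)$ absorbs the factor $2^{-d}C_1/16$ and turns the last display into $|\CC_n^{\sss{(1)}}|\ge n(A'\log n)^{1-1/\zeta_\mathrm{hh}}$; combining with the probability bound of the previous paragraph and renaming $A'$ back to $A$ gives the claim.

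I do not expect a genuine obstacle here: the statement is a non-sharp corollary of the backbone lemma, and the only care needed is bookkeeping — matching $s_{k_n}$ to $\log n$, verifying that the admissibility conditions $k_n\ge k_1$, $n\ge k_n$ eventually hold, and tracking the negative exponent $1-1/\zeta_\mathrm{hh}$ so that enlarging $A$ only strengthens the bound. The loss of sharpness is intrinsic, since the backbone only certifies a polylog-corrected linear fraction of the vertices, whereas the true largest component has size $(1+o(1))\Prob^{\sss{0}}(0\leftrightarrow\infty)n$ by Corollary~\ref{cor:lln}.
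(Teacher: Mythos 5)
Your proof is correct and follows the same route as the paper's own argument: set $k_n=(A\log n)^{1/\zeta_\mathrm{hh}}$, invoke the backbone lemma to get $\Prob(\neg\CA_\mathrm{bb})\le n^{-\delta}$ for $A$ large, and bound $|\CC_n^{\sss{(1)}}|\ge(n'/k_n)s_{k_n}$ on $\CA_\mathrm{bb}$. The only difference is that you spell out bookkeeping that the paper's one-line proof leaves implicit — the bound $n'\ge 2^{-d}n$, the admissibility conditions $k_n\ge k_1$ and $n\ge k_n$, and the fact that the negative exponent $1-1/\zeta_\mathrm{hh}$ lets you absorb the constant $2^{-d}C_1/16$ by enlarging $A$ — all of which are verified correctly.
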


The next proposition identifies the mark-threshold $\overline w$ so that (with polynomially small error probability) \emph{all} vertices with mark above $\overline w$ belong to the largest component $\CC_{n}^{\sss{(1)}}$. \begin{proposition}[Controlling marks of non-giant vertices]\label{prop:min-giant-weight}
 Consider a KSRG under the same assumptions as in Theorem~\ref{thm:subexponential-decay}, in particular $\tau\in(2,2+\sigma)$, with vertex set formed by a homogeneous Poisson point process. When  $\tau\ge \sigma+1$, for \emph{all} $\delta>0$, there exists $M_\delta>0$ such that for  $\overline{w}(n, \delta)=(M_\delta\log n)^{(1-\sigma\gamma_\mathrm{hh})/\zeta_\mathrm{hh}}$
 \begin{equation}
  \Prob\big(\exists u\in\CV_{n}[\overline{w}(n,\delta),\infty): u\notin\CC_{n}^\sss{(1)}\big) \le n^{-\delta}.\label{eq:min-giant-weight}
 \end{equation}
 When $\tau<\sigma+1$, 
  the same bound holds with $\overline{w}(n, \delta)=(M_\delta\log n)^{(1-\sigma\gamma_\mathrm{hh})(\sigma+1-(\tau-1)/\alpha)}$.
\begin{proof}[Proof sketch] We give the detailed proof in Appendix~\ref{app:aux} on page \pageref{proof:min-giant-weight}, and here a sketch when $\tau\ge\sigma+1$. We consider $k$ as a free parameter, so using Lemma~\ref{lemma:upper-hh-bb} with  $k=k_n=\Theta\big((\log n)^{1/\zeta_\mathrm{hh}}\big)$,
  a backbone $\CC_{\mathrm{bb}}(n,k_n)$ exists and satisfies $\CC_{\mathrm{bb}}(n,k_n)\subseteq \CC_{n}^{\sss{(1)}}$, with probability at least $1-n^{-\delta}$ by Corollary \ref{cor:bb-in-giant} and Lemma~\ref{lemma:upper-hh-bb} (the same calculation as the proof of Corollary \ref{cor:lower-c1}). We choose $\overline{w}=\overline{w}(n,\delta)$ to be the lowest possible value so that a vertex $u$ with mark $w_u\ge \overline{w}(n,\delta)$ connects by an edge to each backbone-vertex in its own subbox with probability at least $p$ in \eqref{eq:connection-prob-gen}. Recall also $s_k=(C_1/16)k^{1-\gamma_\mathrm{hh}(\tau-1)}$. For $u$ to not be contained in $\CC_{n}^{\sss{(1)}}$, these $s_{k_n}=\Theta(\log n)$ many edges must be all absent, which happens with probability $(1-p)^{s_{k_n}}=o(n^{-\delta-1})$. A union bound over the $O(n)$ such vertices finishes the proof.
 \end{proof}
\end{proposition}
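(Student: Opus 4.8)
The plan is to take $\overline w$ to be (a large-enough constant multiple of) the smallest mark for which a mark-$\overline w$ vertex sitting in a volume-$k_n$ subbox connects with probability at least $p$ to each of the $s_{k_n}$ backbone vertices that Lemma~\ref{lemma:upper-hh-bb} places in that subbox; such a vertex then misses the backbone only if $s_{k_n}$ conditionally independent trials, each of success probability at least $p$, all fail. First I would fix $\delta>0$ and set $k_n:=(M_\delta\log n)^{1/\zeta_\mathrm{hh}}$ if $\tau\ge\sigma+1$ and $k_n:=(M_\delta\log n)^{\sigma+1-(\tau-1)/\alpha}$ if $\tau<\sigma+1$, with $M_\delta$ chosen large at the very end; then $k_n\to\infty$ and $s_{k_n}=(C_1/16)k_n^{\zeta_\mathrm{hh}}=\Theta(M_\delta\log n)$ by~\eqref{eq:w-gamma-hh}. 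Since $k_n=\mathrm{polylog}(n)$, the hypothesis $n>Ak_n^{2-\zeta_\mathrm{hh}}$ of Corollary~\ref{cor:bb-in-giant} holds for large $n$, so combining Lemma~\ref{lemma:upper-hh-bb} with Corollary~\ref{cor:bb-in-giant} gives that, outside an event of probability at most $3(n/k_n)\exp(-c\,k_n^{\zeta_\mathrm{hh}})+(n/k_n)\exp(-c\,k_n^{\zeta_\mathrm{hh}})$ for a suitable constant $c>0$ (here, and in all such exponents when $\tau<\sigma+1$, one replaces $\zeta_\mathrm{hh}$ by $1/(\sigma+1-(\tau-1)/\alpha)$, the dominant error exponent in Corollary~\ref{cor:bb-in-giant}), the event $\CA_\mathrm{bb}(n,k_n)$ holds and $\CC_\mathrm{bb}(n,k_n)\subseteq\CC_n^\sss{(1)}$. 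For $M_\delta$ large this probability is at most $\tfrac13 n^{-\delta}$.

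Next I would pin down the mark threshold. Fix $u=(x_u,w_u)$ with $x_u\in\Lambda_n$, $w_u\ge\overline w$, and recall $\CS(u)\subseteq\CQ(u)\cap\CC_\mathrm{bb}$ from~\eqref{eq:su}, of size $s_{k_n}$ on $\CA_\mathrm{bb}$. By~\eqref{eq:upper-hh-outside-tesselation}, $\|x_u-x_v\|\le 2\sqrt d\,k_n^{1/d}$ for all $v\in\CS(u)$, while $w_v\ge w_\mathrm{hh}(k_n)=C_1^{-1/(\tau-1)}k_n^{\gamma_\mathrm{hh}}$. Since $\tau<2+\sigma$ forces $\gamma_\mathrm{hh}\le 1/(\sigma+1)$, hence $1-\sigma\gamma_\mathrm{hh}\ge\gamma_\mathrm{hh}$, for $n$ large we have $\overline w\ge w_\mathrm{hh}(k_n)$ and so $\kappa_\sigma(w_u,w_v)=w_u w_v^\sigma\ge\overline w\,w_\mathrm{hh}(k_n)^\sigma$. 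Feeding this and the distance bound into~\eqref{eq:connection-prob-gen} shows that $\mathrm p(u,v)\ge p$ as soon as $\overline w\ge 2^d d^{d/2}\beta^{-1}C_1^{\sigma/(\tau-1)}k_n^{1-\sigma\gamma_\mathrm{hh}}$ (for $\alpha=\infty$ the threshold form of $\mathrm p$ yields the same condition). As $k_n^{1-\sigma\gamma_\mathrm{hh}}$ is a fixed constant times $(M_\delta\log n)^{(1-\sigma\gamma_\mathrm{hh})/\zeta_\mathrm{hh}}$ (resp.\ $(M_\delta\log n)^{(1-\sigma\gamma_\mathrm{hh})(\sigma+1-(\tau-1)/\alpha)}$), enlarging $M_\delta$ once more makes the $\overline w(n,\delta)$ in the statement satisfy this inequality, so I may use it.

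Finally I would run the union bound on the event $\CA_\mathrm{bb}(n,k_n)\cap\{\CC_\mathrm{bb}(n,k_n)\subseteq\CC_n^\sss{(1)}\}$. The key point is that $\CC_\mathrm{bb}(n,k_n)$, and therefore each $\CS(u)$, is a function of $\CG_{n,1}=\CG_n[w_\mathrm{hh},2w_\mathrm{hh})$, whose edge set involves only pairs of vertices with marks in $[w_\mathrm{hh},2w_\mathrm{hh})$ and is thus disjoint from the edges touching $u$ whenever $w_u\ge\overline w>2w_\mathrm{hh}$; hence conditioning on the backbone's existence and identity does not bias the indicators $(\ind{u\sim v})_{v\in\CS(u)}$, which remain independent given $\CV_n$ and $\CG_{n,1}$, each with success probability at least $p$. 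Therefore $\Prob(u\not\sim\CC_\mathrm{bb}\mid\CV_n,\CG_{n,1})\le(1-p)^{s_{k_n}}$, and $u\sim\CC_\mathrm{bb}\subseteq\CC_n^\sss{(1)}$ implies $u\in\CC_n^\sss{(1)}$. Taking expectations, the expected number of $u\in\CV_n[\overline w,\infty)$ lying outside $\CC_n^\sss{(1)}$ is at most $\E[|\CV_n[\overline w,\infty)|]\,(1-p)^{s_{k_n}}=n\,\overline w^{-(\tau-1)}(1-p)^{s_{k_n}}\le n(1-p)^{s_{k_n}}\le\tfrac13 n^{-\delta}$ once $M_\delta$ is large, using $s_{k_n}=\Theta(M_\delta\log n)$. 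Markov's inequality on this count, added to the backbone error terms above, yields~\eqref{eq:min-giant-weight}.

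\emph{Expected main obstacle.} There is no isolated hard step — the result is essentially a careful concatenation of Lemma~\ref{lemma:upper-hh-bb}, Corollary~\ref{cor:bb-in-giant} and the connection-probability formula. The delicate parts will be (i) stating the conditional-independence observation above precisely (conditioning on the existence and identity of the backbone leaves the edges from a high-mark vertex to it unbiased), and (ii) aligning the constants so that all three error events are simultaneously $\le n^{-\delta}/3$ and the mark threshold reproduces exactly the stated exponent of $\log n$; in the regime $\tau<\sigma+1$ one must additionally track that the binding error contribution is the $s$-expandability failure inside Corollary~\ref{cor:bb-in-giant}, which is precisely what forces the larger $k_n$ and the different exponent in $\overline w(n,\delta)$.
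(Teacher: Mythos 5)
Your proposal is correct and follows essentially the same route as the paper's Appendix~\ref{app:aux} proof: build the backbone (Lemma~\ref{lemma:upper-hh-bb}), attach it to the giant (Corollary~\ref{cor:bb-in-giant}), choose $\overline w$ so each mark-$\overline w$ vertex connects to each of the $s_{k_n}$ backbone vertices in its own subbox with probability $\ge p$ independently given $\CG_{n,1}$, and union-bound the failure $(1-p)^{s_{k_n}}$ over the $O(n)$ candidates. One minor bookkeeping slip: your blanket claim $s_{k_n}=\Theta(M_\delta\log n)$ only holds for $\tau\ge\sigma+1$ — when $\tau<\sigma+1$ your $k_n$ gives $s_{k_n}=\Theta\big((M_\delta\log n)^{\sigma+2-\tau}\big)$ (which is larger, since $\sigma+2-\tau>1$), and similarly only Corollary~\ref{cor:bb-in-giant}, not Lemma~\ref{lemma:upper-hh-bb}, has its exponent replaced by $1/(\sigma+1-(\tau-1)/\alpha)$; but both inaccuracies only make the estimates more favorable and do not affect the argument.
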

\begin{remark}
 Combined with the proof of the lower bound of Theorem~\ref{thm:second-largest} below in Section~\ref{sec:lower}, one may show that Proposition~\ref{prop:min-giant-weight} is sharp up to a constant factor when $\tau\ge \sigma+1$, i.e., there exist constants $\delta, m_w>0$ such that for all $n$ sufficiently large \[\Prob\big(\exists v\in\CV_n[(m_w\log n)^{(1-\sigma\gamma_\mathrm{hh})/\zeta_\mathrm{hh}}, \infty): v\notin\CC_{n}^\sss{(1)}\big)\ge 1-n^{-\delta}.
 \]
\end{remark}
Let $\CC_n(0)[1,w)$ be the component containing $0$ in  $\CG_n[1, w)\subseteq \CG_n$, by setting $\CC_{n}(0)[1,w)$ to be the empty set if $w_0\ge w$. Then $\CC_{n}(0)[1, 2w_{\mathrm{hh}}(k))$ is the component of $0$ in $\CG_{n,2}$. In the next proposition, we show that this component has linear size with strictly positive probability when the truncation is at $2w_{\mathrm{hh}}(k_n)=\Theta((\log n)^{\gamma_{\mathrm{hh}}/\zeta_{\mathrm{hh}}})$, equivalently, when $k_n=\Theta((\log n)^{1/\zeta_{\mathrm{hh}}})$. 

\begin{proposition}[Existence of a large component]\label{proposition:existence-large}
  Consider a KSRG under the same assumptions as in Theorem~\ref{thm:subexponential-decay}, with vertex set either formed by a homogeneous Poisson point process or $\Z^d$. Then there exists a unique infinite component in $\CG$. Moreover, there exist constants $\rho, m>0$ such that for all $n$ sufficiently large, when $k_n=m(\log n)^{1/\zeta_\mathrm{hh}}$,
 \begin{align}                  \Prob^\sss{0}\big(|\CC_{n}(0)[1, 2w_{\mathrm{hh}}(k_n))|\ge \rho n\big)
  \ge
  \rho,
  \qquad
  \mbox{and}\qquad
  \Prob^\sss{0}(0\leftrightarrow\infty)\ge \rho.
    \label{eq:prop-subexp-lower-comp}
 \end{align} 

 \begin{proof}[Proof sketch]
 We build a connected backbone in $\Lambda_n$ on vertices with mark in the interval $[w_\mathrm{hh}(k_n), 2w_\mathrm{hh}(k_n))$ using Lemma~\ref{lemma:upper-hh-bb}. Then we use a second-moment method to show that the origin and linearly many other vertices are connected to this backbone via paths along which the vertex marks are increasing, giving the first inequality in \eqref{eq:prop-subexp-lower-comp}. The second inequality follows similarly, forming an infinite path along which the marks are increasing.
 The detailed proof can be found in Appendix~\ref{app:linear-sized}. 
 \end{proof}
\end{proposition}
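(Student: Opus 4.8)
The plan is to treat the two assertions separately: uniqueness of the infinite cluster, and the two quantitative lower bounds, with the latter carrying almost all of the work. For uniqueness I would first observe that the law of $\CG$ is translation-invariant and ergodic (the vertex set is a homogeneous Poisson point process, or $\Z^d$, and edges are conditionally independent given the marked vertices), so once $\Prob^{\sss{0}}(0\leftrightarrow\infty)>0$ is established the number of infinite clusters is an almost-sure constant in $\{1,2,\dots,\infty\}$. To exclude values $\ge 2$ I would run the Burton--Keane trifurcation argument, using in place of finite energy the \emph{vertex-insertion tolerance} of KSRGs: conditionally on the marked vertex set and all edges, inserting a fresh vertex of mark $w$ at a point $x$ connects it by an edge to every existing vertex within Euclidean distance $(\beta w)^{1/d}$ with conditional probability bounded below (deterministically if $\alpha=\infty$), using $\kappa_\sigma(w,w')\ge w$ and \eqref{eq:connection-prob-gen}. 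Taking $w$ large merges, with positive conditional probability, all infinite clusters meeting a fixed box; amenability of $\R^d$ then yields at most one infinite cluster by the classical argument. These are routine and I would place them in an appendix.

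For \eqref{eq:prop-subexp-lower-comp} I would fix $k_n:=m(\log n)^{1/\zeta_{\mathrm{hh}}}$ with $m$ a large constant, so $s_{k_n}=(C_1/16)k_n^{\zeta_{\mathrm{hh}}}=\Theta(m^{\zeta_{\mathrm{hh}}}\log n)$ and, by Lemma~\ref{lemma:upper-hh-bb}, $\Prob(\neg\CA_{\mathrm{bb}}(n,k_n))\le 3(n/k_n)\exp(-c_{\ref{lemma:upper-hh-bb}}k_n^{\zeta_{\mathrm{hh}}})=o(1)$ once $c_{\ref{lemma:upper-hh-bb}}m^{\zeta_{\mathrm{hh}}}>1$. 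I would then reveal the vertex set $\CV_n$ and all edges inside $\CV_n[w_{\mathrm{hh}},2w_{\mathrm{hh}})$ — which determines $\CG_{n,1}$, the event $\CA_{\mathrm{bb}}$, and the backbone $\CC_{\mathrm{bb}}\subseteq\CG_{n,2}:=\CG_n[1,2w_{\mathrm{hh}})$ — leaving all edges incident to a mark-$<w_{\mathrm{hh}}$ vertex unrevealed and conditionally independent. The core is an \emph{increasing-marks exploration}: I would prove that, conditionally on any realization of $(\CV_n,\CG_{n,1})$ satisfying $\CA_{\mathrm{bb}}$, there is a constant $c_0>0$ such that for every vertex $v$ of mark $<w_{\mathrm{hh}}$ whose surrounding box $\Lambda(x_v,c'k_n)$ lies in $\Lambda_n$ (for a suitable constant $c'$), the event $A_v:=\{v$ is joined to $\CC_{\mathrm{bb}}$ by a path of $\CG_{n,2}$ contained in $\Lambda(x_v,c'k_n)\}$ has conditional probability at least $c_0$; the same will hold for the origin under $\Prob^{\sss{0}}$ on $\{w_0<w_{\mathrm{hh}}\}$. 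Concretely, pick mark levels $w^{(i+1)}:=(w^{(i)})^{a}$ with $a\in(1,\sigma/(\tau-2))$ — available exactly because $\tau<2+\sigma$ — from $w^{(0)}=2$ up to $w^{(T)}\in[w_{\mathrm{hh}}/2,w_{\mathrm{hh}})$, with $T=O(\log\log n)$. From $v$ one first reaches a vertex of mark $\ge 2$ by a fresh edge (positive probability, since the expected number of candidates within $O(1)$ is $\Theta(1)$ and each connects with probability $\Theta(1)$), then runs a mark-increasing branching exploration: a particle of mark $\approx w^{(i)}$ at $y$ looks for mark-$\approx w^{(i+1)}$ neighbours inside the ball of radius $R_i:=\Theta((\beta w^{(i+1)}(w^{(i)})^{\sigma})^{1/d})$ around $y$; by \eqref{eq:connection-prob-gen} each connects with probability $\Theta(1)$, and the expected number of children is $\Theta(p\beta(w^{(i)})^{\sigma-a(\tau-2)})$, which exceeds $2$ for every $i$ once $w^{(0)}$ is a large enough constant since $\sigma-a(\tau-2)>0$. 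Comparing (and ignoring collisions, which is legitimate because the explored cluster stays of polylogarithmic size until it reaches the top level) with a uniformly supercritical time-inhomogeneous Bienaym\'e--Galton--Watson process, it survives to generation $T$ with probability $\ge\theta>0$ uniformly in $T$, producing a vertex $v_T$ of mark in $[w_{\mathrm{hh}}/2,w_{\mathrm{hh}})$ reached from $v$ by a mark-increasing path (all of whose edges have an endpoint of mark $<w_{\mathrm{hh}}$, hence are fresh). Finally $v_T$ connects to some vertex of $\CC_{\mathrm{bb}}$ in its own subbox by a \emph{fresh} edge with probability $\ge 1/2$, via the computation \eqref{eq:temp-est-1}--\eqref{eq:upper-hh-bb-half} (with constants adjusted for the factor $2$, using only mark $\ge w_{\mathrm{hh}}/2$, the $s_{k_n}$ backbone vertices of \eqref{eq:su}, and the diameter bound \eqref{eq:upper-hh-outside-tesselation}, together with the exact cancellation of powers of $k$ built into \eqref{eq:w-gamma-hh}). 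Since $\sum_i R_i=O(w_{\mathrm{hh}}^{(1+\sigma)/d})$ and $(1+\sigma)\gamma_{\mathrm{hh}}\le 1$ — checked from the explicit formula \eqref{eq:gamma-hh}, with equality only when $\alpha=\infty$ — the whole path stays inside $\Lambda(x_v,c'k_n)$ for $m$ large, and multiplying the three probabilities gives $\Prob(A_v\mid\CV_n,\CG_{n,1},\CA_{\mathrm{bb}})\ge c_0$.

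I would conclude by a second-moment argument. Let $\CV^{\mathrm{low}}:=\CV_n[1,2)$ and let $X$ count the $v\in\CV^{\mathrm{low}}$ with $\Lambda(x_v,c'k_n)\subseteq\Lambda_n$ for which $A_v$ occurs. On $\CA_{\mathrm{bb}}$, $\E[X\mid\CV_n,\CG_{n,1}]\ge c_1 n$ for some $c_1>0$, since only an $o(1)$-fraction of $\CV^{\mathrm{low}}$ lies within $(c'k_n)^{1/d}$ of $\partial\Lambda_n$. Conditionally on $(\CV_n,\CG_{n,1},\CA_{\mathrm{bb}})$, each $A_v$ is a function of the fresh edges inside $\Lambda(x_v,c'k_n)$, so $A_v$ and $A_{v'}$ are conditionally independent whenever $\|x_v-x_{v'}\|>2(c'k_n)^{1/d}$, and only $O(nk_n)=O(n(\log n)^{1/\zeta_{\mathrm{hh}}})=o(n^2)$ unordered pairs are closer; hence $\mathrm{Var}(X\mid\cdot)\le\E[X\mid\cdot]+o(n^2)=o((\E[X\mid\cdot])^2)$ and Chebyshev gives $\Prob(X\ge c_1 n/2\mid\cdot)\ge 1-o(1)$, so after taking expectations and using $\Prob(\CA_{\mathrm{bb}})\ge1-o(1)$, $\Prob^{\sss{0}}(X\ge c_1 n/2)\ge 1-o(1)$. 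Running the same exploration from the origin, $\Prob^{\sss{0}}(0\leftrightarrow\CC_{\mathrm{bb}}\text{ in }\CG_{n,2})\ge c_0-o(1)$; on the intersection of this event with $\{X\ge c_1 n/2\}$ the origin's component in $\CG_{n,2}$ contains all $X$-vertices and hence has size $\ge c_1 n/2$, and by the union bound $\Prob^{\sss{0}}(A_0\cap\{X\ge c_1 n/2\})\ge \Prob^{\sss{0}}(A_0)+\Prob^{\sss{0}}(X\ge c_1 n/2)-1\ge c_0-o(1)\ge c_0/2$ for $n$ large. Taking $\rho:=\min(c_0/2,c_1/2)$ yields the first inequality in \eqref{eq:prop-subexp-lower-comp}. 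For the second I would simply use $\CC_n(0)[1,2w_{\mathrm{hh}}(k_n))\subseteq\CC_n(0)\subseteq\CC_\infty(0)$ and let $n\to\infty$: the first inequality gives $\Prob^{\sss{0}}(|\CC_\infty(0)|\ge M)\ge\rho$ for every $M$ (take $n\ge M/\rho$), whence $\Prob^{\sss{0}}(|\CC_\infty(0)|=\infty)=\lim_{M\to\infty}\Prob^{\sss{0}}(|\CC_\infty(0)|\ge M)\ge\rho$; in particular an infinite cluster exists a.s.

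The step I expect to be the main obstacle is the increasing-marks exploration. One must simultaneously keep the branching lower bound supercritical at \emph{every} mark level (this is precisely where $\tau<2+\sigma$, equivalently $\zeta_{\mathrm{hh}}>0$, is used, through $\sigma-a(\tau-2)>0$), confine the whole path to a box of volume $O(k_n)$ (which forces the inequality $(1+\sigma)\gamma_{\mathrm{hh}}\le 1$ from the explicit formula for $\gamma_{\mathrm{hh}}$ and dictates the scale $k_n=\Theta((\log n)^{1/\zeta_{\mathrm{hh}}})$), and make the last hop onto the backbone succeed with probability bounded away from $0$ even though $w_{\mathrm{hh}}\to\infty$ — this last point hinges on the exact cancellation of powers of $k$ engineered into $w_{\mathrm{hh}}$ and $s_k$ in \eqref{eq:w-gamma-hh}. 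The borderline case $\alpha=\infty$ (deterministic edges, $(1+\sigma)\gamma_{\mathrm{hh}}=1$) and the $\Z^d$ vertex set (Chernoff bounds in place of Poisson concentration) should require only cosmetic changes.
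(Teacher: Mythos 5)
Your overall strategy coincides with the paper's (Appendix~\ref{app:linear-sized}): build the backbone at scale $k_n=\Theta((\log n)^{1/\zeta_{\mathrm{hh}}})$, attach the origin and linearly many constant-mark vertices to $\CC_{\mathrm{bb}}$ via mark-increasing paths confined to boxes of volume $O(k_n)$ (using $(1+\sigma)\gamma_{\mathrm{hh}}\le1$ and the cancellation engineered into \eqref{eq:w-gamma-hh}), and second-moment the count. Your two closing deductions are clean: monotone convergence from the finite-$n$ bound gives $\Prob^{\sss{0}}(0\leftrightarrow\infty)\ge\rho$ more directly than the paper's device of extending the greedy path to infinite length, and Burton--Keane handles uniqueness as in the paper. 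Two points in the middle need repair.

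First, the conditioning is inconsistent as written. You reveal all of $\CV_n$, which makes the candidate set at every mark level deterministic, yet you then compute with the \emph{expected} number of candidates per level and assert $\Prob(A_v\mid\CV_n,\CG_{n,1},\CA_{\mathrm{bb}})\ge c_0$ for \emph{every} admissible $v$. That claim is false: a sparse local realization of $\CV_n$ near $v$ leaves no candidates at some level. The paper's $\widetilde\Prob_{\mathrm{bb}}$ fixes only $\CG_{n,1}=\CG_n[w_{\mathrm{hh}},2w_{\mathrm{hh}})$, the regularity event $\CA_{\mathrm{reg}}'$, and $w_0$, so vertices of mark $<w_{\mathrm{hh}}$ remain a Poisson process and the candidate count at each level really is $\mathrm{Poi}(\lambda_j)$. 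If you condition only on $\CG_{n,1}$ (and a regularity event), your ``fresh edge'' estimates make sense again, and conditional independence of $A_v$ and $A_{v'}$ for well-separated $v,v'$ still holds because the two explorations live in disjoint space--mark regions, which is exactly what \eqref{eq:key-to-indep} records; your Chebyshev step then goes through unchanged.

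Second, ``ignoring collisions'' in the BGW comparison is not justified by cluster size alone: two level-$i$ siblings may claim the same level-$(i{+}1)$ candidate, so the BGW you describe does not dominate the discovered tree from below, and the uniform survival bound $\theta$ is not immediate. But you never need branching: for survival to the top level it suffices to pick \emph{one} discovered vertex at each level and expose only its fresh candidates, giving a greedy path with failure probability $\le\sum_j e^{-\lambda_j}$. This is precisely Claim~\ref{lemma:lower-conn-bb-2}, which uses doubling mark intervals $I_j=[2^jm_w,2^{j+1}m_w)$ so that $\lambda_j=\Theta((2^jm_w)^{\sigma+2-\tau})$ grows geometrically under $\tau<\sigma+2$ and the sum is dominated by its bottom term, small once $m_w$ is large. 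Your power-growing levels $w^{(i+1)}=(w^{(i)})^a$ with $a\in(1,\sigma/(\tau-2))$ would also work in a greedy-path version, but the doubling scheme is simpler and avoids the collision question altogether.
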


\section{Upper bound: cluster-size decay}\label{sec:upper-sub}
In this section we prove Theorem~\ref{thm:subexponential-decay}(ii)--(iii).
We carry out the plan in Section~\ref{sec:second-sub} in detail. Instead of restricting to  KSRGs with parameters described in Theorem~\ref{thm:subexponential-decay}(ii--iii), we derive general conditions that give subexponential decay. Then we  show that Propositions \ref{prop:2nd-upper-bound-hh} and \ref{prop:min-giant-weight} imply these conditions. 
 Recall from Definition~\ref{def:ksrg} that $\Prob^\sss{x}$ denotes the conditional measure that $\CV$ contains a vertex at location $x$, with an unknown mark from distribution $F_W$. All results of this section hold for KSRGs on $\Z^d$.
\begin{proposition}[Prerequisites for cluster-size decay]\label{prop:condition-subexponential}
 Consider a KSRG satisfying Assumption~\ref{assumption:main} with parameters $\alpha>1, \tau>2$, $\sigma\ge 0$, and $d\in\N$.
 Assume that there exist $c_1\ge 0$ and $\zeta, \eta, c_2, c_3, M>0$, and a function $n_0(k)=O(k^{1+c_1})$, such that for all $k$ sufficiently large, and whenever $n \in [n_0(k), \infty)$, with
 $\overline{w}(n):=M(\log n)^\eta$,
 \begin{align}
  \Prob^\sss{x}\big(|\CC_{n}^\sss{(2)}| > k\big)
                                           & \le
  n^{c_2}\exp\big(-c_3k^{\zeta}\big),\label{eq:prop-2nd}                                                                                  \\ \Prob^\sss{x}\big(|\CC_{n}^\sss{(1)}|\le n^{c_3}\big)                  & \le  n^{-1-c_3},\label{eq:prop-outgiant}\\
  \Prob^\sss{x}\big(\exists v\in\CV_{n}[\overline{w}(n),\infty): v\notin\CC_{n}^\sss{(1)}\big) & \le n^{-c_3}.\label{eq:prop-marksgiant}
 \end{align}
 Then there exists a constant $A>0$ such that for all $k$ sufficiently large constant and $n$ satisfying $n \in [n_0(k), \infty]$,
 \begin{equation}\label{eq:exp-decay-71}
  \Prob^\sss{0}\big(|\CC_n(0)|> k, 0\notin\CC_n^\sss{(1)}\big)\le \exp\big(-(1/A)k^{\zeta}\big).
 \end{equation}
 Further, the Weak Law of Large Numbers holds:
 \begin{equation}
|\CC_n^\sss{(1)}|\,\big/\,n\overset\Prob\longrightarrow \Prob^\sss{0}(0\leftrightarrow\infty),\qquad \mbox{as $n\to\infty$.}\label{eq:lln-section}
 \end{equation}
 
\end{proposition}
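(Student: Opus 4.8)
The plan is to realise the bootstrap sketched in Section~\ref{sec:second-sub}. Fix $k$ large and set $n_k:=\exp(\theta k^\zeta)$ with $\theta:=c_3/(2c_2)$, so that $n_k\ge n_0(k)$ and $n_k^{c_3\zeta}$ beats every power of $\log n_k$. For $n\in[n_0(k),n_k]$ the event $\{|\CC_n(0)|>k,\ 0\notin\CC_n^\sss{(1)}\}$ forces $|\CC_n^\sss{(2)}|>k$, so \eqref{eq:prop-2nd} already gives probability at most $n^{c_2}\exp(-c_3k^\zeta)\le n_k^{c_2}\exp(-c_3k^\zeta)=\exp(-(c_3/2)k^\zeta)$. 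For $n>n_k$ I would embed the centred box $\Lambda_{n_k}$ in $\Lambda_n$ and use the decomposition \eqref{eq:outline-upper-second-sub-last},
\begin{equation*}
\{|\CC_n(0)|>k,\ 0\notin\CC_n^\sss{(1)}\}\ \subseteq\ \{|\CC_{n_k}^\sss{(2)}|>k\}\ \cup\ \{\CC_{n_k}^\sss{(1)}\nsubseteq\CC_n^\sss{(1)}\}\ \cup\ \{|\CC_n(0)|>k,\ 0\notin\CC_n^\sss{(1)},\ |\CC_{n_k}(0)|<k\}.
\end{equation*}
The first term is controlled by \eqref{eq:prop-2nd} at scale $n_k$, again by $\exp(-(c_3/2)k^\zeta)$; the work is in the remaining two.

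For the second term I would take a chain $n_k=m_0<m_1<\dots<m_T=n$ with $m_{i+1}=\lceil m_i^{D}\rceil$, the constant $D$ chosen large enough that $m_{i+1}\ge n_0(m_i^{c_3})$ and $Dc_2\ge1$, whence $T=O(\log\log n)$. The deterministic input is that a connected subgraph of $\CG_{m_{i+1}}$ of size exceeding $|\CC_{m_{i+1}}^\sss{(2)}|$ must lie in $\CC_{m_{i+1}}^\sss{(1)}$; applied to $\CC_{m_i}^\sss{(1)}$, which is connected in $\CG_{m_{i+1}}$, this gives $\{\CC_{m_i}^\sss{(1)}\nsubseteq\CC_{m_{i+1}}^\sss{(1)}\}\subseteq\{|\CC_{m_i}^\sss{(1)}|\le m_i^{c_3}\}\cup\{|\CC_{m_{i+1}}^\sss{(2)}|\ge m_i^{c_3}\}$. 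The first has probability $\le m_i^{-1-c_3}$ by \eqref{eq:prop-outgiant}, and the second has probability $\le m_{i+1}^{c_2}\exp(-c_3(m_i^{c_3})^\zeta(1-o(1)))\le m_i^{-1}$ by \eqref{eq:prop-2nd}, using that for $m_i\ge n_k$ the stretched-exponential factor swallows the polynomial $m_{i+1}^{c_2}=m_i^{Dc_2}$. Summing over the (super-polynomially growing) chain gives $\Prob^\sss{0}(\CC_{n_k}^\sss{(1)}\nsubseteq\CC_n^\sss{(1)})\le 4n_k^{-1}=4\exp(-\theta k^\zeta)$, uniformly in $n$.

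The third term is the hard part. I would work on the good events of \eqref{eq:prop-outgiant} and \eqref{eq:prop-marksgiant} at scale $n_k$ (complement probability $\le 2n_k^{-c_3}$); on $\{|\CC_{n_k}(0)|<k\}$ one then gets $0\notin\CC_{n_k}^\sss{(1)}$, since $|\CC_{n_k}^\sss{(1)}|>n_k^{c_3}>k$, so every vertex of $\CC_{n_k}(0)$ has mark below $\overline w(n_k)=M(\theta k^\zeta)^\eta$, which is only polynomial in $k$. Let $\Lambda^{(1)}\subseteq\dots\subseteq\Lambda^{(k)}=\Lambda_{n_k}$ be concentric boxes of side-length $j\,n_k^{1/d}/k$ and let $D_j$ be the origin's component in $\CG$ restricted to $\Lambda^{(j)}$. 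Since $D_1\subseteq\dots\subseteq D_k=\CC_{n_k}(0)$ and $|D_k|<k$, some $D_{j^\star}=D_{j^\star+1}$; and if also $|\CC_n(0)|>k$, then $D_{j^\star}\subsetneq\CC_n(0)$, so $\CC_n$ contains an edge from some $v\in D_{j^\star}\subseteq\Lambda^{(j^\star)}$ to a vertex $u\notin D_{j^\star}$, and $u$ must lie outside $\Lambda^{(j^\star+1)}$ (otherwise that edge, already present in $\CG$ restricted to $\Lambda^{(j^\star+1)}$, would put $u$ in $D_{j^\star+1}=D_{j^\star}$), so the edge has length at least $R:=n_k^{1/d}/(2k)$. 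A naive union bound over all $\Theta(n_k)$ low-mark vertices of $\Lambda_{n_k}$ would be too lossy; instead, for each fixed $j\le k-1$ I would reveal $\CG$ on $\Lambda^{(j+1)}$, which pins $D_j$ down as a deterministic set of fewer than $k$ vertices of mark $<\overline w(n_k)$ without exposing the crossing edge, and then union-bound over $v\in D_j$. The probability that such a fixed $v$ has an edge of $\CG_n$ to the complement of $\Lambda^{(j+1)}$ is at most the expected number of such edges, which — integrating \eqref{eq:connection-prob-gen} over the far endpoint's location \emph{and} over its mark on all of $[1,\infty)$, the exponent $\tau-2$ arising from far high-mark endpoints and $\alpha-1$ from far low-mark endpoints — is at most $C\,\overline w(n_k)^{O(1)}R^{-d\min(\tau-2,\alpha-1)}$. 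Summing over the $<k$ vertices $v$ and the $<k$ scales $j$ and using $R^d=n_k/(2k)^d$ yields, for any $\xi'<\min(\tau-2,\alpha-1)$, a bound $n_k^{-\xi'}\mathrm{poly}(k)$; together with the bad events the third term is at most $n_k^{-\min(c_3,\xi')}\mathrm{poly}(k)\le\exp(-\tfrac12\min(c_3,\xi')\theta k^\zeta)$. Adding the three contributions proves \eqref{eq:exp-decay-71} for finite $n\ge n_0(k)$ with a suitable $A$; the case $n=\infty$ follows since $\CC_n(0)\uparrow\CC_\infty(0)$ and, on $\{k<|\CC_\infty(0)|<\infty\}$, $0\notin\CC_n^\sss{(1)}$ for all large $n$ because $|\CC_n^\sss{(1)}|\to\infty$, whence $\Prob^\sss{0}(k<|\CC(0)|<\infty)\le\liminf_n\Prob^\sss{0}(|\CC_n(0)|>k,\,0\notin\CC_n^\sss{(1)})$.

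For the law of large numbers \eqref{eq:lln-section} I would first obtain convergence of the mean: by the Mecke equation, translation invariance, and negligibility of boundary effects, $\E[|\CC_n^\sss{(1)}|]/n=\Prob^\sss{0}(0\in\CC_n^\sss{(1)})+o(1)$, and writing $\Prob^\sss{0}(0\in\CC_n^\sss{(1)})=\Prob^\sss{0}(|\CC_n(0)|>k)-\Prob^\sss{0}(|\CC_n(0)|>k,\,0\notin\CC_n^\sss{(1)})$, letting $n\to\infty$ and then $k\to\infty$ with the uniform bound \eqref{eq:exp-decay-71} gives $\E[|\CC_n^\sss{(1)}|]/n\to\Prob^\sss{0}(0\leftrightarrow\infty)$. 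Concentration would then come from Chebyshev applied to $|\CC_n^\sss{(1)}|=|\CV_n|-\#\{v\notin\CC_n^\sss{(1)}\}$: split $\#\{v\notin\CC_n^\sss{(1)}\}\le\#\{v:|\CC_n(v)|\le k\}+\#\{v:|\CC_n(v)|>k,\,v\notin\CC_n^\sss{(1)}\}$, the second summand having mean $\le n\exp(-(1/A)k^\zeta)$ hence being $o(n)$ with high probability for $k$ large (Markov and \eqref{eq:exp-decay-71}), while $\mathrm{Var}(\#\{v:|\CC_n(v)|\le k\})=o(n^2)$ because $\mathrm{Cov}(\mathbbm 1\{|\CC_n(u)|\le k\},\mathbbm 1\{|\CC_n(v)|\le k\})$ decays with $\|u-v\|$ — a cluster with at most $k$ vertices reaches distance $\rho$ only through an edge of length $\ge\rho/k$, bounded by the same long-edge estimate as above — so that with $|\CV_n|/n\to1$ one gets \eqref{eq:lln-section}. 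The main difficulty throughout is the third term: long edges make small clusters potentially delocalized, and the remedy — revealing $\CG$ up to an intermediate scale so as to localize the relevant sub-cluster to $<k$ low-mark vertices before estimating crossing-edge probabilities, combined with the fact that those probabilities decay like $R^{-d\min(\tau-2,\alpha-1)}$ once the far endpoint's mark is integrated over its full range — is exactly what keeps the bound uniform in $n$ and valid for all $\alpha>1$, $\tau>2$.
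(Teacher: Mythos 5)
Your skeleton matches the paper's (Lemma~\ref{prop:refinedevents} and Claims~\ref{claim:leaving-giant}--\ref{claim:edge-long}): decompose $\{|\CC_n(0)|>k,\,0\notin\CC_n^\sss{(1)}\}$ into a second-largest-component term at a scale $\exp(\Theta(k^\zeta))$, a ``leaving-the-giant'' term, and a ``long-edge'' term. Your leaving-the-giant chain is the same observation as Claim~\ref{claim:leaving-giant} (a subgraph that is connected, larger than the second-largest, and not in the largest is impossible); you use a polynomially-spaced chain $m_{i+1}=\lceil m_i^D\rceil$ where the paper uses step size one, which is fine modulo the constant-tuning remark below. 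The genuinely different part is your treatment of the third term: instead of the paper's intermediate scale $N_k$ with the split into $\CA_\mathrm{low\textnormal{-}edge}$ and $\CA_\mathrm{long\textnormal{-}edge}$ and the auxiliary density event $\CA_\mathrm{dense}$, you use a pigeonhole over $k$ concentric shells to force some $D_{j^\star}=D_{j^\star+1}$, then condition on $\CG|_{\Lambda^{(j^\star+1)}}$ and union-bound over the $\le k$ pinned-down low-mark vertices; the per-vertex first-moment bound with the far endpoint's mark integrated over $[1,\infty)$ is exactly the computation in Claim~\ref{claim:edge-long}. This merges the paper's two long-edge claims into one and avoids $\CA_\mathrm{dense}$ entirely; what you pay is that the bound involves $\min(\tau-2,\alpha-1)$ rather than the sharper exponents the paper keeps separate, but as $\alpha>1$, $\tau>2$ this is harmless. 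Your extension to $n=\infty$ via Fatou is correct.

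Two loose ends. First, a minor constant-tuning issue in the chain: the first step $m_0=n_k\to m_1$ applies \eqref{eq:prop-2nd} with $k'=m_0^{c_3}$, which requires $m_1\ge n_0(m_0^{c_3})=O(n_k^{c_3(1+c_1)})$. If $n\in\big(n_k,\,n_k^{c_3(1+c_1)}\big)$ you cannot start the chain. This is repaired by raising the direct-bound threshold from $n_k$ to $n_k^{c_3(1+c_1)+1}$, which costs a smaller $\theta$ (and forces $c_3$ to be taken small enough, which is allowed); you should say so. Also, your phrase ``pins $D_j$ down as a deterministic set of fewer than $k$ vertices of mark $<\overline w(n_k)$'' is slightly circular as written, because $|\CC_{n_k}(0)|<k$ and the mark-control event live at scale $n_k$, not scale $\Lambda^{(j+1)}$; the clean formulation is to insert the events $\{|D_j|<k\}$ and $\{\mathrm{marks\ of\ }D_j<\overline w(n_k)\}$ (which \emph{are} measurable with respect to $\CG|_{\Lambda^{(j+1)}}$ and are implied by the good events) into the conditioning rather than the good events themselves.

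The second loose end is a genuine gap, in the LLN. You replace the paper's route --- verify the local-limit criterion $\lim_k\limsup_n T_{n,k}=0$ from \cite{hofstad2021giantlocal}, which reduces directly to \eqref{eq:exp-decay-71} --- by a hands-on second-moment/Chebyshev argument, and your variance control rests on the assertion that $\mathrm{Cov}(\ind{|\CC_n(u)|\le k},\ind{|\CC_n(v)|\le k})$ decays with $\|u-v\|$ because small clusters escape a ball only via a long edge. This is not enough as stated. The events $\{|\CC_n(u)|\le k\}$ are decreasing, hence by FKG non-negatively correlated, so you really do need a quantitative decoupling; but the natural localization $\{|\CC_n(u)|\le k,\ \CC_n(u)\subseteq B_u\}$ is \emph{not} a local function of $\CV\cap B_u$ (whether the cluster escapes $B_u$ depends on vertices and marks everywhere), so the covariance does not factor just because $B_u\cap B_v=\emptyset$. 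To make this work you would need a further truncation --- e.g.\ to the component in the graph restricted to $B_u$ using only edges shorter than $\mathrm{diam}(B_u)/(4k)$ --- together with a mark-control step so that the ``escape'' error has a uniformly bounded probability, and a separate argument that the truncated indicator's covariance vanishes. None of this is impossible, but it is several claims' worth of work, and the paper's black-box route exists precisely to avoid it. As written, the LLN part of your proposal has a real hole.
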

Observe that \eqref{eq:exp-decay-71} does not follow from a naive application of \eqref{eq:prop-2nd}, since the polynomial prefactor on the right-hand side of \eqref{eq:prop-2nd} vanished in \eqref{eq:exp-decay-71}, and $n=\infty$ is also allowed in \eqref{eq:exp-decay-71}. The inequalities \eqref{eq:prop-2nd}--\eqref{eq:prop-marksgiant} are satisfied when $\tau\in(2, 2+\sigma)$ and $\tau>\sigma+1$ by Propositions \ref{prop:2nd-upper-bound-hh} and \ref{prop:min-giant-weight} and Corollary \ref{cor:lower-c1} (we leave it to the reader to verify that the results hold also for the Palm-version $\mathbb{P}^\sss{x}$ of $\Prob$). Thus, Theorem~\ref{thm:subexponential-decay}(ii)--(iii) follow immediately after we prove Proposition~\ref{prop:condition-subexponential}. We prove an intermediate claim that we need for Proposition~\ref{prop:condition-subexponential}.  
We work under the Palm measure, i.e., $\CV$ contains a vertex $u$ at location $x$ with unknown mark. We write $\CC_{\CQ}^\sss{(1)}$ for the largest component in the graph induced on vertices in a set $\CQ\subseteq\R^d$.
\begin{claim}[Leaving the giant]\label{claim:leaving-giant}
 Consider a KSRG satisfying Assumption~\ref{assumption:main} with parameters $\alpha>1, \tau>2$, $\sigma\ge 0$, and $d\in\N$.  Assume that \eqref{eq:prop-2nd}--\eqref{eq:prop-marksgiant} hold. Then there exists $\delta>0$ such that for 
$n$ sufficiently large, and all $N\in[n,\infty]$, it holds for $u:=(x, w_u)$ and any box $\CQ_n$ of volume $n$ inside $\Lambda_N$ that \begin{equation}
     \Prob^\sss{x}\big(u\in\CC_{\CQ_n}^\sss{(1)}, u\notin\CC_{N}^\sss{(1)}\big) \le n^{-\delta}.\label{eq:claim-remain-giant-2}
 \end{equation}
 \end{claim}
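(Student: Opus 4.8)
\textbf{Proof plan for Claim~\ref{claim:leaving-giant}.}
The idea is to use a multi-scale (sequential boxing) argument to bridge the gap from the box $\CQ_n$ of volume $n$ up to $\Lambda_N$, where $N$ may be infinite. Without loss of generality we may assume by translation invariance that $\CQ_n = \Lambda_n$ centred at the origin, where the Palm vertex $u=(x,w_u)$ sits (more precisely, we shift so that $\Lambda_n$ is the relevant sub-box; the vertex $x$ may be an arbitrary interior point, and we will just use that $x\in\Lambda_n$). First I would define an increasing sequence of volumes $n = n_0 < n_1 < \dots < n_J$ with $n_{j+1} = n_j^{1+c_1}$ (or any fixed polynomial blow-up that keeps us inside the validity range $n\in[n_0(k),\infty)$ of~\eqref{eq:prop-2nd}--\eqref{eq:prop-marksgiant}), stopping at the first index $J$ with $n_J \ge N$ (if $N=\infty$ the chain is infinite but the tail sum still converges, as we check below). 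For each $j$ pick a box $\CQ_{n_j}$ of volume $n_j$ with $\CQ_{n_j}\subseteq \CQ_{n_{j+1}}\subseteq\Lambda_N$ and $\CQ_{n_0}=\Lambda_n$. The event $\{u\in\CC_{\CQ_n}^\sss{(1)}, u\notin\CC_N^\sss{(1)}\}$ forces that along the chain there is some index $j$ at which $u$ is still in the largest component of $\CQ_{n_j}$ but no longer in the largest component of $\CQ_{n_{j+1}}$; by a union bound,
\[
\Prob^\sss{x}\big(u\in\CC_{\CQ_n}^\sss{(1)}, u\notin\CC_N^\sss{(1)}\big) \le \sum_{j\ge 0} \Prob^\sss{x}\big(u\in\CC_{\CQ_{n_j}}^\sss{(1)}, u\notin \CC_{\CQ_{n_{j+1}}}^\sss{(1)}\big).
\]

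The heart of the argument is to bound a single term $\Prob^\sss{x}(u\in\CC_{\CQ_{n_j}}^\sss{(1)}, u\notin \CC_{\CQ_{n_{j+1}}}^\sss{(1)})$ by something like $n_j^{-(1+\delta')}$. On this event, the component of $u$ inside $\CQ_{n_{j+1}}$, call it $\CC_{\CQ_{n_{j+1}}}(u)$, contains $\CC_{\CQ_{n_j}}^\sss{(1)}$, so it has size at least $|\CC_{\CQ_{n_j}}^\sss{(1)}|$, which by~\eqref{eq:prop-outgiant} (applied at scale $n_j$) is at least $n_j^{c_3}$ except with probability $n_j^{-1-c_3}$. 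On the complementary (good) event this component has polynomial size but is \emph{not} the largest component of $\CQ_{n_{j+1}}$. I would split according to whether this ``medium'' component is bigger or smaller than some threshold $k_j := (\log n_{j+1})^{1/\zeta}$ times a large constant: if it is smaller than $k_j$, then $\CC_{\CQ_{n_{j+1}}}(u)$ is a non-largest component of $\CQ_{n_{j+1}}$ of size in $[n_j^{c_3}, k_j\cdot\mathrm{const}]$, which is impossible once $n_j$ is large since $n_j^{c_3}$ is polynomial while $k_j$ is polylogarithmic in $n_{j+1}=n_j^{1+c_1}$; if it is at least $k_j$, then $\CC_{\CQ_{n_{j+1}}}(u)$ witnesses $|\CC_{\CQ_{n_{j+1}}}^\sss{(2)}| > k_j$, whose probability is at most $n_{j+1}^{c_2}\exp(-c_3 k_j^\zeta) = n_{j+1}^{c_2} n_{j+1}^{-c_3\cdot\mathrm{const}}$ by~\eqref{eq:prop-2nd}, and by choosing the constant in $k_j$ large enough this is at most $n_{j+1}^{-2}\le n_j^{-2}$, say. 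Combining, each term is at most $n_j^{-1-c_3} + n_j^{-2} \le 2 n_j^{-(1+\eta')}$ for some $\eta'>0$.

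Since $n_{j+1}=n_j^{1+c_1}$ with $c_1\ge 0$, the sequence $n_j$ grows at least geometrically (if $c_1>0$, doubly exponentially; if $c_1=0$, I would instead use $n_{j+1}=2n_j$ or a genuinely polynomial blow-up with exponent $>1$ — note the statement allows choosing the boxing scheme), so $\sum_{j\ge 0} n_j^{-(1+\eta')}$ converges and is dominated by its first term, giving $\sum_j 2 n_j^{-(1+\eta')} \le C n_0^{-(1+\eta')} = C n^{-(1+\eta')}\le n^{-\delta}$ for $n$ large, with $\delta$ any fixed positive number below $1+\eta'$ (in particular $\delta>0$ works). When $N<\infty$ the sum is finite and the same bound holds a fortiori; when $N=\infty$ the tail converges and the bound is unchanged. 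This proves~\eqref{eq:claim-remain-giant-2}.

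The main obstacle is the single-scale estimate: making sure that the ``medium component'' dichotomy is airtight, i.e. that the polynomial lower bound $n_j^{c_3}$ on $|\CC_{\CQ_{n_j}}^\sss{(1)}|$ from~\eqref{eq:prop-outgiant} genuinely beats the polylogarithmic second-largest threshold $k_j$ coming from~\eqref{eq:prop-2nd}, so that the only surviving case is the one controlled by the second-largest bound. One must also be slightly careful about measurability/monotonicity when passing from $\CQ_{n_j}$ to $\CQ_{n_{j+1}}$ — the component of $u$ can only grow when we enlarge the box, which is what legitimises ``$\CC_{\CQ_{n_{j+1}}}(u)\supseteq\CC_{\CQ_{n_j}}^\sss{(1)}$'' on the event that $u$ was in the largest component at scale $n_j$ — and about the fact that all three input inequalities~\eqref{eq:prop-2nd}--\eqref{eq:prop-marksgiant} are stated for the Palm measure $\Prob^\sss{x}$, which is exactly the measure we are working under, so no transfer is needed. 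A final minor point: one should choose the constant $M$ and the prefactors so that all invocations of~\eqref{eq:prop-2nd}--\eqref{eq:prop-outgiant} are at scales $n_j\ge n_0(k)$ and $n_j$ ``sufficiently large'', which holds for $n$ large since $n_0=n$.
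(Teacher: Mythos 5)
Your proof is correct and follows essentially the same approach as the paper's: build a nested chain of boxes from $\CQ_n$ up to $\Lambda_N$, and at each step combine the polynomial lower bound on $|\CC^{\sss{(1)}}|$ from \eqref{eq:prop-outgiant} with the polylogarithmic upper bound on $|\CC^{\sss{(2)}}|$ from \eqref{eq:prop-2nd} to deduce that the largest component cannot be overtaken between consecutive scales, then union-bound over scales. The only cosmetic difference is the choice of scale sequence: the paper steps through all integer volumes $\tilde n = n, n+1, \dots, N$ with thresholds $K_{\tilde n}=\tilde n^{c_3}$ and $k_{\tilde n}=(A\log\tilde n)^{1/\zeta}$, whereas you use polynomially growing volumes $n_j$; both choices yield a convergent error sum of order $n^{-\delta}$.
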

 \begin{proof}
 We will first prove the following bound that holds generally for a sequence of increasing (nested) graphs $G_n\subseteq G_{n+1}\subseteq \dots$, whose largest and second-largest components we denote by $C_n^\sss{(1)}$ and $C_n^\sss{(2)}$, respectively. 
Let $(k_n)_{n\ge 0}$ and $(K_n)_{n\ge 0}$ be two non-negative sequences such that $k_{n+1}<K_n$ for all $n\ge 0$. Then, for all $0<n\le N\le \infty$,
 \begin{equation}
     \Prob\big(u\in C_n^\sss{(1)}, u\notin C_{N}^\sss{(1)}\big)\le \sum_{\tilde n=n}^N \Big(\Prob\big(|C_{\tilde n}^\sss{(1)}|<K_{\tilde n}\big) + \Prob\big(|C_{\tilde n}^\sss{(2)}|>k_{\tilde n}\big)\Big).\label{eq:claim-remain-giant-1}
 \end{equation} 
 We verify the bound using an inductive argument.
 We define for $\tilde n\ge n$ the events
 \[
 \CA(\tilde n):=\big\{|C_{\tilde{n}}^\sss{(1)}|\ge K_{\tilde n}\big\}\cap \big\{|C_{\tilde{n}+1}^\sss{(2)}|\le k_{\tilde n+1}\big\}.
  \]
 Since by assumption $k_{\tilde n+1}< K_{\tilde n}$, the event $\CA(\tilde n)$ ensures that $|C_{\tilde n}^\sss{(1)}|$ is already larger than $|C_{\tilde{n}+1}^\sss{(2)}|$. Thus, $\CA(\tilde n)$
 implies that $C_{\tilde n}^\sss{(1)}\subseteq C_{\tilde n+1}^\sss{(1)}$. 
  Iteratively applying this argument yields that $\cap_{\tilde n \in [n, N-1]} \CA(\tilde n)$ implies that $\{C_{ n}^\sss{(1)}\subseteq C_{N}^\sss{(1)}\}$. We combine this with the observation that $u\in \CQ_n$ implies that $\{u\in C_n^\sss{(1)}, u\notin C_{N}^\sss{(1)}\}\subseteq\{C_{n}^\sss{(1)}\nsubseteq C_N^\sss{(1)}\}$. This yields
 \begin{align}
 \Prob\big(u\in C_n^\sss{(1)}, u\notin C_{N}^\sss{(1)}\big)\le
 \Prob\big(C_{n}^\sss{(1)}\nsubseteq C_N^\sss{(1)}\big) &\le
 \Prob\bigg(\{C_{n}^\sss{(1)}\nsubseteq C_N^\sss{(1)}\}\cap\bigcap_{\tilde n=n}^{N-1}\CA(\tilde n)\bigg) + \sum_{\tilde n=n}^{N-1} \Prob\big(\neg\CA(\tilde n)\big) \nonumber\\&\le 0 + \sum_{\tilde n=n}^N \Big(\Prob\big(|C_{\tilde n}^\sss{(1)}|<K_{\tilde n}\big) + \Prob\big(|C_{\tilde n}^\sss{(2)}|>k_{\tilde n}\big)\Big),\label{eq:leaving-giant-pr}
 \end{align}
 showing \eqref{eq:claim-remain-giant-1}. We move on to \eqref{eq:claim-remain-giant-2} for which we have to define the increasing sequence of graphs. Consider any sequence of boxes $(\CQ_{\tilde{n}})_{\tilde n\ge n}$ such that $x\in \CQ_n\subseteq\CQ_{n+1}\subseteq\dots\subseteq\CQ_N:=\Lambda_N$ and $\mathrm{Vol}(\CQ_{\tilde {n}})=\tilde n$, and let $G_{\tilde n}$ denote the induced subgraph of $\CG$ on $\CQ_{\tilde n}$ for $\tilde n\in [n, N]$. 
 We use the translation invariance of KSRGs, and the assumed lower bound on $|\mathcal{C}_{\tilde n}^\sss{(1)}|$ in \eqref{eq:prop-outgiant}, and we set $K_{\tilde n}:=\tilde n^{c_3}$. We also use the assumed upper bound on $|\CC_{\tilde n}^\sss{(2)}|$ in \eqref{eq:prop-2nd}. Then  if we set $
     k_{\tilde n}:=(A\log \tilde n)^{1/\zeta}$ for a sufficiently large $A$, then for all sufficiently large $n$ and $\tilde n\ge n$,
 \begin{equation}
     \Prob^\sss{x}\big(|\CC_{\CQ_{\tilde n}}^\sss{(1)}|<K_{\tilde n}\big)\le \tilde{n}^{-c-1}, \qquad \Prob^\sss{x}\big(|\CC_{\CQ_{\tilde n}}^\sss{(2)}|>k_{\tilde n}\big) \le \tilde n^{-c-1}.\nonumber
 \end{equation}
 Clearly $k_{\tilde n+1}<K_{\tilde n}$ for all $\tilde n\ge n$, so that substituting the bounds into \eqref{eq:leaving-giant-pr} and summing over $\tilde n\ge n$ yields the assertion \eqref{eq:claim-remain-giant-2} for any $\delta<c$ and $n$ sufficiently large.
 \end{proof}
We continue to prove Proposition~\ref{prop:condition-subexponential}, starting with some notation.
For some $\varepsilon\in (0, \min(1, \alpha-1, \tau-2))$, and using $c_2,c_3, \eta, \zeta, M$, and $\overline{w}(n)$ from  Proposition~\ref{prop:condition-subexponential}, let
\begin{equation}
\begin{aligned}
N_k&:= \exp(k^{\zeta}c_3/(2c_2)), & 
 n_k&:=N_k^\varepsilon, \\
 \overline{w}_{N_k} &:= \overline{w}(N_k)=M(k^\zeta)^\eta (\tfrac{c_3}{2c_2})^\eta, &
 t_k&:=n_k^{1/d}/(2k).
 \label{eq:nk-Nk}
 \end{aligned}
\end{equation}
Note that $N_k, n_k=\exp(\Theta(k^\zeta))$.
For $n\le N_k$, the statement \eqref{eq:exp-decay-71} follows directly from \eqref{eq:prop-2nd}, since when $n=N_k$, the right-hand side  of \eqref{eq:prop-2nd} becomes $\exp(- k^\zeta c_3/2)$, so we may choose any $A$ such that $1/A\le c_3/2$ in \eqref{eq:exp-decay-71}. 
In the remainder of the section we focus on $n>N_k$.
We write $\CC_{\Lambda(x, n)}(u)$ for the component of vertex $u:=(x, w_u)\in \CV$ in the graph $\CG_\infty$ restricted to $\Lambda(x, n)$.  Define for $x\in\R^d$ the two events
\begin{align}
 \CA_\mathrm{low\textnormal{-}edge}(x, n_k, N_k, \overline{w}_{N_k})  & :=\!\left\{
 \begin{aligned}
  & |\CC_{\Lambda(x, n_k)}(u)|\le k, \exists v_1, v_2\in\CV_{\Lambda(x, N_k)}[1, \overline{w}_{N_k}) \mbox{ s.t.} \\
  & v_1\!\in\!\CC_{\Lambda(x, n_k)}(u), \|x_{v_1}-x_{v_2}\|\ge t_k,\mbox{ and }v_1\sim v_2\end{aligned}
 \right\}, \label{eq:event-low-edge}                              \\
 \!\CA_\mathrm{long\textnormal{-}edge}(x, n_k, N_k, \overline{w}_{N_k}) \!& :=\!
 \big\{\exists v_1\!\in\!\CV_{\Lambda(x, n_k)}[1,\overline{w}_{N_k}), \exists v_2\!\in\!\CV\!\setminus\!\CV_{\Lambda(x, N_k)}, v_1\!\sim\! v_2\big\}.\label{eq:event-long-edge}
\end{align}
The next lemma relates the probability of the event $\{|\CC_n(u)|> k, u\notin\CC_n^\sss{(1)}\}$ to the events $\CA_\mathrm{low\textnormal{-}edge}$ and $\CA_\mathrm{long\textnormal{-}edge}$ using the assumed bounds in Proposition~\ref{prop:condition-subexponential}.
\begin{lemma}[Extending the box-sizes]\label{prop:refinedevents}
 Consider a KSRG satisfying Assumption~\ref{assumption:main} with parameters $\alpha>1, \tau>2$, $\sigma\ge 0$, and $d\in\N$.  Assume that \eqref{eq:prop-2nd}--\eqref{eq:prop-marksgiant} hold. Consider any $x\in \Lambda_n$ with $\|x-\partial\Lambda_n\|\ge N_k^{1/d}/2$, and $u=(x, w_u)$.
  Then there exists $A'>0$ such that for all $n$ with $n\in[N_k,\infty]$, 
  \begin{align}
  \Prob^\sss{x}\big(|\CC_n(u)| > k  , u\notin\CC_n^\sss{(1)}\big)  &\le
  \exp\big(-(1/A')k^{\zeta}\big)
  +\Prob^\sss{0}\big(\CA_\mathrm{low\textnormal{-}edge}(0, n_k, N_k, \overline{w}_{N_k})\big) \nonumber\\
  &\hspace{15pt}+
  \Prob^\sss{0}\big(\CA_\mathrm{long\textnormal{-}edge}(0, n_k, N_k, \overline{w}_{N_k})\big).\label{eq:prop-refined}
  \end{align}
\end{lemma}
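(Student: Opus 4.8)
The idea is to decompose the event $\{|\CC_n(u)|>k,\ u\notin\CC_n^\sss{(1)}\}$ according to what happens in the intermediate box $\Lambda(x,N_k)$ and the smaller box $\Lambda(x,n_k)$, using the hypotheses \eqref{eq:prop-2nd}--\eqref{eq:prop-marksgiant} together with Claim~\ref{claim:leaving-giant} to discard the ``bad'' sub-events cheaply. By translation invariance of KSRGs we may as well take $x=0$ and $u=(0,w_0)$. First I would write
\[
\{|\CC_n(u)|>k,\ u\notin\CC_n^\sss{(1)}\}\subseteq \CE_1\cup\CE_2\cup\CE_3,
\]
where $\CE_1:=\{u\in\CC_{\Lambda(0,n_k)}^\sss{(1)},\ u\notin\CC_n^\sss{(1)}\}$ accounts for the case that $u$ already lies in the largest component of the small box but escapes the giant of the big box; $\CE_2:=\{|\CC_{\Lambda(0,n_k)}^\sss{(2)}|>k\}$, the case that $u$ sits in a non-largest component of the small box that is still larger than $k$; and $\CE_3$ is the remaining case, where $|\CC_{\Lambda(0,n_k)}(u)|\le k$ but $|\CC_n(u)|>k$, so the component of $u$ grows beyond size $k$ only by using at least one edge leaving $\Lambda(0,n_k)$. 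For $\CE_1$ apply Claim~\ref{claim:leaving-giant} with the box $\CQ_{n_k}=\Lambda(0,n_k)$ inside $\Lambda_n$ (valid since $\|x-\partial\Lambda_n\|\ge N_k^{1/d}/2\ge n_k^{1/d}/2$ once $k$ is large), giving a bound $n_k^{-\delta}=\exp(-\delta\varepsilon k^\zeta c_3/(2c_2))\le\exp(-(1/A')k^\zeta)$; for $\CE_2$ apply the hypothesis \eqref{eq:prop-2nd} at volume $n_k$, which gives $n_k^{c_2}\exp(-c_3k^\zeta)$, again absorbed into $\exp(-(1/A')k^\zeta)$ after adjusting $A'$ because $n_k^{c_2}=\exp(c_2\varepsilon k^\zeta c_3/(2c_2))=\exp(\varepsilon c_3 k^\zeta/2)$ and $\varepsilon<1$.

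The heart of the argument is $\CE_3$. On $\CE_3$ the component of $u$ in $\Lambda(0,n_k)$ has size at most $k$, yet acquires an extra vertex outside $\Lambda(0,n_k)$; trace such an edge $v_1\sim v_2$ with $v_1\in\CC_{\Lambda(0,n_k)}(u)$ and $v_2\notin\Lambda(0,n_k)$. Now split further on the mark of $v_1$ and on whether $v_2$ lies inside or outside $\Lambda(0,N_k)$. If $v_1$ has mark $\ge\overline w_{N_k}$, then by \eqref{eq:prop-marksgiant} applied at volume $n$ (or rather to the box $\Lambda(0,N_k)\subseteq\Lambda_n$, using translation invariance and monotonicity of the giant under taking larger boxes together with Claim~\ref{claim:leaving-giant} to pass from $\CC^\sss{(1)}_{\Lambda(0,N_k)}$ to $\CC^\sss{(1)}_n$) $v_1$ lies in the giant of $\Lambda_n$ except on an event of probability $\le N_k^{-c_3}=\exp(-c_3 k^\zeta c_3/(2c_2))$, and since $v_1\in\CC_n(u)$ this forces $u\in\CC_n^\sss{(1)}$, contradicting $\CE_3$; this contributes another $\exp(-(1/A')k^\zeta)$ term. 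So we may assume $v_1$ has mark $<\overline w_{N_k}$. Then if $v_2\notin\Lambda(0,N_k)$ the event $\CA_\mathrm{long\textnormal{-}edge}(0,n_k,N_k,\overline w_{N_k})$ of \eqref{eq:event-long-edge} occurs; and if $v_2\in\Lambda(0,N_k)\setminus\Lambda(0,n_k)$, then since $|\CC_{\Lambda(0,n_k)}(u)|\le k$, the vertex $v_1$ is within $\ell^\infty$-distance $n_k^{1/d}/2$ of $u$ but $v_2$ is at distance $\ge n_k^{1/d}/2$ from the centre, so $\|x_{v_1}-x_{v_2}\|$ is comparable to $n_k^{1/d}$; a short geometric argument (the component of $u$ inside $\Lambda(0,n_k)$ has at most $k$ vertices so its diameter relative to the side length forces $\|x_{v_1}-x_{v_2}\|\ge n_k^{1/d}/(2k)=t_k$, possibly needing a factor adjustment) shows $\CA_\mathrm{low\textnormal{-}edge}(0,n_k,N_k,\overline w_{N_k})$ of \eqref{eq:event-low-edge} occurs with $v_1,v_2$ as chosen. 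Collecting the five contributions and renaming constants gives \eqref{eq:prop-refined}.

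The main obstacle I anticipate is the bookkeeping in the $\CE_3$/$v_2\in\Lambda(0,N_k)$ sub-case: one has to verify carefully that the edge $\{v_1,v_2\}$ really witnesses $\CA_\mathrm{low\textnormal{-}edge}$, i.e. that both endpoints have mark below $\overline w_{N_k}$ (for $v_1$ this is the current assumption; for $v_2$ one either includes it by a further mark-split, reducing to the $v_1$-high case by symmetry, or notes the definition \eqref{eq:event-low-edge} only constrains $v_1,v_2\in\CV_{\Lambda(x,N_k)}[1,\overline w_{N_k})$ so a parallel argument applies to $v_2$), and that the Euclidean separation of $v_1$ and $v_2$ is at least $t_k=n_k^{1/d}/(2k)$ — this is where the constraint $|\CC_{\Lambda(0,n_k)}(u)|\le k$ is used, via the observation that a connected set of $\le k$ unit-spaced vertices cannot have two points farther than roughly $k$ apart, so if $v_1$ is connected to $u$ inside the small box and $v_2$ is near the boundary of that box, their distance must be at least the side length divided by a factor $2k$. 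A secondary, purely arithmetic point is checking that every discarded event has probability bounded by $\exp(-(1/A')k^\zeta)$ for a single constant $A'$: this just amounts to comparing the exponents $\delta\varepsilon c_3/(2c_2)$, $c_3(1-\varepsilon)/2$ (from $n_k^{c_2}\exp(-c_3k^\zeta)$), and $c_3^2/(2c_2)$ and taking $1/A'$ smaller than the minimum; no subtlety there once the events are correctly identified.
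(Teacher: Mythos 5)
Your high-level decomposition matches the paper's (small-box second-largest / leave-giant / mark-giant / low-edge vs.\ long-edge), and the arithmetic bookkeeping at the end is sound. But there is a genuine gap in the key geometric step of the $\CE_3$ case, and you yourself flag it as "the main obstacle": the assertion that the specific boundary-crossing edge $\{v_1,v_2\}$ (with $v_1\in\CC_{\Lambda(0,n_k)}(u)$, $v_2\notin\Lambda(0,n_k)$) has length $\ge t_k$ is simply false. Nothing prevents $v_1$ from sitting right at the boundary of $\Lambda(0,n_k)$ and $v_2$ an $\varepsilon$ outside, so $\|x_{v_1}-x_{v_2}\|$ can be arbitrarily small. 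The ``unit-spaced'' heuristic in your obstacle paragraph does not repair this — vertex spacings in a KSRG are not bounded below, and $\le k$ vertices do not constrain the Euclidean diameter of their convex hull.

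The correct step, which the paper uses, is a proof by contradiction that establishes the \emph{existence} of a long edge somewhere, not the length of the particular crossing edge: if every edge incident to every vertex of $\CC_{\Lambda(x,n_k)}(u)$ had length $<t_k$, then starting from $x$ and taking at most $k-1$ such edges (enough, since the cluster has $\le k$ vertices) one stays within Euclidean distance $(k-1)t_k<n_k^{1/d}/2$ of $x$, hence inside $\Lambda(x,n_k)$, forcing $\CC_n(u)=\CC_{\Lambda(x,n_k)}(u)$ and contradicting $|\CC_n(u)|>k$. This yields an edge $e_{\ge t_k}$ from $\CC_{\Lambda(x,n_k)}(u)$ of length $\ge t_k$, but \emph{both} its endpoints may lie inside $\Lambda(x,n_k)$; that case must also land in $\CA_\mathrm{low\textnormal{-}edge}$ (it does, since \eqref{eq:event-low-edge} allows $v_2$ anywhere in $\Lambda(x,N_k)$). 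Once the long edge is produced this way, the paper's intersection with $\CA_\mathrm{mark\textnormal{-}giant}(x,N_k,\overline w_{N_k})\cap\{u\notin\CC^\sss{(1)}_{\Lambda(x,N_k)}\}$ automatically bounds the marks of \emph{both} endpoints: any high-mark vertex of $\Lambda(x,N_k)$ would be in $\CC^\sss{(1)}_{\Lambda(x,N_k)}$, so nothing in $\CC_{\Lambda(x,N_k)}(u)$ (which contains both endpoints when $v_2\in\Lambda(x,N_k)$) can be high-mark. Your ad-hoc per-endpoint mark splits ($v_1$ first, then $v_2$ "by symmetry") are trying to reinvent this, but without the clean event $\CA_\mathrm{mark\textnormal{-}giant}$ plus the $u\notin\CC^\sss{(1)}_{\Lambda(x,N_k)}$ requirement the bookkeeping becomes circular. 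I'd suggest inserting $\CA_\mathrm{leave\textnormal{-}giant}(x,N_k)$ and $\CA_\mathrm{mark\textnormal{-}giant}(x,N_k,\overline w_{N_k})$ explicitly as in the paper rather than folding both into the $v_1$-mark subcase.
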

\begin{proof}
 Let $\tilde n\in [1, n]$, and denote by $\CC_{\Lambda(x, \tilde n)}^\sss{(1)}$ the largest connected component in the induced subgraph of $\CG_\infty$ inside the box $\Lambda(x, \tilde n)\subseteq\Lambda_n$. For a vertex $u=(x, w_u)\in \CV$ define
  \begin{align}
  \CA_\mathrm{leave\textnormal{-}giant}(x, \tilde n)&:=
  \{u\in\CC_{\Lambda(x,\tilde n)}^\sss{(1)}, u\notin\CC_{n}^\sss{(1)}\}, \label{eq:leave-giant-1}\\  
  \CA_\mathrm{mark\textnormal{-}giant}(x, N_k,\overline{w}_{N_k})&:=\big\{\forall v\in\CV_{\Lambda(x, N_k)}[\overline{w}_{N_k},\infty): v\in\CC_{\Lambda(x, N_k)}^\sss{(1)}\big\}.  \label{eq:event-giant}
 \end{align}
 The first event relates to \eqref{eq:claim-remain-giant-2} in Claim \ref{claim:leaving-giant}, while the second one to \eqref{eq:prop-marksgiant} of Proposition~\ref{prop:condition-subexponential}. 
 The values of  $n_k<N_k\le n$ from \eqref{eq:nk-Nk} and the assumption  $\|x-\partial\Lambda_n\|\ge N_k^{1/d}/2$ ensure that $\Lambda(x, n_k)\!\subseteq\!\Lambda(x, N_k)\!\subseteq \!\Lambda_n$. Then we bound
  \begin{align}
  \big\{|\CC_n(u)|> \,&k  , u\notin\CC_n^\sss{(1)}\big\} \nonumber                                                     \\
                        & \subseteq
  \big\{|\CC_n(u)|> k, u\notin\CC_n^\sss{(1)}, u\notin \CC_{\Lambda(x, n_k)}^\sss{(1)}, |\CC_{\Lambda(x, n_k)}^\sss{(2)}| > k\big\} \cup  \big\{u\in\CC_{\Lambda(x, n_k)}^\sss{(1)}, u\notin\CC_n^\sss{(1)}\big\} \nonumber
  \\
                        & \hspace{15pt}\cup
  \big\{|\CC_n(u)|> k, u\notin\CC_n^\sss{(1)}, u\notin \CC_{\Lambda(x, n_k)}^\sss{(1)}, |\CC_{\Lambda(x, n_k)}^\sss{(2)}| \le k\big\} \nonumber \\
                        &                         \begin{aligned}
  &\subseteq\big\{|\CC_{\Lambda(x, n_k)}^\sss{(2)}| > k\big\} \, \cup \, \CA_\mathrm{leave\textnormal{-}giant}(x,n_k) \\&\hspace{15pt} \cup\,
  \big\{|\CC_n(u)|> k, u\notin\CC_n^\sss{(1)}, u\notin \CC_{\Lambda(x, n_k)}^\sss{(1)}, |\CC_{\Lambda(x, n_k)}(u)| \le k\big\}.
  \end{aligned}\label{eq:upper-sub-an-1}
 \end{align}
 Applying probabilities on both sides we obtain the inequality stated in~\eqref{eq:outline-upper-second-sub-last} for $x\!=\!0$. 
 We introduce a shorthand notation for the third event on the right-hand side of  \eqref{eq:upper-sub-an-1}, i.e.,
 \begin{equation}\nonumber
  \CA_{\mathrm{goal}}:= \big\{|\CC_n(u)|> k, u\notin\CC_n^\sss{(1)}, u\notin \CC_{\Lambda(x, n_k)}^\sss{(1)}, |\CC_{\Lambda(x, n_k)}(u)|\le k\big\}.
 \end{equation}
 Define the auxiliary events
 \begin{equation}
 \begin{aligned}
  \CA_\mathrm{becomes\textnormal{-}large}&:=\{|\CC_{\Lambda(x, n_k)}(u)|\le k, |\CC_n(u)|> k\},\\
  \CA_\mathrm{out\textnormal{-}of\textnormal{-}giant}(n_k,n)&:=\{u\notin\CC_{n}^\sss{(1)}, u\notin\CC_{\Lambda(x, n_k)}^\sss{(1)}\}, \label{eq:events-edge-out-giant}
  \end{aligned}
 \end{equation}
 and observe that $\CA_{\mathrm{goal}}  =
  \CA_\mathrm{becomes\textnormal{-}large}\cap \CA_\mathrm{out\textnormal{-}of\textnormal{-}giant}(n_k,n).$
 In order to bound $\Prob(\CA_{\mathrm{goal}})$, we distinguish whether $u$ enters the giant at the intermediate box  of size $N_k\in(n_k, n)$ or not:
   \begin{align}
  \CA_{\mathrm{goal}}& \subseteq	\{u\!\in\! \CC_{\Lambda(x, N_k)}^\sss{(1)}, u\!\notin\! \CC_{n}^\sss{(1)}  \}
  \!\cup\! \{|\CC_n(u)|\!>\! k, u\!\notin\!\CC_{\Lambda(x, N_k)}^\sss{(1)},
  u\!\notin\!\CC_{\Lambda(x, n_k)}^\sss{(1)}, |\CC_{\Lambda(x, n_k)}(u)|\!\le \! k\}       \nonumber                                                                                \\
                      & =  \CA_\mathrm{leave\textnormal{-}giant}(x, N_k)\cup \big(\CA_\mathrm{becomes\textnormal{-}large}\cap \CA_\mathrm{out\textnormal{-}of\textnormal{-}giant}(n_k,N_k)\big),
  \label{eq:upper-sub-an-3}
  \end{align}
 with $\CA_\mathrm{leave\textnormal{-}giant}(x, \tilde n)$ defined in \eqref{eq:event-giant}.
 We observe that $\CA_\mathrm{becomes\textnormal{-}large}$
 in \eqref{eq:events-edge-out-giant} implies that at least one of the at most $k$ vertices in $\CC_{\Lambda(x, n_k)}(u)$ has an incident edge crossing the boundary of $\Lambda(x, n_k)$, and so there must exist a ``fairly'' long edge either inside the cluster $\CC_{\Lambda(x, n_k)}(u)$ or between a vertex in $\CC_{\Lambda(x, n_k)}(u)$ and a vertex in $\CC_{n}(u)\setminus\CC_{\Lambda(x, n_k)}(u)$. More precisely, recalling $t_k=n_k^{1/d}/(2k)$, define
 
 \begin{equation}
  \CA_{\mathrm{edge}} := \big\{|\CC_{\Lambda(x, n_k)}(u)|\le k, \exists v_1\in\CC_{\Lambda(x, n_k)}(u), v_2\in\CV: v_1\sim v_2, \|v_1-v_2\|\ge t_k \big\}.\label{eq:edge-out-tk}
 \end{equation}
 We argue that $\CA_{\mathrm{becomes\textnormal{-}large}}\subseteq \CA_{\mathrm{edge}}$. Arguing by contradiction, if all edges incident to all vertices in $\CC_{\Lambda(x, n_k)}(u)$ were shorter than $t_k$, the furthest point that could be reached from $x$ with at most $k-1$ edges has Euclidean norm at most $(k-1)t_k < n_k^{1/d}/2$, and thus its location would be inside $\Lambda(x, n_k)$, contradicting the definition of $\CA_\mathrm{becomes\textnormal{-}large}$ in \eqref{eq:events-edge-out-giant}. Returning to \eqref{eq:upper-sub-an-3}, we obtain that
 \begin{equation}
  \begin{aligned} \CA_{\mathrm{goal}}
    & \subseteq \CA_\mathrm{leave\textnormal{-}giant}(x, N_k)\cup \big(\CA_\mathrm{edge}\cap \CA_\mathrm{out\textnormal{-}of\textnormal{-}giant}(n_k,N_k) \big) \\
  &\subseteq  \CA_\mathrm{leave\textnormal{-}giant}(x, N_k)\cup\big(\CA_\mathrm{edge}\cap \{u\notin\CC^\sss{(1)}_{\Lambda(x, N_k)}\}\big).
  \end{aligned}\nonumber
 \end{equation}
 In order to bound the probability of the existence of long edges, we  put restrictions on the marks: we distinguish whether all vertices in $\CV_{\Lambda(x, N_k)}\setminus\CC_{\Lambda(x, N_k)}^\sss{(1)}$ have mark at most $\overline{w}_{N_k}$ or not ---this is the event $\CA_{\mathrm{mark\textnormal{-}giant}}(x, N_k,\overline{w}_{N_k})$ in \eqref{eq:event-giant}.
 We obtain
 \begin{equation}\label{eq:goal-2}
 \begin{aligned}
  \CA_{\mathrm{goal}}
  &\subseteq \CA_\mathrm{leave\textnormal{-}giant}(x, N_k)\cup \big(\neg \CA_{\mathrm{mark\textnormal{-}giant}}(x, N_k,\overline{w}_{N_k})\big) \\
  &\hspace{10pt}\cup \big(\CA_\mathrm{edge}\cap  \{u\notin\CC^\sss{(1)}_{\Lambda(x, N_k)}\}\cap \CA_{\mathrm{mark\textnormal{-}giant}}(x, N_k,\overline{w}_{N_k}) \big).
  \end{aligned}
 \end{equation}
 The intersection with $\{u\notin\CC^\sss{(1)}_{\Lambda(x, N_k)}\} \cap  \CA_{\mathrm{mark\textnormal{-}giant}}(x, N_k,\overline{w}_{N_k})$ in the last event ensures that \emph{all} vertices in the cluster of $u$ with location in $\Lambda(x, N_k)\supseteq\Lambda(x, n_k)$ have mark at most $\overline w_{N_k}$. We make another case distinction, with respect to the locations of the vertices of the  edge $e_{\ge t_k}$ of length at least $t_k$ that exists on the event $\CA_\mathrm{edge}$ in \eqref{eq:edge-out-tk}. Namely, $e_{\ge t_k}$  either has both endpoints in $\Lambda_{N_k}$ or it has one endpoint inside $\Lambda_{n_k}$ and the other one outside $\Lambda_{N_k}$. For the first event, we obtain the event $\CA_{\mathrm{low\textnormal{-}edge}}(x, n_k, N_k, \overline w_{N_k})$, and for the latter $\CA_{\mathrm{long\textnormal{-}edge}}(x, n_k, N_k, \overline w_{N_k})$, respectively (defined in \eqref{eq:event-low-edge}---\eqref{eq:event-long-edge}).
 Hence,
 \begin{equation}
 \begin{aligned}
  \CA_\mathrm{edge}\cap  \{u\notin\CC^\sss{(1)}_{\Lambda(x, N_k)}\} &\cap  \CA_{\mathrm{mark\textnormal{-}giant}}(x, N_k,\overline{w}_{N_k}) \\&\hspace{10pt}\subseteq \CA_{\mathrm{low\textnormal{-}edge}}(x,n_k, N_k, \overline w_{N_k}) \cup \CA_{\mathrm{long\textnormal{-}edge}}(x, n_k, N_k, \overline w_{N_k}).
  \end{aligned}\nonumber
 \end{equation}
 Using this in \eqref{eq:goal-2}, then substituting \eqref{eq:goal-2}  back into \eqref{eq:upper-sub-an-1}, and then taking probabilities yields
 \begin{align}
  \Prob^\sss{x}\big(&|\CC_n(u)| > k, u\notin\CC_n^\sss{(1)}\big) \nonumber\\
  & \le \Prob^\sss{x}\big(|\CC_{\Lambda(x, n_k)}^\sss{(2)}| > k\big)+
  \Prob^\sss{x}\big(\neg\CA_\mathrm{mark\textnormal{-}giant}(x, N_k, \overline{w}_{N_k})\big)
  +\!\!
  \sum_{\tilde n\in\{n_k, N_k\}}\!\!
  \Prob^\sss{x}\big(\CA_\mathrm{leave\textnormal{-}giant}(x, \tilde n)\big)\nonumber
  \\
  & \hspace{15pt}
  +\Prob^\sss{x}\big(\CA_\mathrm{low\textnormal{-}edge}(x, n_k, N_k, \overline{w}_{N_k})\big)
  +
  \Prob^\sss{x}\big(\CA_\mathrm{long\textnormal{-}edge}(x, n_k, N_k, \overline{w}_{N_k})\big).\nonumber
 \end{align}
 The event $\CA_\mathrm{leave\textnormal{-}giant}(x, \tilde n)\!=\!\{u\!\in\!\CC_{\Lambda(x,\tilde n)}^\sss{(1)}, u\!\notin\!\CC_{n}^\sss{(1)}\}$ considers the graph in the box $\Lambda_n$ (which is centered at the origin)  and therefore does not necessarily have the same probability for all $x\in\Lambda_n$. The four other events consider the graph in boxes centered at $x$. Hence, we translate those events (and the Palm measure $\Prob^\sss{x}$) by $-x$ to obtain
 \begin{align}
  \Prob^\sss{x}\big(&|\CC_n(u)| \ge k, u\notin\CC_n^\sss{(1)}\big) \nonumber\\
  & \le \Prob^\sss{0}\big(|\CC_{n_k}^\sss{(2)}| \ge k\big)+
  \Prob^\sss{0}\big(\neg\CA_\mathrm{mark\textnormal{-}giant}(0, N_k, \overline{w}_{N_k})\big)
  +\!\!
  \sum_{\tilde n\in\{n_k, N_k\}}\!\!
  \Prob^\sss{x}\big(u\in\CC_{\Lambda(x,\tilde n)}^\sss{(1)}, u\notin\CC_{n}^\sss{(1)}\big)\nonumber
  \\
  & \hspace{15pt}
  +\Prob^\sss{0}\big(\CA_\mathrm{low\textnormal{-}edge}(0, n_k, N_k, \overline{w}_{N_k})\big)
  +
  \Prob^\sss{0}\big(\CA_\mathrm{long\textnormal{-}edge}(0, n_k, N_k, \overline{w}_{N_k})\big),\nonumber
 \end{align}
   The first two terms can be bounded  by substituting the definitions $n_k, N_k=\exp\big(\Theta(k^\zeta)\big)$ in~\eqref{eq:nk-Nk} into the assumed bounds on
 the probabilities
 in Proposition~\ref{prop:condition-subexponential}. 
 The sum  is bounded from above by $2n_k^{-\delta}=\exp\big(-\Theta(k^\zeta)\big)$ by Claim \ref{claim:leaving-giant}. 
This finishes the proof of \eqref{eq:prop-refined}.
\end{proof}

We move on to bounding $\Prob^\sss{0}(\CA_\mathrm{low\textnormal{-}edge})$ on the right-hand side of \eqref{eq:prop-refined} in Lemma~\ref{prop:refinedevents}, with $\CA_\mathrm{low\textnormal{-}edge}$ from \eqref{eq:event-low-edge}. To do so, we need an auxiliary claim that controls the probability that for every point in $\CV_{N_k}[1, \overline{w}_{N_k})$ there are not ``too many'' vertices at distance at least $t_k$, with $t_k=n_k^{1/d}/(2k)$.
We define first for $i\ge 1$,
and $u=(x_u, w_u)\in\CV_{n_k}[0, \overline{w})$ the annuli
\begin{equation}
 \CR_i(x_u) := \big(\Lambda\big(x_u, (2^{i}t_k)^d\big) \setminus \Lambda\big(x_u, (2^{i-1}t_k)^d\big)\big)\times[1,\infty).
 \label{eq:upper-sub-short-annuli}
\end{equation}
With the measure $\mu_\tau$ from \eqref{eq:poisson-intensity}, we then define the \emph{bad} events
\begin{align}
 \CA_\mathrm{dense}(n_k) & := \big\{\exists i\ge 1, u\in \CV_{n_k}[1,\overline{w}_{N_k}): |\CV_{N_k}\cap \CR_i(x_u)| > 2 \cdot\mu_\tau\big(\CR_i(x_u)\big)\big\}.
 \label{eq:upper-sub-short-not-dense}
\end{align}
In the following auxiliary claim we give an upper bound on $\Prob\big(\CA_\mathrm{dense}\big)$. Its proof is standard, based on Palm theory and Chernoff bounds, see page~\pageref{proof:claim-dense} of Appendix~\ref{app:integral-proofs}.
\begin{claim}\label{claim:dense}
Consider a KSRG with a homogeneous Poisson point process as vertex set.
 For all $c, \delta> 0$ there exists $n_0$ such that $\Prob^\sss{0}\big(\CA_\mathrm{dense}(n_k)\big) \le n_k^{-c}$ for all $n_k\ge (n_0\vee k^{d+\delta})$.
\end{claim}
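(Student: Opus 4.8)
The plan is to bound $\Prob^\sss{0}(\CA_\mathrm{dense}(n_k))$ by a union bound over annuli and vertices, using Palm calculus to handle the conditioning on $u\in\CV_{n_k}[1,\overline{w}_{N_k})$, followed by Poisson concentration. First I would recall Mecke's equation (or the Palm theory for Poisson point processes): for a nonnegative function $g$ of a point and the configuration,
\begin{equation}\nonumber
\E\Big[\sum_{u\in\CV_{n_k}[1,\overline{w}_{N_k})} g(u,\CV)\Big]=\int_{\Lambda_{n_k}\times[1,\overline{w}_{N_k})}\E^\sss{0}\big[g\big((x,w),\CV\cup\{(x,w)\}\big)\big]\,\mu_\tau(\rd x\times\rd w),
\end{equation}
which lets us replace the probability of the existence of a bad pair $(i,u)$ by an integral over the location/mark of $u$ of a single-vertex bad event. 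Since the intensity measure has finite total mass $\mu_\tau(\Lambda_{n_k}\times[1,\overline{w}_{N_k}))\le n_k$, it suffices to show that for each fixed point $(x_u,w_u)$ and each $i\ge 1$,
\begin{equation}\nonumber
\Prob\big(|\CV_{N_k}\cap\CR_i(x_u)|>2\mu_\tau(\CR_i(x_u))\big)
\end{equation}
is summable in $i$ and small enough after multiplying by $n_k$.

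The second step is the concentration estimate itself. The random variable $|\CV_{N_k}\cap\CR_i(x_u)|$ is Poisson-distributed with mean $\lambda_i:=\mu_\tau(\CV_{N_k}\cap\CR_i(x_u))=\mathrm{Vol}\big(\Lambda_{N_k}\cap(\CR_i(x_u)\text{-spatial part})\big)$, which is at most $\mathrm{Vol}(\CR_i(x_u))=(2^{id}-2^{(i-1)d})t_k^d=(1-2^{-d})2^{id}t_k^d$. By the standard upper-tail Poisson bound (Lemma~\ref{lemma:poisson-1}, as used around \eqref{eq:expandable-proof-3}), $\Prob(\Poi(\lambda)\ge 2\lambda)\le\exp(-(2\log 2-1)\lambda)\le\exp(-\lambda/3)$. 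The key point is that $\lambda_i$ is bounded below: the spatial part of $\CR_i(x_u)$ is an annulus of inner side-length $2^{i-1}t_k$ and outer side-length $2^it_k$, and since $x_u\in\Lambda_{n_k}$ while $\Lambda_{N_k}\supseteq\Lambda_{n_k}$ with $N_k^{1/d}\gg n_k^{1/d}\ge 2^i t_k$ for the relevant range of $i$ (one caps $i$ at the largest value with $2^it_k\le N_k^{1/d}$, beyond which $\CR_i(x_u)$ does not contribute new points or contributes a shell whose intersection with $\Lambda_{N_k}$ is still comparable to its volume up to a dimensional constant), we get $\lambda_i\ge c_d 2^{id}t_k^d\ge c_d 2^{id}$ using $t_k=n_k^{1/d}/(2k)\ge 1$ for $n_k\ge(2k)^d$, i.e.\ for $n_k\ge k^{d+\delta}$ once $n_0$ is large. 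Hence each summand is at most $\exp(-c_d 2^{id}/3)$, and $\sum_{i\ge1}\exp(-c_d2^{id}/3)$ converges to a constant; actually one gets the stronger bound that the whole sum is dominated by its first term $\exp(-c_d t_k^d/3)$, which is superpolynomially small in $n_k$ since $t_k^d=n_k/(2k)^d\ge n_k^{\delta/(d+\delta)}$.

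Putting it together: $\Prob^\sss{0}(\CA_\mathrm{dense}(n_k))\le n_k\cdot\sum_{i\ge1}\exp(-c_d 2^{id}t_k^d/3)\le C n_k\exp(-c_d t_k^d/3)$, and since $t_k^d=n_k/(2k)^d\ge n_k^{\delta/(d+\delta)}$ whenever $n_k\ge k^{d+\delta}$, the right-hand side is at most $C n_k\exp(-c_d n_k^{\delta/(d+\delta)}/3)\le n_k^{-c}$ for all $n_k\ge n_0$ with $n_0=n_0(c,\delta,d)$ large enough. The extra factor $n_k^{-c}$ (versus an arbitrary polynomial) is absorbed because the exponential beats any polynomial. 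The main obstacle — really the only subtlety — is bookkeeping the geometry: making sure that for the relevant annuli $\CR_i(x_u)$ the intersection with $\Lambda_{N_k}$ has volume comparable (up to a dimension-dependent constant, not a vanishing one) to $\mathrm{Vol}(\CR_i(x_u))$, and correctly truncating the range of $i$ at $i_{\max}$ with $2^{i_{\max}}t_k\asymp N_k^{1/d}$ so that the union bound has finitely many (in fact $O(\log N_k)$) terms each of which is superpolynomially small; this is why we need both $t_k\ge1$ (giving the lower bound on $\lambda_i$) and $N_k=\exp(\Theta(k^\zeta))$ (ensuring the nesting $\Lambda_{n_k}\subseteq\Lambda_{N_k}$ is ample). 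Everything else is the routine Mecke-plus-Chernoff computation, so I would only sketch it and refer to Lemma~\ref{lemma:poisson-1} for the tail bound, exactly as the paper promises in the sentence preceding the claim.
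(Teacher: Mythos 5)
Your proof is correct and follows the paper's route: Mecke's formula to pass from the sum over $u\in\CV_{n_k}[1,\overline w_{N_k})$ to an integral contributing a factor at most $n_k+1$, a union bound over $i$, and the Poisson upper-tail bound with rate $\mu_\tau(\CR_i(x_u))\ge 2^{d(i-1)}t_k^d$ combined with $t_k^d=n_k/(2k)^d\ge n_k^{\delta/(d+\delta)}/2^d$. The one point you single out as the ``only subtlety''---the overlap between $\CR_i(x_u)$ and $\Lambda_{N_k}$ and the need to cap $i$---is in fact a non-issue: $|\CV_{N_k}\cap\CR_i(x_u)|$ is stochastically dominated by $\Poi(\mu_\tau(\CR_i(x_u)))$ (the untruncated annulus measure) and the threshold in $\CA_\mathrm{dense}$ is precisely $2\mu_\tau(\CR_i(x_u))$, so $\Prob(|\CV_{N_k}\cap\CR_i(x_u)|>2\mu_\tau(\CR_i(x_u)))\le\exp(-\mu_\tau(\CR_i(x_u))/3)$ holds uniformly for all $i\ge1$ with no truncation or geometric comparison needed (for $i$ so large that the annulus misses $\Lambda_{N_k}$ the left-hand side is simply $0$), which is implicitly what the paper does when it treats the count as $\Poi(\mu_\tau(\CR_i(0)))$-distributed.
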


We can now analyze $\Prob^\sss{0}(\CA_{\mathrm{low\textnormal{-}edge}})$  in Lemma~\ref{prop:refinedevents}. The next  claim also applies for KSRGs on $\Z^d$. Recall $n_k, N_k, \overline{w}_{N_k}$ and $t_k$ from \eqref{eq:nk-Nk}.
\begin{claim}[No \emph{low}-mark edge from a small component]\label{claim:edge-short}
 Consider a KSRG satisfying Assumption~\ref{assumption:main} with parameters $\alpha>1, \tau>2$, $\sigma\ge 0$, and $d\in\N$. For any $\varepsilon\in(0, \min(1, \alpha-1, \tau-2))$ in \eqref{eq:nk-Nk}, there exists  a constant $A'>0$, such that for $k$ sufficiently large
 \begin{equation}
  \Prob^\sss{0}\big(\CA_\mathrm{low\textnormal{-}edge}(0, n_k, N_k, \overline{w})\mid \neg\CA_\mathrm{dense}(n_k)\big) \le
  \begin{dcases}
  \exp(-(1/A')k^\zeta), & \text{if }\alpha<\infty, \\
  0,                      & \text{if }\alpha=\infty.
  \end{dcases}\label{eq:event-low-edge-lemma}
 \end{equation}
\end{claim}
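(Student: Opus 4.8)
The plan is to union-bound over the two types of vertices that could be an endpoint of the offending edge $e_{\ge t_k}$ on the event $\CA_\mathrm{low\textnormal{-}edge}$, controlling the expected number of such edges directly. On $\CA_\mathrm{low\textnormal{-}edge}$ there is a vertex $v_1\in\CC_{\Lambda(0,n_k)}(u)$ with $|\CC_{\Lambda(0,n_k)}(u)|\le k$, a vertex $v_2\in\CV_{\Lambda(0,N_k)}[1,\overline w_{N_k})$, both of mark below $\overline w_{N_k}$, with $\|x_{v_1}-x_{v_2}\|\ge t_k$ and $v_1\sim v_2$. First I would note that since $v_1$ lies in a component of size at most $k$ inside $\Lambda(0,n_k)$, the vertex $v_1$ is reachable from $u$ by at most $k-1$ edges each of length strictly less than $t_k$ (otherwise $\CA_\mathrm{edge}$ would already be realized with that edge, but here we are inside $\Lambda(0,n_k)$, so $v_1\in\Lambda(0,(k t_k)^d)\subseteq\Lambda(0,n_k/2^d)$, hence in particular $v_1$ is within Euclidean distance $n_k^{1/d}/2$ of the origin). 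So it suffices to bound the probability that \emph{some} vertex $v_1\in\CV\cap\Lambda(0,n_k/2^d)$ with mark at most $\overline w_{N_k}$ has an edge of length at least $t_k$ to \emph{some} vertex $v_2$ with mark at most $\overline w_{N_k}$; this removes the need to track the cluster structure.

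Next I would compute the expected number of such edges. Conditioning on $\neg\CA_\mathrm{dense}(n_k)$, for each candidate $v_1$ the number of vertices of $\CV_{N_k}$ at distance in $[2^{i-1}t_k,2^{i}t_k)$ is at most $2\mu_\tau(\CR_i(x_u))=\Theta((2^it_k)^d)$, which is what lets us replace the Poisson intensity by a deterministic count in each annulus. For a vertex $v_1$ of mark $\le\overline w_{N_k}$ and a vertex $v_2$ of mark $\le \overline w_{N_k}$ at distance $\Theta(2^i t_k)$, the connection probability from \eqref{eq:connection-prob-gen} is at most $p(\beta \overline w_{N_k}^{1+\sigma}/(2^i t_k)^d)^\alpha$ when $\alpha<\infty$ (using $\kappa_\sigma(w_1,w_2)\le (w_1\vee w_2)^{1+\sigma}\le \overline w_{N_k}^{1+\sigma}$), and it is $0$ once $(2^i t_k)^d>\beta\overline w_{N_k}^{1+\sigma}$. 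So the expected number of length-$\ge t_k$ edges from a fixed $v_1$, summed over the dyadic annuli $i\ge 1$, is at most
\[
 C\sum_{i\ge 1}(2^i t_k)^d\Big(\frac{\beta\overline w_{N_k}^{1+\sigma}}{(2^i t_k)^d}\Big)^\alpha
 = C\beta^\alpha \overline w_{N_k}^{(1+\sigma)\alpha}\,t_k^{d(1-\alpha)}\sum_{i\ge 1}2^{id(1-\alpha)},
\]
and the geometric sum converges because $\alpha>1$. Multiplying by the number of candidate vertices $v_1$, which is at most $\mathrm{Vol}(\Lambda(0,n_k/2^d))=\Theta(n_k)$ up to the $\neg\CA_\mathrm{dense}$-type density bound near the origin (alternatively by Markov/Mecke applied to the PPP $\CV_{n_k}$), gives an upper bound of order $n_k\,\overline w_{N_k}^{(1+\sigma)\alpha}\,t_k^{d(1-\alpha)}$. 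Now substitute $t_k=n_k^{1/d}/(2k)$, $n_k=N_k^\varepsilon$, $N_k=\exp(\Theta(k^\zeta))$, $\overline w_{N_k}=\Theta(k^{\zeta\eta})$ from \eqref{eq:nk-Nk}: the power of $n_k$ is $1+(1-\alpha)=2-\alpha<1$, so the product behaves like $\exp((2-\alpha)\varepsilon\,\Theta(k^\zeta))\cdot k^{2\alpha-d}\cdot k^{\Theta(1)}$, which, since $2-\alpha<0$... wait — I should be careful here: if $\alpha<2$ the exponent $2-\alpha$ is positive, so I instead use $t_k^{d(1-\alpha)}=(n_k^{1/d}/(2k))^{d(1-\alpha)}=(2k)^{d(\alpha-1)}n_k^{1-\alpha}$, and the net power of $n_k$ is $1-\alpha<0$ (not $2-\alpha$); the polynomial factor $(2k)^{d(\alpha-1)}\overline w_{N_k}^{(1+\sigma)\alpha}$ is only polynomial in $k$, while $n_k^{1-\alpha}=\exp(-(\alpha-1)\varepsilon\Theta(k^\zeta))$ decays stretched-exponentially. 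Hence the whole bound is at most $\exp(-(1/A')k^\zeta)$ for $k$ large, as claimed. When $\alpha=\infty$, every term with $(2^it_k)^d>\beta\overline w_{N_k}^{1+\sigma}$ vanishes; since $t_k^d=n_k/(2k)^d=\exp(\Theta(k^\zeta))\to\infty$ while $\overline w_{N_k}^{1+\sigma}$ is only polynomial in $k$, for $k$ large \emph{every} annulus $i\ge 1$ has $(2^it_k)^d>\beta\overline w_{N_k}^{1+\sigma}$, so the probability is exactly $0$.

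The main obstacle I anticipate is making the reduction in the first paragraph fully rigorous — i.e. cleanly converting the cluster-dependent event $\CA_\mathrm{low\textnormal{-}edge}$ (which involves the random component $\CC_{\Lambda(0,n_k)}(u)$, so $v_1$ is a random vertex whose identity depends on revealed edges) into a statement about a \emph{deterministic} family of candidate vertices $v_1$ to which the Mecke/Palm formula and the $\neg\CA_\mathrm{dense}$ bound can be applied as unconditioned first-moment estimates. The correct way is to observe that $\{v_1\sim v_2,\ \|x_{v_1}-x_{v_2}\|\ge t_k\}$ for \emph{some} $v_1\in\CV\cap\Lambda(0,n_k),\ v_2\in\CV\cap\Lambda(0,N_k)$, both of mark $<\overline w_{N_k}$, is a monotone (increasing) event in the edge set and requires no knowledge of the cluster, so its probability only goes up if we drop the constraint $v_1\in\CC_{\Lambda(0,n_k)}(u)$; then condition on $\CV$ restricted to marks below $\overline w_{N_k}$ inside $\Lambda(0,N_k)$, use $\neg\CA_\mathrm{dense}$ to bound the vertex counts in annuli, and apply a union bound over pairs together with the edge probabilities — all of which is the routine computation sketched above. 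The remaining bookkeeping (handling the Palm measure $\Prob^\sss{0}$, which just adds one extra deterministic vertex at $0$, and verifying the constant $A'$ depends only on $d,\alpha,\beta,p,\sigma,\tau,\varepsilon$) is standard.
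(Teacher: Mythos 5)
Your reduction in the first paragraph discards the cluster constraint entirely — you replace ``$v_1\in\CC_{\Lambda(0,n_k)}(u)$ with $|\CC_{\Lambda(0,n_k)}(u)|\le k$'' by ``$v_1$ is any low-mark vertex in $\Lambda(0,n_k)$'' — and this is exactly where the argument breaks down. After that step your union bound has $\Theta(n_k)$ candidate vertices $v_1$, each contributing $\Theta(\overline w_{N_k}^{(1+\sigma)\alpha}t_k^{d(1-\alpha)})$ in expectation, so the total bound is of order
\[
n_k\cdot t_k^{d(1-\alpha)}\cdot\overline w_{N_k}^{(1+\sigma)\alpha}
= (2k)^{d(\alpha-1)}\,n_k^{2-\alpha}\,\overline w_{N_k}^{(1+\sigma)\alpha},
\]
because $t_k^d=n_k/(2k)^d$ and the extra $n_k$ comes from the candidate count. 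The exponent of $n_k$ is $2-\alpha$, not $1-\alpha$ as you claim at the end (you recompute $t_k^{d(1-\alpha)}$ but silently drop the $n_k$ prefactor). Since $n_k=\exp(\Theta(k^\zeta))$, this bound vanishes only when $\alpha>2$; for $\alpha\in(1,2]$ it is bounded away from zero or even blows up, while the claim is supposed to hold for all $\alpha>1$. Your side-remark about $v_1$ being ``reachable from $u$ by at most $k-1$ edges each of length strictly less than $t_k$'' is also unjustified: edges inside $\CC_{\Lambda(0,n_k)}(u)$ can be long, nothing in $\CA_\mathrm{low\textnormal{-}edge}$ forbids that.

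The paper keeps the cluster constraint, and the key ingredient you are missing is a decoupling between short and long edges that lets one do so without creating the negative-correlation problem you flagged. Work with $\CC_{n_k}(0,\le t_k)$, the component of $0$ in the subgraph of $\CG_{n_k}$ with only edges of length $\le t_k$; note $\CC_{n_k}(0,\le t_k)\subseteq\CC_{n_k}(0)$, so $|\CC_{n_k}(0,\le t_k)|\le k$ on $\CA_\mathrm{low\textnormal{-}edge}$. Conditionally on $\CV$, the events $\{v_1\in\CC_{n_k}(0,\le t_k),\ |\CC_{n_k}(0,\le t_k)|\le k\}$ and $\{v_1\sim v_2\}$ for $\|x_{v_1}-x_{v_2}\|\ge t_k$ are determined by \emph{disjoint} collections of edge variables, hence independent. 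So the union bound over $v_1$ factorizes into
\[
\sum_{v_1}\Prob\big(v_1\in\CC_{n_k}(0,\le t_k),\ |\CC_{n_k}(0,\le t_k)|\le k\mid\CV\big)\cdot T(v_1),
\]
and with the $T(v_1)$ bound you already derived (uniform in $v_1$ via $\neg\CA_\mathrm{dense}$), the first sum is $\E[\ind{|\CC_{n_k}(0,\le t_k)|\le k}\,|\CC_{n_k}(0,\le t_k)|\mid\CV]\le k$. That replaces your $n_k$ by $k$, giving $k\cdot t_k^{d(1-\alpha)}=k^{1+d(\alpha-1)}\,2^{d(\alpha-1)}\,n_k^{1-\alpha}$, whose $n_k$-exponent $1-\alpha$ is strictly negative for \emph{all} $\alpha>1$. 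Your $\alpha=\infty$ argument and the annulus/$\neg\CA_\mathrm{dense}$ estimate of $T(v_1)$ are otherwise fine and match the paper's.
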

\begin{proof}
 Assume first $\alpha=\infty$. The event $\CA_\mathrm{low\textnormal{-}edge}(0, n_k, N_k, \overline{w}_{N_k})$ is by definition in \eqref{eq:event-low-edge} restricted to vertices of mark at most $\overline{w}_{N_k}=\overline{w}(N_k)$ in \eqref{eq:nk-Nk}.  By definition of $t_k$ and $\overline{w}_{N_k}$ in~\eqref{eq:nk-Nk}, for $k$ sufficiently large  \[t_k=\exp((\varepsilon/d)(c_3/(2c_2))k^\zeta)/(2k)\ge \beta\overline{w}_{N_k}^{1+\sigma}=\beta M^{1+\sigma}((c_3/(2c_2))k^\zeta)^{\eta(\sigma+1)}.\]
  Hence, the indicator in  $\mathrm p(u,v)$ is then $0$ by \eqref{eq:connection-prob-gen}, so a connection between $u,v$ can not occur.

 Assume then $\alpha<\infty$.
 To obtain an upper bound on the left-hand side of \eqref{eq:event-low-edge-lemma}, we condition on the full realization $\CV$ containing $0$ and satisfying the event $\neg\CA_\mathrm{dense}$:
 \begin{equation}
 \begin{aligned}
  \Prob^\sss{0}\big(\CA_\mathrm{low\textnormal{-}edge}(0, &n_k, N_k, \overline{w})\mid \neg\CA_\mathrm{dense}\big) \\&=
  \E^\sss{0}\big[
  \Prob^\sss{0}\big(\CA_\mathrm{low\textnormal{-}edge}(0, n_k, N_k, \overline{w}_{N_k})\mid \CV,  \neg\CA_\mathrm{dense}\big)
  \big]. \end{aligned}\label{eq:upper-short-notdense-0}
 \end{equation}
 We denote the subgraph of $\CG_{n_k}$ with all edges of length at most $t_k=n_k^{1/d}/(2k)$ by $\CG_{n_k}(\le t_k)$ and write $\CC_{n_k}(0, \le t_k)$ for the component in this graph containing the origin.
 Clearly,
 \begin{equation*}
 \begin{aligned}
  &\left\{
  \begin{aligned}\hspace{-4pt}
    & |\CC_{n_k}(0)|\le k, \exists v_1, v_2\in\CV_{N_k}[1, \overline{w}_{N_k}) \mbox{ s.t. } \\
    & v_1\in\CC_{n_k}(0), \|x_{v_1}-x_{v_2}\|\ge t_k,\mbox{ and }v_1\sim v_2\end{aligned}
  \right\}\\
  &\hspace{30pt}\subseteq
  \left\{
  \begin{aligned}
    & |\CC_{n_k}(0, \le t_k)|\le k, \exists v_1, v_2\in\CV_{N_k}[1, \overline{w}_{N_k}) \mbox{ s.t. } \\
    & v_1\in\CC_{n_k}(0, \le t_k), \|x_{v_1}-x_{v_2}\|\ge t_k,\mbox{ and }v_1\sim v_2\end{aligned}\hspace{-2pt}
  \right\}
  ,
  \end{aligned}
 \end{equation*}
 where the first event is the definition of $\CA_\mathrm{low\textnormal{-}edge}$ in \eqref{eq:event-low-edge}.
 Conditionally on $\CV$, all edges of length at least $t_k$ are present independently of  edges shorter than $t_k$. We obtain
 by a union bound over all vertices in $\CV_{n_k}[1, \overline{w}_{N_k})\subseteq \CV$,
 \begin{align}
  \Prob^\sss{0}  \big(&\CA_\mathrm{low\textnormal{-}edge}(0, n_k, N_k, \overline{w}_{N_k})\mid \CV, \neg\CA_\mathrm{dense}\big)                                                                                        \nonumber      \\
        & \le
  \hspace{-12pt}\sum_{v_1\in\CV_{n_k}[1, \overline{w}_{N_k})}\hspace{-12pt}\Prob^\sss{0}\big(v_1\!\in\!\CC_{n_k}(0, \le\! t_k),|\CC_{n_k}(0, \le \!t_k)|\!\le\!k\mid \CV, \neg\CA_\mathrm{dense}\big)\hspace{-15pt}\underbrace{\sum_{\substack{v_2\in\CV_{N_k}[1, \overline{w}_{N_k}): \\ \|x_{v_1}-x_{v_2}\|\ge t_k}}\hspace{-15pt}\mathrm{p}(v_1, v_2)}_{:=T(v_1)}.\label{eq:upper-short-notdense-1}
 \end{align}
 Using the definitions of $\mathrm{p}$ in \eqref{eq:connection-prob-gen},  $\kappa_\sigma$ from \eqref{eq:kernels},  the upper bound on $|\CR_i(x_u)|$ in $\CA_\mathrm{dense}$  in~\eqref{eq:upper-sub-short-not-dense}, mark bounds   $w_{v_1}, w_{v_2}\le \overline{w}_{N_k}$, and the distance bound $\|x_{v_1}-x_{v_2}\|\ge 2^{(i-1)}t_k$ when $x_{v_2}\in \CR_i(x_{v_1})$ in \eqref{eq:upper-sub-short-annuli},
 the following bound holds uniformly for all
 $v_1\in \CV_{N_k}[1, \overline w_{N_k})$:
 \begin{align}
  T(v_1) & \le \sum_{i\ge1}\sum_{\substack{v_2\in \CV_{N_k}[1,\overline w_{N_k})\\v_2\in \CR_i(v_1)}} p\big(\beta\kappa_\sigma(\overline{w}_{N_k},\overline{w}_{N_k})2^{-(i-1)d}t_k^{- d}\big)^\alpha\nonumber \\
      & \le  2\sum_{i\ge 1}t_k^d2^{id}p\big(\beta\kappa_\sigma(\overline{w}_{N_k},\overline{w}_{N_k})2^{-(i-1) d}t_k^{- d}\big)^\alpha\nonumber \\
      & =
  2^{d+1}p\beta^\alpha\overline{w}_{N_k}^{\alpha(\sigma+1)}t_k^{(1-\alpha) d}\sum_{i\ge 1}2^{-(\alpha-1)(i-1)d}.\label{eq:upper-short-notdense-2}
 \end{align}
 Since $\alpha>1$ by assumption in Theorem~\ref{thm:subexponential-decay}, the sum on the right-hand side is finite. This gives a bound on $T(v_1)$ in \eqref{eq:upper-short-notdense-1} that does not depend on $v_1$.
Hence, returning to \eqref{eq:upper-short-notdense-1},
 \begin{align}
  \sum_{v_1\in\CV_{n_k}[1, \overline{w}_{N_k})}\Prob^\sss{0}\big(v_1 & \in\CC_{n_k}(0, \le t_k), |\CC_{n_k}(0, \le t_k)|\le k\mid \CV, \neg\CA_\mathrm{dense}\big)\nonumber \\
                                                         & =
  \E^\sss{0}\Bigg[\ind{|\CC_{n_k}(0, \le t_k)|\le k}\sum_{v_1\in\CV_{n_k}[1, \overline{w}_{N_k})} \ind{v_1\in\CC_{n_k}(0, \le t_k)}\mid \CV, \neg\CA_\mathrm{dense}\Bigg]
  \le k,\nonumber
 \end{align}
 since on realizations of the graph satisfying $\{\CC_{n_k}(0, \le t_k)\le k\}$, the sum that follows is at most $k$. We substitute this with \eqref{eq:upper-short-notdense-2} into  \eqref{eq:upper-short-notdense-1} and then into
 \eqref{eq:upper-short-notdense-0}.
 Thus, for some constant $C>0$,
 \begin{equation}
  \Prob^\sss{0}\big(\CA_\mathrm{low\textnormal{-}edge}(0, n_k, N_k, \overline{w}_{N_k})\mid \neg \CA_\mathrm{dense}\big)
  \le
  C\,k\,\overline{w}_{N_k}^{\alpha(\sigma+1)}\,t_k^{(1-\alpha)d}.\nonumber
 \end{equation}
 We substitute the definitions $t_k=n_k^{1/d}/(2k)$, and $n_k=\exp(\varepsilon(c_3/(2c_2))k^\zeta)$ from \eqref{eq:nk-Nk}, which yields \eqref{eq:event-low-edge-lemma} for any $\varepsilon>0$
 (using that $\overline{w}_{N_k}$ is polynomial in $k$).
\end{proof}
The last claim  bounds $\CA_\mathrm{long\textnormal{-}edge}$ in~\eqref{eq:prop-refined} in Lemma~\ref{prop:refinedevents}.
Recall $\CA_\mathrm{long\textnormal{-}edge}$ from \eqref{eq:event-long-edge}.
\begin{claim}
 [No \emph{long} edge from a small component]\label{claim:edge-long}
 Consider a KSRG satisfying Assumption~\ref{assumption:main} with parameters $\alpha>1, \tau>2$, $\sigma\ge 0$, and $d\in\N$.
 Assume $N\ge n\ge 1$, and $\overline{w}\ge 1$ such that
 \begin{equation}
  (N^{1/d}-n^{1/d})/2 \ge \big(\sqrt{d} n^{1/d}\vee (\beta\overline{w}^{1+\sigma})^{1/d}\vee N^{1/d}/4\big).\label{eq:sub-upper-tk-tilde-bound}
 \end{equation} There exists a constant  $C_{\ref{claim:edge-long}}>0$ such that
\begin{equation}
\begin{aligned}
  \Prob^\sss{0}\big(\exists v_1&\in\CV_{n}[1,\overline{w}), \exists v_2\in\CV\setminus\CV_{N}: v_1\sim v_2\big)  \\ 
  &\le C_{\ref{claim:edge-long}}\overline{w}^{c_{\ref{claim:edge-long}}}n N^{-\min(\alpha-1, \tau-2)}(1+\ind{\alpha-1=\tau-2}\log N).
  \end{aligned}\label{eq:edge-long-lemma}
 \end{equation}
 In particular, for $n_k$, $N_k$, $\overline{w}_{N_k}$ as in \eqref{eq:nk-Nk}, if $\delta=\delta(M, \eta)\in(0, \min(\alpha-1, \tau-2, 1))$  is sufficiently small, then for all $k\ge 1$, with $\CA_\mathrm{long\textnormal{-}edge}$ defined in \eqref{eq:event-long-edge},
 \begin{equation}
  \Prob^\sss{0}\big(\CA_\mathrm{long\textnormal{-}edge}(0, n_k, N_k, \overline{w}_{N_k})\big) \le \exp\big(-\delta k^{\zeta_\mathrm{hh}}\big).\label{eq:edge-long-lemma2}
 \end{equation}
\end{claim}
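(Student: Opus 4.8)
\textbf{Proof plan for Claim~\ref{claim:edge-long}.}
The plan is to control the expected number of ``long'' edges by a first-moment (Mecke/Palm) computation, split into a contribution from low-mark endpoints $v_2$ and a contribution from high-mark endpoints $v_2$; the tail exponent $\tau$ governs the latter and the long-range exponent $\alpha$ governs the former, producing the $\min(\alpha-1,\tau-2)$ in~\eqref{eq:edge-long-lemma}. First I would set $r:=(N^{1/d}-n^{1/d})/2$, which by the hypothesis~\eqref{eq:sub-upper-tk-tilde-bound} is the minimal Euclidean distance between a point $v_1\in\Lambda_n$ and a point $v_2\notin\Lambda_N$ (up to constants), and is also at least $(\beta\overline w^{1+\sigma})^{1/d}$ so that the minimum in the connection probability~\eqref{eq:connection-prob-gen} is attained at the second term whenever both marks are below $\overline w$. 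I would then bound
\[
\Prob^\sss{0}\big(\exists v_1\in\CV_n[1,\overline w),\,v_2\in\CV\setminus\CV_N: v_1\sim v_2\big)
\le
\E^\sss{0}\Big[\,\sum_{v_1\in\CV_n[1,\overline w)}\sum_{v_2\in\CV\setminus\CV_N}\mathrm p(v_1,v_2)\Big],
\]
and evaluate the right-hand side by the multivariate Mecke equation for the marked PPP with intensity $\mu_\tau$ from~\eqref{eq:poisson-intensity}: the outer integral over $v_1$ contributes a factor $n$ (its location ranges over $\Lambda_n$) times an $O(1)$ mark integral over $[1,\overline w)$, while the vertex at the origin under $\Prob^\sss{0}$ can be absorbed by translation invariance and contributes at most the same $v_1$-type integral. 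The heart of the estimate is the inner integral over the location $y$ and mark $w_2$ of $v_2$, which I would split at $w_2=\overline w$.

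For $w_2<\overline w$ (both endpoints low-mark), using $\kappa_\sigma(w_1,w_2)\le \overline w^{\sigma+1}$ and $\|x_{v_1}-y\|\ge r$ one gets an integrand bounded by $p(\beta\overline w^{\sigma+1}/\|x_{v_1}-y\|^d)^\alpha$, and since $\alpha>1$ the spatial integral $\int_{\|y-x_{v_1}\|\ge r}\|y-x_{v_1}\|^{-\alpha d}\,\rd y$ converges and equals $\Theta(r^{d(1-\alpha)})=\Theta(N^{1-\alpha})$ (using $r=\Theta(N^{1/d})$ from~\eqref{eq:sub-upper-tk-tilde-bound}), giving a total contribution of order $\overline w^{\,\alpha(\sigma+1)}\,n\,N^{1-\alpha}$. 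For $w_2\ge\overline w$ (so the mark of $v_2$ is large), I would integrate $w_2$ against its Pareto density $(\tau-1)w_2^{-\tau}\rd w_2$; there are two sub-regimes according to whether the minimum $1\wedge(\cdot)$ in~\eqref{eq:connection-prob-gen} equals $1$ or not. On the region where $\beta\kappa_\sigma(w_1,w_2)\ge\|x_{v_1}-y\|^d$ the edge is present with probability $p$, but the constraint forces $w_2\gtrsim \|x_{v_1}-y\|^d/(\beta\overline w^{\sigma})\gtrsim r^d$, so integrating the Pareto tail $\Prob(W\ge r^d)=r^{-d(\tau-1)}$ over the $\Theta(N)$-volume of possible locations outside $\Lambda_N$ yields $O(N\cdot N^{-(\tau-1)})=O(N^{2-\tau})$; on the complementary region one again has a polynomially small spatial integral times a convergent mark integral (convergent because $\tau>2$ and $\alpha>1$), giving the same order up to a logarithmic factor in the boundary case $\alpha-1=\tau-2$. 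Collecting the two regimes produces the bound $C\,\overline w^{\,c}\,n\,N^{-\min(\alpha-1,\tau-2)}(1+\ind{\alpha-1=\tau-2}\log N)$ claimed in~\eqref{eq:edge-long-lemma}, for suitable constants depending only on $d,\alpha,\tau,\sigma,\beta,p$.

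Finally, to deduce~\eqref{eq:edge-long-lemma2} I would substitute $n=n_k=N_k^{\varepsilon}=\exp(\varepsilon c_3 k^{\zeta}/(2c_2))$, $N=N_k=\exp(c_3k^{\zeta}/(2c_2))$, and $\overline w_{N_k}=\Theta((k^{\zeta})^{\eta})$ from~\eqref{eq:nk-Nk} into~\eqref{eq:edge-long-lemma}. One first checks that~\eqref{eq:sub-upper-tk-tilde-bound} holds for $k$ large: indeed $N_k^{1/d}-n_k^{1/d}=N_k^{1/d}(1-N_k^{-(1-\varepsilon)/d})\ge N_k^{1/d}/2$ eventually, which dominates $\sqrt d\,n_k^{1/d}$ since $\varepsilon<1$, dominates $(\beta\overline w_{N_k}^{\sigma+1})^{1/d}$ since $\overline w_{N_k}$ is only polynomial in $k$ while $N_k$ is exponential, and trivially dominates $N_k^{1/d}/4$. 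Then the right-hand side of~\eqref{eq:edge-long-lemma} is at most $C\,\overline w_{N_k}^{\,c}\,n_k\,N_k^{-\min(\alpha-1,\tau-2)}\log N_k
= \exp\big(-(\min(\alpha-1,\tau-2)-\varepsilon)\tfrac{c_3}{2c_2}k^{\zeta}+o(k^\zeta)\big)$, where the $\varepsilon$ in the exponent comes from the $n_k$ factor and the $o(k^\zeta)$ absorbs $\overline w_{N_k}^{\,c}$ and $\log N_k$, both of which are $\exp(o(k^\zeta))$. Choosing $\varepsilon<\min(\alpha-1,\tau-2)$ (which is compatible with the standing assumption $\varepsilon\in(0,\min(1,\alpha-1,\tau-2))$) makes the coefficient of $k^\zeta$ strictly negative, and then for a sufficiently small $\delta>0$ the bound $\exp(-\delta k^{\zeta})$ holds for all $k\ge1$ (enlarging the constant or shrinking $\delta$ to cover the finitely many small $k$). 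Note $\zeta=\zeta_{\mathrm{hh}}$ in the application, matching the statement.

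\textbf{Main obstacle.} I expect the bookkeeping of the three mark/location regimes for $v_2$ to be the only real work: one must be careful that on the region where $1\wedge(\cdot)=1$ the apparent ``probability $p$'' contribution is nevertheless small, which works only because that region forces $w_2$ to be at least of order $r^d$ and the Pareto tail at $r^d$ is $r^{-d(\tau-1)}$, precisely cancelling the $\Theta(N)$ volume factor down to $N^{2-\tau}$; and that the spatial integrals in the complementary regions converge, which uses both $\alpha>1$ and $\tau>2$. The hypothesis~\eqref{eq:sub-upper-tk-tilde-bound} is exactly engineered to make all of this go through (it guarantees $r=\Theta(N^{1/d})$, $r\ge(\beta\overline w^{\sigma+1})^{1/d}$, and $r\ge\sqrt d\,n^{1/d}$), so verifying that it holds along the sequence~\eqref{eq:nk-Nk} and then chasing the constants is the rest of the argument.
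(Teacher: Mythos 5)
Your proposal is correct and follows essentially the same route as the paper: a first-moment (Markov/Mecke) bound on the expected number of edges, followed by splitting the $v_2$ integral into the regime where the profile is saturated at $1$ (controlled by the Pareto tail, yielding $N^{2-\tau}$) and the regime where it is not (controlled by the long-range decay, yielding $N^{1-\alpha}$, with a $\log$ at $\alpha-1=\tau-2$), then substituting $n_k,N_k,\overline w_{N_k}$ and absorbing the polynomial-in-$k$ factors. The paper organizes the case split in the opposite order (profile first, then mark), but the decomposition and all the resulting estimates coincide.
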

\begin{proof}
 We defer the proof of \eqref{eq:edge-long-lemma} (based on a first-moment method) to Appendix~\ref{app:integral-proofs} on page~\pageref{proof:edge-long}. 
 The bound \eqref{eq:edge-long-lemma2} follows directly from \eqref{eq:edge-long-lemma} by substituting $n_k$, $N_k$ and $\overline{w}_{N_k}$ from \eqref{eq:nk-Nk} to  \eqref{eq:edge-long-lemma}, then using that $\overline{w}_{N_k}$ and $\log N_k$ are polynomial in $k$ and of much smaller order than $n_k$ and $N_k$.
\end{proof}

Having bounded all terms on the right-hand side in \eqref{eq:prop-refined}, we prove Proposition~\ref{prop:condition-subexponential}.
\begin{proof}[Proof of Proposition~\ref{prop:condition-subexponential}]
For $n\le N_k$, using that  $N_k=\exp\big((c_3/(2c_2))k^{\zeta}\big)$,
 \eqref{eq:exp-decay-71} in Proposition~\ref{prop:condition-subexponential} follows directly from \eqref{eq:prop-2nd}, since
 \begin{align}
  \Prob^\sss{0}\big(|\CC_{n}(0)|> k, 0\notin\CC_{n}^\sss{(1)}\big) \le
  \Prob^\sss{0}\big(|\CC_{n}^\sss{(2)}| > k\big)
  & \le N_k^{c_2}\exp\big( - c_3k^{\zeta}\big) =
  \exp\big(- (c_3/2)k^{\zeta}\big).\nonumber
 \end{align}
 We now consider $n>N_k$.
 Recall the values of $n_k, \overline{w}_{N_k}$, and $t_k$ from \eqref{eq:nk-Nk}. Lemma~\ref{prop:refinedevents} and Claims \ref{claim:dense}--\ref{claim:edge-long} directly imply \eqref{eq:exp-decay-71} in Proposition~\ref{prop:condition-subexponential}. 
 
We  now prove the law of large numbers \eqref{eq:lln-section}. In \cite{maitra2021locallim} it is shown that finite KSRGs $\CG_n=(\CV_n, \CE_n)$ rooted at a vertex at the origin (see  Definition~\ref{def:ksrg}) converge locally to their infinite rooted version $(\CG_\infty, 0)$ as $n\to\infty$. We refer to~\cite{Hofbook2} and its references for an introduction to local limits. We  use the concept of local limits as a black box and verify a necessary and sufficient condition for the law of large numbers for the size of the giant component for graphs that have a local limit by Van der Hofstad~\cite[Theorem~2.2]{hofstad2021giantlocal} of which we state an adaptation. 
 Let $(\CG_n, o_n)_{n\ge 1}$ be a sequence of rooted graphs that converges locally in probability to $(\CG_\infty, \varnothing)$ (Theorem~2.2 in~\cite{hofstad2021giantlocal} demands additionally $|\CV_n|=n$, but its proof extends to cases in which $|\CV_n|\sim\mathrm{Poi}(n)$; we omit details here). Define
\begin{equation}\nonumber 
T_{n,k}:=\E\Big[\frac{1}{|\CV_n|^2}\sum_{u,v\in \CV_n}\ind{|\CC_n(u)|\ge k,\, |\CC_n(v)|\ge k,\, \CC(u)\neq \CC(v)}\Big].
\end{equation}
Then, 
 \begin{equation}\label{eq:lln-condition}
 \lim_{k\to\infty}\limsup_{n\to\infty}T_{n,k}=0 \qquad 
 \Longrightarrow \qquad
|\CC_n^\sss{(1)}|/|\CV_n|\overset\Prob\longrightarrow \Prob\big(\varnothing\leftrightarrow\infty\big),\quad\text{as }n\to\infty.\nonumber
\end{equation}
  For a pair of vertices $u,v$, the indicator in $T_{n,k}$ can only occur if at least one of the vertices is not in the largest component. More precisely, we can bound
 \[ \begin{aligned}
 T_{n,k}&\le \E\Big[\frac{1}{|\CV_n|^2} \sum_{u,v\in \CV_n} \ind{|\CC_n(u)|\ge k,u\notin\CC_n^\sss{(1)}} + \ind{|\CC_n(v)|\ge k,v\notin\CC_n^\sss{(1)}}  \Big] \\&= \E\Big[\frac{1}{|\CV_n|} \sum_{u\in \CV_n} 2\ind{|\CC_n(u)|\ge k,u\notin\CC_n^\sss{(1)}}  \Big] = 2\E\Big[\Prob\big(|\CC_n(U_n)|\ge k,U_n\notin\CC_n^\sss{(1)}\big| |\CV_n|\big)\Big],
 \end{aligned}\]
where $U_n$ is a uniformly selected vertex in $\CV_n$.
 We now restrict to the setting of KSRGs in the box $\Lambda_n$. Given the size $|\CV_n|$, the location $X_n$ of $U_n$  is uniform in $\Lambda_n$, while its mark $W_{U_n}$ is random, sampled from $W$.   
Integrating over the location $X_n=x\in \Lambda_n$ having density $\mathrm{Leb}(\cdot)/n$, we obtain that
\[
\begin{aligned}
\E\big[\Prob\big(&|\CC_n(U_n)|\ge k,U_n\notin\CC_n^\sss{(1)}\,\big| \, |\CV_n|\big)\big]\le \Prob\big(\|X_n-\partial\Lambda_n\|\!<\! N_k^{1/d}/2\big) \\
&+\frac{1}{n}\E\Big[\hspace{-20pt}\int\limits_{x\in\Lambda_n: \|x-\partial\Lambda_n\|\ge N_k^{1/d}/2}\hspace{-40pt}\Prob^x\big(|\CC_n((x,W_{U_n}))|\!\ge\! k,(x, W_{U_n}  )\notin\CC_n^\sss{(1)}\,\big| \, |\CV_n|, (x, W_{U_n})\in\CV_n\big)\rd x \Big]\end{aligned}
\]
 We apply Fubini's theorem to the second term:
 \[
 \begin{aligned}
\E\big[\Prob\big(&|\CC_n(U_n)|\ge k,U_n\notin\CC_n^\sss{(1)}\,\big|\, |\CV_n|\big)\big] \le \Prob\big(\|X_n-\partial\Lambda_n\|\!<\! N_k^{1/d}/2\big) \\
& +\frac{1}{n}\hspace{-20pt}\int\limits_{x\in\Lambda_n: \|x-\partial\Lambda_n\|\ge N_k^{1/d}/2}\hspace{-40pt}\Prob^x\big(|\CC_n((x,W_{U_n}))|\!\ge\! k,(x, W_{U_n}  )\notin\CC_n^\sss{(1)}\,\big| \, (x, W_{U_n})\in\CV_n\big)\rd x \end{aligned}
 \]
The first term tends to zero as $n\to\infty$.
 We recognise that we may apply Lemma~\ref{prop:refinedevents} and Claims~\ref{claim:dense}--\ref{claim:edge-long} to the probability in the integral inside the second term, which is  $\exp\big(-\Omega(k^\zeta)\big)$ uniformly over the domain of the integration. Thus, 
 \[
 \lim_{k\to\infty}\limsup_{n\to\infty}T_{n,k}=\lim_{k\to\infty}\exp\big(-\Omega(k^\zeta)\big)=0.
 \]
 This proves the condition on the left-hand side in~\eqref{eq:lln-condition}. The law of large numbers \eqref{eq:lln-section} follows as $|\CV_n|/n$ tends to 1 in probability.
\end{proof}

\section{Lower bounds}\label{sec:lower}
The main goal of this section is to prove a Proposition~\ref{prop:condition-lower} below that implies the lower bounds in Theorems ~\ref{thm:subexponential-decay}--\ref{thm:second-largest}. Informally, we show that if the graph $\CG_n[1, \mathrm{polylog}(n))$ induced on vertices with at most poly-logarithmically large marks in $n$ contains a linear-sized component with constant probability, then lower bounds as in Theorems ~\ref{thm:subexponential-decay}--\ref{thm:second-largest} follow. This general phrasing allows to derive lower bounds on $|\CC_n^{\sss{(2)}}|$ and on the cluster-size decay for KSRGs more generally, i.e., also without the assumption $\zeta_{\mathrm{hh}}>0$ of  Theorems~\ref{thm:subexponential-decay}--\ref{thm:second-largest}. We re-use this proposition in both~\cite{clusterII,clusterIII} after having established there its condition via renormalization techniques. 

In our proof below, we formalize the variational problem described in Section~\ref{sec:four-regimes}, and relate its solution to the size of the downward vertex boundary defined above~\eqref{eq:zeta-star}. In particular, Lemma~\ref{lemma:lower-vertex-boundary2} below implies Claim~\ref{lemma:lower-vertex-boundary} which states that $\zeta_\star=\max(\zeta_\mathrm{ll}, \zeta_\mathrm{hl}, \zeta_\mathrm{hh}, (d-1)/d)$.
At the end of the section we also prove Theorem~\ref{prop:lower-dev-giant} on the lower tail of large deviations of the largest component, which relies on the same methods as Proposition~\ref{prop:condition-lower}.

We introduce some notation to state Proposition~\ref{prop:condition-lower}.
Recall $\mathfrak{m}_\star$ from~\eqref{eq:m-star}, counting the multiplicity of the maximum in $\{\zeta_\mathrm{ll}$, $\zeta_\mathrm{hl}$, $\zeta_\mathrm{hh}$, $(d-1)/d\}$. If the values $\zeta_\mathrm{ll}, \zeta_\mathrm{hl}, \zeta_\mathrm{hh}$ are all negative and the dimension $d=1$, implying $\mathfrak{m}_\star=1$ and $\zeta_\star=(d-1)/d=0$  by Claim~\ref{lemma:lower-vertex-boundary}, then the model is always subcritical as shown by Gracar, L\"uchtrath, and M\"onch~\cite{gracar2022finiteness}. For all other parameter settings, we define  for some small $\eps>0$ to be specified later 
\begin{equation}
 \underline{k}_{n, \varepsilon} :=
 \begin{dcases}
  \big(\varepsilon(\log n)/(\log\log n)^{\mathfrak{m}_\star-1}\big)^{1/\zeta_\star}, & \text{if }\zeta_\star>0,                                                           \\
  \exp\big((\varepsilon\log n)^{1/(\mathfrak{m}_\star-1)}\big),                 & \text{if }\zeta_\star=0, \mbox{ and }\mathfrak{m}_\star>1.\label{eq:kn-second-largest}
 \end{dcases}
\end{equation}
For dimension $d\ge2$, $\zeta_\star\ge(d-1)/d$ is positive.
Thus, only in dimension $d=1$, $\underline{k}_{n,\varepsilon}$ can increase significantly faster than a polylog of $n$.
More precisely, for $d=1$,  $\underline{k}_{n,\varepsilon}$ equals $n^\eps$  if exactly one out of $\{\zeta_\mathrm{ll}, \zeta_\mathrm{hl}, \zeta_\mathrm{hh}\}$ is zero, and the others are negative (so $\mathfrak{m}_\star=2$ and $1/(\mathfrak{m}_\star-1)=1$); it increases stretched exponential in the logarithm if at least two elements out of $\{\zeta_\mathrm{ll}, \zeta_\mathrm{hl}, \zeta_\mathrm{hh}\}$ are zero, and none of them is positive. 
We recall from above Proposition~\ref{proposition:existence-large} that $\CC_n(0)[1, w)$ is the component of the origin in the induced subgraph $\CG_n[1, w)$ if $w_0<w$, and is the empty set if $w_0\ge w $.

\begin{proposition}[Lower bound holds when linear-sized giant on truncated marks exists]\label{prop:condition-lower}
Consider a KSRG satisfying Assumption~\ref{assumption:main} with parameters $\alpha>1, \tau>2$, $\sigma\ge 0$, and $d\in\N$. Assume that there exist constants $\eta, \rho>0$ such that for all $n$ sufficiently large, 
 \begin{align}
\Prob^\sss{0}\big(|\CC_n(0)[1, \log^\eta n)|\ge \rho n\big)
  \ge
  \rho.
  \label{eq:lower-sub-cond}
 \end{align}
 Then there exists $A>0$ such that for all $n\in [Ak,\infty]$, with $\zeta_\star, m_\star$ from~\eqref{eq:zeta-star} and \eqref{eq:m-star},
 \begin{equation}
  \Prob^\sss{0}\big(|\CC_n(0)|> k, 0\notin\CC_n^\sss{(1)}\big) \ge
  \exp\big(-Ak^{\zeta_\star}(\log k)^{\mathfrak{m}_\star-1}\big).\label{eq:lower-sub-iso-bound}
 \end{equation}
 Moreover, there exist $ \delta, \varepsilon>0$, such that for all $n$ sufficiently large, with $k_{n,\varepsilon}$ from \eqref{eq:kn-second-largest},
 \begin{equation}
  \Prob\big(|\CC_{n}^\sss{(2)}|\ge \underline{k}_{n,\varepsilon}\big)\ge 1-n^{-\delta}.\label{eq:lower-second-bound}
 \end{equation}
\end{proposition}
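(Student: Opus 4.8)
\textbf{Proof plan for Proposition~\ref{prop:condition-lower}.}

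The plan is to exhibit, under the condition~\eqref{eq:lower-sub-cond}, one explicit near-optimal way in which the event $\{|\CC_n(0)|>k, 0\notin\CC_n^\sss{(1)}\}$ can occur, compute its probability, and then lift the bound from the cluster-size decay to the second-largest component by a boxing argument. First I would set $K:=Ak$ for a large constant $A$ and work inside a ball (or box) $\CB$ of volume $K$ centered at the origin. The construction has two independent ingredients: (a) an \emph{isolation event} $\{\CV\le\CM\}\cap\{\CB\not\sim\CB^\complement\}$ making $\CB$ disconnected from the rest of $\CG$, and (b) a \emph{large-component event} inside $\CB$ forcing $|\CC_K(0)|>k$. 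For (a), I would formalize the \emph{optimally suppressed mark-profile} sketched in Section~\ref{sec:lower-method}: require $w_v\le f(\|x_v-\partial\CB\|)$ for all $v\in\CB$ with $f$ increasing, and also that no crossing edge is present between the (now low-mark) vertices of $\CB$ and those of $\CB^\complement$. The key computation is that the number of vertices of $\CV_K$ incident to a downward crossing edge, in expectation, equals (up to constants and the polylog correction on phase boundaries) $\E[|\{u\in\Lambda_K:u\searrow\Lambda_K^\complement\}|]=\Theta(K^{\zeta_\star}(\log K)^{\mathfrak m_\star-1})$; this is exactly $\zeta_\star$ of~\eqref{eq:zeta-star} and the variational problem, and here is where Lemma~\ref{lemma:lower-vertex-boundary2} (hence Claim~\ref{lemma:lower-vertex-boundary}) enters to identify $\zeta_\star=\max(\zeta_\mathrm{ll},\zeta_\mathrm{hl},\zeta_\mathrm{hh},(d-1)/d)$. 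One optimizes $\CM=\{(x,f(x))\}$ so that $\Prob(\CV\le\CM)$ and $\Prob(\CB\not\sim\CB^\complement\mid\CV\le\CM)$ are of the same order, both $\exp(-\Theta(K^{\zeta_\star}(\log K)^{\mathfrak m_\star-1}))$; using $K=\Theta(k)$ this gives the claimed $\exp(-Ak^{\zeta_\star}(\log k)^{\mathfrak m_\star-1})$ for the isolation probability.

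For ingredient (b), conditionally on the isolation event, I would invoke~\eqref{eq:lower-sub-cond}: since $f$ is bounded below by some $\mathrm{polylog}$-threshold away from $\partial\CB$, on a sub-ball $\CB'\subseteq\CB$ of volume $\Theta(K)$ the induced graph $\CG_{\CB'}[1,\log^\eta(\cdot))$ is exactly a KSRG of the right form, so $\Prob^\sss{0}(|\CC_{\CB'}(0)[1,\log^\eta |\CB'|)|\ge\rho|\CB'|)\ge\rho$, a constant. Choosing $A$ large enough that $\rho K> k$, this produces a component of size $>k$ inside $\CB$ containing the origin with probability bounded below by a constant, \emph{independently} of the isolation event (the isolation event only constrains edges crossing $\partial\CB$ and marks, while the large-component event lives on short edges among low-mark vertices inside $\CB'$ — care is needed that the mark-suppression in the profile $\CM$ is compatible with the polylog truncation, which it is since $f\ge\log^\eta$ on $\CB'$). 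Multiplying the two probabilities gives~\eqref{eq:lower-sub-iso-bound}, and taking $n=\infty$ is fine since the isolation event already disconnects $\CB$ from everything. For finite $n\ge Ak$, the same argument works verbatim as long as $\CB\subseteq\Lambda_n$, and the origin is not in $\CC_n^\sss{(1)}$ precisely because $\CC_n(0)\subseteq\CB$ is finite of size between $k$ and $\Theta(k)$.

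For~\eqref{eq:lower-second-bound}, I would tile $\Lambda_n$ with $\Theta(n/(A\underline k_{n,\varepsilon}))$ disjoint boxes each of volume $\Theta(A\underline k_{n,\varepsilon})$, and in each box independently attempt the same construction with $k=\underline k_{n,\varepsilon}$, using the independence of PPP restrictions to disjoint regions and of the edges internal to each box; the subtlety (as in~\cite{kiwimitsche2ndlargest}) is that the ``isolation'' must isolate each box from \emph{the entire rest of the graph}, not just from a neighboring box, so the profile $\CM$ in box $j$ must suppress marks in a collar around $\partial\CB_j$ and kill all crossing edges — these events for different boxes are not independent, but they are positively correlated or can be handled by revealing marks first and then edges, so a second-moment / Harris-type argument or a sequential revealing shows that with probability $\ge 1-n^{-\delta}$ at least one box (in fact a positive fraction) carries an isolated component of size $\ge\underline k_{n,\varepsilon}$; since any such component that is not the global giant witnesses $|\CC_n^\sss{(2)}|\ge\underline k_{n,\varepsilon}$, and the giant can absorb at most one box's component, we are done after choosing $\varepsilon$ small enough that the per-box success probability $\exp(-\Theta(\underline k_{n,\varepsilon}^{\zeta_\star}(\log\underline k_{n,\varepsilon})^{\mathfrak m_\star-1}))=\exp(-\Theta(\varepsilon\log n))=n^{-\Theta(\varepsilon)}$ times the number of boxes $n^{1-o(1)}$ still diverges, yielding the $1-n^{-\delta}$ bound. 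The main obstacle is making the isolation events for the many boxes genuinely usable simultaneously: one must reveal the PPP and marks globally first, check the cheap polylog-mark constraints near all box boundaries at once (a union bound over $n^{o(1)}$-many collars, each event of probability $1-n^{-\Theta(1)}$ after rescaling), and only then reveal edges, so that conditionally the internal large-component events are independent across boxes — this bookkeeping, together with verifying the variational optimum of $\CM$ on phase boundaries (the source of the $(\log k)^{\mathfrak m_\star-1}$ factor), is the delicate part of the argument.
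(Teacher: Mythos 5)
Your proposal follows the paper's general strategy (optimally-suppressed mark-profile, isolation event, invoking \eqref{eq:lower-sub-cond} for the internal component, boxing for $|\CC_n^\sss{(2)}|$), but there are two genuine gaps in the argument for \eqref{eq:lower-sub-iso-bound}.

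First, the claimed independence of ingredients (a) and (b) is false, and this is the heart of the matter. The mark-suppression event $\{\CV\le\CM\}$ is indeed independent of the internal-component event (they read off disjoint thinnings of the PPP), but the event $\{\CB\not\sim\CB^\complement\}$ among vertices below $\CM$ is \emph{not}: its probability depends on how many low-mark vertices sit near $\partial\CB$ on the inside, and conditioning on a large internal component can push that number up. The paper resolves this by introducing the regularity event $\CA_\mathrm{regular}^\sss{(k)}(\eta)$ in \eqref{eq:lower-events-global}, which caps the number of vertices in each mark-slice of the hyperrectangles $\CR_\mathrm{in},\CR_\mathrm{out}$, and then proving Lemma~\ref{prop:subexponential-lower} \emph{uniformly over all realizations satisfying $\CA_\mathrm{regular}$}; one can then integrate over configurations satisfying $\CA_\mathrm{regular}\cap\CA_\mathrm{components}$. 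Without this decoupling mechanism, multiplying the two probabilities is not justified. The compatibility issue you raise (that $f\ge\log^\eta$ on $\CB'$) is a necessary sanity check but does not address this correlation.

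Second, to conclude $0\notin\CC_n^\sss{(1)}$ you argue that $\CC_n(0)\subseteq\CB$ is ``finite of size between $k$ and $\Theta(k)$'', but for $n=\Theta(k)$ this does not preclude $\CC_n(0)$ being the \emph{largest} component of $\CG_n$. The paper constructs an explicit event $\CA^\sss{(k)}_\mathrm{giant\textnormal{-}out}$ (a component of size $\ge kM_\mathrm{out}-1$ in a disjoint hyperrectangle $\CR_\mathrm{out}$, via a second application of \eqref{eq:lower-sub-cond}) together with $\CA^\sss{(k)}_\mathrm{small\textnormal{-}in}$ bounding $|\CV_{\CB_\mathrm{in}}|\le kM_\mathrm{out}/2$, so the outside component is provably larger; this is needed for \eqref{eq:lower-sub-iso-bound} to hold down to $n=Ak$. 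Finally, for \eqref{eq:lower-second-bound}, your tiling into boxes of volume $\Theta(A\underline k_{n,\varepsilon})$ would put the inner ball essentially flush against the subbox boundary, making the mark-suppression collars of neighboring boxes overlap and forcing the Harris/sequential workaround you describe. The paper's route is cleaner: it tiles with boxes of \emph{polynomial} volume $n^\vartheta$ with $\vartheta>\varepsilon$, keeps the candidate-component events $\CA_\mathrm{good}^\sss{(i)}$ confined to disjoint subboxes (hence genuinely independent, no Harris needed), and controls edges escaping a subbox by a single separate union bound via Claim~\ref{claim:edge-long}; choosing $\vartheta\in(1/\min(\alpha,\tau-1),1)$ makes that union-bound term $o(n^{-\delta})$.
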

By Proposition~\ref{proposition:existence-large}, condition \eqref{eq:lower-sub-cond} is satisfied when $\zeta_\mathrm{hh}>0$, implying Theorem~\ref{thm:subexponential-decay}(i). 
We give a detailed proof of Proposition~\ref{prop:condition-lower} for KSRGs with vertex set given by a PPP. We leave adaptations of proofs of most subresults to vertex set $\Z^d$ to the reader (replacing concentration bounds for Poisson random variables to concentration bounds on sums of independent Bernoulli random variables). At the end of the section we explain the non-trivial adaptations.

\subsection{Strategy to find a localized component}\label{sec:lower-strat} 
To bound $\Prob(|\CC_{n}(0)|> k, 0\notin\CC_{n}^\sss{(1)})$ from below, we find a subevent that we can write as the intersection of ``almost independent'' events, for which we introduce some notation now. See Figure~\ref{fig:lower-bound} for a visualization.\smallskip 

\noindent\emph{Two components.\ \ }   We aim to find an isolated and localized component of at least $k$ vertices that is not the giant. For this, we take $\rho$ from \eqref{eq:lower-sub-cond}, and we encompass the box $\Lambda_{k/\rho}$ in a larger ball so that the distance of the ball from the box is half the radius of the ball. Formally, define 
\begin{equation}
r_k:=(k/\rho)^{1/d} \sqrt{d}, \qquad\CB_{\mathrm{in}} := \{x\in\R^d: \|x\|\le r_k\}, \qquad M_\mathrm{in}:=d^{d/2}/\rho, \label{eq:rk}
\end{equation}
so $r_k^d=M_\mathrm{in}k$.
These definitions imply that $\Lambda_{k/\rho}\subseteq \CB_{\mathrm{in}}$. We now constrain $\CC_n(0)$ to the ball  $\CB_{\mathrm{in}}$, and aim to find a component outside $\CB_{\mathrm{in}}$ that is larger than $|\CC_n(0)|$.
We `construct' these two components on vertices in two (hyper)rectangles. Recall that  $\Lambda(x,s)=\Lambda_s(x)$ denotes a box of volume $s$ centered at $x$, see \eqref{eq:xi-q-ab}. Let $M_\mathrm{out}:=2^{d+2}M_\mathrm{in}$ and define for $\eta>0$
\begin{equation}\label{eq:lower-giant-boxes}
\begin{aligned}
 \Lambda_\mathrm{in}&:=\Lambda(0, k/\rho), &\CR_\mathrm{in}&:=\Lambda_{\mathrm{in}} \times \big[1, 2^{\lceil\log_2 \log^\eta(k/\rho)\rceil}\big), \\
 \Lambda_\mathrm{out}&:=\Lambda(x_\mathrm{out}, (kM_\mathrm{out}/\rho)), &\CR_\mathrm{out}&:=\Lambda_\mathrm{out}\times\big[1, 2^{\lceil \log_2 \log^\eta(kM_\mathrm{out}/\rho)\rceil}\big),
 \end{aligned}
\end{equation}
where $x_\mathrm{out}:=(x^{\mathrm{out}}_1, 0, \dots, 0)\in \R^d$ is defined as any solution of $\|\partial\Lambda_\mathrm{out}-\partial \CB_{\mathrm{in}}\|:=r_k/2$ satisfying $\Lambda_\mathrm{out}\cap\CB_{\mathrm{in}}=\emptyset$. 
We assume that the constant $A$ in Proposition~\ref{prop:condition-lower} is sufficiently large so that $\Lambda_\mathrm{in}\cup\Lambda_\mathrm{out}\subseteq\Lambda_{n}$.
We abbreviate $\CC_\mathrm{in}(0):=\CC_{k/\rho}(0)[1, \log^\eta(k/\rho))$. 
Since $\CR_{\mathrm{in}}\subseteq \CB_{\mathrm{in}}\times[1,\infty)$, it is immediate that $\CC_\mathrm{in}(0)\subseteq \CV_{\CB_{\mathrm{in}}}$. 
Let $\CC_\mathrm{out}^\sss{(1)}$ be the largest component in the subgraph of $\CG_n$ induced on vertices in $\CR_\mathrm{out}$.
Define the events
\begin{equation}
 \begin{aligned}\label{eq:lower-large-events}
\CA^\sss{(k)}_\mathrm{giant\textnormal{-}in} &:=\{|\CC_\mathrm{in}(0)|> k\}, &\CA^\sss{(k)}_\mathrm{giant\textnormal{-}out}&:=\{|\CC_\mathrm{out}^\sss{(1)}| > k M_\mathrm{out}-1\},\\
    \CA^\sss{(k)}_\mathrm{small\textnormal{-}in} &:=\{|\CV_{\CB_{\mathrm{in}}}|\le k M_\mathrm{out}/2\}, \qquad &\CA^\sss{(k)}_\mathrm{components}&:=   \CA^\sss{(k)}_\mathrm{giant\textnormal{-}in}   \cap    \CA^\sss{(k)}_\mathrm{giant\textnormal{-}out}.
    \end{aligned}
\end{equation}
\smallskip 

\begin{figure}[b]
 \includegraphics[width=244pt]{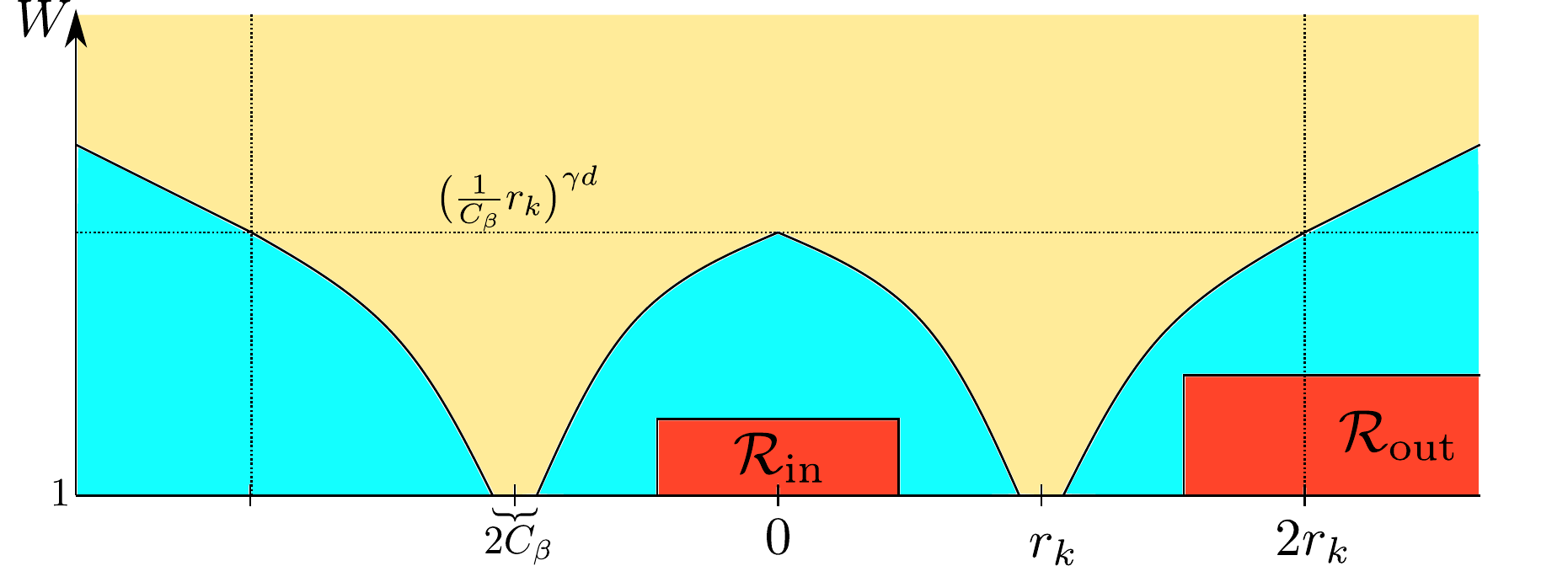}
   \captionsetup{skip=0pt}
 \caption{A visualization of the $\gamma$-suppressed profile $\CM_\gamma$.
 The horizontal axis represents space, the vertical axis represents marks. $\{\CV\le \CM_\gamma\}$ demands no vertices in the yellow region. $\CA_{\mathrm{no\textnormal{-}edge}}(\gamma)$ demands that there is no edge between vertices in the inner blue and vertices in the outer blue regions, $\CA_{\mathrm{components}}$ requires that the two red areas contain large components;  $\CA_\mathrm{regular}(\eta)$ ensures that $\CV$ is `close to typical' in the red areas $\CR_\mathrm{in}$ and $\CR_\mathrm{out}$.}\label{fig:lower-bound}
\end{figure}
\noindent\emph{Isolation.\ \ }
On $\CA^\sss{(k)}_\mathrm{components}$, $\CC_n(0)$ or $ \CC_{\mathrm{in}}(0)$ could still be part of the largest/infinite component.
To prevent this, we will ban edges that cross the boundary of $\CB_{\mathrm{in}}$.
 We first define a suppressed mark-profile  that is parametrized by $\gamma\ge 0$. Below, we optimize its shape to obtain the ``optimally-suppressed mark-profile''. Set $C_\beta:=(2\beta)^{1/d}$, and
define for $x\in\R^d$ with $\|x-\partial\CB_{\mathrm{in}}\|=|\|x\|-r_k|=:z$ the $\gamma$-suppressed profile by
\begin{align}
 \label{eq:suppressed-curve} f_\gamma(z) &:=
 \begin{dcases}
  1                                           & \text{if }z\le C_\beta,       \\
  (z/C_\beta)^{\gamma d}                      & \text{if }z\in(C_\beta, r_k], \\
  (z/C_\beta)^{d}(r_k/C_\beta)^{-d(1-\gamma)} & \text{if }z>r_k,
 \end{dcases}\\
 \CM_\gamma&:=\{(x_v, f_\gamma(|\|x_v\|-r_k|)): x_v\in\R^d\}.\label{eq:M-gamma}
\end{align}
We say that $v$ is \emph{below, on, or above} $\CM_\gamma$ if $w_v$ is at most, equal to, or strictly larger than $f_\gamma(|\|x_v\|-r_k|)$, respectively. 
We split the PPP $\CV$ into four independent PPPs, depending on whether points fall below or above $\CM_\gamma$, and inside or outside $\CB_{\mathrm{in}}$:
\begin{equation}
 \begin{aligned}
  \CV_{\le\CM_\gamma}^\sss{\mathrm{in}}  & := \{(x_u,w_u)\in\CV: x_u\in\CB_{\mathrm{in}}, w_u\le f_\gamma(|\|x_u\|-r_k|)\},    \\
  \CV_{\le\CM_\gamma}^\sss{\mathrm{out}} & := \{(x_v,w_v)\in\CV: x_v\notin\CB_{\mathrm{in}}, w_v\le f_\gamma(|\|x_v\|-r_k|)\}, \\
  \CV_{>\CM_\gamma}^\sss{\mathrm{in}}    & := \{(x_u,w_u)\in\CV: x_u\in\CB_{\mathrm{in}}, w_u> f_\gamma(|\|x_u\|-r_k|)\},      \\
  \CV_{>\CM_\gamma}^\sss{\mathrm{out}}   & := \{(x_v,w_v)\in\CV: x_v\notin\CB_{\mathrm{in}}, w_v> f_\gamma(|\|x_v\|-r_k|)\}.
 \end{aligned}\label{eq:xi-in-out}
\end{equation}
For $A,B\subseteq\CV$ we denote by $|\CE(A,B)|$ the number of edges between vertices in $A$ and $B$. Define
\begin{equation}
 \{\CV\le \CM_{\gamma}\}:=\{|\CV_{>\CM_\gamma}^\sss{\mathrm{in}}\cup\CV_{>\CM_\gamma}^\sss{\mathrm{out}}| = 0\}, \hspace{7pt}
 \CA^\sss{(k)}_{\mathrm{no\textnormal{-}edge}}(\gamma):=\{|\CE(\CV_{\le\CM_\gamma}^\sss{\mathrm{in}},\CV_{\le\CM_\gamma}^\sss{\mathrm{out}})|=0\},
 \label{eq:lower-events}
\end{equation}
On $\{\CV\le \CM_\gamma\}\cap\CA^\sss{(k)}_{\mathrm{no\textnormal{-}edge}}(\gamma)$, the vertices in $\CB_{\mathrm{in}}$ are not connected to the unique infinite component when $n=\infty$, and are isolated from the rest of $\CG_n$ when $n<\infty$. Combined with the events from~\eqref{eq:lower-large-events} and using that $|\CC_n(0)|\ge |\CC_\mathrm{in}(0)|$ we obtain 
\begin{equation}\label{eq:before-tech}
\begin{aligned}
    \{|\CC_n(0)|>k, 0\notin\CC_n^\sss{(1)}\} \supseteq \big(&\{\CV\le \CM_{\gamma}\}\cap  \CA^\sss{(k)}_{\mathrm{no\textnormal{-}edge}}(\gamma)\cap\{|\CC_\mathrm{in}(0)|>k\}\\&\cap\{|\CC_\mathrm{out}^\sss{(1)}|>kM_\mathrm{out}-1\}\cap \{|\CV_{\CB_{\mathrm{in}}}|\le kM_\mathrm{out}/2\}\big).
    \end{aligned}
\end{equation}
We comment on the profile function $f_\gamma$ in \eqref{eq:suppressed-curve}: the event $\{\CV\le \CM_\gamma\}$ demands no vertices within distance $C_\beta$ from $\partial \CB_{\mathrm{in}}$, since $f_\gamma(|\|x\|-r_k|)= 1$ for $\|x-\partial \CB_{\mathrm{in}}\|\le C_\beta$, and vertex marks are above $1$. The function $f_\gamma$ is continuous and increasing in $z$: the closer a point is to the boundary of $\CB_{\mathrm{in}}$, the stronger the mark restriction. This is natural since vertices with higher mark close to $\partial \CB_{\mathrm{in}}$ are more likely to have an edge crossing this boundary, which we want to prevent. While $\{\CV\le \CM_\gamma\}$ becomes less likely when $\gamma$ is small, $\CA^\sss{(k)}_{\mathrm{no\textnormal{-}edge}}(\gamma)$ becomes more likely. This leads to a variational problem, that we set up after a technicality. \smallskip 

\noindent\emph{Ensuring almost independence.\ \ }
The events $ \CA^\sss{(k)}_{\mathrm{no\textnormal{-}edge}}(\gamma)$ and $\{|\CC_\mathrm{in}(0)|> k\}$ in \eqref{eq:before-tech} are negatively correlated. Indeed, $\{|\CC_\mathrm{in}(0)|> k\}$ from~\eqref{eq:lower-large-events} may push up the number of high-mark vertices in $\CR_{\mathrm{in}}$, making $\CA^\sss{(k)}_{\mathrm{no\textnormal{-}edge}}(\gamma)$ less likely.
To overcome the dependence, we introduce two auxiliary events that ensure regularity of the vertex marks in the hyperrectangles $\CR_{\mathrm{in}}, \CR_{\mathrm{out}}$ from \eqref{eq:lower-giant-boxes}.
Let $c_{\mathrm{in}}:=1/\rho$, $c_{\mathrm{out}}:=M_{\mathrm{out}}/\rho$, and  define for $\mathrm{loc}\in\{\mathrm{in}, \mathrm{out}\}$, $\eta>0$,
\begin{equation}
 j^\star_{\mathrm{loc}}  :=\lceil\log_2 \log^\eta(kc_{\mathrm{loc}})\rceil,\qquad 
 I_j^{\sss{\mathrm{loc}}}:=[ 2^{j-1}, 2^j) \quad \text{for }  1\le j\le j^\star_{\mathrm{loc}},
 \label{eq:lower-weight-intervals}
\end{equation}
so that the upper bounds of the largest weight intervals agree with the upper boundaries of the hyperrectangles $\CR_\mathrm{in}$ and $\CR_\mathrm{out}$ defined in \eqref{eq:lower-giant-boxes}.
Using $\Lambda_{\mathrm{in}}, \Lambda_{\mathrm{out}}$ in \eqref{eq:lower-giant-boxes}, the intensity measure $\mu_\tau$ of $\CV$ in \eqref{eq:poisson-intensity}, and $\CV_{\mathrm{loc}}(I_j^{\sss{\mathrm{loc}}})$ for the vertices in $\CV\cap(\Lambda_{\mathrm{loc}}\times I_j^{\sss{\mathrm{loc}}})$, consider the following events for $\mathrm{loc}\in\{\mathrm{in}, \mathrm{out}\}$:
\begin{equation}
 \begin{aligned}
  \CA_\mathrm{regular}^\sss{(k, \mathrm{loc})}(\eta)  &:=\{\forall j\le j^\star_{\mathrm{loc}}: |\CV_\mathrm{loc}(I_j^{\sss{\mathrm{loc}}})|\le 2\mu_\tau(\Lambda_\mathrm{loc}\times I_j^{\sss{\mathrm{loc}}})\}, \\
  \CA^\sss{(k)}_\mathrm{regular}({\eta})             & := \CA_\mathrm{regular}^\sss{(k, \mathrm{in})}({\eta})\cap\CA_\mathrm{regular}^\sss{(k, \mathrm{out})}({\eta}).
  \label{eq:lower-events-global}
 \end{aligned}
\end{equation}
Finally, fix a realization of the induced subgraphs $\CG_{\CR_\mathrm{in}}\cup\CG_{\CR_\mathrm{out}}=(\CV_{\CR_\mathrm{in}}, \CE(\CG_{\CR_\mathrm{in}}))\cup(\CV_{\CR_\mathrm{out}}, \CE(\CG_{\CR_\mathrm{out}}))$  so that the vertex set $\CV_{\CR_\mathrm{in}}\cup \CV_{\CR_\mathrm{out}}$ satisfies the event $\CA^\sss{(k)}_\mathrm{regular}(\eta)$ for some $\eta>0$, and the two induced subgraphs on vertices in $\CR_\mathrm{in}$ and on $\CR_\mathrm{out}$ satisfy $\CA_\mathrm{components}^\sss{(k)}$ defined in \eqref{eq:lower-large-events}. We define the conditional probability measure  and expectation by
\begin{equation}
\begin{aligned}
\widetilde\Prob\big(\,\cdot\,\big) &:= \Prob\big(\,\cdot\mid \CG_{\CR_\mathrm{in}}\cup\CG_{\CR_\mathrm{out}}, \CA^\sss{(k)}_\mathrm{regular}(\eta), \CA_\mathrm{components}^\sss{(k)}\big), \\ \widetilde\E[\,\cdot\,]&:= \E\big[\,\cdot \mid \CG_{\CR_\mathrm{in}}\cup\CG_{\CR_\mathrm{out}}, \CA^\sss{(k)}_\mathrm{regular}(\eta), \CA_\mathrm{components}^\sss{(k)} \big].\end{aligned}\label{eq:cond-prob-lower-1}
\end{equation}
 In the conditioning we reveal both the vertex and edge sets within the disjoint boxes $\CR_{\mathrm{in}}, \CR_{\mathrm{out}}$.  The event $\CA^\sss{(k)}_\mathrm{regular}(\eta)$ checks the number of vertices in hyperrectangles inside $\CR_{\mathrm{in}}, \CR_{\mathrm{out}}$ while $\CA_\mathrm{components}^\sss{(k)}$ depends on the edges spanned on $\CR_{\mathrm{in}}$ and spanned on $\CR_{\mathrm{out}}$, hence both $\CA^\sss{(k)}_\mathrm{regular}(\eta), \CA_\mathrm{components}^\sss{(k)}$ are measurable with respect to $\CG_{\CR_\mathrm{in}}, \CG_{\CR_\mathrm{out}}$.

 \subsection{Isolation via a variational problem}\label{sec:variational}
In this section we analyze the events $\{\CV\!\le\!\CM_\gamma\}$ and $\CA_\mathrm{no\textnormal{-}edge}^\sss{(k)}$  in \eqref{eq:lower-events} under the conditional probability measure in~\eqref{eq:cond-prob-lower-1}.

\begin{lemma}[Lower bound for isolation]\label{prop:subexponential-lower}
 Consider a KSRG satisfying Assumption~\ref{assumption:main} with parameters $\alpha>1, \tau>2$, $\sigma\ge 0$, and $d\in\N$.  There exists $\gamma_\star\in(0, 1/(\sigma+1)]$ such that
for any constant $\eta>0$ in \eqref{eq:lower-events-global} there exists $A>0$ such that for any  realization of $\CG_{\CR_\mathrm{in}}\cup\CG_{\CR_\mathrm{out}}$ satisfying $\CA^\sss{(k)}_\mathrm{regular}(\eta)$,  
        \begin{align}\label{eq:io-lower-zeta}
         \widetilde\Prob\big(\{\CV\le \CM_{\gamma_\star}\}\cap\CA^\sss{(k)}_{\mathrm{no\textnormal{-}edge}}(\gamma_\star)\big)
         \ge
         \exp\big(-Ak^{\zeta_\star}(\log k)^{\mathfrak{m}_\star-1}\big).
        \end{align}
The same bound holds for the Palm-version $\widetilde{\Prob}^\sss{0}$ of $\widetilde\Prob$.
\end{lemma}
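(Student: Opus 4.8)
\textbf{Proof plan for Lemma~\ref{prop:subexponential-lower}.} The strategy is to bound the two events in \eqref{eq:io-lower-zeta} separately and then argue they are (conditionally) nearly independent under $\widetilde\Prob$. For $\{\CV\le \CM_{\gamma}\}$, since $\CV$ restricted to the complement of $\CR_\mathrm{in}\cup\CR_\mathrm{out}$ is an independent PPP, and the conditioning only affects $\CV$ inside those two boxes, we have $\widetilde\Prob(\CV\le\CM_\gamma) = \exp(-\mathrm{Vol}_{d+1}(\text{region above }\CM_\gamma\text{, outside }\CR_\mathrm{in}\cup\CR_\mathrm{out}))$ times a bounded correction accounting for the already-revealed boxes. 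The first step is therefore to compute $\int \Prob(W> f_\gamma(|\|x\|-r_k|))\,\rd x = \int (1\wedge f_\gamma(|\|x\|-r_k|)^{-(\tau-1)})\,\rd x$; using the three-piece definition of $f_\gamma$ in \eqref{eq:suppressed-curve} and integrating in radial coordinates, the dominant contribution comes from the shell $z\in(C_\beta, r_k]$ where the integrand is $(z/C_\beta)^{-\gamma d(\tau-1)}$ against a surface area $\propto r_k^{d-1}$. This yields a bound of the form $\exp(-\Theta(r_k^{d} \cdot r_k^{-\gamma d(\tau-1)})) = \exp(-\Theta(k^{1-\gamma(\tau-1)}))$ when $\gamma(\tau-1)<1$, and $\exp(-\Theta(r_k^{d-1}C_\beta)) = \exp(-\Theta(k^{(d-1)/d}))$ otherwise; the outer region $z>r_k$ contributes a comparable or smaller order by the same computation.

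For $\CA^\sss{(k)}_\mathrm{no\textnormal{-}edge}(\gamma)$, I would estimate the expected number of edges crossing $\partial\CB_\mathrm{in}$ between $\CV^\sss{\mathrm{in}}_{\le\CM_\gamma}$ and $\CV^\sss{\mathrm{out}}_{\le\CM_\gamma}$ and then use a second-moment / Poisson-thinning argument (or Mecke's formula for PPPs) to get $\widetilde\Prob(\CA^\sss{(k)}_\mathrm{no\textnormal{-}edge}(\gamma)\mid \CV\le\CM_\gamma) \ge \exp(-C\,\widetilde\E[|\CE(\CV^\sss{\mathrm{in}}_{\le\CM_\gamma},\CV^\sss{\mathrm{out}}_{\le\CM_\gamma})|])$. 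On the event $\CA^\sss{(k)}_\mathrm{regular}(\eta)$ the already-revealed vertices inside $\CR_\mathrm{in},\CR_\mathrm{out}$ contribute only a constant-factor more than typical, so the expected edge count is within a constant of its unconditional value. Computing that expectation: a vertex at distance $z_1=\|x_1\|-r_k\le 0$ inside with mark $w_1\le f_\gamma(|z_1|)$ and one at distance $z_2\ge0$ outside with mark $w_2\le f_\gamma(z_2)$ connects with probability $\asymp (w_1w_2/\|x_1-x_2\|^d)^\alpha$ (or the threshold analogue); integrating $x_1$ over the inner shell, $x_2$ over the outer shell, and $w_1,w_2$ up to the profile, the integral is governed by the same balance as in Section~\ref{sec:four-regimes}: it splits according to which of the four connectivity types (low-low, high-low, high-high, short) dominates, and by the choice of $f_\gamma$ it is $\Theta(k^{\zeta_\mathrm{long}(\gamma)})$ for an explicit exponent $\zeta_\mathrm{long}(\gamma)$ increasing in $\gamma$, while the suppression cost above is $\Theta(k^{1-\gamma(\tau-1)})$ decreasing in $\gamma$. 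Optimizing over $\gamma\in(0,1/(\sigma+1)]$ — matching the two exponents, which is exactly the defining equation of $\gamma_\mathrm{hh}$ in \eqref{eq:gamma-hh} (and of $\gamma_\mathrm{hl}$ when high-low dominates) — gives $\gamma_\star$ and the common exponent $\zeta_\star = \max(\zeta_\mathrm{ll},\zeta_\mathrm{hl},\zeta_\mathrm{hh},\zeta_\mathrm{short})$ from Claim~\ref{lemma:lower-vertex-boundary}. The logarithmic correction $(\log k)^{\mathfrak{m}_\star-1}$ appears precisely when more than one connectivity type sits at the maximum, from summing the contributions over the $\mathfrak m_\star$ competing scales (this is the content of the footnote and Remark~\ref{remark:polylog}); I would handle it by a dyadic decomposition of the mark-profile into the $\Theta(\log k)$ scales $I_j$ and noting that each contributes at the same order, so the union/product over scales costs a $(\log k)^{\mathfrak m_\star-1}$ factor in the exponent.

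The near-independence step: conditionally on $\CG_{\CR_\mathrm{in}}\cup\CG_{\CR_\mathrm{out}}$ and on $\CA^\sss{(k)}_\mathrm{regular}(\eta)$, the events $\{\CV\le\CM_\gamma\}$ and $\CA^\sss{(k)}_\mathrm{no\textnormal{-}edge}(\gamma)$ both depend only on the PPP outside the revealed boxes plus the (fixed) revealed vertices; since $\{\CV\le\CM_\gamma\}$ is a \emph{decreasing} event in the point configuration and $\CA^\sss{(k)}_\mathrm{no\textnormal{-}edge}$ is also decreasing (fewer vertices and absent edges only help), the Harris/FKG inequality for Poisson processes gives $\widetilde\Prob(\{\CV\le\CM_\gamma\}\cap\CA^\sss{(k)}_\mathrm{no\textnormal{-}edge}(\gamma)) \ge \widetilde\Prob(\CV\le\CM_\gamma)\cdot\widetilde\Prob(\CA^\sss{(k)}_\mathrm{no\textnormal{-}edge}(\gamma))$; alternatively condition on $\{\CV\le\CM_\gamma\}$ first and bound $\widetilde\Prob(\CA^\sss{(k)}_\mathrm{no\textnormal{-}edge}(\gamma)\mid\CV\le\CM_\gamma)$ directly, which is cleaner since restricting to configurations below $\CM_\gamma$ only lowers the relevant marks and hence the expected crossing-edge count. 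Multiplying the two lower bounds and plugging in $\gamma=\gamma_\star$ yields \eqref{eq:io-lower-zeta}. The Palm version $\widetilde\Prob^\sss{0}$ follows identically: adding a vertex of unknown mark at the origin changes $\Prob(\CV\le\CM_{\gamma_\star})$ by at most the factor $\Prob(W\le f_{\gamma_\star}(r_k)) = \Theta(1)$ (the origin is at the center of $\CB_\mathrm{in}$, where $f_{\gamma_\star}$ is largest) and changes the expected crossing-edge count by only a bounded additive term, both absorbed into $A$.

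\textbf{Main obstacle.} The delicate part is the exact evaluation of the double integral for $\widetilde\E[|\CE(\CV^\sss{\mathrm{in}}_{\le\CM_\gamma},\CV^\sss{\mathrm{out}}_{\le\CM_\gamma})|]$ and verifying that its optimal-$\gamma$ value is \emph{exactly} $k^{\zeta_\star}(\log k)^{\mathfrak m_\star-1}$ rather than something larger — in particular, one must check that the geometry (a ball $\CB_\mathrm{in}$ with a shell of width $r_k/2$ separating it from $\Lambda_\mathrm{out}$) does not force extra crossing edges through regions where $f_{\gamma_\star}$ is large, and that the piecewise profile $f_\gamma$ is genuinely optimal (no improvement from a non-power-law shape). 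This is where the careful choice of the exponents $\gamma d$ on $(C_\beta,r_k]$ versus $d$ on $(r_k,\infty)$ in \eqref{eq:suppressed-curve} matters: the first ensures the suppression cost is $k^{1-\gamma(\tau-1)}$, the second ensures the outer region contributes no more crossing edges than the inner shell. Getting the matching $\log$-powers on phase boundaries right, and confirming the $\gamma_\star\le 1/(\sigma+1)$ constraint is never violated (it binds exactly when $\tau>\sigma+2$, consistent with $\gamma_\mathrm{hh}$ in \eqref{eq:gamma-hh}), requires case analysis over the four regimes but no new ideas beyond the back-of-envelope computations already sketched in Section~\ref{sec:four-regimes}.
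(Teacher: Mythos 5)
Your overall plan is correct and follows the same skeleton as the paper: suppress marks below the profile $\CM_\gamma$, bound the suppression cost by $\exp(-\E[|\CV_{>\CM_\gamma}|])$ exactly (since the number of PPP points above $\CM_\gamma$ is Poisson), bound the no-crossing-edge probability by $\exp(-c\,\E[|\CE|])$, optimize $\gamma$, and observe that the Palm measure only changes things by a constant factor because $f_{\gamma_\star}$ is largest at the center. Two points differ from the paper's proof in ways worth flagging.

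First, for separating the two events, you propose Harris/FKG. That is valid — both events are decreasing in $(\CV,\Psi_\CV)$ — but the paper uses a sharper and simpler observation: under $\widetilde\Prob$, the events are \emph{exactly} independent, because $\{\CV\le\CM_\gamma\}$ is measurable with respect to the PPP above $\CM_\gamma$ (independent of the conditioned boxes $\CR_\mathrm{in},\CR_\mathrm{out}$, which lie entirely below $\CM_\gamma$ once $k$ is large), while $\CA^{\sss{(k)}}_{\mathrm{no\textnormal{-}edge}}$ depends only on points below $\CM_\gamma$ and the edges among them. Thinning makes these two pieces independent. FKG is a heavier tool than needed, but it does close the gap.

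Second, and more substantially, your description of the no-edge bound as ``a second-moment / Poisson-thinning argument (or Mecke's formula)'' names the wrong tools and glosses over the step where the $\gamma_\star\le 1/(\sigma+1)$ truncation actually earns its keep. A second-moment argument bounds $\Prob(X=0)$ from \emph{above}, not below, and there is no Poisson structure to thin here since crossing edges are not a PPP. The paper's argument is: conditionally on $\CV$, the edge indicators are independent, and one shows $\mathrm{p}(u,v)\le 2^{-\alpha}<1$ \emph{uniformly} over all pairs $u\in\CV^{\sss{\mathrm{in}}}_{\le\CM_{\gamma_\star}}$, $v\in\CV^{\sss{\mathrm{out}}}_{\le\CM_{\gamma_\star}}$ (this is exactly the content of \eqref{eq:claim-long-edge} in Lemma~\ref{lemma:lower-edges-below}, and the constraint $\gamma\le 1/(\sigma+1)$ is what makes it hold, via the elementary geometric case analysis $\beta\kappa_\sigma(w_u,w_v)/\|x_u-x_v\|^d\le 1/2$). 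The uniform separation from $1$ is essential: it gives $1-\mathrm{p}(u,v)\ge \exp(-c\,\mathrm{p}(u,v))$ with a universal $c$, and then Jensen's inequality applied to the convex function $x\mapsto e^{-x}$ moves the expectation inside the exponential. Without the uniform bound on $\mathrm{p}$, there could be configurations where some crossing pair has $\mathrm{p}(u,v)=1$ with positive probability, and $\Prob(\CA_{\mathrm{no\textnormal{-}edge}})$ would vanish regardless of how small the expected edge count is. You observe that the constraint $\gamma_\star\le 1/(\sigma+1)$ must be respected but tie it to the parameter regime $\tau>\sigma+2$ rather than to this probability-one-edge issue; the latter is the actual mechanism. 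Once you make the uniform bound explicit and replace ``second moment'' by the convexity/Jensen step, your argument is complete.
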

The events $\{\CV\!\le\!\CM_\gamma\}$ and $\CA_\mathrm{no\textnormal{-}edge}^\sss{(k)}$  are independent of each other under $ \widetilde\Prob$ in \eqref{eq:cond-prob-lower-1}, since having no points above $\CM_\gamma$ is independent of the conditioning in $\widetilde\Prob$ (since each point in $\CR_\mathrm{in}\cup\CR_\mathrm{out}$ is below $\CM_\gamma$ if $k$ is sufficiently large), and $\CA_\mathrm{no\textnormal{-}edge}^\sss{(k)}$ only depends on points of $\CV$ below $\CM_\gamma$ with endpoints on different sides of $\partial \CB_\mathrm{in}$. Hence, for any $\gamma\ge 0$,
 \begin{equation}
  \begin{aligned}
  \widetilde\Prob\big(\{\CV\le\CM_\gamma\}\cap\CA_\mathrm{no\textnormal{-}edge}^\sss{(k)}(\gamma)\big)          =\widetilde\Prob\big(\CV\le\CM_\gamma\big)
  \cdot\widetilde\Prob\big(\CA_\mathrm{no\textnormal{-}edge}^\sss{(k)}(\gamma)\big)         .\label{eq:lower-sub-independent}
  \end{aligned}
 \end{equation}
 We show below that the two factors decay exponentially fast respectively in the expected number of vertices above $\CM_\gamma$ (which is non-increasing in $\gamma$), and the expected number of edges between vertices below $\CM_\gamma$ crossing $\partial\CB_{\mathrm{in}}$ (which is non-decreasing in $\gamma$). We compute these in the following two lemmas, then balance them to get the optimal $\gamma$.
  Recall $f_{\gamma},M_\gamma$ from \eqref{eq:suppressed-curve}, \eqref{eq:M-gamma}, and the PPPs in \eqref{eq:xi-in-out}. Let $\CV_{>\CM_{\gamma}}:=\CV_{>\CM_\gamma}^\sss{\mathrm{in}}\cup \CV_{>\CM_\gamma}^\sss{\mathrm{out}}$.
\begin{lemma}[Vertices above $\CM_\gamma$]\label{lemma:lower-vertices-above}
 Consider a KSRG satisfying Assumption~\ref{assumption:main} with parameters $\alpha>1, \tau>2$, $\sigma\ge 0$, and $d\in\N$. For each $\gamma\ge 0$,
 there exists a constant $C_{\ref{lemma:lower-vertices-above}}>0$ such that for all $k \ge 1$
  \begin{equation}
  \widetilde\E\big[|\CV_{>\CM_{\gamma}}|\big] \le C_{\ref{lemma:lower-vertices-above}}k^{\max\big(1-\gamma(\tau-1),\tfrac{d-1}{d}\big)}\cdot(\log k)^{\Ind{1-\gamma(\tau-1)=\tfrac{d-1}{d}}}.
  \label{eq:expected-above-curve}
 \end{equation}
\end{lemma}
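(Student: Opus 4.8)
\textbf{Proof plan for Lemma~\ref{lemma:lower-vertices-above}.} The plan is to bound $\widetilde\E[|\CV_{>\CM_\gamma}|]$ by first removing the conditioning, then splitting the integral according to distance from $\partial\CB_{\mathrm{in}}$. First I would observe that the event $\{\CV\le \CM_\gamma\}$ plays no role here: we are only counting vertices strictly above $\CM_\gamma$, and the conditioning in $\widetilde\Prob$ from~\eqref{eq:cond-prob-lower-1} fixes the configuration inside $\CR_{\mathrm{in}}\cup\CR_{\mathrm{out}}$. For $k$ large, every point of $\CR_{\mathrm{in}}\cup\CR_{\mathrm{out}}$ lies below $\CM_\gamma$ (the mark-profile $f_\gamma$ exceeds the mark-ceiling $2^{\lceil\log_2\log^\eta(\cdot)\rceil}$ of these hyperrectangles once $k$ is large, because $f_\gamma$ grows polynomially in the distance to $\partial\CB_{\mathrm{in}}$ while the hyperrectangles only reach polylogarithmic marks). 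Hence $\CV_{>\CM_\gamma}$ is supported outside $\CR_{\mathrm{in}}\cup\CR_{\mathrm{out}}$, where $\CV$ is unaffected by the conditioning (restriction of a PPP to disjoint regions yields independent PPPs, and $\CA^\sss{(k)}_{\mathrm{regular}}(\eta),\CA^\sss{(k)}_{\mathrm{components}}$ are measurable w.r.t.\ the restriction to $\CR_{\mathrm{in}}\cup\CR_{\mathrm{out}}$). So $\widetilde\E[|\CV_{>\CM_\gamma}|]\le \E[|\CV_{>\CM_\gamma}|]$, and it suffices to bound the latter via the Mecke/Campbell formula.

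Next I would compute $\E[|\CV_{>\CM_\gamma}|]$ as the integral of the intensity over the region above the surface:
\begin{equation}\nonumber
\E\big[|\CV_{>\CM_\gamma}|\big] = \int_{\R^d}\Prob\big(W> f_\gamma(|\|x\|-r_k|)\big)\,\rd x = \int_{\R^d} f_\gamma\big(|\|x\|-r_k|\big)^{-(\tau-1)}\,\rd x,
\end{equation}
using the Pareto tail $\Prob(W>w)=w^{-(\tau-1)}$ from~\eqref{eq:power-law}. Passing to the radial variable $z=|\|x\|-r_k|$ and noting that the sphere of radius $r_k\pm z$ has surface area $\Theta((r_k+z)^{d-1})$, this becomes a one-dimensional integral $\Theta\big(\int_0^\infty (r_k+z)^{d-1} f_\gamma(z)^{-(\tau-1)}\,\rd z\big)$ plus the contribution from $z\le r_k$ on the inner side (a ball of radius $r_k$). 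I would then split according to the three pieces of the piecewise definition of $f_\gamma$ in~\eqref{eq:suppressed-curve}: (a) $z\le C_\beta$, where $f_\gamma\equiv 1$ and the contribution is $\Theta(r_k^{d-1})=\Theta(k^{(d-1)/d})$ since $C_\beta$ is a constant; (b) $z\in(C_\beta, r_k]$, where $f_\gamma(z)=(z/C_\beta)^{\gamma d}$ and the contribution is $\Theta\big(\int_{C_\beta}^{r_k} (r_k+z)^{d-1} z^{-\gamma d(\tau-1)}\,\rd z\big)$, which by standard estimates is $\Theta(r_k^{d-1})$ if $\gamma d(\tau-1)>1$, $\Theta(r_k^{d-1}\log r_k)$ at equality, and $\Theta(r_k^{d-\gamma d(\tau-1)})=\Theta(k^{1-\gamma(\tau-1)})$ if $\gamma d(\tau-1)<1$; (c) $z>r_k$, where $f_\gamma(z)=(z/C_\beta)^d (r_k/C_\beta)^{-d(1-\gamma)}$ decays like $z^{-d(\tau-1)}$ with $d(\tau-1)>d\ge d-1+1$, so the integral $\int_{r_k}^\infty (r_k+z)^{d-1} z^{-d(\tau-1)} r_k^{d(1-\gamma)(\tau-1)}\,\rd z$ converges and is of order $k^{(d-1)/d}\cdot k^{((1-\gamma)(\tau-1)d - (d(\tau-1)-d))/d}$, which one checks is dominated by the previous terms. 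Collecting the dominant piece gives exactly $\Theta\big(k^{\max(1-\gamma(\tau-1),(d-1)/d)}(\log k)^{\Ind{1-\gamma(\tau-1)=(d-1)/d}}\big)$, as claimed.

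The only slightly delicate point is the bookkeeping in step (c) and making sure that the boundary case $\gamma d(\tau-1)=1$ (equivalently $1-\gamma(\tau-1)=(d-1)/d$) genuinely produces the single logarithmic factor and not a higher power — this is a routine but careful computation of $\int_{C_\beta}^{r_k} (r_k+z)^{d-1} z^{-1}\,\rd z$, where one splits at $z=r_k/2$: the range $z\in(r_k/2,r_k]$ contributes $\Theta(r_k^{d-1})$ and the range $z\in(C_\beta,r_k/2]$ contributes $\Theta(r_k^{d-1}\int_{C_\beta}^{r_k/2} z^{-1}\,\rd z)=\Theta(r_k^{d-1}\log r_k)$. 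I do not expect any real obstacle: the lemma is an intensity computation, and the main task is organizing the case analysis so that the maximum and the indicator come out cleanly. The fact that $\widetilde\E\le\E$ (rather than an equality) is harmless since we only need an upper bound.
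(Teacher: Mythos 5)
Your approach is essentially identical to the paper's: drop the conditioning (the paper in fact proves equality, but your $\le$ suffices since the region above $\CM_\gamma$ is disjoint from $\CR_{\mathrm{in}}\cup\CR_{\mathrm{out}}$ for large $k$), reduce by symmetry to the outer contribution, pass to the radial variable $z$, and split the resulting one-dimensional integral into the three ranges of the piecewise $f_\gamma$; the case analysis for the middle range is also the same, and your use of $(r_k+z)^{d-1}\asymp r_k^{d-1}$ on $[C_\beta,r_k]$ is cleaner than the paper's binomial expansion, which yields the same bound. However, your formula for the tail piece (c) has an arithmetical slip: the integral $\int_{r_k}^\infty (r_k+z)^{d-1}z^{-d(\tau-1)}r_k^{d(1-\gamma)(\tau-1)}\,\rd z$ is $\Theta\big(r_k^{d(1-\gamma)(\tau-1)-d(\tau-2)}\big)=\Theta\big(k^{1-\gamma(\tau-1)}\big)$, not $k^{(d-1)/d}\cdot k^{1-\gamma(\tau-1)}$ as you write — the extra factor $k^{(d-1)/d}$ should not be there, and if it were, this term would not be dominated by the other two and the lemma would fail. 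Since the correct value is $k^{1-\gamma(\tau-1)}$, which is indeed at most the maximum, your overall conclusion stands; you should simply correct the bookkeeping in that step.
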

For readability, we need to introduce a few more `exponents', then we state the other lemma that bounds the expected number of edges between $\CV_{\le\CM_\gamma}^\sss{\mathrm{in}}$ and $\CV_{\le\CM_\gamma}^\sss{\mathrm{out}}$. Let
\begin{equation}\label{eq:xi-long}
\begin{aligned}
\xi_\mathrm{ll}&:=0, & \xi_\mathrm{hl}&:=\alpha-(\tau-1), & \xi_\mathrm{hh}&:=(\sigma+1)\alpha-2(\tau-1),\\
 \Xi&:=\{\xi_\mathrm{ll}, \xi_\mathrm{hl}, \xi_\mathrm{hh}\},& \xi_\star&:=\max(\Xi), &\mathfrak{m}_\mathrm{long}&:=\sum_{\xi\in\Xi}\ind{\xi_\star=\xi}.
 \end{aligned}
\end{equation}
\begin{lemma}[Edges crossing $\partial \CB_{\mathrm{in}}$ below $\CM_\gamma$]\label{lemma:lower-edges-below} Consider a KSRG under the conditions of Lemma~\ref{prop:subexponential-lower} with $\alpha<\infty$. 
 For each $\gamma\ge 0$ there exists a constant $C_{\ref{lemma:lower-edges-below}}=C_{\ref{lemma:lower-edges-below}}(\rho)>0$ such that for all $k\ge 1$ and any realization of $\CV_{\CR_\mathrm{in}}\cup\CV_{\CR_\mathrm{out}}$ that satisfies $\CA_\mathrm{regular}(\eta)$ in \eqref{eq:lower-events-global} for some $\eta>0$, 
  \begin{equation}
  \begin{aligned}
\widetilde\E\big[|\CE\big(\CV_{\le\CM_\gamma}^\sss{\mathrm{in}}, \CV_{\le\CM_\gamma}^\sss{\mathrm{out}}\big)| \big] &\le C_{\ref{lemma:lower-edges-below}}k^{\max\big(2-\alpha + \gamma\xi_\star,\frac{d-1}{d}\big)}\\&\hspace{11pt}\cdot(\log k)^{(\mathfrak{m}_\mathrm{long}-1)\Ind{2-\alpha + \gamma\xi_\star > \frac{d-1}{d}}+\mathfrak{m}_\mathrm{long}\Ind{2-\alpha + \gamma\xi_\star = \frac{d-1}{d}}}.
  \end{aligned}
  \label{eq:expected-edges-below}
 \end{equation}
     Assume now $\gamma\in[0, 1/(\sigma+1)]$. For any KSRG under the conditions of Lemma~\ref{prop:subexponential-lower} with vertex set formed by a homogeneous Poisson point process, for any $\alpha\in(1,\infty]$, we have for any $u\in\CV_{\le\CM_\gamma}^\sss{\mathrm{in}}, v\in \CV_{\le\CM_\gamma}^\sss{\mathrm{out}}$
 \begin{equation}
  \mathrm{p}(u, v) \le
  \begin{dcases}
  2^{-\alpha},&\text{if }\alpha<\infty,\\
  0,&\text{if }\alpha=\infty.
  \end{dcases}
  \label{eq:claim-long-edge}
 \end{equation}
\end{lemma}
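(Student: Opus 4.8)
\textbf{Proof plan for Lemma~\ref{lemma:lower-edges-below}.}
The plan is to split the computation into two parts: the bound \eqref{eq:expected-edges-below} on the expected number of crossing edges below $\CM_\gamma$, and the pointwise bound \eqref{eq:claim-long-edge} on a single connection probability. The second part is short: for $u\in\CV_{\le\CM_\gamma}^\sss{\mathrm{in}}$ and $v\in\CV_{\le\CM_\gamma}^\sss{\mathrm{out}}$ we have $w_u\le f_\gamma(|\|x_u\|-r_k|)$ and $w_v\le f_\gamma(|\|x_v\|-r_k|)$ with $x_u\in\CB_{\mathrm{in}}$, $x_v\notin\CB_{\mathrm{in}}$. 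Writing $z_u:=|\|x_u\|-r_k|$, $z_v:=|\|x_v\|-r_k|$, the triangle inequality along the segment through $\partial\CB_{\mathrm{in}}$ gives $\|x_u-x_v\|\ge z_u+z_v$ up to the obvious geometric adjustment (in fact $\|x_u-x_v\|\ge |z_u - (-z_v)|$ once one keeps track of signs, but since $x_u,x_v$ are on opposite sides, $\|x_u-x_v\|\ge z_u+z_v$ whenever $x_u,x_v,0$ are not too far from colinear; in general $\|x_u-x_v\|\ge \max(z_u,z_v)$ always and $\|x_u-x_v\|^d\ge$ a constant times $z_u^d+z_v^d$ is what is actually needed). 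One then bounds $\kappa_\sigma(w_u,w_v)\le (w_u\vee w_v)^{\sigma+1}$ when $\gamma\le 1/(\sigma+1)$ so that $f_\gamma(z)^{\sigma+1}\le (z/C_\beta)^{d}$ (for $z\le r_k$, and with the analogous inequality for $z>r_k$), hence $\beta\kappa_\sigma(w_u,w_v)/\|x_u-x_v\|^d \le \beta (z_u\vee z_v)^d C_\beta^{-d}/(z_u\vee z_v)^d = \beta C_\beta^{-d}=1/2$, using $C_\beta=(2\beta)^{1/d}$ from \eqref{eq:rk}. Plugging into \eqref{eq:connection-prob-gen} gives $\mathrm{p}(u,v)\le p\cdot(1/2)^\alpha\le 2^{-\alpha}$ when $\alpha<\infty$, and the indicator is zero when $\alpha=\infty$. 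Care is needed with the three regimes of $f_\gamma$ in \eqref{eq:suppressed-curve} and with the constant $C_\beta$ on the boundary piece $z\le C_\beta$, where $f_\gamma\equiv 1$ and $\|x_u-x_v\|\ge$ (distance bound) still yields the same conclusion.

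For the expectation \eqref{eq:expected-edges-below} the plan is a first-moment (Mecke/Campbell) computation. Conditioning on $\CG_{\CR_\mathrm{in}}\cup\CG_{\CR_\mathrm{out}}$ and the regularity event, the points of $\CV$ outside the two hyperrectangles still form a PPP (with the obvious restricted intensity), and the points inside $\CR_\mathrm{in},\CR_\mathrm{out}$ are fixed; the conditioning only enters through the deterministic upper bounds $|\CV_\mathrm{loc}(I_j^{\sss{\mathrm{loc}}})|\le 2\mu_\tau(\Lambda_\mathrm{loc}\times I_j^{\sss{\mathrm{loc}}})$ coming from $\CA_\mathrm{regular}$. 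So $\widetilde\E[|\CE(\CV_{\le\CM_\gamma}^\sss{\mathrm{in}},\CV_{\le\CM_\gamma}^\sss{\mathrm{out}})|]$ is, up to constants, a double integral of $\mathrm{p}((x_u,w_u),(x_v,w_v))$ over $x_u\in\CB_{\mathrm{in}}$, $w_u\in[1,f_\gamma(z_u)]$ and $x_v\notin\CB_{\mathrm{in}}$, $w_v\in[1,f_\gamma(z_v)]$, against $\rd x_u\,(\tau-1)w_u^{-\tau}\rd w_u\,\rd x_v\,(\tau-1)w_v^{-\tau}\rd w_v$ — except that on the regions overlapping $\CR_\mathrm{in}$ and $\CR_\mathrm{out}$ the weight-marginals are replaced by the (larger by at most a factor $2$) atomic measures from $\CA_\mathrm{regular}$, which changes nothing at the level of orders. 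Changing to polar-type coordinates $z_u=|\|x_u\|-r_k|\in[0,r_k]$ and $z_v=|\|x_v\|-r_k|\in[0,\infty)$, the $\partial\CB_{\mathrm{in}}$-sphere contributes a surface factor $\Theta(r_k^{d-1})=\Theta(k^{(d-1)/d})$ and the integrand $\mathrm{p}\asymp (1\wedge \beta\kappa_\sigma(w_u,w_v)(z_u+z_v)^{-d})^\alpha$. One then carries out the $w_u,w_v$ integrals against $w^{-\tau}\rd w$ up to $f_\gamma(z_u),f_\gamma(z_v)$, distinguishing whether the minimum in $\mathrm{p}$ is at $1$ or at the ratio. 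This produces exactly the three ``long-edge'' contributions: the low-low term $z_u,z_v$ both $O(1)$ in mark gives exponent $2-\alpha$; the high-low and high-high terms arise from $w_u$ (resp.\ $w_u$ and $w_v$) near their caps $f_\gamma$, and after substituting $f_\gamma(z)=(z/C_\beta)^{\gamma d}$ the $z$-integrals evaluate to powers of $r_k^d=\Theta(k)$ with exponents $2-\alpha+\gamma\xi_\mathrm{hl}$ and $2-\alpha+\gamma\xi_\mathrm{hh}$, where $\xi_\mathrm{hl},\xi_\mathrm{hh}$ are precisely as in \eqref{eq:xi-long}. Taking the maximum of these three against the surface-tension term $k^{(d-1)/d}$ yields $k^{\max(2-\alpha+\gamma\xi_\star,(d-1)/d)}$, and the $\log k$ powers appear from the usual boundary cases where two or more exponents in $\Xi$ coincide (each near-tie contributes a $\log$), or where $2-\alpha+\gamma\xi_\star$ equals $(d-1)/d$ exactly, giving the indicator pattern in \eqref{eq:expected-edges-below}.

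The main obstacle I expect is the careful bookkeeping of the integral over the region where $z_v>r_k$ (the outer piece of $f_\gamma$ with exponent $d$ rather than $\gamma d$), and the interplay with the finite outer box $\Lambda_\mathrm{out}$: one must check that extending $z_v$ to $\infty$ does not overcount and that the integral converges, which requires $\alpha>1$ and the precise matching of the exponent $d$ of $f_\gamma$ on $z>r_k$ with the decay $\|x_u-x_v\|^{-d\alpha}$, so that the $w_v$-integral against $w_v^{-\tau}$ with $\tau>2$ still converges and the $z_v$-tail is summable. A second delicate point is ensuring the bounds on the weight-marginals coming from $\CA_\mathrm{regular}$ are genuinely enough to replace the Poisson marginals in the overlap regions while keeping all constants uniform in the conditioned realization — this is where the dyadic interval decomposition $I_j^{\sss{\mathrm{loc}}}$ in \eqref{eq:lower-weight-intervals} is used, and one has to verify that summing $2\mu_\tau(\Lambda_\mathrm{loc}\times I_j^{\sss{\mathrm{loc}}})$ over $j\le j^\star_{\mathrm{loc}}$ reproduces (up to a constant) the unconditioned integral $\int_1^{f_\gamma} w^{-\tau}\,\rd w$ over the relevant mark range. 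Neither obstacle is conceptually hard, but both need to be handled to make the constant $C_{\ref{lemma:lower-edges-below}}(\rho)$ explicit and realization-independent.
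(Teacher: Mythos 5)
Your plan for \eqref{eq:expected-edges-below} follows the same route as the paper: a first-moment computation on the marked Poisson process, polar reparametrisation giving the surface factor $\Theta(r_k^{d-1})=\Theta(k^{(d-1)/d})$, integration of $\mathrm{p}$ against the mark densities up to the caps $f_\gamma(z_u),f_\gamma(z_v)$, and a case analysis over whether the minimum in $\mathrm{p}$ is attained at $1$ or at the ratio, producing the three long-edge exponents $2-\alpha$, $2-\alpha+\gamma\xi_\mathrm{hl}$, $2-\alpha+\gamma\xi_\mathrm{hh}$ with logarithmic corrections on ties. You also correctly identify the two bookkeeping obstacles (the outer region $z_v>r_k$ and the replacement of the Poisson mark marginal by dyadic counts from $\CA_\mathrm{regular}$); these are exactly what the paper handles via the decomposition into four edge-location cases and the integrals $I_1,I_{2a},I_{2b}$. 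So the expectation part is sound.

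The pointwise bound \eqref{eq:claim-long-edge}, however, has a genuine gap in the outer region. Your chain uses $\kappa_\sigma(w_u,w_v)\le (w_u\vee w_v)^{\sigma+1}\le f_\gamma(z_u\vee z_v)^{\sigma+1}$ together with $\|x_u-x_v\|^d\ge(z_u\vee z_v)^d$. For $z_u\vee z_v\le r_k$ this works since $f_\gamma(z)^{\sigma+1}=(z/C_\beta)^{\gamma d(\sigma+1)}\le(z/C_\beta)^d$ once $\gamma\le1/(\sigma+1)$ and $z\ge C_\beta$. But the ``analogous inequality for $z>r_k$'' is false: there $f_\gamma(z)=(z/C_\beta)^d(r_k/C_\beta)^{-d(1-\gamma)}$ grows like $z^d$, so $f_\gamma(z_v)^{\sigma+1}\asymp z_v^{d(\sigma+1)}$, and dividing by $\|x_u-x_v\|^d\asymp z_v^d$ leaves a factor $z_v^{d\sigma}$ that diverges as $z_v\to\infty$. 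The crude bound $\kappa_\sigma\le(w_u\vee w_v)^{\sigma+1}$ discards the key leverage that the \emph{inner} vertex $u$ has $z_u\le r_k$, hence $w_u\le f_\gamma(r_k)=(r_k/C_\beta)^{\gamma d}$ is bounded. You must keep the product structure: with $z_v>r_k\ge z_u$,
\begin{equation*}
\kappa_\sigma(w_u,w_v)\le f_\gamma(z_v)\,f_\gamma(z_u)^\sigma\le f_\gamma(z_v)\,f_\gamma(r_k)^\sigma
= C_\beta^{-\gamma d(\sigma+1)}\,z_v^{d}\,r_k^{-d(1-\gamma(\sigma+1))},
\end{equation*}
and after dividing by $\|x_u-x_v\|^d\ge z_v^d$ the factor $z_v^d$ cancels, leaving
\begin{equation*}
\beta\frac{\kappa_\sigma(w_u,w_v)}{\|x_u-x_v\|^d}\le \beta C_\beta^{-\gamma d(\sigma+1)}r_k^{-d(1-\gamma(\sigma+1))}\le\beta C_\beta^{-d}=\tfrac12,
\end{equation*}
using $\gamma(\sigma+1)\le 1$ and $r_k\ge C_\beta$. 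Without this finer bound on $\kappa_\sigma$ the case $z_v>r_k$ is unprotected, and the conclusion $\mathrm{p}(u,v)\le 2^{-\alpha}$ does not follow from your argument.
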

\begin{remark}
One can prove that the right-hand side of \eqref{eq:expected-edges-below} is the correct order for the expectation for all $\gamma\in[0,1]$ whenever $\xi_{\mathrm{hh}}<\xi^\star$ in \eqref{eq:xi-long}, by computing a matching lower bound up to constant factor. When $\xi_{\mathrm{hh}}=\xi_\star$, then the right-hand side of \eqref{eq:expected-edges-below} is the correct order when $\gamma\in[0,1/(\sigma+1)]$. When $\xi_{\mathrm{hh}}=\xi^\star$ and $\gamma>1/(\sigma+1)$, the right-hand side of \eqref{eq:expected-edges-below} is not a sharp upper bound, but it suffices for the purposes of the proofs below.
\end{remark}

\begin{proof}[Proof sketch of Lemmas \ref{lemma:lower-vertices-above} and  \ref{lemma:lower-edges-below}]
Since the quantities we compute are  functions of Poisson variables, the proof is an integration and case-distinction exercise over the domains of the underlying Poisson processes and connection probability.
 We defer the (lengthy) integrals to the appendix on page~\pageref{proof:lower-vertices-above},
  and give intuition. We omit among others technicalities caused by the conditioning in $\widetilde\Prob$ in~\eqref{eq:cond-prob-lower-1}. Define the hyperrectangle $\CR^\uparrow:=[-2r_k, 2r_k]^d\times [(1\vee(r_k/C_\beta)^{\gamma d}), \infty)$ and $\mathrm A_\beta:=\{x\in \R^d, \|x\|\in [r_k-C_\beta, r_k+C_\beta]\}\times[1,\infty)$, an annulus in $\R^d$ times all mark-coordinates. Then by definition of $f_\gamma$ in \eqref{eq:suppressed-curve},  the set $(\CR^\uparrow\cup \mathrm A_\beta)$ is above $\CM_\gamma$, and  $\mu_\tau( \CR^\uparrow\cup \mathrm{A}_\beta )=\Theta(k^{1-\gamma(\tau-1)} + k^{(d-1)/d})$ by the definition of the Poisson intensity $\mu_\tau$ in~\eqref{eq:poisson-intensity}.
Integration shows that the Poisson intensity $\mu_{\tau}$ of the larger space-mark area above $\CM_{\gamma}$ (the left-hand side of \eqref{eq:expected-edges-below}) is of the same order if $1-\gamma(\tau-1)\neq(d-1)/d$. When $1-\gamma(\tau-1)=(d-1)/d$ we get an extra $\log k$ factor.
\smallskip

We explain now the exponents of $k$ in \eqref{eq:expected-edges-below} in  Lemma~\ref{lemma:lower-edges-below}.
 The expected number of edges between vertices of constant mark within constant distance of $\partial \CB_{\mathrm{in}}$ is $\Theta(k^{(d-1)/d})$.  Let $0\le\gamma_\wedge\le\gamma_\vee\le \gamma$. Using $\mu_{\tau}$ in \eqref{eq:poisson-intensity}, the expected number of vertex pairs $\CV_{\le\CM_\gamma}^\sss{\mathrm{in}}$ and  $\CV_{\le\CM_\gamma}^\sss{\mathrm{out}}$ within distance $\Theta(r_k)$ from $\partial\CB_{\mathrm{in}}$, and marks $w_\vee=\Theta(k^{\gamma_\vee }), w_\wedge=\Theta(k^{\gamma_\wedge})$ is
 \[ \mathbb E[\mathrm{Pairs}(\gamma_\vee,\gamma_\wedge)]:=\Theta\big(k^{1-\gamma_\vee(\tau-1)}\cdot k^{ 1-\gamma_\wedge(\tau-1)}\big)=\Theta\big(k^{2-(\gamma_\vee+\gamma_\wedge)(\tau-1)}\big).\] The typical Euclidean distance between such vertices is $\Theta(r_k)=\Theta(k^{1/d})$. Therefore, by the connection probability $\mathrm{p}$ in \eqref{eq:connection-prob-gen}, a pair of such vertices are connected with probability roughly $\Theta(k^{\alpha(\gamma_\vee+\sigma\gamma_\wedge-1)})$  when $\gamma_\vee+\sigma\gamma_\wedge\le 1$ and $\alpha<\infty$. Thus, there are \begin{equation}
 \begin{aligned}
  \mathbb E[\mathrm{Edges}(\gamma_\vee, \gamma_\wedge)]&:=\mathbb E[\mathrm{Pairs}(\gamma_\vee,\gamma_\wedge)]\cdot \Theta(k^{\alpha(\gamma_\vee+\sigma\gamma_\wedge-1)})\\&=\Theta\big(k^{2-\alpha+\gamma_\vee(\alpha-(\tau-1))+\gamma_\wedge(\sigma\alpha-(\tau-1))}\big).\end{aligned}\label{eq:intuition-edges}
 \end{equation}
 such edges in expectation. 
 The proof below on page~\pageref{proof:lower-edges-below} reveals that the expectation of $|\CE( \CV_{\le\CM_\gamma}^\sss{\mathrm{in}} , \CV_{\le\CM_\gamma}^\sss{\mathrm{out}}\big)|$ is either $\Theta(k^{(d-1)/d})$ (coming from the constant-distance edges) or its order is the maximal value of the right-hand side  in \eqref{eq:intuition-edges}, when maximized with respect to $0\le \gamma_{\wedge}\le \gamma_{\vee}\le\gamma$. Logarithmic factors  arise when there are multiple maximizers.  
 The exponent of $k$ is linear in both $\gamma_\vee$ and $\gamma_\wedge$. When computing the maximizing pair in the interval $[0,\gamma]$, with $\xi_\mathrm{ll}, \xi_\mathrm{hl}, \xi_\mathrm{hh}$ from~\eqref{eq:xi-long}, we arrive at
 \begin{equation}\nonumber
     (\gamma_\vee^\ast, \gamma_\wedge^\ast)=\begin{dcases}
         (\gamma, 0),&\text{if both }\alpha>\tau-1, \sigma\alpha<\tau-1 \quad \big(\Longleftrightarrow\ \xi_\mathrm{hl}>\max(\xi_\mathrm{ll}, \xi_\mathrm{hh})\big),\\
         (\gamma, \gamma),&\text{if both }\alpha>\tau-1, \sigma\alpha>\tau-1  \hspace{3pt} \big(\Longleftrightarrow \xi_\mathrm{hh}>\max(\xi_\mathrm{hl}, \xi_\mathrm{hh})\big),\\(0,0),&\text{if }0 = \xi_\mathrm{ll}>\max(\xi_\mathrm{hl}, \xi_\mathrm{hh}).
     \end{dcases}
 \end{equation}
 The last case summarizes the outcome of the cases remaining after the first two rows. 
 The maximum of $\{\xi_\mathrm{ll}, \xi_\mathrm{hl}, \xi_\mathrm{hh}\}$ is non-unique if at least one of $\alpha=\tau-1$ and $\sigma\alpha=
 \tau-1
 $ holds. In this case any convex combination of the maximizing vectors among $\{(0,0), (\gamma, 0), (\gamma, \gamma)\}$ gives the maximal value on the right hand-side of \eqref{eq:intuition-edges}. This leads to a polylogarithmic correction factor, where the exponent is the dimension of the simplex formed by the maximizers, i.e., $\mathfrak{m}_\mathrm{long}-1$. When the exponent of the maximum equals $(d-1)/d$, edges of all lengths between constant order and $\Theta(k^{1/d})$ contribute to the number of edges, leading to an extra factor $\log k$ in \eqref{eq:expected-edges-below}. We obtain~\eqref{eq:expected-edges-below} by substituting $(\gamma_\vee^\ast, \gamma_\wedge^\ast)$ into~\eqref{eq:intuition-edges} and combining this with the $\Theta(k^{(d-1)/d}) $ many short edges crossing the boundary. 
 The maximizer(s) tell(s) us if the dominant contribution of long edges comes  from  edges between vertices with constant mark when $(\gamma_\vee^\ast, \gamma_\wedge^\ast)=(0, 0)$, from edges between one high-mark vertex and one low-mark vertex when $(\gamma_\vee^\ast, \gamma_\wedge^\ast)=(\gamma, 0)$, or from edges between two high-mark vertices when $(\gamma_\vee^\ast, \gamma_\wedge^\ast)=(\gamma, \gamma)$. 
These edge types are  the \emph{dominant types of connectivity} described in Section~\ref{sec:four-regimes}. 

We prove~\eqref{eq:claim-long-edge} in Lemma~\ref{lemma:lower-edges-below} by showing that $\beta \kappa_\sigma(w_u,w_v)/$ $\|x_u-x_v\|^d \le 1/2$ whenever $u, v$ are below $\CM_\gamma$ and on different sides of $\partial \CB_{\mathrm{in}}$.
 \end{proof}
We aim to balance the expectations in~\eqref{eq:expected-above-curve} and~\eqref{eq:expected-edges-below}. Thus, we say that $\gamma$ is optimal if the exponents of $k$ in the first two cases of ~\eqref{eq:expected-above-curve} (non-increasing in $\gamma$) and~\eqref{eq:expected-edges-below} (non-decreasing in $\gamma$) are equal. Define when $\alpha<\infty$
\begin{equation}\label{eq:gamma-opt-long}
\begin{aligned}
  \gamma_\mathrm{long}  &:= \min\Big\{\gamma: 1-\gamma(\tau-1)\le 2-\alpha+\gamma\xi_\star\Big\}
  =\frac{\alpha-1}{\max(\xi_\mathrm{ll}, \xi_\mathrm{hl}, \xi_\mathrm{hh})+\tau-1}.
  \end{aligned}
\end{equation}
Setting  $\gamma_\mathrm{long}$ as the smallest exponent $\gamma$ such that the expected number of vertices with mark $\Omega(k^\gamma)$ is at most the expected number of edges between lower-mark vertices, supports the definition of $\gamma_\mathrm{high}$ in~\eqref{eq:gamma-long} as the smallest exponent $\gamma$ such that a vertex of mark $\Theta(k^\gamma)$ is incident to constantly many edges of length $\Omega(k^{1/d})$ in expectation. The values $\gamma_\mathrm{long}$ and $\gamma_\mathrm{high}$ agree when high-high or high-low connections are dominant. To use~\eqref{eq:claim-long-edge} below when bounding $\widetilde\Prob\big(\CA_\mathrm{no\textnormal{-}edge}^\sss{(k)}(\gamma)\big)$ from below, we truncate $\gamma_\mathrm{long}$ and set
\begin{equation}\label{eq:gamma-opt}
    \gamma_\star:=
    \begin{dcases}
        \min\big(\gamma_\mathrm{long}, 1/(\sigma+1)\big),&\text{if }\alpha<\infty, \\
        1/(\sigma+1),&\text{if }\alpha=\infty
    \end{dcases}
\end{equation}
for the optimally suppressed mark profile.
The following two lemmas relate the exponents of $k$ and $\log k$ in~\eqref{eq:expected-above-curve} and~\eqref{eq:expected-edges-below} to the exponent $\zeta_\star$  defined in~\eqref{eq:zeta-star}, which appears in the lower bound on $\widetilde\Prob\big(\CA_\mathrm{no\textnormal{-}edge}^\sss{(k)}(\gamma_\star)\big)$ in~\eqref{eq:io-lower-zeta}. Recall $\zeta_\mathrm{ll}$, $\zeta_\mathrm{hl}$, and $\zeta_\mathrm{hh}$ from~\eqref{eq:zeta-ll}, \eqref{eq:gamma-lh}, and~\eqref{eq:zeta-hh}, respectively, and $\mathfrak{m_\star}$, $\mathfrak{m}_\mathrm{long}$, and $\xi_\star$ from~\eqref{eq:m-star} and~\eqref{eq:xi-long}.
\begin{lemma}[Exponents of the optimally-suppressed mark-profile]\label{lemma:zeta-opt-other}
 Consider a KSRG under the conditions of Lemma~\ref{prop:subexponential-lower}. 
 When $\alpha<\infty$, 
  \begin{align}
  2-\alpha+\xi_\star\gamma_\star\le \max\big(1-\gamma_\star(\tau-1), (d-1)/d\big)
 &=\max\big(\zeta_\mathrm{ll}, \zeta_\mathrm{hl}, \zeta_\mathrm{hh}, (d-1)/d\big),\label{eq:zeta-opt-1}
 \end{align}
 and 
 \begin{equation}
 \begin{aligned}
 \mathfrak{m}_\star-1&= (\mathfrak{m}_\mathrm{long}-1)\ind{2-\alpha+\xi_\star\gamma_\star>\tfrac{d-1}{d}}+\mathfrak{m}_\mathrm{long}\ind{2-\alpha+\xi_\star\gamma_\star=\tfrac{d-1}{d}}\\
 &\ge\ind{1-\gamma_\star(\tau-1)=\tfrac{d-1}{d}}.\label{eq:zeta-opt-m2}
 \end{aligned}
 \end{equation}
 When $\alpha=\infty$, $\max(1-\gamma_\star(\tau-1), (d-1)/d)=\max(\zeta_\mathrm{ll}, \zeta_\mathrm{hl}, \zeta_\mathrm{hh}, (d-1)/d)$, and $\mathfrak{m}_\star-1=\ind{1-\gamma_\star(\tau-1)=(d-1)/d}$.
 \end{lemma}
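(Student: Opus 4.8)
The statement is a purely arithmetic identity relating the exponent $\gamma_\star$ (from \eqref{eq:gamma-opt}) and the combinatorial multiplicities $\mathfrak m_\star,\mathfrak m_\mathrm{long}$ to the four competing exponents $\zeta_\mathrm{ll},\zeta_\mathrm{hl},\zeta_\mathrm{hh},(d-1)/d$. The strategy is to verify it by a finite case analysis driven by which of $\xi_\mathrm{ll}=0$, $\xi_\mathrm{hl}=\alpha-(\tau-1)$, $\xi_\mathrm{hh}=(\sigma+1)\alpha-2(\tau-1)$ realises $\xi_\star=\max\Xi$, and, orthogonally, whether $\gamma_\mathrm{long}\le 1/(\sigma+1)$ (so that $\gamma_\star=\gamma_\mathrm{long}$) or $\gamma_\mathrm{long}>1/(\sigma+1)$ (so that $\gamma_\star=1/(\sigma+1)$). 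First I would record the closed form $\gamma_\mathrm{long}=(\alpha-1)/(\xi_\star+\tau-1)$ from \eqref{eq:gamma-opt-long} and substitute it into $1-\gamma_\mathrm{long}(\tau-1)$ and $2-\alpha+\xi_\star\gamma_\mathrm{long}$; by the defining balance of $\gamma_\mathrm{long}$ these two quantities are \emph{equal}, and a direct computation shows their common value is $(\xi_\star-\alpha+1)/(\xi_\star+\tau-1)$, call it $\zeta(\xi_\star)$. Then I would check, for each of the three choices of which $\xi$ attains $\xi_\star$, that $\zeta(\xi_\star)$ equals the corresponding $\zeta$: $\zeta(\xi_\mathrm{ll})=\zeta(0)=(1-\alpha)/(\tau-1)$ — wait, that is negative, so one must be careful: when $\xi_\star=\xi_\mathrm{ll}=0$ is the (weak) maximum one has $\gamma_\mathrm{long}=(\alpha-1)/(\tau-1)$ and $1-\gamma_\mathrm{long}(\tau-1)=2-\alpha=\zeta_\mathrm{ll}$; when $\xi_\star=\xi_\mathrm{hl}$, $\gamma_\mathrm{long}=\gamma_\mathrm{hl}=1-1/\alpha$ and $1-\gamma_\mathrm{long}(\tau-1)=\zeta_\mathrm{hl}$ by \eqref{eq:gamma-lh}; when $\xi_\star=\xi_\mathrm{hh}$, $\gamma_\mathrm{long}=\gamma_\mathrm{hh}=(1-1/\alpha)/(\sigma+1-(\tau-1)/\alpha)$ and $1-\gamma_\mathrm{long}(\tau-1)=\zeta_\mathrm{hh}$ by \eqref{eq:gamma-hh}--\eqref{eq:zeta-hh}. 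This establishes the right-hand equality in \eqref{eq:zeta-opt-1} in the untruncated case $\gamma_\star=\gamma_\mathrm{long}$.

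Next I would handle the truncation $\gamma_\star=1/(\sigma+1)<\gamma_\mathrm{long}$, which by \eqref{eq:gamma-opt-long} happens exactly when $(\alpha-1)(\sigma+1)>\xi_\star+\tau-1$. One checks that in this regime $\xi_\mathrm{hh}=(\sigma+1)\alpha-2(\tau-1)>(\sigma+1)(\tau-1)/(\sigma+1)\cdots$ — more simply, the inequality $(\alpha-1)(\sigma+1)>\xi_\star+\tau-1$ forces $\xi_\mathrm{hh}$ to be the strict maximum (this is the content of the case distinction in \eqref{eq:gamma-hh}: $\gamma_\mathrm{hh}$ switches to $1/(\sigma+1)$ precisely when $\tau>\sigma+2$, equivalently $\xi_\mathrm{hh}$ degenerate). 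With $\gamma_\star=1/(\sigma+1)$ one computes $1-\gamma_\star(\tau-1)=(\sigma+2-\tau)/(\sigma+1)=\zeta_\mathrm{hh}$ by the second branch of \eqref{eq:zeta-hh}, and separately verifies the inequality $2-\alpha+\xi_\star/(\sigma+1)\le\max(1-\gamma_\star(\tau-1),(d-1)/d)$ — here the point is that truncating $\gamma$ downward only decreases the non-decreasing expression $2-\alpha+\xi_\star\gamma$, so the inequality is a fortiori true once it holds (with equality) at $\gamma_\mathrm{long}$. This gives \eqref{eq:zeta-opt-1} in full; the $\alpha=\infty$ clause is the formal limit $\alpha\to\infty$ of the $\xi_\star=\xi_\mathrm{hh}$, $\gamma_\star=1/(\sigma+1)$ case and requires only substituting the $\alpha=\infty$ branches of \eqref{eq:gamma-lh}, \eqref{eq:zeta-hh}.

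For the multiplicity identity \eqref{eq:zeta-opt-m2}, the plan is to match the maximisers on both sides. On the ``long'' side, $\mathfrak m_\mathrm{long}=|\{\xi\in\Xi:\xi=\xi_\star\}|$ counts ties in $\{\xi_\mathrm{ll},\xi_\mathrm{hl},\xi_\mathrm{hh}\}$, and by the affine relation $\gamma\mapsto 2-\alpha+\gamma\xi$ a tie in $\Xi$ translates (for $\gamma_\star\in(0,1/(\sigma+1)]$ generic) into the same number of maximising ``edge types'' among ll/hl/hh, hence into the same number of $\zeta$'s among $\{\zeta_\mathrm{ll},\zeta_\mathrm{hl},\zeta_\mathrm{hh}\}$ attaining $\max(\zeta_\mathrm{ll},\zeta_\mathrm{hl},\zeta_\mathrm{hh})$; this is because applying the (injective on the relevant range) map $\xi_\star\mapsto\zeta(\xi_\star)=(\xi_\star-\alpha+1)/(\xi_\star+\tau-1)$ preserves coincidences. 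One then adds the $(d-1)/d$ term: the indicator $\ind{2-\alpha+\xi_\star\gamma_\star=\frac{d-1}{d}}$ records whether $(d-1)/d$ also ties for the maximum, which is exactly the extra unit by which $\mathfrak m_\star$ exceeds $\mathfrak m_\mathrm{long}$ (resp. the case $2-\alpha+\xi_\star\gamma_\star<(d-1)/d$, where $(d-1)/d$ is the \emph{strict} winner and $\mathfrak m_\star=1$, matching $\mathfrak m_\mathrm{long}\cdot 0+(\mathfrak m_\mathrm{long}-1)\cdot 0$ — here one must separately observe that this last subcase cannot coexist with $\gamma_\star$ being truncated, which follows from $d\ge 1$ and $(d-1)/d<1$). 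Finally the bound $\mathfrak m_\star-1\ge\ind{1-\gamma_\star(\tau-1)=(d-1)/d}$ is immediate: if $1-\gamma_\star(\tau-1)=(d-1)/d$ then by \eqref{eq:zeta-opt-1} also $2-\alpha+\xi_\star\gamma_\star\le(d-1)/d$, and combining with the tie one gets either $2-\alpha+\xi_\star\gamma_\star=(d-1)/d$ (so the $\mathfrak m_\mathrm{long}$-term is present, giving $\mathfrak m_\star-1\ge\mathfrak m_\mathrm{long}\ge 1$) or $2-\alpha+\xi_\star\gamma_\star<(d-1)/d$, but then $(d-1)/d$ is the strict maximum and both $1-\gamma_\star(\tau-1)$ and $(d-1)/d$ achieve it, forcing $\mathfrak m_\star\ge 2$; in both cases the claimed inequality holds.

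\textbf{Main obstacle.} The genuine work is not in any single computation but in organising the case split cleanly: there are three cases for which $\xi$ wins $\xi_\star$, a further split on whether $\gamma_\mathrm{long}$ gets truncated, boundary sub-cases where two or three of $\{\xi_\mathrm{ll},\xi_\mathrm{hl},\xi_\mathrm{hh}\}$ coincide (these drive $\mathfrak m_\mathrm{long}>1$), and the additional layer of whether $(d-1)/d$ meets or beats the maximum. The subtle point I expect to need the most care is verifying that truncation ($\gamma_\star=1/(\sigma+1)$) is compatible \emph{only} with $\xi_\mathrm{hh}$ being the strict maximum, and that in that regime the $2-\alpha+\xi_\star\gamma_\star$ branch is never equal to but at most $(d-1)/d$ or at most $\zeta_\mathrm{hh}$ — i.e. that truncation never creates a spurious tie that would inflate $\mathfrak m_\star$. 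This is exactly the place where the somewhat artificial second branches of \eqref{eq:gamma-hh}--\eqref{eq:zeta-hh} (``purely technical, giving continuity and monotonicity'') are doing their job, and the identity is essentially designed so that these definitions match; making that matching airtight, rather than the integrals, is the crux.
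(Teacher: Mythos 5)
The core of your plan --- verifying the closed form $\gamma_\mathrm{long}=(\alpha-1)/(\xi_\star+\tau-1)$, using the balance $1-\gamma_\mathrm{long}(\tau-1)=2-\alpha+\gamma_\mathrm{long}\xi_\star$, and matching maximizers via an affine map --- is in the right spirit and recovers the paper's identity in the untruncated regime. But your treatment of the truncation case contains a genuine error, and it is exactly the case you flag as the crux.

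You claim that $\gamma_\star=1/(\sigma+1)<\gamma_\mathrm{long}$ \emph{forces $\xi_\mathrm{hh}$ to be the strict maximum of $\Xi$}. This is false. From $\gamma_\mathrm{long}=(\alpha-1)/(\xi_\star+\tau-1)$, truncation is equivalent to $(\alpha-1)(\sigma+1)>\xi_\star+\tau-1$, which unpacks to the three simultaneous conditions $\alpha(\sigma+1)>\tau+\sigma$, $\alpha\sigma>\sigma+1$, and $\tau>\sigma+2$. These in no way determine which element of $\Xi$ realizes $\xi_\star$. Concretely, take $d=1$, $\sigma=1$, $\tau=3.5$, $\alpha=2.4$. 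Then $\xi_\mathrm{ll}=0$, $\xi_\mathrm{hl}=-0.1$, $\xi_\mathrm{hh}=-0.2$, so $\xi_\star=\xi_\mathrm{ll}$, while $\gamma_\mathrm{long}=1.4/2.5=0.56>0.5=1/(\sigma+1)$: truncation occurs with $\xi_\star\neq\xi_\mathrm{hh}$. Your argument therefore gives $1-\gamma_\star(\tau-1)=\zeta_\mathrm{hh}$ (second branch) by arithmetic, but you have no reason any more to conclude $\zeta_\mathrm{hh}=\max(\zeta_\mathrm{ll},\zeta_\mathrm{hl},\zeta_\mathrm{hh})$. What actually makes \eqref{eq:zeta-opt-1} hold in this regime --- and what is missing from your outline --- is the separate fact that truncation forces all of $\zeta_\mathrm{ll},\zeta_\mathrm{hl},\zeta_\mathrm{hh}$ and $1-\gamma_\star(\tau-1)$ to be negative, so both sides of the equality reduce to $(d-1)/d$. (The paper derives this through the implications ($\Rightarrow_1$) and ($\Rightarrow_4$).) One sees it directly: $\alpha(\sigma+1)>\tau+\sigma>2(\sigma+1)$ gives $\alpha>2$, hence $\zeta_\mathrm{ll}<0$; $\alpha\sigma>\sigma+1$ together with $\tau>\sigma+2$ gives $\alpha>1+1/\sigma\ge(\tau-1)/(\tau-2)$, hence $\zeta_\mathrm{hl}<0$; and $\zeta_\mathrm{hh}=(\sigma+2-\tau)/(\sigma+1)<0$ by the second branch of \eqref{eq:zeta-hh}.

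The same difficulty contaminates your multiplicity argument. Your map $\xi\mapsto\zeta(\xi)$ (whose formula, incidentally, should be $1-(\alpha-1)(\tau-1)/(\xi+\tau-1)$, not $(\xi-\alpha+1)/(\xi+\tau-1)$; you notice the discrepancy for $\xi_\mathrm{ll}$ but continue to use the wrong expression when asserting injectivity) only satisfies $\zeta(\xi_\mathrm{hh})=\zeta_\mathrm{hh}$ in the regime $\tau\le\sigma+2$. Once $\tau>\sigma+2$, the definition of $\zeta_\mathrm{hh}$ in \eqref{eq:zeta-hh} is patched to $1-(\tau-1)/(\sigma+1)$, which is strictly larger than the value the balance map would give. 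Thus the ``order-preserving map preserves coincidences'' argument for $\mathfrak m_\mathrm{long}=\mathfrak m(\zeta_\mathrm{ll},\zeta_\mathrm{hl},\zeta_\mathrm{hh})$ simply does not apply when $\tau>\sigma+2$. The paper handles this by showing directly that $\mathrm{sign}(\xi_i-\xi_j)=\mathrm{sign}(\zeta_i-\zeta_j)$ for all pairs --- an identity that survives the branch switch because of its special algebraic form --- and by the auxiliary implication ($\Rightarrow_2$) that $\zeta_\mathrm{hh}<0\le\max(\zeta_\mathrm{ll},\zeta_\mathrm{hl})$ forces $\xi_\mathrm{hh}<\max(\xi_\mathrm{ll},\xi_\mathrm{hl})$ (so that dropping $\zeta_\mathrm{hh}$ and $\xi_\mathrm{hh}$ from both multiplicity counts is harmless in that sub-case). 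You would need an argument of comparable scope for the $\tau>\sigma+2$ regime, and the one you offer (``truncation forces $\xi_\mathrm{hh}$ to win'') does not hold.
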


  The proof is based on rearrangements of the formulas of $\zeta_\mathrm{ll}, \zeta_\mathrm{hl}, \zeta_\mathrm{hh},$ and $\zeta_\mathrm{hh}$, and we postpone it to the appendix on page \pageref{proof:zeta-opt-other}. The following lemma connects~\eqref{eq:zeta-opt-1} to $\zeta_\star$ defined in~\eqref{eq:zeta-star} and implies Claim~\ref{lemma:lower-vertex-boundary}. We recall that we write $u\searrow\Lambda_k^\complement$ if the vertex $u=(x_u, w_u)\in\Lambda_k\times[1,\infty)$ has an edge to a vertex $v=(x_v, w_v)\in\Lambda_k^\complement\times[1,w_u]$.  
\begin{lemma}[Exponents of the downward vertex boundary]\label{lemma:lower-vertex-boundary2}
 Consider a KSRG under the conditions of Lemma~\ref{prop:subexponential-lower}. There exists a constant $C>0$ such that for all $k$,
 \begin{equation}
  \zeta_\star=\lim_{k\to\infty}\frac{\log \E\big[\big|\big\{u\in\Lambda_k:  u\searrow\Lambda_k^\complement\big\}\big|\big]}{\log k} =\max\big(\zeta_\mathrm{hh}, \zeta_\mathrm{hl}, \zeta_\mathrm{ll}, (d-1)/d\big)<1.\label{eq:lemma-zeta-star}
  \end{equation}
  Moreover, if $\max(\zeta_\mathrm{hh}, \zeta_\mathrm{hl}, \zeta_\mathrm{ll})\ge 0$, then 
  \begin{equation}
  \zeta_\mathrm{long}=\lim_{k\to\infty}\frac{\log \E\big[\big|\big\{u\in\Lambda_{k/2}:  u\searrow\Lambda_k^\complement\big\}\big|\big]}{\log k} =\max(\zeta_\mathrm{hh}, \zeta_\mathrm{hl}, \zeta_\mathrm{ll}).\label{eq:lemma-zeta-long}
  \end{equation}
If $\max(\zeta_\mathrm{hh}, \zeta_\mathrm{hl}, \zeta_\mathrm{ll})<0$, then the $\mathrm{\limsup}$ of the expression on the left-hand side is negative.
\end{lemma}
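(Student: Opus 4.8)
\textbf{Proof plan for Lemma~\ref{lemma:lower-vertex-boundary2}.}
The plan is to compute $\E\big[\big|\big\{u\in\Lambda_k:u\searrow\Lambda_k^\complement\big\}\big|\big]$ directly via the Mecke/Campbell formula for the marked Poisson point process with intensity $\mu_\tau$ in~\eqref{eq:poisson-intensity}, and then identify the polynomial growth order. First I would write
\[
\E\big[\big|\big\{u\in\Lambda_k:u\searrow\Lambda_k^\complement\big\}\big|\big]
=\int_{\Lambda_k}\int_1^\infty\Big(1-\prod_{\text{(heuristic)}}\Big)\,\dots
\]
but since we only need the order of magnitude, it suffices to bound the probability that a vertex $u=(x_u,w_u)$ has \emph{at least one} downward crossing edge above and below by a constant times the expected number of such edges, i.e.
\[
\E^{\sss 0}\big[|\{\text{downward edges from }(x_u,w_u)\text{ to }\Lambda_k^\complement\}|\big]
=\int_{\Lambda_k^\complement}\int_1^{w_u}\mathrm{p}\big((x_u,w_u),(x_v,w_v)\big)\,(\tau-1)w_v^{-\tau}\,\rd w_v\,\rd x_v .
\]
(The two sides agree up to constants as long as the inner expectation is $O(1)$, which holds for all but an $o(k^{\zeta_\star})$-fraction of vertices; those exceptional vertices contribute negligibly, exactly as in the back-of-the-envelope computations in Section~\ref{sec:four-regimes}.) Then I would split the outer integral over $x_u\in\Lambda_k$ according to the distance $z:=\|x_u-\partial\Lambda_k\|$ to the boundary, and the mark $w_u$ according to dyadic scales $w_u\asymp k^{\gamma}$, and likewise decompose the $(x_v,w_v)$-integral according to $\|x_v-\partial\Lambda_k\|$ and $w_v\asymp k^{\gamma'}$ with $\gamma'\le\gamma$ (downward constraint). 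Carrying out the $x_v$-integral against $\|x_u-x_v\|^{-\alpha d}$ and the $w_v$-integral against $w_v^{-\tau}$ gives, for each pair of dyadic scales, a contribution of order $k^{1-\gamma(\tau-1)}\cdot k^{1-\gamma'(\tau-1)}\cdot k^{\alpha(\gamma+\sigma\gamma'-1)}$ when the relevant length scale is $\Theta(k^{1/d})$, and a contribution of order $k^{(d-1)/d}$ from the constant-length-scale boundary shell (vertices within $O(1)$ of $\partial\Lambda_k$, edges of $O(1)$ length). This is precisely the quantity $\E[\mathrm{Edges}(\gamma_\vee,\gamma_\wedge)]$ analyzed in the proof sketch of Lemma~\ref{lemma:lower-edges-below} with $\gamma=1$ (no truncation of the mark profile, since $u\in\Lambda_k$ ranges over all marks); maximizing the exponent $2-\alpha+\gamma_\vee(\alpha-(\tau-1))+\gamma_\wedge(\sigma\alpha-(\tau-1))$ over $0\le\gamma_\wedge\le\gamma_\vee\le 1$ and taking the maximum with $(d-1)/d$ yields exactly $\max(\zeta_\mathrm{ll},\zeta_\mathrm{hl},\zeta_\mathrm{hh},(d-1)/d)$ by the same rearrangement bookkeeping that proves Lemma~\ref{lemma:zeta-opt-other}; in particular the maximizing $(\gamma_\vee^\ast,\gamma_\wedge^\ast)$ is one of $(0,0),(1,0),(1,1)$ (or convex combinations on phase boundaries), matching $\zeta_\mathrm{ll},\zeta_\mathrm{hl},\zeta_\mathrm{hh}$. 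Since the number of dyadic scales is $O(\log k)$, summing over them only costs logarithmic factors, which do not affect $\lim_{k\to\infty}\log(\cdot)/\log k$; this gives the first displayed equality in~\eqref{eq:lemma-zeta-star}, and the strict inequality $\zeta_\star<1$ follows because $\zeta_\mathrm{ll}=2-\alpha<1$ (as $\alpha>1$), $\zeta_\mathrm{hl}=(\tau-1)/\alpha-(\tau-2)<1$ (as $\tau>2$), $\zeta_\mathrm{hh}<1$ (as $\tau>2+\sigma-\sigma=2$ forces $\sigma+2-\tau<\sigma+1-(\tau-1)/\alpha$ whenever the first branch applies, and $(\sigma+2-\tau)/(\sigma+1)<1$ in the second), and $(d-1)/d<1$.

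For the $\zeta_\mathrm{long}$ statement~\eqref{eq:lemma-zeta-long}, the only change is that $u$ is now restricted to $\Lambda_{k/2}$, so every crossing downward edge has Euclidean length $\Omega(k^{1/d})$; this removes the constant-length-scale boundary shell contribution of order $k^{(d-1)/d}$, leaving exactly the long-edge part, whose exponent is $\max(\zeta_\mathrm{ll},\zeta_\mathrm{hl},\zeta_\mathrm{hh})$. The hypothesis $\max(\zeta_\mathrm{ll},\zeta_\mathrm{hl},\zeta_\mathrm{hh})\ge 0$ guarantees that this quantity does not decay to $0$ and hence the $\limsup$ is genuinely a limit equal to that maximum; if instead $\max(\zeta_\mathrm{ll},\zeta_\mathrm{hl},\zeta_\mathrm{hh})<0$, the same bound shows the expectation is $k^{\max(\zeta_\mathrm{ll},\zeta_\mathrm{hl},\zeta_\mathrm{hh})+o(1)}\to 0$, so the $\limsup$ of $\log(\cdot)/\log k$ is negative, giving the last sentence. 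Finally, Claim~\ref{lemma:lower-vertex-boundary} follows immediately: comparing~\eqref{eq:lemma-zeta-star}--\eqref{eq:lemma-zeta-long} with the definitions~\eqref{eq:zeta-star}--\eqref{eq:zeta-long} identifies $\zeta_\star=\max(\zeta_\mathrm{long},\zeta_\mathrm{short})$ and $\zeta_\mathrm{long}=\max(\zeta_\mathrm{ll},\zeta_\mathrm{hl},\zeta_\mathrm{hh},0)$.

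The main obstacle I anticipate is not the exponent bookkeeping (which is mechanical once the dyadic decomposition is set up and mirrors Lemma~\ref{lemma:zeta-opt-other}), but rather the lower bound matching the upper bound: one must exhibit, for the maximizing scales $(\gamma_\vee^\ast,\gamma_\wedge^\ast)$, a genuinely linear-in-$k$ (up to the right exponent) number of boundary vertices with a downward crossing edge, \emph{without} over-counting via the union bound. Concretely, for the high-high regime one needs that the probability a given mark-$\Theta(k^{\gamma_\mathrm{hh}})$ vertex near $\partial\Lambda_k$ has a downward edge to some mark-$\Theta(k^{\gamma_\mathrm{hh}})$ vertex outside is bounded below by a constant; this is where the choice of $\gamma_\mathrm{high}$ in~\eqref{eq:gamma-long} as the threshold making the expected number of crossing edges $\Theta(1)$ is used, together with a second-moment / Paley--Zygmund argument (or simply the elementary bound $\Prob(\mathrm{Bin}\ge 1)\ge c$ when the mean is bounded away from $0$ and the edges are conditionally independent given $\CV$) to convert ``expected number of crossing edges is $\Theta(1)$'' into ``probability of at least one crossing edge is $\Theta(1)$''. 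Care is also needed near the phase boundaries where $\mathfrak{m}_\star>1$: there the maximizer set is a simplex and the contributions from its interior scales must be shown not to blow up the polynomial order — but since each of the $O(\log k)$ dyadic scales contributes the \emph{same} polynomial order $k^{\zeta_\star}$ on the boundary, their sum is $O(k^{\zeta_\star}\log k)$, which is still $k^{\zeta_\star+o(1)}$, so the limit $\log(\cdot)/\log k\to\zeta_\star$ is unaffected (the finer $(\log k)^{\mathfrak{m}_\star-1}$ corrections are recorded in the other lemmas, not here).
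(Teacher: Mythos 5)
Your approach differs from the paper's (which bounds the number of vertices with a downward crossing edge by vertices above a truncated mark profile $\CM_{\gamma_\star}$ plus edges emanating from vertices below it), and unfortunately the difference creates a genuine gap. You propose to compute $\E[\text{downward crossing edges}]$ directly and argue it agrees with $\E[\text{vertices with at least one such edge}]$ up to negligible exceptions. This fails at the polynomial level: a vertex of mark $w_u=\Theta(k^{\gamma_\vee})$ with $\gamma_\vee>\gamma_\mathrm{hh}$ has a \emph{polynomially large} expected number of downward crossing edges, not $O(1)$, so the edge count badly overcounts. Concretely, for $\sigma=1$, $\tau=2.5$, $\alpha=3$ ($d=1$), your uncapped exponent $f(\gamma_\vee,\gamma_\wedge)=2-\alpha+\gamma_\vee(\alpha-(\tau-1))+\gamma_\wedge(\sigma\alpha-(\tau-1))$ attains its constrained maximum $f(1/2,1/2)=1/2$ on the line $\gamma_\vee+\sigma\gamma_\wedge=1$ (where the connection probability saturates), while $\zeta_\star=\zeta_\mathrm{hh}=1/3$ — so $\E[\text{edges}]=k^{1/2+o(1)}$ but $\E[\text{vertices with edge}]=k^{1/3+o(1)}$. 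Your claim that the maximizer is among $(0,0),(1,0),(1,1)$ is also incorrect: $f(1,0)=3-\tau$ exceeds $\zeta_\mathrm{hl}=(\tau-1)/\alpha-(\tau-2)$ strictly whenever $\alpha>\tau-1$, and $f(1,1)=\sigma\alpha-2\tau+4$ bears no relation to $\zeta_\mathrm{hh}$. The ``exceptional vertices'' are not an $o(k^{\zeta_\star})$-fraction either — the number of vertices with mark above $k^{\gamma_\mathrm{hh}}$ is $\Theta(k^{\zeta_\mathrm{hh}})=\Theta(k^{\zeta_\star})$, and they constitute the dominant term in the quantity you want.

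The fix is exactly what the paper does: you must \emph{cap} the contribution of each vertex at $1$, which amounts to replacing $f(\gamma_\vee,\gamma_\wedge)$ by $1-\gamma_\vee(\tau-1)+\min\bigl(0,\;1-\gamma_\wedge(\tau-1)+\alpha(\gamma_\vee+\sigma\gamma_\wedge-1)\bigr)$, and then the maximum is attained on the curve where the expected per-vertex crossing-edge count is $\Theta(1)$, namely at $(\gamma_\mathrm{hl},0)$ or $(\gamma_\mathrm{hh},\gamma_\mathrm{hh})$, recovering $\zeta_\mathrm{hl}$ and $\zeta_\mathrm{hh}$. Operationally this is the mark-profile truncation at $\gamma_\star$ in~\eqref{eq:gamma-opt}: all vertices above $\CM_{\gamma_\star}$ are counted once (Lemma~\ref{lemma:lower-vertices-above}), and below the profile each vertex has $O(1)$ expected crossing edges so an edge count suffices (Lemma~\ref{lemma:lower-edges-below}); the balance is expressed in Lemma~\ref{lemma:zeta-opt-other}. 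Note the paper's Lemma~\ref{lemma:lower-edges-below} even warns that its bound is not sharp for $\gamma>1/(\sigma+1)$, which is precisely where your $\gamma=1$ sits. Your lower-bound discussion (Paley--Zygmund / $\Prob(\mathrm{Bin}\ge 1)\ge c$ at the optimal mark scale) matches the paper's proof sketch and is fine; it is the upper bound that needs the capping.
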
 
The proof is similar to the proof of Lemma~\ref{lemma:lower-edges-below}, so we give a sketch in the appendix on page~\pageref{proof:vertex-boundary}.
We state an immediate corollary of Lemmas~\ref{lemma:lower-vertices-above}--\ref{lemma:lower-vertex-boundary2}.
\begin{corollary}[Optimized expectations]\label{cor:optimized-expectations}
    Consider a KSRG under the conditions of Lemma~\ref{prop:subexponential-lower}. 
 There exists a constant $C_{\ref{cor:optimized-expectations}}>0$ such that for any realization of $\CV_{\CR_\mathrm{in}}\cup\CV_{\CR_\mathrm{out}}$ that satisfies $\CA_\mathrm{regular}(\eta)$ in \eqref{eq:lower-events-global} for some $\eta>0$, 
 \[
 \left. 
 \begin{aligned}
     \widetilde\E\big[|\CV_{>\CM_{\gamma_\star}}|\big]& \\
     \widetilde\E\big[|\CE\big(\CV_{\le\CM_{\gamma_\star}}^\sss{\mathrm{in}}, \CV_{\le\CM_{\gamma_\star}}^\sss{\mathrm{out}}\big)| \big] &
 \end{aligned}
 \right\}\le C_{\ref{cor:optimized-expectations}}k^{\zeta_\star}(\log k)^{\mathfrak{m_\star}-1}.
 \]
 \begin{proof}
     When $\alpha<\infty$, the exponents of $r_k$ and $(\log r_k)$ in Lemmas~\ref{lemma:lower-vertices-above} and~\ref{lemma:lower-edges-below} are at most  $\zeta_\star$ and $\mathfrak{m}_\star-1$ by Lemmas~\ref{lemma:zeta-opt-other} and~\ref{lemma:lower-vertex-boundary2} when $\gamma=\gamma_\star$. When $\alpha=\infty$, the bound on the expected number of vertices above $\CM_{\gamma_\star}$ follows analogously. The expected number of edges below $\CM_{\gamma_\star}$ is 0 by~\eqref{eq:claim-long-edge} while the right-hand site is non-negative. 
 \end{proof}
\end{corollary}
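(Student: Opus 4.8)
The plan is to obtain the corollary purely as the assembly step that merges Lemmas~\ref{lemma:lower-vertices-above} and~\ref{lemma:lower-edges-below} with the exponent identities of Lemmas~\ref{lemma:zeta-opt-other} and~\ref{lemma:lower-vertex-boundary2}; all of the genuine analysis (the integrations against $\mu_\tau$ and the case distinctions over connectivity types) has already been carried out in those lemmas. Concretely, I would fix a realization of $\CV_{\CR_\mathrm{in}}\cup\CV_{\CR_\mathrm{out}}$ satisfying $\CA_\mathrm{regular}(\eta)$, recall from~\eqref{eq:gamma-opt} that $\gamma_\star\in[0,1/(\sigma+1)]$ in all cases (so both lemmas are applicable at $\gamma=\gamma_\star$), and then specialize the two bounds to $\gamma=\gamma_\star$.

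\emph{Case $\alpha<\infty$.} Setting $\gamma=\gamma_\star$ in~\eqref{eq:expected-above-curve} gives $\widetilde\E\big[|\CV_{>\CM_{\gamma_\star}}|\big]\le C_{\ref{lemma:lower-vertices-above}}\,k^{a_1}(\log k)^{b_1}$ with $a_1:=\max\big(1-\gamma_\star(\tau-1),(d-1)/d\big)$ and $b_1:=\ind{1-\gamma_\star(\tau-1)=(d-1)/d}$, and setting $\gamma=\gamma_\star$ in~\eqref{eq:expected-edges-below} gives $\widetilde\E\big[|\CE(\CV^\sss{\mathrm{in}}_{\le\CM_{\gamma_\star}},\CV^\sss{\mathrm{out}}_{\le\CM_{\gamma_\star}})|\big]\le C_{\ref{lemma:lower-edges-below}}\,k^{a_2}(\log k)^{b_2}$ with $a_2:=\max\big(2-\alpha+\gamma_\star\xi_\star,(d-1)/d\big)$ and $b_2:=(\mathfrak{m}_\mathrm{long}-1)\ind{2-\alpha+\gamma_\star\xi_\star>(d-1)/d}+\mathfrak{m}_\mathrm{long}\ind{2-\alpha+\gamma_\star\xi_\star=(d-1)/d}$. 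By~\eqref{eq:zeta-opt-1} of Lemma~\ref{lemma:zeta-opt-other}, $2-\alpha+\xi_\star\gamma_\star\le a_1=\max(\zeta_\mathrm{ll},\zeta_\mathrm{hl},\zeta_\mathrm{hh},(d-1)/d)$, and since $(d-1)/d$ itself occurs in that maximum we also get $a_2\le a_1=\max(\zeta_\mathrm{ll},\zeta_\mathrm{hl},\zeta_\mathrm{hh},(d-1)/d)$; by~\eqref{eq:lemma-zeta-star} of Lemma~\ref{lemma:lower-vertex-boundary2} this common upper bound is exactly $\zeta_\star$, so $a_1,a_2\le\zeta_\star$. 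For the logarithmic exponents, the first line of~\eqref{eq:zeta-opt-m2} is precisely the identity $b_2=\mathfrak{m}_\star-1$ (the two indicators defining $b_2$ are those appearing there, and both vanish when $2-\alpha+\xi_\star\gamma_\star<(d-1)/d$, in which case the line reads $\mathfrak{m}_\star-1=0$ as well), while the inequality in~\eqref{eq:zeta-opt-m2} gives $b_1\le\mathfrak{m}_\star-1$. Hence both expectations are at most $\max\big(C_{\ref{lemma:lower-vertices-above}},C_{\ref{lemma:lower-edges-below}}\big)\,k^{\zeta_\star}(\log k)^{\mathfrak{m}_\star-1}$, and the corollary follows with $C_{\ref{cor:optimized-expectations}}:=\max\big(C_{\ref{lemma:lower-vertices-above}},C_{\ref{lemma:lower-edges-below}}\big)$.

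\emph{Case $\alpha=\infty$.} For the vertex term I would argue exactly as above, now using the last sentence of Lemma~\ref{lemma:zeta-opt-other}, which states $\max\big(1-\gamma_\star(\tau-1),(d-1)/d\big)=\max(\zeta_\mathrm{ll},\zeta_\mathrm{hl},\zeta_\mathrm{hh},(d-1)/d)$ and $\mathfrak{m}_\star-1=\ind{1-\gamma_\star(\tau-1)=(d-1)/d}$, combined again with~\eqref{eq:lemma-zeta-star} to identify the first quantity with $\zeta_\star$; this matches precisely the $k$- and $\log k$-exponents in~\eqref{eq:expected-above-curve}. For the edge term, bound~\eqref{eq:expected-edges-below} is unavailable, but since $\gamma_\star=1/(\sigma+1)\in[0,1/(\sigma+1)]$, the estimate~\eqref{eq:claim-long-edge} of Lemma~\ref{lemma:lower-edges-below} yields $\mathrm{p}(u,v)=0$ for every $u\in\CV^\sss{\mathrm{in}}_{\le\CM_{\gamma_\star}}$ and $v\in\CV^\sss{\mathrm{out}}_{\le\CM_{\gamma_\star}}$, so $\widetilde\E\big[|\CE(\CV^\sss{\mathrm{in}}_{\le\CM_{\gamma_\star}},\CV^\sss{\mathrm{out}}_{\le\CM_{\gamma_\star}})|\big]=0$, which is trivially at most the asserted right-hand side.

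\emph{Main obstacle.} There is no real analytic obstacle here: the heavy lifting lives in Lemmas~\ref{lemma:lower-vertices-above}--\ref{lemma:lower-vertex-boundary2}, and this statement is the bookkeeping step that glues them. The only points that need care are (i) verifying $\gamma_\star\le 1/(\sigma+1)$ so that the lemmas genuinely apply at $\gamma_\star$, which is built into the definition~\eqref{eq:gamma-opt}; (ii) keeping the chain of exponent (in)equalities $2-\alpha+\xi_\star\gamma_\star\le\max(1-\gamma_\star(\tau-1),(d-1)/d)=\max(\zeta_\mathrm{ll},\zeta_\mathrm{hl},\zeta_\mathrm{hh},(d-1)/d)=\zeta_\star$ straight, together with the matching $\log k$-exponent identity $\mathfrak{m}_\star-1=(\mathfrak{m}_\mathrm{long}-1)\ind{\cdot}+\mathfrak{m}_\mathrm{long}\ind{\cdot}$; and (iii) the separate treatment of the $\alpha=\infty$ edge term via the vanishing connection probability rather than via~\eqref{eq:expected-edges-below}.
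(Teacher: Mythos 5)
Your proposal is correct and follows essentially the same route as the paper: specialize Lemmas~\ref{lemma:lower-vertices-above} and~\ref{lemma:lower-edges-below} at $\gamma=\gamma_\star$ and identify the resulting $k$- and $\log k$-exponents with $\zeta_\star$ and $\mathfrak{m}_\star-1$ via Lemmas~\ref{lemma:zeta-opt-other} and~\ref{lemma:lower-vertex-boundary2}, treating the $\alpha=\infty$ edge term separately through~\eqref{eq:claim-long-edge}. You merely spell out the bookkeeping (the chain $a_2\le a_1=\zeta_\star$, $b_2=\mathfrak{m}_\star-1$, $b_1\le\mathfrak{m}_\star-1$) more explicitly than the paper does.
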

\begin{remark}\label{remark:polylog}When the maximum in $\{\zeta_\mathrm{hh}, \zeta_\mathrm{hl}, \zeta_\mathrm{ll}, (d-1)/d\}$ is non-unique, the log-correction factors in the expectations in~\eqref{eq:lemma-zeta-star} and~\eqref{eq:lemma-zeta-long} might differ from those in Corollary~\ref{cor:optimized-expectations}, but these disappear in the limit of the logarithms in~\eqref{eq:lemma-zeta-star} and~\eqref{eq:lemma-zeta-long}. These different polylog factors are due to the fact that on phase-transition boundaries the expected number of downward edges from high-mark vertices is no longer of constant order. 
\end{remark}
We are ready to prove Lemma~\ref{prop:subexponential-lower}. We first assume that the vertex set is formed by a Poisson point process, and then explain the adaptations when the vertex set is $\Z^d$.
\begin{proof}[Proof of Lemma~\ref{prop:subexponential-lower} on Poisson point process]
 We set $\gamma=\gamma_\star\le 1/(\sigma+1)$ defined in~\eqref{eq:gamma-opt}. We recall from~\eqref{eq:lower-sub-independent} that
 \begin{equation}
  \begin{aligned}
  \widetilde\Prob\big(\{\CV\le\CM_{\gamma_\star}\}\cap\CA_\mathrm{no\textnormal{-}edge}^\sss{(k)}(\gamma_\star)\big)          =\widetilde\Prob\big(\CV\le\CM_{\gamma_\star}\big)\cdot
  \widetilde\Prob\big(\CA_\mathrm{no\textnormal{-}edge}^\sss{(k)}(\gamma_\star)\big)         .\label{eq:lower-sub-pr1}
  \end{aligned}
 \end{equation}
 We analyze the two probabilities separately. For the first factor we use the above independence and that the vertex set is formed by a Poisson point process. By Corollary~\ref{cor:optimized-expectations},
 \begin{align}
  \widetilde\Prob\big(\CV\le\CM_{\gamma_\star}\big)
  &=\widetilde\Prob\big(|\CV_{> \CM_{\gamma_\star}}|=0\big)
  =
  \exp\big(-\widetilde\E\big[|\CV_{>\CM_{\gamma_\star}}|\big]\big)
  \nonumber \\
  &\ge \exp\Big(-C_{\ref{cor:optimized-expectations}}k^{\zeta_\star}(\log k)^{\mathfrak{m}_\ast-1}\Big).\label{eq:lower-sub-pr2} 
 \end{align}
 We now turn to the second factor in \eqref{eq:lower-sub-pr1}.
  By definition of $\CA_{\mathrm{no\textnormal{-}edge}}^\sss{(k)}$ in \eqref{eq:lower-events}, and using the conditional independence of edges,
 \begin{equation}
\widetilde\Prob\big(\CA_\mathrm{no\textnormal{-}edge}^\sss{(k)}(\gamma_\star)\big)
  =
  \widetilde\E\Bigg[\prod_{u\in \CV_{\le\CM_{\gamma_\star}}^\sss{\mathrm{in}}, v\in \CV_{\le\CM_{\gamma_\star}}^\sss{\mathrm{out}}}(1-\mathrm{p}(u,v))\Bigg].\label{eq:product-no-edges}
 \end{equation}
 We will now use that $\gamma_\star\le1/(\sigma+1)$ by~\eqref{eq:gamma-opt}, which enables us to use~\eqref{eq:claim-long-edge}. When $\alpha=\infty$, $\mathrm{p}(u,v)=0$ for each factor. So,
 $
  \widetilde\Prob\big(\CA_\mathrm{no\textnormal{-}edge}^\sss{(k)}(\gamma_\star)\big) = 1
 $, which finishes the proof of \eqref{eq:io-lower-zeta} when $\alpha=\infty$ when 
combining \eqref{eq:lower-sub-pr1} with \eqref{eq:lower-sub-pr2}.
 Assume now $\alpha<\infty$. By~\eqref{eq:claim-long-edge}, $1-\mathrm{p}(u,v)\ge 1- 2^{-\alpha}$ for all $(u,v)\in\CV_{\le\CM_{\gamma_\star}}^\sss{\mathrm{in}}\times\CV_{\le\CM_{\gamma_\star}}^\sss{\mathrm{out}}$. Hence, there exists a constant $c>0$, such that $1-\mathrm{p}(u,v)\ge\exp(-c\cdot\mathrm{p}(u,v))$ for all such $(u,v)$.
 Using this in \eqref{eq:product-no-edges} and that $s\mapsto \exp(-s)$ is a convex function,  Jensen's inequality gives a lower bound in terms of the  expected number of  edges between vertices below $\CM_{\gamma_\star}$, i.e.,
 \begin{align}
  \widetilde\Prob\big(\CA_\mathrm{no\textnormal{-}edge}^\sss{(k)}(\gamma)\big)
    &\ge
  \widetilde\E\Bigg[\exp\Bigg(-\!c\!\!\sum_{\substack{u\in \CV_{\le\CM_{\gamma_\star}}^\sss{\mathrm{in}},                                                                                                                             \\ v\in \CV_{\le\CM_{\gamma_\star}}^\sss{\mathrm{out}}}}\!\!\mathrm{p}(u,v)\Bigg)\Bigg] \ge
  \exp\Bigg(-c\,\widetilde\E\Bigg[\!\sum_{\substack{u\in \CV_{\le\CM_{\gamma_\star}}^\sss{\mathrm{in}},\\ 
  v\in \CV_{\le\CM_{\gamma_\star}}^\sss{\mathrm{out}}}}\!\!\mathrm{p}(u,v)\Bigg]\Bigg)     \nonumber  \\
  & = \exp\big(-c\,\widetilde\E\big[|\CE\big(\CV_{\le\CM_{\gamma_\star}}^\sss{\mathrm{in}}, \CV_{\le\CM_{\gamma_\star}}^\sss{\mathrm{out}}\big)|\big]\big).\label{eq:lower-sub-2-pr2}
 \end{align}
 We invoke Corollary~\ref{cor:optimized-expectations} and obtain combined with~\eqref{eq:lower-sub-pr2} and~\eqref{eq:lower-sub-pr1} that
 \begin{equation}\nonumber
 \widetilde\Prob\big(\{\CV\le\CM_{\gamma_\star}\}\cap\CA_\mathrm{no\textnormal{-}edge}^\sss{(k)}(\gamma_\star)\big) \ge
  \exp\Big(-C_{\ref{cor:optimized-expectations}}(c+1)k^{\zeta_\star}(\log k)^{\mathfrak{m}_\ast-1}\Big),  
 \end{equation}
 proving Lemma~\ref{prop:subexponential-lower} when the vertex set is formed by a Poisson point process.
\end{proof}
\begin{proof}[Proof of Lemma~\ref{prop:subexponential-lower} for KSRGs on $\Z^d$]
We explain how to adjust the proof to KSRGs on $\Z^d$ using the assumption $(p\wedge\beta)<1$ in Lemma~\ref{prop:subexponential-lower} by Assumption~\ref{assumption:main}.
Since the vertex locations are given by $\Z^d$, the event $\{\CV\le \CM_\gamma\}$ as defined in \eqref{eq:M-gamma} would never hold, since $\Z^d$ does have points within distance $C_\beta$ from $\partial\CB_{\mathrm{in}}$ in case $C_\beta=(2\beta)^{1/d} \ge 1$ (see $f_\gamma$ in \eqref{eq:suppressed-curve} and the reasoning below \eqref{eq:lower-events}). Thus, if $C_\beta\ge1$, we must adjust the definition of $f_\gamma$ within distance $C_\beta$ of $\partial \CB_{\mathrm{in}}$ to be a constant $c=c(p,\beta,\alpha, \sigma)>1$ close to 1 to restrict vertex marks of vertices that are present close to $\partial \CB_{\mathrm{in}}$.
With that change, the upper bound $2^{-\alpha}$ on $\mathrm{p}(u, v)$ in~\eqref{eq:claim-long-edge} for vertices within distance $C_\beta$ from $\partial\CB_{\mathrm{in}}$ should be replaced by another constant $c'=c'(p, \beta, \alpha, \sigma)$ smaller than $1$, as these nearby vertices are connected with an edge with positive probability strictly bounded away from one. This affects constant prefactors in \eqref{eq:lower-sub-2-pr2} when $\alpha<\infty$. When $\alpha=\infty$, the expected number of potential edges between vertices below $\CM_{\gamma_\star}$ is $\Theta(k^{(d-1)/d})$ by similar calculations as in Lemma~\ref{lemma:lower-edges-below}. To bound $\Prob(\CA_{\mathrm{no\textnormal{-}edge}}(\gamma_\star))$ from below the same reasoning applies as in \eqref{eq:lower-sub-2-pr2} when $\alpha<\infty$.

The proofs of Lemmas \ref{lemma:lower-vertices-above} and \ref{lemma:lower-edges-below} remain valid by replacing concentration bounds for Poisson random variables by concentration bounds for sums of independent Bernoulli random variables, and replacing integrals over $\R^d$ by summations over $\Z^d$. The proofs of Lemmas~\ref{lemma:zeta-opt-other}--\ref{lemma:lower-vertex-boundary2} remain verbatim valid.
\end{proof}

\subsection{Second-largest component and cluster-size decay}\label{sec:lower-second-csd}
We are ready to prove Proposition~\ref{prop:condition-lower}. Recall $\widetilde{\Prob}$ from ~\eqref{eq:cond-prob-lower-1}, and the intersection of events in~\eqref{eq:before-tech}.
\begin{proof}[Proof of Proposition~\ref{prop:condition-lower}]
 We first show \eqref{eq:lower-sub-iso-bound}. Recall the events $\CA^\sss{(k)}_\mathrm{components}$,  $\CA^\sss{(k)}_\mathrm{small\textnormal{-}in}$ from~\eqref{eq:lower-large-events}, and $\CA^\sss{(k)}_\mathrm{regular}({\eta}) $ from \eqref{eq:lower-events-global}. 
 Set 
 \begin{equation}\label{eq:isolation-event}
     \CA^\sss{(k)}_{\mathrm{isolation}}:=\{\CV\le \CM_{\gamma_\star}\}\cap \CA_\mathrm{no\textnormal{-}edge}(\gamma_\star).
 \end{equation}
 The intersection of all these four events implies the event $\{|\CC_n(0) |> k, 0\notin\CC_n^\sss{(1)}\}$, since $|\CC_n(0)|\ge |\CC_{\mathrm{in}}(0)|> k$, and $\CA^\sss{(k)}_{\mathrm{isolation}}$ ensures that $\CC_n(0)$ is fully contained in $\CB_{\mathrm{in}}$. Hence, the events $\{|\CC_n(0)|\le |\CV_{\CB_{\mathrm{in}}}|\le kM_{\mathrm{out}}/2\}$ and $\{|\CC_{\mathrm{out}}|> kM_{\mathrm{out}}-1\}$ ensure that $\CC_n(0)$ is not the largest component of $\CG_n$.
  So, by the law of total probability
  \begin{align}
  \Prob^\sss{0}\big(|\CC_n&(0)  |> k, 0\notin\CC_n^\sss{(1)}\big)                                                                                                                     \nonumber\\
                      & \ge
  \Prob^\sss{0}\big(\CA^\sss{(k)}_\mathrm{components} \cap \CA^\sss{(k)}_{\mathrm{isolation}}\cap\CA^\sss{(k)}_\mathrm{small\textnormal{-}in}\cap\CA^\sss{(k)}_\mathrm{regular}(\eta)\big) \nonumber\\
                      & \ge
                      \Prob^\sss{0}\big(\CA^\sss{(k)}_\mathrm{components} \cap \CA^\sss{(k)}_{\mathrm{isolation}}\cap\CA^\sss{(k)}_\mathrm{regular}(\eta)\big) - \Prob^\sss{0}\big(\neg\CA^\sss{(k)}_\mathrm{small\textnormal{-}in}\big)\nonumber\\
                      &=
                      \Prob^\sss{0}\big(\CA^\sss{(k)}_\mathrm{components} \cap \CA^\sss{(k)}_\mathrm{regular}(\eta)\big)\Prob^\sss{0}\big(\CA^\sss{(k)}_{\mathrm{isolation}}\mid\CA^\sss{(k)}_\mathrm{components} \cap \CA^\sss{(k)}_\mathrm{regular}(\eta)\big) \label{eq:lower-cond-pr1} \\&\hspace{30pt}- \Prob^\sss{0}\big(\neg\CA^\sss{(k)}_\mathrm{small\textnormal{-}in}\big)\nonumber   .
  \end{align}
  Recall $\CA_{\mathrm{small\textnormal{-}in}}^{\sss{(k)}}=\{|\CV_{\CB_{\mathrm{in}}}|\le k M_\mathrm{out}/2\}$ from \eqref{eq:lower-large-events}, and $M_\mathrm{out}=2^{d+2}M_\mathrm{in}$ above \eqref{eq:lower-giant-boxes}. The box with side-length $2r_k=2(kM_\mathrm{in})^{1/d}$ (by definition in \eqref{eq:rk}) centered at the origin is the smallest box that contains $\CB_{\mathrm{in}}$. Using the intensity measure $\mu_\tau$ from \eqref{eq:poisson-intensity}, and writing $\CB_{\mathrm{in}}^\le:=\{(x, w_x)\in \R^{d+1}: x\in \CB_{\mathrm{in}}; (x, w_x) \le \CM_\gamma\}$, we have
\begin{align}
  \mu_\tau(\CB_{\mathrm{in}}^\le) \le 2^{d}kM_\mathrm{in}=2^{d+2}kM_\mathrm{in}/4=kM_\mathrm{out}/4.\nonumber
\end{align}
By a standard concentration inequality for Poisson random variables (see Lemma~\ref{lemma:poisson-1} for $x=2$), there exist $c',c>0$ such that, since $r_k=\Theta(k^{1/d})$,
\begin{align}
     \Prob^\sss{0}\big(\neg\CA^\sss{(k)}_\mathrm{small\textnormal{-}in}\big)\le \exp(-c' r_k^{d})=\exp(-c k).\label{eq:bgiant}
 \end{align}
 Returning to \eqref{eq:lower-cond-pr1}, the event $\CA_\mathrm{regular}^{\sss{(k)}}(\eta)=\CA_{\mathrm{regular}}^\sss{(k, \mathrm{in})}(\eta)\cap\CA_{\mathrm{regular}}^\sss{(k, \mathrm{out})}(\eta)$, defined in \eqref{eq:lower-events-global}, holds with probability tending to $1$ as $k\to \infty$, again by concentration inequalities for Poisson random variables (see Lemma~\ref{lemma:poisson-1} for $x=2$).
 Hence,
\begin{equation}\label{eq:ak-inbetween}
 \begin{aligned}
  \Prob^\sss{0}\big(\CA^\sss{(k)}_\mathrm{components}\cap \CA^\sss{(k)}_\mathrm{regular}(\eta)\big) &\ge \Prob^\sss{0}\big(\CA^\sss{(k)}_\mathrm{components}\big) - \Prob^\sss{0}\big(\neg(\CA^\sss{(k)}_\mathrm{regular}(\eta)\big)\\&=\Prob^\sss{0}\big(\CA^\sss{(k)}_\mathrm{components}\big)-o_k(1).\end{aligned}
 \end{equation}
 We recall from \eqref{eq:lower-large-events} that
 $\CA^\sss{(k)}_\mathrm{components}=\{|\CC_\mathrm{in}(0)|> k\}\cap\{|\CC_\mathrm{out}^\sss{(1)}| > kM_\mathrm{out}-1\}$.   Translate the hyperrectangle $\CR_\mathrm{out}$ in \eqref{eq:lower-giant-boxes} containing $\CC_\mathrm{out}^\sss{(1)}$ to the origin of $\R^d$:
 \begin{equation}\label{eq:zout-centered}
 \CR_\mathrm{out}':=\Lambda(0, kM_\mathrm{out}/\rho)\times[1,\log^\eta(kM_\mathrm{out}/\rho)), 
 \end{equation}
  and write $\CC_{\mathrm{out'}}^\sss{(1)}, \CC_{\mathrm{out'}}(0)$ for the largest component and for the component containing $(0,w_0)$ in the subgraph of $\CG_n$ induced by vertices in $\CR'_\mathrm{out}$. As before, we may ignore the conditioning $(0, w_0)\in\CV$ in Definition~\ref{def:ksrg} in our computations. We use translation invariance of the probability measure and that the events $\{|\CC_\mathrm{in}(0)|> k\}$ and $\{|\CC_\mathrm{out}^\sss{(1)}| > M_\mathrm{out}k-1\}$ are independent because they are induced subgraphs of the disjoint hyperrectangles $\CR_\mathrm{in}$ and $\CR_\mathrm{out}$  in \eqref{eq:lower-giant-boxes}. Hence,
  \[
  \begin{aligned}
  \Prob^\sss{0}\big(\CA^\sss{(k)}_\mathrm{components}\big)&=\Prob^\sss{0}\big(|\CC_\mathrm{in}(0)|> k\big)\Prob\big(|\CC_{\mathrm{out}'}^\sss{(1)}|> M_\mathrm{out}k-1\big)\\&\ge
  \Prob^\sss{0}\big(|\CC_\mathrm{in}(0)|> k\big)\Prob^\sss{0}\big(|\CC_{\mathrm{out}'}(0)|> M_\mathrm{out}k\big).
  \end{aligned}
 \]
 The bound $\Prob^\sss{0}\big(|\CC_{n}(0)[1,\log^\eta n)|\ge \rho n\big) \ge \rho$ in \eqref{eq:lower-sub-cond} in Proposition~\ref{prop:condition-lower} holds for all $n$ sufficiently large by assumption.
 In particular, since $\CC_\mathrm{in}(0)=\CC_{k/\rho}(0)[1, \log^\eta(k/\rho))$ by definition of $\CR_\mathrm{in}$ in \eqref{eq:lower-giant-boxes}, and $\CC_\mathrm{out'}(0)=\CC_{k M_{\mathrm{out}}/\rho, \eta}(0)$ by definition of $\CR_\mathrm{out}'$ in \eqref{eq:zout-centered}, we obtain for $k$ sufficiently large
 \begin{equation}\label{eq:ak-comp-intermed}
  \begin{aligned}\Prob^\sss{0}\big(\CA^\sss{(k)}_\mathrm{components}\big)&\ge\Prob^{\sss{0}}\big(|\CC_{k/\rho}(0)[1,\log^\eta(k/\rho))|> k\big)\\
  &\hspace{20pt}\cdot\Prob^\sss{0}\big(|\CC_{kM_{\mathrm{out}}/\rho}(0)[1,\log^\eta(kM_{\mathrm{out}}/\rho))|> M_\mathrm{out}k\big)
  \ge \rho^2,
  \end{aligned}
 \end{equation}
 implying that $
  \Prob^\sss{0}\big(\CA^\sss{(k)}_\mathrm{components}\cap \CA^\sss{(k)}_\mathrm{regular}(\eta)\big) \ge \rho^2-o_k(1)>3\rho^2/4$ in \eqref{eq:ak-inbetween}.
 Since the event $\CA^\sss{(k)}_\mathrm{components}
  \cap \CA^\sss{(k)}_\mathrm{regular}(\eta)$ is measurable with respect to the $\sigma$-algebra generated by the subgraph $\CG_{\CR_\mathrm{in}}\cup\CG_{\CR_\mathrm{out}}$, we take expectation over all possible realizations of the latter satisfying $\CA^\sss{(k)}_\mathrm{regular}(\eta)$, and recalling the definition of the measure $\widetilde\Prob$ from \eqref{eq:cond-prob-lower-1}, we obtain by the definition of $\widetilde{\Prob}$ in \eqref{eq:cond-prob-lower-1}
  \[\begin{aligned}
  &\Prob^\sss{0}\big(\CA^\sss{(k)}_{\mathrm{isolation}}\mid \CA^\sss{(k)}_\mathrm{components}
  \cap \CA^\sss{(k)}_\mathrm{regular}(\eta)\big) \\
  &= \mathbb E^\sss{0}\big[ \Prob^\sss{0}\big( \CA^\sss{(k)}_{\mathrm{isolation}}\mid \CG_{\CR_\mathrm{in}}\cup\CG_{\CR_\mathrm{out}}, \CA^\sss{(k)}_\mathrm{components}, \CA_\mathrm{regular}^\sss{(k)}(\eta)\big)\big]\\&=\ \mathbb E^\sss{0}\big[ \widetilde\Prob^\sss{0}(\CA^\sss{(k)}_{\mathrm{isolation}})\big].\nonumber
\end{aligned}\]
We apply Lemma~\ref{prop:subexponential-lower} on the right-hand side, and substitute the bound $3\rho^2/4$ below~\eqref{eq:ak-comp-intermed} into \eqref{eq:ak-inbetween} and then in turn into \eqref{eq:lower-cond-pr1} and \eqref{eq:bgiant}, to obtain for $k$ sufficiently large
\begin{align}
  \Prob^\sss{0}\big(&\CA^\sss{(k)}_\mathrm{components} \cap \CA^\sss{(k)}_{\mathrm{isolation}}\cap\CA^\sss{(k)}_\mathrm{small\textnormal{-}in}\cap\CA^\sss{(k)}_\mathrm{regular}(\eta)\big) \nonumber\\
  &\ge
  (\rho^2\!/2)
  \exp\!\big(\!\!-\!\!Ak^{\zeta_\star}(\log k)^{\mathfrak{m}_\star-1}\big)-\exp(-ck) \label{eq:lower-final-proof-sub-pre}\\
  &\ge (\rho^2\!/2)
  \exp\!\big(\!\!-\!\!A'k^{\zeta_\star}(\log k)^{\mathfrak{m}_\star-1}\big).\label{eq:lower-final-proof-sub}
  \end{align}
 We obtained the second row by substituting $r_k=(kM_\mathrm{in})^{1/d}$ in \eqref{eq:rk} and setting $A':=AM_\mathrm{in}^{\zeta_\star}/2$ that also compensates for the constants from the $\log$-correction term. Since $\zeta_\star<1$ by Lemma~\ref{lemma:lower-vertex-boundary2}, the second term in~\eqref{eq:lower-final-proof-sub-pre} is of smaller order than the first term in~\eqref{eq:lower-final-proof-sub-pre}.
 By~\eqref{eq:lower-cond-pr1}, this finishes the proof of \eqref{eq:lower-sub-iso-bound}. We turn to the proof of \eqref{eq:lower-second-bound}.
\vskip0.5em
 \emph{Lower bound on second-largest component.}
 We generalize an argument from \cite{kiwimitsche2ndlargest}.
 We have to bound $\Prob\big(|\CC_n^\sss{(2)}|< \underline{k}_{n,\varepsilon}\big)$ from above for a suitably chosen $\varepsilon$ in the definition of $\underline{k}_{n,\varepsilon}$ in \eqref{eq:kn-second-largest}.
 To do so, we fix $\vartheta\in(0,1)$ to be specified later, and assume for simplicity that $n^{(1-\vartheta)/d}\in\N$. We then partition $\Lambda_n$ into $m_n:=n^{1-\vartheta}$ many subboxes $\Lambda_{\tilde n}^\sss{(1)},\dots, \Lambda_{\tilde n}^\sss{(m_n)}$, centered respectively at $x_1,\dots,x_{m_n}$, each of volume $\tilde n:=n^\vartheta$.
 By disjointness, the induced subgraphs $\CG_{\tilde n}^\sss{(1)}, \dots, \CG_{\tilde n}^\sss{(m_n)}$ in these boxes are independent realizations of $\CG_{\tilde n}$, translated to $x_1,\dots, x_{m_n}$.
 We write $\widetilde\CG_{\tilde n}^\sss{(1)}, \dots, \widetilde\CG_{\tilde n}^\sss{(m_n)}$ for the induced subgraphs, translated back to the origin; $\widetilde\CV_{\le\CM_{\gamma_\star}}^\sss{(\mathrm{in}, i)}$ for the vertex set in $\widetilde\CG_{\tilde n}^\sss{(i)}$ that is below $\CM_{\gamma_\star}$ and inside $\CB_{\mathrm{in}}$ after the translation, see \eqref{eq:xi-in-out}; and write $\CV_{\le\CM_{\gamma_\star}}^\sss{(\mathrm{in}, i)}$ for the same vertex set before the translation.
 For the translated subgraphs $\big(\widetilde\CG_{\tilde n}^\sss{(i)})_{i\le m_n}$, we define for  $k=\underline{k}_{n,\varepsilon}$ the same events as in \eqref{eq:lower-large-events}, \eqref{eq:isolation-event}, \eqref{eq:lower-events-global},
 \[
  \CA^\sss{(i)}_{\mathrm{good}} := \CA^\sss{(\underline{k}_{n, \varepsilon}),i}_\mathrm{components}\cap \CA^\sss{(\underline{k}_{n, \varepsilon}),i}_\mathrm{isolation}\cap\CA^\sss{(\underline{k}_{n, \varepsilon}),i}_\mathrm{small\textnormal{-}in}\cap \CA^\sss{(\underline{k}_{n, \varepsilon}),i}_\mathrm{regular}(\eta),
 \]
where now in the definition of these events we replace $\CC_{\Box}(0)$ with the component containing the point of $\CV$ closest to the origin $0\in \R^d$ for $\Box\in\{\mathrm{in}, \widetilde n\}$.
 We also assume that $\widetilde n=n^\vartheta$ is sufficiently large compared to $\underline{k}_{n,\varepsilon}$ in \eqref{eq:kn-second-largest} so that the spatial projection of the box $\CR_{\mathrm{out}}$ still fits within $\Lambda_{\tilde n}$.
This can be ensured even if $\underline k_{n, \varepsilon}=\Theta(n^\varepsilon)$ is maximal in \eqref{eq:kn-second-largest} by choosing $\varepsilon<\vartheta$.
 If   $\CA^\sss{(i)}_{\mathrm{good}}$ holds for some $i\le m_n$,  then the induced graph $\CG_{\tilde n}^\sss{(i)}$ in subbox $\Lambda_{\tilde n}^\sss{(i)}$ contains a component $\CC_{\tilde n}^{\sss{(i)}}$ in $\CV_{\le\CM_{\gamma_\star}}^\sss{(\mathrm{in},i)}$ (which we call a `candidate' second-largest component of $\CG_n$) with size at least $\underline{k}_{n,\varepsilon}$ that is not the largest component in its own box, and all vertices in $\Lambda_{\tilde n}^\sss{(i)}$ are below $\CM_{\gamma_\star}(x_i)$, i.e., $\CM_{\gamma_\star}$ shifted to $x_i$. 
 
 Since the event $\CA_\mathrm{good}^\sss{(i)}$ is restricted to the induced subgraph $\CG_{\tilde n}^\sss{(i)}$, on $\CA_\mathrm{good}^\sss{(i)}$ there might still be an edge from a candidate second-largest component $\CC_{\tilde n}^\sss{(i)}$ to a vertex in a different box $\Lambda_{\tilde n}^\sss{(j)}$. We exclude such edges in another event: we demand that the whole vertex set  $\CV_{\le\CM_{\gamma_\star}}^\sss{(\mathrm{in},i)}$ has no edge to any other box, so that the component $\CC_{\tilde n}^\sss{(i)}$ is isolated also in $\CG_n$ and has size at least $\underline k_{n,\varepsilon}$.
Taking complements we obtain that \begin{equation}
  \big\{|\CC_n^\sss{(2)}|< \underline{k}_{n,\varepsilon}\}\subseteq
  \{\exists i\le m_n: \CV_{\le\CM_{\gamma_\star}}^\sss{(\mathrm{in}, i)} \sim \CV_n\setminus\CV_{\tilde n}^\sss{(i)}\}
  \cup
  \Bigg(\bigcap_{i\le m_n}\big(\neg\CA_\mathrm{good}^\sss{(i)}\big)\Bigg).\nonumber
 \end{equation}
By translation invariance, a union bound, and the independence of $(\CG_{\tilde n}^\sss{(i)})_{i\le m_n}$,
 \begin{align}\label{eq:two-terms-second}
  \Prob\big(|\CC_n^\sss{(2)}|< \underline{k}_{n,\varepsilon}\big) & \le
  m_n\Prob\big(\CV_{\le\CM_{\gamma_\star}}^\sss{\mathrm{in}} \sim \CV_n\setminus\CV_{\tilde n}\big)
  +
  \big(1-\Prob\big(\CA_\mathrm{good}^\sss{(1)}\big)\big)^{m_n}=:T_1+T_2.
 \end{align}
By the definitions in \eqref{eq:xi-in-out} and \eqref{eq:rk}, each $u\in\CV_{\le\CM_{\gamma_\star}}^\sss{\mathrm{in}}$ has $\|x_u\|\le r_k$ with $k=\underline{k}_{n,\varepsilon}$, and mark $w_u\le f_\star (r_{\underline{k}_{n,\varepsilon}})$, ($f_\star =f_{\gamma_\star}$ is from below \eqref{eq:gamma-opt}). As a result, 
$\CV_{\le\CM_{\gamma_\star}}^\sss{\mathrm{in}}\subseteq  \CV_{(2r_{\underline{k}_{n,\varepsilon}})^d}[1,f_\star (r_{\underline{k}_{n,\varepsilon}}))\subseteq  \CV_{\tilde n}$, whenever $(2r_{\underline{k}_{n,\varepsilon}})^d=\underline k_{n,\varepsilon}M_{\mathrm{in}}2^d<n^\vartheta$, which holds whenever $\varepsilon<\vartheta$ by \eqref{eq:kn-second-largest}. Hence we can bound $T_1$ as
\[T_1\le m_n\Prob\big(\CV_{(2r_{\underline{k}_{n,\varepsilon}})^d}[1,f_\star (r_{\underline{k}_{n,\varepsilon}})) \sim \CV_n\setminus\CV_{\tilde n}\big).\]
We can directly apply Claim \ref{claim:edge-long} to  the right-hand side, i.e., setting there $N:=\tilde n=n^\vartheta$ and  $n:=(2r_{\underline{k}_{n,\varepsilon}})^d=\underline{k}_{n,\varepsilon} M_{\mathrm{in}}2^d$ (by \eqref{eq:rk})  and $\overline w:=f_\star (r_{\underline{k}_{n,\varepsilon}})$.
The profile $f_{\star}=f_{\gamma_\star}$ is defined below  \eqref{eq:gamma-opt}, using \eqref{eq:suppressed-curve} with exponent $\gamma_\star$ and $r_k=(M_{\mathrm{in}}k)^{1/d}$ in \eqref{eq:rk}, and finally $\underline k_{n, \varepsilon}$ from \eqref{eq:kn-second-largest} we obtain
\[\overline w:=f_\star (r_{\underline{k}_{n,\varepsilon}})=
C_\beta^{-\gamma_{\star} d} r_{\underline{k}_{n,\varepsilon}}^{\gamma_{\star} d}=C_\beta^{-\gamma_{\star} d} M_\mathrm{in}^{\gamma_{\star}}\underline{k}_{n,\varepsilon}^{\gamma_{\star}}\]
Condition \eqref{eq:sub-upper-tk-tilde-bound} holds whenever $\varepsilon<\vartheta$, since $N=\Theta(n^\vartheta)$ while $\underline k_{n,\varepsilon} =O(n^\varepsilon)$ and we truncated $\gamma_\star$ in \eqref{eq:gamma-opt} at $1/(\sigma+1)$, so also $\overline w^{\sigma +1}=\Theta(k_{n,\varepsilon}^{\gamma_\star(\sigma+1) }) =O(n^\varepsilon)$.
Then Claim \ref{claim:edge-long}  yields for some $C>0$
 \begin{align*}
  \hspace{-5pt}T_1& \le
  m_n C_{\ref{claim:edge-long}} f_\star (r_{\underline{k}_{n,\varepsilon}})^{c_{\ref{claim:edge-long}}}
  (\underline{k}_{n,\varepsilon}M_{\mathrm{in}}2^{d})n^{-\vartheta\min(\alpha-1, \tau-2)}\big(1+\mathbbm{1}_{\{\alpha=\tau-1\}}\log(n^\vartheta)\big) \\
  & \le
  C (\log n)
 \cdot  \underline{k}_{n,\varepsilon}^{1 + \gamma_{\star} c_{\ref{claim:edge-long}}}
  \cdot n^{1-\vartheta\min(\alpha, \tau-1)}.
 \end{align*}
 Since $\underline{k}_{n,\varepsilon}$ in \eqref{eq:kn-second-largest} is at most $n^\varepsilon$, as long as $1-\vartheta\min(\alpha, \tau-1)<0$,
 we can choose $\varepsilon>0$ in \eqref{eq:kn-second-largest} small such that  for any $\delta\in(0,\vartheta\min(\alpha, \tau-1)-1)$, for all $n$ sufficiently large,
 \begin{equation}\label{eq:t1-estimate}
 T_1\le n^{-\delta}.
 \end{equation} We turn to bound $T_2$ in \eqref{eq:two-terms-second} using $(1-x)^{m_n}\le \exp(-m_n x)$, where we apply \eqref{eq:lower-final-proof-sub} on $x=\Prob(\CA_\mathrm{good}^\sss{(1)})$ to obtain a lower bound on the exponent
 \begin{align*}
  m_n\Prob\big(\CA_\mathrm{good}^\sss{(1)}\big)
  & \ge
  (\rho^2/2)n^{1-\vartheta}\exp\big(-A'\underline{k}_{n,\varepsilon}^{\zeta_\star}(\log \underline{k}_{n,\varepsilon})^{\mathfrak{m}_\star-1}\big) \\
  & =(\rho^2/2)\exp\big((1-\vartheta)(\log n)-
  A'\underline{k}_{n,\varepsilon}^{\zeta_\star}(\log \underline{k}_{n,\varepsilon})^{\mathfrak{m}_\star-1}\big).
 \end{align*}
 In order to show $T_2\le n^{-\delta}$ in \eqref{eq:two-terms-second}, it is much stronger to show that with $\mathfrak{m}_\star-1=\mathfrak{m}'$,
 \begin{equation}
  \mbox{$\forall\varepsilon'>0$, there exists $\varepsilon_1>0$ s.t.\ for all $\varepsilon<\varepsilon_1$: }
  A'\underline{k}_{n,\varepsilon}^{\zeta_\star}(\log\underline{k}_{n,\varepsilon})^{\mathfrak{m}'}<\varepsilon'\log n.
  \label{eq:to-verify-kn}
 \end{equation}
 We recall the definition of $\underline{k}_{n,\varepsilon}$ in \eqref{eq:kn-second-largest} and formally check the two cases.

 \emph{Case 1. $\zeta_\star>0$.} We substitute $\underline{k}_{n,\varepsilon}$ in the first row of \eqref{eq:kn-second-largest} to \eqref{eq:to-verify-kn}
 \begin{align}
A'  \underline{k}_{n,\varepsilon}^{\zeta_\star}(\log\underline{k}_{n,\varepsilon})^{\mathfrak{m}'}
  =
  A'\frac{\varepsilon\log n}{(\log\log n)^{\mathfrak{m}'}}\cdot \Big(\log\Big(\frac{\varepsilon\log n}{(\log\log n)^{\mathfrak{m}'}}\Big)^{1/\zeta_\star}\Big)^{\mathfrak{m}'}.\nonumber
 \end{align}
 The last factor is at most $\zeta_\star^{-\mathfrak m'}(\log\log n)^{\mathfrak{m}'}$, and \eqref{eq:to-verify-kn} follows whenever $\varepsilon<\varepsilon' \zeta_\star^{\mathfrak m'}/A'.$

 \noindent\emph{Case 2. $\zeta_\star=0$.} We substitute $\underline{k}_{n,\varepsilon}$ in the second row of \eqref{eq:kn-second-largest} to \eqref{eq:to-verify-kn} \begin{align}
  A'\underline{k}_{n,\varepsilon}^{\zeta_\star}(\log \underline{k}_{n,\varepsilon})^{\mathfrak{m}'}
  =A' \big(\log\big(\exp\big[(\varepsilon\log n)^{1/\mathfrak{m}'}\big]\big)\big)^{\mathfrak{m}'}=A'\varepsilon\log n,\nonumber
 \end{align}
 and \eqref{eq:to-verify-kn} again follows. Choose now any  $\vartheta\in(1/\min(\alpha,\tau-1),1)$ --- which is possible since $\alpha>1, \tau>2$ ---and then combine \eqref{eq:t1-estimate}
with $T_2\le n^{-\delta}$ to bound \eqref{eq:two-terms-second}. This finishes the proof of \eqref{eq:lower-second-bound} and hence Proposition~\ref{prop:condition-lower} subject to Lemma~\ref{prop:subexponential-lower}.
\end{proof}

\subsection{Lower tail of large deviations}\label{sec:lower-tail}
We finish this section with the proof of Theorem~\ref{prop:lower-dev-giant}, which is based on Lemma~\ref{prop:subexponential-lower}. 
\begin{proof}[Proof of Theorem~\ref{prop:lower-dev-giant}]
For $\rho\ge1$ the statement is trivial.
 There exists a constant $C>0$ such that for any $\rho\in(0, 1)$ and $n\ge 1$ a box of volume $n$ is contained in the union of $\lceil C/\rho\rceil$ (partially overlapping) balls of volume $n\rho/2$. We use balls instead of boxes to reuse the optimally-suppressed mark profile from~\eqref{eq:suppressed-curve} which is defined for a ball; this is a minor technical detail. Fix $\rho\in(0,1)$, and write $\CV^\sss{(i)}$ for the vertices in the $i$-th ball of such a cover of balls of volume $n\rho/2$.
 Recall that $|\CE(A, B)|$ denotes the number of edges between the sets $A,B$. Then 
 \begin{equation}
 \{|\CC_n^\sss{(1)}|<\rho n\} \supseteq \bigcap_{i\le\lceil C/\rho\rceil}\{|\CE(\CV^\sss{(i)}, \CV\setminus\CV^\sss{(i)})|=0\}\cap \{|\CV^\sss{(i)}|<\rho n\}\label{eq:decreasing-events}
 \end{equation}
 Indeed, on the event on the right-hand side, each connected component of $\CG_n$ is fully contained in some ball (or the intersection of some balls) with at most $\rho n$ vertices.
 We apply an FKG inequality to bound the probability of intersection from below.
 
  We give a (natural) definition of increasing events, using the collection $\Psi$ from Definition~\ref{def:encoding} that encodes the presence of edges using a set of uniform random variables $\Psi_\CV=\{\varphi_{u,v}: u,v\in\CV\}$. We say that a function $f(\CV, \Psi_\CV)$ defined on the marked vertex set $\CV$ and edge-variable set $\Psi_\CV$ is \emph{increasing} if it is non-decreasing in $\CV$ with respect to set inclusion (formally, if $\CV'\supseteq \CV, \Psi_{\CV'}\supseteq \Psi_\CV$, then $f(\CV', \Psi_{\CV'})\ge f(\CV, \Psi_{\CV})$ holds), 
  as well as coordinate-wise non-increasing with respect to the edge variables (formally, if $\Psi'_\CV$ satisfies $\varphi'_{u,v}\le \varphi_{u,v}$ for all $u,v\in {\binom \CV 2}$, then $f(\CV, \Psi'_\CV) \ge f(\CV, \Psi_\CV)$ holds). Intuitively this means that more vertices and edges increase the value of $f$.
 Similarly to \cite{dickson2022triangle}, we obtain that for two increasing functions $f_1$, $f_2$,
 \begin{equation}
 \begin{aligned}
 \E[f_1(\CV,\Psi_\CV)\cdot f_2(\CV,\Psi_\CV)] &= \E\big[\E[f_1(\CV,\Psi_\CV)\cdot f_2(\CV,\Psi_\CV)\mid \CV]\big]\\&\ge \E\big[\E[f_1(\CV,\Psi_\CV)\mid \CV]\cdot \E[f_2(\CV,\Psi_\CV)\mid \CV]\big] \\
 &\ge 
 \E[f_1(\CV,\Psi_\CV)\big]\cdot \E[f_2(\CV,\Psi_\CV)]
 ,\nonumber
 \end{aligned}
 \end{equation}
 by applying FKG to the random graph conditioned to have $\CV$ as its vertex set for the first inequality using that $f_1$ and $f_2$ are increasing in the edge-set, and then FKG for point processes for the second inequality \cite[Theorem~20.4]{last2017lectures}.
 We say that an event $\CA$ is decreasing iff the function $-\mathbbm{1}_\CA$ is increasing. It follows that for decreasing events $\CA, \CA'$
 \begin{equation}
 \begin{aligned}
 \Prob\big(\CA\cap\CA'\big)
 =
 \E[(-\mathbbm{1}_{\CA}(\CV,\Psi_\CV))\cdot (-\mathbbm{1}_{\CA'}(\CV,\Psi_\CV))]
 &\ge
 \E[\mathbbm{1}_{\CA}(\CV,\Psi_\CV)]\cdot\E[\mathbbm{1}_{\CA'}(\CV,\Psi_\CV)]\\
 &= \Prob\big(\CA\big)\cdot\Prob\big(\CA'\big).
 \end{aligned}\label{eq:fkg}
 \end{equation}
 Observe that the events on the right-hand side in \eqref{eq:decreasing-events} are all decreasing (adding vertices/edges make the events less likely to occur) so that \eqref{eq:fkg} applies. Hence,
 \begin{equation}\label{eq:after-FKG}
 \Prob\big(|\CC_n^\sss{(1)}|<\rho n\big) \ge \prod_{i\le\lceil C/\rho\rceil}\Prob\big(|\CE(\CV^\sss{(i)}, \CV\setminus\CV^\sss{(i)})|=0\big)\cdot\Prob\big(|\CV^\sss{(i)}|<\rho n\big).
 \end{equation}
Since each ball has volume $n\rho/2$, the event $\{|\CV^\sss{(i)}|<\rho n\}$ holds with probability at least $1/2$ by concentration inequalities for Poisson random variables (Lemma~\ref{lemma:poisson-1} for $x=2$). To bound $\Prob\big(|\CE(\CV^\sss{(i)}, \CV\setminus\CV^\sss{(i)})|=0\big)$, we consider the optimally-suppressed mark profile translated to the center of the $i$-th ball, with $kM_\mathrm{in}$ replaced by $n\rho/2$. We restrict $\CV$ to be below the mark profile, and to have no edges between $\CV^\sss{(i)}$ and $\CV\setminus \CV^\sss{(i)}$.
 We apply Lemma~\ref{prop:subexponential-lower}, integrate over all realizations  of $\CG_{\mathrm{in}}, \CG_{\mathrm{out}}$ satisfying $\CA_\mathrm{regular}$, and use that
the event $\CA_\mathrm{regular}$ in the conditioning in $\widetilde\Prob$ in \eqref{eq:cond-prob-lower-1} holds with high probability by Poisson concentration (Lemma~\ref{lemma:poisson-1} for $x=2$), see the argument below \eqref{eq:bgiant}.
 We obtain that for all $i\le\lceil C/\rho\rceil$,
 \[
 \Prob\big(|\CE(\CV^\sss{(i)}, \CV\setminus\CV^\sss{(i)})|=0\big)\cdot\Prob\big(|\CV^\sss{(i)}|<\rho n\big)
 \ge  \exp\big(-\Theta(n^{\zeta_\star}(\log n)^{\mathfrak{m}_\star-1})\big)/2,
 \]
 which proves \eqref{eq:lower-dev-giant} when taking the product over $\lceil C/\rho\rceil$ balls in~\eqref{eq:after-FKG}.
\end{proof}

\section{Proofs of main results}\label{sec:main-proofs}
We conclude the paper by formally verifying the statements in Sections~\ref{sec:intro} and~\ref{sec:main-results}, starting with the main results.
\begin{proof}[Proofs of Theorems \ref{thm:subexponential-decay}--\ref{thm:second-largest}(i)]
Proposition~\ref{prop:condition-lower} proves the lower bounds in Theorems~\ref{thm:subexponential-decay}--\ref{thm:second-largest}: its condition \eqref{eq:lower-sub-cond} on having a large enough component on restricted marks occurs with positive probability by Proposition~\ref{proposition:existence-large} when $\zeta_\mathrm{hh}>0$. The assumption $\tau>2$ is necessary to have a locally finite graph with multiple components.
\end{proof}
\begin{proof}[Proofs of Theorems \ref{thm:subexponential-decay}--\ref{thm:second-largest}(ii-iii), and Corollary \ref{cor:lln}]
Proposition~\ref{prop:2nd-upper-bound-hh} proves the upper bounds (part ii-iii) in Theorem~\ref{thm:second-largest}.  Substituting $k=k_n=(A\log n)^{1/\zeta_\mathrm{hh}}$ for a sufficiently large constant $A=A(\delta)$ yields part (ii), which uses $\tau\ge \sigma+1$. For part (iii), i.e., when $\tau<\sigma+1$, we substitute $k=(A\log n)^{\sigma+1-(\tau-1)/\alpha}$ instead. The condition $\zeta_\mathrm{hh}>0$ is required to construct a backbone of high-mark vertices (Lemma~\ref{lemma:upper-hh-bb}), and to merge components of size at least $k$ with the backbone via a high-mark vertex in \eqref{eq:sure-connection}. The distinction between $\tau\ge \sigma+1$ and $\tau<\sigma+1$ arises from the cover-expansion step in Lemma~\ref{lemma:hh-expandable}.

For the proof of Theorem~\ref{thm:subexponential-decay}(ii-iii) and Corollary~\ref{cor:lln} it suffices to verify  prerequisites~\eqref{eq:prop-2nd}--\eqref{eq:prop-marksgiant} of Proposition~\ref{prop:condition-subexponential}.
Let $\zeta=\zeta_\mathrm{hh}$ when $\tau\ge \sigma+1$ and $\zeta=1/(\sigma+1-(\tau-1)/\alpha)$ when $\tau<\sigma+1$. Set $c_1=0$, $c_2=1$, and let $c_3>0$ be a sufficiently small constant. Then~\eqref{eq:prop-2nd} is implied by Proposition~\ref{prop:2nd-upper-bound-hh}, \eqref{eq:prop-outgiant} by Corollary~\ref{cor:lower-c1}, and~\eqref{eq:prop-marksgiant} by Proposition~\ref{prop:min-giant-weight} (we leave it to the reader to verify that these statements hold for the Palm-version $\Prob^\sss{0}$ of $\Prob$ as well). 
\end{proof}
We continue with the statements in Section~\ref{sec:intro}.
\begin{proof}[Proof of Theorem~\ref{thm:informal}]
    For continuum scale-free percolation, geometric inhomogeneous random graphs, and hyperbolic random graphs we have $\sigma=1$. When $\tau\in(2,3)$, then $\tau\ge \sigma+1$, and $\zeta_\mathrm{GIRG}=(3-\tau)/(2-(\tau-1)/\alpha)$ agrees with $\zeta_\mathrm{hh}$ from~\eqref{eq:zeta-hh}. The statement assumes that $\zeta_\mathrm{GIRG}>\max\big(2-\alpha, (d-1)/d\big)=\max(\zeta_\mathrm{ll}, \zeta_\mathrm{short})$ by~\eqref{eq:zeta-ll} and~\eqref{eq:zeta-nn}. This implies that $\alpha>\tau-1$, and also that  $\zeta_\mathrm{GIRG}>\zeta_\mathrm{hl}=(\tau-1)/\alpha-(\tau-2)$ when $\alpha>\tau-1$. As a result, $\mathfrak{m}_\star$ in~\eqref{eq:m-star} is equal to $1$ and there are no polylog factors in Theorems \ref{thm:subexponential-decay}--\ref{thm:second-largest}(i), and $\zeta_\star=\zeta_\mathrm{GIRG}$ by Lemma~\ref{lemma:lower-vertex-boundary2}.    
    The statements in~\eqref{eq:product-kernel} now follow from Theorems \ref{thm:subexponential-decay}--\ref{thm:second-largest}(i-ii), \ref{prop:lower-dev-giant}, and Corollary~\ref{cor:lln}.
        We mention that hyperbolic random graphs are generally defined with exactly $n$ vertices on an $n$-dependent hyperbolic space, giving an $n$-dependent vertex-mark distribution and an $n$-dependent connection probability function. However, these converge (fast) to their limiting distribution and connection probabilities, and can be bounded from above and from below by connection probabilities satisfying  Assumption \ref{assumption:main} respectively, see \cite[below Equation (9.8); and Equations (9.16) (9.17)]{KomLod20}. So, one can build the same structures as we did here and use these upper bounding connection probabilities in upper bounds and the lower bounding connection probabilities in lower bound estimates to arrive to the same result for HRGs. The results generally extend to models where  the number of vertices is exactly $n$, and where vertex locations are independent uniform random variables on $\Lambda_n$, by conditioning on a Poisson$(n)$ variable to be exactly $n$. We leave this technical extension to the reader: one needs to replace concentration bounds for Poisson random variables with Chernoff bounds, and one also needs to add extra events that control the number of vertices in certain space-mark areas.
 \end{proof}
\begin{proof}[Proof of Claim~\ref{lemma:lower-vertex-boundary}]
    The statement is implied by Lemma~\ref{lemma:lower-vertex-boundary2}.
\end{proof}

\subsection*{Acknowledgements}
We thank the two anonymous referees for their careful reading of the manuscript which led to significant improvement of the paper.
During the preparation of the manuscript, JJ was employed at Eindhoven University of Technology and CWI Amsterdam.
 JJ thanks Johannes Lengler for stimulating discussions during a three-month visit to ETH Z\"urich, which was supported by Swiss National Science Foundation (SNF) grant 192079, and the Netherlands Organisation for Scientific Research (NWO) Gravitation-grant NETWORKS-024.002.00. The work of JJ and JK is partly supported through grant NWO 613.009.122.  The work of DM is partially supported by grant Fondecyt grant 1220174 and by grant GrHyDy ANR-20-CE40-0002.

\begin{appendix}

\section{Proofs based on backbone construction}
We present the proofs of the propositions at the end of Section~\ref{sec:upper-2nd}.
\begin{proof}[Proof of Proposition~\ref{prop:min-giant-weight}]\label{proof:min-giant-weight}
We give the detailed proof for $\tau\ge \sigma+1$. At the end of the proof we explain the adjustments for $\tau<\sigma+1$. 
 We will first derive a bound on $\Prob\big(\exists v\in\CV_{n}[\overline{w}, \infty): v\notin\CC_{n}^\sss{(1)}\big)$ for arbitrary $\overline{w}\ge 1$.
 We make use of the backbone construction from Section~\ref{sec:upper-2nd}: we will show that vertices with mark at least $\overline{w}$ are likely to connect by an edge to the backbone $\CC_{\mathrm{bb}}$, which will be a subset of the giant component. Observe that the event in \eqref{eq:min-giant-weight} allows us to choose the size of the boxes when we build the backbone, i.e., the value of $k$ is not yet defined with respect to $\overline w$. We define $k=k(\overline w)$ implicitly  by $\overline{w}=:A_1k^{1-\sigma\gamma_\mathrm{hh}}$, where $A_1$ is a large enough constant to be determined later. We aim to show that for some $A_2>0$, and $n\ge k$,
 \begin{equation}
  \Prob\big(\neg\CA_\mathrm{mark\textnormal{-}giant}(n,\overline{w})\big)\!:=\!\Prob\big(\exists v\!\in\!\CV_{n}[\overline w,\infty)\!:\! v\!\notin\!\CC_{n}^\sss{(1)}\big)\!\le\! n\exp\!\big(\!-\!A_2 k(\overline w)^{\zeta_\mathrm{hh}}\!\big).\label{eq:min-weight-bb}
 \end{equation}
 If this bound holds, then substituting $\overline{w}=\overline{w}_{n}=(M_w \log n)^{(1-\sigma\gamma_\mathrm{hh})/\zeta_\mathrm{hh}}$ yields the value 
 \[ k(\overline w_n)=A_1^{-1/(1-\sigma \gamma_\mathrm{hh})} \overline{w}_{n}^{1/(1-\sigma \gamma_\mathrm{hh})} =A_1^{-1/(1-\sigma \gamma_\mathrm{hh})} (M_w\log n)^{1/\zeta_{\mathrm{hh}}}.\] When we substitute this back to  \eqref{eq:min-weight-bb} we obtain that for $M_w$ sufficiently large the right-hand side is at most $n^{-\delta}$, as required in \eqref{eq:min-giant-weight}. We now prove \eqref{eq:min-weight-bb}.

 Recall $\CA_\mathrm{bb}(n,k)$ and $\CC_\mathrm{bb}(n,k)$ from \eqref{eq:upper-hh-bb-event}.
 Distinguishing two cases depending on whether $\CA_{\mathrm{bb}}(n,k)$ holds for $\CG_{n,2}=\CG_n[1,2w_\mathrm{hh})$ or not (with $w_\mathrm{hh}(k)$ in \eqref{eq:w-gamma-hh}), by Lemma~\ref{lemma:upper-hh-bb},
 \begin{align}
  \Prob\big(\neg\CA_\mathrm{mark\textnormal{-}giant}(n,\overline{w})\big) & \le \Prob\big(\neg\CA_\mathrm{bb}\big)+
  \E\big[\ind{\CA_\mathrm{bb}}\Prob\big(\neg\CA_\mathrm{mark\textnormal{-}giant}(n,\overline{w})\mid \CG_{n,2}, \CA_\mathrm{bb}\big)\big] \nonumber\\ &\le 3n\exp(-c_{\ref{lemma:upper-hh-bb}}k^{\zeta_{\mathrm{hh}}})\!+\! \E\big[\ind{\CA_\mathrm{bb}}\Prob\big(\neg\CA_\mathrm{mark\textnormal{-}giant}(n,\overline{w})\!\mid\! \CG_{n,2}, \CA_\mathrm{bb}\big)\big].\label{eq:min-weight-pr1}
 \end{align}
 On the event $\CA_\mathrm{bb}$, there is a backbone $\CC_\mathrm{bb}$. This backbone is either not part of the giant component, or if it is, then a vertex with mark at least $\overline{w}$ outside the giant has  \emph{no connection} to any of the vertices in the backbone. Hence, conditionally on the event $\CA_\mathrm{bb}$,
 \begin{align}
  \neg\CA_\mathrm{mark\textnormal{-}giant}(n,\overline{w})
  \subseteq
  \{\CC_\mathrm{bb}\nsubseteq \CC_n^\sss{(1)}\}\cup
  \{\exists v\in\CV_n[\overline{w},\infty): v\not\sim \CC_\mathrm{bb} , \CC_\mathrm{bb}\subseteq \CC_n^\sss{(1)}\}.\nonumber
 \end{align}
 By a union bound and Corollary \ref{cor:bb-in-giant}, this implies that
 \begin{align}
  \Prob\big(\neg\CA_\mathrm{mark\textnormal{-}giant}&(n,\overline{w})  \mid \CG_{n,2}, \CA_\mathrm{bb}\big) \nonumber\\
                                                        & \le
  (n/k)\exp\big(-c_{\ref{prop:2nd-upper-bound-hh}}k^{\zeta_\mathrm{hh}}\big)
  +
  \Prob\big(\exists v\in\CV_n[\overline{w},\infty): v\not\sim \CC_\mathrm{bb}\mid \CG_{n,2}, \CA_\mathrm{bb}\big)\label{eq:min-weight-pr2}.
 \end{align}
 Recall that $\CG_{n,2}$ is the graph spanned on vertices with mark in $[1, 2w_{\mathrm{hh}})$, see Definition~\ref{def:ksrg-alt}.
 With $C_1$ from \eqref{eq:upper-hh-c1}--\eqref{eq:c1-infty}, we may assume $A_1\ge 2C_1^{-1/(\tau-1)}$. Since  $w_\mathrm{hh}=C_1^{-1/(\tau-1)}k^{\gamma_{\mathrm{hh}}}$ defined in \eqref{eq:w-gamma-hh}, and since $1-(1+\sigma)\gamma_{\mathrm{hh}}\ge 0$ (see \eqref{eq:gamma-upper-sigma}), this implies that
 \begin{equation}\label{eq:overline-w-k}
  \overline w=A_1k^{1-\sigma\gamma_\mathrm{hh}}\ge 2w_\mathrm{hh}=2 C_1^{-1/(\tau-1)}k^{\gamma_{\mathrm{hh}}}\nonumber.
 \end{equation}
 Hence, vertices of mark at least $\overline w$ are part of $\CV_n[2w_{\mathrm{hh}},\infty)$ and are not revealed in $\CG_{n,2}$.
 Conditioning on  the number of vertices $|\CV_n[\overline w,\infty)|$, the location of each vertex is independent and uniform in $\Lambda_n$. Taking a union bound over these vertices in $\CV_n[\overline w,\infty)$, yields
 \begin{align}
  \Prob\big(\exists v\in\CV_{n}[\overline w,\infty) & : v\not\sim \CC_{\mathrm{bb}} \mid \CG_{n,2}, \CA_\mathrm{bb}\big) \nonumber\\ &\le
  \E[|\CV_n[\overline w,\infty)|]\cdot
  \sup_{v\in\CV_n[\overline w,\infty)}\Prob\big( v\not\sim \CC_{\mathrm{bb}} \mid \CG_{n,2}, \CA_\mathrm{bb}\big)           \nonumber\\
&\le
  n\,
  \sup_{v\in\CV_n[\overline w,\infty)}\Prob\big( v\not\sim \CC_{\mathrm{bb}} \mid \CG_{n,2}, \CA_\mathrm{bb}\big).\label{eq:overlinew-connect}
 \end{align}
 Now we use that the backbone is spatially `everywhere'. Let $\CQ(v)$ be the box of $v$ as in \eqref{eq:qu}.
 Conditionally on $\CA_\mathrm{bb}$, $\CQ(v)$ contains at least $s_k=\Theta(k^{\zeta_\mathrm{hh}})$ vertices in $\CC_\mathrm{bb}$ with mark in $[w_\mathrm{hh}, 2w_\mathrm{hh})$, where $w_\mathrm{hh}$ is defined in \eqref{eq:w-gamma-hh}, yielding the set of vertices $\CS(v)$ in \eqref{eq:su}. We use the distance bound in \eqref{eq:upper-hh-outside-tesselation},
 and $\mathrm{p}$, $\kappa_\sigma$ defined in \eqref{eq:connection-prob-gen}, and \eqref{eq:kernels}, respectively, and the value $\overline w$ in \eqref{eq:overline-w-k}, to obtain that for any $v\in \CV_n[\overline w, \infty)$ and  $u\in\CS(v)$, when $\alpha<\infty$,
 \begin{align*}
  \mathrm{p}(u, v) & \ge
  p\Big(1\wedge \big(\beta\kappa_\sigma(w_\mathrm{hh}, A_1k^{1-\sigma\gamma_\mathrm{hh}})(2\sqrt{d})^{-d}k^{-1}\big)\Big)^\alpha \\
                  & =
  p\Big(1\wedge \big(\beta C_1^{-\sigma/(\tau-1)}k^{\sigma\gamma_\mathrm{hh}}A_1k^{1-\sigma\gamma_\mathrm{hh}} (2\sqrt{d})^{-d}k^{-1}\big)^\alpha\Big) = p,
 \end{align*} whenever $A_1\ge (2\sqrt{d})^d C_1^{\sigma/(\tau-1)}/\beta$,
 since the exponent of $k$ in the second term of the minimum is $0$.
 The same bound holds when $\alpha=\infty$.
 Since $v$ connects by an edge to each of the $s_k=\Theta(k^{\zeta_\mathrm{hh}})$ many backbone vertices in $\CS(v)$ with probability at least $p$, conditionally independently of each other, we bound \eqref{eq:overlinew-connect} by
 \[
  \begin{aligned}
  \Prob\big(\exists v\in\CV_{n}[\overline w,\infty): v\not\sim \CC_{\mathrm{bb}} \mid \CG_{n,2}, \CA_\mathrm{bb}\big) & \le n(1-p)^{s_k}.
  \end{aligned}
 \]
 Since $s_k=\Theta(k^{\zeta_{\mathrm{hh}}})$ in \eqref{eq:w-gamma-hh}, combining this with \eqref{eq:min-weight-pr1} and \eqref{eq:min-weight-pr2}  yields \eqref{eq:min-weight-bb} for $A_2$  sufficiently small. As argued below \eqref{eq:min-weight-bb}, this yields \eqref{eq:min-giant-weight} when $\tau\ge\sigma+1$. When $\tau<\sigma+1$, the exponent $\zeta_\mathrm{hh}$ in the exponential on the right-hand side in~\eqref{eq:min-weight-bb} and in the first term on the right-hand side in~\eqref{eq:min-weight-pr1} and~\eqref{eq:min-weight-pr2} change to $1/(\sigma+1-(\tau-1)/\alpha)$ due to Corollary~\ref{cor:bb-in-giant}. Setting $\overline w_n=(M_w \log n)^{(1-\sigma\gamma_\mathrm{hh})(\sigma+1-(\tau-1))}$ proves~\eqref{eq:min-giant-weight} when $\tau<\sigma+1$.
\end{proof}
\subsection{Construction of a linear-sized component}\label{app:linear-sized}
We will prove the first statement of Proposition \ref{proposition:existence-large}. At the end of the section, we comment how the proof can be adjusted to obtain the second statement considering the infinite model. Throughout the proof, we will consider the Palm version of $\mathbb{P}$, conditioning $\CV$ to contain a vertex at location 0. We will leave it out in the notation. We will show using a second-moment method that linearly many vertices connect to the backbone
$\CC_\mathrm{bb}(n, k)$ for a properly chosen $k=k_n$ that we define now
Recall $C_1$ from \eqref{eq:upper-hh-c1}-\eqref{eq:c1-infty}. Implicitly define $k=k_n$ as the solution of the equation
\begin{equation}
 (C_1/16)k_n^{\zeta_\mathrm{hh}}:=(2/c_{\ref{prop:2nd-upper-bound-hh}})\log n,\label{eq:kn-paths-to-bb}
\end{equation}
yielding $m=\big(32/(C_1c_{\ref{prop:2nd-upper-bound-hh}})\big)^{1/\zeta_{\mathrm{hh}}}$ in the statement of Proposition \ref{proposition:existence-large}, and the mark-truncation value in the definition of $\CG_{n,2}$ at  $2w_{\mathrm{hh}}(k_n)=2C_1^{-1/(\tau-1)}k_n^{\gamma_\mathrm{hh}}$ from \eqref{eq:w-gamma-hh}, with  $w_\mathrm{hh}(k_n) = (M_w\log(n))^{\gamma_\mathrm{hh}/\zeta_{\mathrm{hh}}}$ for some constant $M_w$.
We reveal the realization of the graph $\CG_{n,1}=\CG_n[w_\mathrm{hh}(k_n), 2w_\mathrm{hh}(k_n))$ (defined above \eqref{eq:upper-hh-c1}), conditioned to satisfy the event $\CA_\mathrm{bb}(n,k_n)$ in \eqref{eq:upper-hh-bb-event}.
Recalling the intensity measure of the Poisson vertex set $\mu_{\tau}$ from \eqref{eq:poisson-intensity},  for any constant $m_w\ge1$ we define the event
\begin{equation}
 \CA_\mathrm{reg}':= \big\{|\CV_n[m_w, 2m_w)|\big/\mu_\tau\big(\Lambda_n\times[m_w, 2m_w)\big)\, \in[1/2, 2]\big\},\label{eq:global-prime}
\end{equation}
that is, that the PPP in $\Lambda_n$ is regular in the sense that the number of constant-mark vertices is roughly as expected. Writing $w_0$ for the mark of $0$,  we define the conditional probability measure
\begin{equation}
 \widetilde\Prob_\mathrm{bb}(\,\cdot\,):= \Prob\big(\,\cdot\mid \CA_\mathrm{reg}', \CA_\mathrm{bb}, \CG_{n}[1, w_\mathrm{hh}), w_0\in[m_w, 2m_w)\big),\label{eq:lower-cond-prob-2}
\end{equation}
with corresponding expectation $\widetilde\E_\mathrm{bb}$. We state  a lemma that implies Proposition \ref{proposition:existence-large}.
\begin{lemma}[Constructing a component]\label{lemma:construction}
 Consider a KSRG under the same conditions as Proposition \ref{proposition:existence-large}.
 Take $k_n$ as in \eqref{eq:kn-paths-to-bb}. For any $m_w\ge1$, for all sufficiently large $n$,
 \begin{equation}
  \Prob\big(\CA_\mathrm{reg}', \CA_\mathrm{bb}, w_0\in[m_w, 2m_w)\big) \ge m_w^{-(\tau-1)}/2.\label{eq:construction-bb}
 \end{equation}
 Moreover, there exist constants $m_w\ge1, \rho_{\ref{lemma:construction}}>0$, such that for all sufficiently large $n$,
 \begin{equation}
  \widetilde\Prob_\mathrm{bb}\big(|\CC_{n}(0)[1, 2w_{\mathrm{hh}}(k_n))|\ge \rho_{\ref{lemma:construction}} n\big)\ge \rho_{\ref{lemma:construction}}.\label{eq:constructing-1}
 \end{equation}
\end{lemma}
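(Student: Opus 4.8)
\textbf{Proof proposal for Lemma~\ref{lemma:construction}.} The plan is to prove the two displays \eqref{eq:construction-bb} and \eqref{eq:constructing-1} separately. For \eqref{eq:construction-bb}, I would note that the three events are not mutually independent but are nearly so: $\{w_0\in[m_w,2m_w)\}$ is a statement about the mark of the Palm vertex at $0$, which has probability $m_w^{-(\tau-1)}-(2m_w)^{-(\tau-1)}=m_w^{-(\tau-1)}(1-2^{-(\tau-1)})\ge m_w^{-(\tau-1)}/2$ under the Pareto law \eqref{eq:power-law}. The event $\CA_\mathrm{bb}$ has probability $1-o(1)$ by Lemma~\ref{lemma:upper-hh-bb} with $k=k_n$ as in \eqref{eq:kn-paths-to-bb} (the error $3(n/k_n)\exp(-c_{\ref{lemma:upper-hh-bb}}k_n^{\zeta_\mathrm{hh}})$; since $c_{\ref{lemma:upper-hh-bb}}k_n^{\zeta_\mathrm{hh}}$ grows like a multiple of $\log n$, one picks the proportionality constant in \eqref{eq:kn-paths-to-bb} --- equivalently $c_{\ref{prop:2nd-upper-bound-hh}}$ --- so that this error is $o(1)$, possibly absorbing constants). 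The event $\CA_\mathrm{reg}'$ also has probability $1-o(1)$ by a Poisson concentration bound (Lemma~\ref{lemma:poisson-1}) applied to $|\CV_n[m_w,2m_w)|$, whose mean is $\Theta(n)$. The only subtlety is conditioning on $\{w_0\in[m_w,2m_w)\}$: since $m_w=O(1)$ and $2w_\mathrm{hh}(k_n)\to\infty$, for $n$ large the vertex at $0$ has mark below the backbone-truncation, so it is not among the backbone vertices; thus conditioning on its mark changes $\Prob(\CA_\mathrm{bb})$ and $\Prob(\CA_\mathrm{reg}')$ by at most $o(1)$ (one extra vertex). Hence $\Prob(\CA_\mathrm{bb}\cap\CA_\mathrm{reg}'\mid w_0\in[m_w,2m_w))\ge 1-o(1)\ge 1/2$ and \eqref{eq:construction-bb} follows after multiplying by $\Prob(w_0\in[m_w,2m_w))$.

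For \eqref{eq:constructing-1}, I would use a second-moment (Paley--Zygmund) argument on the number $Z$ of vertices of $\CV_n[m_w,2m_w)$ that are connected to the backbone $\CC_\mathrm{bb}$ by a path in $\CG_n[1,2w_\mathrm{hh}(k_n))$ along which marks are non-decreasing (``increasing-mark paths''). Work under $\widetilde\Prob_\mathrm{bb}$ in \eqref{eq:lower-cond-prob-2}, which fixes $\CG_n[1,w_\mathrm{hh})$ and a backbone realization; then the only remaining randomness is the mark-in-$[w_\mathrm{hh},2w_\mathrm{hh})$ vertices' edges among themselves and to lower-mark vertices, plus all not-yet-revealed edges. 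Write $Z=\sum_{v\in\CV_n[m_w,2m_w)}\ind{v\leftrightarrow\CC_\mathrm{bb}\text{ via incr.-mark path}}$. First moment: for a typical constant-mark vertex $v$, one exhibits a \emph{fixed} short increasing-mark route --- e.g., $v$ connects directly (length-$O(1)$ edge, constant probability since marks are $\Theta(1)$) to a mark-$\Theta(k_n^{\gamma_\mathrm{hh}})$ backbone vertex in the cell of $v$ (there are $s_{k_n}=\Theta(\log n)$ such backbone vertices within distance $O(k_n^{1/d})$ of $v$ by $\CA_\mathrm{bb}$, and by the same computation as \eqref{eq:1/2-infty}--\eqref{eq:upper-hh-bb-half}, each connects with probability bounded below, so $v$ connects to at least one with probability $\ge$ a constant). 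This gives $\widetilde\E_\mathrm{bb}[Z]\ge c\,\mu_\tau(\Lambda_n\times[m_w,2m_w))=\Theta(n)$ on $\CA_\mathrm{reg}'$. Second moment: for two vertices $v_1,v_2$ at distance $\gg k_n^{1/d}$, the events ``$v_i\leftrightarrow\CC_\mathrm{bb}$ via a route through $v_i$'s own cell'' use disjoint edge-sets (the short route of $v_i$ lives near $v_i$ and then lands in the backbone, which is shared but is \emph{conditioned on}, hence deterministic) --- so these events are conditionally independent given the backbone and $\CG_n[1,w_\mathrm{hh})$; for nearby pairs bound the correlation crudely by $1$. Thus $\widetilde\E_\mathrm{bb}[Z^2]\le \widetilde\E_\mathrm{bb}[Z]^2(1+o(1))+O(n\cdot k_n)$, so $\mathrm{Var}(Z)=o(\widetilde\E_\mathrm{bb}[Z]^2)$. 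Paley--Zygmund (or Chebyshev) then gives $\widetilde\Prob_\mathrm{bb}(Z\ge\tfrac12\widetilde\E_\mathrm{bb}[Z])\ge 1-o(1)$. Since all these $Z$ vertices connect to $\CC_\mathrm{bb}$ and thus lie in a single component of $\CG_n[1,2w_\mathrm{hh}(k_n))$, and $\CC_\mathrm{bb}$ itself has $\Theta(n/k_n)$ vertices, we get $|\CC_n(0)[1,2w_\mathrm{hh}(k_n))|\ge \tfrac12\widetilde\E_\mathrm{bb}[Z]\ge\rho n$ \emph{provided $0$ itself is in this component}; but $0$ has mark in $[m_w,2m_w)$ and its own cell contains $\Theta(\log n)$ backbone vertices, so $0\leftrightarrow\CC_\mathrm{bb}$ with probability bounded below, and intersecting with $Z\ge\tfrac12\widetilde\E_\mathrm{bb}[Z]$ still leaves probability $\ge\rho_{\ref{lemma:construction}}$.

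The derivation of Proposition~\ref{proposition:existence-large} from Lemma~\ref{lemma:construction} is then immediate: \eqref{eq:prop-subexp-lower-comp} first inequality follows by combining \eqref{eq:construction-bb} and \eqref{eq:constructing-1} and the tower property (the conditioning events in $\widetilde\Prob_\mathrm{bb}$ occur with probability $\ge m_w^{-(\tau-1)}/2$), with $\rho:=\min(\rho_{\ref{lemma:construction}},m_w^{-(\tau-1)}/2,\rho_{\ref{lemma:construction}}\cdot m_w^{-(\tau-1)}/2)$. For $\Prob^\sss{0}(0\leftrightarrow\infty)\ge\rho$ and uniqueness of the infinite component, one repeats the same construction in the infinite model: build a backbone in $\Lambda_{n}$ for every $n$ (they can be coupled monotonically or one simply notes that a doubly-infinite increasing-mark path exists by a Borel--Cantelli/renewal argument along a sequence of nested annuli), yielding an infinite increasing-mark path from $0$ with probability $\ge\rho$; uniqueness follows from the standard Burton--Keane argument since the model is translation-invariant and has finite-energy-type insertion tolerance for edges, or alternatively cite that any two infinite components would have to be disjoint unbounded increasing-mark skeletons and must merge with probability one by the same second-moment estimate applied to the overlap region.

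\textbf{Main obstacle.} The delicate point is the second-moment bound: one must set up the ``increasing-mark route through one's own cell'' in a way that makes the events for far-apart vertices genuinely conditionally independent given the revealed data $(\CG_n[1,w_\mathrm{hh}),\ \CC_\mathrm{bb})$. If instead one allowed arbitrary paths to the backbone, the paths could share edges and the covariance would be uncontrolled. Restricting to a canonical local route (constant-length edges within a bounded neighbourhood, landing on the backbone which is \emph{frozen} by the conditioning) is what decouples them; verifying that this restricted event still has probability bounded below --- which is exactly the content of the backbone-connection estimates \eqref{eq:1/2-infty}--\eqref{eq:upper-hh-bb-half} and Claim~\ref{claim:connecting-to-bb}, now applied to constant-mark rather than $[2w_\mathrm{hh},\infty)$-mark vertices via one intermediate high-mark hop --- is the technical heart and the place where $\zeta_\mathrm{hh}>0$ (equivalently $\tau<2+\sigma$, ensuring $w_\mathrm{hh}(k_n)^{1+\sigma}/k_n$ does not vanish too fast) is used.
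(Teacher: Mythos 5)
Your overall plan matches the paper's: both parts of the lemma are proved separately, \eqref{eq:construction-bb} by a union bound combining Poisson concentration for $\CA_\mathrm{reg}'$, Lemma~\ref{lemma:upper-hh-bb} for $\CA_\mathrm{bb}$, and the Pareto law for $w_0$, and \eqref{eq:constructing-1} by a second-moment (Paley--Zygmund) argument on a count of constant-mark vertices connected to the backbone along increasing-mark paths, with independence extracted from spatial disjointness. The first part is fine, and the structure of the second part is also the one the paper uses.

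However, the first-moment estimate in your second part has a genuine gap. You claim that a constant-mark vertex $v$ ``connects directly (length-$O(1)$ edge, constant probability since marks are $\Theta(1)$) to a mark-$\Theta(k_n^{\gamma_\mathrm{hh}})$ backbone vertex in the cell of $v$''. This fails: the backbone contains only $s_{k_n}=\Theta(k_n^{\zeta_\mathrm{hh}})$ vertices in a box of volume $k_n$, so the nearest backbone vertex to $v$ is typically at distance $\Theta\big((k_n/s_{k_n})^{1/d}\big)=\Theta\big(k_n^{\gamma_\mathrm{hh}(\tau-1)/d}\big)$, and the probability of an edge from $v$ to such a vertex is of order $\big(k_n^{\gamma_\mathrm{hh}}/k_n^{\gamma_\mathrm{hh}(\tau-1)}\big)^\alpha=k_n^{\alpha\gamma_\mathrm{hh}(2-\tau)}\to 0$ since $\tau>2$. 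The ``via one intermediate high-mark hop'' correction you mention at the end does not repair this: a single hop does not bridge the gap between mark-$\Theta(1)$ and mark-$w_\mathrm{hh}(k_n)=k_n^{\Theta(1)}$ with uniformly positive probability. The paper's Claim~\ref{lemma:lower-conn-bb-2} instead builds a chain of $j_\ast+1=\Theta(\log\log n)$ hops through geometrically growing boxes $Q_j(x_u)$ of volume $\Theta\big((2^jm_w)^{\sigma+1}\big)$ and disjoint mark intervals $I_j=[2^jm_w,2^{j+1}m_w)$, so that each consecutive pair connects with probability exactly $p$ by construction; the product over hops converges precisely because $\tau<\sigma+2$, via the sum $\sum_j\exp\big(-c(2^jm_w)^{\sigma+2-\tau}\big)<\infty$. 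This is the true role of $\zeta_\mathrm{hh}>0$ here, and it is a quantitatively different mechanism from the one-hop shortcut you describe.

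Two smaller points. First, your second-moment bound (splitting into far and near pairs, bounding the latter by $O(n k_n)$) works but is more delicate than needed; the paper simply uses the deterministic bound $X^2\le\big(2m_w^{-(\tau-1)}n\big)^2$ coming from $\CA_\mathrm{reg}'$, which avoids having to control nearby-pair covariances at all. Second, the paper folds the event $\{0\overset{\pi}\longrightarrow\CC_\mathrm{bb}\}$ directly into the definition of $X$ (see \eqref{eq:lower-x}) and restricts the sum to vertices outside a box of volume $\Theta(k_n)$ around $0$, so that the independence for the origin and for far-away vertices follows cleanly from the disjointness of their chains of boxes $Q_j(0)$ and $Q_j(x_u)$ (see \eqref{eq:key-to-indep}); handling the event $\{0\leftrightarrow\CC_\mathrm{bb}\}$ as a separate intersection, as you propose, requires an extra argument that the two events are positively correlated or independent, which the paper's formulation sidesteps.
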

To prove the lemma (in particular the second statement), we need to define auxiliary notation and an auxiliary claim: we define for $u:=(x_u, w_u)\in\CV_n[m_w, 2m_w)$  the event
\begin{equation}
 \{u\overset{\pi}\longrightarrow\CC_\mathrm{bb}\}:= \left\{
 \begin{aligned} & \mbox{$\exists$  a path  in $\CG_{n}[1,2w_\mathrm{hh})$   from $u$ to some vertex in $\CC_\mathrm{bb}$, } \\
  & \mbox{with all vertices (except $u$) having mark in  $\R^+\setminus[m_w, 2m_w)$}\end{aligned}\right\}.\label{eq:u-pi-bb}
 \end{equation}
Recall $\Lambda(0, s)$  from \eqref{eq:xi-q-ab} the box centered at $0$ of volume $s$, and that $u=(x_u,w_u)$ denotes the location and mark of a vertex $u\in \CV$. The next claim states that if $u$ and $0$ are both vertices with mark in $[m_w, 2m_w)$, falling into different subboxes $\CQ(0)\neq \CQ(u)$ (where $\CQ(u)$ denotes the box of the tessellation that contains the vertex $u$, or is closest to $u$), then the event that both $0$ and $u$ connect to the backbone $\CC_{\mathrm{bb}}$ happens with constant probability.
\begin{claim}[Paths to the backbone]\label{lemma:lower-conn-bb-2}
 Consider a KSRG under the conditions of Proposition \ref{proposition:existence-large}. There exist positive constants $m_w, q_{\ref{lemma:lower-conn-bb-2}}, C_{\ref{lemma:lower-conn-bb-2}}> 0$ such that for all   $u\in\CV_n[m_w, 2m_w)$ with $x_u\notin\Lambda(0, C_{\ref{lemma:lower-conn-bb-2}}k)$, and $n$ sufficiently large
 \begin{align}
  \widetilde\Prob_\mathrm{bb}\big(\{0\!\overset{\pi}\longrightarrow\!\CC_\mathrm{bb})\}\!\cap\! \{ u\!\overset{\pi}\longrightarrow\!\CC_\mathrm{bb}\}\!\mid\! u\!\in\!\CV_n[m_w, 2m_w),  x_u\!\notin\!\Lambda(0, C_{\ref{lemma:lower-conn-bb-2}}k)\big)\ge
  q_{\ref{lemma:lower-conn-bb-2}}.\nonumber
 \end{align}
 \begin{proof}
 We will build what we call "mark-increasing paths". Recall $k=k_n=m(\log(n))^{1/\zeta_{\mathrm{hh}}}$ in Proposition~\ref{proposition:existence-large} and  that $w_\mathrm{hh}=w_{\mathrm{hh}}(k_n)=(M_w\log(n))^{\gamma_\mathrm{hh}/\zeta_\mathrm{hh}}$ by~\eqref{eq:kn-paths-to-bb} and~\eqref{eq:w-gamma-hh} for some constant $M_w>0$. Define
 \begin{equation}\label{eq:j-star}
  j_\ast:=\max\{j: 2^{j+1}m_w<w_\mathrm{hh}(k_n)\},
 \end{equation}
  and define for $0\le j\le j_\ast$ and $x\in\Lambda_n$ the following boxes and disjoint mark intervals:
 \begin{align}
  Q_j(x) := \Lambda\big(x, \beta 2^{-\sigma}d^{-d/2}(2^{j}m_w)^{\sigma+1}\big)\cap\Lambda_n, \qquad
  I_j := [2^jm_w, 2^{j+1}m_w),
  \label{eq:lower-bj}
 \end{align}
 and write $Q_{j_\ast+1}:=\CQ(x)$, i.e., the volume-$k_n$ subbox   containing $x$ in the partitioning of $\Lambda_n$ for the backbone construction given in Section~\ref{sec:upper-2nd}, and define also $I_{j_\ast+1}:=[w_\mathrm{hh}, 2w_\mathrm{hh})$.
 Even with the truncation by $\Lambda_n$ in \eqref{eq:lower-bj}, the volume bound  
 \begin{equation}
 \beta 2^{-\sigma-d}d^{-d/2}(2^{j}m_w)^{\sigma+1}\le \mathrm{Vol}(Q_j(x))\le \beta 2^{-\sigma}d^{-d/2}(2^{j}m_w)^{\sigma+1}\label{eq:vol-qj}
 \end{equation}
 holds for all $x\in\Lambda_n$ and $j\le j_\ast$.
 By \eqref{eq:poisson-intensity}, the number of vertices of $\CV$ in $Q_j(x_u)\times I_j$ has Poisson distribution with mean
 \begin{equation}
  \mu_\tau\big(Q_j(x_u)\times I_j\big) \ge 2^{-1}\beta(2^j m_w)^{-(\tau-1)}2^{-\sigma-d}d^{-d/2}(2^{j}m_w)^{\sigma+1},
  \label{eq:qj-measure}
 \end{equation}
 where we used that $1-2^{-(\tau-1)}\ge 2^{-1}$ for $\tau>2$.
 Moreover,  \eqref{eq:j-star} implies $2^jm_w\le w_{\mathrm{hh}}$ for all $j\le j_\star$ and substituting this into \eqref{eq:lower-bj} with $w_\mathrm{hh}=C_1^{-1/(\tau-1)}k_n^{\gamma_\mathrm{hh}}$  from \eqref{eq:w-gamma-hh} yields
 \begin{equation}
  \begin{aligned}
  Q_j(x) & \subseteq
  \Lambda\big(x, \beta 2^{-\sigma}d^{-d/2}w_\mathrm{hh}^{\sigma+1}\big)
  =
  \Lambda\big(x, \beta 2^{-\sigma}d^{-d/2}C_1^{-(\sigma+1)/(\tau-1)}k_n^{\gamma_\mathrm{hh}(\sigma+1)}\big) \\
          & \subseteq
  \Lambda\big(x, \beta 2^{-\sigma}d^{-d/2}C_1^{-(\sigma+1)/(\tau-1)}k_n\big)=:Q^\star(x),\nonumber
  \end{aligned}
 \end{equation}
  where the last inclusion follows from  $\gamma_\mathrm{hh}\le1/(\sigma+1)$ by \eqref{eq:gamma-hh} for all parameters $\alpha,\tau$ such that $\zeta_\mathrm{hh}>0$. Let $\mathrm{diam}(Q^\star(x))=:(Ck_n)^{1/d}$ denote the diameter of $Q^\star(x)$, and let $Q^\diamond:=\Lambda(0, 2^{d}Ck_n)$ denote the box centered at $0$, such that  $\mathrm{diam}(Q^\diamond)=2\mathrm{diam}(Q^\star)$. For any $x_u\notin\CQ^\diamond$, and all pairs $j,j'\le j_\ast$, it holds that
 \begin{equation}\label{eq:key-to-indep}
  Q_j(x_u)\cap Q_{j'}(0)=\emptyset,
 \end{equation}
  and thus the PPPs restricted to $Q_j(x_u)\times I_j$ are independent for $j \neq j'$, and the PPP restricted to $Q_j(x_u)\times I_j$ is also independent of the PPP in $Q_{j'}(0)\times I_{j'}$ for all $j,j'\le j_\ast$.  On the conditional measure $\widetilde \Prob_\mathrm{bb}$, defined in \eqref{eq:lower-cond-prob-2}, we fixed (revealed) the realization of $\CV_n[w_\mathrm{hh}, 2w_\mathrm{hh})$. Edges among $\CV_n[w_\mathrm{hh}, 2w_\mathrm{hh})$ and $\CV\cap (Q_{j_*}(x_u)\times I_{j_*})$ are thus also present conditionally independently.
 We define for $u=(x_u, w_u)=:u_0$ the event of having a "mark-increasing path" (a subevent of $\{u\overset{\pi}\longrightarrow\CC_\mathrm{bb}\}$ defined in \eqref{eq:u-pi-bb}):
 \begin{equation}
  \{u\leadsto\CC_\mathrm{bb}\}:=\left\{\begin{aligned} &\exists (u_1,\dots, u_{j_\ast+1}), u_{j_*+1}\in \CC_{\mathrm{bb}}, \\ &\forall j \in [j_{\ast}+1]: u_j\in Q_j(x_u)\times I_j, u_{j-1}\leftrightarrow u_j\end{aligned}\right\}.\label{eq:u-leadsto-bb}
 \end{equation}
  on which there is a path from $u$ to the backbone, where the $j$th vertex on the path is in  $Q_j(x_u)\times I_j$ (that are disjoint across $j$). The mark of $u_{j_*+1}$ is in the right range by definition $I_{j_\ast+1}$ of  below \eqref{eq:lower-bj}.
 By this disjointness and \eqref{eq:key-to-indep}, the events $\{0\leadsto\CC_\mathrm{bb}\}$ and $\{u\leadsto\CC_\mathrm{bb}\}$ are  independent conditionally on $\CV_n[w_\mathrm{hh}, 2w_\mathrm{hh})$.

 To bound $\widetilde \Prob_{\mathrm{bb}}(u\leadsto\CC_\mathrm{bb})$ from below,  we greedily `construct' a path from $u=u_0$ to the backbone. By assumption, $w_u\in[m_w, 2m_w)$ hence $u_0\in Q_0(x_u)\times I_0$. We first bound the probability that $u_0$ connects by an edge to a vertex $u_1\in Q_1(x_u)\times I_1$. Then, if there is such a connection, we choose $u_1$ to be an arbitrary vertex connected to $u_0$, and give a uniform lower bound (over the possible $u_1$) on the probability that it connects by an edge to a vertex $u_2\in Q_2(x_u)\times I_2$. We continue this process until we reach $u_{j_\ast}$ that has mark just smaller than the minimal mark of vertices in the backbone, by definition of $j_\ast$ in \eqref{eq:j-star}.
 Then we find a connection from $u_{j_\ast}$ to the backbone.
 We now bound the probability that two vertices $u_{j-1}$ and $u_{j}$ are connected by an edge.

 By construction, $w_{u_{j-1}}\ge 2^{j-1}m_w$, $w_{u_{j}}\ge 2^{j}m_w$. Hence,
 with the kernel $\kappa_\sigma$ from \eqref{eq:kernels}, and volume bound \eqref{eq:vol-qj}, we obtain 
 \[
  \beta\kappa_\sigma(w_{u_{j-1}}, w_{u_{j}}) = \beta w_{u_{j-1}}^\sigma w_{u_{j}} \ge \beta m_w^{\sigma+1}2^{(\sigma + 1)j-\sigma}\ge\beta2^{-\sigma}(2^{j}m_w)^{\sigma+1} 
 \]
Further, their distance $\|x_{u_{j-1}}-x_{u_{j}}\|^d\le d^{d/2}\mathrm{Vol}(Q_{j}(x_u))\le \beta2^{-\sigma}(2^{j}m_w)^{\sigma+1}$ by \eqref{eq:vol-qj}, hence
 \[
  \mathrm{p}(u_{j-1}, u_{j}) \ge p\Big(1\wedge \frac{\beta\kappa_\sigma(w_{u_j}, w_{u_{j+1}})}{\|x_{u_j}-x_{u_{j+1}}\|^{ d}}\Big)^\alpha = p.
 \]
 The same computation (without $\alpha$ in the exponent) is valid for $\alpha=\infty$. Having already chosen $u_{j-1}$ on the path, each  $v\in \CV_n \cap (Q_j(x_u)\times I_j)$ connects independently by an edge to $u_{j-1}$ with probability $p$. The conditioning in the measure $\widetilde \Prob_{\mathrm{bb}}$ in \eqref{eq:lower-cond-prob-2} only affects the number of vertices with mark  in $[m_w, 2m_w)$ and in $[w_{\mathrm{hh}}, 2w_{\mathrm{hh}})$. Due to independence of the number of points of PPPs in disjoint sets, for $j\le j_\ast$, the number of candidate vertices for the role of $u_j$ is thus stochastically dominated from below by a $\mathrm{Poi}\big(p\mu_{\tau}(Q_j(x_u)\times I_j)\big)$ random variable. The mean is at least $p\beta2^{-\sigma-d-1}d^{-d/2}(2^{j}m_w)^{\sigma+2-\tau}$ by \eqref{eq:poisson-intensity} and \eqref{eq:qj-measure}. For $j=j_\ast+1$ we use that the vertices in the backbone in $Q_{j_\ast + 1}\times I_{j_\ast + 1}$ are exactly those in $\CQ(u)\times [w_{\mathrm{hh}},2w_{\mathrm{hh}})$, that we denote by $\CS(u)$:
 \begin{align}
  \widetilde \Prob_{\mathrm{bb}}\big(\neg\{u\leadsto\CC_\mathrm{bb})\big) & \le \widetilde \Prob_{\mathrm{bb}}\big(u_{j_\ast}\not\leftrightarrow \CS(u)\big)+ \sum_{j=1}^{j_\ast}\Prob\big(\Poi(p\beta2^{-\sigma-d-1}d^{-d/2}(2^{j}m_w)^{\sigma+2-\tau})=0\big)\nonumber \\
                                                                          & \le
  \widetilde \Prob_{\mathrm{bb}}\big(u_{j_\ast}\not\leftrightarrow \CS(u)\big)+\sum_{j=1}^\infty\exp\big(-p\beta2^{-\sigma-d-1}d^{-d/2}(2^{j}m_w)^{\sigma+2-\tau}\big).\label{eq:no-conn-bb}
 \end{align}
 Since $\tau<2+\sigma$ by assumption, the sum can be made arbitrarily small by choosing $m_w$ sufficiently large.
 By definition of $j_\ast$ and $I_{j_\ast}$ in \eqref{eq:j-star}, $w_{u_{j_\ast}}\ge 2^{j_\ast}m_w\ge w_\mathrm{hh}/4$.
 We recall that $\CQ(x_u)$ is the subbox of volume $k_n$ in the partitioning for the backbone containing $x_u$, that contains at least $s_{k_n}$ backbone vertices of mark at least $w_\mathrm{hh}$, both defined in \eqref{eq:w-gamma-hh}. When $\alpha<\infty$, we follow the computations in~\eqref{eq:temp-est-1} (that are also valid under the conditional measure $\widetilde \Prob_{\mathrm{bb}}$ from \eqref{eq:lower-cond-prob-2}, ensuring that $\CC_{\mathrm{bb}}$ exists and $|\CS(u)|\ge s_{k_n}$ by $\CA_{\mathrm{bb}}$ in  \eqref{eq:upper-hh-bb-event}), and use that $u_{j_\ast}$ is at distance at most $2\sqrt{d}k_n^{1/d}$ from any vertex in $\CS(u)$, to obtain 
 \begin{align}
  \widetilde\Prob_\mathrm{bb}\big(u_{j_\ast}\leftrightarrow\CS(u)\big)
  & \ge
  1-\big(1-p\big(1\wedge \beta d^{-d/2}2^{-(2\sigma+d)}w_\mathrm{hh}^{1+\sigma}k_n^{-1}\big)^\alpha\big)^{s_{k_n}}                                            \nonumber \\
  & \ge1-\exp\big(p\big(s_{k_n}\wedge \beta^\alpha d^{-\alpha d/2}2^{-(2\sigma+d)\alpha}w_\mathrm{hh}^{(1+\sigma)\alpha}k_n^{-\alpha}s_{k_n}\big)\big).\label{eq:lower-alpha-finite-bb}
 \end{align}
 Using that $s_{k_n}=k_n w_\mathrm{hh}^{-(\tau-1)}/16$, $w_\mathrm{hh}=C_1^{-1/(\tau-1)}k_n^{\gamma_\mathrm{hh}}$, and $\gamma_\mathrm{hh}=(\alpha-1)/((\sigma+1)\alpha-(\tau-1))$, we have
 \[
  w_\mathrm{hh}^{(1+\sigma)\alpha}k_n^{-\alpha}s_{k_n}
  \ge
  2^{-4}w_\mathrm{hh}^{(1+\sigma)\alpha - (\tau-1)}k_n^{1-\alpha}=2^{-4}C_1^{1-(1+\sigma)\alpha/(\tau-1)}.
 \]
 Using this bound on the right-hand side in \eqref{eq:lower-alpha-finite-bb}, this yields combined with \eqref{eq:no-conn-bb} that there exists a constant $q>0$ such that if $m_w$ is sufficiently large,
 $\widetilde \Prob_{\mathrm{bb}}\big(u\leadsto\CC_\mathrm{bb}\big)
  \ge
  q$
 establishing Claim \ref{lemma:lower-conn-bb-2} when $\alpha<\infty$, by the reasoning about independence below \eqref{eq:u-leadsto-bb}.

 When $\alpha=\infty$, the choice of $C_1$ in \eqref{eq:c1-infty} ensures that for any vertex $u_\mathrm{bb}\in\CC_\mathrm{bb}\cap \CQ(u)$ that
 \begin{align}
  \mathrm{p}(u_{j_\ast}, u_\mathrm{bb}) \ge p\mathbbm{1}\Big\{\beta\frac{\kappa_\sigma(w_\mathrm{hh}/4, w_\mathrm{hh})}{2^dd^{d/2}k_n}\ge 1\Big\} = p,
  \nonumber
 \end{align}
 establishing Lemma \ref{lemma:lower-conn-bb-2} for $\alpha=\infty$ when combined with \eqref{eq:no-conn-bb} for $m_w$ sufficiently large.
\end{proof}
\end{claim}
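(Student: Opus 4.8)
The strategy is to construct, independently from $u$ and from $0$, a \emph{mark-increasing path} to the backbone $\CC_\mathrm{bb}$ that uses only vertices of mark outside $[m_w,2m_w)$ --- so it witnesses the events $\{0\overset{\pi}\longrightarrow\CC_\mathrm{bb}\}$ and $\{u\overset{\pi}\longrightarrow\CC_\mathrm{bb}\}$ --- and to arrange that the two constructions live in disjoint, hence independent, regions of the Poisson process. Fix a large constant $m_w$. For $u=(x_u,w_u)$ with $w_u\in[m_w,2m_w)$, put $j_\ast:=\max\{j:2^{j+1}m_w<w_\mathrm{hh}\}$, and for $0\le j\le j_\ast$ introduce the dyadic mark bands $I_j:=[2^jm_w,2^{j+1}m_w)$ and the boxes $Q_j(x_u):=\Lambda\big(x_u,\beta2^{-\sigma}d^{-d/2}(2^jm_w)^{\sigma+1}\big)\cap\Lambda_n$, appended with $I_{j_\ast+1}:=[w_\mathrm{hh},2w_\mathrm{hh})$ and $Q_{j_\ast+1}:=\CQ(x_u)$ (the volume-$k$ backbone cell containing $x_u$). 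Let $\{u\leadsto\CC_\mathrm{bb}\}$ be the event that there is a path $u=u_0,u_1,\dots,u_{j_\ast+1}$ with $u_j\in\CV\cap(Q_j(x_u)\times I_j)$, consecutive vertices adjacent, and $u_{j_\ast+1}\in\CC_\mathrm{bb}$; since every $u_j$ with $j\ge1$ has mark in $I_j\subseteq\R_+\setminus[m_w,2m_w)$, we have $\{u\leadsto\CC_\mathrm{bb}\}\subseteq\{u\overset{\pi}\longrightarrow\CC_\mathrm{bb}\}$.

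For the independence I would use that $\gamma_\mathrm{hh}\le1/(\sigma+1)$ (true whenever $\zeta_\mathrm{hh}>0$), which yields $Q_j(x_u)\subseteq\Lambda(x_u,\beta2^{-\sigma}d^{-d/2}w_\mathrm{hh}^{\sigma+1})\subseteq\Lambda(x_u,\Theta(k))$ for all $j\le j_\ast$; hence choosing $C_{\ref{lemma:lower-conn-bb-2}}$ large, the constraint $x_u\notin\Lambda(0,C_{\ref{lemma:lower-conn-bb-2}}k)$ forces all of $\{Q_j(x_u)\}_{j\le j_\ast}$, the cell $\CQ(x_u)$, and the analogous objects around $0$ to be pairwise disjoint. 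Under $\widetilde\Prob_\mathrm{bb}$ the backbone $\CC_\mathrm{bb}\subseteq\CG_{n,1}$ is deterministic ($\CG_{n,1}$ is revealed and conditioned to satisfy $\CA_\mathrm{bb}$), and the conditioning touches only the mark bands $[m_w,2m_w)$ and $[w_\mathrm{hh},2w_\mathrm{hh})$, leaving the PPP restrictions to $Q_j(x_u)\times I_j$, $1\le j\le j_\ast$, and the corresponding edge-variables, fresh and mutually independent of everything used in the $0$-construction. Consequently $\{0\leadsto\CC_\mathrm{bb}\}$ and $\{u\leadsto\CC_\mathrm{bb}\}$ are $\widetilde\Prob_\mathrm{bb}$-independent, so it suffices to prove a uniform lower bound $\widetilde\Prob_\mathrm{bb}(u\leadsto\CC_\mathrm{bb})\ge q>0$; since the mark of $0$ lies in $[m_w,2m_w)$ by the conditioning, the same bound applies to the $0$-path, giving $q_{\ref{lemma:lower-conn-bb-2}}:=q^2$.

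To bound $\widetilde\Prob_\mathrm{bb}(u\leadsto\CC_\mathrm{bb})$ I would build the path greedily. If $u_{j-1}$ is fixed then $w_{u_{j-1}}\ge2^{j-1}m_w$ and any $v\in Q_j(x_u)\times I_j$ satisfies $\|x_{u_{j-1}}-x_v\|^d\le d^{d/2}\mathrm{Vol}(Q_j(x_u))\le\beta2^{-\sigma}(2^jm_w)^{\sigma+1}\le\beta\kappa_\sigma(w_{u_{j-1}},w_v)$, so $\mathrm p(u_{j-1},v)\ge p$ (with equality if $\alpha=\infty$). Since the PPP restrictions to the disjoint sets $Q_j(x_u)\times I_j$ are independent and the conditioning does not touch $I_j$ for $1\le j\le j_\ast$, the number of admissible $u_j$ stochastically dominates a $\mathrm{Poi}\big(p\,\mu_\tau(Q_j(x_u)\times I_j)\big)$ variable of mean $\gtrsim(2^jm_w)^{\sigma+2-\tau}$; as $\tau<2+\sigma$ this is bounded below and the total failure probability $\sum_{j\ge1}\exp\big(-c(2^jm_w)^{\sigma+2-\tau}\big)$ is made arbitrarily small by enlarging $m_w$. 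For the final step, $u_{j_\ast}$ has mark $\ge w_\mathrm{hh}/4$ and lies within $2\sqrt{d}\,k^{1/d}$ of each of the $s_k=\Theta(k^{\zeta_\mathrm{hh}})$ backbone vertices in $\CS(u)\subseteq\CQ(x_u)\times[w_\mathrm{hh},2w_\mathrm{hh})$; repeating the estimate in \eqref{eq:temp-est-1} (valid under $\widetilde\Prob_\mathrm{bb}$ since $\CA_\mathrm{bb}$ forces $|\CS(u)|\ge s_k$), together with $s_kw_\mathrm{hh}^{(1+\sigma)\alpha}k^{-\alpha}\ge2^{-4}C_1^{1-(1+\sigma)\alpha/(\tau-1)}$ and the choice of $C_1$ in \eqref{eq:upper-hh-c1}--\eqref{eq:c1-infty}, gives $\widetilde\Prob_\mathrm{bb}(u_{j_\ast}\leftrightarrow\CS(u))$ at least a constant; when $\alpha=\infty$ the indicator in $\mathrm p$ equals $1$ by \eqref{eq:c1-infty}. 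Multiplying the per-step estimates yields $\widetilde\Prob_\mathrm{bb}(u\leadsto\CC_\mathrm{bb})\ge q$.

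The main obstacle is the independence bookkeeping: one must simultaneously choose $C_{\ref{lemma:lower-conn-bb-2}}$ so that the two sequences of construction boxes and the backbone cells $\CQ(x_u),\CQ(0)$ are disjoint --- which hinges on the non-obvious inclusion $Q_j(x)\subseteq\Lambda(x,\Theta(k))$ coming from $\gamma_\mathrm{hh}(\sigma+1)\le1$ --- and verify that the conditioning defining $\widetilde\Prob_\mathrm{bb}$ (which freezes the backbone and fixes the counts in the two distinguished mark bands, but nothing else) genuinely leaves the intermediate-band PPPs and the freshly sampled edges independent, so that both the Poisson domination at each step and the factorization $\widetilde\Prob_\mathrm{bb}(\{0\leadsto\CC_\mathrm{bb}\}\cap\{u\leadsto\CC_\mathrm{bb}\})=\widetilde\Prob_\mathrm{bb}(0\leadsto\CC_\mathrm{bb})\,\widetilde\Prob_\mathrm{bb}(u\leadsto\CC_\mathrm{bb})$ are legitimate. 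The remaining work is a routine comparison of connection probabilities and Poisson means using Assumption~\ref{assumption:main}.
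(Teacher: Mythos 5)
Your proposal is correct and follows essentially the same route as the paper's proof: the same dyadic mark bands $I_j$ and boxes $Q_j(x_u)$, the same use of $\gamma_\mathrm{hh}\le 1/(\sigma+1)$ to confine the construction boxes to $\Lambda(x_u,\Theta(k))$ and force disjointness (hence independence) of the constructions around $0$ and $u$, the same greedy per-step bound $\mathrm{p}(u_{j-1},u_j)\ge p$ with Poisson domination of mean $\Theta\big((2^jm_w)^{\sigma+2-\tau}\big)$ controlled by $\tau<2+\sigma$ and large $m_w$, and the same final connection to $\CS(u)$ via the computation of \eqref{eq:temp-est-1} (with the $\alpha=\infty$ case via \eqref{eq:c1-infty}). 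No substantive differences or gaps.
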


We are now ready to prove Lemma \ref{lemma:construction}.
\begin{proof}[Proof of Lemma \ref{lemma:construction}]
 We first show \eqref{eq:construction-bb}. By a union bound, concentration inequalities for Poisson random variables (Lemma \ref{lemma:poisson-1} for $x\in\{1/2, 2\}$) and $F_W(\rd w_0)=(\tau-1)w_0^{-\tau}\rd w$ in Definition \ref{def:ksrg}, it follows that $\Prob(\neg\CA_\mathrm{reg}')=\exp(-\Theta(n m_w^{-(\tau-1)}))=o(1)$, and thus
 \begin{equation}
  \begin{aligned}
  \Prob\big(\neg  \big(\CA_\mathrm{reg}'\cap\CA_\mathrm{bb}\cap\{ w_0\in[m_w, 2m_w)\}\big)\big) 
                  & \le \Prob\big(\neg\CA_\mathrm{bb}\big)
  +
  \Prob\big(w_0\notin[m_w, 2m_w)\big) + o(1)\\
  &\le
  \Prob\big(\neg\CA_\mathrm{bb}\big)
  +
  1-(1-2^{-(\tau-1)})m_w^{-(\tau-1)}
  +o(1)
  .
  \end{aligned}\nonumber
 \end{equation}
 By the choice of $k=k_n$ in \eqref{eq:kn-paths-to-bb}, the first term tends to zero by Lemma~\ref{lemma:upper-hh-bb} as $n$ tends to infinity. 
 Since $1-2^{-(\tau-1)}> 1/2$ for $\tau>2$,  for $n$ sufficiently large (depending also on the constant $m_w$) it follows that
 \[
 \begin{aligned}
  \Prob\big(\neg\big(\CA_\mathrm{reg}'\cap\CA_\mathrm{bb}\cap\{ w_0\in[m_w, 2m_w)\}\big)\big)&\le 1-m_w^{-(\tau-1)}(1-2^{-(\tau-1)})+o(1)\\ &\le 1-m_w^{-(\tau-1)}/2,
  \end{aligned}
 \]
 and  \eqref{eq:construction-bb} follows. We proceed to \eqref{eq:constructing-1}.
 Conditionally on the realization of  $\CG_{n}[w_\mathrm{hh}, 2w_\mathrm{hh})$ satisfying $\CA_\mathrm{bb}$ (present in the conditioning in $\widetilde\Prob_\mathrm{bb}$ in \eqref{eq:lower-cond-prob-2}), we define the following set and random variable:
 \begin{align}
  \CU:=\{u\in\CV_n[m_w, 2m_w): u\overset\pi\longrightarrow\CC_\mathrm{bb}\},\quad
  X :=     \ind{0\overset{\pi}\longrightarrow \CC_\mathrm{bb}}\hspace{-10pt}\sum_{\substack{u\in\CV_n[m_w, 2m_w): \\ x_u\notin\Lambda(0,C_{\ref{lemma:lower-conn-bb-2}}k_n)}}\hspace{-10pt}\ind{u\overset{\pi}\longrightarrow \CC_\mathrm{bb}}\label{eq:lower-x},
 \end{align}
 with $\overset{\pi}\longrightarrow$ from \eqref{eq:u-pi-bb}.
 The measure $\widetilde\Prob_\mathrm{bb}$ is a conditional measure where $\CA_\mathrm{reg}'$ (defined in \eqref{eq:global-prime}) holds and so
 $|\CV_{n}[m_w, 2m_w)|\le 2\mu_\tau(\Lambda_n\times [m_w, 2m_w))$. Using $\mu_\tau$ in \eqref{eq:poisson-intensity},
 we obtain that deterministically under $\widetilde\Prob_\mathrm{bb}$:
 \[X^2\le 4\big(\mu_\tau(\Lambda_n\times[m_w, 2m_w)\big)^2 \le 4(m_w^{-(\tau-1)}n)^2.
 \]
When $0\in \CU$ holds, then $|\CC_{n}(0)[1, 2w_\mathrm{hh})|\ge |\CU|\ge X$, and so we apply Paley-Zygmund's inequality to $X$ under the measure $\widetilde \Prob_{\mathrm{bb}}$, which yields for $\rho':=\widetilde\E_\mathrm{bb}[X]/(2n)$ that
 \begin{equation}
  \begin{aligned}
  \widetilde\Prob_\mathrm{bb}\big(|\CC_{n}(0)[1, 2w_\mathrm{hh})|\ge \rho' n\big) & \ge\widetilde\Prob_\mathrm{bb}\big(|\CU|\ge \rho' n,\  0\in\CU\big)\ge
  \widetilde\Prob_\mathrm{bb}\big(X\ge \widetilde\E_\mathrm{bb}[X]/2 \big)
  \\&\ge
  (1/4)\frac{\widetilde\E_\mathrm{bb}[X]^2}{\widetilde\E_\mathrm{bb}[X^2]}
  \ge \frac{\widetilde\E_\mathrm{bb}[X]^2}{ 16 n^2m_w^{-2(\tau-1)}}.\label{eq:paley-zygmund}
  \end{aligned}
 \end{equation}
 We now bound the numerator on the right-hand side from below.
 Conditionally on $|\CV_n[m_w, 2m_w)|$, the vertices have a uniform location in $\Lambda_{n}$, so
 \[
  \widetilde\Prob_\mathrm{bb}\big(x_u\notin\Lambda(0,C_{\ref{lemma:lower-conn-bb-2}}k_n) \mid  u\in\CV_{n}[m_w, 2m_w), |\CV_{n}[m_w, 2m_w)|\big) = \big(n-C_{\ref{lemma:lower-conn-bb-2}}k_n\big)/n\ge 1/2,
 \]
 where the last inequality follows from assuming that $n$ is sufficiently large (recall $k_n=\Theta\big(\log^{1/\zeta_\mathrm{hh}}(n)\big)$ by \eqref{eq:kn-paths-to-bb}).
 The conditioning on $\CA_\mathrm{reg}'$ implies that $|\CV_n[m_w, 2m_w)|\ge \mu_\tau(\Lambda_n\times [m_w, 2m_w))/2$. Using linearity of expectation of $X$ in \eqref{eq:lower-x}, and the tower rule, (by first conditioning on $|\CV_n[m_w, 2m_w)|$) we obtain for $n$ sufficiently large
   \begin{align}
  \widetilde\E_\mathrm{bb}\big[X
    \big]
    & \ge (\mu_\tau(\Lambda_n\times [m_w, 2m_w))/2)\nonumber\\
  &\hspace{15pt}\cdot \widetilde \E_{\mathrm{bb}}\Big[\widetilde\Prob_\mathrm{bb}\big(x_u\notin\Lambda(0,C_{\ref{lemma:lower-conn-bb-2}}k_n) \mid u\in\CV_n[m_w, 2m_w), |\CV_n[m_w, 2m_w)|\big)\Big] \nonumber\\
    & \hspace{15pt}\cdot
  \widetilde\Prob_\mathrm{bb}\big(\{0\overset{\pi}\longrightarrow\CC_\mathrm{bb}\}\cap \{ u\overset{\pi}\longrightarrow\CC_\mathrm{bb}\}\mid u\in\CV_{n}[m_w, 2m_w), x_u\notin\Lambda(0,C_{\ref{lemma:lower-conn-bb-2}}k_n)\big)\nonumber\\
    & \ge
  n m_w^{-(\tau-1)}(1-2^{-(\tau-1)})(1/2)\cdot (1/2)\cdot q_{\ref{lemma:lower-conn-bb-2}}
  \ge nq_{\ref{lemma:lower-conn-bb-2}}m_w^{-(\tau-1)}2^{-3},\label{eq:paley-2}
  \end{align}
 where the second bound follows if $m_w$ is chosen as in Claim \ref{lemma:lower-conn-bb-2}, and from the definition of $\mu_\tau$ in \eqref{eq:poisson-intensity};  the last bound holds since $2^{-(\tau-1)}\le 1/2$ for $\tau>2$.
 Substituting the last bound  \eqref{eq:paley-2} into the numerator on the right-hand side of~\eqref{eq:paley-zygmund}, we  obtain that
 \[ \widetilde\Prob_\mathrm{bb}\big(|\CC_{n}(0)[1, 2w_\mathrm{hh})|\ge \rho'n)\ge 2^{-10}q_{\ref{lemma:lower-conn-bb-2}}^2 \]
 holds with
 $\rho'=\widetilde\E_\mathrm{bb}[X]/(2n)\ge q_{\ref{lemma:lower-conn-bb-2}}m_w^{-(\tau-1)}2^{-4}$,
 which yields the statement of Lemma \ref{lemma:construction} for $\rho_{\ref{lemma:construction}}=\min\{q_{\ref{lemma:lower-conn-bb-2}}m_w^{-(\tau-1)}2^{-4}, 2^{-10}q_{\ref{lemma:lower-conn-bb-2}}^2\}$.
\end{proof}
\begin{proof}[Proof of Proposition \ref{proposition:existence-large}]
We start with the first inequality in \eqref{eq:prop-subexp-lower-comp}.
 Using $\widetilde\Prob_\mathrm{bb}$ in \eqref{eq:lower-cond-prob-2}, we observe that the bound in Lemma \ref{lemma:construction} holds uniformly over all realizations of $\CG_{n}[w_\mathrm{hh}, 2w_\mathrm{hh})$ satisfying $\CA_{\mathrm{bb}}$. Hence, by first taking expectation over these possible realizations, we obtain that
 \[ \Prob(|\CC_{n}(0)[1, 2w_\mathrm{hh})|\ge\rho_{\ref{lemma:construction}}n \mid \CA_{\mathrm{reg}}', \CA_{\mathrm{bb}}, w_0\in[m_w, 2m_w]) \ge \rho_{\ref{lemma:construction}}\] also holds. The statement now follows with $\rho:=\rho_{\ref{lemma:construction}}m_w^{-(\tau-1)}/2$ by the law of total probability combining \eqref{eq:construction-bb} and \eqref{eq:constructing-1} of Lemma \ref{lemma:construction}.
 The second inequality  in \eqref{eq:prop-subexp-lower-comp} for $n=\infty$ follows from the same construction as the greedy path in the proof of Claim \ref{lemma:lower-conn-bb-2} below \eqref{eq:u-leadsto-bb} can be made infinitely long. We leave it to the reader to fill in the details. Uniqueness of the infinite component follows from the classical Burton--Keane argument~\cite{BurtonKeane}.
  \end{proof}

\section{Proofs using first-moment method}\label{app:integral-proofs}
We start with the proof of Claim \ref{claim:dense}.
\begin{proof}[Proof]\label{proof:claim-dense}
We first condition on the realization of the spatial coordinates of the Poisson point process $\CV_{n_k}$ while we leave the marks unrevealed, i.e., random. We emphasize this in notation by using $W_0, W_u$ for the random mark of the vertices involved. We use Mecke's formula on the spatial coordinate to obtain that
 \begin{equation*}
 \begin{aligned}
  \Prob^\sss{0}\big(\CA_\mathrm{dense}\big) & =
  \Prob^\sss{0}\big( \exists u\in\CV_{n_k}[1,\overline{w}), i\ge 1: |\CV_{N_k}\cap \CR_i(x_u)| > 2 \cdot\mu_\tau\big(\CR_i(x_u)\big)\big) \\
  &\le \E^\sss{0}\Big[\hspace{-3pt}\sum_{u \in \CV_{n_k}}\sum_{i\ge 1}\Prob\big( |\CV_{N_k}\cap \CR_i(x_u)| > 2 \cdot\mu_\tau\big(\CR_i(x_u)\big) \mid (x_u)_{u\in \CV_{n_k}}\big)\Big] 
  \\
  &=\sum_{i\ge 1}\int_{x_u\in\Lambda_{n_k}}\hspace{-12pt}\Prob\big( |\CV_{N_k}\cap \CR_i(x_u)| > 2 \cdot\mu_\tau\big(\CR_i(x_u)\big)  \mid (0, W_0), (x_u, W_u)\!\in\!\CV_{n_k}\big)\rd x_u \\
  &\hspace{15pt}
  +\sum_{i\ge 1}\Prob\big( |\CV_{N_k}\cap \CR_i(0)| > 2 \cdot\mu_\tau\big(\CR_i(x_u)\big)  \mid (0, W_0)\!\in\!\CV_{n_k}\big).
 \end{aligned}
 \end{equation*}
 The last row contains the term coming from the Palm measure in the first two lines.
 We omit the conditioning $(0, W_0)\in\CV_{n_k}$ at the expense of replacing $2\mu_\tau\big(\CR_i(x_u)\big)$ at the right-hand side between brackets by $2\mu_\tau\big(\CR_i(x_u)\big)-1$. By translation invariance of the PPP we obtain that 
 \[
  \Prob^\sss{0}\big(\CA_\mathrm{dense}\big)
  \le (n_k+1)\sum_{i\ge 1}\Prob^\sss{0}\big( |\CV_{N_k}\cap \CR_i(0)| > 2 \cdot\mu_\tau\big(\CR_i(x_u)\big) - 1\big).
 \]
  For each $i\ge1$,
  $|\CV_{N_k}\cap \CR_i(0)|$ is distributed as $\Poi(\lambda_i)$ with
 \[\lambda_i:=\mu_{\tau}(\CR_i(0))=(2^{i} t_k)^d-(2^{i-1} t_k)^d=(2^{i-1}t_k)^d (2^d-1)\ge2^{d(i-2)}n_kk^{-d}\ge2^{d(i-2)}n_k^{\frac{\delta}{d+\delta}},\]
  having used $t_k=n_k^{1/d}/(2k)$, and $1/k \ge n_k^{-1/(d+\delta)}$.
 By concentration inequalities for Poisson random variables (see Lemma~\ref{lemma:poisson-1} applied with $x=2$, using $1+2\log(2)-2> 1/4$), we obtain for $n_k$ sufficiently large
 \[
 \Prob^\sss{0}\big(|\CV_{N_k}\cap \CR_i(0)| > 2 \cdot\mu_\tau\big(\CR_i(0)\big)-1\big) 
 \le\exp\big(-2^{d(i-2)-2}n_k^{\frac{\delta}{d+\delta}}\big),
 \]
 and the statement follows for every $c > 0$, i.e., 
 \begin{equation*}
  \Prob^\sss{0}\big(\CA_\mathrm{dense}\big) \le (n_k+1)\sum_{i\ge 1} \exp\big(-2^{d(i-2)-2}n_k^{\frac{\delta}{d+\delta}}\big)
  =o( n_k^{-c}).\qedhere
 \end{equation*}
\end{proof}
Now we  prove Claim \ref{claim:edge-long} used for the upper bound of subexponential decay.
\begin{proof}[Proof of Claim \ref{claim:edge-long}]\phantomsection\label{proof:edge-long}
 For compact sets $\CK_1, \CK_2\subseteq\R^d$, let $\|\CK_1-\CK_2\|:=\min\{\|x-y\|, x\in\CK_1, y\in\CK_2\}$.
 We define
 \begin{equation}
  t_{n,N}:=\|\partial \Lambda_{N}- \partial \Lambda_{n}\| = (N^{1/d}-n^{1/d})/2.    \nonumber
 \end{equation}
 The definition of $\CA_\mathrm{long\textnormal{-}edge}$ in \eqref{eq:event-long-edge} implies that
 \begin{equation}
  \CA_\mathrm{long\textnormal{-}edge}(0,n, N, \overline{w})\subseteq
  \big\{\exists u\in\CV_{n}[1,\overline{w}), v\in\CV: \|x_u-x_v\|\ge t_{n,N}, u\leftrightarrow v\big\},\nonumber
 \end{equation}
 so that after conditioning on $\CV_{n}[1,\overline{w})$ it follows by a union bound that
 \begin{align}
  \Prob^\sss{0}\big(\CA_\mathrm{long\textnormal{-}edge}(0,n, N, \overline{w})\big) & \le
  \E^\sss{0}\Bigg[\sum_{u\in\CV_{n}[1,\overline{w})}\hspace{-10pt}\Prob\big(\exists v \in \CV: \|x_u-x_v\|\!\ge\! t_{n,N},  u\!\leftrightarrow \!v \mid u\!\in\!\CV_{n}[1,\overline{w})\big)\Bigg] \nonumber\\
                                                            & =:\E^\sss{0}\Bigg[\sum_{u\in\CV_{n}[1,\overline{w})}q(u)\Bigg].\label{eq:edge-long-proof-union}
 \end{align}
 \emph{Assume $\alpha<\infty$.} Since the diameter of $\Lambda_{n}$ is $\sqrt{d}n^{1/d}$, the lower bound on $N$ in the statement of Claim \ref{claim:edge-long} implies that $\|x_u-x_v\|\le t_{n,N}$ for all $x_u, x_v\in\Lambda_{n}$. Hence, $v\notin\CV_n$ whenever $\|x_u-x_v\|> t_{n,N}$. This implies by Markov's bound, using the connection probability in \eqref{eq:connection-prob-gen}, and that
 $w_u\le \overline{w}$ and the intensity $\mu_\tau$ and the translation invariance of the intensity of $\CV$ in \eqref{eq:poisson-intensity}, that for all $u\in\CV_{n}$,  
 \begin{align}
  q(u)
  & \le
  \E^\sss{0}\Bigg[\sum_{\substack{v\in\CV: \|x_u-x_v\|> t_{n,N}}}p\Big(1\wedge \beta\frac{\kappa_\sigma(\overline{w}, w_v)}{\|x_u-x_v\|^d}\Big)^\alpha \Bigg] \nonumber\\
  & =
  p(\tau-1)
  \int_{w_v=1}^\infty
  \int_{x_v: \|x_u-x_v\|\ge t_{n,N}}
  \Big(1\wedge \beta\frac{\kappa_\sigma(\overline{w}, w_v)}{\|x_u-x_v\|^d}\Big)^\alpha w_v^{-\tau}\rd w_v\rd x_v                            \label{eq:qu-min-for-alpha}                 \\
  & =
  p(\tau-1)
  \int_{w_v=1}^{\infty}
  w_v^{-\tau}
  \int_{x_v: \|x_v\|\ge t_{n,N}, \|x_v\|^d\le \beta\kappa_\sigma(\overline{w},w_v)}
  \rd w_v\rd x_v                                                                                                                                                           \nonumber   \\
  & \hspace{15pt}+
  p(\tau-1)
  \int_{w_v=1}^{\infty}(\beta\kappa_\sigma(\overline{w}, w_v))^\alpha w_v^{-\tau}
  \int_{x_v: \|x_v\|\ge t_{n,N}, \|x_v\|^d\ge \beta\kappa_\sigma(\overline{w},w_v)}
  \hspace{-23pt}\|x_v\|^{-\alpha d} \rd w_v\rd x_v                                     \nonumber\\
  & =: T_1 + T_2.\label{eq:upper-sub-long-pr1}
 \end{align}
 We analyze separately $T_1$ and $T_2$. Analyzing $T_1$, the integration with respect to $x_v$ gives the Lebesgue measure of the set $\{x_v: t_{n,N}^d \le \|x_v^d\|\le \beta \kappa_\sigma(\overline w, w_v)\}$, which is nonzero only if this set is nonempty, and then can be bounded from above by $c_d\beta \kappa_\sigma(\overline w, w_v)$ for some constant depending only on $d$. So we obtain 
 \begin{align*}
  T_1 & \le c_dp\beta(\tau-1)\int_{w_v=1}^\infty\Ind{t_{n,N}^d \le \beta\kappa_\sigma(\overline{w},w_v)}\kappa_\sigma(\overline{w},w_v)w_v^{-\tau}\rd w_v \\
      & =
  c_dp\beta(\tau-1)\Big(\int_{w_v=1}^{\overline{w}}\hspace{-10pt}\Ind{t_{n,N}^d \le \beta\overline{w}w_v^\sigma}\overline{w} w_v^{\sigma-\tau}\rd w_v
  + \int_{w_v=\overline{w}}^\infty\hspace{-10pt}\Ind{t_{n,N}^d \le \beta\overline{w}^\sigma w_v}\overline{w}^\sigma w_v^{1-\tau}\rd w_v\Big),
 \end{align*}
 where in the last step we used the definition of $\kappa_\sigma$ in \eqref{eq:kernels} and cut the integration into two based on the minimum of $\overline w$ and $w_v$.
 Since we assume $t_{n,N}^d\ge \beta\overline{w}^{1+\sigma}$ in the statement of the lemma, and  $w_v^\sigma\le \overline{w}^\sigma$, the indicator in the first integral is $0$. Moving the indicator in the second integral into the integration boundary yields 
 \begin{equation}
  T_1 \le
  c_dp\beta\tfrac{\tau-1}{\tau-2}\overline{w}^\sigma(t_{n,N}^d/(\beta\overline{w}^\sigma))^{-(\tau-2)} = c_dp\beta^{\tau-1}\tfrac{\tau-1}{\tau-2}\overline{w}^{\sigma(\tau-1)}t_{n,N}^{-d(\tau-2)}.\label{eq:upper-sub-long-pr2}
 \end{equation}
 We turn to $T_2$ in \eqref{eq:upper-sub-long-pr1}. For some $d$-dependent constant $c_d'$, using again $\kappa_\sigma$ in \eqref{eq:kernels},
 \begin{align}
  T_2 & \le pc_d'(\tau-1)
  \int_{w_v=1}^{\infty}(\beta\kappa_\sigma(\overline{w}, w_v))^\alpha w_v^{-\tau}
  \max\{t_{n,N}^d, \beta\kappa_\sigma(\overline{w},w_v)\}^{1-\alpha}\rd w_v \nonumber\\
      & =
  pc_d'(\tau-1)
  \int_{w_v=1}^{\overline{w}}(\beta \overline{w}w_v^\sigma)^\alpha w_v^{-\tau}
  \max\{t_{n,N}^d, \beta\overline{w}w_v^\sigma\}^{1-\alpha}\rd w_v         \label{eq:t2-cut-1}            \\
      & \hspace{15pt}+
  pc_d'(\tau-1)
  \int_{w_v=\overline{w}}^\infty(\beta\overline{w}^\sigma w_v)^\alpha w_v^{-\tau}
  \max\{t_{n,N}^d, \beta\overline{w}^\sigma w_v\}^{1-\alpha}\rd w_v. \label{eq:t2-cut-2}
 \end{align}
 For the first integral, we bound $w_v^\sigma\le \overline{w}^\sigma$, and observe that by the assumption $t_{n,N}^d\ge \beta\overline{w}^{1+\sigma}$ in the statement of the lemma, the maximum is always attained at $t_{n,N}^d$. Hence, 
 \begin{align}
  \eqref{eq:t2-cut-1}  \le
  pc_d'(\beta\overline{w}^{\sigma+1})^\alpha t_{n,N}^{-(\alpha-1)d}.\label{eq:t2-cut-1-bound}
 \end{align}
 We split the integral in \eqref{eq:t2-cut-2} according to where the maximum is attained, i.e.,
 \begin{align}
  \eqref{eq:t2-cut-2} & \le
  pc_d'\beta\overline{w}^\sigma(\tau-1)
  \int_{w_v=\max\{\overline{w}, t_{n,N}^d/(\overline{w}^\sigma \beta)\}}^{\infty} w_v^{-(\tau-1)}
  \rd w_v\nonumber
  \\
      & \hspace{15pt}+
  \ind{t_{n,N}^d/(\overline{w}^\sigma\beta) \ge \overline{w}}
  pc_d'(\tau-1)
  t_{n,N}^{-(\alpha-1)d}
  \int_{w_v=\overline{w}}^{t_{n,N}^d/(\overline{w}^\sigma\beta)}(\beta\overline{w}^\sigma w_v)^\alpha w_v^{-\tau}\rd w_v\nonumber \\
      & =pc_d'\beta\overline{w}^\sigma(\tau-1)
  \int_{w_v=t_{n,N}^d/(\overline{w}^\sigma \beta)}^{\infty} \hspace{-8pt}w_v^{-(\tau-1)}
  \rd w_v  \nonumber\\
  &\hspace{15pt}+
  pc_d'(\tau-1)
  t_{n,N}^{-(\alpha-1)d}(\beta\overline{w}^\sigma )^\alpha
  \int_{w_v=\overline{w}}^{t_{n,N}^d/(\overline{w}^\sigma\beta)} \hspace{-8pt}w_v^{\alpha-\tau}
  \rd w_v        \nonumber                                                                                                        \\
      & =
  \frac{pc_d'(\beta\overline{w}^\sigma)^{\tau-1}(\tau\!-\!1)}{\tau-2}t_{n,N}^{-d(\tau-2)}
  \!+\!
  pc_d'(\tau-1)
  t_{n,N}^{-(\alpha-1)d}(\beta\overline{w}^\sigma )^\alpha
  \hspace{-5pt}\int_{w_v=\overline{w}}^{t_{n,N}^d/(\overline{w}^\sigma\beta)} \hspace{-9pt}w_v^{\alpha-\tau}
  \rd w_v,\label{eq:upper-sub-long-pr4}
 \end{align}
 where  the second step follows from the assumption that $t_{n,N} \ge \beta\overline{w}^{1+\sigma}$ in the statement of the lemma.
 For the remaining term containing the integral on the right-hand side of \eqref{eq:upper-sub-long-pr4}, say $T_{22}$, we have three cases, i.e., for some $C>0$,
 \begin{align}
  T_{22}
  & \le
  \begin{dcases}
  Ct_{n,N}^{-(\alpha-1)d}(\beta\overline{w}^\sigma )^\alpha \big(t_{n,N}^d/(\overline{w}^\sigma\beta)\big)^{\alpha-(\tau-1)}
                                                                                                                             & \text{if }\alpha>\tau-1, \\
  Ct_{n,N}^{-(\alpha-1)d}(\beta\overline{w}^\sigma )^\alpha\log\big(t_{n,N}^d/(\overline{w}^\sigma\beta)\big) & \text{if }\alpha=\tau-1, \\
  C\overline{w}^{\alpha-(\tau-1)}t_{n,N}^{-(\alpha-1)d}(\beta\overline{w}^\sigma )^\alpha                            & \text{if }\alpha<\tau-1. \\
  \end{dcases}\nonumber
 \end{align}
 Elementary rewriting of the first and third case yields
 \begin{align}
  T_{22}
  & \le
  \begin{dcases}
  Ct_{n,N}^{-d(\tau-2)}(\beta\overline{w}^\sigma)^{\tau-1}                                                           & \text{if }\alpha>\tau-1, \\
  Ct_{n,N}^{-d(\alpha-1)}(\beta\overline{w}^\sigma )^\alpha\log\big(t_{n,N}^d/(\overline{w}^\sigma\beta)\big) & \text{if }\alpha=\tau-1, \\
  Ct_{n,N}^{-d(\alpha-1)}\beta^\alpha \overline{w}^{\alpha(\sigma+1)-(\tau-1)}                                       & \text{if }\alpha<\tau-1.
  \end{dcases}\nonumber
 \end{align}
 Combining this bound with \eqref{eq:upper-sub-long-pr4}, then \eqref{eq:t2-cut-1-bound} and  \eqref{eq:upper-sub-long-pr2}, gives in \eqref{eq:upper-sub-long-pr1} and \eqref{eq:edge-long-proof-union} that for some $C'>0, b>0$,
 \begin{align*}
  \Prob^\sss{0}\big(\CA_\mathrm{long\textnormal{-}edge}(0, n, N, \overline{w})\big) & \le
  C'\overline{w}^{b}\widetilde{t}_k^{-d \min\{\alpha-1,\tau-2\}}(1+\ind{\alpha=\tau-1}\log(t_{n,N}))\E[|\CV_{n}[1,\overline{w})]|] \\
                                                            & \le
  C\overline{w}^{b}N^{-\min\{\alpha-1,\tau-2\}}(1+\ind{\alpha=\tau-1})\log(N))n,
 \end{align*}
 where the last bound follows by the assumed bound in \eqref{eq:sub-upper-tk-tilde-bound}, since $t_{n,N}=(N^{1/d}-n^{1/d})/2$, and the intensity of $\CV_{n}[1,\overline{w})$ in \eqref{eq:poisson-intensity}. This finishes the proof of \eqref{eq:edge-long-lemma} when $\alpha<\infty$.

 It remains to show the bound for the case $\alpha=\infty$. In this case, the same calculations hold, with only $T_1$ in \eqref{eq:upper-sub-long-pr1} present, since the connection probability is $0$ when the minimum in \eqref{eq:qu-min-for-alpha} is not attained at $1$.

 Lastly, we verify~\eqref{eq:edge-long-lemma2}. Assume first that $k$ is at least a sufficiently large constant so that~\eqref{eq:sub-upper-tk-tilde-bound} holds with $N=N_k$, $n=n_k$ and $\overline w=\overline w_{N_k}$ defined in~\eqref{eq:nk-Nk}. Then~\eqref{eq:edge-long-lemma2} follows by substituting these sequences into~\eqref{eq:edge-long-lemma2}. For smaller values of $k$, one might adjust $\delta$ to be sufficiently small.
\end{proof}
We proceed with the proofs of two lemmas for the lower bound.

\begin{proof}[Proof of Lemma \ref{lemma:lower-vertices-above}]\label{proof:lower-vertices-above}
The expectation $\widetilde\E$ in~\eqref{eq:cond-prob-lower-1} is conditional on $\CV\cap(\CR_\mathrm{in}\cup\CR_\mathrm{out})$ where $\CR_\mathrm{in}$, $\CR_\mathrm{out}$ defined in~\eqref{eq:lower-giant-boxes} are hyperrectangles below $\CM_\gamma$ for all $\eta$ and any $n$ sufficiently large, since the upper mark thresholds in $\CR_\mathrm{in}$ and $\CR_\mathrm{out}$ are polylogarithmic in $k$. Hence, $\CV_{>\CM_\gamma}$ is independent of the conditioning in $\widetilde \E$, so 
\[
\widetilde\E\big[|\CV_{>\CM_\gamma}|]=\widetilde\E\big[|\CV^\sss{\mathrm{in}}_{>\CM_\gamma}\cup\CV^\sss{\mathrm{out}}_{>\CM_\gamma}|]=\E\big[|\CV^\sss{\mathrm{in}}_{>\CM_\gamma}\cup\CV^\sss{\mathrm{out}}_{>\CM_\gamma}|] = \E\big[\big|\CV_{>\CM_\gamma}^\sss{\mathrm{in}}\big|\big]\!+\! \E\big[\big|\CV_{>\CM_\gamma}^\sss{\mathrm{out}}\big|\big].
\]
 We introduce some notation: for two functions $g(k), h(k)$, we write $g\!\lesssim\! h$ if $g\!=\!O(h)$.
 Since $f_\gamma(x)$ is symmetric around the boundary of $\partial \CB_{\mathrm{in}}$ (see its definition in \eqref{eq:suppressed-curve}), it is easy to see that $\E\big[\big|\CV_{>\CM_\gamma}^\sss{\mathrm{in}}\big|\big]\!\le\! \E\big[\big|\CV_{>\CM_\gamma}^\sss{\mathrm{out}}\big|\big]$. It is sufficient to show that
 \begin{equation}
  \E\big[\big|\CV_{>\CM_\gamma}^\sss{\mathrm{out}}\big|\big]
  \lesssim r_k^{d(1-\gamma(\tau-1))} + (1+\ind{1-\gamma(\tau-1)=1-1/d}\log(r_k))r_k^{d-1}. \label{eq:points-above-sufficient-crit}
 \end{equation}
Using the intensity measure of $\CV$ in \eqref{eq:poisson-intensity}, switching to polar coordinates in the first $d$ directions, and integrating with respect to the mark-coordinate, we obtain using the exact form of  $f_\gamma$ in \eqref{eq:suppressed-curve} 
 \begin{align}
  \E\big[\big|\CV_{>\CM_\gamma}^\sss{\mathrm{out}}\big|\big]
  & \lesssim
  \int_{z=0}^\infty (z+r_k)^{d-1} \int_{ f_\gamma(z)}^{\infty} (\tau-1)w^{-\tau}\rd w \rd z
  \nonumber \\
  & \lesssim
  \int_{z=0}^{C_\beta}(z+r_k)^{d-1} \rd z
  +
  \int_{z=C_\beta}^{r_k}(z+r_k)^{d-1} z^{-d\gamma(\tau-1)} \rd z                      \nonumber\\
  & \hspace{15pt}+
  \int_{z=r_k}^{\infty}(z+r_k)^{d-1}(z^{d}r_k^{-d(1-\gamma)})^{-(\tau-1)} \rd z
  =: I_1 + I_2 + I_3.\label{eq:points-above-three-ints}
 \end{align}
 The integration length of $I_1$ is a constant, so $
  I_1 \lesssim r_k^{d-1}.$
 For $I_2$ we apply the binomial theorem, i.e.,
 \begin{align}
  I_2                                                       \lesssim\sum_{j=0}^{d-1}r_k^j\int_{C_\beta}^{r_k} z^{(1-\gamma(\tau-1))d-1-j}\rd z. \label{eq:points-above-pr1}
 \end{align}
 Analyzing the summands separately, we obtain for $j\le d-1$
 \begin{equation}
  r_k^j\int_{C_\beta}^{r_k}z^{(1-\gamma(\tau-1))d-1-j}\rd z
  \lesssim
  \begin{dcases}
  r_k^{d(1-\gamma(\tau-1))}, & \text{if }d(1-\gamma(\tau-1)) > j, \\
  \log(r_k)r_k^j,            & \text{if }d(1-\gamma(\tau-1)) = j, \\
  r_k^j,                     & \text{if }d(1-\gamma(\tau-1)) < j.
  \end{dcases}\nonumber
 \end{equation}
 Using these bounds, which are non-decreasing in $j\in[d-1]$, in \eqref{eq:points-above-pr1}, we obtain
 \begin{equation}
  I_2
  \lesssim
  (1+\ind{1-\gamma(\tau-1)=1-1/d}\log(r_k))r_k^{d-1} + r_k^{d(1-\gamma(\tau-1))}.\label{eq:points-above-pr2}
 \end{equation}
 It remains to bound $I_3$ in \eqref{eq:points-above-three-ints}. Using that $\tau>2$ by assumption, and $z+r_k\le 2z$,
 \begin{align}
  I_3 \lesssim
  r_k^{d(1-\gamma)(\tau-1)}\int_{z=r_k}^\infty z^{-d(\tau-2)-1}\rd z
  \lesssim
  r_k^{d((1-\gamma)(\tau-1) - (\tau-2))}=r_k^{d(1-\gamma(\tau-1))}.\nonumber
 \end{align}
 We use that $r_k=\Theta(k^{1/d})$ by definition in~\eqref{eq:rk}.
 Together with the bound on $I_1$ below the definitions of $I_1, I_2$, and $I_3$ in \eqref{eq:points-above-pr1}, and on $I_2$ in \eqref{eq:points-above-pr2}, this proves \eqref{eq:points-above-sufficient-crit} and also finishes the proof of \eqref{eq:expected-above-curve}.
\end{proof}

\begin{proof}[Proof of Lemma {\ref{lemma:lower-edges-below}}]\label{proof:lower-edges-below}
 We start with the proof of~\eqref{eq:expected-edges-below}. We split the expected number of edges depending on the locations of the endpoints of the vertices.
 \begin{align}
  \widetilde\E \!\Big[\! \big|\CE\big(\CV_{\le\CM_\gamma}^\sss{\mathrm{in}}, \CV_{\le\CM_\gamma}^\sss{\mathrm{out}}\big)\big|\! \Big]
          &\!=\!
          \widetilde\E \!\Big[\! \big|\CE\big(\CV_{\le\CM_\gamma\setminus\CR_\mathrm{in}}^\sss{\mathrm{in}}, \CV_{\le\CM_\gamma\setminus\CR_\mathrm{out}}^\sss{\mathrm{out}}\big)\big|\! \Big]\!
     +\!
                  \widetilde\E \Big[ \!\big|\CE\big(\CV_{\CR_\mathrm{in}}, \CV_{\le\CM_\gamma\setminus\CR_\mathrm{out}}^\sss{\mathrm{out}}\big)\big| \!\Big]\nonumber \\
          &\hspace{15pt}+
                  \widetilde\E \Big[ \big|\CE\big(\CV_{\CR_\mathrm{in}}, \CV_{\CR_\mathrm{out}}\big)\big| \Big]+
                  \widetilde\E \Big[ \big|\CE\big(\CV_{\le\CM_\gamma\setminus\CR_\mathrm{in}}^\sss{\mathrm{in}}, \CV_{\CR_\mathrm{out}}\big)\big| \Big]
  \label{eq:lower-edges-e3}
 \end{align}
 We analyze the first term on the right-hand side and at the end we sketch how the bounds could be adapted for the other three terms.
 Since $\CA_\mathrm{regular}(\eta)$ is measurable with respect to $\CV_{\CR_\mathrm{in}}\cup\CV_{\CR_\mathrm{out}}$, it can be left out of the conditioning. Further, points of $\CV$ in disjoint sets are independently present, hence
  \begin{align}
  \E\Big[\big|\CE\big(\CV_{\le\CM_\gamma\!\setminus\!\CR_\mathrm{in}}^\sss{\mathrm{in}}, \CV_{\le\CM_\gamma\!\setminus\!\CR_\mathrm{out}}^\sss{\mathrm{out}}\big)\big| \Big|\CV_{\CR_\mathrm{in}}\!\cup\!\CV_{\CR_\mathrm{out}}, \CA_\mathrm{regular}(\eta)\Big] &=
  \E\big[ \big|\CE\!\big(\CV_{\le\CM_\gamma\!\setminus\!\CR_\mathrm{in}}^\sss{\mathrm{in}}, \CV_{\le\CM_\gamma\!\setminus\!\CR_\mathrm{out}}^\sss{\mathrm{out}}\big)\!\big| \big]\nonumber \\
  &\le
  \E\big[\big|\CE\!\big(\CV_{\le\CM_\gamma}^\sss{\mathrm{in}}, \CV_{\le\CM_\gamma}^\sss{\mathrm{out}}\big)\!\big| \big].\label{eq:lower-edges-in-out-I}
  \end{align}
We use the notation $g\lesssim h$ if $g=O(h)$.
 We integrate over the locations and marks of the vertices in $\CV_{\le\CM_\gamma}^\sss{\mathrm{in}}\cup\CV_{\le\CM_\gamma}^\sss{\mathrm{out}}$ by writing $z_u=\|x_u-\partial \CB_{\mathrm{in}}\|$, and bounding from above the connectivity function $\mathrm{p}$ in \eqref{eq:connection-prob-gen} to obtain 
 \begin{equation}
 \begin{aligned}\label{eq:lower-edges-in-out-4}
  &\E\big[\big|\CE\big(\CV_{\le\CM_\gamma}^\sss{\mathrm{in}}, \CV_{\le\CM_\gamma}^\sss{\mathrm{out}}\big)\big| \big] \\
  &\hspace{-3pt}\lesssim\int_{x_u:\|x_u\|\le r_k}\hspace{-2pt}\int_{x_v: \|x_v\|\ge r_k} \hspace{-2pt}\int_{w_u=1}^{f_\gamma(z_u)}\hspace{-4pt}\int_{w_v=1}^{f_\gamma(z_v)}\hspace{-3pt} \frac{(\kappa_\sigma(w_u, w_v)\!)^\alpha}{\|x_u-x_v\|^{\alpha d}}\!(w_uw_v)^{-\tau}\!\rd w_v \rd w_u \rd x_v\rd x_u.
 \end{aligned}\end{equation}
 We analyze the double integral over the marks.
 Define
 \begin{equation}
  \begin{aligned}
  g_1(z_1, z_2) & := \int_{w_1=1}^{f_\gamma(z_1)}\int_{w_2=1}^{f_\gamma(z_2)}\big(\kappa_\sigma(w_1, w_2)\big)^\alpha(w_1w_2)^{-\tau}\rd w_1 \rd w_2.
  \end{aligned}\nonumber
 \end{equation}
 Using the definition and symmetry of $\kappa_\sigma$ in \eqref{eq:kernels}, we reparametrize by $w\le \tilde w$. We also use that $f_\gamma$ is increasing to obtain
 \begin{align*}
  g_1(z_1, z_2) & \lesssim
  \int_{w=1}^{f_\gamma(z_1\wedge z_2)}\int_{\widetilde w=w}^{f_\gamma(z_1\vee z_2)}\big(\kappa_\sigma(w, \widetilde w)\big)^\alpha(w\widetilde w)^{-\tau}\rd \widetilde w\rd w\\
  &=
  \int_{w=1}^{f_\gamma(z_1\wedge z_2)}w^{\sigma\alpha-\tau}\int_{\widetilde w=w}^{f_\gamma(z_1\vee z_2)}\widetilde w^{\alpha-\tau}\rd \widetilde w\rd w.
 \end{align*}
 When integrating, we have nine cases depending on whether the exponents are larger, equal or smaller than $-1$ each.
The definition $f_\gamma$ in \eqref{eq:suppressed-curve} undergoes a change at $z=r_k$.   This yields for $1\le z_2\le \min\{z_1, r_k\}$  (so $f_\gamma(z_2)=1\vee(z_2/C_\beta)^{\gamma d}$) that $g_1(z_1, z_2)\lesssim g_2(z_1\vee z_2, z_1\wedge z_2)$,
 with
 \begin{equation}
  g_2(z_\vee, z_\wedge) \!:=\!
  \begin{dcases}
  f_\gamma(z_\vee)^{\alpha\!-\!(\tau\!-\!1)}z_\wedge^{\gamma d(\sigma\alpha\!-\!(\tau\!-\!1))},            & \text{if }\alpha\!>\!\tau\!-\!1, \sigma\alpha\!>\!\tau\!-\!1,          \\
  f_\gamma(z_\vee)^{\alpha\!-\!(\tau\!-\!1)}\log(z_\wedge),                                        & \text{if }\alpha\!>\!\tau\!-\!1, \sigma\alpha\!=\!\tau\!-\!1,          \\
  f_\gamma(z_\vee)^{\alpha\!-\!(\tau\!-\!1)},                                                 & \text{if } \alpha\!>\!\tau\!-\!1, \sigma\alpha\!<\!\tau\!-\!1,         \\
  (1\!+\!\log(f_\gamma(z_\vee)/f_\gamma(z_\wedge)))z_\wedge^{\gamma d((\sigma\!+\!1)\alpha\!-\!2(\tau\!-\!1))}, & \text{if }\alpha\!=\!\tau\!-\!1, \sigma\alpha\!>\!\tau\!-\!1,          \\
  \log(f_\gamma(z_\vee))\log(z_\wedge),                                                    & \text{if }\alpha\!=\!\tau\!-\!1, \sigma\alpha\!=\!\tau\!-\!1,          \\
  \log(f_\gamma(z_\vee)),                                                             & \text{if }\alpha\!=\!\tau\!-\!1, \sigma\alpha\!<\!\tau\!-\!1,          \\
  z_\wedge^{\gamma d((\sigma\!+\!1)\alpha\!-\!2(\tau\!-\!1))},                                      & \text{if }\alpha\!<\!\tau\!-\!1, (\sigma\!+\!1)\alpha \!>\! 2(\tau\!-\!1), \\
  \log(z_\wedge),                                                                       & \text{if }\alpha\!<\!\tau\!-\!1, (\sigma\!+\!1)\alpha \!=\! 2(\tau\!-\!1), \\
  1,                                                                               & \text{if }\alpha\!<\!\tau\!-\!1, (\sigma\!+\!1)\alpha \!<\! 2(\tau\!-\!1).
  \end{dcases}
  \nonumber
 \end{equation}
 We define 
 \begin{equation}
     g(z_1, z_2):=g_2(z_1\vee z_2, z_1\wedge z_2).\label{eq:g-def}
 \end{equation}
 The function $g$ is non-decreasing and positive since all exponents are positive and $f_\gamma(\cdot)$, $z_\vee$, and $z_\wedge$ are all at least one.
 Returning to \eqref{eq:lower-edges-in-out-4}, we use $g(z_1, z_2)$ to bound the inner two integrals from above.
We make a case distinction on whether the vertex $u$ (inside) or $v$ (outside) is closer to $\partial \CB_{\mathrm{in}}$.   Then we obtain (since $f_\gamma(x)=1$ when $z(x)\le C_\beta$ by \eqref{eq:suppressed-curve}),
 \begin{align}
  \E\big[\big|\CE\big(\CV_{\le\CM_\gamma}^\sss{\mathrm{in}}, \CV_{\le\CM_\gamma}^\sss{\mathrm{out}}\big)\big| \big] & \lesssim
  \int_{x_u:\|x_u\|\le r_k-C_\beta}\int_{x_v: \|x_v\|> 2r_k}\|x_u-x_v\|^{-\alpha d} g(z_u,z_v) \rd x_v \rd x_u\nonumber                \\
    & \hspace{15pt}+
  \int_{x_u:\|x_u\|\le r_k-C_\beta}\int_{\substack{x_v: z_v\le z_u}}\|x_u-x_v\|^{-\alpha d} g(z_u, z_v) \rd x_v \rd x_u \nonumber \\
    & \hspace{15pt}+
  \int_{x_v:C_\beta\le\|x_v\|-r_k\le r_k}\int_{\substack{x_u: z_u\le z_v}}\|x_u-x_v\|^{-\alpha d} g(z_u,z_v) \rd x_v \rd x_u
  \nonumber                                                                                                                                    \\
    & =: I_1 + I_{2a} + I_{2b}=:I_1 + I_2.
  \label{eq:edges-below-g-int}
 \end{align}
 To evaluate the integrals we change variables, starting with $I_1$.  For $I_1$ we use $z_u\le r_k\le z_v$, and $g$ is  increasing in both its arguments, and that $\|x_u-x_v\|\ge | \|x_v\|-r_k| \ge z_v:=t$ and use polar coordinates in the second row below. Then $\|x_v\|=t+r_k$. 
 Thus, since there are $\Theta((t+r_k)^{d-1})$ points outside at distance $t$ from $\partial \CB_{\mathrm{in}}$,
 \begin{align}
  I_1 & \lesssim
  \int_{x_u:\|x_u\|\le r_k-C_\beta}\int_{x_v: \|x_v\|> 2r_k}|\|x_v\|-r_k|^{-\alpha d} g(z_v, r_k) \rd x_v \rd x_u \nonumber\\
      & \lesssim
  r_k^d \int_{t> r_k}(t+r_k)^{d-1}t^{-\alpha d} g(t, r_k) \rd t
  \lesssim r_k^d \int_{t\ge r_k}t^{-d(\alpha-1) -1} g(t, r_k) \rd t.\label{eq:i1-bound}
 \end{align}
 Before substituting the definition of $g$ into the bound, we recall that $f_\gamma(t)=C_\beta^{-\gamma d} t^dr_k^{-d(1-\gamma)}$ for $t>r_k$ by \eqref{eq:suppressed-curve}. The following elementary integration inequalities will be helpful soon:
 \begin{align}
  r_k^d \int_{t\ge r_k}\!t^{-d(\alpha-1) -1} f_\gamma(t)^{\alpha-(\tau-1)} \rd t
                                                      &\lesssim
  r_k^{d-d(1-\gamma)(\alpha-(\tau-1))} \int_{t\ge r_k}\!\!\!t^{-d(\tau-2) -1} \rd t\nonumber\\
                                                      &\lesssim r_k^{d(2-\alpha+\gamma(\alpha-(\tau-1)))}, \label{eq:integral-estimate-1} \\
  r_k^d \int_{t\ge r_k}t^{-d(\alpha-1) -1}\rd t  &\lesssim r_k^{d(2-\alpha)}.\label{eq:integral-estimate-2} \end{align}
Substituting the definition of $g$ in \eqref{eq:g-def} into \eqref{eq:i1-bound}, since $r_k<t$, we must set $z_2=r_k$ in $g$ in \eqref{eq:g-def}. We obtain then by elementary integration on \eqref{eq:i1-bound} and the bounds in \eqref{eq:integral-estimate-1}-\eqref{eq:integral-estimate-2} that:
 \begin{equation}
  I_1\lesssim
  \begin{dcases}
  r_k^{d(2-\alpha+\gamma((\sigma+1)\alpha-2(\tau-1)))},  & \text{if }\alpha>\tau-1, \sigma\alpha>\tau-1,          \\
  r_k^{d(2-\alpha+\gamma(\alpha-(\tau-1)))}\log(r_k),    & \text{if }\alpha>\tau-1, \sigma\alpha=\tau-1,          \\
  r_k^{d(2-\alpha+\gamma(\alpha-(\tau-1)))},             & \text{if } \alpha>\tau-1, \sigma\alpha<\tau-1,         \\
  r_k^{d(2-\alpha+\gamma((\sigma+1)\alpha-2(\tau-1))},   & \text{if }\alpha=\tau-1, \sigma\alpha>\tau-1,          \\
  r_k^{d(2-\alpha)}\log^2(r_k),                          & \text{if }\alpha=\tau-1, \sigma\alpha=\tau-1,          \\
  r_k^{d(2-\alpha)}\log(r_k),                            & \text{if }\alpha=\tau-1, \sigma\alpha<\tau-1,          \\
  r_k^{d(2-\alpha+\gamma ((\sigma+1)\alpha-2(\tau-1)))}, & \text{if }\alpha<\tau-1, (\sigma+1)\alpha > 2(\tau-1), \\
  r_k^{d(2-\alpha)}\log(r_k),                            & \text{if }\alpha<\tau-1, (\sigma+1)\alpha = 2(\tau-1), \\
  r_k^{d(2-\alpha)},                                     & \text{if }\alpha<\tau-1, (\sigma+1)\alpha < 2(\tau-1).
  \end{dcases}\label{eq:i1-final-bound}
 \end{equation}
 We turn to $I_{2a}$ in \eqref{eq:edges-below-g-int}, handling the case when the outside vertex $v$ is closer to the boundary $\partial \CB_{\mathrm{in}}$ than $u$, implying $z_v\le z_u$.  We reparametrize this integral based on the distance $z_u$ from $\partial \CB_{\mathrm{in}}$ of  the inside vertex $u$. Indeed, when $z_u\in[C_\beta, r_k]$ then $\|x_u\|=r_k-z_u$. Since $v$ is closer, we must also have that $z_v=\|x_v\|-r_k\in[C_\beta, z_u]$, and hence $t:=\|x_u-x_v\|\in[z_u+C_\beta, 2r_k]$.  Hence,
 \begin{align}
  I_{2a} & \lesssim
  \int_{z_u=C_\beta}^{r_k}
  \int_{x_u: r_k-\|x_u\|=z_u}
  \int_{t=z_u+C_\beta}^{2r_k}
  \int_{z_v=C_\beta}^{z_u}
  \int_{\substack{x_v: z(x_v)=z_v, \\\|x_u-x_v\|=t
  }}t^{-\alpha d}g(z_u, z_v)
  \rd x_v
  \rd z_v
  \rd t
  \rd x_u
  \rd z_u.\nonumber
 \end{align}
 The integrand does not depend on $x_v$ anymore, hence the most inside integral, over $x_v$, can be bounded from above by maximizing the Lebesgue measure of where $x_v$ may fall: $x_v$ has distance $t$ from $x_u$ and distance $z_v$ from the boundary. Some geometry shows that $x_v$ is then on the intersection of two spheres with radii $t$ and $r_k+z_v$, respectively, with Lebesgue measure then at most $\Theta(t^{d-2})$. We can also integrate over all the potential locations $x_u$, giving a factor $\Theta((r_k-z_u)^{d-1})$, so we obtain
 \begin{align}
  I_{2a} & \lesssim
  \int_{z_u=C_\beta}^{r_k}
  (r_k-z_u)^{d-1}
  \int_{t=z_u+C_\beta}^{2r_k}
  t^{d-2-\alpha d}
  \int_{z_v=C_\beta}^{z_u}
  g(z_u, z_v)
  \rd z_v
  \rd t
  \rd z_u          \nonumber \\
         & \lesssim
  \int_{z_u=C_\beta}^{r_k}
  (r_k-z_u)^{d-1}
  z_u^{-d(\alpha-1)-1}
  \int_{z_v=C_\beta}^{z_u}
  g(z_u, z_v)
  \rd z_v
  \rd z_u,\label{eq:i2a-bound}
 \end{align}
 where we integrated over $t$ to obtain the second row.
 Treating $I_{2b}$ in \eqref{eq:edges-below-g-int} is very similar, but now we reparametrize the integral based on the distance $z_v$ of $v$ from the boundary and the distance $t=\|x_u-x_v\|$.  We obtain
 \begin{align}
  I_{2b} & \lesssim
  \int_{z_v=C_\beta}^{r_k}
  \int_{x_v: \|x_v\|-r_k=z_v}
  \int_{t=z_v+C_\beta}^{3r_k}
  \int_{z_u=C_\beta}^{z_v}
  \int_{\substack{x_u: z_u=z(x_u),           \\\|x_u-x_v\|=t}} t^{-\alpha d}g(z_u, z_v)
  \rd x_u
  \rd z_u
  \rd t
  \rd x_v
  \rd z_v \nonumber                         \\
         & \lesssim\int_{z_v=C_\beta}^{r_k}
  (r_k+z_v)^{d-1}
  z_v^{-d(\alpha-1)-1}
  \int_{z_u=C_\beta}^{z_v}
  g(z_u, z_v)
  \rd z_u
  \rd z_v      .\nonumber
 \end{align}
 This bound dominates the bound on $I_{2a}$ in \eqref{eq:i2a-bound}. Applying the binomial theorem on $(r_k+z_v)^{d-1}$, we obtain
 \begin{align}
  I_2=I_{2a} + I_{2b} & \lesssim
  \sum_{j=0}^{d-1}r_k^j
  \int_{z_v=C_\beta}^{r_k}
  z_v^{d(2-\alpha)-2-j}
  \int_{z_u=C_\beta}^{z_v}
  g(z_u, z_v)
  \rd z_u
  \rd z_v.\label{eq:i2-bound}
 \end{align}
 We evaluate the inner integral using the definition of $g$ in \eqref{eq:g-def}, and since $z_u\le z_v$ we set $z_1=z_v, z_2=z_u$ in \eqref{eq:g-def}, and $f_\gamma(z)= (z/C_\beta)^{\gamma d}$, and obtain the nine cases:
 \begin{align}
  \int_{z_u=C_\beta}^{z_v}
  g(z_u, z_v)
  \rd z_u
  \lesssim
  \begin{dcases}
  z_v^{\gamma d((\sigma+1)\alpha-2(\tau-1))+1}, & \text{if }\alpha>\tau-1, \sigma\alpha>\tau-1,          \\
  z_v^{\gamma d(\alpha-(\tau-1))+1}\log(z_v),   & \text{if }\alpha>\tau-1, \sigma\alpha=\tau-1,          \\
  z_v^{\gamma d(\alpha-(\tau-1))+1},            & \text{if } \alpha>\tau-1, \sigma\alpha<\tau-1,         \\
  z_v^{\gamma d((\sigma+1)\alpha-2(\tau-1))+1}, & \text{if }\alpha=\tau-1, \sigma\alpha>\tau-1,          \\
  z_v\log^2(z_v),                               & \text{if }\alpha=\tau-1, \sigma\alpha=\tau-1,          \\
  z_v\log(z_v),                                 & \text{if }\alpha=\tau-1, \sigma\alpha<\tau-1,          \\
  z_v^{\gamma d((\sigma+1)\alpha-2(\tau-1))+1}, & \text{if }\alpha<\tau-1, (\sigma+1)\alpha > 2(\tau-1), \\
  z_v\log(z_v),                                 & \text{if }\alpha<\tau-1, (\sigma+1)\alpha = 2(\tau-1), \\
  z_v,                                          & \text{if }\alpha<\tau-1, (\sigma+1)\alpha < 2(\tau-1).
  \end{dcases}\label{eq:i2-bound-g}
 \end{align} 
 We substitute \eqref{eq:i2-bound-g} into \eqref{eq:i2-bound}.
 Using $\xi_\star$ and $\mathfrak{m}_\mathrm{long}$ from~\eqref{eq:xi-long}, the nine cases can be summarized as obtaining the integrand of $z_v^{d(2-\alpha + \gamma\xi_\star)-1-j}\log^{\mathfrak{m}_\mathrm{long}-1}(z_v)$. Following similar reasoning as from \eqref{eq:points-above-pr1} to \eqref{eq:points-above-pr2}, we obtain
 \begin{align}
  I_2 \lesssim
  \begin{dcases}
  r_k^{d(2-\alpha + \gamma\xi_\star)}\log^{\mathfrak{m}_\mathrm{long}-1}(r_k), & \text{if }d(2-\alpha + \gamma\xi_\star) > d-1, \\
  r_k^{d(2-\alpha + \gamma\xi_\star)}\log^{\mathfrak{m}_\mathrm{long}}(r_k),   & \text{if }d(2-\alpha + \gamma\xi_\star) = d-1, \\
  r_k^{d-1},                                                      & \text{if }d(2-\alpha + \gamma\xi_\star) < d-1,
  \end{dcases}\nonumber
 \end{align}
 where the second bound follows from similar reasoning as in \eqref{eq:points-above-pr1} leading to \eqref{eq:points-above-pr2}.
 The presence of a $(d-1)$ term and the additional $\log$-factors ensure that the bound on $I_2$ dominates the bound on $I_1$ in \eqref{eq:i1-final-bound}. Recalling that $I_1+I_2$ dominates the expected number of edges below the $\gamma$-suppressed profile from \eqref{eq:edges-below-g-int}, this yields by \eqref{eq:lower-edges-in-out-I} that, 
 \begin{align*}
  \E\Big[\big|\CE\big(\CV_{\le\CM_\gamma\setminus\CR_\mathrm{in}}^\sss{\mathrm{in}}, \CV_{\le\CM_\gamma\setminus\CR_\mathrm{out}}^\sss{\mathrm{out}}\big)\big| \,&\Big|\, \CV_{\CR_\mathrm{in}}\cup\CV_{\CR_\mathrm{out}}, \CA_\mathrm{regular}(\eta)\Big] \\
          & \lesssim
  \begin{dcases}
  k^{2-\alpha + \gamma\xi_\star}(\log k)^{\mathfrak{m}_\mathrm{long}-1}, & \text{if }2-\alpha + \gamma\xi_\star > \tfrac{d-1}{d}, \\
  k^{2-\alpha + \gamma\xi_\star}(\log k)^{\mathfrak{m}_\mathrm{long}},   & \text{if }2-\alpha + \gamma\xi_\star = \tfrac{d-1}{d}, \\
  k^{(d-1)/d},                                                      & \text{if }2-\alpha + \gamma\xi_\star < \tfrac{d-1}{d},
  \end{dcases}
 \end{align*}
 where we also used that $r_k=\Theta(k^{1/d})$ by~\eqref{eq:rk}.
 To obtain bounds on the other three expectations in \eqref{eq:lower-edges-e3}, one can
 replace the integrals over the marks in \eqref{eq:lower-edges-in-out-4}  by summing over the mark intervals $I_j$ defined in \eqref{eq:lower-weight-intervals}: the upper bounds on the number of vertices using the definitions of $\CA_\mathrm{regular}^\sss{(k,\mathrm{in})}(\eta)$ and $\CA_\mathrm{regular}^\sss{(k,\mathrm{out})}(\eta)$ in \eqref{eq:lower-events-global} ensure that the total number of points in each interval only differs from its expectation by a constant factor. Then one can use an upper bound on the mark of each vertex in $I_j^{\sss{\mathrm{loc}}}$ in \eqref{eq:lower-weight-intervals}, given by $I_j^{\sss{\mathrm{loc}}}=[2^{j-1}, 2^j)$, and thus also the mark is at most a factor two larger than the mark of a typical vertex in $I_j^{\sss{\mathrm{loc}}}$. Lastly, the distance between vertices in $\CR_\mathrm{in}$ and outside $\CB_{\mathrm{in}}$ (but within distance $r_k$ of $\CB_{\mathrm{in}}$) can be bounded from below by $r_k/2$ by Lemma \ref{lemma:construction}, and analogously we can bound the distance between vertices in $\CR_\mathrm{out}$ and vertices inside $\CB_{\mathrm{in}}$.
 We leave it to the reader to fill in the details.

It remains to show~\eqref{eq:claim-long-edge} in Lemma~\ref{lemma:lower-edges-below}.
  We show that whenever $u, v$ are below $\CM_\gamma$ and on different sides of $\partial \CB_{\mathrm{in}}$, then $\beta \kappa_\sigma(w_u,w_v)/$ $\|x_u-x_v\|^d \le 1/2$. By definition of $\mathrm{p}$ in \eqref{eq:connection-prob-gen}, this directly implies \eqref{eq:claim-long-edge}.
To see this bound, for $\sigma\ge 0$ the connection probability $\mathrm{p}$ is increasing in the marks.
  Therefore, without loss of generality we will assume that $u\in \CB_{\mathrm{in}}$ and $v\notin \CB_{\mathrm{in}}$ fall exactly on $\CM_\gamma$, and that $k$ is large enough that $r_k\ge C_\beta$. Since $f_\gamma$ in \eqref{eq:suppressed-curve} changes its definition at $r_k$ outside $\partial \CB_{\mathrm{in}}$, we distinguish two cases.\smallskip 

  \noindent\emph{Case 1. Assume $|\|x_v\|-r_k|\le r_k$.}
  Since the suppressed mark profile equals one (the minimal mark) for all points within distance $C_\beta$ by~\eqref{eq:suppressed-curve}, there are no vertex pairs within distance $C_\beta$ of $\CB_{\mathrm{in}}$.
  Thus, for any $(u, v)$, there exist $t\ge2C_\beta$, $\nu\in(0,1)$ such that $\|x_u-\partial \CB_{\mathrm{in}}\|=r_k-\|x_u\|=(1-\nu)t$, and $\|x_v-\partial \CB_{\mathrm{in}}\|=\|x_v\|-r_k=\nu t$. Since the line-segment $(x_u, x_v)$ must pass through $\partial \CB_{\mathrm{in}}$ it follows that $\|x_u-x_v\|\ge t$.
  Hence, $w_v=C_\beta^{-\gamma d} (\nu t)^{\gamma d}, w_u=C_\beta^{-\gamma d} ((1-\nu) t)^{\gamma d}$ by assuming $u,v$ being on $\CM_\gamma$ and $f_\gamma$ in \eqref{eq:suppressed-curve}. Using $\kappa_\sigma$ in \eqref{eq:kernels}, and $\nu\in(0,1)$,
  \begin{align}
  \beta\frac{\kappa_\sigma(w_u, w_v)}{\|x_u-x_v\|^d}
    & \le
  \beta C_\beta^{-\gamma d(\sigma+1)}\frac{\max\{(1-\nu)t, \nu t\}^{\gamma d}\min\{(1-\nu)t, \nu t\}^{\sigma \gamma d}}{t^d} \\
  &\le
  \beta C_\beta^{-\gamma d(\sigma+1)}t^{d(\gamma(\sigma + 1)-1)}.\nonumber
  \end{align}
  The right-hand side is non-increasing in $t$ \emph{whenever} $\gamma\le\frac{1}{\sigma+1}$. Since $t\ge 2C_\beta\ge C_\beta$, and $C_\beta=(2\beta)^{1/d}$,
  \begin{equation}
  \beta\frac{\kappa_\sigma(w_u, w_v)}{\|x_u-x_v\|^d}
  \le
  \beta C_\beta^{-d\gamma(\sigma+1)}C_\beta^{d(\gamma(\sigma+1)-1)}
  =
  \beta  C_\beta^{-d}
  = 1/2.    \nonumber
  \end{equation}
  \noindent
  \noindent\emph{Case 2. Assume  $|\|x_v\|-r_k|> r_k$.} In this case $\|x_v-\partial\CB_{\mathrm{in}}\|\ge\|x_u-\partial \CB_{\mathrm{in}}\|$, and $\|x_v-x_u\|\ge \|x_v-\partial\CB_{\mathrm{in}}\|=|\|x_v\|-r_k|$.
  Since $f_\gamma(z)$ is increasing, $w_v=f_\gamma(|\|x_v\|-r_k|)\ge f_\gamma(r_k)\ge f_\gamma(|\|x_u\|-r_k|)=w_u$. We obtain by definition of $\kappa_\sigma$ in  \eqref{eq:kernels} and \eqref{eq:suppressed-curve}
  \begin{align}
  \beta\frac{\kappa_\sigma(w_u, w_v)}{\|x_u-x_v\|^d}
  &\le
  \beta \frac{f_\gamma(|\|x_v\|-r_k|)f_\gamma(|\|x_u\|-r_k|)^{\sigma}}{|\|x_v\|-r_k|^d}\\ 
    & \le
  \beta C_\beta^{-\gamma d(\sigma+1)}  |\|x_v\|-r_k|^{d}r_k^{-d(1-\gamma)}r_k^{\sigma \gamma d}|\|x_v\|-r_k|^{-d}\nonumber \\
    & =\beta C_\beta^{-\gamma d(\sigma+1)}r_k^{-d(1-\gamma(\sigma+1))} \le 1/2,\nonumber
  \end{align}
  where to obtain the second inequality we used that $f_\gamma(|\|x_u\|-r_k|)^{\sigma}\le r_k^{\sigma \gamma d}$, and to obtain the second row the power of $|\|x_v\|-r_k|$ canceled. The bound $1/2$ follows because $r_k\ge C_\beta$, $\gamma \le 1/(\sigma+1)$ and $C_\beta=(2\beta)^{1/d}$.
  \end{proof}

To finish the auxiliary lower bound proofs, we provide a sketch of the proof of Lemma~\ref{lemma:lower-vertex-boundary2}.

\begin{proof}[Proof sketch of Lemma~\ref{lemma:lower-vertex-boundary2}]\phantomsection\label{proof:vertex-boundary}  Let $\CZ:=\{\zeta_\mathrm{hh}, \zeta_\mathrm{hl}, \zeta_\mathrm{ll}, \zeta_\mathrm{short}\}$, with $\zeta_\mathrm{short}=(d-1)/d$.
We first sketch the lower bounds, starting with the case $\max(\CZ)=\zeta_\mathrm{short}$. With probability tending to one as $k\to\infty$, there are $\Theta(k^{(d-1)/d})$ vertices within constant distance from the boundary of $\Lambda_k$. For each such vertex $u$, there is with constant probability a vertex $v$ outside $\Lambda_k$ within constant distance, to which $u$ connects by an edge with constant probability by~\eqref{eq:connection-prob-gen}.  When $\max(\CZ)\in\{\zeta_\mathrm{ll}, \zeta_\mathrm{hl}, \zeta_\mathrm{hh}\}$, the lower bounds of~\eqref{eq:lemma-zeta-star} and~\eqref{eq:lemma-zeta-long} can be shown by the following domination of a binomial random variable. If $\max(\CZ)=\zeta_\mathrm{ll}=2-\alpha$, and $\tau<\infty,$  there are $\Theta(k)$ vertices inside $\Lambda_{k/2}$ with mark in $[2,3)$, and $\Theta(k)$ vertices in $\Lambda_{2k}/\Lambda_k$ with mark in $[1,2)$. The number of vertices in $\Lambda_{k/2}$ with a downward edge to a vertex in $\Lambda_{2k} \setminus\Lambda_k$ dominates a binomial random variable with parameters $\Theta(k)$ and $p=\Theta(k^{1-\alpha})$, which has expectation $\Theta(k^{\zeta_\mathrm{ll}})$. When $\tau=\infty$, all marks are identical, and each edge is downward by definition. When $\max(\CZ)=\zeta_\mathrm{hl}$ (respectively $\zeta_\mathrm{hh}$), a similar reasoning works, but we consider vertices of mark at least $k^{\gamma_\mathrm{hl}}$ (resp.\ $k^{\gamma_\mathrm{hh}}$) inside $\Lambda_{k/2}$, and of mark in $[1,2)$ (resp. $[k^{\gamma_\mathrm{hh}}/2, k^{\gamma_\mathrm{hh}})$) in $\Lambda_{2k}\setminus\Lambda_k$.

For the upper bounds, one needs to slightly modify the optimally suppressed mark-profile in~\eqref{eq:suppressed-curve}, so that its minimum is around the boundary of the box $\Lambda_k$, rather than around the boundary of the ball $\CB_{\mathrm{in}}$. The suppression-profile outside $\Lambda_k^\complement$ is not required as we only consider downward edges. The number of vertices with a downward edge in $\Lambda_k$ is bounded from above by the total number of vertices with mark above the suppressed mark profile $|\CV^{\sss{\mathrm{in}}}_{>\CM_\star}|$, plus the number of downward edges to $\Lambda_k^\complement$ emanating from vertices below the profile, denoted by $|\CE(\CV^{\sss{\mathrm{in}}}_{\le \CM_\star} \searrow \Lambda_k^\complement)|$.  Lemma~\ref{lemma:lower-vertices-above} can be used to bound $|\CV^{\sss{\mathrm{in}}}_{>M_\gamma}|$.  The expectation of $|\CE(\CV^{\sss{\mathrm{in}}}_{\le \CM_\star} \searrow \Lambda_k^\complement)|$  can be bounded similarly to the proof of Lemma~\ref{lemma:lower-edges-below}. This yields the upper bound of~\eqref{eq:lemma-zeta-star}. The upper bound of~\eqref{eq:lemma-zeta-long}, under the assumption $\max(\zeta_\mathrm{hh}, \zeta_\mathrm{hl}, \zeta_\mathrm{ll})\ge 0$, follows from the same computations, restricting the integrals to vertices at distance at least $\Omega(k^{1/d})$ from the boundary of $\Lambda_k$. When $\max(\zeta_\mathrm{hh}, \zeta_\mathrm{hl}, \zeta_\mathrm{ll})< 0$ these integrals are of order $O(k^{-\eps})$ for some $\eps>0$. We leave the details to the reader.  

Assumption~\ref{assumption:main} ensures that $\alpha>1$. Combining this with the definitions of $\zeta_\mathrm{ll}, \zeta_\mathrm{hl}, \zeta_\mathrm{hh}$ in~\eqref{eq:zeta-ll},~\eqref{eq:gamma-lh}, and~\eqref{eq:gamma-hh}, implies that $\max\big(\zeta_\mathrm{hh}, \zeta_\mathrm{hl}, \zeta_\mathrm{ll}, (d-1)/d\big)<1$.  
\end{proof}

\section{Auxiliary proof}\label{app:aux}

It remains to prove Lemma~\ref{lemma:zeta-opt-other}.
\begin{proof}[Proof of Lemma~\ref{lemma:zeta-opt-other}]\label{proof:zeta-opt-other}
We start with three helping statements to prove the bounds for $\alpha<\infty$. First, we prove the implication
\begin{equation}\label{eq:imp1}\tag{$\Rightarrow_1$}
\max\left\{\begin{aligned}&1-\gamma_\mathrm{long}(\tau-1), \\&1-\gamma_\star(\tau-1),\\& 2-\alpha+\gamma_\star\xi_\star\end{aligned}\right\}\ge 0\quad \Longrightarrow\quad 
    \left(\begin{aligned}1-\gamma_\mathrm{long}&(\tau-1)\\&=1-\gamma_\star(\tau-1) \\&=2-\alpha+\gamma_\star\xi_\star\end{aligned}\right).
\end{equation}
Since $\gamma_\star=\min(\gamma_\mathrm{long}, 1/(\sigma+1))$  by definition in~\eqref{eq:gamma-opt}, the second term in the maximum is at least the first term. Since $\gamma_\mathrm{long}$ is the smallest $\gamma$ such that $1-\gamma(\tau-1)\le 2-\alpha+\gamma\xi_\star$ by~\eqref{eq:gamma-opt-long}, the second term in the maximum is at least the third term. Thus the left-hand side is equivalent to $1-\gamma_\star(\tau-1)\ge 0$.  By the same definitions, the right-hand side only fails to be true if $\gamma_\mathrm{long}\neq\gamma_\star$, which is when $\gamma_\mathrm{long}>\gamma_\star=1/(\sigma+1)$. 
Thus, \eqref{eq:imp1} is equivalent to showing
\begin{equation}\label{eq:opt-3-pr}
    1-\gamma_\star(\tau-1) \ge 0 \qquad \Longrightarrow\qquad \gamma_\mathrm{long}\le 1/(\sigma+1).
\end{equation}
If $\gamma_\mathrm{long}=\gamma_\star$, the implication holds since $\gamma_\star\le1/(\sigma+1)$ by definition.
If $\gamma_\mathrm{long}>\gamma_\star$, then $\gamma_\star=1/(\sigma+1)$ and the left-hand side is equivalent to $\tau-1\le \sigma+1$.
We substitute $\gamma_\mathrm{long}$ from~\eqref{eq:gamma-opt-long} with $\xi_\star$ from~\eqref{eq:xi-long} to see that $\gamma_\mathrm{long}\le1/(\sigma+1)$ is equivalent to \[\max\big(\tau-1, \alpha, (\sigma+1)\alpha-(\tau-1)\big)\ge (\alpha-1)(\sigma+1).\] The third term in the maximum is at least $(\alpha-1)(\sigma+1)$ if $\tau-1\le \sigma+1$, proving~\eqref{eq:opt-3-pr} which is equivalent to~\eqref{eq:imp1}.
\smallskip 

We now state and prove another, second implication. Recall $\zeta_\mathrm{hh}=1-\gamma_\mathrm{hh}(\tau-1)$, $\zeta_\mathrm{ll}=2-\alpha$, and $\zeta_\mathrm{hl}=1-(1-1/\alpha)(\tau-1)$ from~\eqref{eq:zeta-hh}, \eqref{eq:zeta-ll}, and \eqref{eq:gamma-lh}, and  $\xi_\mathrm{hh}=(\sigma+1)\alpha-2(\tau-1)$, $\xi_\mathrm{ll}=0$, and $\xi_\mathrm{hl}=\alpha-(\tau-1)$ from~\eqref{eq:xi-long}. We prove now
\begin{equation}\label{eq:imp2}\tag{$\Rightarrow_2$}
    \zeta_\mathrm{hh}<0\le \max(\zeta_\mathrm{ll}, \zeta_\mathrm{hl}) \qquad 
    \Longrightarrow \qquad \xi_\mathrm{hh}< \max(\xi_\mathrm{ll}, \xi_\mathrm{hl}).
\end{equation}
By~\eqref{eq:zeta-hh}, $\zeta_\mathrm{hh}<0$ if and only if $\sigma+1<\tau-1$, while $\max(\zeta_\mathrm{ll},\zeta_\mathrm{hl})\ge 0$ implies by elementary operations that $\alpha\le \max(2, (\tau-1)/(\tau-2))$. On the one hand, if $\alpha\le 2$ and $\sigma+1<\tau-1$, ~\eqref{eq:imp2} follows immediately since  \[\xi_\mathrm{hh}<(\tau-1)2-2(\tau-1)=0=\xi_\mathrm{ll}.\] 
If on the other hand $\alpha\le (\tau-1)/(\tau-2)$ and $\sigma+1<\tau-1$, then 
\[\xi_\mathrm{hh}=\xi_\mathrm{hl}+\sigma\alpha-(\tau-1)\le \xi_\mathrm{hl}+(\tau-1)\Big(\frac{\sigma}{\tau-2} - 1\Big)<\xi_\mathrm{hl},\]
which finishes the proof of \eqref{eq:imp2}. Next, we prove a third implication
\begin{equation}\label{eq:imp3}\tag{$\Rightarrow_3$}
\max(\zeta_\mathrm{ll}, \zeta_\mathrm{hl}, \zeta_\mathrm{hh})\ge 0 \quad \Longrightarrow\quad 
    1-\gamma_\mathrm{long}(\tau-1)= \max(\zeta_\mathrm{ll}, \zeta_\mathrm{hl}, \zeta_\mathrm{hh}) .
\end{equation}
Recalling the definitions of $\zeta_\mathrm{ll}=2-\alpha$, $\zeta_\mathrm{hl}=1-(\alpha-1)(\tau-1)/\alpha$, and $\zeta_\mathrm{hh}=1-\gamma_\mathrm{hh}(\tau-1)$ from~\eqref{eq:zeta-ll}, \eqref{eq:gamma-lh}, and \eqref{eq:zeta-hh}, as well as $\gamma_\mathrm{long}$ from~\eqref{eq:gamma-opt-long}, the right-hand side is equivalent to showing 
\begin{equation}\label{eq:opt-3}
    \frac{\alpha-1}{\max(\tau-1, \alpha, (\sigma+1)\alpha-(\tau-1))}=\min\Big((\alpha-1)/(\tau-1), (\alpha-1)/\alpha, \gamma_\mathrm{hh}\Big).
\end{equation}
If $\tau\le 2+\sigma$, the definition of $\gamma_\mathrm{hh}=(1-1/\alpha)/(\sigma+1-(\tau-1)/\alpha)$ in~\eqref{eq:gamma-hh} proves the equality in this case. If $\tau>2+\sigma$, then $\zeta_\mathrm{hh}<0$ by definition in~\eqref{eq:zeta-hh}, and we need to show $1-\gamma_\mathrm{long}(\tau-1)=\max(\zeta_\mathrm{ll}, \zeta_\mathrm{hl})$. 

If $\max(\zeta_\mathrm{ll}, \zeta_\mathrm{hl})\ge 0>  \zeta_{\mathrm{hh}}$, \eqref{eq:imp2} implies (by subtracting $\tau-1$ from each of the $\xi$ values) that the maximum in the denominator on the left-hand side in~\eqref{eq:opt-3} is never attained at the third term in~\eqref{eq:opt-3}. Hence, $1-\gamma_\mathrm{long}(\tau-1)=\max(\zeta_\mathrm{ll}, \zeta_\mathrm{hl})$ follows since formally clearly
\begin{equation}\nonumber 
    \frac{\alpha-1}{\max(\tau-1, \alpha)}=\min\Big((\alpha-1)/(\tau-1), (\alpha-1)/\alpha\Big)
\end{equation}
holds. Using similar rearrangements and the definitions, the reader may verify that 
\begin{equation}\label{eq:imp4}\tag{$\Rightarrow_4$}
    \max(\zeta_\mathrm{ll}, \zeta_\mathrm{hl},\zeta_\mathrm{hh})<0
    \quad \Longrightarrow\quad 
    1-\gamma_\mathrm{long}(\tau-1)<0.
\end{equation} 

We prove now~\eqref{eq:zeta-opt-1}. By definition of $\gamma_\mathrm{long}$ and $
\gamma_\star$ in~\eqref{eq:gamma-opt-long} and~\eqref{eq:gamma-opt}, $2-\alpha+\gamma_\star\xi_
\star\le1-\gamma_\star(\tau-1)$, leaving to verify the equality in~\eqref{eq:zeta-opt-1}.
First assume $\max(\zeta_\mathrm{ll}, \zeta_\mathrm{hl}, \zeta_\mathrm{hh})<0$, so that  $1-\gamma_\mathrm{long}(\tau-1)<0$ by~\eqref{eq:imp4}. Then~\eqref{eq:imp1} implies that also $1-\gamma_\star(\tau-1)<0$, otherwise they would be equal and all nonnegative. 
So,~\eqref{eq:zeta-opt-1} holds in this case since  $(d-1)/d\ge 0$ for all $d\ge 1$.  Finally we assume $\max(\zeta_\mathrm{ll}, \zeta_\mathrm{hl}, \zeta_\mathrm{hh})\ge 0$. Then, \eqref{eq:imp3} and~\eqref{eq:imp1} imply that then $\max(\zeta_\mathrm{ll}, \zeta_\mathrm{hl}, \zeta_\mathrm{hh})=1-\gamma_\star(\tau-1)$.
Thus,~\eqref{eq:zeta-opt-1} follows again.
\smallskip 

To prove \eqref{eq:zeta-opt-m2}, we introduce some general notation in which we  count the multiplicity of the maximum. Let for a list (with potentially repeated elements) $\CY=\{y_1,\ldots, y_\ell\}\subseteq\R$,
\begin{equation}
    \mathfrak{m}(\CY):=\mathfrak{m}(y_1,\ldots, y_\ell):= \sum_{i\in[\ell]}\ind{y_i=\max(\CY)}.
\end{equation}
Define $\mathrm{sign}:\R\mapsto\{-, 0, +\}$ as $\mathrm{sign}(x)\!=\!-$ for $x\!<\!0$, $\mathrm{sign}(x)\!=\!+$ for $x\!>\!0$, and $\mathrm{sign}(0)\!=\!0$. Consider now two lists of numbers $\{y_1, \dots, y_\ell\}$ and $\{z_1, \dots, z_\ell\}$ of length $\ell$.
We claim that
\begin{equation}\label{eq:imp-mult}
    \big(\mathrm{sign}(y_i-y_j)=\mathrm{sign}(z_i-z_j)\quad \forall i\neq j\big)\quad \Longrightarrow \quad \mathfrak{m}(y_1,\ldots, y_\ell)=\mathfrak{m}(z_1,\ldots, z_\ell).
\end{equation}
Indeed, the index of a maximal element in both lists can be identified in a list if all sign differences are equal to $0$ or $+$, and the multiplicity can be computed by counting how often the sign difference with the other elements equals $0$.
We will use this observation to prove
\begin{equation}\label{eq:opt-5}\tag{$\Rightarrow_5$}
    \max(\zeta_\mathrm{ll}, \zeta_\mathrm{hl}, \zeta_\mathrm{hh})\ge0\qquad \Longrightarrow\qquad \mathfrak{m}(\zeta_\mathrm{ll}, \zeta_\mathrm{hl}, \zeta_\mathrm{hh}) = \mathfrak{m}(\xi_\mathrm{ll}, \xi_\mathrm{hl}, \xi_\mathrm{hh})=\mathfrak{m}_\mathrm{long}.
\end{equation}
We claim that $\mathfrak{m}(\xi_\mathrm{ll},\xi_\mathrm{hl})=\mathfrak{m}(\zeta_\mathrm{ll}, \zeta_\mathrm{hl})$: using the definitions of  $\xi_\mathrm{ll}=0$ and $\xi_\mathrm{hl}=\alpha-(\tau-1)$ from~\eqref{eq:xi-long}, and $\zeta_\mathrm{ll}=2-\alpha$ and $\zeta_\mathrm{hl}=1-(1-1/\alpha)(\tau-1)$ in~\eqref{eq:zeta-ll} and~\eqref{eq:gamma-lh}, it is elementary to compute that $\mathrm{sign}(\xi_\mathrm{ll}-\xi_\mathrm{hl})=\mathrm{sign}(\zeta_\mathrm{ll}-\zeta_\mathrm{hl})$.

Assume now that $\zeta_\mathrm{hh}<0\le \max(\zeta_\mathrm{ll}, \zeta_\mathrm{hl})$, so $\mathfrak{m}(\zeta_\mathrm{ll}, \zeta_\mathrm{hl}, \zeta_\mathrm{hh})=\mathfrak{m}(\zeta_\mathrm{ll}, \zeta_\mathrm{hl})=\mathfrak{m}(\xi_\mathrm{ll}, \xi_\mathrm{hl})$. By~\eqref{eq:imp2}, also $\xi_\mathrm{hh}<\max(\xi_\mathrm{ll}, \xi_\mathrm{hl})$, so $\mathfrak{m}(\xi_\mathrm{ll}, \xi_\mathrm{hl})=\mathfrak{m}(\xi_\mathrm{ll}, \xi_\mathrm{hl}, \xi_\mathrm{hh})$. Thus,~\eqref{eq:opt-5} follows when $\zeta_\mathrm{hh}<0$ by definition of $\mathfrak{m}_\mathrm{long}$ in~\eqref{eq:xi-long}.

Assume next that $\zeta_\mathrm{hh}=1-\gamma_\mathrm{hh}(\tau-1)\ge 0$. Using $\gamma_\mathrm{hh}=(\alpha-1)/((\sigma+1)\alpha-(\tau-1))$ and $\xi_\mathrm{hh}=(\sigma+1)\alpha-2(\tau-1)$, we leave it to the reader to verify that also $\mathrm{sign}(\xi_\mathrm{ll}-\xi_\mathrm{hh})=\mathrm{sign}(\zeta_\mathrm{ll}-\zeta_\mathrm{hh})$ and $\mathrm{sign}(\xi_\mathrm{hl}-\xi_\mathrm{hh})=\mathrm{sign}(\zeta_\mathrm{hl}-\zeta_\mathrm{hh})$. This proves~\eqref{eq:opt-5} in all cases.\smallskip 

 We now analyze the left-hand side in~\eqref{eq:zeta-opt-m2}, and note that $\mathfrak{m}_\star=\mathfrak{m}(\zeta_\mathrm{ll}, \zeta_\mathrm{hl}, \zeta_\mathrm{hh}, (d-1)/d)$ by definition in~\eqref{eq:m-star}. Thus, 
 \[
 \mathfrak{m}_\star-1=\big(\mathfrak{m}(\zeta_\mathrm{ll}, \zeta_\mathrm{hl}, \zeta_\mathrm{hh})-1\big)\ind{\max(\zeta_\mathrm{ll}, \zeta_\mathrm{hl}, \zeta_\mathrm{hh})>\tfrac{d-1}{d}}
 +
 \mathfrak{m}(\zeta_\mathrm{ll}, \zeta_\mathrm{hl}, \zeta_\mathrm{hh})\ind{\max(\zeta_\mathrm{ll}, \zeta_\mathrm{hl}, \zeta_\mathrm{hh})=\tfrac{d-1}{d}}, 
 \]
 as $\mathfrak{m}(\zeta_\mathrm{ll}, \zeta_\mathrm{hl}, \zeta_\mathrm{hh}, (d-1)/d)-1=0$ if $\max(\zeta_\mathrm{ll}, \zeta_\mathrm{hl}, \zeta_\mathrm{hh})<(d-1)/d$. Since $(d-1)/d\ge 0$, we can replace the multiplicities on the right-hand side by $\mathfrak{m}_\mathrm{long}$ using~\eqref{eq:opt-5}. By~\eqref{eq:imp1} and~\eqref{eq:imp3} we can replace the maximum inside the indicators by $2-\alpha+\gamma_\star\xi_\star$. Thus,
 \[
 \mathfrak{m}_\star-1=\big(\mathfrak{m}_\mathrm{long}-1\big)\ind{2-\alpha+\gamma_\star\xi_\star>\tfrac{d-1}{d}}
 +
 \mathfrak{m}_\mathrm{long}\ind{2-\alpha+\gamma_\star\xi_\star=\tfrac{d-1}{d}}.
 \]
 This proves the equality in~\eqref{eq:zeta-opt-m2}. We turn to the inequality in~\eqref{eq:zeta-opt-m2}. If the right-hand side of~\eqref{eq:zeta-opt-m2} is zero, the bound holds trivially since $\mathfrak{m}_\star\ge 1$. If the right-hand side of~\eqref{eq:zeta-opt-m2} is one, i.e., $1-\gamma_\star(\tau-1)=\tfrac{d-1}{d}$, then by~\eqref{eq:imp1} and~\eqref{eq:imp3} also $\max(\zeta_\mathrm{ll}, \zeta_\mathrm{hl}, \zeta_\mathrm{hh})=(d-1)/d$, and $\mathfrak{m}_\star \ge 2$, proving~\eqref{eq:zeta-opt-m2}.\smallskip

 Finally, we prove the statements for $\alpha=\infty$. We compute $\lim_\mathrm{\alpha\to\infty}\zeta_\mathrm{ll}=2-\alpha=-\infty$ and $\lim_{\alpha\to\infty}\zeta_\mathrm{hl}=(\tau-1)/\alpha-(\tau-2)=-(\tau-2)$. Hence, $\max(\zeta_\mathrm{ll}, \zeta_\mathrm{hl})<(d-1)/d$. By~\eqref{eq:gamma-opt}, $\gamma_\star=1/(\sigma+1)$, and $\zeta_\mathrm{hh}=1-(\tau-1)/(\sigma+1)$ by~\eqref{eq:zeta-hh}. So, $1-\gamma_\star(\tau-1)=\zeta_\mathrm{hh}$, proving   $\max(1-\gamma_\star(\tau-1), (d-1)/d)=\max(\zeta_\mathrm{ll}, \zeta_\mathrm{hl}, \zeta_\mathrm{hh}, (d-1)/d)$. By the same argumentation 
 \begin{equation*}\mathfrak{m}_\star-1=\mathfrak{m}(\zeta_\mathrm{ll}, \zeta_\mathrm{hl}, \zeta_\mathrm{hh}, (d-1)/d)-1=\mathfrak{m}(\zeta_\mathrm{hh}, (d-1)/d) -1= \ind{1-\gamma_\star(\tau-1)=(d-1)/d}.\qedhere\end{equation*}
\end{proof}
\vspace{-5pt}
Lastly, we state a Poisson concentration bound (without proof) that we often use.
\vspace{-5pt}
\begin{lemma}[Poisson bound {\cite{mitzenmacher2017probability}}]\label{lemma:poisson-1}
 For $x>1$,
 \begin{equation}
  \Prob\big(\Poi(\lambda) \ge x\lambda\big)\le\exp(-\lambda(1+x(\log x)-x)),\nonumber
 \end{equation}
 and for $x<1$,
 \begin{equation}
  \Prob\big(\Poi(\lambda) \le x\lambda\big)\le
  \exp(-\lambda(1-x-x(\log1/x)).\nonumber
 \end{equation}
\end{lemma}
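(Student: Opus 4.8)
The plan is to derive both inequalities from the exponential Markov (Chernoff) bound, using the explicit moment generating function $\E[\re^{tX}]=\exp(\lambda(\re^{t}-1))$ of a Poisson$(\lambda)$ variable $X$. This is the textbook argument; the only thing I would take care with is the sign of the optimizing tilt parameter in each of the two regimes.

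For the upper tail, fix $x>1$ and let $t>0$. Markov's inequality applied to $\re^{tX}$ gives
\[
\Prob\big(\Poi(\lambda)\ge x\lambda\big)\le \re^{-tx\lambda}\,\E[\re^{tX}]=\exp\big(\lambda(\re^{t}-1-tx)\big).
\]
I would then optimize the exponent $t\mapsto \re^{t}-1-tx$ over $t>0$: its derivative $\re^{t}-x$ vanishes at $t=\log x$, which is admissible precisely because $x>1$, and substituting gives $\re^{t}-1-tx=x-1-x\log x$. Hence $\Prob(\Poi(\lambda)\ge x\lambda)\le\exp(-\lambda(1+x\log x-x))$, as claimed.

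For the lower tail, fix $x<1$ and apply the same argument to $\re^{-tX}$ with $t>0$:
\[
\Prob\big(\Poi(\lambda)\le x\lambda\big)\le \re^{tx\lambda}\,\E[\re^{-tX}]=\exp\big(\lambda(\re^{-t}-1+tx)\big),
\]
and the exponent $t\mapsto \re^{-t}-1+tx$ is minimized at $t=\log(1/x)$, which is strictly positive because $x<1$, yielding $\re^{-t}-1+tx=x-1+x\log(1/x)$ and therefore $\Prob(\Poi(\lambda)\le x\lambda)\le\exp(-\lambda(1-x-x\log(1/x)))$. The only non-mechanical step is checking that the minimizing tilt lies in the admissible range $t>0$, which is exactly where the hypotheses $x>1$ and $x<1$ are used, so there is no genuine obstacle here; a full reference is~\cite{mitzenmacher2017probability}.
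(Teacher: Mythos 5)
Your proof is correct and is the standard Chernoff-bound derivation; the paper itself states this lemma without proof, citing Mitzenmacher--Upfal, and the argument you give (optimize the exponential Markov bound, checking that the optimal tilt $t=\log x$ resp.\ $t=\log(1/x)$ lies in the admissible range $t>0$) is exactly the textbook argument that reference supplies.
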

\end{appendix}

\end{document}